\DeclareMathAlphabet{\mathbbold}{U}{bbold}{m}{n}
\theoremstyle{plain}
        \newtheorem{theorem}{Theorem}[section]
        \newtheorem*{theorem*}{Theorem}
        \newtheorem*{conj*}{Conjecture}
        \newtheorem{lemma}[theorem]{Lemma}
        \newtheorem{prop}[theorem]{Proposition}
        \newtheorem*{prop*}{Proposition}
        \newtheorem{cor}[theorem]{Corollary}
        \newtheorem*{cor*}{Corollary}
        \newtheorem{thmx}{Theorem}
\theoremstyle{definition}
        \newtheorem{definition}[theorem]{Definition}
        \newtheorem*{definition*}{Definition}
        \newtheorem{rem}[theorem]{Remark}
\theoremstyle{remark}
        \newtheorem*{remark}{Remark}
\numberwithin{equation}{section}
\numberwithin{theorem}{section}
\numberwithin{table}{section}
\numberwithin{figure}{section}
\def\th@plain{%
	\thm@notefont{}
	\itshape 
}
\def\th@definition{%
	\thm@notefont{}
	\normalfont 
}
\providecommand{\defn}[1]{\emph{#1}}
\newcommand{\dist}{\operatorname{dist}}
\newcommand{\length}{\operatorname{length}}
\newcommand{\diam}  {\operatorname{diam}}
\newcommand{\inter}  {\operatorname{int}}
\newcommand{\id} {\operatorname{id}}
\newcommand{\card} {\operatorname{card}}
\newcommand{\R}{\mathbb{R}}
\newcommand{\B}{\mathbb{B}}    
\newcommand{\C}{\mathbb{C}}
\newcommand{\N}{\mathbb{N}}
\newcommand{\D}{\mathbb{D}}
\renewcommand{\le}{\leqslant}
\renewcommand{\leq}{\leqslant}
\renewcommand{\ge}{\geqslant}
\renewcommand{\geq}{\geqslant}
\renewcommand{\:}{\colon}
\newcommand{\mesh}{\operatorname{mesh}}
\providecommand{\abs}[1]{\lvert#1\rvert}
\providecommand{\Absbig}[1]{\bigl\lvert#1\bigr\rvert}
\providecommand{\norm}[1]{\|#1\|}
\renewcommand{\=}{\coloneqq}
\newcommand{\cA}{\mathcal{A}}
\newcommand{\cB}{\mathcal{B}}
\newcommand{\cC}{\mathcal{C}}
\renewcommand{\cD}{\mathcal{D}}
\newcommand{\cE}{\mathcal{E}}
\newcommand{\cF}{\mathcal{F}}
\newcommand{\cI}{\mathcal{I}}
\newcommand{\cJ}{\mathcal{J}}
\newcommand{\cM}{\mathcal{M}}
\newcommand{\cN}{\mathcal{N}}
\newcommand{\cP}{\mathcal{P}}
\newcommand{\cS}{\mathcal{S}}
\newcommand{\cU}{\mathcal{U}}
\newcommand{\cV}{\mathcal{V}}
\newcommand{\cX}{\mathcal{X}}
\newcommand{\cY}{\mathcal{Y}}
\newcommand{\fC}{\mathfrak{C}}
\newcommand{\fD}{\mathfrak{D}}
\newcommand{\fF}{\mathfrak{F}}
\newcommand{\fX}{\mathfrak{X}}
\newcommand{\fY}{\mathfrak{Y}}
\newcommand{\sA}{\mathscr{A}}
\newcommand{\hX}{\widehat{X}}
\newcommand{\tC}{\widetilde{C}}
\newcommand{\tH}{\widetilde{H}}
\newcommand{\tU}{\widetilde{U}}
\newcommand{\tW}{\widetilde{W}}
\newcommand{\tX}{\widetilde{X}}
\newcommand{\tY}{\widetilde{Y}}
\newcommand{\ta}{\widetilde{a}}
\newcommand{\tb}{\widetilde{b}}
\newcommand{\tc}{\widetilde{c}}
\newcommand{\tg}{\widetilde{g}}
\newcommand{\tm}{\widetilde{m}}
\newcommand{\ty}{\widetilde{y}}
\newcommand{\tgamma}{\widetilde{\gamma}}
\newcommand{\tdelta}{\widetilde{\delta}}
\newcommand{\tlambda}{\widetilde{\lambda}}
\newcommand{\tLambda}{\widetilde{\Lambda}}
\newcommand{\tsigma}{\widetilde{\sigma}}
\newcommand{\ttau}{\widetilde{\tau}}
\newcommand{\Sp}{\mathbb{S}}
\newcommand{\cellint}{\operatorname{int_\circ}}
\newcommand{\cellbound}{\partial_{\circ}}
\newcommand{\cls}[1]{\overline{#1}}
\newcommand{\supp}{\operatorname{Car}}
\newcommand{\flower}{\cF}
\newcommand{\coverF}{\fF}
\newcommand{\modul}{\operatorname{Mod}}
\renewcommand{\top}{[n]}
\newcommand{\loc}{{\operatorname{loc}}}
\newcommand{\celPair}{cellular pair}
\newcommand{\celSeq}{cellular sequence}
\newcommand{\MkvSeq}{cellular Markov sequence}
\newcommand{\itCel}{iterated cellular}
\newcommand{\ItCel}{Iterated cellular}
\newcommand{\Round}{\operatorname{Round}}
\newcommand{\mfd}{\cM}
\newcommand{\dg}{d_g}
\newcommand{\Isom}{\mathrm{Isom}}
\newcommand{\ind}{i}
\newcommand{\NxD}[2]{M_{#1}(#2)}
\newcommand{\di}{\mathrm{d}}
\newcommand{\mk}{\mathrm{p}}
\newcommand{\stdInt}[1]{\inter #1}
\newcommand{\stdCellint}[1]{\cellint #1}
\newcommand{\brCellint}[1]{\cellint(#1)}
\newcommand{\sk}[2]{\bigcup_{i\leq #2}#1^{[i]}}
\newcommand{\clcnR}{connected closed Riemannian}
\newcommand{\clcntop}{connected closed topological}
\newcommand{\clcnmtr}{connected closed metric}
\newcommand{\orclcnR}{oriented, connected, and closed Riemannian}
\newcommand{\orclcntop}{oriented, connected, and closed topological}
\newcommand{\jsOpSd}{joins opposite sides}
\newif\ifneworder
\begin{document}

\title[Thurston-type maps]{Rigidity and quasisymmetric uniformization of Thurston-type maps}

\author{Zhiqiang Li, Pekka Pankka, and Hanyun Zheng}

\address{Zhiqiang~Li, School of Mathematical Sciences \& Beijing International Center for Mathematical Research, Peking University, Beijing 100871, China.}
\email{zli@math.pku.edu.cn}

\address{Pekka~Pankka, Department of Mathematics and Statistics, P.O. Box 68 (Pietari Kalmin katu 5), FI-00014 University of Helsinki, Finland}
\email{pekka.pankka@helsinki.fi}

\address{Hanyun~Zheng, School of Mathematical Sciences, Peking University, Beijing 100871, China.}
\email{1900013001@pku.edu.cn}

\subjclass[2020]{Primary: 37F10; Secondary: 30C65, 30L10, 37F15, 37F20, 37F31}

\keywords{Branched covering map, expanding dynamics, postcritically-finite map, Latt\`es map, visual metric, quasisymmetry, branched quasisymmetry, quasiregular map, quasiconformal geometry, quasisymmetric uniformization.}

\begin{abstract} 
    We prove the 
    No Invariant Line Fields conjecture for a class of generalized postcritically-finite branched covers on higher-dimensional Riemannian manifolds. 
    Moreover, we establish a quasisymmetric uniformization theorem for this class of generalized postcritically-finite maps.
\end{abstract}

\maketitle

{
    \hypersetup{linkcolor=black}
    \setcounter{tocdepth}{2}
    \tableofcontents
}


\section{Introduction}\label{sct: intro}

In this article, we prove the
No Invariant Line Fields conjecture for a class of branched covers on higher-dimensional Riemannian manifolds. 

\begin{thmx}[No Invariant Line Fields]
\label{tx: UQR = Lattes}
Let $f\colon \mfd^n \to \mfd^n$, $n \geq 3$, be an expanding Thurston-type map on an {\orclcnR} $n$-manifold. Then $f$ is uniformly quasiregular if and only if $f$ is a Latt\`es map.
\end{thmx}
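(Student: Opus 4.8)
The ``if'' direction is the soft one. If $f$ is a Latt\`es map, then by definition $\mfd^n$ carries a natural locally homogeneous Riemannian metric $g_0$---the pushforward, under the defining crystallographic quotient $\R^n\to\R^n/\Gamma\cong\mfd^n$, of the Euclidean metric---for which $f$ is a local homothety, in particular conformal. Since $\mfd^n$ is compact, $g_0$ is bi-Lipschitz to the given metric $g$, so the identity $(\mfd^n,g)\to(\mfd^n,g_0)$ is $K_0$-quasiconformal for some $K_0\geq 1$; hence every iterate $f^k$, being $g_0$-conformal, is $K_0^2$-quasiregular with respect to $g$, and $f$ is uniformly quasiregular. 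For the converse, assume $f$ is $K$-uniformly quasiregular. Since every $f^k$ is $K$-quasiregular, the $\{(f^k)^{\#}\}$-orbit of the standard conformal structure stays inside the set $\mathscr{M}_K$ of bounded measurable conformal structures of eccentricity at most $K$; the Sullivan--Tukia averaging construction on $\mathscr{M}_K$ (barycentric combination of conformal structures, plus non-expansiveness of the pullback operators for the intrinsic metric on $\mathscr{M}_K$) then yields an $f$-invariant $\mathbf{G}\in\mathscr{M}_K$, i.e.\ a measurable field of ellipsoids of uniformly bounded eccentricity with $f^{\#}\mathbf{G}=\mathbf{G}$ almost everywhere. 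Equivalently, $\mfd^n$ carries a bounded measurable Riemannian metric $g_{\mathbf{G}}$, in a bounded conformal class relative to $g$, for which $f$ is $1$-quasiregular almost everywhere.

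Next I would use the Thurston-type structure to upgrade $\mathbf{G}$. An expanding Thurston-type map comes with an iterated cellular Markov partition: for every level $k$ a covering of $\mfd^n$ by closed $k$-cells, each a component of $f^{-k}$ of a $0$-cell, of uniformly bounded combinatorial complexity, with $\diam\leq C\Lambda^{-k}$ for a single expansion factor $\Lambda>1$, and such that deep inside the complement of the postcritical set $f^k$ restricts to a $K$-quasiconformal homeomorphism of a $k$-cell onto a $0$-cell. The $f$-invariance of $\mathbf{G}$ then identifies $\mathbf{G}$ on such a $k$-cell with the pullback, under a map of bounded distortion, of $\mathbf{G}$ on a $0$-cell. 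Feeding this into McMullen's line-field rigidity argument---a Lebesgue density point of $\mathbf{G}$ together with bounded-distortion pullbacks at all small scales, now carried out for fields of ellipsoids rather than for directions---one concludes that $\mathbf{G}$ agrees almost everywhere with a \emph{continuous} $f$-invariant conformal structure $\mathbf{G}_0$, and that the $k$-cells are uniformly round in $\mathbf{G}_0$ (hence, up to the bounded distortion, in $g$).

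Roundness of the cells makes the visual metric $\varrho$ attached to the cellular Markov partition Ahlfors $n$-regular and linearly locally contractible, and makes the identity $(\mfd^n,g)\to(\mfd^n,\varrho)$ a branched quasisymmetry---genuinely quasisymmetric off the postcritical set. The quasisymmetric uniformization theorem for expanding Thurston-type maps (the second main result of this paper) then shows that $(\mfd^n,\varrho)$ is quasisymmetrically equivalent to $(\mfd^n,g)$ and that, transported to this model, $f$ is a conformal---in fact locally $\Lambda$-homothetic---expanding Thurston-type branched covering with respect to the conformal structure of $\mathbf{G}_0$. Finally, a closed Riemannian manifold admitting such an expanding conformal postcritically-finite self-map is, via Poincar\'e linearization of the expansion near a repelling periodic point and propagation of the resulting flat chart along the inverse branches of $f$ (which cover $\mfd^n$ by expansion), a Euclidean crystallographic orbifold quotient $\R^n/\Gamma$ on which $f$ is induced by a conformal affine expansion $x\mapsto\Lambda Ux+b$ with $U\in O(n)$; that is, $f$ is a Latt\`es map.

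The main obstacle is the upgrading step. In dimension $n\geq 3$ there is no measurable Riemann mapping theorem to integrate $\mathbf{G}$, so continuity of the invariant conformal structure has to be squeezed out of the interaction between $f$-invariance, the uniform eccentricity bound, and the bounded geometry and single-scale decay of the cellular Markov partition---and, crucially, this has to be made to work at the postcritical set, where $f$ is genuinely branched and the cellular charts degenerate. Handling the branch behaviour correctly, for which the ``branched quasisymmetry'' formalism and the full force of the axioms of an expanding Thurston-type map are designed, is the technical heart of the proof.
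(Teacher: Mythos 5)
Your outline diverges from the paper's route, and the divergence is exactly where the gap sits. The step you yourself flag as ``the main obstacle''---upgrading the Sullivan--Tukia invariant measurable conformal structure $\mathbf{G}$ to a \emph{continuous} invariant structure via a McMullen-style density-point argument---is not carried out, and as stated it is almost certainly too strong: even for a Latt\`es map the natural invariant conformal structure is the pushforward of the standard structure under the strongly automorphic quasiregular map $h$, which is merely bounded measurable, so one cannot expect global continuity. What the rigidity machinery actually needs (and what Martin--Mayer's theorem, Theorem~\ref{t: Ap: cont in measure at conical=>Latt}/Corollary~\ref{c: conical: +measure conical=>Lattes} in the appendix, uses) is continuity \emph{in measure} at a single conical point, which holds automatically a.e.; the real work is then to produce conical points. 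The paper does precisely this: it first proves the finite-forward-index property (Proposition~\ref{p: CPCF: UQR => bd multi}, via Rickman's local-index bound forcing high-multiplicity points to be vertices, and the exclusion of periodic branch points by topological exactness), then uses that bound together with the marking/cellular-neighborhood constructions and a modulus-of-curve-families estimate (Proposition~\ref{prop: UQR + bd mul (no need for Mkv) => dist/diam>}) to get uniform relative separation of chambers, and finally runs a rescaling/Montel normal-family argument to show \emph{every} point is conical, after which Martin--Mayer finishes. Your sketch contains no substitute for the ffi property or the separation estimate, yet some such degree/distortion control is indispensable for any bounded-distortion pullback argument at all scales, precisely because of branching---the issue you defer.

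Two further points. Your closing step (Poincar\'e linearization at a repelling periodic point and propagation of flat charts to exhibit $\mfd^n$ as $\R^n/\Gamma$ with $f$ affine) is essentially a re-derivation of the Hinkkanen--Martin--Mayer/Martin--Mayer local theory that the paper simply invokes; for quasiregular dynamics in dimension $n\ge 3$ this linearization is itself a nontrivial theorem, not a routine Poincar\'e argument, so it cannot be asserted in one sentence. And in the ``if'' direction your bi-Lipschitz claim for the pushforward metric fails when $h$ is branched or merely quasiregular (the induced map $\R^n/\Gamma\to\mfd^n$ is only quasiconformal, and the quotient may be an orbifold); this direction is harmless only because, in this paper's Definition~\ref{d: Lattes triple}, Latt\`es maps are uniformly quasiregular by definition, so nothing needs to be proved there.
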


We define Thurston-type maps later in this introduction. On the $2$-dimensional sphere, 
Thurston-type maps are Thurston maps, i.e., postcritically-finite branched covering maps of degree at least two;
in all dimensions, the expansion is defined analogously to the two-dimensional maps; see e.g.~recent monograph of Bonk \& Meyer \cite{BM17} for the $2$-dimensional theory of expanding Thurston maps.
Uniformly quasiregular mappings are (weakly) conformal with respect to an intrinsic measurable conformal structure and in this sense a higher-dimensional counterpart for complex dynamics on the Riemann sphere. We refer to a survey of Martin \cite{Ma14} for a detailed discussion on uniformly quasiregular mappings.

\smallskip
Expanding Thurston-type maps have a natural class of metrics called visual metrics, for which we establish a quasisymmetric uniformization theorem. 

\begin{thmx}[Uniformization]
\label{tx: QS uniformization of visual metric}
	Let $f\colon \mfd^n \to \mfd^n$, $n \geq 3$, be an expanding Thurston-type map of an {\orclcnR} $n$-manifold $(\mfd^n,g)$, and let $\varrho$ be a visual metric for $f$ on $\mfd^n$. Then the identity map $\id\:(\mfd^n,\varrho)\to(\mfd^n,\dg)$ is quasisymmetric if and only if $f$ is uniformly quasiregular.
\end{thmx}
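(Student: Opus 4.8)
We prove both implications by passing through the correspondence, provided by the theory of branched quasisymmetric maps developed here, between quasiregularity and branched quasisymmetry: a branched covering of $(\mfd^n,g)$ is $K$-quasiregular if and only if it is branched quasisymmetric with distortion data depending only on $n$ and $K$; consequently $f$ is uniformly quasiregular if and only if the iterates $\{f^m\}_{m\geq 1}$ are \emph{uniformly} branched quasisymmetric with respect to $\dg$. We shall also use two facts about a visual metric $\varrho$ and its associated cellular decompositions of $\mfd^n$: (a)~with respect to $\varrho$ the iterates $\{f^m\}$ are uniformly branched quasisymmetric --- on each $m$-cell the map $f^m$ acts as a quasi-similarity of dilatation comparable to $\Lambda^m$ with multiplicative error independent of $m$, which uses that an expanding Thurston-type map has no periodic branch points (so the local degrees of $f^m$ are bounded uniformly in $m$) and that $f^m$ maps each $m$-cell homeomorphically onto a $0$-cell; and (b)~any two visual metrics for $f$ are quasisymmetrically equivalent via the identity, a change of expansion factor amounting to a snowflake-type deformation.

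\smallskip
\noindent\emph{From the quasisymmetry to uniform quasiregularity.} Assume $h\=\id\colon(\mfd^n,\varrho)\to(\mfd^n,\dg)$ is quasisymmetric. Since $h$ is the identity as a map of sets, $f^m=h\circ f^m\circ h^{-1}$, where on the right $f^m$ is regarded as a self-map of $(\mfd^n,\varrho)$. Hence $f^m\colon(\mfd^n,\dg)\to(\mfd^n,\dg)$ is the composition of the quasisymmetry $h^{-1}$, the uniformly branched quasisymmetric map of (a), and the quasisymmetry $h$, and the composition is branched quasisymmetric with control independent of $m$. By the correspondence above, $f$ is uniformly quasiregular.

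\smallskip
\noindent\emph{From uniform quasiregularity to the quasisymmetry.} Assume $f$ is uniformly quasiregular, so each $f^m$ is $K$-quasiregular with $K$ independent of $m$. Since $f^m$ maps each $m$-cell homeomorphically onto one of the finitely many $0$-cells, the restriction of $f^m$ to an $m$-cell is a $K$-quasiconformal homeomorphism. Feeding this into the distortion theory of quasiconformal and quasiregular maps, together with the expansion of $f$ (which forces the $\dg$-diameters of $m$-cells to tend to $0$ uniformly over the $m$-cells), yields the metric estimates characterizing $\dg$ as a visual metric: the $m$-cells are uniform quasiballs for $\dg$, two intersecting $m$-cells have comparable $\dg$-diameters, and $\diam_{\dg}(X)\asymp\lambda^{-m}$ for every $m$-cell $X$, with a fixed expansion factor $\lambda>1$. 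Thus $\dg$ is a visual metric for $f$, and by (b) the identity $\id\colon(\mfd^n,\varrho)\to(\mfd^n,\dg)$ is quasisymmetric.

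\smallskip
The crux is the metric step in the second implication: turning ``$f^m$ is uniformly quasiconformal on cells'' into genuine metric self-similarity of $(\mfd^n,\dg)$ under $f$, i.e.\ the quasiball property of cells, the comparability of intersecting cells of a common level, and the uniform geometric decay $\diam_{\dg}(X)\asymp\lambda^{-m}$ across all levels. Establishing this requires running the distortion theory of quasiregular maps --- including the behaviour near the branch set --- against the expansion hypothesis in order to bound ratios of $\dg$-diameters both within a level and between consecutive levels; this is the technical heart of the theorem. By contrast, once the quasiregular--branched-quasisymmetric dictionary and the basic metric geometry of visual metrics are in hand, the two reductions above are essentially formal.
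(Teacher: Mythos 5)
Your first direction (identity quasisymmetric $\Rightarrow$ UQR) is essentially the paper's argument: $f$ is a uniform expanding BQS map with respect to the visual metric (Lemma~\ref{l: cellular BQS: UEBQS on visual sphere}), quasisymmetric conjugation transfers this to $\dg$ (Corollary~\ref{c: cellular BQS: QS equiv => flower UBQS}), and the BQS-to-quasiregular theorem (Theorem~\ref{thm: LiP19: BQS=>QR}) gives uniform quasiregularity. Two of your supporting claims should be dropped, though. The asserted two-way dictionary ``$K$-quasiregular $\Leftrightarrow$ branched quasisymmetric with data depending only on $(n,K)$'' is only available, and only needed, in the direction BQS $\Rightarrow$ QR; the converse with uniform data is not established anywhere and is essentially what the hard half of the theorem is about. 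And your parenthetical justification of fact~(a) --- that an expanding Thurston-type map has no periodic branch points, so the local degrees of $f^m$ are bounded uniformly in $m$ --- is both unnecessary for Lemma~\ref{l: cellular BQS: UEBQS on visual sphere} and unavailable in this generality: the paper explicitly notes that the finite-forward-index condition (\ref{e: ffi}) can fail for expanding Thurston-type maps, and boundedness of $\ind(x,f^m)$ is deduced from UQR (Proposition~\ref{p: CPCF: UQR => bd multi}), not from expansion alone.

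The converse direction contains the genuine gap. Your plan is to show that $\dg$ is itself a visual metric (in particular $\diam_{\dg}X\asymp\lambda^{-m}$ uniformly over all $m$-chambers $X$) and then invoke quasisymmetric uniqueness of visual metrics. That intermediate statement is false in general and is not what the paper proves: by Lemma~\ref{l: visual metric: metric properties of cells}, a visual metric makes all $m$-chambers of uniformly comparable diameter $\asymp\Lambda^{-m}$, whereas the ambient Riemannian metric typically squashes the chambers meeting the postcritical locus --- already in the two-dimensional model (a flexible Latt\`es map with the chordal metric) the $m$-tiles at a postcritical point have diameter $\asymp\Lambda^{-2m}$ while generic $m$-tiles have diameter $\asymp\Lambda^{-m}$, and in higher dimensions the strongly automorphic quasiregular map of the Latt\`es structure is merely H\"older, not bi-Lipschitz on cells, so nothing in the UQR hypothesis rules out the same non-uniform decay. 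What is actually needed are only the weaker quasi-visual comparability estimates of Lemma~\ref{l: QS unif: QV approx}, and those are proved there for uniform expanding BQS maps, not for UQR maps; obtaining them (or anything playing their role) from uniform quasiregularity is exactly the step you defer as ``running the distortion theory against the expansion hypothesis,'' i.e.\ you have named the main difficulty rather than resolved it. In the paper this step is not a formal distortion argument: it goes through the finite-forward-index property (Proposition~\ref{p: CPCF: UQR => bd multi}), the modulus-based separation estimate for same-level chambers built on the marked continua of Lemma~\ref{l: cellular neighborhood: continua A and B} (Proposition~\ref{prop: UQR + bd mul (no need for Mkv) => dist/diam>}), the conclusion that every point is conical and hence that $f$ is a chaotic Latt\`es map (Theorem~\ref{t: cellular UQR => Lattes}, via Martin--Mayer), then Latt\`es $\Rightarrow$ CXC (Ha\"issinsky--Pilgrim) $\Rightarrow$ uniform expanding BQS (Proposition~\ref{p: CXC=>UBQS}), and only then does Theorem~\ref{t: cellular BQS: flowerBQS => QS equivalence} yield the quasisymmetry of the identity. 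As proposed, your second implication rests on a false intermediate claim and omits the argument that constitutes the theorem's actual content.
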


Following the work of Bonk \& Kleiner \cite{BK02} on Cannon's conjecture, and the reformulation of Cannon's conjecture as a quasisymmetric uniformization problem of visual metrics \cite{Bo06, Kl06}, the corresponding two-dimensional quasisymmetric uniformization result was established by Ha\"issinsky \& Pilgrim \cite{HP09} and Bonk \& Meyer \cite{BM10, BM17}.

\smallskip

We discuss first in this introduction the background for these statements and the role of uniformly quasiregular maps in these statements. The discussion on Thurston-type maps and the methods in the proofs is postponed until after this discussion.

\medskip
\noindent {\bf No Invariant Line Fields and Rigidity.}
The No Invariant Line Fields (abbreviated as NILF) conjecture anticipates that a rational map on the Riemann sphere $\widehat{\C}$ having a Julia set of positive measure is either a Latt\`es map or carries no invariant line field on its Julia set  (see \cite{Mc94a,Mc94b,McS98}). 
 The NILF conjecture can be formulated as follows: 
 \emph{If a rational map $f\:\widehat{\C}\to\widehat{\C}$ has a Julia set of positive measure and an invariant conformal structure that is non-trivial on its Julia set, then $f$ is a Latt\`es map.}
A higher-dimensional counterpart of this conjecture was introduced by Martin and Mayer in~\cite{MM03}, formulated as the following quasiregular NILF conjecture (\cite[Conjecture~1.4]{MM03}):
\begin{quote}
	\emph{Suppose $f\:\Sp^n\to\Sp^n$, $n \geq 3$, is a non-injective uniformly quasiregular map on the $n$-sphere having a Julia set of positive measure.
    Then $f$ is a Latt\`es map.}
\end{quote}
The analogy between this conjecture and the NILF conjecture lies in the following two points.  
First, quasiregular maps, which admit a Fatou--Julia theory (see e.g.~\cite{Be13,BFN14,BN14,FN11}), provide an analog of the dynamics of rational maps. Second, uniformly quasiregular maps are precisely the quasiregular maps having invariant conformal structures (cf.~\cite{IM96,IM01,Ma10}). Note that due to the Liouville theorem, an invariant conformal structure of a uniformly quasiregular map $f\:\Sp^n\to\Sp^n$ cannot be trivial unless $f$ is trivial, i.e., a M\"obius transformation. 

Recall that a continuous map $f\colon \mfd^n \to \cN^n$ between oriented Riemannian $n$-manifolds is \defn{quasiregular} if $f$ is in the Sobolev space $W^{1,n}_{\loc}(\mfd^n,\cN^n)$ and there exists $K \geq 1$ for which the distortion inequality
\begin{equation}	\label{eq:K}
	\norm{Df}^n  \leq  K J_f \quad \text{a.e.\ }\mfd^n
\end{equation}
holds, where $\norm{Df}$ and $J_f$ are the operator norm and the Jacobian, respectively, of the differential $Df$ of $f$. In this case, we say that $f$ is \defn{$K$-quasiregular}. In particular, a holomorphic map of one complex variable is a $1$-quasiregular map. In this terminology, a map is \defn{quasiconformal} if it is a quasiregular homeomorphism. 
Note that quasiregular maps are branched covers (see~Subsection~\ref{subsct: prelim on BC}) that are orientation-preserving.
We refer to the monographs of Rickman \cite{Ri93}, Reshetnyak \cite{Re89}, and Iwaniec \& Martin \cite{IM01} for detailed expositions on quasiregular theory.
Following Iwaniec \& Martin \cite{IM96} (see also \cite{IM01}), a quasiregular map $f\colon \mfd^n\to\mfd^n$ of an oriented Riemannian $n$-manifold is \defn{uniformly quasiregular} (abbreviated as \defn{UQR}) if there exists $K \geq  1$ for which each iterate $f^i$, $i\geq 1$, of $f$ is $K$-quasiregular.\footnote{Note that the uniform quasiregularity of a map is independent of the choice of the Riemannian metric of $\mfd^n$, since, for Riemannian metrics $g$ and $g'$ on $\mfd^n$, the identity map $\id \colon (\mfd^n,g) \to (\mfd^n,g')$ of a closed Riemannian $n$-manifold is quasiconformal.}
See a survey of Martin \cite{Ma14} for a detailed discussion.

An important subclass of uniformly quasiregular maps are the (quasiregular) Latt\`es maps originating from the work of Mayer~\cite{May97,May98}, which include the well-known Latt\`es maps on $\widehat\C$. In this article, we say that a uniformly quasiregular map $f\colon \mfd^n\to\mfd^n$ is a \defn{(quasiregular) Latt\`es map} if there exist (i)~a discrete subgroup $\Gamma < \mathrm{Isom}_+(\R^n)$, (ii)~a strongly $\Gamma$-automorphic quasiregular map $\varphi\colon \R^n \to \mfd^n$, and (iii)~a conformal affine map $A\colon \R^n \to \R^n$, $x\mapsto \lambda Ux+v$, where $U\in \mathrm{SO}(n)$, $\lambda>0$, and $v\in \R^n$, satisfying $A \Gamma A^{-1} \subseteq\Gamma$, for which the following diagram commutes; see~Definition~\ref{d: Lattes triple} and~\cite{Ka22} for more details.
\begin{equation*}
	\xymatrix{
		\R^n \ar[r]^A \ar[d]_\varphi & \R^n \ar[d]^\varphi \\
		\mfd^n \ar[r]^f & \mfd^n
	}
\end{equation*}

The topological rigidity statement in Theorem~\ref{tx: UQR = Lattes} has a metric antecedent in a rigidity theorem of Ha\"issinsky and Pilgrim \cite[Theorems~4.4.3 and~4.4.4]{HP09}: \emph{coarse expanding conformal systems on a closed Riemannian $n$-manifold $\mfd^n$, $n \ge 3$, coincide with chaotic Latt\`es maps}.
We return to the relationship between Thurston-type maps and coarse expanding conformal systems later in this introduction.

\medskip
\noindent{\bf Uniformization.}
We move now to the topic of our second main theorem, uniformization.
The classical uniformization problem asks whether a given region in $\widehat{\C}$ or a given Riemann surface is conformally equivalent to a model region, such as the unit disk $\D$, or more generally, a Riemann surface. For metric spaces, this uniformization problem takes the following form:

\smallskip

\begin{quote}
	{\sc The Quasisymmetric Uniformization Problem.} \it Given a metric space $(\fX,d)$ homeomorphic to some ``standard" metric space $(\fX_0,d_0)$, under what condition is $(\fX,d)$ quasisymmetrically equivalent to $(\fX_0,d_0)$?
\end{quote}

\smallskip

Recall that a homeomorphism $\varphi \colon (\fX,d_\fX) \to (\fY,d_\fY)$ between metric spaces $(\fX,d_\fX)$ and $(\fY,d_\fY)$ is \emph{$\eta$-quasisymmetric} for a homeomorphism $\eta \colon [0,+\infty) \to [0,+\infty)$ if, for $x,y,z\in \fX$ with $x\ne z$, the inequality
\begin{equation*}
d_\fY(\varphi(x),\varphi(y))  \leq  \eta\biggl( \frac{d_\fX(x,y)}{d_\fX(x,z)} \biggr) d_\fY(\varphi(x),\varphi(z))
\end{equation*}
holds. Metric spaces $(\fX,d_\fX)$ and $(\fY,d_\fY)$ are \emph{quasisymmetrically equivalent} if there exists an $\eta$-quasisymmetric homeomorphism $(\fX,d_\fX) \to (\fY,d_\fY)$ for some homeomorphism $\eta\colon [0,+\infty) \to [0,+\infty)$.
This metric notion of quasisymmetric maps was coined by Tukia and V\"ais\"al\"a in \cite{TV80}.

Heuristically, the quasisymmetric uniformization problem asks for a list of quantitative conditions of the metric space $(\fX,d)$ that allow a promotion of a homeomorphism $\fX\to \fX_0$ to a homeomorphism $(\fX,d)\to (\fX_0,d_0)$ of metric spaces respecting ratios of distances.

This uniformization problem was settled for metric $1$-spheres, i.e., metric circles, in a seminal work of Tukia and V\"ais\"al\"a \cite{TV80}: \emph{A metric circle $\bigl(\cS^1,d\bigr)$ is quasisymmetrically equivalent to the standard circle $\Sp^1$ if and only if $\bigl( \cS^1,d \bigr)$ is doubling and of bounded turning}.

The corresponding result for metric $2$-spheres is due to Bonk and Kleiner \cite{BK02}. A special case of their result states that \emph{a linearly locally contractible and Ahlfors regular $2$-sphere is quasisymmetrically equivalent to the standard $2$-sphere}; see \cite{BK02} for the full characterization, and Wildrick \cite{Wi08,Wi10} for uniformization results for metric surfaces. Quasisymmetric and quasiconformal uniformization of metric spheres and surfaces has been under active research; see e.g.~
Rajala \cite{Ra17}, Lytchak \& Wenger \cite{LW20}, Ikonen \cite{Ik22}, Ntalampekos \& Romney \cite{NR23}, and Meier \cite{Me24} for uniformization results of this type. For the terminology, see e.g.~Heinonen~\cite{He01} or Bonk \& Kleiner~\cite{BK02}.

For metric spheres of dimension at least three, no such characterizations are known. In particular, by a result of Semmes \cite{Se96} in dimension $n=3$ and by a result of Heinonen and Wu \cite{HW10} in dimensions $n\ge 4$, there are linearly locally contractible Ahlfors regular metric $n$-spheres $(\cS^n,d)$ that are not quasisymmetrically equivalent to the standard $n$-sphere $\mathbb{S}^n$; see also \cite{PW14} for a discussion. In particular, we are not aware of any quantitative \emph{metric} conditions that would yield quasisymmetric uniformization of metric $n$-spheres.

The uniformization theorem for visual metrics (Theorem \ref{tx: QS uniformization of visual metric}) shows that the quasisymmetric uniformization problem is solvable for visual metrics of expanding Thurston-type maps. 
In dimension $n=2$, the quasisymmetric uniformization was established by Bonk \& Meyer~\cite[Theorem~18.1]{BM17} and Ha\"issinsky \& Pilgrim \cite[Theorem~4.2.11]{HP09}:	\emph{Let $f \colon \cS^2 \to \cS^2$ be an expanding Thurston map on a topological $2$-sphere and let $\varrho$ be a visual metric for $f$ on $\cS^2$. Then $\bigl(\cS^2, \varrho\bigr)$ is quasisymmetrically equivalent to the Riemann sphere $\widehat\C$ equipped with the spherical metric if and only if $f$ is topologically conjugate to a rational map $\widehat \C \to \widehat \C$.}
Recall that each rational map is uniformly quasiregular, and by Sullivan's UQR characterization theorem \cite[Theorem~9]{Su83} \emph{each uniformly quasiregular map on $\widehat\C$ is quasiconformally conjugate to a rational map}. 

The results of Bonk \& Meyer and Ha\"issinsky \& Pilgrim may be viewed as a dynamical counterpart of Cannon's conjecture \cite{Ca94} in geometric group theory. We refer to Kleiner~\cite{Kl06} and Bonk~\cite{Bo06} (see also Bonk \& Meyer~\cite{BM17}) for a discussion on Cannon's conjecture in this context.

The construction of visual metrics for Thurston-type maps is analogous to the construction of visual metrics for Thurston maps \cite{BM10,BM17}, visual metrics of coarse expanding conformal systems \cite{HP09}, and further to visual metrics of Gromov hyperbolic groups \cite{GD90}.
We refer to Subsection~\ref{subsct: visual metrics} and Appendix~\ref{Ap: visual metric} for a construction.

\medskip

The quasisymmetric uniformization theorem has two immediate consequences on the structure of the underlying manifold. When combined with a result of Kangasniemi \cite{Ka21}, we have the following corollary on the structure of the de Rham cohomology $H^*_{\mathrm{dR}}(\mfd^n)$ of $\mfd^n$. \emph{A fortiori}, a result of Manin and Prywes \cite{ManPr24} yields that $\mfd^n$ has a virtually abelian fundamental group if the identity map $\id\:(\mfd^n,\varrho)\to(\mfd^n,\dg)$ is quasisymmetric. We combine these consequences as a corollary.

\begin{cor*}
    Let $f\colon \mfd^n \to \mfd^n$, $n \geq 3$, be an expanding Thurston-type map of an {\orclcnR} $n$-manifold $(\mfd^n,g)$, and let $\rho$ be a visual metric for $f$ on $\mfd^n$. If the identity map $\id\:(\mfd^n,\varrho)\to(\mfd^n,\dg)$ is quasisymmetric, then there exists an embedding of algebras $H^*_{\mathrm{dR}}(\mfd^n) \to \wedge^* \R^n$ and the fundamental group $\pi_1(M)$ is virtually abelian.
\end{cor*}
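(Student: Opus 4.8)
The plan is to assemble the corollary from the two main theorems together with two external inputs: the theorem of Kangasniemi \cite{Ka21} on the de Rham cohomology of closed manifolds carrying a non-injective uniformly quasiregular self-map, and the theorem of Manin and Prywes \cite{ManPr24} on fundamental groups of quasiregularly elliptic manifolds. First I would apply Theorem~\ref{tx: QS uniformization of visual metric}: the assumption that $\id\colon(\mfd^n,\varrho)\to(\mfd^n,\dg)$ is quasisymmetric forces $f$ to be uniformly quasiregular. Since an expanding Thurston-type map is a branched covering of topological degree at least two, the map $f$ is non-injective; thus $\mfd^n$ is a closed, connected, oriented Riemannian $n$-manifold admitting a non-injective uniformly quasiregular self-map, and \cite{Ka21} applies and produces a monomorphism of graded algebras $H^*_{\mathrm{dR}}(\mfd^n)\hookrightarrow\wedge^*\R^n$. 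This gives the first assertion.

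For the second assertion, I would first strengthen the conclusion that $f$ is uniformly quasiregular using Theorem~\ref{tx: UQR = Lattes}: since $f$ is an expanding Thurston-type map, it is in fact a (quasiregular) Latt\`es map. By Definition~\ref{d: Lattes triple} there then exist a discrete subgroup $\Gamma<\Isom_+(\R^n)$ and a strongly $\Gamma$-automorphic quasiregular map $\varphi\colon\R^n\to\mfd^n$; this $\varphi$ is surjective, hence a non-constant quasiregular map from $\R^n$, so $\mfd^n$ is a closed quasiregularly elliptic manifold. The theorem of Manin and Prywes \cite{ManPr24} then shows that $\pi_1(\mfd^n)$ is virtually abelian, completing the proof. (One can see both conclusions by hand in the special case when $\Gamma$ acts freely, so that $\varphi$ is an unbranched covering: then $\pi_1(\mfd^n)\cong\Gamma$ is a cocompact discrete subgroup of $\Isom_+(\R^n)$ and hence virtually $\Z^n$ by Bieberbach's theorem, while $H^*_{\mathrm{dR}}(\mfd^n)$ embeds into the cohomology of a finite torus cover; the point of \cite{Ka21} and \cite{ManPr24} is to remove the freeness hypothesis and control the branch locus of $\varphi$.)

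Since each step is a direct invocation of an available theorem, there is no genuine obstacle here, and the argument is essentially bookkeeping. The only points that demand attention are the verification of the hypotheses of the two external results: that an expanding Thurston-type map is non-injective, so that \cite{Ka21} applies, and that the strongly automorphic map furnished by a Latt\`es presentation is a non-constant quasiregular map on $\R^n$ witnessing quasiregular ellipticity, so that \cite{ManPr24} applies. Both are immediate from the relevant definitions, the former from the definition of a Thurston-type map and the latter from the definition of strong automorphy.
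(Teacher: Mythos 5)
Your proposal is correct and follows essentially the same route the paper intends: Theorem~\ref{tx: QS uniformization of visual metric} upgrades the quasisymmetry hypothesis to uniform quasiregularity of the (necessarily non-injective, since $\deg f\geq 2$) map $f$, after which the algebra embedding is exactly Kangasniemi's theorem and the fundamental group statement is Manin--Prywes applied to a quasiregularly elliptic manifold. The only cosmetic difference is that you certify quasiregular ellipticity via Theorem~\ref{tx: UQR = Lattes} and the strongly automorphic map in the Latt\`es triple, whereas the paper's accompanying remark invokes the general fact that a closed manifold with a non-injective UQR self-map is quasiregularly elliptic; both verifications are valid.
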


\begin{remark}
We note in passing that, in all dimensions $n\ge 2$, a closed manifold $\mfd^n$ admitting a non-injective uniformly quasiregular map is quasiregularly elliptic, that is, a manifold admitting a non-constant quasiregular map $\R^n \to \mfd^n$ from the Euclidean space $\R^n$. Whereas closed quasiregularly elliptic manifolds are classified up to dimension $n \le 4$ (see \cite{HeP25, ManPr24}), manifolds admitting non-injective UQR dynamics are classified only in dimensions $n=2,3$. In particular, by a result of Kangasniemi \cite{Ka21}, a quasiregularly elliptic $4$-manifold $\Sp^2\times \Sp^2 \# \Sp^2 \times \Sp^2$ does not admit a non-injective uniformly quasiregular map.
\end{remark}

\medskip

\noindent{\bf Thurston-type maps.}
Now we proceed to the definition of Thurston-type maps. Recall that a postcritically-finite (abbreviated as~PCF) branched cover $f\colon \cS^2 \to \cS^2$ having topological degree at least $2$ is called a \emph{Thurston map}. By postcritically-finite we mean that the postcritical set $P_f = \bigcup_{m\ge 1} f^m(B_f)$ is a finite set, where $B_f$ is the critical set of $f$. Recall that, by a simple Euler characteristic computation, branched covers on $2$-torus are covering maps and branched covers on higher-genus surfaces are homeomorphisms.
A Thurston map $f\colon \cS^2\to \cS^2$ is \emph{expanding} if there exists a finite cover $\cU$ of $\cS^2$ for which the mesh of the cover $(f^m)^*\cU$ tends to zero as $m\to \infty$, where $(f^m)^*\cU$ is the cover of $\cS^2$ given by connected components of the sets $f^{-m}(U)$ for $U\in \cU$, and \textit{mesh} of a finite cover is the maximal diameter of the sets in the cover. Note that Latt\`es maps on $\widehat\C$ are rational expanding Thurston maps with parabolic orbifolds. A rational Thurston map is expanding if and only if the Julia set is the whole Riemann sphere. In our higher-dimensional case, expansion also guarantees a full Julia set and one may anticipate the converse result, but this is beyond the scope of this article. We refer the reader to the monograph of Bonk and Meyer \cite{BM17} for a comprehensive exposition on expanding Thurston maps.

More generally, we follow \cite{HR02} and call
a continuous map $f\colon \fX\to \fY$ between topological spaces a \defn{(generalized) branched cover} if $f$ is discrete and open. Recall that $f$ is \defn{discrete} if the fibers $f^{-1}(y)$, $y\in \fY$, are discrete sets, and $f$ is \defn{open} if the images of open sets under $f$ are open sets. The classes of maps called \emph{branched covering maps} in \cite{BM17} and \emph{finite branched coverings} in \cite{HP09} are indeed discrete and open. In the context of orientation-preserving maps on {\orclcntop} manifolds or oriented surfaces, our definition of a branched cover coincides with these notions from \cite{HP09} and \cite{BM17} (cf.~\cite{LuP20}).

We denote the \emph{branch set} (or \emph{critical set}) and the \emph{post-branch set} (or \emph{postcritical set}) of $f$, respectively, by
\begin{equation*}
	B_f \= \{ x\in \fX : f \text{ is not a local homeomorphism at }x\}\quad\text{ and }\quad P_f \= \bigcup_{m \geq 1} f^m(B_f).
\end{equation*}

Readers familiar with higher-dimensional branched covers immediately observe that the PCF condition---in the definition of Thurston maps---does not lead to a fruitful theory in higher dimensions, since in dimensions $n\ge 3$ branched covers between $n$-manifolds do not have isolated critical points. In this article, we generalize the PCF condition using cell complex structures.

Here we give some terminology that will be discussed in detail later (see Section~\ref{sct: prelim}). A map $f\colon \fX'\to \fX$ is \emph{$(\cD',\cD)$-cellular} for cell decompositions (see Definition~\ref{d: cell decomposition}) $\cD'$ and $\cD$ of $\fX'$ and $\fX$, respectively, if, for each cell $c\in \cD'$, $f(c)\in\cD$ and the restriction $f|_c \colon c\to f(c)$ is a homeomorphism. In this case, $f$ defines an induced map by $f_*\:\cD'\to\cD,\,c\mapsto f(c)$.

We are now ready to introduce the following notion of cellularly-postcritical-finiteness as a generalization of postcritical-finiteness.

\begin{definition}[Cellularly-postcritically-finite]\label{def:CPCF}
	A branched cover $f\colon \mfd^n\to\mfd^n$ on an {\orclcntop} $n$-manifold is \defn{cellularly-postcritically-finite} (abbreviated as~\defn{CPCF}) if there exists a cell decomposition $\cP$ of the postcritical set $P_f$ having the following properties:
	\begin{enumerate}[label=(\alph*),font=\rm]
		\smallskip
		\item There exists a refinement $\cP_0$ of $\cP$ for which the restriction $f|_{P_f} \colon P_f \to P_f$ is a $(\cP_0,\cP)$-cellular map, and a cell decomposition $\cD_0$ of $\mfd^n$ for which $\cP_0 \subseteq \cD_0$. \label{item:CPCF-i}
		\smallskip
		\item There exists a cell decomposition $\cB$ of $B_f$ for which the restriction $f|_{B_f} \colon B_f \to P_f$ is a $(\cB,\cP)$-cellular map and, for each $c\in \cB$, the local index $\ind(\cdot,f)$ is constant on its cell-interior $\stdCellint{c}$, that is, the function $\ind(\cdot,f)|_{\stdCellint{c}} \colon \stdCellint{c} \to \N$, $x \mapsto \ind(x,f)$, is constant. \label{item:CPCF-ii}
	\end{enumerate}
	We call the quadruple $(\cB,\cP;\cP_0,\cD_0)$ \defn{(CPCF) data of $f$}.
\end{definition}

The role of Condition~\ref{item:CPCF-i} is two-fold. First, it postulates that the postcritical set $P_f$ is cellularly tame in the ambient space $\mfd^n$, i.e.,~that $P_f$ admits a cell decomposition that extends to a cell decomposition of the ambient space $\mfd^n$. This is a non-trivial topological requirement on the postcritical set $P_f$, which is trivially satisfied in dimension two. The second part of Condition~\ref{item:CPCF-i} is to postulate
the compatibility of the two cell decompositions on $P_f$, that is, the restriction $f|_{P_f} \colon P_f \to P_f$ is a cellular Markov map (see Definition~\ref{d: cellular Markov}) with respect to the given structures.
Condition~\ref{item:CPCF-ii}
requires that the branching of the map is encoded by a cellular structure of the branch set.

The definition of Thurston-type maps is analogous to the definition of Thurston maps.
\begin{definition}[Thurston-type maps] \label{def: Thurston type maps}
	A continuous map $f\colon \mfd^n\to\mfd^n$ on an {\orclcntop} $n$-manifold is of \defn{Thurston-type} if $f$ is a CPCF 
	branched cover of topological degree at least $2$.
\end{definition}

Clearly two-dimensional PCF branched covers are CPCF. Hence Thurston maps are Thurston-type maps.
It is also easy to observe that the idea of \emph{two-tile subdivision rules} on $2$-spheres (see \cite[Chapter~12]{BM17}) can be extended to higher-dimensional cases and yield
a concrete subclass of Thurston-type maps in all dimensions $n\ge 3$.
Similarly, it can be verified that orthotopic Latt\`es maps are Thurston-type maps. We call a Latt\`es map $f\colon \mfd^n \to \mfd^n$ \emph{orthotopic} if a fundamental domain of the group $\Gamma$ can be realized as an orthotope $\prod_{i=1}^n [0,a_i] \subseteq \R^n$ and $f$ is semi-conjugated to a scaling $\R^n \to \R^n$ by a strongly $\Gamma$-automorphic quasiregular map $h\colon \R^n \to \mfd^n$ (cf.\ Definition~\ref{d: Lattes triple}). We have decided not to discuss orthotopic Latt\'es maps further in this paper and instead return to the topic elsewhere. See \cite{LZ25b} for a detailed discussion; see also Astola, Kangaslampi \& Peltonen \cite{AKP10} for concrete examples of orthotopic Latt\`es maps.

\begin{remark}
Although the classes of uniformly quasiregular maps and Thurston-type maps intersect, neither of these classes is contained in the other---even after conjugation. Indeed, for $n \geq 3$, 
the branch set $B_f$ of a quasiregular map $\Sp^n \to \Sp^n$ need not have a cellular structure by a construction of Heinonen and Rickman \cite{HR98}.
Further, by a result of Martin and Peltonen \cite{MarPe10}, the branch set of a quasiregular map on $\Sp^n$ is realizable as the branch set of a uniformly quasiregular map on $\Sp^n$. This shows that there are UQR maps that are not Thurston-type maps. Similarly, not all expanding Thurston-type maps are UQR. For instance, one may construct examples where the finite-forward-index condition (see (\ref{e: ffi})) fails.
\end{remark}

\medskip
\noindent{\bf Thurston-type maps as {\itCel} branched covers.}
As a key property of Thurston maps, a Jordan curve passing through the postcritical points of a Thurston map on $\cS^2$ induces a natural backward iterated sequence of tile structures of the sphere; see \cite[Chapter~5]{BM17}. In higher dimensions, the strong version of the Jordan curve theorem is not at our disposal. We may use the CPCF data of a Thurston-type map $\mfd^n\to \mfd^n$ to obtain an analogous backward iterated sequence of cell decompositions over the decomposition $\cD_0$ of the manifold $\mfd^n$. 

\begin{thmx}[Iterated cellular decompositions] \label{tx: cellular sequence}
	Let $f\colon \mfd^n \to \mfd^n$, $n \geq 3$, be a Thurston-type map of an {\orclcntop} $n$-manifold $\mfd^n$. Then there exists a sequence $\{\cD_m\}_{m\in\N_0}$ of cell decompositions of $\mfd^n$ such that $f$ is $(\cD_{m+1},\cD_m)$-cellular for each $m\in\N_0$.
\end{thmx}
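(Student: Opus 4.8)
The plan is to construct the sequence $\{\cD_m\}_{m \in \N_0}$ inductively by pulling back along $f$, starting from the given decomposition $\cD_0$ supplied by the CPCF data $(\cB,\cP;\cP_0,\cD_0)$. The key point is that a Thurston-type map, being a branched cover (discrete and open), restricts to a covering map over the complement of its critical/postcritical data, so pullbacks of cells away from $P_f$ are controlled by standard covering-space theory; the delicate part is handling cells that meet $P_f$, and this is precisely what Conditions \ref{item:CPCF-i} and \ref{item:CPCF-ii} of Definition \ref{def:CPCF} are designed to make possible. Concretely, I would first establish a pullback construction: given a cell decomposition $\cD$ of $\mfd^n$ such that $f$ is "compatible" with $\cD$ in the appropriate sense (the decomposition restricts to a cellular Markov structure on $P_f$ refining $\cP$, and $B_f$ is a subcomplex with constant local index on cell-interiors), one produces a refinement $f^*\cD$ whose cells are the connected components of $f^{-1}(c)$ for $c \in \cD$, together with the induced cellular structure on $B_f$ and its image. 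One then checks that $f\colon (\mfd^n, f^*\cD) \to (\mfd^n,\cD)$ is $(f^*\cD,\cD)$-cellular and that $f^*\cD$ again satisfies the same compatibility hypotheses, so the construction can be iterated. Setting $\cD_{m+1} \= f^*\cD_m$ then yields the desired sequence.

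The heart of the argument is verifying that $f^{-1}(c)$ decomposes into cells. For a cell $c \in \cD$ whose cell-interior $\stdCellint{c}$ is disjoint from $P_f$, the restriction of $f$ over $\stdCellint{c}$ is a covering map onto $\stdCellint{c}$ (since $f$ is an open, discrete, proper map and hence a covering over the complement of $P_f$ by the standard theory of branched covers — see Subsection~\ref{subsct: prelim on BC}), so $f^{-1}(\stdCellint{c})$ is a disjoint union of open cells each mapped homeomorphically onto $\stdCellint{c}$; the closures then inherit a cell structure by induction on the dimension of $c$, using that lower-dimensional skeleta have already been handled. For a cell $c$ meeting $P_f$, one uses the cellular Markov structure: by Condition \ref{item:CPCF-i}, $f|_{P_f}\colon P_f \to P_f$ is $(\cP_0,\cP)$-cellular with $\cP_0 \subseteq \cD_0$, so the preimage of $c \cap P_f$ already has a prescribed cellular subdivision coming from $\cP_0$, and one extends this across $\mfd^n$ using the collar/regular-neighborhood structure of $P_f$ in $\mfd^n$ (available because $\cP_0$ extends to the decomposition $\cD_0$ of the ambient manifold). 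Condition \ref{item:CPCF-ii} ensures the local index is constant on each cell of $B_f$, which is what makes the local model of $f$ near a branch cell a genuine "folded-cellular" map, so that $f^{-1}(c)$ near $B_f$ is a union of cells glued along the branch subcomplex.

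I would organize the proof so that the technical statement is extracted as a lemma: \emph{if $\cD$ is a cell decomposition of $\mfd^n$ with $\cP \subseteq \cD$ refined by $\cP_0$ as in the CPCF data and with $\cB$ a subcomplex on which $f$ has constant cellular local index, then there is a refinement $f^*\cD \succ \cD$ making $f$ cellular, and $f^*\cD$ enjoys the same properties with respect to the iterated data $(f^*\cB, \cP; f^*\cP_0, f^*\cD_0)$ derived from the CPCF data.} The theorem then follows by a one-line induction. The main obstacle I expect is the second case above — organizing the pullback of cells that straddle $P_f$ so that the pieces fit together into a genuine cell decomposition (CW or regular cell complex) rather than merely a stratification, and checking that the self-similarity of the construction is preserved under iteration, i.e., that the induced data is again valid CPCF-type data; this is where the cellular-tameness half of Condition \ref{item:CPCF-i} is essential, since without it the preimages of cells could fail to be tamely embedded and the inductive setup would break down. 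All other steps — the covering-space argument away from $P_f$, the dimension induction over skeleta, and the verification of the cellular property $f|_c\colon c \to f(c)$ being a homeomorphism — are routine once the local models are in place.
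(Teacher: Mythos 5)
Your high-level scheme---pull the decomposition back one step at a time, extract a lemma saying that a single pullback yields a $(\cD_1,\cD_0)$-cellular structure whose data is again of CPCF type, and then iterate---is exactly the structure of the paper's argument (Theorem~\ref{t: CPCF=>encoding sequence} is obtained by iterating Proposition~\ref{p: CPCF: CPCF=>cellular CPCF}). The gap is in the one step you yourself flag as ``the main obstacle'': assembling the preimages into a genuine cell decomposition near $B_f$ and $P_f$. Two concrete problems. First, for a cell $c$ contained in $P_f$ you conflate $f^{-1}(c)$ with $(f|_{P_f})^{-1}(c)$: Condition~\ref{item:CPCF-i} prescribes a subdivision of the preimage of $c$ only \emph{inside} $P_f$, whereas the full preimage $f^{-1}(c)$ in general contains points outside $P_f$, including points of $B_f$, and it is precisely there that your covering-space argument is unavailable. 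Second, the proposed repair---extending across $\mfd^n$ ``using the collar/regular-neighborhood structure of $P_f$''---is not justified by the hypotheses: CPCF only asserts that $\cP_0$ extends to \emph{some} cell decomposition $\cD_0$ of the topological manifold $\mfd^n$; no collar, regular neighborhood, or local flatness of the complex $P_f$ is assumed or available in this generality, and the paper never invokes such structure. So the argument does not close at exactly the point where Conditions~\ref{item:CPCF-i} and~\ref{item:CPCF-ii} have to do actual work; naming them as ``what makes it possible'' is not the same as using them.

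For comparison, the paper sidesteps preimages of low-dimensional cells entirely. Since $f|_{B_f}$ is $(\cB,\cP)$-cellular and $\dim B_f\le n-2$ (Chernavskii--V\"ais\"al\"a), both $P_f$ and $f(B_f)$ lie in the $(n-2)$-skeleton of $\cD_0$; hence for every chamber $Y\in\cD_0^{[n]}$ the interior $\stdCellint{Y}$ avoids $f(B_f)$, each component $U'$ of $f^{-1}(\stdCellint{Y})$ maps homeomorphically onto $\stdCellint{Y}$ (Lemma~\ref{l: constructing covering map U'-> U}), and this branch of the inverse extends to a homeomorphism $Y\to\cls{U'}$ (via Lemma~\ref{l: BM: Lemma 5.15}). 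One then defines $\cD_1$ as the union, over all such closed pieces $X=\cls{U'}$, of the pullbacks $(f|_X)^*\bigl(\cD_0|_{f(X)}\bigr)$, so every candidate cell already lies inside a closed chamber-preimage. The genuine content is a uniqueness claim: two candidate cells with intersecting cell-interiors coincide. Its proof is where the CPCF data enters: a failure produces a branch point $x\in B_f$ with $f(x)\in P_f$, and the constancy of $\ind(\cdot,f)$ on cell-interiors of $\cB$ (Condition~\ref{item:CPCF-ii}), via Lemma~\ref{l: constant deg on int(c)=>neighborhood not map to f(c)}, gives a neighborhood $U_x$ with $U_x\cap f^{-1}\bigl(f(\stdCellint{\sigma})\bigr)=\stdCellint{\sigma}$ for the relevant $\sigma\in\cB$, which together with the refinement relation between $\cP_0$ and $\cP$ (Condition~\ref{item:CPCF-i}) yields a contradiction. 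Finally, the iterability you want is verified separately: $\cB_1$ comes from Proposition~\ref{p: B_f: is a sub complex}, and $\cP_1$ from the cellular-Markov pullback of $(\cP_0,\cP)$ (Lemma~\ref{l: cellular Markov: pullbcak of ess also ess}). If you replace your ``collar'' step by this extension-of-inverse-branches construction and the uniqueness claim, your outline becomes the paper's proof.
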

We call a sequence $\{\cD_m\}_{m\in\N_0}$ as in Theorem \ref{tx: cellular sequence} a \defn{\celSeq} of $f$, and a map $\mfd^n \to \mfd^n$ having a {\celSeq} an \emph{{\itCel} map}.
For Thurston maps, the aforementioned sequence of cell decompositions associated to a Jordan curve containing the postcritical points is a {\celSeq} in this terminology.

Similar to Thurston maps, a Thurston-type map need not be cellular Markov, i.e., cell decomposition $\cD_1$ need not refine cell decomposition $\cD_0$.
However, the converse holds: A branched cover $\mfd^n \to \mfd^n$ having the cellular Markov property is a Thurston-type map; see Proposition~\ref{p: Mkv => CPCF}.

Having {\celSeq}s at our disposal, we may characterize the expansion of a Thurston-type map via backward contraction along a {\celSeq}. More precisely, an {\itCel} branched cover $f\colon \mfd^n \to \mfd^n$ is expanding if and only if $f$ has a {\celSeq} $\{ \cD_m \}_{m\in \N}$ for which the meshes of cell decompositions $\cD_m$ tend to zero as $m\to +\infty$; see Subsection~\ref{subsct: cellular Expansion}.

\medskip
\noindent{\bf Metric theory of expanding Thurston-type maps.}
As in \cite{BM17}, we say that a metric $\varrho$ on $\mfd^n$ is a \emph{visual metric of $f$} if there exist a {\celSeq} $\{\cD_m\}_{m\in \N_0}$ of $f$ and constants $C>0$ and $\Lambda>1$ having the property that, for all $x,\,y\in \mfd^n$,
\begin{equation*}
\frac{1}{C} \Lambda^{-m(x,y)} \le \varrho(x,y) \le C \Lambda^{-m(x,y)},
\end{equation*}
where $m(\cdot, \cdot)$ is the separation level of $x$ and $x'$ with respect to $\{\cD_m\}_{m\in \N_0}$; see \eqref{e: the constant m(x,y)} for the definition.

It is almost immediate from the definition of the visual metric that an expanding {\itCel} branched cover $f\colon \mfd^n \to \mfd^n$ has good metric properties with respect to its visual metric. Indeed, $f\colon (\mfd^n,\varrho) \to (\mfd^n,\varrho)$ is a \emph{uniform expanding branched quasisymmetric map}.

Recall that a map $f\colon \fX \to \fY$ between metric spaces $(\fX,d_\fX)$ and $(\fY,d_\fY)$ is an \defn{$\eta$-branched quasisymmetric} (abbreviated as~\defn{$\eta$-BQS}) \defn{map} for a homeomorphism $\eta \colon [0,+\infty) \to [0,+\infty)$ if, for all pairs of intersecting \defn{continua}, i.e.,~non-singleton connected compact subsets $E$ and $F$ in $\fX$,
\begin{equation*}
	\diam_\fY(f(E))  \leq  \eta\biggl( \frac{\diam_\fX E}{\diam_\fX F}\biggr) \diam_\fY(f(F)).
\end{equation*}
The name---introduced by Guo and Williams in \cite{GW16}---for these maps stems from the observation that homeomorphic branched quasisymmetric maps between bounded turning spaces coincide with quasisymmetries.

In this article, we introduce a dynamical version of branched quasisymmetric maps called uniform expanding branched quasisymmetric maps.

\begin{definition}[Uniform expanding BQS map]\label{d: UEBQS}
	Let $\fX$ be a compact locally-connected metric space, and $f\:\fX\to \fX$ be a branched cover. We call $f$ a \defn{uniform expanding branched quasisymmetric} (abbreviated as~\defn{uniform expanding BQS}) \defn{map} if there exists a finite cover $\cU$ of $\fX$ by connected open subsets of $\fX$ and a homeomorphism $\eta\:[0,+\infty)\to[0,+\infty)$ such that the following conditions are satisfied:
	\begin{enumerate}[label=(\roman*),font=\rm]
		\smallskip
		\item (Expansion.) $\max\{\diam U: U\in(f^m)^*\cU\}\to 0$ as $m\to+\infty$.
		\smallskip
		\item (Uniformity.) For each $m\in\N$ and each $U\in (f^m)^*\cU$, the map $f^m|_{U}\:U\to f^m(U)$ is $\eta$-BQS.
	\end{enumerate}
	Here $(f^m)^*\cU\=\{U\subseteq\mfd^n:U\text{ is a connected component of }f^{-m}(V)\text{ for some }V\in\cU\}$ is the pullback of $\cU$. We call $(\cU,\eta)$ \defn{(uniform expanding BQS) data} of $f$.
\end{definition}

Notwithstanding the terms ``uniform'' and ``expanding'', uniform expanding BQS maps, being branched covers, are not necessarily \emph{uniformly expanding} (i.e., the distance between each pair of sufficiently close points is stretched by at least a uniform factor $\lambda>1$ under an iteration). Having the notion of uniform expanding branched quasisymmetries at our disposal, we may view 
Theorems~\ref{tx: UQR = Lattes} and \ref{tx: QS uniformization of visual metric}
as instances in the following list of equivalent conditions for expanding {\itCel} branched covers.

\begin{thmx}
\label{tx: QS-UEBQS-Lattes-UQR}
	Let $f\colon \mfd^n\to\mfd^n$, $n\geq 3$, be an orientation-preserving expanding {\itCel} branched cover
	on an {\orclcnR} $n$-manifold $\mfd^n$, and $\varrho$ be a visual metric for $f$. Then the following statements are equivalent:
	\begin{enumerate}[label=(\roman*),font=\rm]
		\smallskip
		\item The identity map $\id \colon (\mfd^n,\varrho) \to (\mfd^n,\dg)$ is a quasisymmetry. \label{item:UQR-1}
		\smallskip
		\item $f \colon (\mfd^n,\dg) \to (\mfd^n,\dg)$ is a uniform expanding BQS map. \label{item:UQR-2}
		\smallskip
		\item $f\colon \mfd^n\to\mfd^n$ is UQR. \label{item:UQR-3}
		\smallskip
		\item $f \colon \mfd^n\to\mfd^n$ is a (chaotic) Latt\`es map.\label{item:UQR-4}
		\smallskip
		\item $f \colon (\mfd^n,\dg) \to (\mfd^n,\dg)$ is metric CXC. \label{item:UQR-5}
	\end{enumerate}
\end{thmx}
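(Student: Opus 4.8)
The plan is to establish the cycle of implications
$\ref{item:UQR-1}\Rightarrow\ref{item:UQR-2}\Rightarrow\ref{item:UQR-5}\Rightarrow\ref{item:UQR-3}\Rightarrow\ref{item:UQR-4}\Rightarrow\ref{item:UQR-1}$,
with each arrow isolating one of the structural features already developed in the paper. For $\ref{item:UQR-1}\Rightarrow\ref{item:UQR-2}$, recall that with respect to the visual metric $\varrho$ the map $f$ is \emph{always} a uniform expanding BQS map (this is noted immediately before Definition~\ref{d: UEBQS} and follows from the defining inequalities of $\varrho$ together with a {\celSeq} of $f$). Since the identity $\id\colon(\mfd^n,\varrho)\to(\mfd^n,\dg)$ is assumed quasisymmetric, and branched quasisymmetry is preserved under pre- and post-composition with quasisymmetries (here one uses that intersecting continua map to intersecting continua and that diameters transform controllably under a quasisymmetry — the standard argument that a quasisymmetry is a quasi-isometry on the level of diameters of connected sets), the map $f\colon(\mfd^n,\dg)\to(\mfd^n,\dg)$ inherits the uniform expanding BQS property from the pullback cover $\cU$ transported through $\id$. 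The expansion clause (i) transfers because $\id$ is a homeomorphism between compact metric spaces and hence the two metrics induce comparable notions of ``mesh tending to zero'' along $(f^m)^*\cU$.

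The implications $\ref{item:UQR-2}\Leftrightarrow\ref{item:UQR-5}$ and $\ref{item:UQR-2}\Rightarrow\ref{item:UQR-3}$ are where the analytic content enters. For metric CXC (coarse expanding conformal) I would invoke the machinery of Ha\"issinsky \& Pilgrim \cite{HP09}: a uniform expanding BQS branched cover on a compact, locally connected, doubling metric space with the relevant linear local connectivity satisfies the round-annulus/roundness distortion axioms defining metric CXC, and conversely the CXC axioms give precisely the diameter-distortion control on intersecting continua required for the BQS estimate. The passage $\ref{item:UQR-5}\Rightarrow\ref{item:UQR-3}$ (CXC $\Rightarrow$ UQR) is then exactly the content of Ha\"issinsky--Pilgrim's rigidity theorem quoted in the introduction — on a closed Riemannian $n$-manifold, $n\ge 3$, a metric CXC system is quasiconformally conjugate to (in fact equals, after the conjugacy) a chaotic Latt\`es map, which is UQR; alternatively, one builds an $f$-invariant measurable conformal structure directly from the BQS distortion bounds by averaging pullbacks of the standard structure over the forward orbit, using that the distortion of $f^m$ is uniformly bounded (this is the Iwaniec--Martin construction, cf.\ \cite{IM96,IM01}). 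The step $\ref{item:UQR-3}\Rightarrow\ref{item:UQR-4}$ is again the Ha\"issinsky--Pilgrim classification (or rather its UQR-dynamics analog): an expanding Thurston-type map that is UQR has parabolic orbifold type and develops via a strongly automorphic map from $\R^n$, hence is a quasiregular Latt\`es map in the sense of Definition~\ref{d: Lattes triple}; one must check that expansion forces the chaotic (Julia set $=\mfd^n$) alternative and that the linearization $A$ is a conformal scaling.

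Finally, $\ref{item:UQR-4}\Rightarrow\ref{item:UQR-1}$: for a chaotic Latt\`es map the visual metric $\varrho$ is bi-Lipschitz equivalent to the pushforward under $\varphi$ of the Euclidean metric on $\R^n$ (the {\celSeq} can be taken as the $\varphi$-image of a sequence of dyadic-type decompositions of $\R^n$ adapted to $A$, and the conformal-affine scaling $A$ makes the separation level $m(x,y)$ comparable to $-\log_\Lambda$ of the Euclidean distance), and $\varphi$ is a local quasiconformal — hence locally quasisymmetric — map onto $(\mfd^n,\dg)$; patching the local quasisymmetries over the compact manifold gives that $\id\colon(\mfd^n,\varrho)\to(\mfd^n,\dg)$ is globally quasisymmetric. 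I expect the main obstacle to be the implication $\ref{item:UQR-2}\Rightarrow\ref{item:UQR-3}$ (equivalently the upgrade from BQS distortion control to an honest $f$-invariant bounded measurable conformal structure): this requires knowing that the manifold together with $\dg$ is Ahlfors $n$-regular and linearly locally contractible — which does not come for free and must be extracted from the cellular/BQS structure — so that the BQS inequalities can be promoted to Loewner-type modulus estimates, and then that the averaged conformal structure is genuinely bounded, which hinges on the uniform (in $m$) distortion bound supplied by clause (ii) of Definition~\ref{d: UEBQS}.
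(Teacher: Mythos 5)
Your cycle hides the two implications that are the actual content of the theorem behind citations that do not exist. The arrow \ref{item:UQR-3}$\Rightarrow$\ref{item:UQR-4} is not ``the Ha\"issinsky--Pilgrim classification or its UQR analog'': the rigidity theorems of \cite{HP09} take the metric CXC axioms (in particular [Deg] and [Round]) as hypotheses, and nothing of that sort is available from the bare assumption that $f$ is UQR. In the paper this implication is Theorem~\ref{t: cellular UQR => Lattes} (i.e.\ Theorem~\ref{tx: UQR = Lattes}), and it is proved by a genuinely new argument that uses the expanding {\itCel} structure: first the finite-forward-index property (Proposition~\ref{p: CPCF: UQR => bd multi}), which rests on the fact that a cellular quasiregular map can only have large local index at a vertex (Rickman's local index bound, \cite[Chapter~III, Corollary~5.9]{Ri93}, combined with the cell decomposition); then a uniform relative separation of $m$-chambers (Proposition~\ref{prop: UQR + bd mul (no need for Mkv) => dist/diam>}), obtained from modulus estimates on curve families joining the continua built from markings and cellular neighborhoods; then a rescaling/normal-family argument showing every point is conical; and only then Martin--Mayer \cite{MM03} (Corollary~\ref{c: conical: +measure conical=>Lattes}). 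Your proposal assumes precisely this theorem. The same problem affects your arrow \ref{item:UQR-2}$\Rightarrow$\ref{item:UQR-5}: asserting that the BQS inequality yields the CXC axioms directly is unsupported --- the degree axiom [Deg] (a uniform bound on $\deg\bigl(f^k|_{\tU}\bigr)$ over all levels) does not follow formally from diameter distortion, and in the paper the only route from uniform expanding BQS to CXC passes through Theorem~\ref{tx: UeBQS-Lattes} (BQS $\Rightarrow$ conical UQR $\Rightarrow$ Latt\`es, again via \cite{MM03}) and then \cite[Theorem~4.4.3]{HP09}; the converse CXC $\Rightarrow$ BQS is Proposition~\ref{p: CXC=>UBQS}.

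Your closing arrow \ref{item:UQR-4}$\Rightarrow$\ref{item:UQR-1} is also flawed as written: the strongly automorphic map $\varphi\colon\R^n\to\mfd^n$ of a Latt\`es triple is only quasiregular and in dimensions $n\ge 3$ typically has a non-empty branch set, so it is not a local homeomorphism and in particular not ``locally quasisymmetric''; patching local quasisymmetries of $\varphi$ is therefore not available. Moreover a visual metric $\varrho$ (whose expansion factor $\Lambda$ is unrelated to the dilation $\lambda$ of $A$) is in general only quasisymmetrically, not bi-Lipschitz, comparable to $\dg$. The paper closes the loop differently: Latt\`es $\Rightarrow$ CXC (\cite{HP09}) $\Rightarrow$ uniform expanding BQS (Proposition~\ref{p: CXC=>UBQS}) $\Rightarrow$ quasisymmetry of the identity, the last step being Theorem~\ref{t: CBC: QS sph <=> UBQS}, whose proof needs the quantitative comparison between $\dg$-diameters of chambers and the separation level $m_{f,\cD_0}$ (Lemma~\ref{l: QS unif: QV approx}). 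Your \ref{item:UQR-1}$\Rightarrow$\ref{item:UQR-2} step is fine and matches the paper (Lemma~\ref{l: cellular BQS: UEBQS on visual sphere} and Corollary~\ref{c: cellular BQS: QS equiv => flower UBQS}), but as it stands the proposal reduces the theorem's hardest parts to results that are not in the literature.
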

Following~\cite{HMM04}, we say that a Latt\`es map $f\colon \mfd^n \to \mfd^n$ is \defn{chaotic} if the group $\Gamma$ of $f$ is cocompact and $f$ is expanding as a Latt\`es map, i.e.,~$\lambda>1$. In this case, the Julia set of $f$ is the whole manifold $\mfd^n$. Note that expanding Latt\`es maps are chaotic (see Lemma~\ref{l: Lattes: top exact => chaotic}).

\medskip
Theorem~\ref{tx: QS-UEBQS-Lattes-UQR} is established in three parts.
First, we show that conditions~\ref{item:UQR-1} and \ref{item:UQR-2} are equivalent, independently from the other conditions. Second, we show the equivalence of \ref{item:UQR-2}, \ref{item:UQR-4}, and \ref{item:UQR-5} in the following order:
\begin{equation}\label{e: implications: UBQS-Lattes-CXC}
	\xymatrix{
		\text{unif.~exp.~BQS} \ar@{=>}[r] & \text{conical UQR} \ar@{=>}[r] & \text{Latt\`es} \ar@{=>}[r] & \text{CXC} \ar@{=>}[r]
		& \text{unif.~exp.~BQS},
	}
\end{equation}
where \emph{conical UQR} refers to uniformly quasiregular maps $\mfd^n \to \mfd^n$ having the property that every point of $\mfd^n$ is a \emph{conical point}. We recall related notions and statements in Appendix~\ref{Ap: conical pt of UQR}. The implication ``$\text{conical UQR}\Rightarrow\text{Latt\`es}$" follows from a theorem of Martin and Mayer \cite[Theorem~1.3]{MM03}, which shows that a uniformly quasiregular map having a positive-measure conical set is a Latt\`es map. Note that it is an open question whether a uniformly quasiregular map having a positive-measure Julia set also has a positive-measure set of conical points. The implication ``$\text{Latt\`es}\Rightarrow\text{CXC}$" is due to Ha\"issinsky and Pilgrim \cite[Theorem~4.4.3]{HP09}.
The third part is the equivalence of \ref{item:UQR-3} and \ref{item:UQR-4}, which is also independent of the other conditions.
For this, we first show that---under the expanding {\itCel} branched cover assumption---UQR maps have \emph{finite forward index} (abbreviated as~\emph{ffi}), that is,
\begin{equation}\label{e: ffi}
	\sup\{ i(x,f^m) : m\in \N_0,\,x\in\mfd^n\} <+\infty;
\end{equation}
see Proposition~\ref{p: CPCF: UQR => bd multi}. Using this property and the combinatorial properties of the map, we conclude in the following order:
\begin{equation}
	\label{e: implications: UQR-Lattes-UQR}
	\xymatrix{
		\text{UQR} \ar@{=>}[r] & \text{ffi UQR} \ar@{=>}[r] & \text{conical UQR} \ar@{=>}[r] & \text{Latt\`es}\ar@{=>}[r] 
		& \text{UQR}.
	}
\end{equation}

The implications in (\ref{e: implications: UBQS-Lattes-CXC}) hold without additional conditions on the cellular structure of the map. We formulate this as the following theorem, which can be viewed as an analog of \cite[Theorems~4.4.3 and~4.4.4]{HP09}. The implication ``$\text{unif.\ exp.\ BQS}\Rightarrow\text{Latt\`es}$" parallels \cite[Theorem~4.4.4]{HP09} in the sense that, in both cases, the maps are UQR and each point on $\mfd^n$ is proved to be a conical point.
\begin{thmx}\label{tx: UeBQS-Lattes}
	Let $f\colon \mfd^n\to\mfd^n$, $n\geq3$, be an orientation-preserving branched cover on an {\orclcnR} $n$-manifold. Then $f\colon (\mfd^n,\dg) \to (\mfd^n,\dg)$ is a uniform expanding BQS map if and only if $f$ is a chaotic Latt\`es map.
\end{thmx}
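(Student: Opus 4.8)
The plan is to establish Theorem~\ref{tx: UeBQS-Lattes} by closing the cycle of implications recorded in \eqref{e: implications: UBQS-Lattes-CXC}: \emph{unif.\ exp.\ BQS} $\Rightarrow$ \emph{conical UQR} $\Rightarrow$ \emph{chaotic Latt\`es} $\Rightarrow$ \emph{metric CXC} $\Rightarrow$ \emph{unif.\ exp.\ BQS}. Of these four arrows, three are either quotable or follow by unwinding definitions, while the single arrow ``\emph{unif.\ exp.\ BQS} $\Rightarrow$ \emph{conical UQR}'' carries the new content. The ``if'' direction of the theorem is the composite ``\emph{chaotic Latt\`es} $\Rightarrow$ \emph{metric CXC} $\Rightarrow$ \emph{unif.\ exp.\ BQS}'', while the ``only if'' direction is ``\emph{unif.\ exp.\ BQS} $\Rightarrow$ \emph{conical UQR} $\Rightarrow$ \emph{chaotic Latt\`es}''.

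First I would dispatch the three soft arrows. For ``\emph{chaotic Latt\`es} $\Rightarrow$ \emph{metric CXC}'' I quote Ha\"issinsky and Pilgrim \cite[Theorem~4.4.3]{HP09}. For ``\emph{metric CXC} $\Rightarrow$ \emph{unif.\ exp.\ BQS}'' I take the finite level-zero cover $\cU_0$ provided by the metric CXC structure to be the cover $\cU$ in Definition~\ref{d: UEBQS}, so that $(f^m)^*\cU$ is precisely the $m$-th level of the associated pullback structure; expansion (condition~(i) in Definition~\ref{d: UEBQS}) is one of the CXC axioms, and for the uniformity (condition~(ii)) I run the now-standard chaining argument from the theory of coarse expanding conformal dynamics (cf.\ \cite{HP09,BM17}): the relative-roundness control of CXC bounds the shapes of the level sets, and covering an arbitrary pair of intersecting continua $E,F$ inside a level-$m$ set by controlled chains of finer level sets turns this into the estimate $\diam_\dg f^m(E)\le\eta(\diam_\dg E/\diam_\dg F)\,\diam_\dg f^m(F)$ for a distortion function $\eta$ built from the CXC data. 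Finally, ``\emph{conical UQR} $\Rightarrow$ \emph{chaotic Latt\`es}'' is a direct application of Martin and Mayer \cite[Theorem~1.3]{MM03}: if every point of $\mfd^n$ is conical then the conical set has full, hence positive, measure, so $f$ is a Latt\`es map, and it is chaotic because a conical UQR map is expanding, so that its scaling factor satisfies $\lambda>1$ and Lemma~\ref{l: Lattes: top exact => chaotic} applies.

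It remains to prove ``\emph{unif.\ exp.\ BQS} $\Rightarrow$ \emph{conical UQR}'': that a uniform expanding BQS map $f\colon(\mfd^n,\dg)\to(\mfd^n,\dg)$ with data $(\cU,\eta)$ is UQR with every point conical. For uniform quasiregularity I use the metric-to-analytic bridge for branched quasisymmetries. Away from its branch set, $f^m$ is a local homeomorphism, and a homeomorphic $\eta$-BQS map between uniformly bounded-turning metric spaces is uniformly quasisymmetric (the Guo--Williams observation, cf.\ \cite{GW16}); since in the closed Riemannian manifold $(\mfd^n,\dg)$ all sufficiently small metric balls are uniformly bi-Lipschitz to Euclidean balls, hence uniformly bounded turning, the restriction of $f^m$ to any sufficiently small set on which it is injective is $K$-quasiconformal with $K=K(\eta,n)$ independent of $m$. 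As the branch set of a discrete, open, sense-preserving map between $n$-manifolds has topological codimension at least two, it is removable here, so each $f^m$ is $K$-quasiregular and $f$ is UQR. For conicality, I fix $x\in\mfd^n$ and for each $m\in\N$ pick $U_m\in(f^m)^*\cU$ with $x\in U_m$; then $\diam_\dg U_m\to0$ by expansion, while $f^m(U_m)$ belongs to the finite family $\cU$, so $\diam_\dg f^m(U_m)\ge\delta_0>0$ uniformly in $m$. Combined with the uniformity condition, the maps $f^m|_{U_m}$ thus supply, as $m\to\infty$, the sequence of dynamical rescalings around $x$ demanded by the definition of a conical point recalled in Appendix~\ref{Ap: conical pt of UQR}; when $f^m$ branches inside $U_m$ one first passes to a single sheet through $x$ whose image still has diameter comparable to $\delta_0$. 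Hence every point of $\mfd^n$ is conical, and the cycle of the first paragraph closes.

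The step I expect to be the main obstacle is exactly this last arrow, and within it the control of the interaction between $f$ and its branch set $B_f$: one must ensure that the branching along forward orbits does not destroy the uniform quasiconformality of the rescaled maps, and that the grand orbit $\bigcup_{j\ge0}f^{-j}(B_f)$ is negligible so that conicality really propagates to every point of $\mfd^n$. By contrast, uniform quasiregularity is a soft consequence of the Guo--Williams bridge, and the remaining arrows reduce either to the cited theorems (\cite[Theorem~4.4.3]{HP09}, \cite[Theorem~1.3]{MM03}, Lemma~\ref{l: Lattes: top exact => chaotic}) or to transporting the chaining machinery of expanding Thurston-map theory to dimensions $n\ge3$.
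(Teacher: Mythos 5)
Your overall architecture is the same as the paper's: the cycle unif.\ exp.\ BQS $\Rightarrow$ conical UQR $\Rightarrow$ chaotic Latt\`es $\Rightarrow$ metric CXC $\Rightarrow$ unif.\ exp.\ BQS, with \cite[Theorem~4.4.3]{HP09} quoted for the third arrow, a direct argument for CXC $\Rightarrow$ BQS (the paper's Proposition~\ref{p: CXC=>UBQS} does this with a single $(f,\cU)$-approximating element per continuum rather than chains, but your sketch is in the same spirit), and Corollary~\ref{c: conical: +measure conical=>Lattes} (Martin--Mayer) for the second. The genuine gaps are inside the arrow you yourself flag, and they are not quite where you locate them. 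First, your proof that $f$ is UQR is unsound as stated: knowing that $f^m$ is uniformly quasisymmetric on small sets where it is injective, together with the fact that $B_{f^m}$ has topological codimension at least two, does not let you ``remove'' the branch set. Topological smallness gives no capacity or measure control and no Sobolev regularity of $f^m$ across $B_{f^m}$; removability results for quasiregularity require metric smallness (capacity zero, $\sigma$-finite $(n-1)$-measure, NED), none of which you establish. The paper avoids this entirely via Theorem~\ref{thm: LiP19: BQS=>QR} (Lemma~\ref{l: UEBQS => UQR}), whose proof bounds the linear dilatation $H^*(x,f)$ at \emph{every} point, branch points included, and then invokes \cite[Theorem~4.14]{MRV69}; you could simply cite that theorem instead of re-deriving it with a removability step.

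Second, your conicality argument is missing its quantitative core. Choosing $U_m\in(f^m)^*\cU$ containing $x$ with $\diam_{\dg} f^m(U_m)\geq\delta_0$ does not yield the rescalings required by Definition~\ref{d: conical points}: one needs a round ball centred at $x$ of radius comparable to $\diam U_m$ contained in $U_m$ (a priori $x$ may lie arbitrarily close to $\partial U_m$), and, crucially for the non-constancy of the limit, a definite lower bound on the image of that small ball --- the largeness of $f^m(U_m)$ could in principle come from the part of $U_m$ far from $x$. Moreover ``passing to a single sheet through $x$'' is not an available operation at a branch point. This is precisely the content of the paper's Lemma~\ref{l: exp UBQS => non-zero distortion at a point}, which produces $U$ with $B(x,2\alpha\diam U)\subseteq U$ and $B\bigl(f^m(x),\tdelta\bigr)\subseteq f^m(B(x,\alpha\diam U))\subseteq f^m(B(x,2\alpha\diam U))\subseteq B(f^m(x),\delta)$ via a curve-lifting argument plus the BQS estimate; the upper inclusion gives the equicontinuity needed for Arzel\`a--Ascoli and the lower one gives non-constancy. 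Your closing worry about the grand orbit of $B_f$ is a red herring: conical points need not avoid branching, and the paper's estimates hold at every point, so no exceptional set arises. Finally, the chaoticity of the resulting Latt\`es map comes from the expansion hypothesis in Definition~\ref{d: UEBQS} (via topological exactness and Lemma~\ref{l: Lattes: top exact => chaotic}), not from conicality as such.
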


Theorem~\ref{tx: UeBQS-Lattes} implies that all uniform expanding BQS maps on $\mfd^n$ satisfy the ffi property (\ref{e: ffi}). Note that the ffi condition plays an important role from the perspective of ergodic theory. For example, in the two-dimensional case, for expanding Thurston maps, this condition is equivalent to the asymptotic $h$-expansiveness and the upper semi-continuity of the entropy map; cf.~\cite{Li15,LS24}. In addition, for CXC systems, a related condition (the axiom [Deg]) helps to establish the asymptotic $h$-expansiveness in~\cite{LZ25a}, from which we obtain:
\begin{cor*}
	An orientation-preserving uniform expanding BQS on an {\orclcnR} $n$-manifold of dimension $n \geq 3$ is asymptotically $h$-expansive and thus, admits an equilibrium state for each continuous potential function.
\end{cor*}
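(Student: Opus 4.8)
The plan is to combine the structural equivalences already obtained with the ergodic-theoretic input from \cite{LZ25a}. Let $f\colon \mfd^n\to\mfd^n$, $n\geq 3$, be an orientation-preserving uniform expanding BQS map on an {\orclcnR} $n$-manifold. First I would invoke Theorem~\ref{tx: UeBQS-Lattes}, equivalently the chain of implications in~\eqref{e: implications: UBQS-Lattes-CXC}, to conclude that $f$ is a chaotic Latt\`es map and, in particular, that $f\colon(\mfd^n,\dg)\to(\mfd^n,\dg)$ is metric CXC. Since asymptotic $h$-expansiveness is a topological conjugacy invariant --- being the vanishing of the topological tail entropy $h^*(f)$ --- it will suffice to establish it for the underlying topological system $(\mfd^n,f)$; the existence of equilibrium states then follows from standard variational considerations.

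For the asymptotic $h$-expansiveness, I would argue as follows. By the remark following Theorem~\ref{tx: UeBQS-Lattes}, the map $f$ has finite forward index~\eqref{e: ffi}, and for the metric CXC system $(\mfd^n,f,\dg)$ this is precisely the bounded-degree axiom~[Deg]. The asymptotic $h$-expansiveness of metric CXC systems satisfying~[Deg] is exactly the content of the result in~\cite{LZ25a}, whence $h^*(f)=0$. As an independent check, one could instead argue directly from a Latt\`es presentation $f\circ\varphi=\varphi\circ A$: since the cocompact discrete group $\Gamma<\Isom_+(\R^n)$ contains, by Bieberbach's theorem, a finite-index normal translation lattice $\Gamma_0$, and since conjugation by the affine map $A$ preserves translations, one gets $A\Gamma_0 A^{-1}\subseteq\Gamma_0$; hence $A$ descends to a positively expansive (in particular $h$-expansive) expanding endomorphism $\tf$ of the flat torus $T^n=\R^n/\Gamma_0$, and the finite branched covering $p\colon T^n\to\mfd^n$ intertwines $\tf$ with $f$. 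A tail-entropy version of Bowen's inequality for the finite-to-one factor map $p$ would then give $h^*(f)\leq h^*(\tf)=0$.

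From here the existence of equilibrium states is routine. By Misiurewicz's theorem, $h^*(f)=0$ implies that the entropy map $\mu\mapsto h_\mu(f)$ is upper semi-continuous on the weak-$*$ compact, nonempty space $\mathcal{M}_f(\mfd^n)$ of $f$-invariant Borel probability measures. For a continuous potential $\phi\colon\mfd^n\to\R$ the functional $\mu\mapsto\int\phi\,\di\mu$ is weak-$*$ continuous, so $\mu\mapsto h_\mu(f)+\int\phi\,\di\mu$ is upper semi-continuous on a nonempty compact space and attains its supremum; by the variational principle this supremum equals the topological pressure $P(f,\phi)$, and any maximizer is an equilibrium state for $\phi$.

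The step I expect to be the main obstacle is the bookkeeping in the first route: one must check that the metric CXC structure on $(\mfd^n,\dg)$ delivered by the ``Latt\`es $\Rightarrow$ CXC'' implication of \cite{HP09} genuinely matches the hypotheses of the asymptotic $h$-expansiveness theorem in \cite{LZ25a}, and in particular that the finite-forward-index bound~\eqref{e: ffi} is exactly the degree axiom~[Deg] used there (rather than merely a formally similar condition), with no additional control on the roundness or relative-size data required. The alternative Latt\`es-theoretic route avoids this at the cost of the short Bieberbach reduction and an appeal to a factor inequality for topological tail entropy.
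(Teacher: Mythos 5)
Your first route is exactly the paper's argument: by Theorem~\ref{tx: UeBQS-Lattes} the map is a chaotic Latt\`es map, hence a metric CXC system by \cite[Theorem~4.4.3]{HP09}, and then asymptotic $h$-expansiveness comes from \cite{LZ25a} and equilibrium states from upper semi-continuity of the entropy map on the compact space of invariant measures. The obstacle you flag is essentially vacuous, since [Deg] is one of the defining axioms of a metric CXC system, so no separate matching with the ffi bound \eqref{e: ffi} is needed; your alternative Bieberbach/torus route (with its tail-entropy factor inequality) is not what the paper does and is not required.
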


Ergodic theory of Thurston maps and related dynamical systems has been studied in the literature \cite{BM17, DPTUZ21, HP09, HRL19, Li15, Li18, LS24}.
We expect that the cellular structure of Thurston-type maps facilitates a similar ergodic theory in higher dimensions.

\medskip
\noindent{\bf Organization of the article.}
This article is organized as follows. Section~\ref{sct: prelim} is dedicated to preliminaries on branched covers, cell decompositions, and related notions. In Section~\ref{sct: cellular maps}, we discuss notions related to expanding Thurston-type maps. In particular, we establish that Thurston-type maps are iterated cellular branched covers through an inductive construction of a {\celSeq}.

In Section~\ref{sct: QS uniformization of visual}, we consider an aspect of quasisymmetric uniformization of visual metrics, giving a direct proof of (i)$\Longleftrightarrow$(ii) in Theorem~\ref{tx: QS-UEBQS-Lattes-UQR}. There are two key ingredients: first, a class of open sets called \emph{cellular neighborhoods} constructed from cells in the {\celSeq} (Subsection~\ref{subsct: cellular neighborhood}), which serves as a basic tool in this section and Section~\ref{sct: Rigidity of UQR}; then a list of metric properties of cells in the {\celSeq}, which is in the spirit of the quasi-visual approximation by Bonk and Meyer~\cite{BM22}.

In Section~\ref{sct: Rigidity of UQR}, we prove the implications in \eqref{e: implications: UQR-Lattes-UQR}, which also give a direct proof for Theorem~\ref{tx: UQR = Lattes}. The proof is divided into three parts. First, in Subsection~\ref{subsct: marking}, we introduce a notion called \emph{markings for {\celSeq}s}, which is a key combinatorial tool that allows us to give metric estimates in the coming subsection. Then, in Subsection~\ref{subsct: cellular UQR => Lattes}, we establish two key lemmas: one is a lemma on the finite-forward-index (ffi) property (\ref{e: ffi}); the other is a metric estimate on the cells in the {\celSeq}, necessary for the implication ``ffi UQR $\Rightarrow$ conical UQR". These two lemmas crucially rely on the {\celSeq} and properties of quasiregular maps, and the combinatorial tools constructed in Subsections~\ref{subsct: cellular neighborhood} and~\ref{subsct: marking} play an important role in the proof. We also note that the metric estimate is given via moduli of curve families, while in the two-dimensional theory (\cite{BM17}), Koebe's distortion theorem of rational maps is used.
Finally, in the same subsection, we prove ``ffi UQR $\Rightarrow$ conical UQR" and then establish Theorem~\ref{tx: UQR = Lattes}.

Finally, in Section~\ref{sct: UEBQS <=> Lattes}, we prove Theorem~\ref{tx: UeBQS-Lattes} (which includes the equivalence of (ii), (iv), and (v) in Theorem~\ref{tx: QS-UEBQS-Lattes-UQR}). We utilize the metric properties of the uniform expanding BQS maps to estimate the distortion of the map, which leads to the conclusion that uniform expanding BQS maps on Riemannian manifolds are UQR maps with full conical sets. This, combined with the discussions in previous sections, establishes Theorems~\ref{tx: QS uniformization of visual metric} and \ref{tx: QS-UEBQS-Lattes-UQR}.

We conclude with an appendix recalling notions and results related to quasiregular mappings, branched quasisymmetries, conical points, and visual metrics.

\medskip
\noindent{\bf Acknowledgments.} 
The authors would like to express their sincere gratitude to
Mario~Bonk, Romain~Dujardin, Peter~Ha\"issinsky, Daniel~Meyer, Kevin~M.~Pilgrim, and Ruicen~Qiu for their helpful comments. The authors are also grateful for the hospitality of Urgench State University, where the authors had fruitful discussions and initiated their collaboration on this article.
Z.~Li and H.~Zheng were partially supported by Beijing Natural Science Foundation (JQ25001, 1214021) and National Natural Science Foundation of China (12471083, 12101017, 12090010, and 12090015). P.~Pankka was partially supported by the Research Council of Finland project~\#332671 and Centre of Excellence FiRST.

\medskip
\noindent{\bf Notation.}
Let $\N\=\{1,\,2,\,3,\,\dots\}$ be the set of positive integers, and $\N_0\=\{0\}\cup\N$. As usual, we denote by $\log_a$ the logarithm to the base $a>0$, and by $\log$ the natural logarithm function.

For a metric space $(X,d)$, the distance between $a$ and $b$ is denoted by $\abs{a-b}$ when the metric is clear from the context. The diameter of a subset $A$ is $\diam_d A\=\sup\{d(a,b) : a,b\in A\}$. The distance between subsets $A$ and $B$ is $\dist_d(A,B)\=\inf\{d(a,b):a\in A,\,b\in B\}$. Denote by $B_d(a,r)\=\{x\in X:d(a,x)<r\}$ the open metric ball and by $S_d(a,r)\=\{x\in X:d(a,x)=r\}$ the metric sphere centered at $a\in X$ of radius $r>0$. For a cover $\cC$ of $(X,d)$, we denote 
\begin{equation}\label{e: mesh(C)}
	\mesh_d(\cC)\=\sup\{\diam_d C:C\in\cC\}.
\end{equation}
We omit $d$ in the above notations if the metric is clear from the context.

We denote by $\B^n$ the open unit ball in $\R^n$, and by $\B^n(a,r)$ the open ball in $\R^n$ at $a\in\R^n$ with radius $r>0$. Denote by 
$\Sp^n\=\bigl\{(x_1,\,\dots,\, x_{n+1})\in\R^{n+1}:x_1^2+\cdots+x_{n+1}^2=1\bigr\}$
the $n$-dimensional unit sphere in $\R^{n+1}$, and by $\cS^n$ a topological $n$-sphere, i.e., a topological space homeomorphic to $\Sp^n$.

The spherical metric $\sigma$ on $\Sp^n$ is defined as usual. We call a metric $\tsigma$ on $\cS^n$ a spherical metric if there exists an isometry $\varrho\:(\cS^n,\tsigma)\to(\Sp^n,\sigma)$.

Throughout this article, unless otherwise stated, we use $\mfd^n$ to denote $n$-manifolds. For a Riemannian $n$-manifold $(\mfd^n,g)$, we denote by $\dg$ the Riemannian distance on $\mfd^n$.

We use $\id$ to denote the identity map on a subset $D$ of a set $\Omega$, i.e., the map $\id\:D\to\Omega$, $x\mapsto x$. We use $I_n$ (or simply $I$ when there is no ambiguity) to denote the $n$-dimensional identity matrix.

\section{Preliminaries}\label{sct: prelim}
\subsection{Branched covers}\label{subsct: prelim on BC}
Our discussions in this article concentrate on branched covers on manifolds. In the literature, there are various definitions of branched covers between topological spaces, most of which coincide in the context of topological manifolds; see e.g.,~\cite{GW16,HR02,HP09}.
In this article, we adopt the formulation in \cite{HR02}: A discrete, open, and continuous map $f\colon \fX\to \fY$ between topological spaces $\fX$ and $\fY$ is called a \defn{branched cover}. 

For a map $f\:\fX\to \fY$ between topological spaces and a subset $A\subseteq \fX$, we denote
\begin{equation}\label{e: N(f,A)}
	N(f,A)\=\sup\bigl\{\card\bigl(f^{-1}(y)\cap A\bigr):y\in f(A)\bigr\},
\end{equation}
and $N(f)\=N(f,\fY)$.
The \defn{local multiplicity}\footnote{This notion is also called \emph{local index}, \emph{local degree}, etc., in different contexts.} of $f$ at a point $x\in \fX$ is defined as
\begin{equation}\label{e: N_loc(f,x)}
	\ind(x,f)\=\inf\{N(f,U):U\subseteq \fX\text{ is an open neighborhood of }x\}.
\end{equation}

Recall that, given a branched cover $f\:\fX\to \fY$, a point $x\in \fX$ is a \defn{branch point} if $f$ is not locally homeomorphic at $x$. In terms of local multiplicity, $x$ is a branch point if and only if $\ind(x,f) \geq 2$. The set of all branch points is denoted by $B_f$. It is easy to see that the branch set is closed.

Let $\fX$ be a compact locally-connected Hausdorff space, $f\:\fX\to\fX$ be a branched cover, and $\cU$ be a cover of $\fX$ by connected open subsets. The \defn{pullback} of $\cU$ by $f$ is the open cover
\begin{equation}\label{e: pullback of cover}
	\begin{aligned}
		f^*\cU\=\bigl\{\tU : \text{there exists }U\in\cU
		\text{ for which }\tU\text{ is a connected component of }f^{-1}(\cU)\bigr\}.
	\end{aligned}
\end{equation}
It can be checked that $f^*g^*\cU=(g\circ f)^*\cU$, and thus $(f^*)^n\cU=(f^n)^*\cU$ for each $n\in\N$. 

\begin{lemma}\label{l: open+closed => f(tU)=U}
    Let $\fX$ be a locally-connected topological space, $f\:\fX\to\fX$ be continuous, open, and closed. Then for each connected open subset $U$ of $\fX$ and each connected component $\tU$ of $f^{-1}(U)$, we have $f\bigl(\tU\bigr)=U$.
\end{lemma}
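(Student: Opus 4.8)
The plan is to show that $f(\tU)$ is both open and closed in $U$, and then invoke connectedness of $U$ together with nonemptiness of $f(\tU)$. Since $f$ is open and $\tU$ is open in $\fX$, the image $f(\tU)$ is open in $\fX$, hence open in $U$. For closedness, I would argue as follows: let $V$ be the union of all connected components of $f^{-1}(U)$ \emph{other than} $\tU$. Because $\fX$ is locally connected, connected components of the open set $f^{-1}(U)$ are themselves open, so $V$ is open and $\tU$ and $V$ are disjoint open sets with $\tU \cup V = f^{-1}(U)$. I would like to conclude that $\tU$ is also closed in $f^{-1}(U)$, which indeed follows since $\tU = f^{-1}(U) \setminus V$ is the complement of an open set within $f^{-1}(U)$. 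The key point is now that $\clos(\tU) \cap f^{-1}(U) = \tU$, i.e., any limit point of $\tU$ lying in $f^{-1}(U)$ already lies in $\tU$.

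Next I would use that $f$ is closed to transfer this to the image side. Observe $\clos(\tU)$ is closed in $\fX$, so $f(\clos(\tU))$ is closed in $\fX$ by hypothesis, and it contains $f(\tU)$. I claim $f(\clos(\tU)) \cap U = f(\tU)$. The inclusion $\supseteq$ is clear. For $\subseteq$, take $x \in \clos(\tU)$ with $f(x) \in U$; then $x \in f^{-1}(U)$, and combined with $x \in \clos(\tU)$ and the previous paragraph we get $x \in \tU$, so $f(x) \in f(\tU)$. Therefore $f(\tU) = f(\clos(\tU)) \cap U$ is the intersection of a closed set with $U$, hence closed in $U$. Finally, $f(\tU)$ is nonempty (as $\tU$ is a nonempty connected component of $f^{-1}(U)$, which is nonempty whenever $U$ meets $f(\fX)$; and since $f$ is an open map of $\fX$ into itself with $\fX$ nonempty, $f^{-1}(U) \neq \emptyset$ for every nonempty open $U \subseteq \fX$ — more carefully, $U \cap f(\fX)$ is open and nonempty, so $f^{-1}(U) \neq \emptyset$). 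A nonempty subset of the connected space $U$ that is both open and closed in $U$ must equal $U$, giving $f(\tU) = U$.

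The main obstacle — really the only subtle point — is the transition from ``$\tU$ is relatively clopen in $f^{-1}(U)$'' to ``$f(\tU)$ is relatively closed in $U$'': one must be careful that a relatively closed subset of $f^{-1}(U)$ need not have closed image under an arbitrary continuous map, which is exactly why the hypothesis that $f$ is \emph{closed} is essential, and why I pass through $\clos(\tU)$ (closed in all of $\fX$) rather than through $\tU$ directly. Local connectedness is used precisely to guarantee that the components of $f^{-1}(U)$ are open, so that $\tU$ is relatively open (and hence, being a component, relatively clopen) in $f^{-1}(U)$. Everything else is a routine clopen-in-a-connected-space argument.
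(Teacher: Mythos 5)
Your proof is correct and follows essentially the same route as the paper's: both establish that $f\bigl(\tU\bigr)$ is open in $U$ (openness of $f$ plus open components from local connectedness), exhibit a closed subset of $\fX$ whose intersection with $f^{-1}(U)$ is exactly $\tU$, push it forward using that $f$ is closed to see $f\bigl(\tU\bigr)$ is closed in $U$, and finish by connectedness of $U$ --- the only cosmetic difference is that the paper uses the closed set $\tU\cup\bigl(\fX\smallsetminus f^{-1}(U)\bigr)$ where you use $\clos\bigl(\tU\bigr)$. A minor remark: your parenthetical argument that $f^{-1}(U)\neq\emptyset$ is unnecessary (a connected component $\tU$ is nonempty by definition) and its blanket claim would in general need $\fX$ connected, but this does not affect the validity of the proof.
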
 
\begin{proof}
Set
$F\=\tU\cup\bigl(\fX\smallsetminus f^{-1}(U)\bigr)=\fX\smallsetminus\bigcup\bigl(\cC\smallsetminus\bigl\{\tU\bigr\}\bigr)$, 
where $\cC$ is the collection of all connected components of $f^{-1}(U)$. Then $F$ is a closed subset of $\fX$.
Since $f$ is open and closed, the subset $f\bigl(\tU\bigr)=f(F)\cap U$ is an open and closed subset of $U$. The connectedness of $U$ implies $f\bigl(\tU\bigr)=U$.
\end{proof}
\begin{rem}\label{r: forward inv of pullback}
    Since a continuous map between compact Hausdorff spaces is closed (cf.~\cite[Section~26]{Mu00}), by Lemma~\ref{l: open+closed => f(tU)=U} the pullback defined in (\ref{e: pullback of cover}) satisfies $f\bigl(\tU\bigr)\in\cU$ for each $\tU\in f^*\cU$.
\end{rem}

We now recall the notion of expansion defined in terms of the pullback of covers.
\begin{definition}[Expansion]\label{d: expansion}
	Let $\fX$ be a compact locally-connected metric space and $f\:\fX\to\fX$ be a branched cover. We call $f$ \defn{expanding} if there exists a finite cover $\cU$, consisting of connected open subsets, that satisfies $\mesh((f^m)^*\cU)\to 0$ as $m\to+\infty$. Here $\mesh(\cC)\=\sup\{\diam C:C\in\cC\}$ for a cover $\cC$ of $\fX$.
	In this case, we also say that $f$ is \defn{expanding with respect to $\cU$}.
\end{definition}

The above definition of expansion yields a natural idea to estimate the size of a connected set.
\begin{definition}[Approximation using covers]\label{d: approximate}
    Let $\fX$ be a compact locally-connected metric space, $\cU$ be a finite cover $\cU$ of $\fX$ by connected open sets, and $f\:\fX\to\fX$ be a branched cover expanding with respect to $\cU$. For $m\in\N_0$ and $U\in (f^m)^*\cU$, we call \defn{$U$ a $(f,\cU)$-approximation of a connected set $A\subseteq\fX$} if $A\subseteq U$ and, for all $n> m$, no element in $(f^n)^*\cU$ contains $A$. 
\end{definition}
\begin{rem}\label{r: approximate: iterate}
    Clearly, for $m\in\N_0$, $U\in(f^m)^*\cU$, and a connected set $A$, if $U$ is an $(f,\cU)$-approximation of $A$, then for each $n\leq m$, $f^n(U)\in(f^{m-n})^*\cU$ is an $(f,\cU)$-approximation of $f^n(A)$.
\end{rem}

Now we discuss some basic properties of branched covers on {\clcntop} manifolds. Note that quasiregular maps between Riemannian manifolds are orientation-preserving branched covers.

\begin{rem}\label{r: bc on mfd is cls fbc}
A branched cover $f\:\mfd^n\to\mfd^n$ on a {\clcntop} $n$-manifold is open and closed, and hence surjective. It follows that $N(f)$ is finite, and equal to $\abs{\deg(f)}$ when $\mfd^n$ is oriented (cf.~\cite[Theorem~5.5]{Va66}).     
\end{rem}

Given a branched cover $f\:\mfd^n\to\mfd^n$ on a {\clcntop} $n$-manifold, for each $x\in\mfd^n$, we can find a normal neighborhood $U$ of $x$ such that for each $y\in f(U)$, 
\begin{equation}\label{e: i(x,f)=Sum i(f,z)}
	N(f,U)=\ind(x,f)=\sum_{z\in U\cap f^{-1}(y)}\ind(z,f).
\end{equation}
Recall that a domain $D$ is a \defn{normal neighborhood of $x$ (with respect to $f$)} if $f(\partial D)=\partial f(D)$ and $f^{-1}(f(x))\cap D=\{x\}$.

More precisely, we use \cite[Lemmas~9.14 and~9.15]{Vu88} to find a normal neighborhood $U$ containing $x$, which is so small that $U$ and $f(U)$ are contained in charts and that $N(f,U)=i(x,f)$. Then $f|_U\:U\to f(U)$ is closed, since for a relatively closed subset $E=F\cap U$, where $F$ is a closed subset of $\mfd^n$, we have that $f\bigl(F\cap\cls{U}\bigr)=f(E)\cup f(F\cap\partial U)$ is a closed subset of $\mfd^n$, and thus $f(E)=f\bigl(F\cap\cls{U}\bigr)\cap f(U)$ is a relatively closed subset of $f(U)$. Then (\ref{e: i(x,f)=Sum i(f,z)}) follows from \cite[Lemma~9.22]{Vu88}.

\subsection{Cell decompositions and flowers}\label{subsct: cell decomp}
Here we introduce cell decompositions and cellular neighborhoods of vertices called flowers. Our definitions of cells and cell decompositions follow \cite[Chapter~5]{BM17}. Throughout the remainder of this section, $\fX$ will always be a locally-compact Hausdorff space. 

\subsubsection{Cell decompositions}

Recall that, for each $n\in\N$, a subset $c$ of $\fX$ homeomorphic to $[0,1]^n$ is called an \defn{$n$-dimensional cell}, and $\dim(c)\=n$ is called the dimension of $c$. We denote by $\cellbound c$ the set of points corresponding to $[0,1]^n\smallsetminus (0,1)^n$ under a homeomorphism between $c$ and $[0,1]^n$. We call $\cellbound c$ the \defn{cell-boundary} and $\stdCellint{c}\=c\smallsetminus\cellbound c$ the \defn{cell-interior} of $c$. The sets $\cellbound c$ and $\stdCellint{c}$ are independent of the choice of the homeomorphism. Note that the cell-boundary and cell-interior generally do not agree with the boundary $\partial c$ and interior $\stdInt{c}$ of $c$ regarded as a subset of the topological space $\fX$. A $0$-dimensional cell is a subset consisting of a single point in $\fX$. For a $0$-dimensional cell $c$, we set $\cellbound c\=\emptyset$ and $\stdCellint{c}\=c$.

We recall some basic properties of cells for further discussion; we omit the standard proofs. 
\begin{lemma}\label{l: properties of cells}
	Let $\fX$ be a locally-compact Hausdorff space. Then the following statements are true:
	\begin{enumerate}[label=(\roman*),font=\rm]
    	\smallskip
		\item Each cell $c\subseteq\fX$ is closed. Moreover, $c=\cls{\stdCellint{c}}$.
		\smallskip
		\item Assume that $\fX$ is a topological $n$-manifold. Then for each $n$-dimensional cell $X$ in $\fX$, $\stdCellint{X}$ and $\cellbound X$ agree with the interior and boundary of $X$ regarded as a subset of the topological manifold $\fX$, respectively.
	\end{enumerate}
\end{lemma}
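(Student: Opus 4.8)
The plan is to handle the two assertions separately; both are standard facts of general and geometric topology, and the only non-elementary ingredient, Brouwer's invariance of domain theorem, enters only in part~(ii).

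For part~(i), I would first observe that a cell $c$ is, by definition, a homeomorphic image of the compact space $[0,1]^n$ when $\dim c = n \geq 1$, and a single point when $\dim c = 0$; in either case $c$ is a compact subset of the Hausdorff space $\fX$ and hence closed. For the identity $c = \cls{\stdCellint{c}}$ (with $n\geq 1$; the $0$-dimensional case is trivial since then $\stdCellint{c} = c$), I would transport the elementary density statement $\overline{(0,1)^n} = [0,1]^n$ through a homeomorphism $\phi\colon [0,1]^n \to c$: since $\phi$ is a homeomorphism onto $c$ equipped with the subspace topology, the closure of $\stdCellint{c} = \phi\bigl((0,1)^n\bigr)$ inside $c$ is $\phi\bigl(\overline{(0,1)^n}\bigr) = c$; and because $c$ is closed in $\fX$, this closure agrees with the closure $\cls{\stdCellint{c}}$ computed in $\fX$.

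For part~(ii), fix an $n$-dimensional cell $X$ and a homeomorphism $\phi\colon [0,1]^n \to X$, so that $\stdCellint{X} = \phi\bigl((0,1)^n\bigr)$ and $\cellbound{X} = \phi\bigl([0,1]^n \smallsetminus (0,1)^n\bigr)$. I would establish the two inclusions $\stdCellint{X} \subseteq \stdInt{X}$ and $\stdInt{X} \subseteq \stdCellint{X}$, with the interior taken in the ambient manifold $\fX$. The first is immediate from invariance of domain: $\stdCellint{X}$ is homeomorphic to $(0,1)^n$, hence to an open subset of $\R^n$, and a subset of an $n$-manifold that is homeomorphic to an open subset of $\R^n$ is open in the manifold --- one sees this by passing to a chart and applying invariance of domain. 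For the reverse inclusion, I would argue by contradiction: suppose some $p \in \cellbound{X}$ lies in $\stdInt{X}$. Then $p$ has an open neighborhood $V$ in $\fX$ with $V \subseteq X$, and after shrinking we may take $V$ inside a chart, so there is a homeomorphism of $V$ onto an open set $\Omega \subseteq \R^n$. The set $A \= \phi^{-1}(V)$ is then open in $[0,1]^n$, contains the cube-boundary point $\phi^{-1}(p)$, and is homeomorphic to $\Omega$; composing a homeomorphism $\Omega \to A$ with the inclusion $A \hookrightarrow [0,1]^n \hookrightarrow \R^n$ and applying invariance of domain shows that $A$ is open in $\R^n$. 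This is impossible, since $A \subseteq [0,1]^n$ yet $A$ contains a point of the topological boundary of $[0,1]^n$ in $\R^n$. This proves $\stdInt{X} = \stdCellint{X}$, and the boundary statement follows formally using part~(i): $\partial X = \cls{X} \smallsetminus \stdInt{X} = X \smallsetminus \stdCellint{X} = \cellbound{X}$.

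I do not expect a genuine obstacle, as the statement is entirely standard; the only real input is invariance of domain, and everything else is routine point-set bookkeeping. The step that most warrants care is the reverse inclusion in part~(ii): one must be attentive that ``interior'' and ``boundary'' there refer to $X$ as a subset of the manifold $\fX$, not to $X$ as a topological space on its own, and the invariance-of-domain argument must be set up in a chart so that the homeomorphic copy of $\Omega$ sitting inside $[0,1]^n$ is forced to be $\R^n$-open, contradicting that it meets the boundary of the cube. (Alternatively, this inclusion can be obtained from a local-homology computation, comparing $H_n(\fX, \fX \smallsetminus \{p\}) \cong \Z$ with $H_n(X, X \smallsetminus \{p\})$, the latter vanishing when $p \in \cellbound{X}$.)
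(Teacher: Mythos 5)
Your proof is correct: compactness of $[0,1]^n$ plus the Hausdorff hypothesis gives closedness, transporting $\overline{(0,1)^n}=[0,1]^n$ through the parametrizing homeomorphism gives $c=\cls{\stdCellint{c}}$, and the two chart-plus-invariance-of-domain arguments (openness of $\stdCellint{X}$ in $\fX$, and the contradiction ruling out $p\in\cellbound X\cap\stdInt{X}$) together with $\partial X=\cls{X}\smallsetminus\stdInt{X}$ settle (ii). The paper omits the proof as standard, and your argument is exactly the standard one it has in mind, so there is nothing to add.
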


\begin{definition}[Cell decompositions]\label{d: cell decomposition}
A collection $\cD$ of cells in a locally-compact Hausdorff space $\fX$ is a \defn{cell decomposition of $\fX$} if the following conditions are satisfied:
    \begin{enumerate}[label=(\roman*),font=\rm]
    	\smallskip
        \item The union of all cells in $\cD$ is equal to $\fX$.
        \smallskip
        \item $\brCellint{\sigma}\cap\brCellint{\tau}=\emptyset$ for all distinct $\sigma,\,\tau\in\cD$.
        \smallskip
        \item For each $\tau\in\cD$, the cell-boundary $\cellbound\tau$ is a union of cells in $\cD$.
        \smallskip
        \item Every point in $\fX$ has a neighborhood that meets only finitely many cells in $\cD$.
    \end{enumerate}
\end{definition}

For a collection $\cC$ of cells in an ambient space $\fX$, we denote by $\abs{\cC}\=\bigcup\cC$ the \defn{space} of $\cC$. If $\cC$ is a cell decomposition of $\abs{\cC}$, then we call $\cC$ a \defn{cell complex}. A cell decomposition is a cell complex.
A subset $\cC'\subseteq\cC$ is a \defn{subcomplex} of a cell complex $\cC$ if $\cC'$ is a cell complex.

Let $\cD$ be a cell decomposition of $\fX$.
For each $S \subseteq \fX$, we denote 
\begin{equation*}
\cD|_{S}\=\{c \in\cD : c \subseteq S \},
\end{equation*}
and call it the \defn{restriction of $\cD$ on $S$}. 
For each $k\in\N_0$, we denote
 \begin{equation*}
 \cD^{[k]}\=\{c\in\cD:\dim(c)=k\},
 \end{equation*} 
 and call $\Absbig{\sk{\cD}{k}}$ the \defn{$k$-skeleton of $\cD$}.

We record some elementary properties of cell decompositions, and refer the reader to \cite[Section~5.1]{BM17} for details.

\begin{lemma}[{\cite[Lemmas~5.2 and~5.3]{BM17}}]\label{l: properties of cell decompositions}
	Let $\cD$ be a cell decomposition of $\fX$. Then the following statements are true:
	\begin{enumerate}[label=(\roman*),font=\rm]
    	\smallskip
		\item For each $k\in\N_0$, the $k$-skeleton of $\cD$ is equal to $\bigcup\{\stdCellint{c}:c\in\cD,\,\dim(c) \leq  k\}$.
		\smallskip
		\item $\fX=\bigcup\{\stdCellint{c}:c\in\cD\}$.
		\smallskip
		\item For each $\tau\in\cD$, we have $\tau=\bigcup\{\stdCellint{c}:c\in\cD,\,c\subseteq\tau\}$.
		\smallskip
		\item If $\sigma$ and $\tau$ are two distinct cells in $\cD$ with $\sigma\cap\tau\neq\emptyset$, then one of the following three statements is true: $\sigma\subseteq\cellbound\tau$, $\tau\subseteq\cellbound\sigma$, or $\sigma\cap\tau=\cellbound\sigma\cap\cellbound\tau$ and the intersection consists of cells in $\cD$ of dimension strictly less than $\min\{\dim(\sigma),\dim(\tau)\}$.
		\smallskip
		\item If $\sigma,\,\tau_1,\,\dots,\,\tau_k$ are cells in $\cD$ and $\brCellint{\sigma}\cap\bigcup_{j=1}^k\tau_j\neq\emptyset$, then $\sigma\subseteq\tau_i$ for some $i\in\{1,\,\dots,\,k\}$.
	\end{enumerate}
\end{lemma}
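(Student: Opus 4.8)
The plan is to deduce all five assertions from a single \emph{local decomposition} statement: for every cell $\tau\in\cD$ and every point $x\in\tau$ there is exactly one cell $c\in\cD$ with $c\subseteq\tau$ and $x\in\stdCellint{c}$. Uniqueness is immediate from Definition~\ref{d: cell decomposition}(ii), since distinct cells have disjoint cell-interiors. For existence I would use (strong) induction on $\dim\tau$: if $x\in\stdCellint{\tau}$, take $c=\tau$; otherwise $x\in\cellbound\tau$, which by Definition~\ref{d: cell decomposition}(iii) is a union of cells of $\cD$, so $x$ lies in some $d\in\cD$ with $d\subseteq\cellbound\tau\subseteq\tau$, and applying the inductive hypothesis to $d$ supplies the desired $c$ (note $c\subseteq d\subseteq\tau$). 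The only step requiring genuine input is the claim that $\dim d<\dim\tau$ whenever $d\in\cD$ and $d\subseteq\cellbound\tau$: by definition $\cellbound\tau$ is homeomorphic to $[0,1]^{\dim\tau}\smallsetminus(0,1)^{\dim\tau}$, that is, to the sphere $\Sp^{\dim\tau-1}$, and the open cube $\stdCellint{d}\cong(0,1)^{\dim d}$ cannot topologically embed in a sphere of dimension $<\dim d$ by invariance of domain, forcing $\dim d\le\dim\tau-1$. (The same fact gives $\dim c'\le\dim c$ whenever $c'\subseteq c$.) This dimension bound is reused below.

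Granting the local decomposition statement, (iii) is precisely that statement ranged over all $x\in\tau$, the inclusion ``$\supseteq$'' being trivial since $\stdCellint{c}\subseteq c$. For (i), ``$\supseteq$'' is trivial; for ``$\subseteq$'', given $c\in\cD$ with $\dim c\le k$ and $x\in c$, apply (iii) to obtain $x\in\stdCellint{c'}$ with $c'\subseteq c$, whence $\dim c'\le\dim c\le k$. Assertion (ii) follows from (i) on letting $k\to\infty$, using that $\fX$ is the union of all cells of $\cD$ (Definition~\ref{d: cell decomposition}(i)) and that each cell has finite dimension. For (v): if $\stdCellint{\sigma}\cap\tau_i\ne\emptyset$, pick $x$ in this set; applying (iii) to $\tau_i$ gives $c\in\cD$ with $c\subseteq\tau_i$ and $x\in\stdCellint{c}$, and since $x\in\stdCellint{\sigma}$ as well, uniqueness yields $c=\sigma$ and hence $\sigma\subseteq\tau_i$.

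It remains to handle (iv). Fix distinct $\sigma,\tau\in\cD$ with $\sigma\cap\tau\ne\emptyset$. Combining (iii) for $\sigma$, (iii) for $\tau$, and uniqueness shows that every $x\in\sigma\cap\tau$ lies in $\stdCellint{c}$ for a single $c\in\cD$ with $c\subseteq\sigma$ and $c\subseteq\tau$; hence $\sigma\cap\tau$ is a union of cells of $\cD$, each contained in $\sigma\cap\tau$. Now I would case-split. If $\sigma\subseteq\tau$: a point of $\sigma$ lying in $\stdCellint{\tau}$ would, by (iii) for $\sigma$ and uniqueness, force $\tau\subseteq\sigma$ and so $\sigma=\tau$; thus $\sigma\cap\stdCellint{\tau}=\emptyset$, i.e.,\ $\sigma\subseteq\cellbound\tau$. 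The case $\tau\subseteq\sigma$ is symmetric. If neither containment holds: a point of $\sigma\cap\tau$ lying in $\stdCellint{\sigma}$ would, by (iii) for $\tau$ and uniqueness, give $\sigma\subseteq\tau$, a contradiction, so $\sigma\cap\tau\subseteq\cellbound\sigma$, and symmetrically $\sigma\cap\tau\subseteq\cellbound\tau$; since $\cellbound\sigma\cap\cellbound\tau\subseteq\sigma\cap\tau$ trivially, we obtain $\sigma\cap\tau=\cellbound\sigma\cap\cellbound\tau$, and the dimension bound from the first paragraph shows each of the cells of $\cD$ composing $\sigma\cap\tau$ has dimension strictly less than $\min\{\dim\sigma,\dim\tau\}$.

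I do not expect a serious obstacle: once the local decomposition statement is in place, everything is bookkeeping with conditions (ii) and (iii) of Definition~\ref{d: cell decomposition}. The one non-formal ingredient, and the point I would be most careful about, is the topological fact used in the induction, namely that a cell contained in the cell-boundary of a $d$-dimensional cell has dimension at most $d-1$, which is a form of invariance of domain.
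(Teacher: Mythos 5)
Your proof is correct: the paper offers no argument of its own here, simply citing \cite[Lemmas~5.2 and~5.3]{BM17}, and your route — the local decomposition statement (each $x\in\tau$ lies in $\stdCellint{c}$ for a unique $c\in\cD$ with $c\subseteq\tau$), proved by induction on $\dim(\tau)$ with invariance of domain supplying the key fact that a cell contained in $\cellbound\tau$ has dimension strictly less than $\dim(\tau)$, followed by bookkeeping with the disjointness of cell-interiors — is essentially the standard argument given in that reference. I see no gaps.
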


The following lemma is a direct consequence of Definition~\ref{d: cell decomposition} and Lemma~\ref{l: properties of cell decompositions}.
\begin{lemma}\label{l: cell decomp: restrict}
     Let $\cD$ be a cell decomposition of a locally-compact Hausdorff space $\fX$ and $S\subseteq\fX$. Then the following statements are true:
     \begin{enumerate}[label=(\roman*),font=\rm]
        \smallskip
         \item  $\cD|_S$ is a subcomplex of $\cD$.
         \smallskip
         \item If $S$ is a union of cells in $\cD$, then $\cD|_S$ is a cell decomposition of $S$. In particular, for each $c\in\cD$, $\cD|_c$ and $\cD|_{\cellbound c}$ are cell decompositions of $c$ and $\cellbound c$, respectively.
         \smallskip
         \item If $S=\abs{\cC}$ for a subcomplex $\cC$ of $\cD$, then $\cD|_S=\cC$. 
     \end{enumerate}
 \end{lemma}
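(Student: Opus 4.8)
The plan is to verify the three assertions in turn, each of which reduces to a short check against Definition~\ref{d: cell decomposition} and Lemma~\ref{l: properties of cell decompositions}; recall throughout that every cell is closed (Lemma~\ref{l: properties of cells}(i)). For part~(i), I would first note that $\abs{\cD|_S}$ is closed in $\fX$: it is the union of the family of closed cells $\{c\in\cD:c\subseteq S\}$, which is locally finite by axiom~(iv) of Definition~\ref{d: cell decomposition} applied to $\cD$, so its union is closed; hence $\abs{\cD|_S}$, being a closed subspace of the locally-compact Hausdorff space $\fX$, is again locally-compact Hausdorff, and the notion of a cell decomposition of $\abs{\cD|_S}$ makes sense. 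It then remains to check the four axioms of Definition~\ref{d: cell decomposition} for the collection $\cD|_S$ in this space. Axioms~(i), (ii), and~(iv) are inherited verbatim from $\cD$ (the union of the cells of $\cD|_S$ equals $\abs{\cD|_S}$ by definition; cell-interiors of distinct cells of $\cD|_S\subseteq\cD$ are disjoint; local finiteness passes to subfamilies), and the only point needing an argument is axiom~(iii): for $\tau\in\cD|_S$, the cell-boundary $\cellbound\tau$ is a union of cells of $\cD$ by axiom~(iii) for $\cD$, and each such cell $c$ obeys $c\subseteq\cellbound\tau\subseteq\tau\subseteq S$, hence lies in $\cD|_S$. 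Thus $\cD|_S$ is a cell decomposition of $\abs{\cD|_S}$, i.e., a cell complex, and being a subset of $\cD$ it is a subcomplex of $\cD$.

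For part~(ii), the extra hypothesis that $S$ is a union of cells of $\cD$ forces $\abs{\cD|_S}=S$: if $S=\bigcup_i c_i$ with $c_i\in\cD$, then each $c_i\in\cD|_S$, so $S\subseteq\abs{\cD|_S}$, while the reverse inclusion is immediate from the definition of $\cD|_S$. Combined with part~(i), this shows $\cD|_S$ is a cell decomposition of $S$. For the ``in particular'' clause it then suffices to observe that, for each $c\in\cD$, both $c$ and $\cellbound c$ are unions of cells of $\cD$: for $\cellbound c$ this is exactly axiom~(iii) of Definition~\ref{d: cell decomposition}, and for $c$ one uses Lemma~\ref{l: properties of cell decompositions}(iii), which gives $c=\bigcup\{\stdCellint{c'}:c'\in\cD,\,c'\subseteq c\}\subseteq\bigcup\{c'\in\cD:c'\subseteq c\}\subseteq c$.

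For part~(iii), I would establish the two inclusions separately. If $c\in\cC$, then $c\in\cD$ and $c\subseteq\abs{\cC}=S$, so $c\in\cD|_S$; thus $\cC\subseteq\cD|_S$. Conversely, let $c\in\cD|_S$ and pick any point $x\in\stdCellint{c}$, which is nonempty since $c$ is a cell. Since $\cC$ is a subcomplex, it is a cell decomposition of $\abs{\cC}=S$, so by Lemma~\ref{l: properties of cell decompositions}(i)--(ii) there is a (unique) cell $c'\in\cC$ with $x\in\stdCellint{c'}$. Then $c$ and $c'$ are both cells of $\cD$ with $x\in\stdCellint{c}\cap\stdCellint{c'}$, so axiom~(ii) of Definition~\ref{d: cell decomposition}, applied in the \emph{ambient} decomposition $\cD$, forces $c=c'$, whence $c\in\cC$. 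This last step --- using the disjointness of cell-interiors in $\cD$ rather than merely in $\cC$ to match $c$ with a cell of $\cC$ through a point of its cell-interior --- is the only part that is not a completely mechanical unwinding of definitions, and I do not anticipate any genuine obstacle, the lemma being essentially a formal consequence of Definition~\ref{d: cell decomposition} and Lemma~\ref{l: properties of cell decompositions}.
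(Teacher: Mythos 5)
Your proof is correct, and it follows exactly the route the paper intends: the paper states this lemma without proof as ``a direct consequence of Definition~\ref{d: cell decomposition} and Lemma~\ref{l: properties of cell decompositions}'', and your argument is precisely the routine verification of the axioms (plus the closedness of $\abs{\cD|_S}$ via local finiteness and the cell-interior matching step in (iii)) that this remark leaves to the reader.
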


As another direct consequence of Definition~\ref{d: cell decomposition} and Lemma~\ref{l: properties of cell decompositions}, given a cell decomposition $\cD$ of $\fX$, for each $x\in\fX$, there is a unique cell $c\in\cD$ such that $x\in\stdCellint{c}$, which we call the \emph{carrier}.

\begin{definition}[Carrier]\label{d: carrier}
	The \defn{carrier} of $x\in\fX$ in a cell decomposition $\cD$ is the unique cell $\supp_{\cD}(x)\in\cD$ for which $x\in\brCellint{\supp_\cD(x)}$.
\end{definition}
It is easy to observe that for $d\in\N_0$, $x\in\Absbig{\sk{\cD}{d}}\smallsetminus\Absbig{\sk{\cD}{d-1}}$ if and only if $\dim(\supp_\cD(x))=d$.

In this article, we mostly focus on the special case where the topological space $\fX$ is a \defn{closed topological manifold}, i.e.,~a compact topological manifold without boundary.
For a topological $n$-manifold $\mfd^n$, by the invariance of domain theorem, the dimension of each cell in a cell decomposition of $\mfd^n$ is no more than $n$. This observation, together with Baire's theorem, yields the following properties.

\begin{lemma}\label{l: cell decomp: compact, and Mfd}
    Let $\cD$ be a cell decomposition of a locally-compact Hausdorff space $\fX$. Then the following statements are true:
    \begin{enumerate}[label=(\roman*),font=\rm]
    	\smallskip
		\item If $\fX$ is compact, then $\cD$ is a finite set.
		\smallskip
		\item If $\fX$ is a closed topological $n$-manifold, then $\fX=\Absbig{\cD^{[n]}}$ and each cell in $\cD$ is contained in an $n$-dimensional cell in $\cD$.
        \smallskip
        \item For each $c\in\cD$ and each integer $0\leqslant k\leqslant \dim(c)$, there exists $\sigma\in\cD|_c$ for which $\dim(\sigma)=k$.
	\end{enumerate}  
\end{lemma}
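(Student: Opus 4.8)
The plan is to prove the three parts in order, using each one to bootstrap the next. Part~(i) is immediate from local finiteness: by condition~(iv) of Definition~\ref{d: cell decomposition}, every point of $\fX$ has an open neighborhood meeting only finitely many cells of $\cD$; compactness yields a finite subcover $U_1,\dots,U_m$, and since every nonempty cell of $\cD$ meets some $U_i$, the collection $\cD$ is the union of the $m$ finite sets $\{c\in\cD:c\cap U_i\neq\emptyset\}$, hence finite.

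For part~(ii) the essential input is the invariance of domain; part~(i) then reduces the rest to an elementary point-set fact (a finite union of closed nowhere-dense subsets of a nonempty space is proper), so the full strength of Baire's theorem is not actually needed. By Lemma~\ref{l: properties of cells}(i) every cell is closed in $\fX$, and by invariance of domain no nonempty open subset of the $n$-manifold $\fX$ can be embedded into $[0,1]^k$ when $k<n$; hence every cell $c$ with $\dim(c)=k<n$, being closed with empty interior, is nowhere dense in $\fX$. Set $V\=\abs{\cD^{[n]}}$ and $W\=\bigcup\{c\in\cD:\dim(c)<n\}$. By part~(i), $W$ is a finite union of closed nowhere-dense sets, hence closed and nowhere dense, so $\fX\smallsetminus W$ is open and dense. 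Since every cell of $\cD$ has dimension at most $n$ (invariance of domain again), $\fX=V\cup W$, and therefore $\fX\smallsetminus W\subseteq V$; as $V$ is also closed, $V=\fX$. For the second assertion, given $c\in\cD$ I would take a point $x\in\stdCellint{c}$; since $x\in\fX=\abs{\cD^{[n]}}$, there is an $n$-dimensional cell $X\in\cD$ with $x\in X$, and Lemma~\ref{l: properties of cell decompositions}(v), applied with $\sigma=c$ and the single cell $X$, forces $c\subseteq X$.

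For part~(iii) I would descend through cell-boundaries. The case $k=\dim(c)$ is trivial, since $c\in\cD|_c$. For $k<\dim(c)$ it suffices to prove that every $\tau\in\cD$ with $\dim(\tau)=j\geq1$ contains a cell of $\cD$ of dimension $j-1$: iterating this starting from $c$ and stopping at dimension $k$ then produces the desired $\sigma\in\cD|_c$, since each cell so obtained lies in the cell-boundary of the previous one --- hence inside $c$ --- and belongs to $\cD$ because cell-boundaries are unions of cells of $\cD$ (Definition~\ref{d: cell decomposition}(iii)). To find a $(j-1)$-cell inside $\tau$, note that $\cellbound\tau$ is homeomorphic to the boundary of the $j$-cube, i.e.\ to $\Sp^{j-1}$, which is a compact topological $(j-1)$-manifold; by Lemma~\ref{l: cell decomp: restrict}(ii) the restriction $\cD|_{\cellbound\tau}$ is a cell decomposition of $\cellbound\tau$, so applying the already-proved part~(ii) to this manifold gives $\cellbound\tau=\abs{(\cD|_{\cellbound\tau})^{[j-1]}}$, whence $\cD|_{\cellbound\tau}\subseteq\cD|_\tau$ contains a cell of dimension $j-1$.

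The only step that is more than bookkeeping is the nowhere-density claim in part~(ii): this is precisely where invariance of domain enters, and it is also why the manifold hypothesis is essential (without it parts~(ii) and~(iii) fail, e.g.\ for a $2$-disk with an arc attached along a boundary point, whose attached arc lies in no $2$-cell). Everything else follows from the cell-decomposition axioms together with Lemmas~\ref{l: properties of cells}, \ref{l: properties of cell decompositions}, and~\ref{l: cell decomp: restrict}. The degenerate case $n=0$ (also needed inside part~(iii) when $j=1$) is trivial, since a closed $0$-manifold is a finite discrete space whose only cells are singletons.
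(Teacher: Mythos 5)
Your proposal is correct and follows essentially the same route as the paper: local finiteness plus compactness for (i); invariance of domain to bound cell dimensions and make the lower-dimensional skeleton nowhere dense, then closedness of $\Absbig{\cD^{[n]}}$ and Lemma~\ref{l: properties of cell decompositions}~(v) for (ii); and descent through cell-boundaries, applying (ii) to the sphere $\cellbound\tau$ with the restricted decomposition from Lemma~\ref{l: cell decomp: restrict}, for (iii). Your observations that only the finite (rather than full Baire) version of the nowhere-density argument is needed and that the $j=1$/$n=0$ degenerate case is harmless are minor refinements of the same argument.
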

\begin{proof}
    (i) For each $x\in\fX$, by Definition~\ref{d: cell decomposition}~(iv), we may choose a neighborhood $U_x$ in $\mfd^n$ that meets only finitely many cells in $\cD$.
	Since $\fX$ is compact, suppose $\fX=U_{x_1}\cup\dots\cup U_{x_m}$ for some $x_1,\,\dots,\, x_m\in\mfd^n$. Then for each $c\in\cD$, there exists $i\in\{1,\,\dots,\,m\}$ such that $U_{x_i}\cap c\neq\emptyset$. It follows that 
	$\cD=\bigcup_{i=1}^m\{c\in\cD:c\cap U_i\neq\emptyset\}$
	is a finite set.

    \smallskip

    (ii) By (i) $\cD$ is a finite set. Since for each $c\in\sk{\cD}{n-1}$ the interior $\stdInt{c}$ (regarded as a subset of the topological $n$-manifold $\fX$) is empty, we have
	$\stdInt{\Absbig{\sk{\cD}{n-1}}}=\emptyset$. Combining this with $|\cD|=\fX$, we have that
	$\Absbig{\cD^{[n]}}$ is dense in $\fX$. Since each $c\in\cD$ is a closed subset of $\fX$, $\Absbig{\cD^{[n]}}$ is closed. Thus, $\fX=\Absbig{\cD^{[n]}}$. 
    As a consequence, for each $c\in\cD$, there exists $X\in\cD^{[n]}$ that meets $\stdCellint{c}$, and it follows from Lemma~\ref{l: properties of cell decompositions}~(v) that $c\subseteq X$. 

    \smallskip

    (iii) We argue by induction. 
	For each $c\in\cD$ with $\dim(c)=0$, the claim is clear. It suffices to show that for each $c\in\cD$ with $\dim(c)>0$, there exists $\sigma\in\cD|_c$ with $\dim(\sigma)=\dim(c)-1$.
	
	Let $c\in\cD$ with $\dim(c)=d>0$ be arbitrary.
	By Lemma~\ref{l: cell decomp: restrict}, $\cD|_{\cellbound c}$ is a cell decomposition of $\cD$.
	Since $\cellbound c$ is a topological $(d-1)$-sphere, by (ii),  $\cellbound c$ is the union of $(d-1)$-dimensional cells in $\cD|_{\cellbound c}$, which is a subset of $\cD|_c$. Thus, there exists $\sigma\in\cD|_c$ with $\dim(\sigma)=d-1$.
\end{proof}

Finally, we recall the notion of \emph{refinements} of cell decompositions.
\begin{definition}[Refinements]\label{d: refinement}
A cell decomposition $\cD_1$ of a locally-compact Hausdorff space $\fX$ is a \defn{refinement} of a cell decomposition $\cD_0$ of $\fX$ if the following conditions are satisfied:
\begin{enumerate}[label=(\roman*),font=\rm]
	\smallskip
    \item For each $\sigma\in\cD_1$, there exists $\tau\in\cD_0$ satisfying $\sigma\subseteq\tau$.
    \smallskip
    \item Each cell $\tau\in\cD_0$ is the union of cells $\sigma\in\cD_1$ satisfying $\sigma\subseteq\tau$.
\end{enumerate}
In this case, we also say that $\cD_1$ \defn{refines $\cD_0$}.
\end{definition}

\begin{lemma}[{\cite[Lemma~5.7]{BM17}}]\label{l: cell decomp: refinement inte() contained in inte()}
	Let $\cD$ be a cell decomposition of a locally-compact Hausdorff space $\fX$ and let $\cD'$ be a refinement of $\cD$. Then for each $\sigma\in\cD'$, there exists a minimal cell $\tau\in\cD$ with $\sigma\subseteq\tau$, i.e., if $\sigma\subseteq\ttau$ for some $\ttau\in\cD$, then $\tau\subseteq\ttau$. Moreover, $\tau$ is the unique cell in $\cD$ with $\stdCellint{\sigma}\subseteq\stdCellint{\tau}$.
\end{lemma}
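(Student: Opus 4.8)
The plan is to produce the minimal cell explicitly, namely as the $\cD$-carrier of a point lying in the cell-interior of $\sigma$, and then to read off both the universal property and the ``moreover'' clause from uniqueness of carriers. To begin, fix a point $x\in\stdCellint{\sigma}$ (a nonempty set, since $\sigma$ is a cell) and set $\tau\=\supp_\cD(x)$, so that $\tau\in\cD$ and $x\in\stdCellint{\tau}$. I claim that $\tau$ is the cell asserted in the statement.

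The first step is to show $\sigma\subseteq\tau$; this is the only place where the refinement hypothesis enters, and only through condition~(ii) of Definition~\ref{d: refinement}. That condition writes $\tau$ as the union of the cells $\rho\in\cD'$ with $\rho\subseteq\tau$, so $x$ lies in some such $\rho$. Applying Lemma~\ref{l: properties of cell decompositions}~(iii) to the cell decomposition $\cD'$ and the cell $\rho$ gives $\rho=\bigcup\{\stdCellint{c}:c\in\cD',\,c\subseteq\rho\}$, hence $x\in\stdCellint{c}$ for some $c\in\cD'$ with $c\subseteq\rho$. Since also $x\in\stdCellint{\sigma}$, uniqueness of the carrier of $x$ in $\cD'$ (Definition~\ref{d: carrier}) forces $c=\sigma$, and therefore $\sigma\subseteq\rho\subseteq\tau$.

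The second step is minimality. If $\ttau\in\cD$ and $\sigma\subseteq\ttau$, then $x\in\ttau$, so Lemma~\ref{l: properties of cell decompositions}~(iii) applied now to $\cD$ and $\ttau$ produces a cell $c\in\cD$ with $c\subseteq\ttau$ and $x\in\stdCellint{c}$; uniqueness of the carrier of $x$ in $\cD$ forces $c=\tau$, so $\tau\subseteq\ttau$. Together with the first step this shows $\tau$ is a minimal cell of $\cD$ containing $\sigma$; and any two minimal cells are contained in one another, hence coincide, so the minimal cell is unique. In particular the same cell $\tau$ is obtained from every choice of $x\in\stdCellint{\sigma}$, i.e.\ $\supp_\cD$ is constant on $\stdCellint{\sigma}$.

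Finally, for the ``moreover'' statement, the last remark gives $y\in\stdCellint{\supp_\cD(y)}=\stdCellint{\tau}$ for every $y\in\stdCellint{\sigma}$, so $\stdCellint{\sigma}\subseteq\stdCellint{\tau}$; and if $\tau_1\in\cD$ satisfies $\stdCellint{\sigma}\subseteq\stdCellint{\tau_1}$, then $x\in\stdCellint{\tau_1}\cap\stdCellint{\tau}$, and uniqueness of the carrier of $x$ in $\cD$ gives $\tau_1=\tau$. I do not expect a genuine obstacle here: the argument is entirely formal. The one thing that requires care is bookkeeping --- keeping track of which cell decomposition one is working in when invoking Lemma~\ref{l: properties of cell decompositions}~(iii) and uniqueness of carriers (the finer complex $\cD'$ for the inclusion $\sigma\subseteq\tau$, the coarser complex $\cD$ for minimality and for the cell-interior characterization).
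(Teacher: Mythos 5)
Your proof is correct: choosing $x\in\stdCellint{\sigma}$, taking $\tau=\supp_\cD(x)$, and using refinement condition~(ii) together with Lemma~\ref{l: properties of cell decompositions}~(iii) and uniqueness of carriers in $\cD'$ and $\cD$ establishes $\sigma\subseteq\tau$, minimality, and the ``moreover'' clause without gaps. The paper itself gives no proof (it cites \cite[Lemma~5.7]{BM17}), and your carrier-based argument is essentially the standard one found there, so there is nothing further to reconcile.
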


\begin{cor}\label{c: refinement: inte(c1) meet inte(c0)}
    Let $\cD$ be a cell decomposition of a locally-compact Hausdorff space $\fX$ and let $\cD'$ be a refinement of $\cD$. If $c'\in\cD'$ and $c\in\cD$ satisfy $\brCellint{c'}\cap\brCellint{c}\neq\emptyset$, then $c'\subseteq c$ and $\stdCellint{c'}\subseteq\stdCellint{c}$.
\end{cor}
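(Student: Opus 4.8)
The plan is to derive the statement directly from Lemma~\ref{l: cell decomp: refinement inte() contained in inte()}, using only the disjointness of cell-interiors in a cell decomposition. First I would apply Lemma~\ref{l: cell decomp: refinement inte() contained in inte()} to the cell $c'\in\cD'$ to obtain the minimal cell $\tau\in\cD$ with $c'\subseteq\tau$; the same lemma guarantees $\stdCellint{c'}\subseteq\stdCellint{\tau}$, and that $\tau$ is the \emph{unique} cell of $\cD$ with this property (in particular $\tau=\supp_\cD(x)$ for every $x\in\stdCellint{c'}$).

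Next I would exploit the hypothesis $\brCellint{c'}\cap\brCellint{c}\neq\emptyset$. Picking a point $x$ in this intersection, the inclusion $\stdCellint{c'}\subseteq\stdCellint{\tau}$ gives $x\in\brCellint{\tau}\cap\brCellint{c}$. Since $\tau$ and $c$ both lie in the cell decomposition $\cD$, Definition~\ref{d: cell decomposition}~(ii) forces $\tau=c$ (equivalently, $c$ is the carrier $\supp_\cD(x)$, and so is $\tau$, so they coincide by uniqueness of the carrier). Substituting back yields $c'\subseteq\tau=c$ and $\stdCellint{c'}\subseteq\stdCellint{\tau}=\stdCellint{c}$, which is precisely the claim.

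There is no genuine obstacle here; the corollary is a bookkeeping consequence of the refinement lemma. The only point requiring a little care is to invoke the correct half of Lemma~\ref{l: cell decomp: refinement inte() contained in inte()}---namely the assertion about cell-interiors, not merely the containment $c'\subseteq\tau$---so that the disjointness argument on interiors applies. With that in hand the conclusion is immediate.
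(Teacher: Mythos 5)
Your argument is correct and matches the paper's proof: both invoke Lemma~\ref{l: cell decomp: refinement inte() contained in inte()} to get the cell $\sigma\in\cD$ with $\stdCellint{c'}\subseteq\stdCellint{\sigma}$ (and $c'\subseteq\sigma$), then use the disjointness of cell-interiors in $\cD$ to conclude $\sigma=c$. No issues.
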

\begin{proof}
    By Lemma~\ref{l: cell decomp: refinement inte() contained in inte()}, there exists $\sigma\in\cD$ for which $\stdCellint{c'}\subseteq\stdCellint{\sigma}$. Since $\brCellint{c'}\cap\brCellint{c}\neq\emptyset$, $\brCellint{c}\cap\brCellint{\sigma}\neq\emptyset$, and thus $c=\sigma$.
\end{proof}

\begin{cor}\label{c: refinement: restriction}
    Let $\fX$ be a locally-compact Hausdorff space, $\cD$ be a cell decomposition of $\fX$, $\cD'$ be a refinement of $\cD$, and $S\subseteq\fX$ be a union of cells in $\cD$. Then $\cD'|_S$ is a cell decomposition of $S$ that refines $\cD|_S$.
\end{cor}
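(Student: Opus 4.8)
The plan is to reduce everything to the earlier lemmas on restrictions of cell decompositions and then to verify the two conditions in Definition~\ref{d: refinement} one at a time. First I would note that $S$ is also a union of cells of $\cD'$: since $\cD'$ refines $\cD$, each $\tau\in\cD$ equals $\bigcup\{\sigma\in\cD':\sigma\subseteq\tau\}$ by Definition~\ref{d: refinement}~(ii), so a union of cells of $\cD$ is automatically a union of cells of $\cD'$. Consequently Lemma~\ref{l: cell decomp: restrict}~(ii), applied to both $\cD$ and $\cD'$, shows that $\cD|_S$ and $\cD'|_S$ are cell decompositions of $S$; it remains to prove that $\cD'|_S$ refines $\cD|_S$.

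Condition~(ii) of Definition~\ref{d: refinement} is the easy one: given $\tau\in\cD|_S$, the fact that $\cD'$ refines $\cD$ gives $\tau=\bigcup\{\sigma\in\cD':\sigma\subseteq\tau\}$, and every such $\sigma$ satisfies $\sigma\subseteq\tau\subseteq S$, hence lies in $\cD'|_S$; since the reverse inclusion of index sets is trivial, $\tau=\bigcup\{\sigma\in\cD'|_S:\sigma\subseteq\tau\}$, as required.

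For condition~(i), given $\sigma\in\cD'|_S$, I would pick a point $x\in\stdCellint{\sigma}$ and consider its carrier $\tau\=\supp_{\cD}(x)\in\cD$. The point to establish is that $\tau\subseteq S$: since $\cD|_S$ is a cell decomposition of $S$ and $x\in S$, there is a cell $c\in\cD|_S$ with $x\in\stdCellint{c}$, and by the uniqueness of carriers in $\cD$ we must have $c=\tau$, so $\tau=c\subseteq S$ and $\tau\in\cD|_S$. Finally $x\in\brCellint{\sigma}\cap\brCellint{\tau}$, so Corollary~\ref{c: refinement: inte(c1) meet inte(c0)} yields $\sigma\subseteq\tau$, which is precisely condition~(i). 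I do not expect a genuine obstacle here; the one subtlety is the identification $c=\tau$, which works because the cell-interior of a cell is intrinsic to it, so the carrier of $x$ computed inside the cell decomposition $\cD|_S$ of $S$ agrees with the carrier computed inside $\cD$.
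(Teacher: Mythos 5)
Your proof is correct and follows essentially the same route as the paper's: both deduce from Definition~\ref{d: refinement} that $S$ is a union of cells of $\cD'$, invoke Lemma~\ref{l: cell decomp: restrict}~(ii) to get that $\cD|_S$ and $\cD'|_S$ are cell decompositions of $S$, verify condition~(i) of Definition~\ref{d: refinement} via Corollary~\ref{c: refinement: inte(c1) meet inte(c0)} applied to a cell of $\cD|_S$ whose cell-interior meets $\stdCellint{\sigma}$, and verify condition~(ii) directly from the definition of refinement. Your detour through the carrier $\supp_\cD(x)$ is only a cosmetic variation of the paper's direct use of Lemma~\ref{l: properties of cell decompositions}~(ii) for $\cD|_S$, and your handling of the identification $c=\tau$ is sound since the cell-interior is intrinsic to the cell.
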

\begin{proof}
    By Definition~\ref{d: refinement}, $S$ is also a union of cells in $\cD'$.
    Thus, by Lemma~\ref{l: cell decomp: restrict}~(ii), $\cD|_S$ and $\cD'|_S$ are indeed cell decompositions of $S$. For each $c'\in\cD'|_S$, by Lemma~\ref{l: properties of cell decompositions}~(ii), there exists $c\in\cD|_S$ with $\brCellint{c'}\cap\brCellint{c}\neq\emptyset$. Since $\cD'$ refines $\cD$, by Corollary~\ref{c: refinement: inte(c1) meet inte(c0)}, $c'\subseteq c$. On the other hand, by Definition~\ref{d: refinement}, each cell in $\cD|_S$ is a union of cells in $\cD'|_S$. Thus, $\cD'|_S$ is a refinement of $\cD|_S$.
\end{proof}

We give a lemma that allows us to obtain a refinement of a cell decomposition by subdividing each cell.
\begin{lemma}\label{l: construct refinement}
	Let $\cD$ be a cell decomposition of a locally-compact Hausdorff space $\fX$. For each $c\in\cD$, let $\cD'(c)$ be a cell decomposition of $c$ that refines $\cD|_c$. Then $\cD'\=\bigcup_{c\in\cD}\cD'(c)$ is a cell decomposition that refines $\cD$ if and only if for all $c,\,\sigma\in\cD$ with $c\subseteq\sigma$, we have $\cD'(\sigma)|_c=\cD'(c)$.
\end{lemma}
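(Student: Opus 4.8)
The plan is to prove the two implications separately. The workhorses will be Corollary~\ref{c: refinement: restriction} and Corollary~\ref{c: refinement: inte(c1) meet inte(c0)}, together with the following elementary observation, which I would record first: if $\cA$ and $\cB$ are cell decompositions of the same space and both are subcollections of a single cell decomposition $\cE$, then $\cA=\cB$. Indeed, for $\tau\in\cA$ one picks $x\in\brCellint{\tau}$; since $\cB$ is a cell decomposition of the space containing $x$, there is a unique $\tau'\in\cB$ with $x\in\brCellint{\tau'}$, and as $\tau,\tau'\in\cE$ have intersecting cell-interiors, Definition~\ref{d: cell decomposition}~(ii) forces $\tau=\tau'\in\cB$; by symmetry $\cA=\cB$.

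For the ``only if'' direction, I would assume $\cD'=\bigcup_{c\in\cD}\cD'(c)$ is a cell decomposition of $\fX$ refining $\cD$ and fix $c,\sigma\in\cD$ with $c\subseteq\sigma$. Since $c$ is a union of cells of the cell decomposition $\cD|_\sigma$ of $\sigma$, Corollary~\ref{c: refinement: restriction} (applied to $\cD|_\sigma$ and its refinement $\cD'(\sigma)$) shows that $\cD'(\sigma)|_c$ is a cell decomposition of $c$. Then $\cD'(c)$ and $\cD'(\sigma)|_c$ are both cell decompositions of $c$ and both are subcollections of $\cD'$, so the observation above gives $\cD'(c)=\cD'(\sigma)|_c$.

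For the ``if'' direction, I would assume $\cD'(\sigma)|_c=\cD'(c)$ whenever $c,\sigma\in\cD$ satisfy $c\subseteq\sigma$ and then verify the four conditions of Definition~\ref{d: cell decomposition} for $\cD'$, followed by the two refinement conditions. Condition~(i) is immediate from $\bigcup\cD'(c)=c$ and $\bigcup_{c\in\cD}c=\fX$; Condition~(iii) holds because for $\tau\in\cD'(c)$ the cell-boundary $\cellbound\tau$ is a union of cells of $\cD'(c)\subseteq\cD'$; and Condition~(iv) follows by picking, for $x\in\fX$, a neighborhood meeting only finitely many cells $c_1,\dots,c_k$ of $\cD$, since any cell of $\cD'$ meeting that neighborhood lies in some $\cD'(c_i)$ and each $\cD'(c_i)$ is finite by Lemma~\ref{l: cell decomp: compact, and Mfd}~(i) (the $c_i$ being compact). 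The substantive point, and the one where the hypothesis is used, is Condition~(ii): given $\tau_1\in\cD'(c_1)$ and $\tau_2\in\cD'(c_2)$ with $\brCellint{\tau_1}\cap\brCellint{\tau_2}\ni x$, I would let $e\in\cD$ be the carrier of $x$. Since $x\in c_i$ lies in the cell-interior of a cell of $\cD$ contained in $c_i$ (Lemma~\ref{l: properties of cell decompositions}~(iii)), uniqueness of the carrier gives $e\subseteq c_i$ for $i=1,2$; then, since $\cD'(c_i)$ refines $\cD|_{c_i}$ and $\brCellint{\tau_i}\cap\brCellint{e}\ni x$, Corollary~\ref{c: refinement: inte(c1) meet inte(c0)} gives $\tau_i\subseteq e$. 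The hypothesis then yields $\tau_i\in\cD'(c_i)|_e=\cD'(e)$ for $i=1,2$, whence $\tau_1=\tau_2$ because $\cD'(e)$ is a cell decomposition of $e$. Finally, $\cD'$ refines $\cD$: every cell of $\cD'$ belongs to some $\cD'(\tau)$ and hence lies in $\tau\in\cD$, and $\bigcup\cD'(\tau)=\tau$ shows each $\tau\in\cD$ is a union of cells of $\cD'$ contained in it.

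The step I expect to require the most care is the cell-interior disjointness in the ``if'' direction: one has to identify correctly the single cell $e$ of $\cD$ that contains both $\tau_1$ and $\tau_2$ and lies inside both $c_1$ and $c_2$, so that the compatibility hypothesis $\cD'(c_i)|_e=\cD'(e)$ can be invoked; everything else reduces to routine bookkeeping with the lemmas of Section~\ref{sct: prelim}.
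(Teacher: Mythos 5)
Your proof is correct and takes essentially the same route as the paper: the crux in both is to locate, for two cells of $\cD'$ with intersecting cell-interiors, the common cell of $\cD$ on which the compatibility hypothesis is applied (you do this via the carrier of an intersection point and Corollary~\ref{c: refinement: inte(c1) meet inte(c0)}, the paper via Lemma~\ref{l: cell decomp: refinement inte() contained in inte()}), while conditions (i), (iii), (iv) and the refinement property are checked just as in the paper. Your ``only if'' argument, packaged as a uniqueness observation after invoking Corollary~\ref{c: refinement: restriction}, is simply a restatement of the paper's two-inclusion argument.
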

\begin{proof}
    First, we show the ``only if" part. Assume that $\cD'$ is a cell decomposition. Let $c,\,\sigma\in\cD$ satisfy $c\subseteq\sigma$. For each $c'\in\cD'(\sigma)|_c$, since $\cD'(c)$ is a cell decomposition of $c$, by Lemma~\ref{l: properties of cell decompositions}~(ii), there exists $c''\in\cD'(c)$ such that $\brCellint{c'}\cap\brCellint{c''}\neq\emptyset$. Then since $\cD'$ is a cell decomposition, $c'=c''\in\cD'(c)$. Thus $\cD'(\sigma)|_c\subseteq\cD'(c)$. The same arguments show that $\cD'(c)\subseteq\cD'(\sigma)|_c$.
    
    In what follows, we show the ``if" part. Assume that for all $c,\,\sigma\in\cD$ with $c\subseteq\sigma$, we have $\cD'(\sigma)|_c=\cD'(c)$. We verify conditions~(i)--(iii) in Definition~\ref{d: cell decomposition}.
    
    Conditions~(i) and~(iii) are direct consequences of the construction of $\cD'$. 
    
    Now we verify (ii).
	Let $\sigma',\,\tau'\in\cD'$ be such that $\brCellint{\sigma'}\cap\brCellint{\tau'}\neq\emptyset$. Suppose $\sigma,\,\tau\in\cD$ satisfy $\sigma'\in\cD'(\sigma)$ and $\tau'\in\cD'(\tau)$. 
	Since $\cD'(\sigma)$ refines $\cD|_\sigma$, by Lemma~\ref{l: cell decomp: refinement inte() contained in inte()}, there is $\sigma_1\in\cD|_\sigma$ such that $\stdCellint{\sigma'}\subseteq\stdCellint{\sigma_1}$ and $\sigma'\subseteq\sigma_1$. Likewise, there is $\tau_1\in\cD|_\tau$ such that $\stdCellint{\tau'}\subseteq\stdCellint{\tau_1}$ and $\tau'\subseteq\tau_1$. If follows from $\brCellint{\sigma'}\cap\brCellint{\tau'}\neq\emptyset$ that  $\stdCellint{\sigma_1}$ meets $\stdCellint{\tau_1}$, and thus $\sigma_1=\tau_1$. Denote $c\=\sigma_1=\tau_1\in\cD$. Then 
    since $\sigma'\subseteq\sigma_1$ and $\tau'\subseteq\tau_1$,
    $\sigma'\in\cD'(\sigma)|_c$ and $\tau'\in\cD'(\tau)|_c$. By the assumption $\cD'(\sigma)|_c=\cD'(c)=\cD'(\tau)|_c$, we have $\sigma',\,\tau'\in\cD'(c)$. Since $\cD'(c)$ is a cell decomposition and $\sigma'\cap\tau'\neq\emptyset$, we have $\sigma'=\tau'$. 

    Finally, we verify~(iv).
    Fix arbitrary $x\in\fX$ and consider a neighborhood $U$ of $x$ that meets only finitely many cells in $\cD$. 
    For each $c\in\cD$, since $c$ is compact, by Lemma~\ref{l: cell decomp: compact, and Mfd}~(i), $\cD'(c)$ is finite. It follows that $U$ meets only finitely many cells in $\cD'$. 
    
    Hence, $\cD'$ is a cell decomposition. It is clear from the construction of $\cD'$ that $\cD'$ refines $\cD$.
\end{proof}

\subsubsection{Flowers}
Let $\cD$ be a cell decomposition of a closed topological $n$-manifold $\mfd^n$, $n\geq 2$. We have by Lemma~\ref{l: cell decomp: compact, and Mfd}~(ii) that $\cD^{[k]}$ is non-empty for each $0 \leq  k \leq  n$. For a $0$-dimensional cell $\{v\}\in\cD$, we call the unique point $v\in\mfd^n$ a \defn{vertex}. We call the $n$-dimensional cells of $\cD$ \defn{chambers} and $(n-1)$-dimensional cells \defn{facets}.

The chambers of $\cD$ form a cover of $\mfd^n$, but are not open. To construct from $\cD$ an open cover of $\mfd^n$, we introduce the notion of \emph{flowers}. Note that this notion is very similar to the \emph{stars} in the theory of simplicial complex.

Let $\cD$ be a cell decomposition of a closed topological $n$-manifold $\mfd^n$. For $c\in\cD$, we denote
\begin{equation}\label{e: cF(c)}
	\flower_\cD(c)\=\bigcup\{\stdCellint{\sigma}:\sigma\in\cD,\,c\subseteq\sigma\}=\bigcup\{\stdCellint{\sigma}:\sigma\in\cD,\,\sigma\cap\stdCellint{c}\neq\emptyset\}.
\end{equation}
Similarly, for a point $x\in\mfd^n$, we denote 
\begin{equation}\label{e: Fl(x)}
	\begin{aligned}
			\flower_\cD(x)\=\flower_\cD(\supp_\cD(x))=\bigcup\{\stdCellint{\sigma}:x\in\sigma\}.
	\end{aligned}
\end{equation}
It is easy to check by Lemma~\ref{l: properties of cell decompositions}~(v) that the equalities in (\ref{e: cF(c)}) and (\ref{e: Fl(x)}) hold. It is also clear that $\stdCellint{c}\subseteq\flower_\cD(c)$ for each $c\in\cD$, and $x\in\flower_\cD(x)$ for each $x\in\mfd^n$.
 
Clearly $\{\flower_\cD(x):x\in\mfd^n\}$ is a cover of $\mfd^n$. We choose a finite subcover, namely,
\begin{equation*}
\coverF(\cD)\=\{\flower_\cD(p):p\text{ is a vertex of }\cD\}.
\end{equation*}
For each vertex $p$ in $\cD$, we call $\flower_\cD(p)$ the \defn{flower} of the vertex $p$ in $\cD$.
Indeed, $\coverF(\cD)$ is a cover of $\mfd^n$, since for each $x\in\mfd^n$, Lemmas~\ref{l: properties of cell decompositions}~(ii) and~\ref{l: cell decomp: compact, and Mfd}~(iii) yield that there exist $c\in\cD$ with $x\in\stdCellint{c}$ and a vertex $p$ with $p\in c$.

With $\cD$ understood, we often omit $\cD$ in the above notations, that is, we denote $\flower(\cdot)\=\flower_\cD(\cdot)$.
The following properties are elementary.
\begin{prop}\label{p: c-flower: structure}
	Let $\cD$ be a cell decomposition of a closed topological $n$-manifold $\mfd^n$ and $c\in\cD$. Then the following statements are true:
	\begin{enumerate}[font=\rm,label=(\roman*)]
    	\smallskip
		\item $\mfd^n\smallsetminus\flower(c)=\bigcup\{\sigma\in\cD:c\nsubseteq \sigma\}$, and $\flower(c)$ is a path-connected open set.
		\smallskip
		\item $\cls{\flower(c)}=\bigcup\bigl\{X\in\cD^{[n]}:c\subseteq X\bigr\}$.
		\smallskip
		\item For each $x\in\mfd^n$, $x\in\partial\flower(c)$ if and only if there exist $X\in\cD^{[n]}$ and $\sigma\in\cD|_X$ such that $x\in\sigma$, $c\nsubseteq\sigma$, and $c\subseteq X$.
	\end{enumerate}
\end{prop}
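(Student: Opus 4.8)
The plan is to prove the three items essentially in the stated order, with (i) carrying almost all of the content and (ii), (iii) following by formal manipulation using the structural lemmas on cell decompositions. For the set equality in (i), I would start from the fact that $\mfd^n=\bigcup\{\stdCellint{\sigma}:\sigma\in\cD\}$ with pairwise disjoint cell-interiors (Lemma~\ref{l: properties of cell decompositions}~(ii) and Definition~\ref{d: cell decomposition}~(ii)); since $\flower(c)=\bigcup\{\stdCellint{\sigma}:\sigma\in\cD,\ c\subseteq\sigma\}$, this immediately gives $\mfd^n\smallsetminus\flower(c)=\bigcup\{\stdCellint{\sigma}:\sigma\in\cD,\ c\nsubseteq\sigma\}$. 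To upgrade the cell-interiors to full cells, I would use $\sigma=\bigcup\{\stdCellint{\tau}:\tau\in\cD,\ \tau\subseteq\sigma\}$ (Lemma~\ref{l: properties of cell decompositions}~(iii)) together with the observation that $\tau\subseteq\sigma$ and $c\nsubseteq\sigma$ force $c\nsubseteq\tau$; this shows $\bigcup\{\sigma\in\cD:c\nsubseteq\sigma\}=\mfd^n\smallsetminus\flower(c)$, which is the asserted formula. Since $\cD$ is finite (Lemma~\ref{l: cell decomp: compact, and Mfd}~(i)) and cells are closed (Lemma~\ref{l: properties of cells}~(i)), the complement of $\flower(c)$ is a finite union of closed sets, so $\flower(c)$ is open.

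The path-connectedness in (i) is the one step I expect to require genuine care rather than bookkeeping. I would fix a point $p_0\in\stdCellint{c}$ and show every $x\in\flower(c)$ is joined to $p_0$ by a path inside $\flower(c)$. Put $\sigma\=\supp_\cD(x)$; then $x\in\stdCellint{\sigma}$, and since cell-interiors are disjoint and $x\in\flower(c)$ we must have $c\subseteq\sigma$. Choosing a homeomorphism $\phi\colon[0,1]^k\to\sigma$ with $k=\dim(\sigma)$, the independence of the cell-interior from the defining homeomorphism gives $\phi\bigl((0,1)^k\bigr)=\stdCellint{\sigma}$, so $\phi^{-1}(x)\in(0,1)^k$, while $\phi^{-1}(p_0)\in[0,1]^k$ because $p_0\in c\subseteq\sigma$. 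As $(0,1)^k$ is the interior of the convex set $[0,1]^k$ and $\phi^{-1}(p_0)$ lies in its closure, the segment from $\phi^{-1}(x)$ to $\phi^{-1}(p_0)$ (possibly minus its latter endpoint) stays inside $(0,1)^k$; pushing this segment forward by $\phi$ yields a path from $x$ to $p_0$ contained in $\stdCellint{\sigma}\cup\{p_0\}\subseteq\flower(c)$. Since $\stdCellint{c}$ is itself path-connected, any two points of $\flower(c)$ are joined through $p_0$, so $\flower(c)$ is path-connected.

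With (i) in hand, (ii) is quick: for ``$\subseteq$'', every $\sigma\in\cD$ with $c\subseteq\sigma$ is contained in some $X\in\cD^{[n]}$ by Lemma~\ref{l: cell decomp: compact, and Mfd}~(ii), whence $c\subseteq\sigma\subseteq X$, so $\flower(c)\subseteq\bigcup\{X\in\cD^{[n]}:c\subseteq X\}$, and the right-hand side is a finite union of closed cells, hence closed and containing $\cls{\flower(c)}$; for ``$\supseteq$'', if $X\in\cD^{[n]}$ and $c\subseteq X$ then $\stdCellint{X}\subseteq\flower(c)$, so $X=\cls{\stdCellint{X}}\subseteq\cls{\flower(c)}$ by Lemma~\ref{l: properties of cells}~(i). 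Finally, for (iii) I would use that $\flower(c)$ is open, so $\partial\flower(c)=\cls{\flower(c)}\cap\bigl(\mfd^n\smallsetminus\flower(c)\bigr)$, and then substitute the descriptions from (ii) and (i), obtaining that $x\in\partial\flower(c)$ iff $x$ lies in some $X\in\cD^{[n]}$ with $c\subseteq X$ and also in some $\tau\in\cD$ with $c\nsubseteq\tau$. The ``if'' direction of (iii) is then immediate, since for the given $X$ and $\sigma\in\cD|_X$ one has $x\in\sigma\subseteq X$. For ``only if'', given $x\in\partial\flower(c)$ I would pick $X\in\cD^{[n]}$ with $x\in X$ and $c\subseteq X$, set $\sigma\=\supp_\cD(x)$, note that $\stdCellint{\sigma}\ni x\in X$ forces $\sigma\subseteq X$ by Lemma~\ref{l: properties of cell decompositions}~(v) (so $\sigma\in\cD|_X$), and observe that $c\nsubseteq\sigma$, for otherwise $x\in\stdCellint{\sigma}\subseteq\flower(c)$, contradicting $x\notin\flower(c)$. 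Apart from the path-connectedness argument, every step here is a direct application of the lemmas already recorded.
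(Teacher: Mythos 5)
Your proof is correct and follows essentially the same route as the paper: the same carrier/interior argument for the identity $\mfd^n\smallsetminus\flower(c)=\bigcup\{\sigma\in\cD:c\nsubseteq\sigma\}$ and openness, the same two-inclusion argument for (ii), and the same derivation of (iii) from (i) and (ii) via $\supp_\cD(x)\subseteq X$. The only divergence is that you spell out the path-connectedness (which the paper dismisses as ``easy to see'') via the convexity of $[0,1]^k$ and the half-open segment from an interior point to a boundary point, which is a valid filling-in of that detail rather than a different approach.
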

\begin{proof}
	(i) First, if $x\in\mfd^n\smallsetminus\flower(c)$, then $c\nsubseteq\supp_\cD(x)$ by (\ref{e: cF(c)}) and Definition~\ref{d: carrier}. On the other hand, if $\sigma\in\cD$ satisfies $c\nsubseteq\sigma$, then for each $x\in\sigma$, we have $c\nsubseteq\supp_\cD(x)\subseteq\sigma$ (cf.~Lemma~\ref{l: properties of cell decompositions}~(v)), and thus $x\notin\flower(c)$ by (\ref{e: cF(c)}) and Definition~\ref{d: carrier}.
    Thus $\mfd^n\smallsetminus\flower(c)=\bigcup\{\sigma\in\cD:c\nsubseteq\sigma\}$. So Lemma~\ref{l: cell decomp: compact, and Mfd}~(i) implies that $\mfd^n\smallsetminus\flower(c)$ is closed and $\flower(c)$ is open. For each $c\subseteq\sigma$, it is easy to see that $\brCellint{\sigma}\cup\brCellint{c}$ is path-connected. Hence, $\flower(c)$ is path-connected.

	\smallskip
	
	(ii) For each $X\in\cD^{[n]}$ with $c\subseteq X$, we have $\stdCellint{X}\subseteq \flower(c)$, and thus $X\subseteq\cls{\flower(c)}$. It follows that $\bigcup\{X\in\cD^{[n]}:c\subseteq X\}\subseteq\cls{\flower(c)}$. On the other hand, by Lemma~\ref{l: cell decomp: compact, and Mfd}~(ii) and (\ref{e: cF(c)}), we have $\flower(c)\subseteq\bigcup\bigl\{X\in\cD^{[n]}:c\subseteq X\bigr\}$. Since the right-hand side is a closed set contained in $\cls{\flower(c)}$, we have $\cls{\flower(c)}=\bigcup\bigl\{X\in\cD^{[n]}:c\subseteq X\bigr\}$.
	
	\smallskip
	
	(iii) Fix $x\in\mfd^n$. First suppose $x\in\partial\flower(c)=\cls{\flower(c)}\smallsetminus\flower(c)$. Then $c\nsubseteq\supp_\cD(x)$ by (\ref{e: cF(c)}). By (ii), we can choose $X\in\cD^{[n]}$ with $x\in X$ and $c\subseteq X$. By Lemma~\ref{l: properties of cell decompositions}~(v), we have $\supp_\cD(x)\subseteq X$.
	On the other hand, suppose $X\in\cD^{[n]}$ and $\sigma\in\cD|_X$ satisfy $x\in\sigma$, $c\nsubseteq\sigma$, and $c\subseteq X$. By (i) and (ii) we have $x\in\cls{\flower(c)}\smallsetminus\flower(c)=\partial\flower(c)$.
\end{proof}

As a direct consequence of Proposition~\ref{p: c-flower: structure}, the structure of $\flower(x)$, $x\in\mfd^n$, are clear.
\begin{cor}\label{c: x-flower structure}
Let $\cD$ be a cell decomposition of a closed topological $n$-manifold $\mfd^n$ and $x\in\mfd^n$. Then the following statements are true:
	\begin{enumerate}[font=\rm,label=(\roman*)]
    	\smallskip
		\item $\mfd^n\smallsetminus\flower(x)=\bigcup\{\sigma\in\cD:x\notin\sigma\}$, and $\flower(x)$ is a path-connected open neighborhood of $x$.
		\smallskip
		\item $\cls{\flower(x)}=\bigcup\bigl\{X\in\cD^{[n]}:x\in X\bigr\}$.
		\smallskip
		\item For each $z\in\mfd^n$, $z\in\partial\flower(x)$ if and only if there exist $X\in\cD^{[n]}$ and $\sigma\in\cD|_X$ such that $z\in\sigma$, $x\notin\sigma$, and $x\in X$.
	\end{enumerate}
\end{cor}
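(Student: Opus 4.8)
The plan is to deduce all three assertions from Proposition~\ref{p: c-flower: structure} applied to the carrier $c \= \supp_\cD(x)$, using that $\flower_\cD(x) = \flower_\cD(c)$ by~\eqref{e: Fl(x)}. The only nontrivial ingredient is the dictionary between the conditions ``$c \subseteq \sigma$'' (resp.\ ``$c \nsubseteq \sigma$'') that occur in Proposition~\ref{p: c-flower: structure} and the conditions ``$x \in \sigma$'' (resp.\ ``$x \notin \sigma$'') appearing in the corollary.

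First I would record the following elementary claim: for each $\sigma \in \cD$, we have $x \in \sigma$ if and only if $c \subseteq \sigma$. For the forward implication, $x \in \sigma$ together with $x \in \stdCellint{c}$ gives $\brCellint{c} \cap \sigma \neq \emptyset$, and Lemma~\ref{l: properties of cell decompositions}~(v) (with the single cell $\sigma$) then yields $c \subseteq \sigma$; the converse is immediate from $x \in c$. Consequently $c \nsubseteq \sigma$ if and only if $x \notin \sigma$. Once this is in place, part~(i) follows by substituting ``$x \notin \sigma$'' for ``$c \nsubseteq \sigma$'' in Proposition~\ref{p: c-flower: structure}~(i), with the remark that $\flower(x)$ is a neighborhood of $x$ because $x \in \stdCellint{c} \subseteq \flower(c)$; part~(ii) follows by substituting ``$x \in X$'' for ``$c \subseteq X$'' in Proposition~\ref{p: c-flower: structure}~(ii); and part~(iii) is the verbatim translation of Proposition~\ref{p: c-flower: structure}~(iii) under both substitutions.

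I do not anticipate any genuine obstacle here: the corollary is simply a restatement of Proposition~\ref{p: c-flower: structure} with ``the carrier of $x$'' replaced by ``$x$'', and the entire content already lives in that proposition. The only place that requires a moment's care is the elementary claim above, where one must invoke the correct clause of Lemma~\ref{l: properties of cell decompositions}, namely~(v), together with the defining property of the carrier.
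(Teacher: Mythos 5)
Your proposal is correct and follows essentially the same route as the paper: the paper's proof likewise reduces everything to Proposition~\ref{p: c-flower: structure} via the equivalence ``$x\in\sigma \Leftrightarrow \supp_\cD(x)\subseteq\sigma$'', justified by Lemma~\ref{l: properties of cell decompositions}~(v) and \eqref{e: Fl(x)}. Your extra remark that $\flower(x)$ is a neighborhood of $x$ (since $x\in\stdCellint{c}\subseteq\flower(c)$) is a harmless, correct addition.
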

\begin{proof}
    By Lemma~\ref{l: properties of cell decompositions}~(v), for each $\sigma\in\cD$, $x\in\sigma$ if and only if $\supp_{\cD}(x)\subseteq\sigma$. Thus, each statement in (i)--(iii) follows from (\ref{e: Fl(x)}) and the corresponding statement in Proposition~\ref{p: c-flower: structure}~(i)--(iii).
\end{proof}

Moreover, we will give a necessary and sufficient condition for a set to be contained in some flower. First, we introduce the notion of \emph{joining opposite sides}, which generalizes a similar notion of Bonk and Meyer for Jordan curves containing the postcritical set on $\cS^2$; see \cite[Section~5.7]{BM17}.

\begin{definition}[Joining opposite sides]   \label{d: join_opposite_sides}
	Let $\cD$ be a cell decomposition of a closed topological $n$-manifold $\mfd^n$. A subset $A\subseteq\mfd^n$ \defn{{\jsOpSd}} of $\cD$ if $\bigcap\{c\in\cD:A\cap c\neq\emptyset\}=\emptyset$.
\end{definition}

\begin{lemma}\label{l: decomp of S^n: join opp <=> not in flower}
	Let $\cD$ be a cell decomposition of a closed topological $n$-manifold $\mfd^n$. Then for each subset $A\subseteq\mfd^n$, the following are equivalent:
	\begin{enumerate}[label=(\roman*),font=\rm]
    	\smallskip
		\item $A$ {\jsOpSd} of $\cD$.
		\smallskip
		\item $A$ is not contained in $\flower(c)$ for each $c\in\cD$.
		\smallskip
		\item $A$ is not contained in any flower in $\coverF(\cD)$.
	\end{enumerate}
\end{lemma}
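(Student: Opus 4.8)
The plan is to prove the cycle of implications (i)$\Rightarrow$(ii)$\Rightarrow$(iii)$\Rightarrow$(i), where the only substantial step is (i)$\Rightarrow$(ii) and the other two are essentially unwinding definitions. Throughout, I would use Corollary~\ref{c: x-flower structure}~(i), which states that $\flower_\cD(x)=\bigcup\{\stdCellint{\sigma}:x\in\sigma\}$, or equivalently $\mfd^n\smallsetminus\flower(x)=\bigcup\{\sigma\in\cD:x\notin\sigma\}$, together with the fact from Proposition~\ref{p: c-flower: structure}~(i) that $\flower(c)=\flower(x)$ whenever $\supp_\cD(x)=c$.

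\smallskip
\textbf{(i)$\Rightarrow$(ii).} I argue by contraposition. Suppose $A\subseteq\flower_\cD(c)$ for some cell $c\in\cD$. I claim that $c\in\bigcap\{\sigma\in\cD:A\cap\sigma\neq\emptyset\}$, so in particular this intersection is nonempty and $A$ does not join opposite sides. To see the claim, let $\sigma\in\cD$ with $A\cap\sigma\neq\emptyset$, and pick $x\in A\cap\sigma$. Since $x\in A\subseteq\flower(c)$, by~\eqref{e: cF(c)} we have $x\in\stdCellint{\tau}$ for some $\tau\in\cD$ with $c\subseteq\tau$; thus $\supp_\cD(x)=\tau\supseteq c$. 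On the other hand $x\in\sigma$, so by Lemma~\ref{l: properties of cell decompositions}~(v), $\supp_\cD(x)\subseteq\sigma$, hence $c\subseteq\tau\subseteq\sigma$, proving the claim.

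\smallskip
\textbf{(ii)$\Rightarrow$(iii).} Immediate, since $\coverF(\cD)=\{\flower_\cD(p):p\text{ a vertex of }\cD\}$ is a subfamily of $\{\flower_\cD(c):c\in\cD\}$: if $A$ lies in no flower of any cell, then in particular it lies in no flower of a vertex.

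\smallskip
\textbf{(iii)$\Rightarrow$(i).} Again by contraposition: suppose $A$ does not join opposite sides, i.e.\ there exists a point $v\in\bigcap\{c\in\cD:A\cap c\neq\emptyset\}$. I claim $A\subseteq\flower_\cD(v)$; I then refine this to a vertex flower. For the claim, let $x\in A$ and set $\sigma=\supp_\cD(x)$. Then $x\in A\cap\sigma$, so $\sigma$ is one of the cells in the intersection, whence $v\in\sigma$; thus $x\in\stdCellint{\sigma}$ with $v\in\sigma$, so $x\in\flower_\cD(v)$ by~\eqref{e: Fl(x)}. This gives $A\subseteq\flower_\cD(v)$. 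Now $\flower_\cD(v)=\flower_\cD(\supp_\cD(v))$; by Lemma~\ref{l: cell decomp: compact, and Mfd}~(iii) applied to $c=\supp_\cD(v)$ with $k=0$, there is a vertex $p\in\cD|_{\supp_\cD(v)}$, so $\supp_\cD(v)$ contains the cell $\{p\}$, and then~\eqref{e: cF(c)} gives $\flower_\cD(\supp_\cD(v))\subseteq\flower_\cD(\{p\})$. Hence $A$ is contained in a flower in $\coverF(\cD)$, contradicting (iii). (Alternatively, one may avoid the last refinement by replacing (iii) with (ii) in the cycle and proving (ii)$\Rightarrow$(i) directly by the same point-chasing argument, then noting (ii)$\Leftrightarrow$(iii) separately; the vertex refinement above is the only place the manifold hypothesis is used beyond finiteness.)

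\smallskip
I expect no real obstacle here: the statement is a direct consequence of the definitions of flowers, carriers, and joining opposite sides, and Lemma~\ref{l: properties of cell decompositions}~(v) does all the heavy lifting in (i)$\Rightarrow$(ii). The only mild subtlety is the passage from an arbitrary cell flower to a vertex flower in (iii)$\Rightarrow$(i), handled by Lemma~\ref{l: cell decomp: compact, and Mfd}~(iii).
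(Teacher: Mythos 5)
Your proof is correct and follows essentially the same route as the paper: contraposition for (i)$\Rightarrow$(ii) via the observation that any cell meeting $\flower_\cD(c)$ must contain $c$ (you unwind Proposition~\ref{p: c-flower: structure}~(i) by hand using Lemma~\ref{l: properties of cell decompositions}~(v)), the trivial (ii)$\Rightarrow$(iii), and contraposition for (iii)$\Rightarrow$(i) producing a vertex flower containing $A$. The only cosmetic difference is in (iii)$\Rightarrow$(i): the paper locates a vertex inside $\bigcap\{\sigma\in\cD:A\cap\sigma\neq\emptyset\}$ via Lemma~\ref{l: properties of cell decompositions}~(iv) and Lemma~\ref{l: cell decomp: compact, and Mfd}~(iii), whereas you first show $A\subseteq\flower_\cD(v)$ for an arbitrary point $v$ of the intersection and then pass to a vertex flower by monotonicity of flowers under inclusion of cells; both are valid.
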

\begin{proof}
	(ii)$\Rightarrow$(iii) is clear. It suffices to prove 	
	(i)$\Rightarrow$(ii) and 	
	(iii)$\Rightarrow$(i).
	
	First, we verify (i)$\Rightarrow$(ii). Suppose $A\subseteq\flower(c)$ for some $c\in\cD$. Let $\sigma\in\cD$ be an arbitrary cell with $\sigma\cap A\neq\emptyset$. Then $\sigma\cap\flower(c)\neq\emptyset$,
	and, by Lemma~\ref{p: c-flower: structure}~(i), we have $c\subseteq\sigma$. It follows that $c\subseteq\bigcap\{\sigma\in\cD:A\cap\sigma\neq\emptyset\}\neq\emptyset$.
	
	For (iii)$\Rightarrow$(i), suppose $\bigcap\{\sigma\in\cD:A\cap\sigma\neq\emptyset\}\neq\emptyset$. By Lemma~\ref{l: properties of cell decompositions}~(iv), $\bigcap\{\sigma\in\cD:A\cap\sigma\neq\emptyset\}$ is a union of cells in $\cD$. Then by Lemma~\ref{l: cell decomp: compact, and Mfd}~(iii), there exists a vertex $p$ for which $p\in\bigcap\{\sigma\in\cD:A\cap\sigma\neq\emptyset\}$. For each $x\in A$, since $\supp_{\cD}(x)\cap A\neq\emptyset$, we have $p\in\supp_{\cD}(x)$ and $x\in\flower(p)$. Then $A\subseteq\flower(p)$.
\end{proof}

By Lemma~\ref{l: decomp of S^n: join opp <=> not in flower}, a flower cannot join opposite sides of $\cD$, and in particular, cannot meet two disjoint cells (cf.~Definition~\ref{d: join_opposite_sides}).

\begin{cor}\label{c: decomp of S^n: disjoint chambers=>not in a flower}
	Let $\cD$ be a cell decomposition of a closed topological $n$-manifold $\mfd^n$, and let $\sigma,\,\tau\in\cD$ be disjoint. Then for each $c\in\cD$, either $\flower(c)\cap\sigma=\emptyset$ or $\flower(c)\cap\tau=\emptyset$.
\end{cor}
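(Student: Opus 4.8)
The statement is Corollary~\ref{c: decomp of S^n: disjoint chambers=>not in a flower}, asserting that if $\sigma,\tau\in\cD$ are disjoint, then for every $c\in\cD$, a flower $\flower(c)$ cannot meet both $\sigma$ and $\tau$. The plan is to deduce this directly from Lemma~\ref{l: decomp of S^n: join opp <=> not in flower} via a short argument by contradiction. First I would suppose, for the sake of contradiction, that there is some cell $c\in\cD$ with $\flower(c)\cap\sigma\neq\emptyset$ and $\flower(c)\cap\tau\neq\emptyset$; I would then pick points $x\in\flower(c)\cap\sigma$ and $y\in\flower(c)\cap\tau$ and set $A\=\{x,y\}$.

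The key observation is that this two-point set $A$ \jsOpSd\ of $\cD$. Indeed, by Definition~\ref{d: join_opposite_sides}, I need $\bigcap\{d\in\cD:A\cap d\neq\emptyset\}=\emptyset$. But $\sigma$ and $\tau$ both belong to the collection $\{d\in\cD:A\cap d\neq\emptyset\}$ — since $x\in\sigma$ and $y\in\tau$ — so the intersection over that collection is contained in $\sigma\cap\tau=\emptyset$. Hence $A$ joins opposite sides of $\cD$. Now Lemma~\ref{l: decomp of S^n: join opp <=> not in flower}, in the form (i)$\Rightarrow$(ii), says that a set joining opposite sides cannot be contained in $\flower(d)$ for any $d\in\cD$. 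In particular $A\not\subseteq\flower(c)$. But by construction $x,y\in\flower(c)$, so $A=\{x,y\}\subseteq\flower(c)$ — a contradiction. This contradiction shows no such $c$ exists, i.e., for every $c\in\cD$ we have $\flower(c)\cap\sigma=\emptyset$ or $\flower(c)\cap\tau=\emptyset$, as claimed.

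There is essentially no obstacle here: the corollary is a routine consequence of the equivalence already established in Lemma~\ref{l: decomp of S^n: join opp <=> not in flower}, and the only thing to verify carefully is that a two-point set straddling two disjoint cells joins opposite sides, which follows immediately from the definition since the relevant intersection is trapped inside the empty set $\sigma\cap\tau$. If one preferred a more hands-on proof avoiding the notion of joining opposite sides entirely, one could instead invoke Proposition~\ref{p: c-flower: structure}~(i): $\flower(c)\cap\sigma\neq\emptyset$ forces $c\subseteq\sigma$ and $\flower(c)\cap\tau\neq\emptyset$ forces $c\subseteq\tau$, whence $c\subseteq\sigma\cap\tau=\emptyset$, which is absurd since cells are nonempty. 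Either route is a two-line argument; I would present the second since it is the most self-contained and makes transparent why disjointness of the cells is exactly what is used.
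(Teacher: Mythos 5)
Your proof is correct and follows essentially the same route as the paper, which deduces the corollary directly from Lemma~\ref{l: decomp of S^n: join opp <=> not in flower} (the paper notes that a flower cannot join opposite sides, hence cannot meet two disjoint cells; your two-point set $A=\{x,y\}$ is just a minor packaging of the same argument). Your alternative via Proposition~\ref{p: c-flower: structure}~(i) is also valid and is in fact the mechanism inside the paper's proof of the implication (i)$\Rightarrow$(ii) of that lemma, so either presentation is fine.
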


\subsection{Cellular maps}
We recall cellular maps and related notions, following \cite[Chapter~5]{BM17}.
In this subsection, $\fX$ and $\fX'$ stand for locally-compact Hausdorff spaces.

\begin{definition}[Cellular maps]\label{d: cellular map}
A continuous map $f\:\fX'\to\fX$ is a \defn{cellular map} if there exist cell decompositions $\cD'$ and $\cD$ of $\fX'$ and $\fX$, respectively, such that for each $c\in\cD'$, $f(c)$ is a cell in $\cD$ and the restriction $f|_{c}\:c\to f(c)$ is a homeomorphism. In this case, we call $f$ a \defn{$(\cD',\cD)$-cellular map}, $(\cD',\cD)$ a \defn{{\celPair}} of $f$, and $\cD'$ a \defn{pullback} of $\cD$ by $f$.
\end{definition}

To study dynamics from the perspective of cellular structures, we consider a sequence $\{\cD_m\}_{m\in\N_0}$ of cell decompositions such that all pairs $(\cD_{m+1},\cD_m)$ of consecutive cell decompositions are cellular. 

\begin{definition}[{\ItCel} maps]\label{d: iter cellular map}
	A sequence $\{\cD_m\}_{m\in\N_0}$ of cell decompositions is a \defn{{\celSeq}} of a continuous map $f\:\fX\to\fX$ if, for each $m\in\N_0$, $f$ is $(\cD_{m+1},\cD_m)$-cellular. We call a continuous map $f\:\fX\to\fX$ an \defn{{\itCel} map} if there exists a {\celSeq} of $f$.
\end{definition}

\begin{rem}\label{r: basic cellular property}
It follows from the definition that the composition of two cellular maps is cellular:
if $g\:\fX''\to\fX'$ is $(\cD'',\cD')$-cellular and $f\:\fX'\to\fX$ is $(\cD',\cD)$-cellular, then $f\circ g$ is $(\cD'',\cD)$-cellular. In particular, given a {\celSeq} $\{\cD_m\}_{m\in\N_0}$ of an {\itCel} map $f\:\fX\to\fX$, for all $m,\,k\in\N_0$, $f^k$ is $(\cD_{m+k},\cD_m)$-cellular.
\end{rem}

Although neither each cell decomposition admits a pullback, nor {\celPair}s in Definition~\ref{d: cellular map} are unique, we show that the pullback $\cD'$ of cell decomposition $\cD$, if exists, is uniquely determined by $\cD$.

\begin{lemma}\label{l: cellular: pull back ess decomp}
	Let $f\:\fX\to\fX$ be a cellular map and $(\cD_1,\cD_0)$ be a {\celPair} of $f$. 
	Then the following statements are true:
	\begin{enumerate}[label=(\roman*),font=\rm]
    	\smallskip
		\item For each $\tau\in\cD_0$, we have $f^{-1}(\stdCellint{\tau})=\bigcup\{\stdCellint{\sigma}:\sigma\in\cD_{1},\,f(\sigma)=\tau\}$.
		\smallskip
		\item For each $\sigma\in\cD_{1}$, $\stdCellint{\sigma}$ is a connected component of $f^{-1}(\stdCellint{f(\sigma)})$.
		\smallskip
		\item 
		$\cD_{1}=\bigcup_{\tau\in \cD_0}\bigl\{\cls{c}:c\text{ is a connected component of }f^{-1}(\stdCellint{\tau})\bigr\}$.
		\smallskip
		\item If $(\cD_1',\cD_0)$ is a cellular pair of $f$, then $\cD_1'=\cD_1$.
	\end{enumerate}
\end{lemma}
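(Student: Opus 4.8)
The plan is to prove the four statements in the order (i) $\Rightarrow$ (ii) $\Rightarrow$ (iii) $\Rightarrow$ (iv), since each later statement follows essentially as a formal consequence of the earlier ones together with the defining property of a cellular map. Throughout, let $(\cD_1,\cD_0)$ be a {\celPair} of $f$, so that for each $\sigma \in \cD_1$ the restriction $f|_\sigma \colon \sigma \to f(\sigma) \in \cD_0$ is a homeomorphism.

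\textbf{Step 1 (statement (i)).} Fix $\tau \in \cD_0$. For the inclusion ``$\supseteq$'', if $\sigma \in \cD_1$ satisfies $f(\sigma) = \tau$, then since $f|_\sigma$ is a homeomorphism onto $\tau$ it maps $\stdCellint{\sigma}$ onto $\stdCellint{\tau}$ (the cell-interior is intrinsically defined, hence preserved by homeomorphisms of cells); thus $\stdCellint{\sigma} \subseteq f^{-1}(\stdCellint{\tau})$. For the inclusion ``$\subseteq$'', let $x \in f^{-1}(\stdCellint{\tau})$ and let $\sigma \= \supp_{\cD_1}(x)$, so $x \in \stdCellint{\sigma}$. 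By the same argument $f(\stdCellint{\sigma}) = \stdCellint{f(\sigma)}$, and since $f(x) \in \stdCellint{\tau} \cap \stdCellint{f(\sigma)}$, property (ii) of a cell decomposition (disjointness of distinct cell-interiors) forces $f(\sigma) = \tau$. Hence $x \in \stdCellint{\sigma}$ with $f(\sigma) = \tau$, as required.

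\textbf{Step 2 (statement (ii)).} Fix $\sigma \in \cD_1$ and set $\tau \= f(\sigma) \in \cD_0$. By Step 1, $f^{-1}(\stdCellint{\tau})$ is the disjoint union $\bigsqcup_{\sigma' \in \cD_1,\, f(\sigma')=\tau} \stdCellint{\sigma'}$. Each $\stdCellint{\sigma'}$ is connected (being homeomorphic to $(0,1)^{\dim \sigma'}$, or a point). It remains to check these sets are the connected components, i.e.\ that the family $\{\stdCellint{\sigma'}\}$ is relatively open in $f^{-1}(\stdCellint{\tau})$; equivalently, that distinct such $\stdCellint{\sigma'}$ are mutually separated. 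This follows because $\stdCellint{\sigma'}$ is open in $\sigma'$ and $\sigma' = \cls{\stdCellint{\sigma'}}$ by Lemma~\ref{l: properties of cells}~(i); since the $\sigma'$ are finitely many closed sets (Lemma~\ref{l: cell decomp: compact, and Mfd}~(i) applies when $\fX$ is compact; in the locally-compact case one works inside a neighborhood meeting finitely many cells, using Definition~\ref{d: cell decomposition}~(iv)), the closure of one $\stdCellint{\sigma'}$ meets another $\stdCellint{\sigma''}$ only if $\sigma' \cap \stdCellint{\sigma''} \neq \emptyset$, which by Lemma~\ref{l: properties of cell decompositions}~(v) would give $\sigma'' \subseteq \sigma'$ and hence, comparing dimensions via $f|_{\sigma'}, f|_{\sigma''}$ being homeomorphisms onto the same $\tau$, a contradiction unless $\sigma' = \sigma''$. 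Thus $\stdCellint{\sigma}$ is exactly one connected component.

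\textbf{Step 3 (statements (iii) and (iv)).} For (iii): by (ii), the connected components of $f^{-1}(\stdCellint{\tau})$, as $\tau$ ranges over $\cD_0$, are exactly the sets $\stdCellint{\sigma}$ for $\sigma \in \cD_1$; taking closures and using $\sigma = \cls{\stdCellint{\sigma}}$ (Lemma~\ref{l: properties of cells}~(i)) gives $\cD_1 = \bigcup_{\tau \in \cD_0}\{\cls{c} : c \text{ a connected component of } f^{-1}(\stdCellint{\tau})\}$. Statement (iv) is then immediate: if $(\cD_1',\cD_0)$ is another {\celPair} of $f$, the right-hand side of (iii) is the same for both, so $\cD_1' = \cD_1$. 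The main obstacle in the whole argument is the separation claim in Step 2 — the purely formal manipulation of cell-interiors under $f$ is routine, but verifying that the $\stdCellint{\sigma'}$ genuinely are the connected components (rather than merely a partition into connected pieces) requires the local finiteness of $\cD_1$ together with the closure identity $\sigma' = \cls{\stdCellint{\sigma'}}$ and the dimension bookkeeping supplied by $f|_{\sigma'}$ being a homeomorphism onto $\tau$.
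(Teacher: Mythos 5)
Your proof is correct and follows essentially the same route as the paper's: the same carrier/interior-disjointness argument for (i), the same separation-plus-(local-)finiteness argument exhibiting each $\stdCellint{\sigma}$ as a relatively clopen connected piece of $f^{-1}(\stdCellint{\tau})$ for (ii), and (iii)--(iv) as formal consequences via $\sigma=\cls{\stdCellint{\sigma}}$. The only compressed point is the dimension step in your Step 2: the paper spells out that for distinct $\sigma''\subseteq\sigma'$ the disjointness of cell-interiors forces $\sigma''\subseteq\cellbound\sigma'$, which is what actually contradicts $\dim\sigma''=\dim\sigma'$; your ``comparing dimensions'' remark is exactly this argument in abbreviated form.
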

\begin{proof}
	(i) Let $\tau\in\cD_0$ be arbitrary. For each $x\in f^{-1}(\stdCellint{\tau})$, $f(x)\in \brCellint{\tau}\cap\brCellint{f (\supp_{\cD_1}(x))}$, and thus $f(\supp_{\cD_1}(x))=\tau$ (cf.~Definition~\ref{d: cell decomposition}~(ii)). Then $f^{-1}(\stdCellint{\tau})\subseteq\bigcup\{\stdCellint{\sigma}:\sigma\in\cD_{1},\,f(\sigma)=\tau\}$.
	On the other hand, it is clear that $\bigcup\{\stdCellint{\sigma}:\sigma\in\cD_{1},\,f(\sigma)=\tau\}\subseteq f^{-1}(\stdCellint{\tau})$.
	
	\smallskip
	
	(ii) Let $\tau\in\cD_0$ be arbitrary.  We show that if $\sigma,\,\sigma'\in\cD_{1}$ satisfy $\sigma\neq\sigma'$, $\tau=f(\sigma)=f(\sigma')$, then $\sigma'\cap\stdCellint{\sigma}=\emptyset$. Suppose $\sigma'\cap\stdCellint{\sigma}\neq\emptyset$. Then by Lemma~\ref{l: properties of cell decompositions}~(v), we have that $\sigma\subseteq\sigma'$. If $\sigma\cap\stdCellint{\sigma'}\neq\emptyset$, then $\sigma'\subseteq\sigma$, which contradicts $\sigma\neq\sigma'$. Hence, $\sigma\subseteq\cellbound\sigma'$, which contradicts $\dim(\sigma)=\dim(\tau)=\dim(\sigma')$ (cf.~Definition~\ref{d: cellular map}). Therefore, $\sigma'\cap\stdCellint{\sigma}=\emptyset$.
	
	Fix arbitrary $\sigma\in\cD_{1}$ and let $\tau\=f(\sigma)$. Since $\stdCellint{\sigma'}\cap\sigma=\emptyset$ for each $\sigma'\in\cD_{1}$ that satisfies $\tau=f(\sigma')$ and $\sigma'\neq\sigma$, it follows from (i) that $\stdCellint{\sigma}=\sigma\cap f^{-1}(\stdCellint{\tau})$. By Lemma~\ref{l: properties of cells}~(i), $\sigma$ is a closed subset of $\fX$, then $\stdCellint{\sigma}=\sigma\cap f^{-1}(\stdCellint{\tau})$ is a closed subset of $f^{-1}(\stdCellint{\tau})$.
	
	Then $\{\stdCellint{\sigma}:\sigma\in\cD_{1},\,f(\sigma)=\tau\}$ is a finite collection of closed connected subsets of $f^{-1}(\stdCellint{\tau})$ that are pairwise disjoint. Hence, for each $c\in\cD_{1}$ with $f(c)=\tau$, $\stdCellint{c}$ is a connected component of $f^{-1}(\stdCellint{\tau})$.
	
	\smallskip
	
	(iii) For each $\sigma\in\cD_{1}$, we have by (ii) that $\stdCellint{\sigma}$ is a connected component of $f^{-1}(\brCellint{f(\sigma)})$, and by Lemma~\ref{l: properties of cells}~(i) that $\sigma=\cls{\stdCellint{\sigma}}$.
	
	On the other hand, for each $\tau\in\cD_0$ and each connected component $c$ of $f^{-1}(\stdCellint{\tau})$, by (i) and~(ii) we have that $c=\stdCellint{\sigma}$ for some $\sigma\in\cD_{1}$. Thus, $\cls{c}=\sigma\in\cD_{1}$.
	
	\smallskip
	
	(iv) Statement~(iv) follows from (iii).
\end{proof}

    Homeomorphisms are cellular maps: under a homeomorphism, each cell decomposition admits a unique pullback. For a homeomorphism  $\phi\:\fX'\to\fX$ and a cell decomposition $\cD$ of $\fX$, denote
    \begin{equation}\label{e: homeo pullback}
        \phi^*(\cD)\=\bigl\{\phi^{-1}(c):c\in\cD\bigr\}.
    \end{equation}
	\begin{cor}\label{c: cellular: cellular homeo}
    Let $\phi\:\fX'\to\fX$ be a homeomorphism and $\cD$ a cell decomposition of $\fX$, then $\phi^*(\cD)\=\bigl\{\phi^{-1}(c):c\in\cD\bigr\}$ is a cell decomposition, and the unique pullback of $\cD$.
	In particular, given a $(\cD',\cD)$-cellular map $f\colon\fX'\to\fX$, for each $c'\in \cD'$, we have $\cD'|_{c'}=(f|_{c'})^*\bigl(\cD|_{f(c')}\bigr)$. 
	\end{cor}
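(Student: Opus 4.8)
The plan is to verify three assertions in turn---that $\phi^*(\cD)$ is a cell decomposition, that $\phi$ is $(\phi^*(\cD),\cD)$-cellular, and that this pullback is unique---and then to deduce the ``in particular'' clause by restricting everything to a single cell of $\cD'$.

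First I would exploit that $\phi$ is a homeomorphism: it carries each cell $c\in\cD$ to a cell $\phi^{-1}(c)$ of the same dimension, and it intertwines cell-interiors and cell-boundaries, namely $\stdCellint{\phi^{-1}(c)}=\phi^{-1}(\stdCellint{c})$ and $\cellbound \phi^{-1}(c)=\phi^{-1}(\cellbound c)$. Each of the four conditions of Definition~\ref{d: cell decomposition} then transfers verbatim from $\cD$ on $\fX$ to $\phi^*(\cD)$ on $\fX'$; for the local-finiteness condition one simply pushes a suitable neighborhood of $x$ back by $\phi^{-1}$. Hence $\phi^*(\cD)$ is a cell decomposition of $\fX'$. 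Moreover, for each $c'=\phi^{-1}(c)\in\phi^*(\cD)$ we have $\phi(c')=c\in\cD$ and $\phi|_{c'}\colon c'\to c$ is a homeomorphism, so by Definition~\ref{d: cellular map} the map $\phi$ is $(\phi^*(\cD),\cD)$-cellular; that is, $\phi^*(\cD)$ is a pullback of $\cD$ by $\phi$.

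For uniqueness, I would invoke Lemma~\ref{l: cellular: pull back ess decomp}~(iv)---whose proof nowhere uses that the source and target coincide, so it applies equally to $\phi\colon\fX'\to\fX$---which forces any pullback of $\cD$ by $\phi$ to equal $\bigcup_{\tau\in\cD}\bigl\{\cls{c}:c\text{ is a connected component of }\phi^{-1}(\stdCellint{\tau})\bigr\}$. Since $\phi$ is a homeomorphism and $\stdCellint{\tau}$ is connected, $\phi^{-1}(\stdCellint{\tau})$ is connected, so this collection is exactly $\{\cls{\phi^{-1}(\stdCellint{\tau})}:\tau\in\cD\}=\{\phi^{-1}(\cls{\stdCellint{\tau}}):\tau\in\cD\}=\{\phi^{-1}(\tau):\tau\in\cD\}$, where the last step uses $\tau=\cls{\stdCellint{\tau}}$ from Lemma~\ref{l: properties of cells}~(i). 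Thus $\phi^*(\cD)$ is the unique pullback. (If one prefers not to reuse Lemma~\ref{l: cellular: pull back ess decomp} across different spaces, the same connectedness argument runs directly from Lemma~\ref{l: cellular: pull back ess decomp}~(i)--(ii).)

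Finally, for the ``in particular'' clause, let $f\colon\fX'\to\fX$ be $(\cD',\cD)$-cellular and fix $c'\in\cD'$, so that $f|_{c'}\colon c'\to f(c')$ is a homeomorphism with $f(c')\in\cD$. By Lemma~\ref{l: cell decomp: restrict}~(ii) both $\cD'|_{c'}$ and $\cD|_{f(c')}$ are cell decompositions of $c'$ and $f(c')$, respectively. For any $d'\in\cD'|_{c'}$ one has $d'\in\cD'$ and $d'\subseteq c'$, hence $f(d')\in\cD$ and $f(d')\subseteq f(c')$, so $f(d')\in\cD|_{f(c')}$; moreover $(f|_{c'})|_{d'}=f|_{d'}$ is a homeomorphism onto $f(d')$. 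Thus $f|_{c'}$ is $(\cD'|_{c'},\cD|_{f(c')})$-cellular, i.e.\ $\cD'|_{c'}$ is a pullback of $\cD|_{f(c')}$ by the homeomorphism $f|_{c'}$; by the uniqueness just established it must coincide with $(f|_{c'})^*(\cD|_{f(c')})$. Everything here is a routine transfer of cell-decomposition structure along a homeomorphism; the only step with any content is the uniqueness argument, and even that is short, so I do not expect a genuine obstacle.
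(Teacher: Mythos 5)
Your proof is correct and follows essentially the route the paper intends: the corollary is stated without a separate proof precisely because it is the immediate combination of the straightforward transfer of the cell-decomposition axioms under a homeomorphism with the uniqueness statement of Lemma~\ref{l: cellular: pull back ess decomp}, applied again to the homeomorphism $f|_{c'}$ for the ``in particular'' clause. Your observation that Lemma~\ref{l: cellular: pull back ess decomp} applies verbatim to maps $\fX'\to\fX$ between different spaces is exactly the implicit generality the paper uses, so there is no gap.
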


\section{Cellular sequence and expansion of Thurston-type maps}
\label{sct: cellular maps}
This section contains a discussion on notions related to Thurston-type maps.
First, we discuss some elementary properties of cellular maps. Then we recall the notion of expansion and discuss its relation to cellular structures. Finally, in Subsection~\ref{subsct: CPCF and CTM}, we establish a key result of this section---the existence of {\celSeq}s, as discussed in the introduction. 

We begin by fixing some notations and terminology.
With an {\itCel} map $f\colon \mfd^n \to \mfd^n$, $n\geq 2$, on a closed topological $n$-manifold and a {\celSeq} $\{\cD_m\}_{m\in \N_0}$ of $f$ understood, we call, for each $i\in\N_0$, a vertex, a facet, and a chamber in $\cD_i$ a \defn{level-$i$~vertex} (or \defn{$i$-vertex}), a \defn{level-$i$~facet} (or \defn{$i$-facet}), and a \defn{level-$i$~chamber} (or \defn{$i$-chamber}), respectively. We also denote $\flower_i(\cdot)\=\flower_{\cD_i}(\cdot)$ for each $i\in\N_0$, and call an element in $\coverF(\cD_i)$ a \defn{level-$i$~flower} (or \defn{$i$-flower}).
For a cellular pair $(\cD_1,\cD_0)$, we also use the above notations and terminology with $i\in\{0,\,1\}$.

\subsection{Invariance of flowers}\label{subsct: inv flower}
We begin with a statement that, for all cellular maps, flowers are forward invariant in the following sense.

\begin{prop}\label{p: c-flower: fwd invariance}
	Let $f\:\mfd^n\to\mfd^n$ be a $(\cD_1,\cD_0)$-cellular map on a {\clcntop} $n$-manifold. Then
    \begin{enumerate}[label=(\roman*),font=\rm]
        \smallskip
        \item for each $c\in\cD_1$, we have $f(\flower_1(c))\subseteq\flower_0(f(c))$ and $f(\partial\flower_1(c))\subseteq\partial\flower_0(f(c))$;
        \smallskip
        \item consequently, for each $x\in\mfd^n$, $f(\flower_1(x))\subseteq\flower_0(f(x))$ and $f(\partial\flower_1(x)))\subseteq\partial\flower_0(f(x))$.
    \end{enumerate}
\end{prop}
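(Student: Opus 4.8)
The plan is to read off both assertions from the description of flowers in~\eqref{e: cF(c)} and~\eqref{e: Fl(x)}, using only the defining property of a cellular map: for every $\sigma\in\cD_1$ the restriction $f|_\sigma\colon\sigma\to f(\sigma)$ is a homeomorphism onto a cell $f(\sigma)\in\cD_0$. First I would record the elementary observation that such a restriction sends cell-interior onto cell-interior, i.e.\ $f(\stdCellint{\sigma})=\stdCellint{f(\sigma)}$ for each $\sigma\in\cD_1$, since cell-interiors are preserved by homeomorphisms between cells. Two consequences are then immediate: for $x\in\mfd^n$ with carrier $c\=\supp_{\cD_1}(x)$ one has $f(x)\in\stdCellint{f(c)}$, so $\supp_{\cD_0}(f(x))=f(c)$; and hence $\flower_1(x)=\flower_1(c)$, $\flower_0(f(x))=\flower_0(f(c))$, with the analogous equalities for the topological boundaries. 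This reduces part~(ii) to part~(i) applied to the cell $c$, so it suffices to prove~(i).

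For the inclusion $f(\flower_1(c))\subseteq\flower_0(f(c))$ I would argue directly from~\eqref{e: cF(c)}: since images commute with unions,
\[
f(\flower_1(c))=\bigcup\bigl\{f(\stdCellint{\sigma}):\sigma\in\cD_1,\ c\subseteq\sigma\bigr\}=\bigcup\bigl\{\stdCellint{f(\sigma)}:\sigma\in\cD_1,\ c\subseteq\sigma\bigr\}.
\]
As $c\subseteq\sigma$ implies $f(c)\subseteq f(\sigma)$, every cell-interior on the right-hand side occurs among those whose union defines $\flower_0(f(c))$, which yields the inclusion.

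For the boundary inclusion $f(\partial\flower_1(c))\subseteq\partial\flower_0(f(c))$ I would invoke the characterization in Proposition~\ref{p: c-flower: structure}~(iii). Given $x\in\partial\flower_1(c)$, pick $X\in\cD_1^{[n]}$ and $\sigma\in\cD_1|_X$ with $x\in\sigma$, $c\nsubseteq\sigma$, and $c\subseteq X$. The natural images $f(X),f(\sigma)\in\cD_0$ satisfy everything one needs with $c$ replaced by $f(c)$: $f(X)$ is again $n$-dimensional (being a homeomorphic image of $X$), $f(\sigma)\subseteq f(X)$, $f(c)\subseteq f(X)$, and $f(x)\in f(\sigma)$. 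The one condition that is not purely formal is $f(c)\nsubseteq f(\sigma)$, and verifying it is the only genuinely non-routine point: it uses that $c$ and $\sigma$ lie inside the \emph{single} chamber $X$ on which $f$ restricts to a homeomorphism, whence $f|_X$ is injective, so $f(c)\subseteq f(\sigma)$ would pull back under $(f|_X)^{-1}$ to $c\subseteq\sigma$, contradicting the choice of $\sigma$. (The ambient chamber $X$ is indispensable here: a ``folding'' cellular map can carry a cell not contained in another onto a proper subcell of its image.) By Proposition~\ref{p: c-flower: structure}~(iii) this gives $f(x)\in\partial\flower_0(f(c))$, completing~(i); part~(ii) then follows from the reduction above.
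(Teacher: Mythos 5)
Your proof is correct and follows essentially the same route as the paper: the first inclusion is read off from \eqref{e: cF(c)} using $f(\stdCellint{\sigma})=\brCellint{f(\sigma)}$, the boundary inclusion is obtained by pushing forward the data from Proposition~\ref{p: c-flower: structure}~(iii) with injectivity of $f|_X$ giving $f(c)\nsubseteq f(\sigma)$, and part~(ii) reduces to~(i) via $f(\supp_{\cD_1}(x))=\supp_{\cD_0}(f(x))$. Your explicit pullback justification of $f(c)\nsubseteq f(\sigma)$ only spells out what the paper leaves implicit.
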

\begin{proof}
	(i) Fix $c\in\cD$. First consider $x\in \flower_1(c)$. By (\ref{e: cF(c)}), there is $\sigma\in\cD_1$ with $c\subseteq\sigma$ and $x\in\stdCellint{\sigma}$. Then $f(c)\subseteq f(\sigma)$ and thus $ f(\stdCellint{\sigma})=\brCellint{f(\sigma)}\subseteq\flower_0(f(c))$. Hence, $f(x)\in f(\stdCellint{\sigma})\subseteq\flower_0(f(c))$.
	Now consider $x\in\partial\flower_1(c)$. By Proposition~\ref{p: c-flower: structure}~(iii), we can choose $X\in\cD_1^{[n]}$ and $\sigma\in\cD_1|_X$ satisfying $x\in\sigma$, $c\nsubseteq\sigma$, and $c\subseteq X$. Then since $f|_X\:X\to f(X)$ is a homeomorphism, we have $f(x)\in f(\sigma)$, $f(c)\subseteq f(X)$, and $f(c)\nsubseteq f(\sigma)$. Then (again, by Proposition~\ref{p: c-flower: structure}~(iii)) we have $f(x)\in\partial\flower_0(f(c))$.

    \smallskip

    (ii) Fix $x\in\mfd^n$. By the definition of cellular maps, $f(\supp_{\cD_1}(x))=\supp_{\cD_0}(f(x))$. Thus, by (i) and (\ref{e: Fl(x)}), $f(\flower_1(x))=f(\flower_1(\supp_{\cD_1}(x)))\subseteq\flower_0(f(\supp_{\cD_1}(x)))=\flower_0(\supp_{\cD_0}(f(x)))=\flower_0(f(x))$. Likewise, $f(\partial\flower_1(x)))\subseteq\partial\flower_0(f(x))$.
\end{proof}

In Proposition~\ref{p: c-flower: fwd invariance}, when $f$ is not open, $f(\flower_1(c))$ may not equal $\flower_0(f(c))$ (in fact, one can construct an example where all $1$-chambers are mapped to an $0$-chamber). However, for cellular branched covers (i.e., branched covers that are cellular maps), we have the following total invariance properties of flowers analogous to the corresponding properties in dimension two in Bonk--Meyer~\cite[Lemma~5.29]{BM17}. The main difference between the statements is that, in dimensions $n>2$, flowers do not have ``cyclic petals" (cf.~\cite[Lemma~5.29]{BM17}).

\begin{prop}\label{p: p-flowers: invariance}
	Let $f\colon \mfd^n \to \mfd^n$ be a cellular branched cover on a {\clcntop} $n$-manifold and $(\cD_1,\cD_0)$ be a {\celPair} of $f$.
	Then the following statements are true:
	\begin{enumerate}[label=(\roman*),font=\rm]
    	\smallskip
		\item If $p$ is a $1$-vertex, then $f(\flower_{1}(p))=\flower_0(f(p))$.
		\smallskip
		\item If $q$ is a $0$-vertex, then the connected components of $f^{-1}(\flower_0(q))$ are exactly all the flowers $\flower_{1}(p),\,p\in f^{-1}(q)$. Consequently, $\coverF(\cD_1)=f^*\coverF(\cD_0)$.
	\end{enumerate}
\end{prop}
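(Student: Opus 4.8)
The plan is to exploit the already-established forward invariance (Proposition~\ref{p: c-flower: fwd invariance}), the openness and closedness of branched covers on closed manifolds (Remark~\ref{r: bc on mfd is cls fbc}), and the pullback description of cell decompositions (Lemma~\ref{l: cellular: pull back ess decomp}), together with the structural identity $\cls{\flower(c)} = \bigcup\{X\in\cD^{[n]}: c\subseteq X\}$ from Proposition~\ref{p: c-flower: structure}~(ii).

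For part~(i), I would argue as follows. The inclusion $f(\flower_1(p)) \subseteq \flower_0(f(p))$ is Proposition~\ref{p: c-flower: fwd invariance}~(i). For the reverse inclusion, recall from \eqref{e: Fl(x)} that $\flower_0(f(p)) = \bigcup\{\brCellint{\tau} : f(p) \in \tau,\ \tau\in\cD_0\}$. Fix such a $\tau$. Since $f$ is $(\cD_1,\cD_0)$-cellular and $f(\{p\}) = \{f(p)\} \subseteq \tau$, and $\{f(p)\}\in\cD_0$, there is by Lemma~\ref{l: cellular: pull back ess decomp}~(i) some $\sigma\in\cD_1$ with $f(\sigma) = \tau$ and $\{p\}\subseteq\sigma$ provided $p$ lies in some cell mapping onto $\tau$; more carefully, I would take a point $y\in\brCellint{\tau}$, note $y\in\flower_0(f(p))$, and since $f$ is surjective (Remark~\ref{r: bc on mfd is cls fbc}) pick a preimage. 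The cleanest route: use that $f\restrict{\sigma}\colon\sigma\to\tau$ is a homeomorphism for the carrier $\sigma=\supp_{\cD_1}(p)$, with $f(\sigma)=\supp_{\cD_0}(f(p))$; then for any $\tau\in\cD_0$ with $f(p)\in\tau$ we have $\supp_{\cD_0}(f(p))\subseteq\tau$, so $\tau=f(\sigma')$ for some $\sigma'\supseteq\sigma\ni p$ by Lemma~\ref{l: cellular: pull back ess decomp}~(iii) (every cell of $\cD_0$ is covered by images of $\cD_1$-cells, and openness forces a $\sigma'$ through $p$). Then $\brCellint{\tau}=f(\brCellint{\sigma'})\subseteq f(\flower_1(p))$. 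The key point making the $\sigma'$ through $p$ exist is that $f$ is a branched cover: openness of $f$ means $f(\flower_1(p))$ is open, and since it is also closed in $\flower_0(f(p))$ (as $f$ is closed and $\flower_1(p)$ is a union of cell-interiors whose closures map into $\cls{\flower_0(f(p))}$) and $\flower_0(f(p))$ is connected (Corollary~\ref{c: x-flower structure}~(i)), equality follows. I expect the openness/closedness-plus-connectedness argument to be the slickest formulation, mirroring the proof of Lemma~\ref{l: open+closed => f(tU)=U}.

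For part~(ii), let $q$ be a $0$-vertex and $W\=\flower_0(q)$, which is a connected open neighborhood of $q$. I would first show each $\flower_1(p)$, $p\in f^{-1}(q)$, is contained in a connected component of $f^{-1}(W)$: indeed $f(\flower_1(p))=W$ by part~(i) (note $f(p)=q$), so $\flower_1(p)\subseteq f^{-1}(W)$, and $\flower_1(p)$ is connected. Conversely, by Lemma~\ref{l: cellular: pull back ess decomp}~(i) applied to each $\tau\in\cD_0$ with $q\in\tau$, the preimage $f^{-1}(\brCellint{\tau})$ is the disjoint union of cell-interiors $\brCellint{\sigma}$ over $\sigma\in\cD_1$ with $f(\sigma)=\tau$; since $f\restrict{\sigma}$ is a homeomorphism onto $\tau$ and $q\in\tau$, each such $\sigma$ contains a unique vertex $p_\sigma$ with $f(p_\sigma)=q$, hence $p_\sigma\in f^{-1}(q)$ and $\brCellint{\sigma}\subseteq\flower_1(p_\sigma)$. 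Taking the union over all $\tau\ni q$ gives $f^{-1}(W)=\bigcup_{p\in f^{-1}(q)}\flower_1(p)$. It remains to see that distinct $p,p'\in f^{-1}(q)$ give $\flower_1(p)\cap\flower_1(p')=\emptyset$: if some $\brCellint{\sigma}$ lay in both, then $\sigma$ would contain both vertices $p,p'$; but $f\restrict{\sigma}$ is injective and $f(p)=q=f(p')$ forces $p=p'$. Therefore the $\flower_1(p)$ are precisely the connected components of $f^{-1}(W)$. Finally, since $f^{-1}(q)$ is exactly the set of $1$-vertices $p$ with $f(p)=q$ (cellularity sends vertices to vertices, and $f^{-1}(q)$ consists of vertices because $f$ maps $\cD_1^{[0]}$ onto $\cD_0^{[0]}$ bijectively on each cell), ranging $q$ over all $0$-vertices yields $\coverF(\cD_1)=f^*\coverF(\cD_0)$ by the definition \eqref{e: pullback of cover} of the pullback cover.

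The main obstacle I anticipate is the reverse inclusion $\flower_0(f(p))\subseteq f(\flower_1(p))$ in part~(i): one must promote the ``some preimage cell through a nearby cell'' statement of Lemma~\ref{l: cellular: pull back ess decomp}~(iii) to ``a preimage cell through the given vertex $p$,'' and this genuinely uses that $f$ is a branched cover (local surjectivity onto a full neighborhood), not merely cellular — a cellular map that is not open can collapse the flower of $p$, as the remark before the proposition notes. The connectedness-plus-clopen argument circumvents the combinatorial bookkeeping cleanly, so I would present part~(i) that way and then part~(ii) via the explicit cell-interior decomposition of Lemma~\ref{l: cellular: pull back ess decomp}~(i).
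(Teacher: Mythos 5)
Your proposal is correct, and it reaches the result by a genuinely different route from the paper's — in effect you swap the two techniques between the two parts. For part~(i) the paper argues cell-by-cell: for every cell $c\in\cD_0$ containing $q=f(p)$ it first produces, via Lemma~\ref{l: CBC: Y meet f^k(Fl)=> Y=f(Y')} (this is where openness of $f$ enters), a chamber $\tX\ni p$ with $f\bigl(\tX\bigr)=X$ for some chamber $X\supseteq c$, and then pulls $c$ back inside $\tX$ using Corollary~\ref{c: cellular: cellular homeo} to get a cell through $p$ mapping onto $c$. You instead run a connectedness argument: $f(\flower_1(p))$ is open, nonempty, relatively closed in $\flower_0(q)$, and $\flower_0(q)$ is connected. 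For part~(ii) the roles are reversed: the paper shows each $\flower_1(p)$ is a component via Lemma~\ref{l: BM-Lemma-5.4} together with $f(\partial\flower_1(p))\subseteq\partial\flower_0(q)$, and gets the converse from surjectivity onto $\flower_0(q)$ of each component (Remark~\ref{r: forward inv of pullback} and Lemma~\ref{l: open+closed => f(tU)=U}); you instead decompose $f^{-1}(\flower_0(q))$ explicitly into cell-interiors via Lemma~\ref{l: cellular: pull back ess decomp}~(i), note that each $\stdCellint{\sigma}$ with $f(\sigma)=\tau\ni q$ lies in $\flower_1(p_\sigma)$ for the $1$-vertex $p_\sigma=(f|_\sigma)^{-1}(q)$, and obtain disjointness of the flowers from injectivity of $f$ on cells. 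Both routes are sound; yours trades the paper's combinatorial bookkeeping in~(i) for topology, and its topology in~(ii) for combinatorics, with the small bonus in~(ii) of exhibiting $f^{-1}(\flower_0(q))$ directly as the disjoint union of the flowers.

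One justification in your part~(i) needs tightening. Relative closedness of $f(\flower_1(p))$ in $\flower_0(q)$ does not follow merely from ``$f$ is closed and the closures of the cell-interiors map into $\cls{\flower_0(q)}$'': a priori a boundary point of $\flower_1(p)$ could map into $\flower_0(q)$ itself, and then $f(\flower_1(p))$ would not be relatively closed. The ingredient you actually need is the boundary half of Proposition~\ref{p: c-flower: fwd invariance}, namely $f(\partial\flower_1(p))\subseteq\partial\flower_0(q)$; with it, $f(\flower_1(p))=f\bigl(\cls{\flower_1(p)}\bigr)\cap\flower_0(q)$, which is relatively closed because $f$ is a closed map (Remark~\ref{r: bc on mfd is cls fbc}). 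This is a local repair, not a change of strategy, since you already cite that proposition for the forward inclusion.
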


Recall that $f^*\flower(\cD_0)$ is defined by (\ref{e: pullback of cover}). 
Before the proof of Proposition~\ref{p: p-flowers: invariance}, we state the following corollary, which is a straightforward consequence of Proposition~\ref{p: p-flowers: invariance}, Lemma~\ref{l: decomp of S^n: join opp <=> not in flower}, and Corollary~\ref{c: decomp of S^n: disjoint chambers=>not in a flower}; cf.~\cite[Lemma~5.29]{BM17}.

\begin{cor}\label{c: CBC: connected set in flower}
	Let $f\:\mfd^n\to\mfd^n$ be a cellular branched cover on a {\clcntop} $n$-manifold and $(\cD_1,\cD_0)$ be a {\celPair} of $f$.
	Let $A\subseteq\mfd^n$ be a connected set. Then
    \begin{enumerate}[label=(\roman*),font=\rm]
        \item $A$ is contained in a $1$-flower if and only if $f(A)$ is contained in a $0$-flower;
        \smallskip
        \item $A$ {\jsOpSd} of $\cD_1$ if and only if $f(A)$ {\jsOpSd} of $\cD_0$; and
        \smallskip
        \item if $A$ meets both $\sigma,\,\tau$ for some disjoint $\sigma,\,\tau\in\cD_1$, then $f(A)$ {\jsOpSd} of $\cD_0$ and cannot be contained in any $0$-flower.
    \end{enumerate}
\end{cor}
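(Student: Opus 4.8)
\emph{Plan.} I would derive all three statements from the flower characterization of joining opposite sides in Lemma~\ref{l: decomp of S^n: join opp <=> not in flower} together with the invariance of flowers under a cellular branched cover, exactly as the three ingredients flagged in the excerpt suggest. Recall that by Lemma~\ref{l: decomp of S^n: join opp <=> not in flower}, for any subset $B\subseteq\mfd^n$ and any cell decomposition $\cD$ of $\mfd^n$, the statement ``$B$ {\jsOpSd} of $\cD$'' is \emph{equivalent} to ``$B$ is contained in no $\flower_\cD(c)$, $c\in\cD$'', equivalently ``$B$ is contained in no vertex flower of $\cD$''; negating, ``$B$ is contained in some flower of $\cD$'' is equivalent to ``$B$ does not {\jsOpSd} of $\cD$''. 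Consequently, statement~(i) is just the negation of statement~(ii) read at both levels $\cD_1$ and $\cD_0$, and statement~(iii) reduces to~(ii) once we observe that a connected set meeting two disjoint $1$-cells can lie in no $1$-flower. So the only real content is~(ii), which I would prove by establishing its two implications separately, each by contraposition.

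For the implication ``$f(A)$ {\jsOpSd} of $\cD_0$ $\Longrightarrow$ $A$ {\jsOpSd} of $\cD_1$'': if $A$ fails to {\jsOpSd} of $\cD_1$, then $A\subseteq\flower_1(p)$ for some $1$-vertex $p$ by Lemma~\ref{l: decomp of S^n: join opp <=> not in flower}, and pushing forward using the forward invariance in Proposition~\ref{p: c-flower: fwd invariance}~(i) (or the equality in Proposition~\ref{p: p-flowers: invariance}~(i)) gives $f(A)\subseteq\flower_0(f(p))$, a $0$-flower; hence $f(A)$ does not {\jsOpSd} of $\cD_0$. For the converse ``$A$ {\jsOpSd} of $\cD_1$ $\Longrightarrow$ $f(A)$ {\jsOpSd} of $\cD_0$'': if $f(A)$ fails to {\jsOpSd} of $\cD_0$, then $f(A)\subseteq\flower_0(q)$ for some $0$-vertex $q$, so $A$ is a connected subset of $f^{-1}(\flower_0(q))$ and therefore contained in a single connected component of that preimage; by the total-invariance statement Proposition~\ref{p: p-flowers: invariance}~(ii) this component equals $\flower_1(p)$ for some $p\in f^{-1}(q)$, whence $A\subseteq\flower_1(p)$ and $A$ does not {\jsOpSd} of $\cD_1$. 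This proves~(ii).

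It then remains to assemble the pieces. For~(i), apply Lemma~\ref{l: decomp of S^n: join opp <=> not in flower} at level $\cD_1$ to rewrite ``$A$ is contained in a $1$-flower'' as ``$A$ does not {\jsOpSd} of $\cD_1$'', invoke the contrapositive form of~(ii), and apply Lemma~\ref{l: decomp of S^n: join opp <=> not in flower} at level $\cD_0$ in the other direction to reach ``$f(A)$ is contained in a $0$-flower''. For~(iii), if $A$ meets disjoint $\sigma,\tau\in\cD_1$, then $A$ cannot be contained in any $\flower_1(c)$, for otherwise $\flower_1(c)$ would meet both $\sigma$ and $\tau$, contradicting Corollary~\ref{c: decomp of S^n: disjoint chambers=>not in a flower}; hence by Lemma~\ref{l: decomp of S^n: join opp <=> not in flower}, $A$ {\jsOpSd} of $\cD_1$, so by~(ii), $f(A)$ {\jsOpSd} of $\cD_0$, and one final application of Lemma~\ref{l: decomp of S^n: join opp <=> not in flower} rephrases this as ``$f(A)$ is contained in no $0$-flower''. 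I do not expect a genuine obstacle here; the only point requiring a bit of care is the asymmetry inside~(ii)---the pull-back direction really uses the description of the components of $f^{-1}(\flower_0(q))$ from Proposition~\ref{p: p-flowers: invariance}~(ii) (together with the elementary fact that a connected set lies in a single connected component of any set containing it), whereas the push-forward direction needs only the much cheaper forward invariance of flowers.
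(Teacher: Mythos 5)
Your proposal is correct and follows exactly the route the paper intends: the paper states the corollary as a straightforward consequence of Proposition~\ref{p: p-flowers: invariance}, Lemma~\ref{l: decomp of S^n: join opp <=> not in flower}, and Corollary~\ref{c: decomp of S^n: disjoint chambers=>not in a flower}, and your argument fills in the details using precisely these three ingredients (forward invariance of flowers for one direction of (ii), the component description of $f^{-1}(\flower_0(q))$ for the other, and the join-opposite-sides/flower equivalence to translate (i) and (iii)).
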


In order to prove Proposition~\ref{p: p-flowers: invariance}, we recall that a cellular branched cover is surjective on the level of chambers. We formulate this as follows.

\begin{lemma}
	\label{l: CBC: Y meet f^k(Fl)=> Y=f(Y')}
	Let $f\colon \mfd^n \to \mfd^n$ be a cellular branched cover on a {\clcntop} $n$-manifold and $(\cD_1,\cD_0)$ be a {\celPair} of $f$. Let $p$ be a vertex in $\cD_1$, and $Y\in \cD_0^{\top}$ satisfy $Y \cap f(\flower_1(p))\neq\emptyset$. Then there exists $\tY\in\cD_{1}^{[n]}$ with $p\in \tY$ such that $Y=f\bigl(\tY\bigr)$.
\end{lemma}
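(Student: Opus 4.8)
The plan is to exploit the fact that, although $f$ is merely cellular and need not carry flowers onto flowers, it is in particular an \emph{open} map, while an $n$-chamber is the closure of its cell-interior. Hence the open set $f(\flower_1(p))$, if it meets $Y$ at all, must already meet the cell-interior of $Y$, and this lets one read off the desired $n$-chamber directly as the carrier of a suitable point of $\flower_1(p)$.

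First I would record the two topological inputs. Since $f$ is a branched cover it is open, so $f(\flower_1(p))$ is an open subset of $\mfd^n$, the set $\flower_1(p)$ itself being open by Corollary~\ref{c: x-flower structure}~(i). And since $Y$ is an $n$-dimensional cell of $\cD_0$ in the closed topological $n$-manifold $\mfd^n$, Lemma~\ref{l: properties of cells} gives $\cellint Y=\stdInt Y$ and $Y=\cls{\cellint Y}$. From $Y\cap f(\flower_1(p))\neq\emptyset$ together with $Y=\cls{\cellint Y}$ it then follows that the open set $f(\flower_1(p))$ meets $\cellint Y$: any point of $f(\flower_1(p))\cap Y$ lies in the closure of $\cellint Y$, so the open neighborhood $f(\flower_1(p))$ of that point intersects $\cellint Y$. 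Fix $y\in f(\flower_1(p))\cap\cellint Y$ and choose $x\in\flower_1(p)$ with $f(x)=y$.

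Next I would let $\sigma\=\supp_{\cD_1}(x)$ be the carrier of $x$ in $\cD_1$; by the description of flowers in~\eqref{e: Fl(x)} we have $p\in\sigma$ and $x\in\cellint\sigma$. Since $f$ is $(\cD_1,\cD_0)$-cellular, $f|_\sigma\colon\sigma\to f(\sigma)$ is a homeomorphism onto a cell of $\cD_0$, and a homeomorphism between cells maps cell-interior onto cell-interior (this is immediate from the parametrization-independence of $\cellint(\cdot)$ noted after its definition, i.e.\ invariance of domain for cubes). Hence $y=f(x)\in\cellint\bigl(f(\sigma)\bigr)$. But also $y\in\cellint Y$, and distinct cells of $\cD_0$ have disjoint cell-interiors by Definition~\ref{d: cell decomposition}~(ii); therefore $f(\sigma)=Y$. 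In particular $\dim\sigma=\dim f(\sigma)=\dim Y=n$, so $\tY\=\sigma$ is an $n$-chamber of $\cD_1$ with $p\in\tY$ and $f(\tY)=Y$, as required.

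The argument is short; the only point requiring a moment's care is the passage from ``$f(\flower_1(p))$ meets $Y$'' to ``$f(\flower_1(p))$ meets the cell-interior of $Y$'', which is precisely where the openness of $f$ as a branched cover (not merely as a cellular map) is used. The alternative of enlarging $\sigma$ to an $n$-chamber after only knowing $f(\sigma)\subseteq Y$ would run into the genuine possibility that such an $n$-chamber meets $Y$ only along its boundary (the third alternative in Lemma~\ref{l: properties of cell decompositions}~(iv)); avoiding that is the main, if minor, obstacle, and the opening move above sidesteps it entirely.
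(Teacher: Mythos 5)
Your proof is correct and follows essentially the same route as the paper: both arguments use the openness of $f$ together with $Y=\cls{\stdCellint{Y}}$ to find a point of $f(\flower_1(p))$ in $\stdCellint{Y}$, and then identify a chamber of $\cD_1$ through $p$ mapping onto $Y$. The only (cosmetic) difference is in the last step: the paper selects an $n$-chamber containing $p$ via Proposition~\ref{p: c-flower: structure}~(ii) and concludes with Lemma~\ref{l: properties of cell decompositions}~(iv), whereas you take the carrier $\supp_{\cD_1}(x)$ of a preimage point and use that cellular homeomorphisms carry cell-interiors to cell-interiors plus disjointness of cell-interiors, so that $\dim\sigma=n$ falls out automatically.
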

\begin{proof}
	By Proposition~\ref{p: c-flower: structure}~(i), the subset $\flower_{1}(p)$ is open. Since $f$ is an open map, $f(\flower_{1}(p))$ is open. Then since $Y=\cls{\stdCellint{Y}}$ (cf.~Lemma~\ref{l: properties of cells}), we have $ f(\flower_1(p))\cap\stdCellint{Y}\neq\emptyset$.
	So by Proposition~\ref{p: c-flower: structure}~(ii), there exists $\tY\in\cD_{1}^{\top}$ satisfying $p\in\tY$ and $f\bigl(\tY\bigr)\cap\stdCellint{Y}\neq \emptyset$. Since $\dim(Y)=\dim\bigl(f\bigl(\tY\bigr)\bigr)=n$, it follows from Lemma~\ref{l: properties of cell decompositions}~(iv) that $Y=f\bigl(\tY\bigr)$.
\end{proof}

We also recall a topological fact; see e.g.,~\cite[Lemma~5.4]{BM17}.

\begin{lemma}\label{l: BM-Lemma-5.4}
	Let $A\subseteq \fX$ be a closed subset of a locally-compact Hausdorff space $\fX$, and $U\subseteq \fX\smallsetminus A$ be a non-empty connected open set with $\partial U\subseteq A$. Then $U$ is a connected component of $\fX\smallsetminus A$.
\end{lemma}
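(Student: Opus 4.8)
The plan is to identify the connected component of $\fX\smallsetminus A$ containing $U$ and show it is no larger than $U$. Since $U$ is non-empty and connected, there is a unique connected component $V$ of $\fX\smallsetminus A$ with $U\subseteq V$, and it suffices to prove $U=V$. Because $V$ is connected, for this it is enough to show that $U$ is simultaneously open and closed as a subset of $V$ (with the subspace topology), since $U$ is non-empty.

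The openness is immediate: $U$ is open in $\fX$ by hypothesis, hence open in $V$. For relative closedness I would compute the closure of $U$ inside $V$. As $U$ is open in $\fX$, we have $\overline{U}=U\cup\partial U$, so the closure of $U$ relative to $V$ is
\[
\overline{U}\cap V=(U\cap V)\cup(\partial U\cap V)=U\cup(\partial U\cap V).
\]
Now the hypothesis $\partial U\subseteq A$ together with $V\subseteq\fX\smallsetminus A$ gives $\partial U\cap V\subseteq A\cap(\fX\smallsetminus A)=\emptyset$, so $\overline{U}\cap V=U$; that is, $U$ is closed in $V$. Being a non-empty clopen subset of the connected space $V$, it follows that $U=V$, and therefore $U$ is a connected component of $\fX\smallsetminus A$.

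I do not expect a genuine obstacle here; this is a routine connectedness argument, and the only points needing a moment of care are the elementary identity $\overline{U}=U\cup\partial U$ for an open set and the bookkeeping with the subspace topology on $V$. (In fact neither local compactness nor the Hausdorff property is used; they are retained only for consistency with the ambient setting.)
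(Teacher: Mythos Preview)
Your proof is correct; the argument is the standard clopen-in-a-component one, and your observation that neither local compactness nor the Hausdorff condition is actually needed is accurate. The paper does not give its own proof of this lemma but simply cites \cite[Lemma~5.4]{BM17}, so there is no alternative approach to compare against; your argument is essentially what one finds in that reference.
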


\begin{proof}[Proof of Proposition~\ref{p: p-flowers: invariance}]

	(i) Let $p$ be a $1$-vertex and denote $q\=f(p)$. Let $c\in\cD_0$ be a cell containing $q$. By Lemma~\ref{l: cell decomp: compact, and Mfd}~(ii) there exists $X\in\cD_0^{\top}$ containing $c$. 
	Then by Lemma~\ref{l: CBC: Y meet f^k(Fl)=> Y=f(Y')}, there exists $\tX\in\cD_{1}^{\top}$ satisfying $p\in\tX$ and $X=f\bigl(\tX\bigr)$.
	By Corollary~\ref{c: cellular: cellular homeo}, $\cD_{1}|_{\tX}=\bigl(f|_{\tX}\bigr)^*(\cD_0|_X)$. Denote $\tc\=\bigl(f|_{\tX}\bigr)^{-1}(c)\in\cD_{1}|_{\tX}$, then $p\in\tc$ and $\stdCellint{c}=f(\stdCellint{\tc})\subseteq f(\flower_{1}(p))$. Since $c\in\cD_0$ containing $q$ is arbitrary, we have $\flower_0(q)\subseteq f(\flower_{1}(p))$. On the other hand, by Proposition~\ref{p: c-flower: fwd invariance}~(ii), $f(\flower_{1}(p))\subseteq\flower_0(q)$.

	\smallskip
	
	(ii) It is clear that $f^{-1}(q)$ is a collection of $1$-vertices.
	Assuming $p\in f^{-1}(q)$, we show that $\flower_1(p)$ is a connected component of $f^{-1}(\flower_0(q))$. With Lemma~\ref{l: BM-Lemma-5.4} and the fact that $\flower_{1}(p)$ is open and connected cf.~Proposition~\ref{p: c-flower: structure}~(i)), it suffices to show $\partial\flower_{1}(p)\subseteq \mfd^n\smallsetminus f^{-1}(\flower_0(q))$, because
    By Proposition~\ref{p: c-flower: fwd invariance}~(ii) and the fact that $\flower_0(q)$ is open (cf.~Proposition~\ref{p: c-flower: structure}~(i)),  we have $f(\partial\flower_1(p))\subseteq\partial\flower_0(q)\subseteq\mfd^n\smallsetminus\flower_0(q)$. Thus, $\partial\flower_{1}(p)\subseteq \mfd^n\smallsetminus f^{-1}(\flower_0(q))$.
	
    Conversely, we show that each connected component of $f^{-1}(\flower_0(q))$ is a flower. Let $U$ be a connected component of $f^{-1}(\flower_0(q))$. Then $f(U)=\flower_0(q)$ (cf.~Remark~\ref{r: forward inv of pullback} and Lemma~\ref{l: open+closed => f(tU)=U}).
    Let $p\in U$ satisfy $f(p)=q$. Then $p$ is a $1$-vertex and $\flower_1(p)$ is a connected component of $f^{-1}(\flower_0(q))$ as shown above. Thus, we have $U=\flower_{1}(p)$.
    It follows immediately that $\coverF(\cD_1)=f^*\coverF(\cD_0)$.
\end{proof}

\subsection{Local multiplicity and branch set}
Now we consider the local multiplicity and the structure of the branch set of a cellular branched cover, in terms of its combinatorial data. As a consequence, we obtain a description of the branch set of a cellular branched cover.

\begin{lemma}\label{l: multi: i(x,f)=max card U(Y)}
	Let $f\:\mfd^n\to\mfd^n$ be a cellular map on a {\clcntop} $n$-manifold and $(\cD_1,\cD_0)$ be a {\celPair} of $f$. Then for each $x\in\mfd^n$,
	\begin{equation*}
	\ind(x,f)=\max_{\tau\in\cD_0,\,f(x)\in\tau}\card\{\sigma\in\cD_1:x\in\sigma,\,f(\sigma)=\tau\}.
\end{equation*}
\end{lemma}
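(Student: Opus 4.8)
The plan is to prove both inequalities by comparing each side to the single number
$M\=\max\{\card\{\sigma\in\cD_1:x\in\sigma,\,f(\sigma)=\tau\}:\tau\in\cD_0,\,f(x)\in\tau\}$, using the flower $\flower_1(x)$ as a distinguished neighbourhood of $x$ for the bound $\ind(x,f)\leq M$, and running over all neighbourhoods for the reverse bound $\ind(x,f)\geq M$. The only inputs needed are that $f|_\sigma\colon\sigma\to f(\sigma)$ is a homeomorphism for each $\sigma\in\cD_1$ (so it carries $\stdCellint{\sigma}$ onto $\stdCellint{f(\sigma)}$), that distinct cells of a cell decomposition have disjoint cell-interiors (Definition~\ref{d: cell decomposition}~(ii)), that $\cls{\stdCellint{c}}=c$ for every cell (Lemma~\ref{l: properties of cells}~(i)), and that $\flower_1(x)$ is an open neighbourhood of $x$ (Corollary~\ref{c: x-flower structure}).

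For $\ind(x,f)\leq M$, I would take the open neighbourhood $U_0\=\flower_1(x)=\bigcup\{\stdCellint{\sigma}:\sigma\in\cD_1,\,x\in\sigma\}$. Since the cell-interiors in this union are pairwise disjoint and $f$ is injective on each, a point $y\in f(U_0)$ has, in $U_0$, exactly one preimage inside $\stdCellint{\sigma}$ for each $\sigma\in\cD_1$ with $x\in\sigma$ and $f(\sigma)=\supp_{\cD_0}(y)$, and none elsewhere; moreover $f(U_0)=\bigcup\{\stdCellint{f(\sigma)}:\sigma\in\cD_1,\,x\in\sigma\}$, so as $y$ varies the carrier $\supp_{\cD_0}(y)$ runs exactly over the cells of the form $f(\sigma)$, all of which contain $f(x)$. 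Hence $\card\bigl(f^{-1}(y)\cap U_0\bigr)=\card\{\sigma\in\cD_1:x\in\sigma,\,f(\sigma)=\supp_{\cD_0}(y)\}$, and taking the supremum over $y\in f(U_0)$ gives $N(f,U_0)=M$, whence $\ind(x,f)\leq N(f,U_0)=M$.

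For $\ind(x,f)\geq M$, I would show $N(f,U)\geq M$ for an arbitrary open neighbourhood $U$ of $x$. Fix $\tau_0\in\cD_0$ with $f(x)\in\tau_0$ realizing the maximum, and let $\sigma_1,\dots,\sigma_M\in\cD_1$ be the distinct cells with $x\in\sigma_j$ and $f(\sigma_j)=\tau_0$. For each $j$ the homeomorphism $f|_{\sigma_j}\colon\sigma_j\to\tau_0$ satisfies $(f|_{\sigma_j})^{-1}(f(x))=x$, so by continuity of $(f|_{\sigma_j})^{-1}$ there is a neighbourhood $V_j$ of $f(x)$ in $\tau_0$ with $(f|_{\sigma_j})^{-1}(V_j)\subseteq U$; put $V\=\bigcap_{j=1}^M V_j$, still a neighbourhood of $f(x)$ in $\tau_0$. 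Since $\tau_0=\cls{\stdCellint{\tau_0}}$, choose $y\in V\cap\stdCellint{\tau_0}$. Then the points $z_j\=(f|_{\sigma_j})^{-1}(y)$ lie in the pairwise disjoint cell-interiors $\stdCellint{\sigma_j}$, hence are distinct, each lies in $U$, and $f(z_j)=y$; therefore $\card\bigl(f^{-1}(y)\cap U\bigr)\geq M$, so $N(f,U)\geq M$, and taking the infimum over $U$ yields $\ind(x,f)\geq M$.

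Combining the two bounds gives $\ind(x,f)=M$, the asserted formula. The step requiring the most care is the lower bound: one must shrink the auxiliary neighbourhood $V$ of $f(x)$ inside $\tau_0$ so that it is absorbed into $U$ by all $M$ local inverse homeomorphisms at once, and then locate a single value $y\in\stdCellint{\tau_0}$ lying in $V$ — this is precisely where one uses that $f(x)$ lies in $\tau_0=\cls{\stdCellint{\tau_0}}$ (it need not lie in $\stdCellint{\tau_0}$), which is harmless because $\stdCellint{\tau_0}$ is dense in $\tau_0$.
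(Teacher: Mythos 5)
Your proof is correct, and it follows essentially the same route as the paper: the lower bound by pulling back points of $\stdCellint{\tau_0}$ near $f(x)$ through the $M$ inverse homeomorphisms $(f|_{\sigma_j})^{-1}$ (the paper phrases this with a sequence $y_i\to f(x)$ rather than a shrunken neighbourhood $V$), and the upper bound by using the flower $\flower_1(x)$ as the test neighbourhood and identifying the fiber over $y$ inside it with the cells $\sigma\ni x$ satisfying $f(\sigma)=\supp_{\cD_0}(y)$ (the paper runs this step by contradiction, but the counting is the same).
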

\begin{proof}
	Fix arbitrary $x\in\mfd^n$.
	For each $\tau\in\cD_0$ with $f(x)\in\tau$, denote
	\begin{equation*}
		\cU(\tau)\=\{\sigma\in\cD_1:x\in\sigma,\,f(\sigma)=\tau\}.
	\end{equation*}

    First, we show 
	$\ind(x,f) \geq  \max\{\card (\cU(\tau)):\tau\in\cD_0,\, f(x)\in\tau\}$.
	Fix $\tau\in\cD_0$ with $f(x)\in\tau$ and enumerate $\cU(\tau)=\{\sigma_1,\,\dots,\,\sigma_m\}$. Let $\{y_i\}_{i\in\N}$ be a sequence in $\stdCellint{\tau}$ such that $y_i\to f(x)$ as $i\to+\infty$. By the definition of cellular maps, for each $1 \leq  j \leq  m$, the map $f|_{\sigma_j}\:\sigma_j\to\tau$ is homeomorphic. Thus, we can choose $x^j_i\=\bigl(f|_{\sigma_j}\bigr)^{-1}(y_i)$ for each $i\in\N$ and each $1 \leq  j \leq  m$.
    Combining this construction with the fact that $\stdCellint{\sigma_1},\,\dots,\,\stdCellint{\sigma_m}$ are pairwise disjoint, we have
    \begin{enumerate}[label=(\roman*),font=\rm]
    \smallskip
        \item $x^1_i,\,\dots,\,x^m_i$ are pairwise distinct points for each $i\in\N$;
        \smallskip
        \item $f\bigl(x^1_i\bigr)=\cdots=f(x^m_i)=y_i$ for each $i\in\N$; and
        \smallskip
        \item $x^j_i\to x$ as $i\to+\infty$ for each $1\leq j\leq m$.
    \end{enumerate}
    Then it follows from the definition of $i(x,f)$ (cf.~(\ref{e: N_loc(f,x)})) that $\ind(x,f) \geq  m$.
	
	To show $\ind(x,f) \leq  \max\{\card (\cU(\tau)):\tau\in\cD_0,\,f(x)\in\tau\}$, we argue by contradiction and assume
	$\ind(x,f)> m\=\max\{\card(\cU(\tau)):\tau\in\cD_0,\,f(x)\in\tau\}$. 
	By Proposition~\ref{p: c-flower: structure}~(i), $\flower_1(x)$ is an open neighborhood. Then by the definition of $\ind(x,f)$ (cf.~\ref{e: N_loc(f,x)}),
	$
	N(f,\flower_1(x)) \geq  \ind(x,f)> m
	$,
	and thus (by (\ref{e: N(f,A)})) there exist distinct $x_1,\,\dots,x_{m+1}\in\flower_1(x)$ and $y\in f(\flower_1(x))$ such that $f(x_1)=\cdots=f(x_{m+1})=y$. By~(\ref{e: Fl(x)}), we can choose $\sigma_i\in\cD_1$ for each $1 \leq  i \leq  m+1$ satisfying $x\in\sigma_i$ and $x_i\in\stdCellint{\sigma_i}$. By Proposition~\ref{p: c-flower: fwd invariance}~(ii), $y\in\flower_0(f(x))$, and thus there is $\tau\in\cD_0$ satisfying $f(x)\in\tau$ and $y\in\stdCellint{\tau}$. For each $1 \leq  i \leq  m+1$, since $y\in\brCellint{f(\sigma_i)}\cap\brCellint{\tau}\neq\emptyset$, we have $f(\sigma_i)=\tau$ and $\sigma_i\in\cU(\tau)$. 
    For all $1 \leq  i<j \leq  m+1$, since $x_i\neq x_j$, $f(x_i)=y=f(x_j)$, and $f|_{c}$ is injective for each $c\in\cD_1$, we have $\sigma_i\neq\sigma_j$.
    Hence, $\card(\cU(\tau)) \geq  m+1$, which contradicts the choice of $m$.
    
	Therefore, $\ind(x,f)=\max\{\card(\cU(\tau)):\tau\in\cD_0,\,f(x)\in\tau\}$.
\end{proof}

The above lemma yields a two-sided bound for the local multiplicity of a cellular map in terms of the combinatorial data. Given a cell decomposition $\cD$ of a {\clcntop} $n$-manifold $\mfd^n$, we denote
\begin{equation}\label{e: N(D,x)}
	\NxD{x}{\cD}\=\card\bigl\{X\in\cD^{[n]}:x\in X\bigr\}.
\end{equation}

\begin{lemma}\label{l: multiplicity: N_loc(f,x) and N(D,x)}
	Let $f\:\mfd^n\to\mfd^n$ be a cellular map on a {\clcntop} $n$-manifold, and $(\cD_1,\cD_0)$ be a {\celPair} of $f$. Then $\ind(x,f) \leq  \NxD{x}{\cD_1} \leq \NxD{f(x)}{\cD_0}\cdot \ind(x,f)$ for each $x\in\mfd^n$. In particular, $\NxD{x}{\cD_1}\leq \ind(x,f)\cdot\card\cD_0$.
\end{lemma}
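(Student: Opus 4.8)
\medskip
\noindent\textbf{Proof proposal.}
The plan is to derive both inequalities from Lemma~\ref{l: multi: i(x,f)=max card U(Y)}, using repeatedly that a $(\cD_1,\cD_0)$-cellular map restricts to a homeomorphism on each cell and hence preserves dimension: if $X\in\cD_1^{[n]}$ is a chamber then $f(X)\in\cD_0^{[n]}$, and conversely any cell $\sigma\in\cD_1$ with $f(\sigma)\in\cD_0^{[n]}$ must itself be a chamber. The ``in particular'' clause will be immediate once the second inequality is in hand, since $\NxD{f(x)}{\cD_0}\le\card\cD_0$.

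For the lower bound $\ind(x,f)\le\NxD{x}{\cD_1}$, I would fix a cell $\tau\in\cD_0$ containing $f(x)$ that realizes the maximum in Lemma~\ref{l: multi: i(x,f)=max card U(Y)}, and set $\cU(\tau)\=\{\sigma\in\cD_1:x\in\sigma,\ f(\sigma)=\tau\}$, so that $\card\cU(\tau)=\ind(x,f)$. By Lemma~\ref{l: cell decomp: compact, and Mfd}~(ii), each $\sigma\in\cU(\tau)$ lies in some chamber $X_\sigma\in\cD_1^{[n]}$, and then $x\in\sigma\subseteq X_\sigma$. The assignment $\sigma\mapsto X_\sigma$ is injective: if $X_\sigma=X_{\sigma'}=:X$, then $\tau=f(\sigma)\subseteq f(X)$, so $\tau\in\cD_0|_{f(X)}$; by Corollary~\ref{c: cellular: cellular homeo} we have $\cD_1|_X=(f|_X)^*\bigl(\cD_0|_{f(X)}\bigr)$, and since $f|_X$ is a homeomorphism this forces $\sigma=(f|_X)^{-1}(\tau)=\sigma'$. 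Hence $\ind(x,f)=\card\cU(\tau)\le\card\{X\in\cD_1^{[n]}:x\in X\}=\NxD{x}{\cD_1}$.

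For the upper bound I would partition the chambers of $\cD_1$ through $x$ according to their images. Every $X\in\cD_1^{[n]}$ with $x\in X$ satisfies $f(X)\in\cD_0^{[n]}$ and $f(x)\in f(X)$, so $f(X)$ ranges over the $\NxD{f(x)}{\cD_0}$ chambers of $\cD_0$ containing $f(x)$, and
\begin{equation*}
	\NxD{x}{\cD_1}=\sum_{Y\in\cD_0^{[n]},\ f(x)\in Y}\card\bigl\{X\in\cD_1^{[n]}:x\in X,\ f(X)=Y\bigr\}.
\end{equation*}
For each such $Y$, a cell $\sigma\in\cD_1$ with $f(\sigma)=Y$ is automatically a chamber, since $f|_\sigma\colon\sigma\to Y$ is a homeomorphism and $\dim Y=n$; thus the summand equals $\card\{\sigma\in\cD_1:x\in\sigma,\ f(\sigma)=Y\}$, which is at most $\ind(x,f)$ by Lemma~\ref{l: multi: i(x,f)=max card U(Y)}. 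Summing over the at most $\NxD{f(x)}{\cD_0}$ chambers $Y$ gives $\NxD{x}{\cD_1}\le\NxD{f(x)}{\cD_0}\cdot\ind(x,f)$, and then $\NxD{x}{\cD_1}\le\ind(x,f)\cdot\card\cD_0$.

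I do not anticipate a genuine obstacle; the step most prone to error will be the injectivity argument in the lower bound, which relies on Corollary~\ref{c: cellular: cellular homeo} to identify $\cD_1|_X$ as the homeomorphic pullback of $\cD_0|_{f(X)}$, together with the dimension bookkeeping that lets one pass freely between ``$n$-cells of $\cD_1$'' and ``cells of $\cD_1$ mapped onto $n$-cells of $\cD_0$''.
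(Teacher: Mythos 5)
Your proof is correct, and it is worth noting how it differs from the paper's. For the upper bound $\NxD{x}{\cD_1}\le\NxD{f(x)}{\cD_0}\cdot\ind(x,f)$ you argue exactly as the paper does: partition the chambers of $\cD_1$ through $x$ by their images in $\cD_0$ and bound each fiber of this partition by $\ind(x,f)$ via Lemma~\ref{l: multi: i(x,f)=max card U(Y)} (the paper does not even need your observation that a cell mapping onto a chamber is itself a chamber, since it only counts chambers and uses the inequality $\card\cU(Y)\le\ind(x,f)$ directly; your extra remark is harmless). For the lower bound $\ind(x,f)\le\NxD{x}{\cD_1}$, however, you take a genuinely different route: you again invoke the combinatorial characterization of $\ind(x,f)$ from Lemma~\ref{l: multi: i(x,f)=max card U(Y)}, pick a cell $\tau$ realizing the maximum, and inject the maximizing family $\cU(\tau)$ into the chambers through $x$ using Lemma~\ref{l: cell decomp: compact, and Mfd}~(ii) together with the pullback identity $\cD_1|_X=(f|_X)^*\bigl(\cD_0|_{f(X)}\bigr)$ of Corollary~\ref{c: cellular: cellular homeo}; the injectivity step is sound, since two cells of $\cU(\tau)$ lying in the same chamber $X$ must both equal $(f|_X)^{-1}(\tau)$. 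The paper instead argues directly from the definition \eqref{e: N_loc(f,x)}: it sets $V=\bigcup\bigl\{X\in\cD_1^{[n]}:x\in X\bigr\}$, notes via Proposition~\ref{p: c-flower: structure} that $V$ contains the open flower $\flower_1(x)\ni x$, and observes that $f$ is injective on each of the $\NxD{x}{\cD_1}$ chambers making up $V$, so $\ind(x,f)\le N(f,V)\le\NxD{x}{\cD_1}$. The paper's version is a bit shorter and more self-contained (no need for the maximizer, the chamber-containment lemma, or the pullback corollary), while yours stays entirely at the combinatorial level of Lemma~\ref{l: multi: i(x,f)=max card U(Y)} and avoids reasoning about neighborhoods and $N(f,\cdot)$; both are valid, and your ``in particular'' step is the same trivial observation $\NxD{f(x)}{\cD_0}\le\card\cD_0$.
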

\begin{proof}
	Fix arbitrary $x\in\mfd^n$. Denote $\cX\=\bigl\{X\in\cD_1^{[n]}:x\in X\bigr\}$ and $V\=\bigcup\cX$. By Proposition~\ref{p: c-flower: structure}~(i) and~(ii), $V=\cls{\flower_1(x)}$ and $x\in\flower_1(x)\subseteq\stdInt{V}$. By the definition of cellular maps, $f|_X$ is injective for each $X\in\cX$ which, combined with (\ref{e: N(f,A)}), yields $N(f,V) \leq \card\cX=\NxD{x}{\cD_1}$. Consequently, (\ref{e: N_loc(f,x)}) yields $\ind(x,f) \leq  N(f,V) \leq  \NxD{x}{\cD_1}$. The first inequality is now verified.
	
	Now we verify the second inequality. Denote $\cY\=\bigl\{Y\in\cD_0^{[n]}:f(x)\in Y\bigr\}$ and set $\cU(Y)\=\{X\in\cX:f(X)=Y\}$ for each $Y\in\cY$. It is clear that $\{f(X):X\in\cX\}\subseteq\cY$, and consequently
	$\card\cX=\sum_{Y\in\cY}\card(\cU(Y)) \leq \card\cY\cdot\max_{Y\in\cY}\card(\cU(Y))$.
	Then by Lemma~\ref{l: multi: i(x,f)=max card U(Y)}, we have $\card\cX \leq \ind(x,f)\cdot\card\cY=\ind(x,f)\cdot \NxD{f(x)}{\cD_0}$.
\end{proof}

The function $x \mapsto \NxD{x}{\cD}$ attains its local maxima at vertices of $\cD$.
\begin{lemma}\label{l: multiplicity: N(D,x)<=N(D,p)}
	Let $\cD$ be a cell decomposition of a closed topological $n$-manifold $\mfd^n$. Then there exists a vertex $q$ with $\NxD{q}{\cD}=\max\{\NxD{x}{\cD}:x\in\mfd^n\}$.
\end{lemma}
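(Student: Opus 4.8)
The plan is to show that the function $x \mapsto \NxD{x}{\cD}$ is ``upper semicontinuous along the skeleton filtration'' in the sense that its value at any point is at most its value at some vertex in the carrier of that point. Concretely, I would fix an arbitrary $x \in \mfd^n$ with carrier $c \= \supp_{\cD}(x)$, and prove that there exists a vertex $q$ with $q \in c$ and $\NxD{x}{\cD} \le \NxD{q}{\cD}$. Since $\mfd^n$ is compact, $\cD$ is finite by Lemma~\ref{l: cell decomp: compact, and Mfd}~(i), so the maximum $\max\{\NxD{x}{\cD} : x \in \mfd^n\}$ is attained; applying the claim to a maximizing point yields a vertex achieving the maximum.

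The key step is the monotonicity $\NxD{x}{\cD} \le \NxD{q}{\cD}$ when $q$ is a vertex lying in $c = \supp_\cD(x)$. By Corollary~\ref{c: x-flower structure}~(ii), $\cls{\flower_\cD(x)} = \bigcup\{X \in \cD^{[n]} : x \in X\}$ and likewise $\cls{\flower_\cD(q)} = \bigcup\{X \in \cD^{[n]} : q \in X\}$. By Lemma~\ref{l: properties of cell decompositions}~(v), for $X \in \cD^{[n]}$ we have $x \in X$ if and only if $\supp_\cD(x) = c \subseteq X$, and similarly $q \in X$ iff $\{q\} \subseteq X$; so it suffices to show that every $n$-chamber $X$ containing $c$ also contains $q$. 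But $q \in c \subseteq X$ is immediate once $q \in c$. Hence $\{X \in \cD^{[n]} : x \in X\} \subseteq \{X \in \cD^{[n]} : q \in X\}$, which gives $\NxD{x}{\cD} \le \NxD{q}{\cD}$ by \eqref{e: N(D,x)}.

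It remains to produce a vertex $q$ inside $c = \supp_\cD(x)$. This is exactly the content already used in the proof of Lemma~\ref{l: decomp of S^n: join opp <=> not in flower}: by Lemma~\ref{l: cell decomp: compact, and Mfd}~(iii), the cell $c$ contains a cell of each dimension $0 \le k \le \dim(c)$ from $\cD|_c$; taking $k = 0$ gives a $0$-dimensional cell $\{q\} \in \cD$ with $q \in c$. Combining the two steps finishes the proof.

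The main obstacle is essentially bookkeeping rather than any genuine difficulty: one must be careful that ``$x \in X$'' for an $n$-cell $X$ is governed by the carrier relation (Lemma~\ref{l: properties of cell decompositions}~(v)), so that the inclusion of $n$-chamber sets really does follow from $q \in c \subseteq X$. I would also note in passing that one need not take $q$ in the carrier of a maximizer: any vertex $q$ with $\NxD{q}{\cD} = \max_{x} \NxD{x}{\cD}$ works, and the argument above simply certifies that such a vertex exists.
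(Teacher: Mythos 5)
Your proof is correct and follows essentially the same route as the paper: take the carrier $c=\supp_\cD(x)$, obtain a vertex in $c$ via Lemma~\ref{l: cell decomp: compact, and Mfd}~(iii), use Lemma~\ref{l: properties of cell decompositions}~(v) to see that every chamber containing $x$ contains $c$ and hence that vertex, and conclude by finiteness. The only (immaterial) difference is bookkeeping: you first fix a maximizer and then pass to a vertex in its carrier, while the paper bounds $\NxD{x}{\cD}$ by a vertex value for every $x$ and then maximizes over the finitely many vertices.
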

\begin{proof}
	Let $x\in\mfd^n$ be arbitrary and denote $c\=\supp_\cD(x)$ (cf.~Definition~\ref{d: carrier}). By Lemma~\ref{l: cell decomp: compact, and Mfd}~(iii), there exists a vertex $p\in c$. For each $X\in\cD^{[n]}$, if $x\in X$, then $X\cap\stdCellint{c}\neq\emptyset$ and we have by Lemma~\ref{l: properties of cell decompositions}~(v) that $p\in c\subseteq X$. It follows that $\NxD{x}{\cD} \leq \NxD{p}{\cD}$. Since the set of all vertices is finite (cf.~Lemma~\ref{l: cell decomp: compact, and Mfd}~(i)), the lemma follows.
\end{proof}

Lemmas~\ref{l: multiplicity: N_loc(f,x) and N(D,x)} and \ref{l: multiplicity: N(D,x)<=N(D,p)} yield the following corollary.
\begin{cor}\label{c: multiplilcity: characterization of bounded N_loc}
	Let $f\:\mfd^n\to\mfd^n$ be an {\itCel} map on a {\clcntop} $n$-manifold and $\{\cD_m\}_{m\in\N_0}$ be a {\celSeq} of $f$. Then the following statements are equivalent:
	\begin{enumerate}[label=(\roman*),font=\rm]
    	\smallskip
		\item $\sup\{\ind(x,f^m):m\in\N,\,x\in\mfd^n\}<+\infty$.
		\smallskip
		\item $\sup\{\NxD{x}{\cD_m}:m\in\N,\,x\in\mfd^n\}<+\infty$.
		\smallskip
		\item $\sup\{\NxD{p}{\cD_m}:m\in\N,\,p\text{ is a vertex of }\cD_m\}<+\infty$.
	\end{enumerate}
\end{cor}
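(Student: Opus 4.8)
The plan is to deduce the corollary directly from Lemmas~\ref{l: multiplicity: N_loc(f,x) and N(D,x)} and~\ref{l: multiplicity: N(D,x)<=N(D,p)}, the only extra ingredient being the observation (Remark~\ref{r: basic cellular property}) that, for every $m\in\N$, the iterate $f^m$ is a $(\cD_m,\cD_0)$-cellular map. With this in hand it suffices to verify the three pairwise equivalences, each of which follows in essentially one line, so no cyclic argument is even needed.

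\textbf{Equivalence of (i) and (ii).} Fix $m\in\N$ and apply Lemma~\ref{l: multiplicity: N_loc(f,x) and N(D,x)} to the cellular pair $(\cD_m,\cD_0)$ of $f^m$. The first inequality there reads $\ind(x,f^m)\le \NxD{x}{\cD_m}$ for every $x\in\mfd^n$; taking suprema over $x\in\mfd^n$ and $m\in\N$ gives the implication (ii)$\Rightarrow$(i). Conversely, the ``in particular'' bound of the same lemma gives $\NxD{x}{\cD_m}\le \ind(x,f^m)\cdot\card\cD_0$. Since $\mfd^n$ is compact, $\cD_0$ is a finite set by Lemma~\ref{l: cell decomp: compact, and Mfd}~(i), so the factor $\card\cD_0$ does not depend on $m$, and taking suprema yields (i)$\Rightarrow$(ii). (One could instead invoke the sharper bound $\NxD{x}{\cD_m}\le \NxD{f^m(x)}{\cD_0}\cdot\ind(x,f^m)$, but using the base decomposition $\cD_0$ makes the uniformity in $m$ transparent, which is the only point here requiring a moment's attention.)

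\textbf{Equivalence of (ii) and (iii).} The implication (ii)$\Rightarrow$(iii) is immediate because every vertex of $\cD_m$ is a point of $\mfd^n$. For (iii)$\Rightarrow$(ii), note that by Lemma~\ref{l: multiplicity: N(D,x)<=N(D,p)}, for each $m\in\N$ there is a vertex $q_m$ of $\cD_m$ with $\NxD{q_m}{\cD_m}=\max\{\NxD{x}{\cD_m}:x\in\mfd^n\}$. Hence
\[
\sup_{x\in\mfd^n}\NxD{x}{\cD_m}=\NxD{q_m}{\cD_m}\le \sup\bigl\{\NxD{p}{\cD_k}:k\in\N,\ p\text{ a vertex of }\cD_k\bigr\},
\]
and taking the supremum over $m\in\N$ gives (ii). This completes the proof. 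The substantive work is entirely contained in the two cited lemmas; the corollary itself is a bookkeeping step, and the only mild subtlety is the uniform-in-$m$ comparison constant noted above.
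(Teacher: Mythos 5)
Your proposal is correct and follows exactly the route the paper intends: the paper derives this corollary directly from Lemmas~\ref{l: multiplicity: N_loc(f,x) and N(D,x)} and~\ref{l: multiplicity: N(D,x)<=N(D,p)}, applied (via Remark~\ref{r: basic cellular property}) to the cellular pair $(\cD_m,\cD_0)$ of $f^m$, which is precisely your argument. Your remark about the uniform-in-$m$ constant $\card\cD_0$ is the right point to flag, and nothing further is needed.
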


Lemma~\ref{l: multi: i(x,f)=max card U(Y)} yields the following description of the local multiplicity function.
\begin{cor}\label{c: multi: N(f,x)=const on int(c)}
	Let $f\:\mfd^n\to\mfd^n$ be a cellular branched cover on a {\clcntop} $n$-manifold and $(\cD_1,\cD_0)$ be a {\celPair} of $f$. Then {\rm(i)} $\ind(\cdot,f)\:\stdCellint{c}\to\N$ is constant for each $c\in\cD_1$ and {\rm(ii)} for each $c\in\cD_1$, $c\subseteq B_f$ if and only if $B_f\cap\stdCellint{c}\neq\emptyset$.
\end{cor}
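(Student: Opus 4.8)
The plan is to deduce both statements directly from the combinatorial formula for the local multiplicity in Lemma~\ref{l: multi: i(x,f)=max card U(Y)}, the point being that the two index sets occurring in that formula do not vary along the cell-interior of a fixed cell of $\cD_1$.

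For~(i), fix $c\in\cD_1$ and an arbitrary point $x\in\stdCellint{c}$. By Lemma~\ref{l: properties of cell decompositions}~(v), a cell $\sigma\in\cD_1$ satisfies $x\in\sigma$ if and only if $\brCellint{c}\cap\sigma\neq\emptyset$, i.e.\ if and only if $c\subseteq\sigma$; hence $\{\sigma\in\cD_1:x\in\sigma\}=\{\sigma\in\cD_1:c\subseteq\sigma\}$, independently of the choice of $x\in\stdCellint{c}$. Moreover, since $f|_c\colon c\to f(c)$ is a homeomorphism and a homeomorphism between cells maps cell-interior onto cell-interior, $f$ maps $\stdCellint{c}$ onto $\stdCellint{f(c)}$, so $f(x)\in\brCellint{f(c)}$; applying Lemma~\ref{l: properties of cell decompositions}~(v) in $\cD_0$ then gives $\{\tau\in\cD_0:f(x)\in\tau\}=\{\tau\in\cD_0:f(c)\subseteq\tau\}$, again independently of $x\in\stdCellint{c}$. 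Substituting these two descriptions into Lemma~\ref{l: multi: i(x,f)=max card U(Y)} yields
\[
  \ind(x,f)=\max_{\tau\in\cD_0,\,f(c)\subseteq\tau}\card\{\sigma\in\cD_1:c\subseteq\sigma,\,f(\sigma)=\tau\}
\]
for every $x\in\stdCellint{c}$, and the right-hand side depends only on $c$. Hence $\ind(\cdot,f)$ is constant on $\stdCellint{c}$.

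For~(ii), the forward implication is immediate, since $\stdCellint{c}$ is a nonempty subset of $c$, so $c\subseteq B_f$ forces $B_f\cap\stdCellint{c}=\stdCellint{c}\neq\emptyset$. For the converse, suppose $x\in B_f\cap\stdCellint{c}$; then $\ind(x,f)\geq 2$, so by~(i) we have $\ind(\cdot,f)\geq 2$ on all of $\stdCellint{c}$, that is, $\stdCellint{c}\subseteq B_f$. Since the branch set $B_f$ is closed and $c=\cls{\stdCellint{c}}$ by Lemma~\ref{l: properties of cells}~(i), it follows that $c=\cls{\stdCellint{c}}\subseteq\cls{B_f}=B_f$.

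There is no serious obstacle here; the only step demanding a little care is the substitution in~(i), namely the passage from the condition ``$f(x)\in\tau$'' to ``$f(c)\subseteq\tau$'' in the range of the maximum, which is exactly where one uses that $f|_c$ preserves cell-interiors. (Note that part~(i) in fact requires only that $f$ be a cellular map.)
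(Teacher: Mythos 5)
Your proof is correct and follows essentially the same route as the paper: both parts rest on Lemma~\ref{l: properties of cell decompositions}~(v) to show that the index sets in Lemma~\ref{l: multi: i(x,f)=max card U(Y)} depend only on $c$ (not on the point of $\stdCellint{c}$), and part~(ii) is then deduced exactly as in the paper via the closedness of $B_f$ and $c=\cls{\stdCellint{c}}$. Your extra remark that $f|_c$ maps cell-interior onto cell-interior simply makes explicit a step the paper leaves implicit.
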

\begin{proof}
	(i) Consider $c\in\cD_1$ and $x,\,x'\in\stdCellint{c}$. Then for all $\sigma\in\cD_1$ and $\tau\in\cD_0$, it follows from Lemma~\ref{l: properties of cell decompositions}~(v) that
 $x\in \sigma\Leftrightarrow c\subseteq\sigma\Leftrightarrow x'\in\sigma$ and
 $f(x)\in\tau\Leftrightarrow f(c)\subseteq\tau \Leftrightarrow f(x')\in\tau$.
 Then (i) follows from Lemma~\ref{l: multi: i(x,f)=max card U(Y)}.
	
	\smallskip
	
	(ii) By the definition of the branch set, for each point $x\in\mfd^n$, $x\in B_f$ if and only if $\ind(x,f)>1$. This, together with (i), implies that if $c\in\cD_1$ satisfies $B_f\cap\stdCellint{c}\neq\emptyset$, then 
    $\stdCellint{c}\subseteq B_f$, and thus $c\subseteq B_f$ since $c=\cls{\stdCellint{c}}$ (cf.~Lemma~\ref{l: properties of cells}~(i)) and $B_f$ is closed. The converse implication is clear.
\end{proof}

As a consequence, the branch set of a cellular branched cover admits a cell complex structure.
\begin{prop}\label{p: B_f: is a sub complex}
	Let $f\:\mfd^n\to\mfd^n$ be a cellular branched cover on a {\clcntop} $n$-manifold and $(\cD_1,\cD_0)$ be a {\celPair} of $f$. Then there exists a collection $\cB_f\subseteq\cD_1$ for which $B_f=\abs{\cB_f}$ and $\cB_f$ is a cell decomposition of $B_f$.
\end{prop}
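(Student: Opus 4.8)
The plan is to take for $\cB_f$ the restriction $\cD_1|_{B_f} = \{c \in \cD_1 : c \subseteq B_f\}$ of the cell decomposition $\cD_1$ to the branch set, and to reduce the whole statement to the single assertion that $B_f$ is a union of cells in $\cD_1$. Once that is known, Lemma~\ref{l: cell decomp: restrict}~(ii) applies directly: the restriction of a cell decomposition to a subset that is a union of its cells is again a cell decomposition of that subset, so $\cB_f = \cD_1|_{B_f}$ is a cell decomposition of $B_f$, and by construction $\cB_f \subseteq \cD_1$ and $\abs{\cB_f} = B_f$.

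So the only thing to verify is $B_f = \abs{\cB_f}$. The inclusion $\abs{\cB_f} \subseteq B_f$ is immediate from the definition of $\cB_f$. For the reverse inclusion, I would fix $x \in B_f$ and let $c \= \supp_{\cD_1}(x) \in \cD_1$ be its carrier (Definition~\ref{d: carrier}), so that $x \in \stdCellint{c}$. Then $x \in B_f \cap \stdCellint{c}$, so $B_f \cap \stdCellint{c} \neq \emptyset$, and Corollary~\ref{c: multi: N(f,x)=const on int(c)}~(ii) gives $c \subseteq B_f$; hence $c \in \cB_f$ and $x \in c \subseteq \abs{\cB_f}$. This shows $B_f \subseteq \abs{\cB_f}$, completing the argument.

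There is no serious obstacle here: the content is entirely packaged in Corollary~\ref{c: multi: N(f,x)=const on int(c)}, which in turn rests on Lemma~\ref{l: multi: i(x,f)=max card U(Y)} and the closedness of $B_f$ together with $c = \cls{\stdCellint{c}}$ (Lemma~\ref{l: properties of cells}~(i)). The conceptual point worth recording in the proof is that the constancy of the local multiplicity $\ind(\cdot,f)$ on each cell-interior of $\cD_1$ is exactly what upgrades the membership of a \emph{single} point of $B_f$ in a cell-interior to the membership of the whole (closed) cell in $B_f$; without this the branch set need not be cellular with respect to $\cD_1$. I would state the proof in these three short steps, citing Corollary~\ref{c: multi: N(f,x)=const on int(c)} and Lemma~\ref{l: cell decomp: restrict} as the two inputs.
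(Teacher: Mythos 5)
Your proposal is correct and follows essentially the same route as the paper: both take $\cB_f=\{c\in\cD_1:c\subseteq B_f\}$ and the whole content rests on Corollary~\ref{c: multi: N(f,x)=const on int(c)}~(ii), which upgrades $B_f\cap\stdCellint{c}\neq\emptyset$ to $c\subseteq B_f$. The only cosmetic difference is that you invoke Lemma~\ref{l: cell decomp: restrict}~(ii) to conclude that the restriction is a cell decomposition, whereas the paper verifies conditions (i)--(iv) of Definition~\ref{d: cell decomposition} directly; both are fine.
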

\begin{proof}
	Set $\cB_f\=\{c\in\cD_1:c\subseteq B_f\}$. Now we show that $\cB_f$ is a cell decomposition of $B_f$ by verifying conditions (i) through (iv) in Definition~\ref{d: cell decomposition}.
	
	(i) follows from Lemma~\ref{l: properties of cell decompositions}~(ii) and Corollary~\ref{c: multi: N(f,x)=const on int(c)}. 
	(ii) follows from the fact that $\cB_f\subseteq\cD_1$ and $\cD_1$ is a cell decomposition. To show (iii), consider arbitrary $c\in\cB_f$. If $\sigma\in\cD_1$ satisfies $\sigma\subseteq\cellbound c$, then since $\cellbound c\subseteq B_f$, $\sigma\in\cB_f$. It follows that $\cellbound c=\bigcup\{\sigma\in\cD_1:\sigma\subseteq\cellbound c\}=\bigcup\{\sigma\in\cB_f:\sigma\subseteq\cellbound c\}$. (iv) follows from the fact that $\cD_1$ is finite.
\end{proof}

\subsection{Expansion of {\itCel} branched covers}\label{subsct: cellular Expansion}
First, for branched covers on connected spaces, we show a consequence of expansion (Definition~\ref{d: expansion}): the \emph{topological exactness}.

\begin{prop}\label{p: expans=>LEO}
	Let $\fX$ be a compact connected locally-connected metric space, and let $f\:\fX\to\fX$ be an expanding branched cover. Then $f$ is topologically exact, i.e., for each non-empty open subset $U$ of $\fX$, there exists $m\in\N$ such that $f^m(U)=\fX$.
\end{prop}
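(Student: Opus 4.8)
The plan is to reduce, by a pigeonhole argument on the finite cover $\cU$ witnessing expansion, to the assertion that the forward orbit of a single member of $\cU$ exhausts $\fX$, and then to extract one iterate by compactness. We may assume $\fX$ has more than one point (otherwise $f(U)=\fX$ trivially). Recall $f$ is open by hypothesis, closed because $\fX$ is compact Hausdorff, and hence surjective because $\fX$ is connected; thus each $f^m$ is again open, closed and surjective, so by Lemma~\ref{l: open+closed => f(tU)=U} applied to $f^m$ every $V\in(f^m)^*\cU$ has $f^m(V)\in\cU$ with $f^m|_{V}\colon V\to f^m(V)$ \emph{onto}. Fix a Lebesgue number $\delta>0$ of the finite open cover $\cU$.

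First I would record the following sharpening of the obvious consequence of $\mesh((f^m)^*\cU)\to 0$: for every non-empty open $U$ there are $m\geq 1$ and $W\in\cU$ with $\overline W\subseteq f^m(U)$. Indeed, pick $x\in U$ and $m$ so large that the piece $V\in(f^m)^*\cU$ containing $x$ satisfies $\overline V\subseteq U$; set $W\=f^m(V)\in\cU$, so that $\overline W=f^m(\overline V)\subseteq f^m(U)$ since $f^m$ is closed. Iterating this and using that $\cU$ is finite, I obtain $W\in\cU$ and integers $\ell,j\geq 1$ with $W\subseteq f^\ell(U)$ and $\overline W\subseteq f^j(W)$. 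Now put $g\=f^j$; it is again an expanding branched cover with respect to $\cU$ (as $(g^m)^*\cU=(f^{jm})^*\cU$), open, closed and surjective, and $\overline W\subseteq g(W)$.

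\smallskip

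It then suffices to prove $\bigcup_{i\geq 0}g^i(W)=\fX$: since $W\subseteq\overline W\subseteq g(W)$, the sets $A_i\=g^i(W)$ increase, so compactness gives $A_{i_0}=\fX$ for some $i_0$, whence $f^{ji_0+\ell}(U)\supseteq f^{ji_0}\bigl(f^\ell(U)\bigr)\supseteq g^{i_0}(W)=\fX$. From $\overline W\subseteq g(W)$ and closedness of $g^i$ one gets $\overline{A_i}\subseteq A_{i+1}$ for all $i$; set $\Omega\=\bigcup_i A_i$, an open connected set with $g(\Omega)=\Omega$, and suppose $\Omega\neq\fX$. The crucial quantitative input from expansion is: for every $m$ and every $A\subseteq\fX$, writing $N_m(A)$ for the union of all $V\in(g^m)^*\cU$ that meet $A$, the image $g^m(N_m(A))$ contains the open $\tfrac{\delta}{2}$-neighbourhood of $g^m(A)$ — for if $z$ lies within $\tfrac{\delta}{2}$ of some $g^m(a)$ with $a\in A$, then $\{z,g^m(a)\}$ has diameter $<\delta$, hence lies in a single $W'\in\cU$ by the Lebesgue number, and the component of $g^{-m}(W')$ through $a$ belongs to $(g^m)^*\cU$, meets $A$, and maps \emph{onto} $W'\ni z$. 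Fixing $i$, since $\overline{A_i}\subseteq A_{i+1}\neq\fX$ I may pick $m_i$ with $\mesh((g^{m_i})^*\cU)<\dist(\overline{A_i},\fX\setminus A_{i+1})$, which forces $N_{m_i}(A_i)\subseteq A_{i+1}\subseteq\Omega$; hence $g^{m_i}(N_{m_i}(A_i))\subseteq g^{m_i}(\Omega)=\Omega$, while it also contains the $\tfrac{\delta}{2}$-neighbourhood of $A_{i+m_i}$. Letting $i\to\infty$ (the $i+m_i$ are unbounded and the $A_k$ increasing) shows that the $\tfrac{\delta}{2}$-neighbourhood of $\Omega$ lies in $\Omega$; so $\fX\setminus\Omega$ is at distance $\geq\tfrac{\delta}{2}$ from $\Omega$, hence open, contradicting connectedness of $\fX$. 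Therefore $\Omega=\fX$.

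\smallskip

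I expect the main obstacle to be exactly this last step, i.e.\ upgrading "$\Omega$ open, connected, forward invariant" to "$\Omega=\fX$". Purely topological arguments fail — in a compact space there can be a strictly increasing chain of connected open sets with $\overline{A_i}\subseteq A_{i+1}$ whose union is proper — and passing to the completely invariant saturation $\bigcup_n g^{-n}(\Omega)$ stalls, because expansion controls preimages only and a forward invariant open set need not visibly meet every backward orbit. What rescues the argument is the rigidity built into branched covers, namely that each small pullback piece maps \emph{onto} a full member of $\cU$: this forces a fattened image of any connected set to engulf a $\tfrac{\delta}{2}$-collar, and together with the thickening $\overline{A_i}\subseteq A_{i+1}$ (arranged by placing $W$ well inside $f^j(W)$) it makes $\Omega$ absorb a fixed-width collar of itself, which a connected space cannot tolerate.
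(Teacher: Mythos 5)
Your proof is correct, but it follows a genuinely different route from the paper's. The paper works at a single scale: it fixes $m$ with $\mesh((f^m)^*\cU)<r/\card\cU$, starts from the pullback piece containing $x\in U$, and uses connectedness of $\fX$ to grow, one new element of $\cU$ at a time, a chain of at most $\card\cU$ pullback pieces whose images exhaust $\cU$; since the chain has diameter less than $r$ it sits inside $U$, and the onto-property of pullback components (Lemma~\ref{l: open+closed => f(tU)=U}, via Remark~\ref{r: forward inv of pullback}) gives $f^m(U)=\fX$ outright, with an explicit $m$. You instead first manufacture, by pigeonhole over the finite cover, an ``absorbed'' element $W\in\cU$ with $W\subseteq f^\ell(U)$ and $\overline W\subseteq f^j(W)$, and then show that $\Omega=\bigcup_i g^i(W)$ (with $g=f^j$) is all of $\fX$: the $\tfrac{\delta}{2}$-collar estimate coming from the Lebesgue number and the same onto-property of pullback pieces shows $\Omega$ absorbs a fixed-width neighbourhood of itself, hence is clopen, and connectedness plus compactness finish. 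Both arguments hinge on the same two ingredients (components of $f^{-m}(V)$ map onto $V$, and connectedness of $\fX$), but you invoke connectedness only at the very end through a clopen dichotomy, whereas the paper uses it at each step of the chain construction; the paper's proof is shorter and yields an essentially explicit iterate (any $m$ with small enough mesh works), while yours produces a larger, less explicit exponent $ji_0+\ell$ but passes through intermediate facts of some independent interest (the existence of $W\in\cU$ with $\overline W\subseteq f^j(W)$ reachable from any open set, and the collar-absorption property of forward-invariant open sets). All the individual steps you wrote check out: $f^m(\overline V)=\overline{f^m(V)}$ by closedness, $\overline{A_i}\subseteq A_{i+1}$, positivity of $\dist\bigl(\overline{A_i},\fX\smallsetminus A_{i+1}\bigr)$, and the final clopen contradiction are all sound.
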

\begin{proof}
	Let $U$ be a non-empty open subset of $\fX$. Suppose $x\in U$ and $B(x,r)\subseteq U$ with some $r>0$. Let $\cU$ be a finite cover of $\fX$ by connected open subsets for which $\mesh((f^m)^*\cU)\to 0$ as $m\to+\infty$. Then there exists $m\in\N$ for which $\mesh((f^m)^*\cU)<r/\card\cU$.
	
	For each subset $\cU'\subseteq (f^m)^*\cU$, we denote $f^m(\cU')\=\{f^m(U'):U'\in\cU'\}\subseteq\cU$ (cf.~Remark~\ref{r: forward inv of pullback}).
	Let $\tU_1\in(f^m)^*\cU$ contain $x$, and denote $\cU_1\=\bigl\{\tU_1\bigr\}$. We recursively define a sequence $\{\cU_i\}_{i\in\N}$ of subsets of $(f^m)^*\cU$. Suppose that $\cU_i$ is defined for $i\in\N$. If $f^m(\cU_i)\subseteq\cU$ covers $\fX$, then define $\cU_{i+1}\=\cU_i$. Otherwise, suppose $f^m(\cU_i)\neq\cU$. Then since $\cU$ is a finite open cover of the connected space $\fX$, there exists $U_{i+1}\in\cU\smallsetminus f^m(\cU_i)$ that intersects $\bigcup f^m(\cU_i)$. Thus, there exists $\tU_{i+1}\in(f^m)^*\cU$ such that $\tU_{i+1}$ is a connected component of $f^{-m}(U_{i+1})$ that intersects $\bigcup\cU_i$, and we define $\cU_{i+1}\=\cU_i\cup\bigl\{\tU_{i+1}\bigr\}$. 
	
	By the above construction and the finiteness of $\cU$, there exists $1 \leq  t \leq \card\cU$ for which $f^m(\bigcup\cU_t)=\bigcup f^m(\cU_t)=\fX$. The construction of the sequence $\{\cU_i\}_{i\in\N}$ implies
$\diam(\bigcup\cU_t) \leq \card\cU\cdot\mesh((f^m)^*\cU)<r$. 
	Then $\bigcup\cU_t\subseteq U$, and thus $f^m(U)=\fX$. 
\end{proof}

For {\itCel} branched covers, expansion can be characterized in terms of expanding {\celSeq}s. In what follows, we consider \emph{metric manifolds}, i.e.,~topological manifolds with metrics compatible with the given topology.

\begin{definition}
	A {\celSeq} $\{\cD_m\}_{m\in\N_0}$ of
	an {\itCel} map on a {\clcnmtr} $n$-manifold $\mfd^n$ is \defn{expanding} if $\mesh(\cD_m)\to0$ as $m\to+\infty$.
\end{definition}

\begin{prop}\label{p: characterization of expans}
	Let $f\:\mfd^n\to\mfd^n$ be an {\itCel} branched cover on a {\clcnmtr} $n$-manifold. Then the following statements are equivalent:
	\begin{enumerate}[label=(\roman*),font=\rm]
    	\smallskip
		\item The map $f$ is expanding.
		\smallskip
		\item There exists an expanding {\celSeq} of $f$.
		\smallskip
		\item Each {\celSeq} of $f$ is expanding.
	\end{enumerate}
\end{prop}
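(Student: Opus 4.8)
The plan is to transport the whole statement to the flower covers $\coverF(\cD_m)$ of a {\celSeq}, where the pullback structure is transparent. First I would observe that if $\{\cD_m\}_{m\in\N_0}$ is a {\celSeq} of $f$, then $(\cD_{m+1},\cD_m)$ is a {\celPair} for every $m$, so Proposition~\ref{p: p-flowers: invariance}~(ii) gives $\coverF(\cD_{m+1})=f^*\coverF(\cD_m)$, and iterating together with $(f^*)^m=(f^m)^*$ yields $\coverF(\cD_m)=(f^m)^*\coverF(\cD_0)$ for all $m$. Then I would prove the two-sided comparison $\mesh(\cD_m)\le\mesh(\coverF(\cD_m))\le 2\mesh(\cD_m)$: for the upper bound, any two points of $\flower_{\cD_m}(p)$ lie in cells of $\cD_m$ containing the vertex $p$, so their distance is at most $\abs{x-p}+\abs{p-y}\le 2\mesh(\cD_m)$; for the lower bound, every cell $c\in\cD_m$ lies in a chamber $X$ (Lemma~\ref{l: cell decomp: compact, and Mfd}~(ii)), $X$ contains a vertex $p$ (Lemma~\ref{l: cell decomp: compact, and Mfd}~(iii)), $X\subseteq\cls{\flower_{\cD_m}(p)}$ (Proposition~\ref{p: c-flower: structure}~(ii)), and $\diam\cls{\flower_{\cD_m}(p)}=\diam\flower_{\cD_m}(p)$, so $\diam c\le\mesh(\coverF(\cD_m))$. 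Combining these, a {\celSeq} $\{\cD_m\}$ is expanding if and only if $\mesh((f^m)^*\coverF(\cD_0))\to 0$, that is, if and only if $f$ is expanding with respect to the finite connected open cover $\coverF(\cD_0)$ in the sense of Definition~\ref{d: expansion}.

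Given this dictionary, two of the implications are immediate. Since $f$ is {\itCel} it admits some {\celSeq}, so "(iii)$\Rightarrow$(ii)" is trivial; and an expanding {\celSeq} $\{\cD_m\}$ exhibits, via the dictionary, a finite connected open cover $\coverF(\cD_0)$ whose pullback meshes tend to $0$, which is exactly "(ii)$\Rightarrow$(i)". The substantial content is "(i)$\Rightarrow$(iii)": given a finite connected open cover $\cU$ witnessing expansion of $f$ and an \emph{arbitrary} {\celSeq} $\{\cD_m\}$, one must show that $f$ is also expanding with respect to $\coverF(\cD_0)$. Thus the whole proposition reduces to the independence of expansion from the defining cover: if $f$ is expanding with respect to one finite cover by connected open sets, then it is expanding with respect to any other.

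I expect this independence statement to be the main obstacle. I would prove it following the template of the two-dimensional theory (cf.~\cite[Chapter~8]{BM17}): fix a Lebesgue number $\lambda>0$ of $\coverF(\cD_0)$ and choose $k_0$ with $\mesh((f^{k_0})^*\cU)<\lambda$, so that every element of $(f^{k_0})^*\cU$ sits inside a single level-$0$ flower; then, for $m\ge k_0$, realize a given $F\in\coverF(\cD_m)=(f^m)^*\coverF(\cD_0)$ as a connected component of $f^{-(m-k_0)}(F')$ for a suitable $F'\in\coverF(\cD_{k_0})=(f^{k_0})^*\coverF(\cD_0)$ (so that $f^{m-k_0}(F)=F'$, by Proposition~\ref{p: p-flowers: invariance}, Lemma~\ref{l: open+closed => f(tU)=U}, and Remark~\ref{r: forward inv of pullback}), cover the connected set $F'$ by a finite chain of elements of $\cU$, pull the chain back by $f^{m-k_0}$, and bound $\diam F$ by the resulting sum of tile diameters, each at most $\mesh((f^{m-k_0})^*\cU)\to 0$. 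The delicate point is to keep the length of the pulled-back chains bounded uniformly in $m$; this is where the higher-dimensional setting genuinely differs from dimension two, since the valences $\NxD{x}{\cD_m}$ of an expanding {\celSeq} need not be bounded (that is the finite-forward-index condition, which is not among the hypotheses), and so the chain-length bound has to be extracted from the finiteness of the degree $N(f)=\abs{\deg f}$ (Remark~\ref{r: bc on mfd is cls fbc}) and the combinatorics of how the two covers intersect, rather than from a local-multiplicity bound. Once independence of the cover is established, "(i)$\Rightarrow$(iii)" follows from the dictionary of the first paragraph, closing the cycle.
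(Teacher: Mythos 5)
Your dictionary between $\mesh(\cD_m)$ and $\mesh(\coverF(\cD_m))$, the identification $\coverF(\cD_m)=(f^m)^*\coverF(\cD_0)$, and the resulting proofs of (iii)$\Rightarrow$(ii) and (ii)$\Rightarrow$(i) are all fine and agree with the paper. The problem is (i)$\Rightarrow$(iii), which you correctly identify as the substantial implication but do not actually prove: the ``delicate point'' you flag --- a bound, uniform in $m$, on the number of pulled-back $\cU$-pieces needed in a connected chain covering a level-$m$ flower $F$ --- is left as an assertion that it ``has to be extracted from the finiteness of the degree $N(f)$ and the combinatorics of how the two covers intersect.'' This is a genuine gap, and the proposed source of the bound does not work: after pulling back the chain covering $F'\in\coverF(\cD_{k_0})$ through $f^{m-k_0}$, the relevant multiplicities are those of $f^{m-k_0}$, whose global degree is $N(f)^{m-k_0}$ and whose local index at the central vertex of $F$ is $\ind\bigl(p,f^{m-k_0}\bigr)$; neither is bounded uniformly in $m$ under the hypotheses of the proposition (the finite-forward-index condition is, as you note yourself, not assumed). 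So each element of your chain in $\cU$ can have an unbounded number of preimage components meeting $F$, and the estimate $\diam F\leq(\#\text{pieces})\cdot\mesh\bigl((f^{m-k_0})^*\cU\bigr)$ collapses.

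The paper closes exactly this step by a different device, which avoids degrees and multiplicities altogether: work cell by cell rather than flower by flower. For each $Y\in\cD_0$ fix once and for all a finite cover $\cA_Y$ of $Y$ by connected subsets, each contained in some element of $\cU$ (subdivide $[0,1]^{\dim Y}$ finely under a homeomorphism), and set $M\=\max_Y\card\cA_Y$. For $X\in\cD_m$ the restriction $f^m|_X\:X\to f^m(X)\in\cD_0$ is a homeomorphism, so $\bigl\{(f^m|_X)^{-1}(A):A\in\cA_{f^m(X)}\bigr\}$ covers $X$ by at most $M$ connected sets, each contained in an element of $(f^m)^*\cU$ by the definition of the pullback cover. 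Hence $\mesh(\cD_m)\leq M\,\mesh\bigl((f^m)^*\cU\bigr)\to 0$ with $M=M(\cD_0,\cU)$ independent of $m$, which (via your own comparison $\mesh(\coverF(\cD_m))\leq 2\mesh(\cD_m)$, or directly) gives (i)$\Rightarrow$(iii). In short: the injectivity of $f^m$ on each cell of $\cD_m$ is the structural fact that replaces the chain-length bound you were hoping to get from $N(f)$; without invoking it (or some equivalent), your argument does not go through.
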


To prove Proposition~\ref{p: characterization of expans} we state the following lemma. When applying this lemma, we usually choose $\cU\=\coverF(\cC_0)$ where $\{\cC_m\}_{m\in\N_0}$ is a {\celSeq} of $f$.

\begin{lemma}\label{l: CBC: cover cell by M flowers}
	Let $f\:\mfd^n\to\mfd^n$ be an {\itCel} branched cover on a {\clcntop} $n$-manifold, $\{\cD_m\}_{m\in\N_0}$ be a {\celSeq} of $f$, and $\cU$ be a finite cover of $\mfd^n$ by connected open subsets. Then there exists $M=M(\cD_0,\cU)\in\N$ having the following property: for each $m\in\N_0$ and each $X\in\cD_m$, there exists a cover $\cU_X$ of $X$ by elements in $(f^m)^*\cU$ for which $\card\cU_X \leq  M$.
\end{lemma}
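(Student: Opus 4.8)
The plan is to reduce the claim to a finite, purely level-$0$ statement by exploiting that $f^m$ acts homeomorphically on cells. By Remark~\ref{r: basic cellular property} the iterate $f^m$ is $(\cD_m,\cD_0)$-cellular, so for every $X\in\cD_m$ the restriction $f^m|_X$ is a homeomorphism of $X$ onto the cell $Y\=f^m(X)\in\cD_0$ (Definition~\ref{d: cellular map}). The first step is the observation that \emph{it suffices to cover the base cell $Y$ by boundedly many connected subsets, each contained in some member of $\cU$}: for if $E\subseteq Y$ is connected with $E\subseteq V$ for some $V\in\cU$, then $(f^m|_X)^{-1}(E)$ is a non-empty connected subset of $X$ lying in $f^{-m}(V)$ (as $f^m$ carries it onto $E\subseteq V$), hence it is contained in a single connected component of $f^{-m}(V)$, that is, in a single element of $(f^m)^*\cU$. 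Transporting such a cover of $Y$ through $(f^m|_X)^{-1}$ and recording, for each piece, one element of $(f^m)^*\cU$ containing its image, produces a cover $\cU_X$ of $X$ of the same cardinality.

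Next I would establish the level-$0$ bound: there is $M\in\N$ such that each cell $Y\in\cD_0$ is a union of at most $M$ connected sets, each contained in some member of $\cU$. Since $\cD_0$ is finite by Lemma~\ref{l: cell decomp: compact, and Mfd}~(i), it is enough to treat a single cell $Y$, and the bound is elementary: fix a homeomorphism $\psi\colon[0,1]^{\dim Y}\to Y$; since $Y$ is compact, finitely many of the open sets $\psi^{-1}(V\cap Y)$, $V\in\cU$, cover the cube $[0,1]^{\dim Y}$; let $\delta_Y>0$ be a Lebesgue number for this finite cover. Covering $[0,1]^{\dim Y}$ by finitely many connected subsets of diameter $<\delta_Y$ (e.g.\ small subcubes) and applying $\psi$ yields a finite cover of $Y$ by connected sets, each contained in a member of $\cU$. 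Take $M$ to be the maximum of these cardinalities over the finitely many $Y\in\cD_0$; then $M=M(\cD_0,\cU)$ as required.

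Assembling the two steps finishes the proof: given $m\in\N_0$ and $X\in\cD_m$, set $Y\=f^m(X)\in\cD_0$, take the level-$0$ cover $Y=E_1\cup\dots\cup E_N$ with $N\le M$ and $E_j\subseteq V_j\in\cU$, let $U_j\in(f^m)^*\cU$ be the connected component of $f^{-m}(V_j)$ containing $(f^m|_X)^{-1}(E_j)$, and put $\cU_X\=\{U_1,\dots,U_N\}$. Then $\card\cU_X\le N\le M$ and $\bigcup\cU_X\supseteq\bigcup_j (f^m|_X)^{-1}(E_j)=(f^m|_X)^{-1}(Y)=X$.

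The main point, and the only genuine obstacle, is to realize that one should \emph{not} attempt to cover $X$ directly by the elements of $(f^m)^*\cU$ meeting it: because $f$ is branched, the traces $V\cap Y$ on a cell $Y$ may be badly disconnected, so the number of components of $f^{-m}(V)$ meeting $X$ need not remain bounded as $m\to\infty$. The homeomorphism $f^m|_X$ circumvents this by transporting \emph{connected} pieces of a fixed cover of the base cell $Y$; the required subdivision is performed on the standard cube, so no tameness of the embeddings $Y\hookrightarrow\mfd^n$ is needed, and no expanding hypothesis enters.
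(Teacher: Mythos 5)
Your proof is correct and takes essentially the same route as the paper: cover each base cell $Y\in\cD_0$ by boundedly many connected pieces (small subcubes under a homeomorphism $Y\cong[0,1]^{\dim Y}$), each inside a member of $\cU$, pull this cover back through the homeomorphism $f^m|_X$, and use connectedness to place each pulled-back piece inside a single component of $f^{-m}(V)$, i.e.\ a single element of $(f^m)^*\cU$. The only cosmetic difference is that you make the Lebesgue-number step explicit where the paper just takes the subdivision parameter $h$ sufficiently large.
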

\begin{proof}
	For each $Y\in\cD_0$, we fix a finite cover $\cA_Y$ of $Y$ by connected subsets of $Y$ such that each element of $\cA_Y$ is contained in some element in $\cU$. For example, one may consider 
	\begin{equation*}
	\cA_Y\=\bigl\{\phi_Y^{-1}(I):I\in\{[0,1/h],\,[1/h,2/h],\,\dots,\,[(h-1)/h,1]\}^d\bigr\},
\end{equation*} 
	where $\phi_Y\:Y\to[0,1]^d$, $d\=\dim(Y)$, is a homeomorphism and $h\in\N$ is sufficiently large. Denote $M\=\max\{\card\cA_Y:Y\in\cD_0\}$.
	
	Fix arbitrary $m\in\N_0$ and $X\in\cD_m$. Denote $Y\=f^m(X)\in\cD_0$.
	Since $f^m|_X\:X\to Y$ is a homeomorphism (cf.~Remark~\ref{r: basic cellular property}), $\cA_X\=\bigl\{(f^m|_X)^{-1}(A):A\in\cA_Y\bigr\}$ is a finite cover of $X$ by connected subsets of $X$ with $\card\cA_X=\card\cA_Y$.
	Then for each element $A\in\cA_X$, $f^m(A)$ is contained in an element in $\cU$, and it follows from the definition of $(f^m)^*\cU$ (cf.~(\ref{e: pullback of cover})) that each element in $\cA_X$ is contained in an element in $(f^m)^*\cU$. Hence, $X$ can be covered by no more than $M$ elements in $(f^m)^*\cU$.
\end{proof}

\begin{proof}[Proof of Proposition~\ref{p: characterization of expans}]
	First, we verify (i)$\Rightarrow$(iii). Suppose $f$ is expanding and let $\cU$ be a finite cover of $\mfd^n$ by connected open sets for which $\mesh((f^m)^*\cU)\to 0$ as $m\to+\infty$. Let $\{\cD_m\}_{m\in\N_0}$ be a {\celSeq} of $f$. Then by Lemma~\ref{l: CBC: cover cell by M flowers}, we can fix a number $M\in\N$ such that for each $m\in\N_0$, each $X\in\cD_m$ is covered by at most $M$ elements of $(f^m)^*\cU$. This implies $\mesh(\cD_m) \leq  M\mesh((f^m)^*\cU)$. Therefore, $\{\cD_m\}_{m\in\N}$ is an expanding {\celSeq} of $f$.

    (iii)$\Rightarrow$(ii) is clear, and it suffices to show (ii)$\Rightarrow$(i). If $\{\cD_m\}_{m\in\N_0}$ is an expanding {\celSeq} of $f$, then $\coverF(\cD_0)$ is a cover by connected open sets for which the mesh of $\coverF(\cD_m)=(f^m)^*\coverF(\cD_0)$  converges to $0$ as $m\to+\infty$ (cf.~Propositions~\ref{p: c-flower: structure}~(ii) and~\ref{p: p-flowers: invariance}). Then $f$ is expanding.
\end{proof}

Given a cell decomposition $\cD$ of a {\clcntop} $n$-manifold $\mfd^n$, and a collection $\fC$ of subsets of $\mfd^n$, we denote by $J(\fC,\cD)$ the minimal number of elements in $\fC$ required to form a ``chain" that {\jsOpSd} (cf.~Definition~\ref{d: join_opposite_sides}) of $\cD$. More precisely, set 
\begin{equation}\label{e: the joining number J(m>k)}
	\begin{aligned}
		J(\fC,\cD)\=\min\Bigl\{\card\cA:\cA\subseteq\fC,\,\bigcup\cA\text{ is connected and {\jsOpSd} of }\cD\Bigr\}.
	\end{aligned}
\end{equation}
Note that a similar construction is given in \cite[Section~8.1]{BM17}.

\begin{lemma}\label{l: expansion: J_m--->+infty}
	Let $\{\cD_m\}_{m\in\N_0}$ be an expanding {\celSeq} of an {\itCel} map $f\:\mfd^n\to\mfd^n$ on a {\clcnmtr} $n$-manifold. 
	Then $J\bigl(\bigcup_{i \geq  m}\cD_i,\cD_0\bigr)\to +\infty$ 
	and $J(\cD_m,\cD_0)\to+\infty$ as $m\to+\infty$.
\end{lemma}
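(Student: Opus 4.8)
The plan is to convert the qualitative expansion hypothesis into a quantitative lower bound for the joining number. Concretely, I would show that $J\bigl(\bigcup_{i\geq m}\cD_i,\cD_0\bigr)\geq\delta_0/\varepsilon_m$, where $\delta_0>0$ is a Lebesgue number of the finite open cover $\coverF(\cD_0)$ of $\mfd^n$ and $\varepsilon_m\=\sup\{\mesh(\cD_i):i\geq m\}$; since $\varepsilon_m\to0$ by expansion, the first assertion follows, and the second follows because $\cD_m\subseteq\bigcup_{i\geq m}\cD_i$ makes $J$ monotone.

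First I would fix the constants. The manifold $\mfd^n$ is compact, and $\coverF(\cD_0)$ is a finite cover of $\mfd^n$ by open sets (Proposition~\ref{p: c-flower: structure}~(i)), so the Lebesgue number lemma yields $\delta_0>0$ such that every subset of $\mfd^n$ of diameter strictly less than $\delta_0$ lies in some $0$-flower. By Lemma~\ref{l: decomp of S^n: join opp <=> not in flower}, a subset of $\mfd^n$ that {\jsOpSd} of $\cD_0$ lies in no $0$-flower, hence has diameter at least $\delta_0$. On the other hand $0<\varepsilon_m\leq\diam\mfd^n<+\infty$ (each $\cD_i$ contains a non-degenerate $n$-cell by Lemma~\ref{l: cell decomp: compact, and Mfd}~(ii)), and the hypothesis that $\{\cD_m\}_{m\in\N_0}$ is expanding means precisely $\varepsilon_m\to0$ as $m\to+\infty$.

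The heart of the argument is the elementary estimate that, for any finite collection $\cA\subseteq\bigcup_{i\geq m}\cD_i$ with $\bigcup\cA$ connected, one has $\diam\bigl(\bigcup\cA\bigr)\leq\card(\cA)\cdot\varepsilon_m$. Indeed, each $c\in\cA$ is closed (Lemma~\ref{l: properties of cells}~(i)) with $\diam c\leq\varepsilon_m$; form the graph on vertex set $\cA$ with an edge between $c$ and $c'$ exactly when $c\cap c'\neq\emptyset$. Were this graph disconnected, splitting $\cA$ into two non-empty parts with no edges between would exhibit $\bigcup\cA$ as a disjoint union of two non-empty closed sets, contradicting connectedness; so the graph is connected, and any two vertices are joined by a path with at most $\card(\cA)-1$ edges. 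For $x,y\in\bigcup\cA$, chaining along such a path through intersection points and summing diameters yields $\dist(x,y)\leq\card(\cA)\cdot\varepsilon_m$. If in addition $\bigcup\cA$ {\jsOpSd} of $\cD_0$, combining this with the first paragraph forces $\delta_0\leq\card(\cA)\cdot\varepsilon_m$, i.e.\ $\card(\cA)\geq\delta_0/\varepsilon_m$. Taking the infimum over all admissible $\cA$ (cf.~\eqref{e: the joining number J(m>k)}) gives $J\bigl(\bigcup_{i\geq m}\cD_i,\cD_0\bigr)\geq\delta_0/\varepsilon_m\to+\infty$. Since $\cD_m\subseteq\bigcup_{i\geq m}\cD_i$, every admissible collection for $J(\cD_m,\cD_0)$ is also admissible for $J\bigl(\bigcup_{i\geq m}\cD_i,\cD_0\bigr)$, whence $J(\cD_m,\cD_0)\geq J\bigl(\bigcup_{i\geq m}\cD_i,\cD_0\bigr)\to+\infty$ as well.

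The only non-routine point is the chain-diameter estimate; it is the combinatorial shadow of the fact that a connected set covered by small sets is small, and it uses crucially that cells are closed so that the intersection graph of a connected union of cells is itself connected. The remaining ingredients — the Lebesgue number of the flower cover, the flower characterization of joining opposite sides, and the expansion hypothesis — enter only as bookkeeping.
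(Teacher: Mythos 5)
Your proof is correct and follows essentially the same route as the paper: Lebesgue number of $\coverF(\cD_0)$ plus Lemma~\ref{l: decomp of S^n: join opp <=> not in flower} to bound below the diameter of any set joining opposite sides, the bound $\diam\bigl(\bigcup\cA\bigr)\leq\card(\cA)\cdot\sup\{\mesh(\cD_i):i\geq m\}$ from connectedness, and monotonicity of $J$ under $\cD_m\subseteq\bigcup_{i\geq m}\cD_i$. The only difference is that you spell out the chain-diameter estimate (via the intersection graph), which the paper leaves as an immediate consequence of connectedness.
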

\begin{proof}
	Let $d$ be a metric on $\mfd^n$ compatible with the topology. Denote by $\delta_0$ the Lebesgue number of $\coverF(\cD_0)$.
    For each $m\in\N_0$, if $\cA\subseteq\bigcup_{i \geq  m}\cD_i$ is a finite subset such that $\bigcup\cA$ is connected and {\jsOpSd} of $\cD_0$, then the connectedness of $\bigcup\cA$ implies $\diam(\bigcup\cA)\leq\sup\{\mesh(\cD_i):i \geq  m\}\cdot\card\cA $, and thus, by Lemma~\ref{l: decomp of S^n: join opp <=> not in flower}, $\sup\{\mesh(\cD_i):i \geq  m\}\cdot\card\cA\geq \delta_0$.
    Thus, by (\ref{e: the joining number J(m>k)}), $J\bigl(\bigcup_{i \geq  m}\cD_i,\cD_0\bigr)\geq\delta_0/\sup\{\mesh(\cD_i):i \geq  m\}$.

    Since $f$ is expanding, $\sup\{\mesh(\cD_i):i \geq  m\}\to0$, and thus $J\bigl(\bigcup_{i \geq  m}\cD_i,\cD_0\bigr)\to +\infty$ as $m\to+\infty$. Since $\cD_m\subseteq\bigcup_{i \geq  m}\cD_i$, it follows from (\ref{e: the joining number J(m>k)}) that $J(\cD_m,\cD_0) \geq  J\bigl(\bigcup_{i \geq  m}\cD_i,\cD_0\bigr)$, and thus $J(\cD_m,\cD_0)\to+\infty$ as $m\to+\infty$.
\end{proof}

\subsection{Construction of {\celSeq}s}\label{subsct: CPCF and CTM}

Recall the CPCF condition from Definition~\ref{def:CPCF}, and recall that a Thurston-type map is a CPCF branched cover with topological degree at least $2$.
In this subsection, we prove the key property discussed in the introduction
that CPCF branched covers admit {\celSeq}s. More precisely, we establish the following main theorem of this subsection.

\begin{theorem}\label{t: CPCF=>encoding sequence}
	Let $f\:\mfd^n\to\mfd^n$ be a CPCF branched cover on a {\clcntop} $n$-manifold. Then $f$ admits a {\celSeq} $\{\cD_m\}_{m\in\N}$ satisfying the following conditions:
	\begin{enumerate}[label=(\roman*),font=\rm]
    	\smallskip
		\item For each $m\in\N_0$, there is a cell complex $\cP_m\subseteq\cD_m$ such that $P_f=\abs{\cP_m}$.
		\smallskip
		\item $\{\cP_m\}_{m\in\N}$ is a {\celSeq} of $f|_{P_f}$ such that $\cP_{m+1}$ refines $\cP_m$ for each $m\in\N_0$.
	\end{enumerate}
\end{theorem}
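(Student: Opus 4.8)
The plan is to construct the {\celSeq} by iterated pullback. Put $\cD_0$ equal to the cell decomposition from the CPCF data $(\cB,\cP;\cP_0,\cD_0)$, and, having produced $\cD_m$, define
\begin{equation*}
	\cD_{m+1}\=\bigcup_{\tau\in\cD_m}\bigl\{\cls{c}:c\text{ is a connected component of }f^{-1}\bigl(\stdCellint{\tau}\bigr)\bigr\},
	\qquad
	\cP_{m+1}\=\cD_{m+1}|_{P_f}.
\end{equation*}
By Lemma~\ref{l: cellular: pull back ess decomp}, as soon as one knows that $\cD_{m+1}$ is a cell decomposition and that $f$ is $(\cD_{m+1},\cD_m)$-cellular, this $\cD_{m+1}$ is automatically the unique pullback of $\cD_m$; so the whole content of the theorem lies in (a)~showing that the pullback exists at every stage, and (b)~checking that the features of $\cD_m$ used in the construction are inherited by $\cD_{m+1}$. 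For $m\ge1$ the latter is automatic: $\cD_m$ is itself a pullback, so by Proposition~\ref{p: B_f: is a sub complex} and Corollary~\ref{c: multi: N(f,x)=const on int(c)} the branch set $B_f$ is a subcomplex of $\cD_m$ with $\ind(\cdot,f)$ constant on cell-interiors, and one checks (using the forward invariance $f(P_f)\subseteq P_f$ and the cellular map $f|_{P_f}$) that $P_f$ is likewise a subcomplex of $\cD_m$; for $m=0$ these facts are supplied directly by the CPCF data.

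I would prove the existence of the pullback of $\cD_m$ by a secondary induction on the dimension $d$ of the cell $\tau\in\cD_m$ being pulled back, so that the cells in $\cellbound{\tau}$ are processed first and their pullbacks assemble into the prospective boundary sphere of each candidate cell over $\tau$; the crux is then that every connected component $c$ of $f^{-1}\bigl(\stdCellint{\tau}\bigr)$ closes up to a $d$-cell on which $f$ restricts to a homeomorphism onto $\tau$. Here one splits into two regimes. Because $P_f$ --- and hence, via the $(\cB,\cP)$-cellular map $f|_{B_f}$ together with the refinement $\cP_0$ of $\cP$, the branch value set $f(B_f)$ --- is a subcomplex lying in a low skeleton of $\cD_m$, every cell $\tau$ either satisfies $\stdCellint{\tau}\cap f(B_f)=\emptyset$ (automatic for chambers and facets) or $\tau\subseteq f(B_f)$. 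In the first regime $f^{-1}\bigl(\stdCellint{\tau}\bigr)$ contains no branch points, so $f$ restricts to it as a proper local homeomorphism onto the simply connected set $\stdCellint{\tau}\cong\R^{d}$, hence as a trivial covering; each component $c$ maps homeomorphically onto $\stdCellint{\tau}$ and, together with the already-built sphere over $\cellbound{\tau}$, yields the desired cell $\cls{c}$. In the second regime the behaviour near $B_f$ is dictated by the data: pulling back the cell structure of $\cD_m$ on $f(B_f)$ along the cellular map $f|_{B_f}$, together with the constancy of $\ind(\cdot,f)$ on cell-interiors of the branch set and the normal-neighbourhood description \eqref{e: i(x,f)=Sum i(f,z)}, organizes $f^{-1}\bigl(\stdCellint{\tau}\bigr)$ near $B_f$ so that one can again conclude that each component is a cell mapped homeomorphically onto $\tau$. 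Assembling the two regimes and gluing the per-cell data in the spirit of Lemma~\ref{l: construct refinement} verifies the axioms of Definition~\ref{d: cell decomposition} for $\cD_{m+1}$.

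With the pullback in hand at every level the remaining claims are short. Statement~(i) holds with $\cP_m=\cD_m|_{P_f}$. For~(ii): by construction $\cD_{m+1}|_{P_f}$ is exactly the pullback of $\cD_m|_{P_f}$ under $f|_{P_f}$, so $\{\cP_m\}$ is a {\celSeq} of $f|_{P_f}$; and $\cP_{m+1}$ refines $\cP_m$ because for $m=0$ this is Condition~\ref{item:CPCF-i} (the data's $\cP_0$ refines $\cP$, which after one pullback step becomes $\cP_1$ refining $\cP_0$), while for $m\ge1$ it propagates: pulling a refinement back along a cellular map yields a refinement (Corollary~\ref{c: cellular: cellular homeo} together with Lemma~\ref{l: construct refinement}), so the pullback $\cP_{m+1}$ of $\cP_m$ refines the pullback $\cP_m$ of $\cP_{m-1}$.

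The step I expect to be the main obstacle is the pullback-existence in the second regime: showing that, near the branch set, the naive preimage of a cell genuinely decomposes into cells mapped homeomorphically onto it. In dimension two this is handled by the Jordan curve theorem together with the local normal form $z\mapsto z^{k}$ at branch points, but in dimensions $n\ge3$ the topology of a branched cover near $B_f$ can be genuinely wild (cf.~\cite{HR98}), so the verification must be carried out entirely through the cellular bookkeeping --- which is precisely what the CPCF hypothesis furnishes: compatible cell structures $\cB$ on $B_f$ and $\cP\subseteq\cP_0$ on $P_f$ along which $f$ is cellular, with locally constant local index.
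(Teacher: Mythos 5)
Your route is genuinely different from the paper's. The paper (Proposition~\ref{p: CPCF: CPCF=>cellular CPCF}, iterated) never pulls back low-dimensional cells directly: it inverts $f$ only over the interiors of chambers, which avoid $f(B_f)$ because the Chernavskii--V\"ais\"al\"a theorem places $f(B_f)$ in the $(n-2)$-skeleton; it extends each inverse branch over the closed chamber by a limit argument based on Lemma~\ref{l: BM: Lemma 5.15}; it transports the whole cell structure of the chamber through the resulting homeomorphism; and the CPCF data enters only afterwards, through Lemma~\ref{l: constant deg on int(c)=>neighborhood not map to f(c)} and the refinement hypothesis (via Corollary~\ref{c: refinement: inte(c1) meet inte(c0)}), in the Claim of Step~2, which rules out two transported cells having overlapping but unequal interiors; the iteration then runs by propagating fresh CPCF data $(\cB_1,\cP_0;\cP_1,\cD_1)$, with $\cP_1$ produced by the Markov pullback of Lemma~\ref{l: cellular Markov: pullbcak of ess also ess}. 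Your bottom-up, dimension-by-dimension pullback would, if completed, give directly the description of $\cD_{m+1}$ as closures of components of preimages of cell-interiors (which the paper only recovers a posteriori from Lemma~\ref{l: cellular: pull back ess decomp}), and I believe it can be completed with the tools already in the paper; but the difficulty does not disappear, it just relocates.

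As written, the two decisive verifications are asserted rather than proved. First, the second-regime claim that every component of $f^{-1}(\stdCellint{\tau})$, for $\tau\subseteq f(B_f)$, maps homeomorphically onto $\stdCellint{\tau}$: you name the right ingredients but give no mechanism. What is needed is that at a branch point $x$ with carrier $\sigma\in\cB$, the preimage $f^{-1}(\stdCellint{\tau})$ coincides near $x$ with $(f|_{\sigma})^{-1}(\stdCellint{\tau})$; this requires $\stdCellint{\tau}\subseteq f(\stdCellint{\sigma})$ (exactly where $\cP_0$ refining $\cP$ enters, via Corollary~\ref{c: refinement: inte(c1) meet inte(c0)}) combined with Lemma~\ref{l: constant deg on int(c)=>neighborhood not map to f(c)} --- in effect a re-derivation of the paper's Claim, which is the heart of the proof. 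Second, the ``closing up'': knowing that $c$ maps homeomorphically onto $\stdCellint{\tau}$ and that the cells over $\cellbound{\tau}$ already exist does not by itself yield that $\cls{c}$ is a $d$-cell on which $f$ is injective, nor that $\cls{c}\smallsetminus c$ is a union of the previously built cells; conditions~(ii) and~(iii) of Definition~\ref{d: cell decomposition} for $\cD_{m+1}$ live precisely here. You need, in every dimension, the analogue of the paper's Step-1 extension: use Lemma~\ref{l: BM: Lemma 5.15} to extend $(f|_{c})^{-1}$ continuously to all of $\tau$, check that the extension is a homeomorphism onto $\cls{c}$, and check that for each cell $\tau'\subseteq\cellbound{\tau}$ the image of $\stdCellint{\tau'}$ is a \emph{full} component of $f^{-1}(\stdCellint{\tau'})$ (this last step uses the inductively known injectivity over $\tau'$). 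Finally, a smaller but real point: for $m\geq 1$, ``$P_f$ is a subcomplex of $\cD_m$'' does not follow from forward invariance of $P_f$ alone; you need the Markov pullback of $f|_{P_f}$ from Lemma~\ref{l: cellular Markov: pullbcak of ess also ess} together with the injectivity of $f$ on the components just discussed --- this is what the paper's Step~3 accomplishes when it shows $\cP_1\subseteq\cD_1$.
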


    Theorem \ref{t: CPCF=>encoding sequence} refines Theorem~\ref{tx: cellular sequence} in the introduction and we obtain it by pulling back the CPCF data $(\cB,\cP;\cP_0,\cD_0)$ to produce a diagram
	\begin{equation*}
		\xymatrix{
			\cdots&\cB_{m+1} \ar[dr]^{\!\!\!\!\!(f|_{B_f})_*} & & \cdots & \cB_1 \ar[dr]^{\!\!\!\!\!(f|_{B_f})_*} &  \cB \ar[dr]^{\!\!\!\!\!(f|_{B_f})_*} &\\
			\cdots \ar[r]_{(f|_{P_f})_*}  & \cP_{m+1} \ar[r]_{(f|_{P_f})_*}  \ar@{^(->}[d] & \cP_m \ar[r]_{(f|_{P_f})_*}  \ar@{^(->}[d] & \cdots \ar[r]_{(f|_{P_f})_*} & \cP_1 \ar[r]_{(f|_{P_f})_*}  \ar@{^(->}[d] & \cP_0 \ar[r]_{(f|_{P_f})_*} \ar@{^(->}[d] & \cP \\
			\cdots \ar[r]^{f_*} & \cD_{m+1} \ar[r]^{f_*} & \cD_m \ar[r]^{f_*} & \cdots  \ar[r]^{f_*} & \cD_1 \ar[r]^{f_*} & \cD_0 & 
		}
	\end{equation*}
	which is interpreted as follows:
	\begin{enumerate}[label=(\roman*),font=\rm]
		\item $\{\cD_m\}_{m\in\N_0}$, $\{\cB_m\}_{m\in\N_0}$, and $\{\cP_m\}_{m\in\N_0}$ are sequences of cell decompositions of $\mfd^n$, $B_f$, and $P_f$, respectively. Moreover, for each $m\in\N_0$, $\cB_m\subseteq\cD_m$, $\cP_m\subseteq\cD_m$, and $\cP_{m+1}$ is a refinement of $\cP_m$.
		\smallskip
		\item For each $m\in\N_0$, $f \colon \mfd^n\to\mfd^n$ is $(\cD_{m+1},\cD_m)$-cellular, $f|_{B_f}\:B_f\to B_f$ is $(\cB_{m+1},\cP_m)$-cellular, and $f|_{P_f} \colon P_f \to P_f$ is $(\cP_{m+1},\cP_m)$-cellular.
	\end{enumerate}

We begin by considering a natural class of cellular maps, the \emph{cellular Markov maps}, which are {\itCel} maps and, in the context of branch covers, give a subclass of maps that are CPCF. In this subsection, $\fX$ stands for a locally-compact Hausdorff space.

\begin{definition}[Cellular Markov partitions, cellular Markov maps]\label{d: cellular Markov}
	A \defn{cellular Markov partition} of a continuous map $f\:\fX\to\fX$ is a {\celPair} $(\cD',\cD)$ of $f$ where $\cD'$ is a refinement of $\cD$. We call $f$ a \defn{cellular Markov map} if there exists a cellular Markov partition of $f$.
\end{definition}

A sequence $\{\cD_m\}_{m\in \N}$ of cell decompositions of $\fX$ is \defn{a {\MkvSeq}} of $f$ if, for each $m\in \N$, the pair $(\cD_{m},\cD_{m-1})$ is a cellular Markov partition of $f$. We show that a cellular Markov partition induces a {\MkvSeq}.

\begin{prop}
\label{p: cellular Markov: induced ess-seq}
Let $f\colon \fX\to \fX$ be a cellular Markov map and $(\cD',\cD)$ be a cellular Markov partition of $f$. Then there exists a unique {\MkvSeq} $\{\cD_m\}_{m\in \N}$ of $f$ with $\cD_0 = \cD$.
\end{prop}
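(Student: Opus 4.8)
The plan is to construct $\{\cD_m\}_{m\in\N_0}$ by induction, setting $\cD_0\=\cD$ and $\cD_1\=\cD'$, and at each stage passing to a pullback. The heart of the matter is the following claim: \emph{if $f\colon\fX\to\fX$ is $(\cA,\cB)$-cellular and $\cA$ refines $\cB$, then $\cA$ admits a pullback $\cA^{+}$ under $f$, and $(\cA^{+},\cA)$ is again a cellular Markov partition of $f$.} Granting this, the induction is immediate: given that $(\cD_m,\cD_{m-1})$ is a cellular Markov partition (the base case $m=1$ being the hypothesis), apply the claim with $\cA\=\cD_m$, $\cB\=\cD_{m-1}$ and put $\cD_{m+1}\=\cD_m^{+}$; then $(\cD_{m+1},\cD_m)$ is a cellular Markov partition, so $\{\cD_m\}_{m\in\N_0}$ is a cellular Markov sequence of $f$ with $\cD_0=\cD$.

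To prove the claim I would construct $\cA^{+}$ one cell at a time and glue via Lemma~\ref{l: construct refinement}. Fix $c\in\cA$. Since $f$ is $(\cA,\cB)$-cellular, $f(c)\in\cB$ and $f|_c\colon c\to f(c)$ is a homeomorphism; as $f(c)$ is a cell of $\cB$ it is a union of cells of $\cB$ (Lemma~\ref{l: properties of cell decompositions}(iii)), so by Corollary~\ref{c: refinement: restriction} the collection $\cA|_{f(c)}$ is a cell decomposition of $f(c)$ refining $\cB|_{f(c)}$. Define $\cE(c)\=(f|_c)^{*}\bigl(\cA|_{f(c)}\bigr)$. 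By Corollary~\ref{c: cellular: cellular homeo} this is a cell decomposition of $c$, and since $\cA|_c=(f|_c)^{*}\bigl(\cB|_{f(c)}\bigr)$ by the same corollary and pulling a refinement back along a homeomorphism produces a refinement, $\cE(c)$ refines $\cA|_c$. This verifies the hypotheses of Lemma~\ref{l: construct refinement}; what remains is the compatibility condition $\cE(\sigma)|_c=\cE(c)$ for $c,\sigma\in\cA$ with $c\subseteq\sigma$. Here the homeomorphism $f|_\sigma\colon\sigma\to f(\sigma)$ restricts to $f|_c$ on $c$ and carries $c$ onto $f(c)$, so for $d\in\cA$ with $d\subseteq f(\sigma)$ one has $(f|_\sigma)^{-1}(d)\subseteq c$ if and only if $d\subseteq f(c)$, and in that case $(f|_\sigma)^{-1}(d)=(f|_c)^{-1}(d)$; hence $\cE(\sigma)|_c=\bigl\{(f|_c)^{-1}(d):d\in\cA,\ d\subseteq f(c)\bigr\}=\cE(c)$. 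Lemma~\ref{l: construct refinement} now gives that $\cA^{+}\=\bigcup_{c\in\cA}\cE(c)$ is a cell decomposition refining $\cA$. Finally $f$ is $(\cA^{+},\cA)$-cellular: any $e\in\cA^{+}$ lies in some $\cE(c)$, so $e=(f|_c)^{-1}(d)$ for a cell $d\in\cA$ with $d\subseteq f(c)$, and therefore $f(e)=d\in\cA$ while $f|_e\colon e\to d$ is a homeomorphism as a restriction of $f|_c$. This proves the claim.

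For uniqueness, suppose $\{\cD_m'\}_{m\in\N_0}$ is another cellular Markov sequence of $f$ with $\cD_0'=\cD$. I would show $\cD_m'=\cD_m$ by induction on $m$: assuming $\cD_{m-1}'=\cD_{m-1}$, the pairs $(\cD_m',\cD_{m-1})$ and $(\cD_m,\cD_{m-1})$ are both cellular pairs of $f$, and hence $\cD_m'=\cD_m$ by Lemma~\ref{l: cellular: pull back ess decomp}(iv). (Thus uniqueness uses only the uniqueness of pullbacks and not the refinement part of the Markov condition.)

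I expect the main obstacle to be the verification of the compatibility identity $\cE(\sigma)|_c=\cE(c)$, which is what legitimizes the application of Lemma~\ref{l: construct refinement}; the rest is a careful but routine assembly of the previously established facts about refinements, restrictions, and pullbacks along homeomorphisms. One small point worth stating explicitly is that a single cell $f(c)\in\cB$ qualifies as ``a union of cells of $\cB$'', so that Corollary~\ref{c: refinement: restriction} indeed applies to $S=f(c)$.
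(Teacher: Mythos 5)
Your proposal is correct and takes essentially the same route as the paper: the paper proves your key claim as Lemma~\ref{l: cellular Markov: pullbcak of ess also ess}, constructing exactly your $\cA^{+}=\bigcup_{\sigma}(f|_{\sigma})^{*}\bigl(\cA|_{f(\sigma)}\bigr)$ and verifying the same compatibility identity before invoking Lemma~\ref{l: construct refinement}, and then obtains the proposition by iterating. The only difference is cosmetic: the paper checks $\cE(\sigma)|_{c}=\cE(c)$ by applying Corollary~\ref{c: cellular: cellular homeo} to $f|_{\sigma}$ rather than by your direct set-theoretic computation, and it leaves the uniqueness step (your appeal to Lemma~\ref{l: cellular: pull back ess decomp}~(iv)) implicit.
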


Proposition~\ref{p: cellular Markov: induced ess-seq} is obtained by repeatedly applying the following lemma. Recall that $\phi^*\cD$ denotes the pullback of a cell decomposition $\cD$ under a homeomorphism $\phi$, given by (\ref{e: homeo pullback}).

\begin{lemma}\label{l: cellular Markov: pullbcak of ess also ess}
	Let $f\:\fX\to\fX$ be a cellular Markov map, and $(\cD_1,\cD_0)$ be a cellular Markov partition of $f$. Then
	\begin{equation*}
	\cD_2\=\bigcup_{\sigma\in\cD_1}(f|_{\sigma})^*\bigl(\cD_1|_{f(\sigma)}\bigr)
	\end{equation*}
	is a cell decomposition of $\fX$ and $(\cD_2,\cD_1)$ is a cellular Markov partition of $f$.
\end{lemma}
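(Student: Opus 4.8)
The plan is to present $\cD_2$ as a fiberwise refinement of $\cD_1$ over its cells and then invoke Lemma~\ref{l: construct refinement}. For $\sigma\in\cD_1$ write $\cD_2(\sigma)\=(f|_{\sigma})^*\bigl(\cD_1|_{f(\sigma)}\bigr)$, so that $\cD_2=\bigcup_{\sigma\in\cD_1}\cD_2(\sigma)$. First I would check that each $\cD_2(\sigma)$ is a cell decomposition of $\sigma$ refining $\cD_1|_{\sigma}$. Since $f$ is $(\cD_1,\cD_0)$-cellular, $f(\sigma)\in\cD_0$; and since $\cD_1$ refines $\cD_0$, the cell $f(\sigma)$ is a union of cells of $\cD_1$, so by Lemma~\ref{l: cell decomp: restrict}~(ii) and Corollary~\ref{c: refinement: restriction} the collection $\cD_1|_{f(\sigma)}$ is a cell decomposition of $f(\sigma)$ that refines $\cD_0|_{f(\sigma)}$. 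Because $f|_{\sigma}\colon\sigma\to f(\sigma)$ is a homeomorphism, Corollary~\ref{c: cellular: cellular homeo} shows that $\cD_2(\sigma)$ is a cell decomposition of $\sigma$ and that $\cD_1|_{\sigma}=(f|_{\sigma})^*\bigl(\cD_0|_{f(\sigma)}\bigr)$; moreover the refinement relation is purely a matter of inclusions between cells, hence is preserved by pulling back along the homeomorphism $f|_{\sigma}$, so $\cD_2(\sigma)$ refines $\cD_1|_{\sigma}$.

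The main point is then to verify the compatibility hypothesis of Lemma~\ref{l: construct refinement}: for all $c,\sigma\in\cD_1$ with $c\subseteq\sigma$ one must show $\cD_2(\sigma)|_c=\cD_2(c)$. The key elementary observation is that $(f|_{\sigma})^{-1}$ commutes with restriction to subcells of $\sigma$. Indeed, every element of $\cD_2(\sigma)$ has the form $(f|_{\sigma})^{-1}(e)$ with $e\in\cD_1$ and $e\subseteq f(\sigma)$; since $f|_{\sigma}$ is a bijection onto $f(\sigma)$ restricting to the bijection $f|_c\colon c\to f(c)$, one has $(f|_{\sigma})^{-1}(e)\subseteq c$ if and only if $e\subseteq f(c)$, and for such $e$ the point sets $(f|_{\sigma})^{-1}(e)$ and $(f|_c)^{-1}(e)$ coincide. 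Therefore $\cD_2(\sigma)|_c=\bigl\{(f|_c)^{-1}(e):e\in\cD_1,\ e\subseteq f(c)\bigr\}=(f|_c)^*\bigl(\cD_1|_{f(c)}\bigr)=\cD_2(c)$. With this and the first paragraph in hand, Lemma~\ref{l: construct refinement} yields that $\cD_2$ is a cell decomposition of $\fX$ refining $\cD_1$.

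It then remains to check that $(\cD_2,\cD_1)$ is a cellular pair of $f$. Given $\tau\in\cD_2$, say $\tau\in\cD_2(\sigma)$ with $\sigma\in\cD_1$, write $\tau=(f|_{\sigma})^{-1}(e)$ for some $e\in\cD_1$ with $e\subseteq f(\sigma)$; then $f(\tau)=f|_{\sigma}(\tau)=e\in\cD_1$, and $f|_{\tau}=(f|_{\sigma})|_{\tau}$ is the restriction of a homeomorphism, hence a homeomorphism of $\tau$ onto $e=f(\tau)$. Thus $f$ is $(\cD_2,\cD_1)$-cellular, and combined with the fact that $\cD_2$ refines $\cD_1$ this shows $(\cD_2,\cD_1)$ is a cellular Markov partition of $f$. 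The only mildly delicate point in the whole argument is the compatibility identity $\cD_2(\sigma)|_c=\cD_2(c)$, which is just careful bookkeeping with the bijections $f|_{\sigma}$ and $f|_c$; everything else is a direct application of the cell-decomposition machinery assembled in Section~\ref{sct: prelim}.
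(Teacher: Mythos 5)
Your proof is correct and follows essentially the same route as the paper: define $\cD_2(\sigma)=(f|_{\sigma})^*\bigl(\cD_1|_{f(\sigma)}\bigr)$, show via Corollary~\ref{c: refinement: restriction} and Corollary~\ref{c: cellular: cellular homeo} that it is a cell decomposition of $\sigma$ refining $\cD_1|_{\sigma}=(f|_\sigma)^*\bigl(\cD_0|_{f(\sigma)}\bigr)$, verify the compatibility $\cD_2(\sigma)|_c=\cD_2(c)$, and conclude with Lemma~\ref{l: construct refinement} plus the evident $(\cD_2,\cD_1)$-cellularity. The only cosmetic difference is that you check the compatibility identity by direct bookkeeping with the bijections $f|_\sigma$ and $f|_c$, whereas the paper obtains it by applying Corollary~\ref{c: cellular: cellular homeo} once more to the cellular homeomorphism $f|_{\sigma}$.
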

\begin{proof}
	For each $\sigma\in\cD_1$, since $\cD_1|_{f(\sigma)}$ is a cell decomposition that refines $\cD_0|_{f(\sigma)}$ (cf.~Corollary~\ref{c: refinement: restriction}), and $f|_{\sigma}\:\sigma\to f(\sigma)$ is a homeomorphism, it follows from Corollary~\ref{c: cellular: cellular homeo} that $(f|_\sigma)^*\bigl(\cD_1|_{f(\sigma)}\bigr)$ is a well-defined cell decomposition of $\sigma$ that refines $\cD_1|_\sigma=(f|_\sigma)^*\bigl(\cD_0|_{f(\sigma)}\bigr)$.
    
    In what follows, we apply Lemma~\ref{l: construct refinement}. Denote $\cD_2(\sigma)\=(f|_{\sigma})^*\bigl(\cD_1|_{f(\sigma)}\bigr)$ for each $\sigma\in\cD_1$.
    Let $c_1,\,\sigma_1\in\cD_1$ satisfy $c_1\subseteq\sigma_1$. By the construction of $\cD_2(\sigma_1)$, $f|_{\sigma_1}$ is a homeomorphism that is also $\bigl(\cD_2(\sigma_1),\cD_1|_{f(\sigma_1)}\bigr)$-cellular, and thus, by Corollary~\ref{c: cellular: cellular homeo},
    \begin{equation*}
        \begin{aligned}
            \cD_2(\sigma_1)|_{c_1}=((f|_{\sigma_1})|_{c_1})^*\bigl(\bigl(\cD_1|_{f(\sigma_1)}\bigr)|_{f(c_1)}\bigr)=(f|_{c_1})^*\bigl(\cD_1|_{f(c_1)}\bigr)=\cD_2(c_1).
        \end{aligned}
    \end{equation*}
 
    Thus, by Lemma~\ref{l: construct refinement}, $\cD_2$ is a cell decomposition that refines $\cD_1$.
	By the construction of $\cD_2$, $f$ is $(\cD_2,\cD_1)$-cellular. Hence, $(\cD_2,\cD_1)$ is a cellular Markov partition of $f$.
\end{proof}

It is easy to check that cellular Markov branched covers are CPCF.
\begin{prop}\label{p: Mkv => CPCF}
	A cellular Markov branched cover $f\:\mfd^n\to\mfd^n$ on a {\clcntop} $n$-manifold is a CPCF branched cover.
\end{prop}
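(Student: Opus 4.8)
The plan is to produce explicit CPCF data $(\cB,\cP;\cP_0,\cD_0)$ for $f$ directly from a cellular Markov partition. Let $(\cE_1,\cE_0)$ be a cellular Markov partition of $f$, so that $f$ is $(\cE_1,\cE_0)$-cellular and $\cE_1$ refines $\cE_0$. By Proposition~\ref{p: cellular Markov: induced ess-seq}, there is a {\MkvSeq} $\{\cE_m\}_{m\in\N_0}$ of $f$; in particular $f$ is $(\cE_{m+1},\cE_m)$-cellular for every $m$, so $f$ is an {\itCel} branched cover. What remains is to show that $f$ satisfies Definition~\ref{def:CPCF}, i.e.\ to exhibit cell decompositions of $P_f$ and $B_f$ with the required compatibility and constancy-of-index properties.

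First I would handle the postcritical set. Since $f$ is a cellular branched cover with {\celPair} $(\cE_1,\cE_0)$, Proposition~\ref{p: B_f: is a sub complex} gives that $B_f = \abs{\cB_f}$ for the subcomplex $\cB_f = \{c\in\cE_1 : c\subseteq B_f\}$ of $\cE_1$. Because $f$ is cellular from $\cE_1$ to $\cE_0$, the images $f(c)$ of cells $c\in\cB_f$ are cells of $\cE_0$, and iterating, $f^m(B_f)$ is a union of cells of $\cE_0$ for each $m\geq 1$ (here one uses that $f$ is $(\cE_m,\cE_0)$-cellular by Remark~\ref{r: basic cellular property}, and that $\cE_m$ refines $\cE_0$, so $f^m$ maps cells of $\cB_f$'s iterated refinements onto cells of $\cE_0$). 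Hence $P_f = \bigcup_{m\ge1} f^m(B_f)$ is a union of cells of $\cE_0$; since $\cE_0$ is finite (Lemma~\ref{l: cell decomp: compact, and Mfd}~(i)), $P_f$ is a finite union of cells of $\cE_0$ and in particular closed. Set $\cD_0 \= \cE_0$, $\cP \= \cE_0|_{P_f}$, and $\cP_0 \= \cE_1|_{P_f}$. By Lemma~\ref{l: cell decomp: restrict}~(ii) these are cell decompositions of $P_f$, and $\cP_0 \subseteq \cD_0$ fails as stated, so instead I take $\cD_0 \= \cE_1$ and $\cP \= \cE_0|_{P_f}$, $\cP_0 \= \cE_1|_{P_f}$; then $\cP_0 = \cD_0|_{P_f}\subseteq\cD_0$. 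By Corollary~\ref{c: refinement: restriction}, $\cP_0 = \cE_1|_{P_f}$ refines $\cP = \cE_0|_{P_f}$. Since $f$ is $(\cE_1,\cE_0)$-cellular and $P_f$ is forward-invariant and a union of cells of both $\cE_0$ and $\cE_1$, the restriction $f|_{P_f}\colon P_f\to P_f$ is $(\cP_0,\cP)$-cellular: for $c\in\cP_0\subseteq\cE_1$ we have $f(c)\in\cE_0$, $f(c)\subseteq P_f$, so $f(c)\in\cP$, and $f|_c$ is a homeomorphism. This verifies Condition~\ref{item:CPCF-i}.

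For Condition~\ref{item:CPCF-ii}, take $\cB \= \{c\in\cE_1 : c\subseteq B_f\} = \cB_f$, which is a cell decomposition of $B_f$ by Proposition~\ref{p: B_f: is a sub complex}. Again, since $f$ is $(\cE_1,\cE_0)$-cellular and $B_f\subseteq f^{-1}(P_f)$ is mapped into $P_f$, for each $c\in\cB$ we have $f(c)\in\cE_0$ and $f(c)\subseteq P_f$, hence $f(c)\in\cP$ and $f|_c$ is a homeomorphism; so $f|_{B_f}\colon B_f\to P_f$ is $(\cB,\cP)$-cellular. Finally, the constancy of the local index on $\stdCellint{c}$ for $c\in\cB$ is exactly Corollary~\ref{c: multi: N(f,x)=const on int(c)}~(i), applied to the {\celPair} $(\cE_1,\cE_0)$. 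This completes the verification that $(\cB,\cP;\cP_0,\cD_0)$ is CPCF data for $f$, so $f$ is CPCF. I expect the only genuinely delicate point to be the bookkeeping that $P_f$ is a finite union of cells simultaneously in $\cE_0$ and in $\cE_1$ (so that the chosen restrictions are honest cell decompositions and $f|_{P_f}$, $f|_{B_f}$ are cellular with respect to them); this uses forward-invariance of $P_f$ together with the fact, from the {\MkvSeq}, that iterates of $f$ are cellular from a common refinement down to $\cE_0$, and that refinements restrict well by Corollary~\ref{c: refinement: restriction}.
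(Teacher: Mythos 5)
Your proof is correct and follows essentially the same route as the paper: both use the cellular Markov sequence to see that $P_f$ is a union of cells of the original decompositions, apply Proposition~\ref{p: B_f: is a sub complex} and Corollaries~\ref{c: refinement: restriction} and~\ref{c: multi: N(f,x)=const on int(c)}, and arrive at the same CPCF data, namely $\bigl(\cB_f,\,\cE_0|_{P_f};\,\cE_1|_{P_f},\,\cE_1\bigr)$, which is exactly the paper's $(\cB_1,\cP_0;\cP_1,\cD_1)$. The only cosmetic difference is that the paper first defines the decompositions of $P_f$ as collections of forward images of branch cells and then identifies them as restrictions, whereas you show directly that $P_f$ is a finite union of cells and restrict; your parenthetical ``$f$ is $(\cE_m,\cE_0)$-cellular'' should of course read ``$f^m$ is $(\cE_m,\cE_0)$-cellular,'' but the argument is unaffected.
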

\begin{proof}
	Let $\{\cD_m\}_{m\in\N_0}$ be a {\MkvSeq} of $f$ from Proposition~\ref{p: cellular Markov: induced ess-seq}. By Proposition~\ref{p: B_f: is a sub complex}, for each $m\in\N$, there exists a subcomplex $\cB_m$ of $\cD_m$ such that $B_f=\abs{\cB_m}$. Denote $\cP_0\=\bigcup_{m\in\N}\{f^m(c):c\in\cB_m\}\subseteq\cD_0$ and $\cP_1\=\bigcup_{m\in\N}\{f^m(c):c\in\cB_{m+1}\}\subseteq\cD_1$. By the construction of $\cP_1$ and $\cP_0$, it is easy to see from Definition~\ref{d: cell decomposition} that $\cP_1$ and $\cP_0$ are cell decompositions of $P_f$ for which $f|_{P_f}$ is $(\cP_1,\cP_0)$-cellular and $f|_{B_f}$ is $(\cB_1,\cP_0)$-cellular. By Lemma~\ref{c: multi: N(f,x)=const on int(c)}, $i(\cdot,f)|_c$ is constant for each $c\in\cB_1$.
    Since $\cP_1$ and $\cP_0$ are subcomplexes of $\cD_1$ and $\cD_0$, respectively, by Lemma~\ref{l: cell decomp: restrict}~(iii), $\cP_1=\cD_1|_{P_f}$ and $\cP_0=\cD_0|_{P_f}$.
    Thus, by Corollary~\ref{c: refinement: restriction}, $\cP_1$ is a refinement of $\cP_0$.
    Therefore, $f$ is CPCF with data $(\cB_1,\cP_0;\cP_1,\cD_1)$. 
\end{proof}

Now we are ready to establish Theorem~\ref{t: CPCF=>encoding sequence}, which is obtained by repeatedly applying the following Proposition~\ref{p: CPCF: CPCF=>cellular CPCF}.

\begin{prop}\label{p: CPCF: CPCF=>cellular CPCF}
	If $f\:\mfd^n\to\mfd^n$ is a CPCF branched cover on a {\clcntop} $n$-manifold with data $(\cB,\cP;\cP_0,\cD_0)$, then there exist a cell decomposition $\cD_1$ of $\mfd^n$ and cell complexes $\cB_1,\,\cP_1\subseteq\cD_1$ such that
	\begin{enumerate}[font=\rm,label=(\roman*)]
    	\smallskip
		\item $f$ is $(\cD_1,\cD_0)$-cellular and
		\smallskip
		\item $f$ is CPCF with data $(\cB_1,\cP_0;\cP_1,\cD_1)$.
	\end{enumerate}
\end{prop}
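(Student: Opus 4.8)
The plan is to construct, in order, the finer cell structure $\cP_1$ on $P_f$, then the cell decomposition $\cD_1$ of $\mfd^n$, and finally $\cB_1$, and then to read off the two CPCF conditions from the structural results of Sections~\ref{sct: prelim}--\ref{sct: cellular maps}. First I would record the facts about $P_f$ that make the construction run. Since $\cP_0$ is a cell complex with $\cP_0\subseteq\cD_0$, it equals the subcomplex $\cD_0|_{P_f}$ (Lemma~\ref{l: cell decomp: restrict}), so $P_f=\abs{\cP_0}$ is a union of cells of $\cD_0$; and since $\dim B_f\le n-2$ (cf.~\cite{Va66}), the same bound holds for each $f^m(B_f)$ and hence $\dim P_f\le n-2$, whence $P_f\subseteq\abs{\sk{\cD_0}{n-2}}$. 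In particular $f(B_f)\subseteq P_f=\abs{\cP_0}$; moreover, as the carrier in $\cD_0$ of any point of $P_f$ lies in $\cP_0$, every cell $\tau\in\cD_0\smallsetminus\cP_0$ has $\stdCellint{\tau}$ disjoint from $P_f\supseteq f(B_f)$, so $f$ restricts to a finite \emph{unbranched} covering over the contractible set $\stdCellint{\tau}$ for each such $\tau$, and in particular over the interior of every cell of $\cD_0$ of dimension at least $n-1$.

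Next, $\cP_1$. Because $f|_{P_f}$ is $(\cP_0,\cP)$-cellular and $\cP_0$ refines $\cP$, the pair $(\cP_0,\cP)$ is a cellular Markov partition of $f|_{P_f}$, so Lemma~\ref{l: cellular Markov: pullbcak of ess also ess} yields $\cP_1\=\bigcup_{\sigma\in\cP_0}(f|_{\sigma})^*\bigl(\cP_0|_{f(\sigma)}\bigr)$, a cell decomposition of $P_f$ refining $\cP_0$ for which $(\cP_1,\cP_0)$ is again a cellular Markov partition of $f|_{P_f}$; in particular $\cP_1$ refines $\cP_0$ and $f|_{P_f}$ is $(\cP_1,\cP_0)$-cellular. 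Now the core step: building $\cD_1$, which I would define as the $f$-pullback of $\cD_0$, assembled by induction on the dimension of the cells of $\cD_0$. For a cell $\tau\in\cD_0\smallsetminus\cP_0$ one takes the closures of the connected components of $f^{-1}(\stdCellint{\tau})$: by the first paragraph $f$ is unbranched over the contractible set $\stdCellint{\tau}$, so each component maps homeomorphically onto $\stdCellint{\tau}$, and a properness and boundary-extension argument (the higher-dimensional counterpart of the planar reasoning in \cite[Section~5]{BM17}, using that $f^{-1}(\cellbound{\tau})$ has already received a compatible cell structure at the previous stage of the induction) promotes this to a homeomorphism of the closed cell onto $\tau$. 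For a cell $c\in\cP_0$, the preimage $f^{-1}(\stdCellint{c})$ — over which $f$ may be branched — is decomposed instead using the cell structures $\cP_1$ on $P_f$ and $\cB$ on $B_f$ supplied by the CPCF data: this is the step in which the cellular encoding of the branching is used to split the a priori non-cellular branched preimages into cells. One then checks the four axioms of Definition~\ref{d: cell decomposition} for $\cD_1$ using Lemma~\ref{l: properties of cell decompositions} and finiteness of $\cD_0$, that $f$ is $(\cD_1,\cD_0)$-cellular by construction, and — invoking the uniqueness of pullbacks in Lemma~\ref{l: cellular: pull back ess decomp}\,(iv) for the cellular map $f|_{P_f}$ — that $\cD_1|_{P_f}=\cP_1$, so $\cP_1$ is a subcomplex of $\cD_1$.

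With $\cD_1$ and the $(\cD_1,\cD_0)$-cellularity of $f$ in hand, condition~(i) holds and the rest of the data for condition~(ii) is essentially free. By Proposition~\ref{p: B_f: is a sub complex}, $B_f$ is a union of cells of $\cD_1$, so by Lemma~\ref{l: cell decomp: restrict} the restriction $\cB_1\=\cD_1|_{B_f}$ is a cell decomposition of $B_f$ that is a subcomplex of $\cD_1$; since $f$ maps each cell of $\cB_1$ homeomorphically onto a cell of $\cD_0$ whose image lies in $P_f$, hence in $\cP_0=\cD_0|_{P_f}$, the map $f|_{B_f}$ is $(\cB_1,\cP_0)$-cellular; and by Corollary~\ref{c: multi: N(f,x)=const on int(c)}\,(i) the local index $\ind(\cdot,f)$ is constant on every cell-interior of $\cD_1$, hence on every cell-interior of $\cB_1$. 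Combined with the properties of $\cP_1$ from the second paragraph ($\cP_1$ refines $\cP_0$, $f|_{P_f}$ is $(\cP_1,\cP_0)$-cellular, $\cP_1\subseteq\cD_1$), this shows $f$ is CPCF with data $(\cB_1,\cP_0;\cP_1,\cD_1)$.

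The main obstacle is the construction and verification of $\cD_1$: showing that all the preimage pieces really are cells on which $f$ restricts to homeomorphisms, and that they fit together into a cell decomposition restricting to $\cP_1$ on $P_f$. Two points carry the technical weight — the boundary-extension of the covering homeomorphism from a component of $f^{-1}(\stdCellint{\tau})$ onto $\stdCellint{\tau}$ to the closed cells, and, more seriously, the splitting of the branched components of $f^{-1}(\stdCellint{c})$ over cells $c\in\cP_0$ into cells; both rest on the local structure of branched covers near $B_f$ combined with the hypothesis that this branching is already encoded cellularly, i.e., on the existence of the cell decomposition $\cB$ of $B_f$ with $f|_{B_f}$ being $(\cB,\cP)$-cellular and $\ind(\cdot,f)$ constant on its cell-interiors.
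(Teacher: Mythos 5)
Your endgame is fine and matches the paper: producing $\cP_1$ from the cellular Markov partition $(\cP_0,\cP)$ of $f|_{P_f}$ via Lemma~\ref{l: cellular Markov: pullbcak of ess also ess}, taking $\cB_1$ from Proposition~\ref{p: B_f: is a sub complex}, and reading off the CPCF conditions is exactly the paper's final step. The gap is in the construction of $\cD_1$ itself, which is the heart of the proposition. Your plan for cells $c\in\cP_0$ --- decompose $f^{-1}(\stdCellint{c})$ ``using $\cP_1$ and $\cB$'' --- cannot work as stated: $f^{-1}(\stdCellint{c})$ is in general not contained in $P_f\cup B_f$ (most preimage components of a postcritical cell are neither postcritical nor critical), so $\cP_1$ and $\cB$ do not even cover the set you propose to decompose; moreover $\cB$ is adapted to $\cP$, not $\cP_0$, so on $B_f$ its cells map onto cells of $\cP$ and would have to be refined; and for a preimage component that meets $B_f$ without being contained in it you have no tool at all. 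This is precisely where hypothesis \ref{item:CPCF-ii} of Definition~\ref{def:CPCF} must do real work, and you only assert that it will, without an argument. In addition, your dimension-induction needs covering-space reasoning over $\stdCellint{\tau}$ for cells $\tau$ of dimension $<n$, which are not open in $\mfd^n$, so Lemma~\ref{l: constructing covering map U'-> U} does not apply there; and the ``compatible cell structure on $f^{-1}(\cellbound\tau)$ from the previous stage'' that your induction invokes is exactly the disjointness/compatibility statement that has to be proved (it is what condition~(iii) of Definition~\ref{d: cell decomposition} and the fitting-together of chamber boundaries amount to), not something you may presuppose.

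For comparison, the paper never decomposes preimages of lower-dimensional cells directly. Since $f(B_f)$ lies in the $(n-2)$-skeleton of $\cD_0$, for each $Y\in\cD_0^{[n]}$ the interior $\stdCellint{Y}$ is open, simply connected, and disjoint from $f(B_f)$; Lemma~\ref{l: constructing covering map U'-> U} plus a boundary-extension argument then makes the closure $X$ of each component of $f^{-1}(\stdCellint{Y})$ a closed chamber with $f|_X\colon X\to Y$ a homeomorphism, and $\cD_1$ is defined as the union of the pullbacks $(f|_X)^*\bigl(\cD_0|_Y\bigr)$, which produces all lower-dimensional cells for free. The CPCF index condition is used in exactly one place: the Claim that any two cells $c,\,c'$ with $f(c),\,f(c')\in\cD_0$, $f|_c,\,f|_{c'}$ homeomorphisms, and $\brCellint{c}\cap\brCellint{c'}\neq\emptyset$ must coincide; its proof is an open-and-closed argument in $\stdCellint{c}$ whose openness step applies Lemma~\ref{l: constant deg on int(c)=>neighborhood not map to f(c)} to the cell of $\cB$ through a putative bad point. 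Any repair of your route would have to prove a uniqueness statement of this kind anyway, so as written the proposal leaves unproven the one step where the CPCF hypothesis actually enters.
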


Before proving Proposition~\ref{p: CPCF: CPCF=>cellular CPCF}, we formulate several technical lemmas. First, we use the following lemma to get a ``branch of inverse of $f$" on the interior of each chamber in $\cD_0$.

\begin{lemma}\label{l: constructing covering map U'-> U}
    Let $f\:\mfd^n\to\mfd^n$ be a branched cover on a {\clcntop} $n$-manifold $\mfd^n$. Let $V\subseteq\mfd^n\smallsetminus f(B_f)$ be a connected open set and $U$ be a connected component of $f^{-1}(V)$. Then $f|_U\:U\to V$ is a covering map. Moreover, if $V$ is simply-connected, then $f|_U\:U\to V$ is a homeomorphism.
\end{lemma}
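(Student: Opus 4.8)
The plan is to prove directly that $f|_U\colon U\to V$ is a finite-sheeted covering map by exhibiting evenly covered neighborhoods, and then to deduce the homeomorphism statement from standard covering space theory.

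First I would collect the preliminary facts. The branch set $B_f$ is closed, hence compact since $\mfd^n$ is compact, so $f(B_f)$ is compact and therefore closed; thus $\mfd^n\smallsetminus f(B_f)$ is open, $f^{-1}(V)$ is open, and no point of $f^{-1}(V)$ lies in $B_f$, because $x\in f^{-1}(V)\cap B_f$ would force $f(x)\in V\cap f(B_f)=\emptyset$. Consequently $f$ is a local homeomorphism at every point of $f^{-1}(V)$, so $f|_U\colon U\to V$ is a local homeomorphism. As $\mfd^n$ is a manifold it is locally connected, so the connected component $U$ of the open set $f^{-1}(V)$ is open and closed in $f^{-1}(V)$; in particular $U$ is an open submanifold of $\mfd^n$ and is path-connected. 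By Remark~\ref{r: bc on mfd is cls fbc}, $f$ is open and closed, so Lemma~\ref{l: open+closed => f(tU)=U} yields $f(U)=V$, and fibers of $f$ are finite since they are discrete and closed in the compact space $\mfd^n$.

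Next, I would produce an evenly covered neighborhood of an arbitrary point $y\in V$. Write $f^{-1}(y)=\{x_1,\dots,x_m\}$. Since each $x_j\notin B_f$, choose pairwise disjoint open neighborhoods $W_j\ni x_j$ with $f|_{W_j}\colon W_j\to f(W_j)$ a homeomorphism onto an open set. Because $f$ is closed and $f^{-1}(y)\subseteq\bigcup_j W_j$, the set $\bigl(\bigcap_j f(W_j)\bigr)\smallsetminus f\bigl(\mfd^n\smallsetminus\bigcup_j W_j\bigr)$ is an open neighborhood of $y$; intersecting it with $V$ and passing to the component of $y$ gives a connected open neighborhood $V_0\subseteq V$ of $y$ with $f^{-1}(V_0)\subseteq\bigcup_j W_j$. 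Hence $f^{-1}(V_0)=\bigsqcup_j\bigl(W_j\cap f^{-1}(V_0)\bigr)$, each piece mapped homeomorphically onto $V_0$ by $f$. Each such piece is connected and contains $x_j$, so it lies in the connected component of $f^{-1}(V)$ through $x_j$; therefore $f^{-1}(V_0)\cap U$ is exactly the disjoint union of those $W_j\cap f^{-1}(V_0)$ with $x_j\in U$, each still mapped homeomorphically onto $V_0$. Since $f(U)=V$, at least one of these is nonempty, so $V_0$ is evenly covered by $f|_U$. As $y$ was arbitrary, $f|_U\colon U\to V$ is a covering map with finite fibers, and the number of sheets is constant because $V$ is connected.

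Finally, if $V$ is simply connected then, since $V$ is also connected and locally path-connected (an open subset of a manifold), a covering of $V$ with connected total space is a homeomorphism: the number of sheets of $f|_U$ equals the index of $(f|_U)_*\pi_1(U)$ in $\pi_1(V)$, which is trivial, so $f|_U$ is a continuous open bijection, hence a homeomorphism. I expect the main obstacle to be the construction of $V_0$ in the middle step --- using the closedness of $f$ to discard the part of $\mfd^n$ outside $\bigcup_j W_j$, and then checking, via the connectedness of $V_0$ and of each sheet, that a sheet over $V_0$ cannot be split among several components of $f^{-1}(V)$; everything else is routine.
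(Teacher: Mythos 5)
Your proof is correct and follows essentially the same route as the paper: both produce evenly covered neighborhoods of each $y\in V$ from pairwise disjoint neighborhoods of the finitely many fiber points on which $f$ is a homeomorphism, then shrink the base neighborhood and use connectedness to sort the resulting slices into components of $f^{-1}(V)$. The differences are cosmetic: you justify the shrinking via closedness of $f$ (discarding $f\bigl(\mfd^n\smallsetminus\bigcup_j W_j\bigr)$), where the paper instead takes $\cls{D}\subseteq\bigcap_i f(U_i)$ and identifies each slice as a full connected component of $f^{-1}(D)$, and you spell out the standard simply-connected step that the paper leaves implicit.
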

\begin{proof}
    By Remark~\ref{r: bc on mfd is cls fbc} and Lemma~\ref{l: open+closed => f(tU)=U}, $f|_U$ is surjective and finite-to-one.
    Let $y\in V$ be an arbitrary point. We enumerate $U\cap f^{-1}(y)$ as $\{x_i\}_{i\in\cI}$, where $\cI$ is a finite index set.

    From $V\subseteq\mfd^n\smallsetminus f(B_f)$ we get $U\subseteq\mfd^n\smallsetminus B_f$. Thus, we can choose, for each $i\in\cI$, a small open neighborhood $U_i\subseteq U$ of $x_i$ for which $f|_{U_i}$ is a homeomorphism between $U_i$ and the open neighborhood $f(U_i)$ of $y$. Since $\cI$ is finite, we may assume $U_i$, $i\in\cI$, to be so small that they are pairwise disjoint. 

     Let $D\subseteq V$ be a connected open neighborhood of $y$ for which $\cls{D}\subseteq\bigcap_{i\in\cI}f(U_i)$. For each $i\in\cI$, denote $D_i\=(f|_{U_i})^{-1}(D)$. Then for each $i\in\cI$, $f|_{\cls{D_i}}\:\cls{D_i}\to \cls{D}$ is a homeomorphism, and thus, since $f(\partial D_i)=\partial D$, $D_i$ is the connected component of $f^{-1}(D)$ containing $x_i$ (cf.~Lemma~\ref{l: BM-Lemma-5.4}).
     
     Since $U$ is a connected component of $f^{-1}(V)$, $(f|_U)^{-1}(D)$ is the union of the connected components of $f^{-1}(D)$ that are contained in $U$. Combining this with the fact that each connected component of $f^{-1}(D)$ contains at least one point in $f^{-1}(y)$ (cf.~Remark~\ref{r: bc on mfd is cls fbc} and Lemma~\ref{l: open+closed => f(tU)=U}), we have $(f|_U)^{-1}(D)=\bigcup_{i\in\cI}D_i$. Now we conclude that $D$ is evenly covered by $f|_U$.
 
    Therefore, since $y\in V$ is arbitrary, $f|_U$ is a covering map.
\end{proof}

Then we record a lemma in \cite{BM17}, which allows us to extend the ``branch of inverse" given by the lemma above to the boundary of $V$, leading to the construction of $\cD_1$. 
\begin{lemma}[{\cite[Lemma~5.15]{BM17}}]\label{l: BM: Lemma 5.15}
	Let $\fX,\,\fY$ be metric spaces and $f\:\fX\to \fY$ be a continuous map. Let $\fX$ be compact and $A\subseteq \fY$ be a closed subset. Then for each $\epsilon>0$, there exists $\delta>0$ such that $f^{-1}(\cN_\delta(A))\subseteq \cN_\epsilon\bigl(f^{-1}(A)\bigr)$.	 
\end{lemma}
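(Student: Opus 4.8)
This is a routine fact about maps out of compact spaces, and I would prove it directly. Fix $\epsilon>0$ and set
\[
K\=\fX\smallsetminus\cN_\epsilon\bigl(f^{-1}(A)\bigr)=\bigl\{x\in\fX:\dist(x,f^{-1}(A))\geq\epsilon\bigr\},
\]
using the convention $\dist(x,\emptyset)=+\infty$. Then $K$ is closed in $\fX$, hence compact, so $f(K)$ is a compact subset of $\fY$; and since $K\cap f^{-1}(A)=\emptyset$ by construction, we get $f(K)\cap A=\emptyset$.

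The plan is to separate $f(K)$ from $A$ quantitatively and take $\delta$ below the gap. If $f(K)$ and $A$ are both non-empty, then, $f(K)$ being compact and $A$ closed, the continuous function $y\mapsto\dist(y,A)$ attains a positive minimum on $f(K)$, say $\dist(f(K),A)=\delta_0>0$; otherwise put $\delta_0\=1$. For any $\delta\in(0,\delta_0)$ and any $x\in K$ we then have $\dist(f(x),A)\geq\delta_0>\delta$, so $f(x)\notin\cN_\delta(A)$. Hence $K\cap f^{-1}(\cN_\delta(A))=\emptyset$, i.e.
\[
f^{-1}(\cN_\delta(A))\subseteq\fX\smallsetminus K=\cN_\epsilon\bigl(f^{-1}(A)\bigr),
\]
as desired.

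There is no genuine obstacle here; the only care needed is bookkeeping of degenerate cases. When $A=\emptyset$ the inclusion is vacuous since $\cN_\delta(A)=\emptyset$; when $f^{-1}(A)=\emptyset$ one has $K=\fX$, and the argument above still applies, producing $\delta$ with $f(\fX)\cap\cN_\delta(A)=\emptyset$ and hence $f^{-1}(\cN_\delta(A))=\emptyset\subseteq\cN_\epsilon(\emptyset)$; and when $\fX=\emptyset$ everything is trivial. The one place compactness is actually used is the chain ``$\fX$ compact $\Rightarrow$ $f(K)$ compact $\Rightarrow$ $\dist(f(K),A)>0$'' (together with $A$ being closed), which is precisely the hypothesis of the lemma.
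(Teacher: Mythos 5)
Your proof is correct, and the only care points (closedness of $K$, the implication $K\cap f^{-1}(A)=\emptyset\Rightarrow f(K)\cap A=\emptyset$, and the degenerate cases $A=\emptyset$, $f^{-1}(A)=\emptyset$) are all handled properly. Note that the paper does not prove this lemma at all: it is quoted verbatim from \cite{BM17} (Lemma~5.15 there), so there is no in-paper argument to compare against. The proof in \cite{BM17} runs by contradiction: if the claim failed for some $\epsilon>0$, one picks points $x_i$ with $\dist(f(x_i),A)\to 0$ but $\dist\bigl(x_i,f^{-1}(A)\bigr)\geq\epsilon$, extracts a convergent subsequence by compactness of $\fX$, and uses continuity of $f$ and closedness of $A$ to land the limit point in $f^{-1}(A)$, a contradiction. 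Your argument is the direct, non-sequential version of the same idea: you separate the compact set $f\bigl(\fX\smallsetminus\cN_\epsilon(f^{-1}(A))\bigr)$ from the closed set $A$ by a positive distance and take $\delta$ below the gap. The two proofs use compactness in equivalent ways; yours has the mild advantages of producing an explicit $\delta$, avoiding any sequential-compactness step (so it works verbatim for compact, not necessarily metrizable, domains if one replaces the sequence argument), and making the degenerate cases explicit.
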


We also give a lemma that will be used to verify that $\cD_1$ is a cell decomposition.
\begin{lemma}\label{l: constant deg on int(c)=>neighborhood not map to f(c)}
	Let $f\:\mfd^n\to\mfd^n$ be a branched cover on a {\clcntop} $n$-manifold. Let $c\subseteq\mfd^n$ be a cell for which $i(\cdot,f)|_{\stdCellint{c}}$ is constant and $f|_c\:c\to f(c)$ is a homeomorphism. Then there exists an open neighborhood $U_c\supseteq \stdCellint{c}$ with 
	$
	U_c\cap f^{-1}(f(\stdCellint{c}))=\stdCellint{c}.
	$
\end{lemma}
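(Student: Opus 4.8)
\emph{Strategy.} Let $k$ denote the constant value of $\ind(\cdot,f)$ on $\stdCellint{c}$. The plan is to attach to each point $x\in\stdCellint{c}$ a small ball $B(x,\delta_x)$ with the property that every preimage under $f$ lying in $B(x,\delta_x)$ of a point of $f(\stdCellint{c})$ already lies in $\stdCellint{c}$, and then take $U_c$ to be the union of these balls. Fix $x\in\stdCellint{c}$. Using the local structure of branched covers recalled after \eqref{e: i(x,f)=Sum i(f,z)} in Subsection~\ref{subsct: prelim on BC} (via \cite[Lemmas~9.14, 9.15, and~9.22]{Vu88}), choose a normal neighborhood $W$ of $x$ so small that $W$ and $f(W)$ are contained in charts and
\[
\sum_{z\in W\cap f^{-1}(y)}\ind(z,f)=N(f,W)=\ind(x,f)=k\qquad\text{for every }y\in f(W).
\]

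\emph{The key claim.} We claim there is $\delta_x>0$ with $B(x,\delta_x)\subseteq W$ and $B(x,\delta_x)\cap f^{-1}(f(\stdCellint{c}))\subseteq\stdCellint{c}$. Suppose not. Since $\mfd^n$ is metrizable, there is a sequence $z_j\to x$ with $z_j\in f^{-1}(f(\stdCellint{c}))\smallsetminus\stdCellint{c}$. Because $f|_c\colon c\to f(c)$ is a homeomorphism and $f(z_j)\in f(\stdCellint{c})$, the point $x_j\=(f|_c)^{-1}(f(z_j))$ lies in $\stdCellint{c}$; moreover $x_j\neq z_j$ (as $z_j\notin\stdCellint{c}$), one has $f(x_j)=f(z_j)$, and $x_j\to(f|_c)^{-1}(f(x))=x$ by continuity of $f$ and of $(f|_c)^{-1}$. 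Hence for all large $j$ both $x_j$ and $z_j$ belong to $W$, so they are distinct points of $W\cap f^{-1}(f(z_j))$; using $\ind(x_j,f)=k$ (since $x_j\in\stdCellint{c}$) and $\ind(z_j,f)\geq 1$,
\[
k=\sum_{z\in W\cap f^{-1}(f(z_j))}\ind(z,f)\ \geq\ \ind(x_j,f)+\ind(z_j,f)\ \geq\ k+1,
\]
a contradiction. This proves the claim.

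\emph{Conclusion.} Set $U_c\=\bigcup_{x\in\stdCellint{c}}B(x,\delta_x)$, an open set containing $\stdCellint{c}$. Then
\[
U_c\cap f^{-1}(f(\stdCellint{c}))=\bigcup_{x\in\stdCellint{c}}\bigl(B(x,\delta_x)\cap f^{-1}(f(\stdCellint{c}))\bigr)\subseteq\stdCellint{c},
\]
while the reverse inclusion is immediate because $\stdCellint{c}\subseteq U_c$ and $\stdCellint{c}\subseteq f^{-1}(f(\stdCellint{c}))$. Hence $U_c\cap f^{-1}(f(\stdCellint{c}))=\stdCellint{c}$, as required. The only genuinely delicate ingredient is choosing $W$ so that the index-sum identity holds simultaneously at \emph{every} point of $f(W)$; this is exactly the normal-neighborhood structure of branched covers recalled in Subsection~\ref{subsct: prelim on BC}. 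Everything else is routine continuity and compactness bookkeeping (and the argument degenerates correctly when $\dim c=0$, where $x_j=x$ throughout).
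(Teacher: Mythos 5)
Your proof is correct and follows essentially the same route as the paper: choose a normal neighborhood of each $x\in\stdCellint{c}$ on which the index-sum identity \eqref{e: i(x,f)=Sum i(f,z)} holds, derive a contradiction from a sequence outside $\stdCellint{c}$ mapping into $f(\stdCellint{c})$ by pulling back along $(f|_c)^{-1}$ and using $\ind(x,f)\geq\ind(x',f)+\ind(x_N,f)$, and then take the union of these neighborhoods as $U_c$. The only cosmetic difference is that you phrase the neighborhoods as metric balls, whereas the paper works with abstract open neighborhoods; the argument is otherwise identical.
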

\begin{remark}
    In particular, we may let $f\:\mfd^n\to\mfd^n$ be CPCF with data $(\cB,\cP;\cP_0,\cD_0)$ and $c\in\cB$ (see condition~\ref{item:CPCF-ii} in Definition~\ref{def:CPCF}).
\end{remark}
\begin{proof}
	We show that for each $x\in\stdCellint{c}$, there exists an open neighborhood $U_x$ of $x$ such that $U_x\cap f^{-1}(f(\stdCellint{c}))\subseteq\stdCellint{c}$. Fix $x\in\stdCellint{c}$. We argue by contradiction and assume that $\{x_k\}_{k\in\N}$ is a sequence in $\mfd^n\smallsetminus\stdCellint{c}$ satisfying $x_k\to x$ as $k\to+\infty$ and $y_k\=f(x_k)\in f(\stdCellint{c})$ for all $k\in\N$. Then $y_k\to f(x)$ as $k\to+\infty$. Choose a small open neighborhood $U$ of $x$ such that (\ref{e: i(x,f)=Sum i(f,z)}) holds for each $y\in f(U)$.
	Since $f|_{c}$ is a homeomorphism, we can pick a sufficiently large $N\in\N$ such that $x'\=(f|_{c})^{-1}(y_N)\in U\cap\stdCellint{c}$. Then we have $f(x')=y_N=f(x_N)$. However, since $i(x,f)=i(x',f)$,
	\begin{equation*}
		\ind(x,f)=N(f,U) \geq  \ind(x',f)+\ind(x_N,f)= \ind(x,f)+\ind(x_N,f),	
	\end{equation*}
	which is a contradiction.
	
	Consider the open neighborhood $U_c\=\bigcup_{x\in \stdCellint{c}}U_x$ of $\stdCellint{c}$. Given $z\in U_c$, suppose $z\in U_x$ for some $x\in\stdCellint{c}$. If $f(z)\in f(\stdCellint{c})$, then by the choice of $U_x$ we have $z\in\stdCellint{c}$. It follows that
	$U_c\cap f^{-1}(f(\stdCellint{c}))\subseteq\stdCellint{c}$. The other direction is clear, and thus $U_c\cap f^{-1}(f(\stdCellint{c}))=\stdCellint{c}$.
\end{proof}

Now we are ready to prove Proposition~\ref{p: CPCF: CPCF=>cellular CPCF}. 
\begin{proof}[Proof of Proposition~\ref{p: CPCF: CPCF=>cellular CPCF}]
	Let $f\:\mfd^n\to\mfd^n$ be a CPCF branched cover with data $(\cB,\cP;\cP_0,\cD_0)$.
	By the Chernavskii--V\"ais\"al\"a theorem (see~\cite[Theorem~5.4]{Va66}, cf.~\cite{Ch64, Vu88}), the topological dimension of $B_f$ is no more than $n-2$, which, combined with the fact that $f|_{B_f}$ is $(\cB,\cP)$-cellular and $\cP_0$ refines $\cP$, implies that $f(B_f)$ is contained in the $(n-2)$-skeleton of $\cD_0$.

    \smallskip
    
    {\it Step 1:} We aim to construct $\cD_1$.
    Consider $Y\in\cD_0^{[n]}$, denote $U\=\stdCellint{Y}$, and let $U'$ be a connected component of $f^{-1}(U)$. 

    Since $f(B_f)$ is contained in the $(n-2)$-skeleton of $\cD_0$, $U\subseteq\mfd^n\smallsetminus f(B_f)$. Thus, since $U$ is simply-connected, Lemma~\ref{l: constructing covering map U'-> U} asserts that $f|_{U'}\:U'\to U$ is a homeomorphism. 
    Denote $g\=(f|_{U'})^{-1}\:U\to U'$.
	
    In what follows, we extend $g$ to a homeomorphism $\tg\:Y\to\cls{U'}$ satisfying $\tg\=\bigl(f|_{\cls{U'}}\bigr)^{-1}$. 
    Fix $w\in\partial U$ and let $\{w_i\}_{i\in\N}$ be a sequence in $U$ satisfying $w_i\to w$ as $i\to+\infty$.  
	Enumerate $f^{-1}(w)\=\{z_1,\,\dots,\,z_m\}$ and     
    let $\epsilon>0$ be so small that the closed balls $\cls{B(z_i,\epsilon)}$, $1 \leq  i \leq  m$, are pairwise disjoint. Then, by Lemma~\ref{l: BM: Lemma 5.15}, we may choose $\delta>0$ satisfying $f^{-1}(B(w,\delta))\subseteq\bigcup_{i=1}^{m}B(z_i,\epsilon)$. Let $\phi\:\cls{U}\to\cls{\B^n}$ be a homeomorphism and denote, for each $r\in(0,1)$, $V_r\=\phi^{-1}(\B^n\cap\B^n(\phi(w),r))$. Choose a sufficiently small $r\in(0,1)$ such that $\cls{V_r}\subseteq B(w,\delta)$
    and denote $V\=V_r$.
    Then since $f^{-1}(B(w,\delta))\subseteq\bigcup_{i=1}^{m}B(z_i,\epsilon)$ and $\cls{B(z_i,\epsilon)}$, $1\le i\le m$, are pairwise disjoint, there exists a unique $1 \leq  k \leq  m$ such that $g(V)\subseteq B(z_k,\epsilon)$. 
    By the construction of $V$, for each sufficiently large $i\in\N$, we have $w_i\in V$ and thus $g(w_i)\in g(V)\subseteq B(z_k,\epsilon)$.
    Thus, the limit points of the sequence $\{g(w_i)\}_{i\in\N}$ are in $\cls{B(z_k,\epsilon)}$. Consequently, $z_k$ is the only possible limit point of $g(w_i)$, since it is easy to verify that each limit point of $\{g(w_i)\}_{i\in\N}$ is in $f^{-1}(w)$. Hence, $g(w_i)\to z_k\in\cls{U'}$ as $i\to+\infty$. We define $\tg(w)\=\lim_{i\to+\infty} g(w_i)=z_k$. Note that, by the above construction, $\tg(w)$ does not depend on the choice of the sequence $\{w_i\}_{i\in\N}$.
    
    It is easy to obtain from the construction of $\tg$ that $\tg$ is continuous, $f(\tg(y))=y$ for each $y\in Y$, and $\tg(f(x))=x$ for each $x\in \cls{U'}$. Hence, $\tg\:Y\to \cls{U'}$ is a homeomorphism for which $\tg=\bigl(f|_{\cls{U'}}\bigr)^{-1}$.
	
	Denote 
    \begin{equation*}
     \fD\=\bigl\{\cls{U'}:U'\text{ is a connected component of }f^{-1}(\stdCellint{X_0})\text{ for some }X_0\in\cD_0^{[n]}\bigr\}.   
    \end{equation*}
    By the construction of $\fD$, for each $X\in\fD$, $f|_X\:X\to f(X)$ is homeomorphic and $f(X)\in\cD_0^{[n]}$, and thus by Corollary~\ref{c: cellular: cellular homeo}, $(f|_X)^*\bigl(\cD_0|_{f(X)}\bigr)$ is a well-defined cell decomposition of $X$.
	Define
	\begin{equation*}
	\cD_1\=\bigcup_{X\in\fD}(f|_{X})^*\bigl(\cD_0|_{f(X)}\bigr).
	\end{equation*}

    \smallskip

    {\it Step 2:}
	We verify that $\cD_1$ is a cell decomposition of $\mfd^n$ for which $f$ is $(\cD_1,\cD_0)$-cellular. 

    Since, for each $Y\in\cD_0^{[n]}$, the set of connected components of $f^{-1}(\stdCellint{Y})$ is finite (cf.~Lemma~\ref{l: open+closed => f(tU)=U} and Remark~\ref{r: bc on mfd is cls fbc}), $\fD$ is finite. Then
	\begin{enumerate}
    \smallskip
		\item $\cD_1$ is a finite collection of cells, since $\fD$ is finite and, for each $X\in\fD$, $\cD_0|_{f(X)}$ is finite (cf.~Lemma~\ref{l: cell decomp: compact, and Mfd}~(i)); and 
		\smallskip
		\item for each $c\in\cD_1$, the cell-boundary $\cellbound c$ is a union of cells in $\cD_1$, since for each $X\in\fD$, $\cD_0|_{f(X)}$ is a cell decomposition.
	\end{enumerate}
	
	It is also straightforward to show $\bigcup\cD_1=\bigcup\fD=\mfd^n$: Suppose $E\=\mfd^n\smallsetminus\bigcup\fD\neq\emptyset$. Since $\fD$ is a finite collection of closed sets, $E$ is open, and thus $f(E)$ is open since $f$ is open. Then we can find $Y\in\cD_0^{[n]}$ such that $f(E)\cap\stdCellint{Y}\neq\emptyset$ (cf.~Lemma~\ref{l: cell decomp: compact, and Mfd}~(ii) and Lemma~\ref{l: properties of cells}~(i)), and thus $E\cap f^{-1}(\stdCellint{Y})$, which contradicts $E=\mfd^n\smallsetminus\bigcup\fD$. 
	
	The above discussion verifies conditions~(i), (iii), and~(iv) in Definition~\ref{d: cell decomposition}. Now it suffices to verify condition (ii), that is, if $c,\,c'\in\cD_1$ and $\brCellint{c}\cap\brCellint{c'}\neq\emptyset$, then $c=c'$.
	
	Consider the following collection of cells in $\mfd^n$:
	\begin{equation*}
	\begin{aligned}
		\cD'_1=\{\tc\subseteq\mfd^n:f(\tc)\in\cD_0, \, f|_{\tc}\:\tc\to f(\tc)\text{ is a homeomorphism}\}.
	\end{aligned}
	\end{equation*}
	Then $\cD_1\subseteq\cD'_1$ by the construction of $\cD_1$. It suffices to prove the following claim:
	
	\smallskip
	
	{\it Claim.} If $c,\,c'\in\cD'_1$ and $\brCellint{c}\cap\brCellint{c'}\neq\emptyset$, then $c=c'$.
	
	\smallskip
	
	{\it Proof of Claim.} Let $c,\,c'\in\cD'_1$ satisfy $\brCellint{c}\cap\brCellint{c'}\neq\emptyset$. By the definition of $\cD'_1$, $f(c)$ and $f(c')$ are cells in $\cD_0$ with $\brCellint{f(c)}\cap\brCellint{f(c')}\neq\emptyset$. Thus, $f(c)=f(c')$, and we denote $\tau\=f(c)=f(c')\in\cD_0$.
    In what follows, we show that $\brCellint{c}\cap\brCellint{c'}$ is a (relatively) open and closed subset of $\stdCellint{c}$.
        
	First, we show that $\brCellint{c}\cap\brCellint{c'}$ is relatively closed. Consider $x\in\brCellint{c}\smallsetminus\brCellint{c'}$. If $x\in\cellbound c'$, then $f(x)\in\brCellint{\tau}\cap\cellbound \tau$, which cannot be true. Hence, $x\in\mfd^n\smallsetminus c'$. By Lemma~\ref{l: properties of cells}, $c'$ is closed, 
    and there exists an open neighborhood $U$ of $x$ such that $U\cap c'=\emptyset$. It follows that $\brCellint{c}\smallsetminus\brCellint{c'}$ is an (relatively) open subset of $\stdCellint{c}$, and $\brCellint{c}\cap\brCellint{c'}$ is a (relatively) closed subset of $\stdCellint{c}$. 
	
	Next, we show that $\brCellint{c}\cap\brCellint{c'}$ is relatively open. More precisely, for each $x\in\brCellint{c}\cap\brCellint{c'}$, we give an open neighborhood $V_x$ of $x$ such that $V_x\cap\stdCellint{c}\subseteq\stdCellint{c'}$. We argue by contradiction and suppose that there exist $x\in\brCellint{c}\cap\brCellint{c'}$ and a sequence $\{x_i\}_{i\in\N}$ in $\brCellint{c}\smallsetminus\brCellint{c'}$ such that $x_i\to x$ as $i\to+\infty$. Denote $y_i\=f(x_i)\in\stdCellint{\tau}$ and $x'_i\=(f|_{c'})^{-1}(y_i)\in\stdCellint{c'}$ for each $i\in\N$. Then
    \begin{enumerate}
        \item $y_i\to f(x)$ and $x'_i\to x$ as $i\to+\infty$; and
        \smallskip
        \item $x_i\neq x'_i$ and $f(x_i)=f(x'_i)=y_i$ for each $i\in\N$.
    \end{enumerate}
	It follows that $x\in B_f$ and $f(x)\in P_f$. 
    
    Then we consider the CPCF data $(\cB,\cP;\cP_0,\cD_0)$; cf.~Definition~\ref{def:CPCF}. Since $\cB$ and $\cP_0$ are cell decompositions of $B_f$ and $P_f$, respectively, by Lemma~\ref{l: properties of cell decompositions}~(ii), there are $\sigma\in\cB$ and $\tau'\in\cP_0$ such that 
    $x\in\stdCellint{\sigma}$ and $f(x)\in\stdCellint{\tau'}$.
    Since $\tau'\in\cP_0\subseteq\cD_0$ and $f(x)\in\brCellint{\tau'}\cap\brCellint{\tau}$, we have $\tau=\tau'\in\cP_0$. Since $f|_{B_f}\:B_f\to P_f$ is $(\cB,\cP)$-cellular (cf.~Definition~\ref{def:CPCF}~(b)), $f(\sigma)\in\cP$ and $f(x)\in\brCellint{f(\sigma)}\cap\brCellint{\tau}$. Consequently,
    since $\cP_0$ refines $\cP$ (cf.~Definition~\ref{def:CPCF}~(a)) and by Corollary~\ref{c: refinement: inte(c1) meet inte(c0)}, $\stdCellint{\tau}\subseteq \brCellint{f(\sigma)}$. Thus,
    $f(\stdCellint{c})= f(\stdCellint{c'})=\stdCellint{\tau}\subseteq \brCellint{f(\sigma)}=f(\stdCellint{\sigma}).$
	Since $\ind(\cdot,f)$ is constant on $\stdCellint{\sigma}$ (cf.~Definition~\ref{def:CPCF}~(b)), by Lemma~\ref{l: constant deg on int(c)=>neighborhood not map to f(c)}, there exists an open neighborhood $U_x$ of $x$ for which 
	\begin{equation*}
	U_x\cap\stdCellint{c}\subseteq U_x\cap f^{-1}(f(\stdCellint{\sigma}))\subseteq\stdCellint{\sigma}\quad\text{and}\quad U_x\cap\stdCellint{c'}\subseteq\stdCellint{\sigma}.
	\end{equation*} 

    Since $x_i\to x$ and $x'_i\to x$ as $i\to+\infty$, when $i\in\N$ is sufficiently large, we have $x_i\in U_x\cap\stdCellint{c}$ and $x'_i\in U_x\cap\stdCellint{c'}$, in which case $x_i,\,x'_i\in\stdCellint{\sigma}$. However, this contradicts that $f|_\sigma$ is a homeomorphism since, for all $i\in\N$, we have $x_i\neq x'_i$ and $f(x_i)=f(x'_i)=y_i$.
    
    Now it is verified that there exists an open neighborhood $V_x$ of $x$ such that $V_x\cap\stdCellint{c}\subseteq\stdCellint{c'}$.

	Since $x\in\brCellint{c}\cap\brCellint{c'}$ is arbitrary, $\brCellint{c}\cap\brCellint{c'}$ is a (relatively) open subset of $\stdCellint{c}$.
	Since $\brCellint{c}\cap\brCellint{c'}$ is a non-empty, (relatively) open, and (relatively) closed subset of $\stdCellint{c}$, we have $\stdCellint{c}\subseteq\stdCellint{c'}$. Similarly, we have $\stdCellint{c'}\subseteq\stdCellint{c}$, and thus $c=c'$.
	
	The proof of Claim is now complete.
	
	\smallskip
	
	Now it is verified that $\cD_1$ is a cell decomposition. From the construction of $\cD_1$, one can see that $f$ is  $(\cD_1,\cD_0)$-cellular.

    \smallskip

    {\it Step 3:} 
    Now we construct $\cB_1,\,\cP_1$ and verify that $f$ is CPCF with data $(\cB_1,\cP_0;\cP_1,\cD_1)$.
	By Proposition~\ref{p: B_f: is a sub complex}, there exists a subcomplex $\cB_1$ of $\cD_1$ such that $B_f=\abs{\cB_1}$. It follows from Lemma~\ref{l: multi: i(x,f)=max card U(Y)} that for each $c\in\cB_1$, the map $\ind(\cdot,f)|_{\stdCellint{c}}\:\stdCellint{c}\to\N$ is a constant depending on $c$. Since $\cP_0$ is a subcomplex of $\cD_0$ with $P_f=\abs{\cP_0}$, by Lemma~\ref{l: cell decomp: restrict}~(iii), $\cP_0=\cD_0|_{P_f}$.
    Consequently, for each $c\in\cB_1$, since $f(c)\subseteq P_f$, $f(c)\in\cD_0|_{P_f}=\cP_0$. Thus, $f|_{B_f}$ is $(\cB_1,\cP_0)$-cellular.
     
     By Lemma~\ref{l: cellular Markov: pullbcak of ess also ess}, there exists a cell decomposition $\cP_1$ of $P_f$ such that $\cP_1$ refines $\cP_0$, and $f|_{P_f}$ is $(\cP_1,\cP_0)$-cellular. By the definition of $\cD'_1$, we have $\cP_1\subseteq \cD'_1$.

	Moreover, we show $\cP_1\subseteq\cD_1$ by showing that $\cD_1$ is identical to $\cD'_1$. Indeed, by the definition of $\cD'_1$ and $\cD_1$, $\cD_1\subseteq\cD'_1$. On the other hand, for each $c'\in\cD'_1$, since $\cD_1$ is a cell decomposition, by Lemma~\ref{l: properties of cell decompositions}~(ii), there exists a cell $c\in\cD_1\subseteq\cD'_1$ such that $\brCellint{c'}\cap\brCellint{c}\neq\emptyset$, and thus $c'=c\in\cD_1$ by the above claim in Step~2. Thus, $\cD'_1\subseteq\cD_1$.
    
	The above discussion concludes that $f$ is a CPCF branched cover with data $(\cB_1,\cP_0;\cP_1,\cD_1)$.
\end{proof}

\section{Quasisymmetric uniformization of visual metrics}\label{sct: QS uniformization of visual}

In this section, we discuss the quasisymmetric uniformization of visual metrics for expanding {\itCel} branched covers, in terms of uniform expanding BQS maps (cf.~Definition~\ref{d: UEBQS}). More precisely, we establish \ref{item:UQR-1}$\Leftrightarrow$\ref{item:UQR-2} in Theorem~\ref{tx: QS-UEBQS-Lattes-UQR}, as the following theorem.

\begin{theorem}\label{t: CBC: QS sph <=> UBQS}
	Let $f\:\mfd^n\to\mfd^n$, $n\geq 2$, be an expanding {\itCel} branched cover on a {\clcnR} $n$-manifold $\mfd^n$, and $\varrho$ be a visual metric for $f$. Let $\dg$ be the distance function on $\mfd^n$. Then the identity map $\id\:(\mfd^n,\varrho)\to(\mfd^n,\dg)$ is quasisymmetric if and only if $f\:(\mfd^n,\dg)\to(\mfd^n,\dg)$ is a uniform expanding BQS map.
\end{theorem}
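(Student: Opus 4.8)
The plan is to route both implications through the fact --- recorded in the introduction --- that $f\colon(\mfd^n,\varrho)\to(\mfd^n,\varrho)$ is itself a uniform expanding BQS map, and to transfer this property between $\varrho$ and $\dg$ using the identity map. Fix a {\celSeq} $\{\cD_m\}_{m\in\N_0}$ for which $\varrho$ is visual, so that $\varrho(x,y)\asymp\Lambda^{-m(x,y)}$, and recall from Proposition~\ref{p: p-flowers: invariance} that $\coverF(\cD_m)=(f^m)^*\coverF(\cD_0)$ for every $m$. The first and most substantial step --- carried out via the cellular neighborhoods of Subsection~\ref{subsct: cellular neighborhood} --- would be to establish a list of metric properties of the cells and flowers of $\{\cD_m\}$ valid for \emph{any} metric $d$ on $\mfd^n$ (compatible with the topology) for which $f\colon(\mfd^n,d)\to(\mfd^n,d)$ is a uniform expanding BQS map: writing $W_m(x)$ for the level-$m$ flower of a chosen vertex of $\supp_{\cD_m}(x)$, these are (a) a uniform geometric-type decay $\diam_d W_{m+k}(x)\le\sigma_d(k)\diam_d W_m(x)$ along the nested neighborhoods $W_0(x)\supseteq W_1(x)\supseteq\cdots$, with $\sigma_d(k)\to0$; (b) comparability, uniform in $m$, of the $d$-diameters of incident level-$m$ cells; and (c) $d(x,y)\asymp\diam_d W_{m(x,y)}(x)$. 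Each of these follows from a ``conformal elevator'': $f^m$ maps $W_m(x)$ \emph{onto} a level-$0$ flower by Proposition~\ref{p: p-flowers: invariance}(i) (applied through a vertex of the carrier of $x$), the restriction $f^m|_{W_m(x)}$ is $\eta$-BQS by the uniformity hypothesis, there are only finitely many level-$0$ cells and flowers, and $\mesh_d(\coverF(\cD_m))\to0$ uniformly by expansion together with Lemma~\ref{l: CBC: cover cell by M flowers}; feeding these into the BQS inequality (used in both directions) yields (a)--(c). Reformulated, (a)--(c) produce a homeomorphism $\theta_d\colon[0,+\infty)\to[0,+\infty)$ with $d(x,y)/d(x,z)\le\theta_d\bigl(\Lambda^{-(m(x,y)-m(x,z))}\bigr)$ for all $x,y,z$ with $x\ne z$; for $d=\varrho$ this recovers the defining estimate of a visual metric.

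Granting this, the \emph{if} direction would be immediate: if $f\colon(\mfd^n,\dg)\to(\mfd^n,\dg)$ is uniform expanding BQS, apply (a)--(c) with $d=\dg$ and combine $\dg(x,y)/\dg(x,z)\le\theta_{\dg}\bigl(\Lambda^{-(m(x,y)-m(x,z))}\bigr)$ with $\varrho(x,y)\asymp\Lambda^{-m(x,y)}$ to get $\dg(x,y)/\dg(x,z)\le\eta\bigl(\varrho(x,y)/\varrho(x,z)\bigr)$ for a homeomorphism $\eta$ assembled from $\theta_{\dg}$ and the visual-metric constants, which is precisely the quasisymmetry of $\id\colon(\mfd^n,\varrho)\to(\mfd^n,\dg)$.

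For the \emph{only if} direction, suppose $\id\colon(\mfd^n,\varrho)\to(\mfd^n,\dg)$ is $\eta_0$-quasisymmetric; its inverse is then quasisymmetric as well. I would first observe that a quasisymmetric homeomorphism between compact metric spaces is branched quasisymmetric: for intersecting continua $E,F$, pick $x\in E\cap F$ and, using connectedness of $F$, a point $z\in F$ with $d(x,z)\ge\tfrac12\diam F$; then the three-point inequality gives $\diam(\text{image of }E)\le 2\eta_0\bigl(2\diam E/\diam F\bigr)\diam(\text{image of }F)$. Hence both $\id\colon(\mfd^n,\varrho)\to(\mfd^n,\dg)$ and $\id\colon(\mfd^n,\dg)\to(\mfd^n,\varrho)$ are branched quasisymmetric homeomorphisms. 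Now for $U\in\coverF(\cD_m)=(f^m)^*\coverF(\cD_0)$ I would factor
\[
\bigl(U,\dg\bigr)\xrightarrow{\;\id\;}\bigl(U,\varrho\bigr)\xrightarrow{\;f^m\;}\bigl(f^m(U),\varrho\bigr)\xrightarrow{\;\id\;}\bigl(f^m(U),\dg\bigr);
\]
all three maps are light continuous (the $f^m|_U$ because branched covers are discrete, so no continuum is collapsed) and branched quasisymmetric --- the outer two by the above, the middle one with a control function $\eta_\varrho$ independent of $m$ and $U$ --- and the composition of such maps is branched quasisymmetric with the composed control function. Thus $f^m|_U\colon(U,\dg)\to(f^m(U),\dg)$ is $\eta_g$-BQS with $\eta_g$ independent of $m,U$. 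Finally $\mesh_\varrho(\coverF(\cD_m))\to0$ and the uniform continuity of $\id\colon(\mfd^n,\varrho)\to(\mfd^n,\dg)$ (a homeomorphism of compact spaces) give $\mesh_{\dg}(\coverF(\cD_m))\to0$, so $f\colon(\mfd^n,\dg)\to(\mfd^n,\dg)$ is uniform expanding BQS with data $\bigl(\coverF(\cD_0),\eta_g\bigr)$. This argument, and indeed the whole statement, is insensitive to the dimension, so the hypothesis $n\ge2$ suffices.

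The hard part will be the first step, i.e.\ the ``quasi-visual'' estimates (a)--(c) for a metric making $f$ uniform expanding BQS; this plays here the role that Koebe's distortion theorem plays in the two-dimensional theory of \cite{BM17}. Within it, the delicate points are the comparability (b) of incident same-level cells --- which I would obtain by pushing incident level-$m$ cells by $f^m$ into the fixed, finite level-$0$ picture and pulling back comparability through the BQS inequality --- and, more seriously, the reconciliation of the pull-back cover $(f^m)^*\cU$ supplied by a given uniform expanding BQS structure with the flower cover $\coverF(\cD_m)$: since a level-$m$ flower is merely covered by a bounded number of elements of $(f^m)^*\cU$, the local BQS estimates have to be propagated along short combinatorial chains, which requires care in tracking how the control functions degrade. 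The cellular neighborhoods of Subsection~\ref{subsct: cellular neighborhood} are designed precisely to provide the bounded combinatorial geometry needed for this bookkeeping.
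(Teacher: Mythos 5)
Your outline follows essentially the same route as the paper: the direction ``uniform expanding BQS for $\dg$ $\Rightarrow$ quasisymmetry of $\id$'' is the paper's Lemma~\ref{l: QS unif: QV approx} (your estimates (a)--(c) are, in substance, its items (i)--(iii)) followed by Theorem~\ref{t: cellular BQS: flowerBQS => QS equivalence}, and the converse direction is the paper's Corollary~\ref{c: cellular BQS: QS equiv => flower UBQS}, with your direct ``quasisymmetric homeomorphisms are BQS and BQS maps compose'' argument playing the role of the citation of \cite{LiP19}; that composition argument is correct. Two caveats. First, the fact you import from the introduction --- that $f\:(\mfd^n,\varrho)\to(\mfd^n,\varrho)$ is uniform expanding BQS with a control function independent of the level --- is only asserted there; it is Lemma~\ref{l: cellular BQS: UEBQS on visual sphere}, which the paper proves as part of the proof of this very theorem, so your write-up must supply it. It follows from the cell-diameter estimates $\diam_\varrho X\asymp\Lambda^{-m}$ of Lemma~\ref{l: visual metric: metric properties of cells} together with the flower-approximation device you already describe, but note that you cannot extract it from your step-1 estimates for a general metric $d$, since those presuppose that $f$ is uniform expanding BQS with respect to $d$ --- doing so would be circular. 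Second, the reconciliation of a general datum $(\cU,\eta)$ with the flower covers, which you propose to handle by propagating BQS estimates along combinatorial chains, has a much simpler resolution used in the paper: since $f$ is expanding, choose $m_0$ with $2\mesh_{\dg}(\cD_{m_0})$ strictly below the Lebesgue number of $\cU$; then every level-$(m_0+m)$ flower is contained in an element of $(f^m)^*\cU$, the restriction of an $\eta$-BQS map to a subset is again $\eta$-BQS, and one runs the whole argument with the shifted sequence $\{\cD_{m_0+k}\}_{k\in\N_0}$. No chaining, and no tracking of how control functions degrade, is needed.
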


\subsection{Visual metrics}\label{subsct: visual metrics}
For expanding CPCF branched covers, there exists a natural class of metrics called \emph{visual metrics}, which is closely related to the visual metrics on the boundaries of Gromov hyperbolic spaces, and the very similar notions in \cite{BM17,HP09}.
Here we briefly give some definitions with necessary remarks, and postpone more detailed discussions to Appendix~\ref{Ap: visual metric}.

Visual metrics are defined via the following function of ``separation level".

Let $f\:\mfd^n\to\mfd^n$ be an {\itCel} map and $\{\cD_l\}_{l\in\N_0}$ be a {\celSeq} of $f$. We define $m_{f,\cD_0}\:\mfd^n\times\mfd^n\to\N_0\cup\{+\infty\}$ by
	\begin{equation}\label{e: the constant m(x,y)}
		\begin{aligned}
			m_{f,\cD_0}(x,y)\=\sup\bigl\{l\in\N_0: 
			\text{there exist }X,\,Y\in\cD_l^{\top}
			\text{ with }
			x\in X,\,y\in Y,\text{ and } X\cap Y\neq\emptyset\bigr\},
		\end{aligned}
	\end{equation}
where we use the convention $\sup\emptyset=0$ in (\ref{e: the constant m(x,y)}).

Since $\{\cD_l\}_{l\in\N_0}$ is determined by $\cD_0$ and $f$ (cf.~Lemma~\ref{l: cellular: pull back ess decomp}), it suffices to indicate $\cD_0$ in the notation $m_{f,\cD_0}$. If $f$ and $\cD_0$ are clear from the context, we write $m(x,y)\=m_{f,\cD_0}(x,y)$.
\begin{rem}\label{r: remarks on m(x,y)}
(a) If, for all $m\in\N$, $X,\,Y\in\cD_m^{[n]}$ with $x\in X$, $y\in Y$, we have $X\cap Y=\emptyset$, then $m_{f,\cD_0}(x,y)=0$.
(b) When the {\celSeq} $\{\cD_m\}_{m\in\N_0}$ of $f$ is expanding,  $m_{f,\cD_0}(x,y)=+\infty$ if and only if $x=y$. 
\end{rem}

\begin{definition}[Visual metrics]\label{d: visual metrics}
	Let $f\:\mfd^n\to\mfd^n$ be an expanding {\itCel} map on a {\clcnmtr} $n$-manifold. A metric $\varrho$ on $\mfd^n$ is a \defn{visual metric} for $f$ if there exists a {\celSeq} $\{\cD_m\}_{m\in\N_0}$ of $f$, and constants $C>1$ and $\Lambda>1$ such that 
	\begin{equation}\label{e: visual metric}
		C^{-1}\Lambda^{-m_{f,\cD_0}(x,y)} \leq \varrho(x,y) \leq  C\Lambda^{-m_{f,\cD_0}(x,y)}
	\end{equation}
	for all $x,y\in\mfd^n$, where $m_{f,\cD_0}(x,y)$ is defined by (\ref{e: the constant m(x,y)}). We call the constant $\Lambda$ the \defn{expansion factor} for $\varrho$ and the metric space $(\mfd^n,\varrho)$ the \defn{visual space}.
\end{definition}

\begin{remark}
Here we state some facts about visual metrics without proof. For a detailed discussion, see Appendix~\ref{Ap: existence of visual metric}. Let $f\:\mfd^n\to\mfd^n$ be an expanding {\itCel} branched cover, then
\begin{enumerate}[label=(\roman*),font=\rm]
	\smallskip
    \item visual metrics for $f$ exist and induce the given topology on $\mfd^n$;
    \smallskip
    \item visual metrics are well defined, i.e.,~independent of the choice of {\celSeq}s of $f$;
    \smallskip
    \item the expansion factor of a given visual metric is unique.
\end{enumerate}
\end{remark}

\subsection{Cellular neighborhoods}\label{subsct: cellular neighborhood}
We introduce a class of neighborhoods $U_\cD^i(G)$, called \emph{cellular neighborhoods}, constructed from cell decompositions as a key tool in the current section and Section~\ref{sct: Rigidity of UQR}.

Let $\cD$ be a cell decomposition of a {\clcnR} $n$-manifold $\mfd^n$. 
For each subset $G\subseteq\mfd^n$, we denote
\begin{equation}\label{e: U^1 and U^2}
    \begin{aligned}
		\cU_{\cD}^1(G)&\=\bigl\{Y\in\cD^{[n]}:Y\cap G\neq\emptyset\bigr\}, &U_\cD^1(G)&\=\stdInt{\Absbig{\cU_\cD^1(G)}},\\
		\cU^2_{\cD}(G)&\=\cU^1_\cD\bigl(\Absbig{\cU^1_\cD(G)}\bigr),
        &U_\cD^2(G)&\=\stdInt{\Absbig{\cU_\cD^2(G)}}.
    \end{aligned}
\end{equation}

With the cell decomposition $\cD$ understood, we often omit $\cD$ and denote $\cU^i(G)\=\cU_\cD^i(G)$ and $U^i(G)\=U_\cD^i(G)$ for $i\in\{1,\,2\}$.
With a {\celSeq} $\{\cD_m\}_{m\in\N_0}$ of an {\itCel} map $f\:\mfd^n\to\mfd^n$ understood, we denote, for $m\in\N_0$, $G\subseteq\mfd^n$, and $i\in\{1,\,2\}$,
\begin{equation}\label{e: U^i_m(G)}
	\begin{aligned}
		\cU_{m}^i(G)\=\cU_{\cD_m}^i(G)\quad \text{ and } \quad
		U_{m}^i(G)\=U_{\cD_m}^i(G).
	\end{aligned}
\end{equation}

\begin{remark}
(i) It is a natural idea to recursively define $\cU^j(G)$ for $j\in\{3,\,4,\,\dots\}$. However, in this article, it suffices to consider $\cU^1(G)$ and $\cU^2(G)$.
(ii) Although for a point $x\in\mfd^n$, the open sets $U^1(\{x\})$ and $\flower(x)$ (defined in (\ref{e: Fl(x)})) look similar, they generally do not coincide.
(iii) In this article, we usually consider the cases where $G$ is a single point, or $G\=X$ for some $X\in\cD^{[n]}$. 
\end{remark}

We now give some elementary properties of these sets.
\begin{lemma}\label{l: cellular neighborhood: int(cX) - cls, comp, bd}
	Let $\cD$ be a cell decomposition of a {\clcnR} $n$-manifold $\mfd^n$, $\cX$ be a subset of $\cD^{[n]}$, and denote $U\=\stdInt{\abs{\cX}}$. Then 
    \begin{enumerate}[label=(\roman*),font=\rm]
    	\smallskip
        \item $\cls{U}=\abs{\cX}$,
        \smallskip
        \item $\mfd^n\smallsetminus U=\Absbig{\cD^{[n]}\smallsetminus\cX}$, and
        \smallskip
        \item $\partial U=\bigcup\{c\in\cD:\partial U\cap\stdCellint{c}\neq\emptyset\}=\bigcup\{c\in\cD:c\subseteq \partial U\}$.
    \end{enumerate}
\end{lemma}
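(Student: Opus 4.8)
The plan is to prove the three identities by unwinding the definition of $\cX$ as a subcollection of $n$-dimensional cells and exploiting the basic structure of cell decompositions of a closed manifold established earlier. Throughout I would use freely that $\cD$ is finite (Lemma~\ref{l: cell decomp: compact, and Mfd}~(i)), that $\mfd^n=\Absbig{\cD^{[n]}}$ and every cell lies in an $n$-cell (Lemma~\ref{l: cell decomp: compact, and Mfd}~(ii)), and that each cell is closed with $c=\cls{\stdCellint c}$ (Lemma~\ref{l: properties of cells}~(i)). Since $\cX$ is a \emph{finite} collection of closed sets, $\abs{\cX}=\bigcup\cX$ is closed, and likewise $\Absbig{\cD^{[n]}\smallsetminus\cX}$ is closed; these two observations drive everything.

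For part~(ii): by definition $U=\stdInt{\abs{\cX}}=\mfd^n\smallsetminus\cls{\mfd^n\smallsetminus\abs{\cX}}$. Since $\mfd^n=\Absbig{\cD^{[n]}}=\abs{\cX}\cup\Absbig{\cD^{[n]}\smallsetminus\cX}$, we have $\mfd^n\smallsetminus\abs{\cX}\subseteq\Absbig{\cD^{[n]}\smallsetminus\cX}$, and the latter being closed gives $\cls{\mfd^n\smallsetminus\abs{\cX}}\subseteq\Absbig{\cD^{[n]}\smallsetminus\cX}$, hence $\Absbig{\cD^{[n]}\smallsetminus\cX}\subseteq\mfd^n\smallsetminus U$. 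For the reverse inclusion, suppose $x\notin U$, i.e.\ $x\in\cls{\mfd^n\smallsetminus\abs{\cX}}$; then $x$ is a limit of points not in any cell of $\cX$, and since $x$ lies in some $Y\in\cD^{[n]}$ (again by Lemma~\ref{l: cell decomp: compact, and Mfd}~(ii)) one checks using the local finiteness and the fact that $\stdCellint Y$ is the manifold-interior of $Y$ (Lemma~\ref{l: properties of cells}~(ii)) that $Y$ cannot be a member of $\cX$: if $x\in\stdCellint Y$ then a whole neighborhood of $x$ lies in $Y$, contradicting $x$ being a limit of points outside $\abs{\cX}\supseteq Y$; if $x\in\cellbound Y$ then by Lemma~\ref{l: properties of cell decompositions}~(v) the carrier $\supp_\cD(x)$ is a proper face of $Y$, and some $n$-cell $Y'$ through $x$ with $Y'\notin\cX$ can be found from the approximating sequence (using local finiteness, infinitely many terms lie in one fixed $n$-cell not in $\cX$). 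Either way $x\in\Absbig{\cD^{[n]}\smallsetminus\cX}$. Then~(i) is immediate: $\cls U=\mfd^n\smallsetminus\stdInt{(\mfd^n\smallsetminus U)}=\mfd^n\smallsetminus\stdInt{\Absbig{\cD^{[n]}\smallsetminus\cX}}$, and by symmetry (applying~(ii) with $\cD^{[n]}\smallsetminus\cX$ in place of $\cX$) $\stdInt{\Absbig{\cD^{[n]}\smallsetminus\cX}}=\mfd^n\smallsetminus\abs{\cX}$, so $\cls U=\abs{\cX}$.

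For part~(iii): from~(i) and~(ii), $\partial U=\cls U\smallsetminus U=\abs{\cX}\cap\Absbig{\cD^{[n]}\smallsetminus\cX}$, so $\partial U$ is a union of closed cells. The inclusions $\bigcup\{c\in\cD:c\subseteq\partial U\}\subseteq\bigcup\{c\in\cD:\partial U\cap\stdCellint c\neq\emptyset\}\subseteq\partial U$ are trivial (for the second, if $\partial U\cap\stdCellint c\neq\emptyset$ then, since $\partial U$ is a union of cells, Lemma~\ref{l: properties of cell decompositions}~(v) forces $c$ to lie in one of those cells, hence $c\subseteq\partial U$ — which also closes the loop). So it remains to show $\partial U\subseteq\bigcup\{c\in\cD:c\subseteq\partial U\}$: given $x\in\partial U$, its carrier $c=\supp_\cD(x)$ satisfies $\stdCellint c\ni x$, and since $\partial U=\abs{\cX}\cap\Absbig{\cD^{[n]}\smallsetminus\cX}$ is a union of cells containing $x$, Lemma~\ref{l: properties of cell decompositions}~(v) gives $c\subseteq\partial U$, so $x\in c\subseteq\bigcup\{c\in\cD:c\subseteq\partial U\}$. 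This gives the chain of equalities.

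The main obstacle I expect is the reverse inclusion in~(ii), namely producing, for a point $x\in\cls{\mfd^n\smallsetminus\abs{\cX}}$ lying in the cell-boundary of every $n$-cell of $\cX$ through it, an $n$-cell \emph{not} in $\cX$ that contains $x$. The clean way is: take a sequence $x_k\to x$ with $x_k\notin\abs{\cX}$; each $x_k$ lies in some $Y_k\in\cD^{[n]}\smallsetminus\cX$; by finiteness of $\cD$ pass to a subsequence with $Y_k\equiv Y$ fixed; then $x=\lim x_k\in Y$ since $Y$ is closed, and $Y\notin\cX$, so $x\in\Absbig{\cD^{[n]}\smallsetminus\cX}$. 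This argument is short and avoids the case analysis above, so in the write-up I would route the whole of~(ii) through it; everything else is a formal consequence. All three parts are then genuinely routine once the closedness/finiteness bookkeeping is in place.
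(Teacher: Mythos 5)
There is a genuine gap in your proof of (ii), and it propagates, because you deduce (i) and (iii) from the full equality in (ii). Your chain $\mfd^n\smallsetminus\abs{\cX}\subseteq\Absbig{\cD^{[n]}\smallsetminus\cX}$, hence $\mfd^n\smallsetminus U=\cls{\mfd^n\smallsetminus\abs{\cX}}\subseteq\Absbig{\cD^{[n]}\smallsetminus\cX}$, proves exactly the inclusion $\mfd^n\smallsetminus U\subseteq\Absbig{\cD^{[n]}\smallsetminus\cX}$; the ``hence $\Absbig{\cD^{[n]}\smallsetminus\cX}\subseteq\mfd^n\smallsetminus U$'' is a non sequitur, since that is the \emph{opposite} inclusion and it is never established anywhere in the proposal. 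Your subsequent ``reverse inclusion'' argument, and likewise the sequence argument you say you would route everything through, again prove $\mfd^n\smallsetminus U\subseteq\Absbig{\cD^{[n]}\smallsetminus\cX}$, i.e.\ the same inclusion a second time. So even in your streamlined final plan, half of (ii) --- namely that an $n$-cell $Y\notin\cX$ cannot meet $U=\stdInt{\abs{\cX}}$ --- remains unproved, and this is not a purely formal point: it genuinely uses the cell structure, not just closedness and finiteness.

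The missing inclusion is exactly the direction the paper handles with a cell argument: for $Y\in\cD^{[n]}\smallsetminus\cX$ one first checks $\stdCellint{Y}\cap\abs{\cX}=\emptyset$ (if $\stdCellint{Y}$ met some $\tau\in\cX$, Lemma~\ref{l: properties of cell decompositions}~(v) would give $Y\subseteq\tau$, and then Lemma~\ref{l: properties of cell decompositions}~(iv) together with invariance of domain forces $Y=\tau\in\cX$, a contradiction), whence $Y=\cls{\stdCellint{Y}}\subseteq\cls{\mfd^n\smallsetminus\abs{\cX}}=\mfd^n\smallsetminus U$. With that step inserted your argument closes up: your closure/sequence proof of the other inclusion is correct and is in fact more elementary than the paper's flower argument; your derivation of (i) by applying (ii) to the complementary collection $\cD^{[n]}\smallsetminus\cX$ is legitimate since the lemma allows arbitrary $\cX\subseteq\cD^{[n]}$; and (iii) works, provided you either justify via Lemma~\ref{l: properties of cell decompositions}~(iv) your claim that $\abs{\cX}\cap\Absbig{\cD^{[n]}\smallsetminus\cX}$ is a union of cells, or argue directly as the paper does: a point $x\in\partial U$ lies in some $Y\in\cX$ and some $Y'\in\cD^{[n]}\smallsetminus\cX$, and Lemma~\ref{l: properties of cell decompositions}~(v) applied to $\supp_{\cD}(x)$ gives $\supp_{\cD}(x)\subseteq Y\cap Y'\subseteq\partial U$.
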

\begin{proof}
    (i) For each $Y\in\cX$, since $\stdCellint{Y}$ is open (cf.~Lemma~\ref{l: properties of cells}~(ii)), $\stdCellint{Y}\subseteq U$ and $Y=\cls{\stdCellint{Y}}\subseteq\cls{U}$. Thus $\abs{\cX}\subseteq\cls{U}$. By Lemma~\ref{l: cell decomp: compact, and Mfd}~(i), $\cX$ is finite and thus $\abs{\cX}$ is closed. Then $\cls{U}=\cls{\stdInt{\abs{\cX}}}\subseteq\abs{\cX}$, and thus $\abs{\cX}=\cls{U}$.
    
	\smallskip
    
    (ii) For each $Y\in\cD^{\top}\smallsetminus\cX$, by Lemma~\ref{l: properties of cell decompositions}~(vi) and (v), we have $\stdCellint{Y}\subseteq\mfd^n\smallsetminus\abs{\cX}$, and thus $Y=\cls{\stdCellint{Y}}\subseteq\mfd^n\smallsetminus U$.
	On the other hand, assume $x\in\mfd^n\smallsetminus U$. Since $\flower(x)$ is an open neighborhood of $x\in\mfd^n\smallsetminus U$ (cf.~Corollary~\ref{c: x-flower structure}~(i)), we have $\flower(x)\nsubseteq\abs{\cX}$. Thus, by Corollary~\ref{c: x-flower structure}~(ii), there exists $Y\in\cD^{[n]}\smallsetminus\cX$ with $x\in Y$. 

    \smallskip

    (iii) It follows from Lemma~\ref{l: properties of cell decompositions}~(ii) that $\partial U\subseteq\bigcup\{c\in\cD:\partial U\cap\stdCellint{c}\neq\emptyset\}$.
    It is also clear that $\bigcup\{c\in\cD:c\subseteq\partial U\}\subseteq\partial U$. Now it suffices to show $\bigcup\{c\in\cD:\partial U\cap\stdCellint{c}\neq\emptyset\}\subseteq\bigcup\{c\in\cD:c\subseteq\partial U\}$.
	
	Let $c\in\cD$ satisfy $\partial U\cap\stdCellint{c}\neq\emptyset$. By~(i) and~(ii),
	$\partial U=\cls{U}\smallsetminus U=\abs{\cX}\cap\Absbig{\cD^{[n]}\smallsetminus\cX}$. Thus,
	there exist $Y\in\cX$ and $Y'\in\cD^{[n]}\smallsetminus\cX$ such that $Y\cap Y'\cap\stdCellint{c}\neq\emptyset$. Then it follows from Lemma~\ref{l: properties of cell decompositions}~(v) that $c\subseteq Y\cap Y'\subseteq\abs{\cX}\cap\Absbig{\cD^{[n]}\smallsetminus\cX}=\partial U$. 
\end{proof}

\begin{lemma}\label{l: cell Nbhd: U^i(G)}
    Let $\cD$ be a cell decomposition of a {\clcnR} $n$-manifold $(\mfd^n,\dg)$. Then for each subset $G\subseteq\mfd^n$, the following statements are true
    \begin{enumerate}[label=(\roman*),font=\rm]
    	\smallskip
		\item $G\subseteq U^1(G)\subseteq\cls{U^1(G)}=\Absbig{\cU^1(G)}\subseteq U^2(G)$.
		\smallskip
		\item If $G$ is path-connected, then $\Absbig{\cU^i(G)}$ and $U^i(G)$, $i\in\{1,\,2\}$, are path-connected.
		\smallskip
		\item $\dist_{\dg}(G,Z) \geq \dist_{\dg}\bigl(G,\partial U^1(G)\bigr)$ for each $Z\in\cD^{\top}\smallsetminus\cU^1(G)$. 
	\end{enumerate}
\end{lemma}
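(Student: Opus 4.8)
The plan is to establish the three assertions of Lemma~\ref{l: cell Nbhd: U^i(G)} in order, relying on Lemma~\ref{l: cellular neighborhood: int(cX) - cls, comp, bd} applied with $\cX\=\cU^1(G)$ (and later $\cX\=\cU^2(G)$), together with elementary facts about cell decompositions.

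\textbf{Part (i).} First I would observe $G\subseteq U^1(G)$: for each $x\in G$, the carrier $c\=\supp_\cD(x)$ satisfies $\stdCellint{c}\subseteq\flower(x)$, and by Lemma~\ref{l: cell decomp: compact, and Mfd}~(ii) the flower $\flower(x)$ (or rather $\cls{\flower(x)}$, by Corollary~\ref{c: x-flower structure}~(ii)) is covered by the $n$-chambers containing $x$; every such chamber meets $G$ at $x$, hence lies in $\cU^1(G)$, so $\flower(x)\subseteq\Absbig{\cU^1(G)}$, and since $\flower(x)$ is open (Corollary~\ref{c: x-flower structure}~(i)), $x\in\stdInt{\Absbig{\cU^1(G)}}=U^1(G)$. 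The inclusion $U^1(G)\subseteq\cls{U^1(G)}$ is trivial, and $\cls{U^1(G)}=\Absbig{\cU^1(G)}$ is exactly Lemma~\ref{l: cellular neighborhood: int(cX) - cls, comp, bd}~(i). Finally $\Absbig{\cU^1(G)}\subseteq U^2(G)$: by definition $\cU^2(G)=\cU^1\bigl(\Absbig{\cU^1(G)}\bigr)$, so applying the already-proved first inclusion with $G$ replaced by the set $\Absbig{\cU^1(G)}$ gives $\Absbig{\cU^1(G)}\subseteq U^1\bigl(\Absbig{\cU^1(G)}\bigr)=U^2(G)$.

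\textbf{Part (ii).} Assume $G$ is path-connected. I claim $\Absbig{\cU^1(G)}$ is path-connected: given two chambers $Y,Y'\in\cU^1(G)$, pick $p\in Y\cap G$ and $p'\in Y'\cap G$; a path in $G$ from $p$ to $p'$ together with paths inside $Y$ and $Y'$ (each chamber being homeomorphic to a cube, hence path-connected) connects an arbitrary point of $Y$ to an arbitrary point of $Y'$ within $\Absbig{\cU^1(G)}$. For $U^1(G)=\stdInt{\Absbig{\cU^1(G)}}$, I would pass to interiors: every point of $\Absbig{\cU^1(G)}$ lies in some $\stdCellint{c}$ with $c\subseteq$ some chamber of $\cU^1(G)$, hence in the flower of a vertex of that chamber, and such a flower is an open path-connected set contained in $\Absbig{\cU^1(G)}$ (using that each chamber containing that vertex and meeting $G$ stays in $\cU^1(G)$), so it is contained in $U^1(G)$; thus $U^1(G)$ is a union of path-connected open sets whose union with the connected "backbone" remains connected — more cleanly, one shows $\Absbig{\cU^1(G)}$ and $U^1(G)$ have the same path-components by the flower argument, hence $U^1(G)$ is path-connected. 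The case $i=2$ follows by replacing $G$ with $\Absbig{\cU^1(G)}$, which is path-connected by the $i=1$ case.

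\textbf{Part (iii).} Let $Z\in\cD^{[n]}\smallsetminus\cU^1(G)$. By Lemma~\ref{l: cellular neighborhood: int(cX) - cls, comp, bd}~(ii) (with $\cX=\cU^1(G)$), $Z\subseteq\mfd^n\smallsetminus U^1(G)$. Since $U^1(G)$ is open, any path from a point $g\in G\subseteq U^1(G)$ to a point $z\in Z\subseteq\mfd^n\smallsetminus U^1(G)$ must cross $\partial U^1(G)$; quantitatively, for any $g\in G$, $z\in Z$ one has $\dg(g,z)\geq\dg(g,\partial U^1(G))$ because the segment realizing (nearly) the distance, or any rectifiable curve, exits the open set $U^1(G)$ and hence meets $\partial U^1(G)$ — more directly, $\dg(g,z)\geq\dist_\dg(\{g\},\mfd^n\smallsetminus U^1(G))\geq\dist_\dg(\{g\},\partial U^1(G))$, where the last step uses that for an open set, the distance from an interior point to the complement equals the distance to the boundary. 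Taking the infimum over $g\in G$, $z\in Z$ gives $\dist_\dg(G,Z)\geq\dist_\dg(G,\partial U^1(G))$.

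\textbf{Main obstacle.} The routine-looking parts are genuinely routine; the only place demanding a little care is the path-connectedness of the \emph{open} sets $U^i(G)$ in Part (ii), since taking interiors can in principle disconnect a set. The key point to get right is that each point of $\Absbig{\cU^1(G)}$ lies in a flower of some vertex that is entirely contained in $\Absbig{\cU^1(G)}$ and hence in $U^1(G)$, so that $U^1(G)$ is dense in $\Absbig{\cU^1(G)}$ and shares its path-components; once that observation is in place, the rest is immediate.
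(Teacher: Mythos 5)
Your parts (i) and (iii) are correct and follow the paper's own argument essentially verbatim, and the path-connectedness of $\Absbig{\cU^i(G)}$ in part (ii) is also fine. The genuine gap is exactly at the point you flag as delicate: your claim that each point of $\Absbig{\cU^1(G)}$ lies in the flower of a vertex that is entirely contained in $\Absbig{\cU^1(G)}$ is false. A flower $\flower(p)$ of a vertex $p$ contains the cell-interiors of \emph{all} chambers containing $p$, including chambers that do not meet $G$; the interior of such a chamber is disjoint from $\Absbig{\cU^1(G)}$ (by Lemma~\ref{l: properties of cell decompositions}~(v), the cell-interior of a chamber outside $\cU^1(G)$ cannot meet any chamber of $\cU^1(G)$). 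Concretely, if $G$ is a single point of $\stdCellint{Y}$ for a chamber $Y$, then $\Absbig{\cU^1(G)}=Y$ and no vertex-flower is contained in $Y$. In fact, if your claim held, every point of $\Absbig{\cU^1(G)}$ would have an open neighborhood inside $\Absbig{\cU^1(G)}$, so $\Absbig{\cU^1(G)}$ would be open and equal to $U^1(G)$, contradicting part (i), since $\Absbig{\cU^1(G)}=\cls{U^1(G)}$ generally has non-empty boundary. So the statement that $U^1(G)$ is ``dense in $\Absbig{\cU^1(G)}$ and shares its path-components'' is not established by the flower argument; moreover, even granting the false containment, a flower of a vertex of the carrier of $x$ need not contain any point of $G$, so chaining these flowers to the ``backbone'' $G$ would require a further argument.

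The paper closes this step differently: it writes $U^1(G)=\bigcup\bigl\{Y\cap U^1(G):Y\in\cU^1(G)\bigr\}$, notes that each $Y\in\cU^1(G)$ meets the path-connected set $G\subseteq U^1(G)$, and shows that each piece $Y\cap U^1(G)=\stdCellint{Y}\cup\bigl(\cellbound Y\cap U^1(G)\bigr)$ is path-connected: a point of $\cellbound Y\cap U^1(G)$ has an open neighborhood contained in $U^1(G)$, and since $Y=\cls{\stdCellint{Y}}$ is an $n$-cell, one can push such a point into $\stdCellint{Y}$ by a short path staying in $Y\cap U^1(G)$. The case $i=2$ then follows by replacing $G$ with $\Absbig{\cU^1(G)}$, exactly as you do. If you repair part (ii) along these lines (or by any argument connecting an arbitrary point of $U^1(G)$ to $G$ within $U^1(G)$ through the chamber containing it), the rest of your proposal stands.
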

\begin{proof}
	Let $G\subseteq\mfd^n$ be arbitrary.
	
	\smallskip
	
	(i) We first show $G\subseteq U^1(G)$. Fix arbitrary $x\in G$. Then by the definition of $\cU^1(G)$ and Corollary~\ref{c: x-flower structure}~(ii), $\cls{\flower(x)}\subseteq\Absbig{\cU^1(G)}$, and thus, since $\flower(x)$ is open (cf.~Corollary~\ref{c: x-flower structure}~(i)), we have $x\in\flower(x)\subseteq\stdInt{\Absbig{\cU^1(G)}}=U^1(G)$. So $G\subseteq U^1(G)$. By Lemma~\ref{l: cellular neighborhood: int(cX) - cls, comp, bd}~(i) and the above discussion, $\cls{U^1(G)}=\Absbig{\cU^1(G)}\subseteq U^1\bigl(\Absbig{\cU^1(G)}\bigr)=U^2(G)$ (cf.~(\ref{e: U^1 and U^2})).
	
	\smallskip
	
	(ii) Suppose $G$ is path-connected. It is clear from (\ref{e: U^1 and U^2}) and the path-connectedness of $G$ that $\Absbig{\cU^i(G)}$, $i\in\{1,\,2\}$, are path-connected. Now consider $U^i(G)$, $i\in\{1,\,2\}$. The definition (cf.~(\ref{e: U^1 and U^2})) yields $U^1(G)=\bigcup\bigl\{Y\cap U^1(G):Y\in\cU^1(G)\bigr\}$. Thus, since each $Y\in\cU^1(G)$ meets $G$ (cf.~(\ref{e: U^1 and U^2})), and the path-connected set $G$ is contained in $U^1(G)$ (by~(i)), it suffices to show that for each $Y\in\cU^1(G)$, $Y\cap U^1(G)$ is path-connected. Fix $Y\in\cU^1(G)$. Since $\stdCellint{Y}$ is open (cf.~Lemma~\ref{l: properties of cells}~(ii)),  we have $\stdCellint{Y}\subseteq U^1(G)$, and thus $Y\cap U^1(G)=(U^1(G)\cap\cellbound Y)\cup\stdCellint{Y}$. Consequently, $Y\cap U^1(G)$ is path-connected since $Y$ is an $n$-dimensional cell. 
    Hence, $U^1(G)$ is path-connected. Since $\Absbig{\cU^1(G)}$ is path-connected, by the above discussion, $U^2(G)=U^1\bigl(\Absbig{\cU^1(G)}\bigr)$ is path-connected.
	
	\smallskip
	
	(iii) Consider $Z\in\cD^{\top}$ with $Z\cap G=\emptyset$. By Lemma~\ref{l: cellular neighborhood: int(cX) - cls, comp, bd}~(ii), 
	$Z\subseteq\mfd^n\smallsetminus U^1(G)$, and thus $\dist_{\dg}(G,Z) \geq \dist_{\dg}\bigl(G,\mfd^n\smallsetminus U^1(G)\bigr)=\dist_{\dg}\bigl(G,\partial U^1(G)\bigr)$.
\end{proof}

We usually apply the following special case of Lemma~\ref{l: cell Nbhd: U^i(G)}, where $G$ is a chamber.
\begin{lemma}\label{l: cel Nbhd: U^1(X)}
	Let $\cD$ be a cell decomposition of a {\clcnR} $n$-manifold $(\mfd^n,\dg)$.
	Then for each $X\in\cD^{\top}$, the following statements are true: 
	\begin{enumerate}[label=(\roman*),font=\rm]
    	\smallskip
		\item $X\subseteq U^1(X)\subseteq\cls{U^1(X)}=\Absbig{\cU^1(X)}\subseteq U^2(X)$.
		\smallskip
		\item $\Absbig{\cU^i(X)}$ and $U^i(X)$, $i\in\{1,\,2\}$, are path-connected.
		\smallskip
		\item $\dist_{\dg}(X,Z) \geq \dist_{\dg}\bigl(X,\partial U^1(X)\bigr)$ for each $Z\in\cD^{\top}\smallsetminus\cU^1(X)$. 
	\end{enumerate}
\end{lemma}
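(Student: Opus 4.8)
The plan is to obtain this statement as the special case $G = X$ of Lemma~\ref{l: cell Nbhd: U^i(G)}, where $X\in\cD^{[n]}$ is a chamber. All three assertions will follow once the two hypotheses entering that lemma are checked for $G = X$.

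First I would note that a chamber $X$, being an $n$-dimensional cell, is homeomorphic to $[0,1]^n$ and is therefore path-connected; this is precisely the hypothesis needed to invoke part~(ii) of Lemma~\ref{l: cell Nbhd: U^i(G)}. Hence parts~(i) and~(ii) of the present statement are immediate from parts~(i) and~(ii) of that lemma applied with $G = X$: part~(i) gives $X\subseteq U^1(X)\subseteq\cls{U^1(X)}=\Absbig{\cU^1(X)}\subseteq U^2(X)$, and part~(ii) gives the path-connectedness of $\Absbig{\cU^i(X)}$ and $U^i(X)$ for $i\in\{1,2\}$.

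For part~(iii), I would observe that by the definition of $\cU^1(X)$ in~(\ref{e: U^1 and U^2}), a chamber $Z\in\cD^{[n]}$ lies outside $\cU^1(X)$ exactly when $Z\cap X=\emptyset$. Thus part~(iii) of Lemma~\ref{l: cell Nbhd: U^i(G)}, applied with $G = X$, applies verbatim and yields $\dist_{\dg}(X,Z)\geq\dist_{\dg}\bigl(X,\partial U^1(X)\bigr)$ for every $Z\in\cD^{[n]}\smallsetminus\cU^1(X)$.

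Since each step is a direct specialization or a one-line verification, there is no genuine obstacle here; the only point worth spelling out is the path-connectedness of $X$, which is automatic from the definition of a cell. The lemma is stated separately only because the chamber case is the form used repeatedly in the rest of the section.
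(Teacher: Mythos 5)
Your proof is correct and matches the paper exactly: the paper states this lemma as the special case $G=X$ of Lemma~\ref{l: cell Nbhd: U^i(G)} (introducing it with the phrase that it is the case where $G$ is a chamber) and gives no separate proof, and your only added checks---path-connectedness of the cell $X$ and the identification of $\cD^{[n]}\smallsetminus\cU^1(X)$ with the chambers disjoint from $X$---are precisely the routine verifications needed.
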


\begin{lemma}\label{l: cel Nbhd: fwd invariance}
    Let $\{\cD_m\}_{m\in\N_0}$ be a {\celSeq} of an {\itCel} branched cover $f\:\mfd^n\to\mfd^n$ on a {\clcnR} $n$-manifold. Then for all $G\subseteq\mfd^n$, $m\in\N_0$, $k\in\N$, and $i\in\{1,\,2\}$, $f^k\bigl(\Absbig{\cU^i_{m+k}(G)}\bigr)\subseteq \Absbig{\cU^i_m\bigl(f^k(G)\bigr)}$ and
    $f^k\bigl(U^i_{m+k}(G)\bigr)\subseteq U^i_m\bigl(f^k(G)\bigr)$.
\end{lemma}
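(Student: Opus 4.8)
The plan is to reduce everything to the level of cells: the cellular neighborhoods $\cU^i_m(\cdot)$ are defined purely in terms of which $m$-chambers meet a set, and $f^k$ is $(\cD_{m+k},\cD_m)$-cellular by Remark~\ref{r: basic cellular property}, so $f^k$ sends $(m+k)$-chambers homeomorphically onto $m$-chambers. I would first prove the $i=1$ case of the inclusion $f^k(\Absbig{\cU^1_{m+k}(G)})\subseteq\Absbig{\cU^1_m(f^k(G))}$, then bootstrap to $i=2$, and finally deduce the statement about the open sets $U^i_m$ by taking interiors together with Proposition~\ref{p: c-flower: structure}~(i) or Lemma~\ref{l: cellular neighborhood: int(cX) - cls, comp, bd}.

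For $i=1$: let $Y\in\cU^1_{m+k}(G)$, so $Y\in\cD_{m+k}^{[n]}$ and $Y\cap G\neq\emptyset$. Since $f^k$ is $(\cD_{m+k},\cD_m)$-cellular, $f^k(Y)\in\cD_m$ and, because $\dim(f^k(Y))=\dim(Y)=n$ (cellular maps restrict to homeomorphisms on cells), $f^k(Y)\in\cD_m^{[n]}$. Moreover $f^k(Y)\cap f^k(G)\supseteq f^k(Y\cap G)\neq\emptyset$, so $f^k(Y)\in\cU^1_m(f^k(G))$. Hence $f^k(Y)\subseteq\Absbig{\cU^1_m(f^k(G))}$ for each such $Y$, and taking the union over $Y\in\cU^1_{m+k}(G)$ gives $f^k(\Absbig{\cU^1_{m+k}(G)})\subseteq\Absbig{\cU^1_m(f^k(G))}$. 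The $i=2$ case now follows by applying the $i=1$ inclusion twice: by definition (see~(\ref{e: U^1 and U^2})) $\cU^2_{m+k}(G)=\cU^1_{m+k}(\Absbig{\cU^1_{m+k}(G)})$, so
\begin{equation*}
	f^k\bigl(\Absbig{\cU^2_{m+k}(G)}\bigr)\subseteq\Absbig{\cU^1_m\bigl(f^k(\Absbig{\cU^1_{m+k}(G)})\bigr)}\subseteq\Absbig{\cU^1_m\bigl(\Absbig{\cU^1_m(f^k(G))}\bigr)}=\Absbig{\cU^2_m\bigl(f^k(G)\bigr)},
\end{equation*}
where the second inclusion uses monotonicity of $\Absbig{\cU^1_m(\cdot)}$ in its argument (immediate from the definition) applied to the already-established inclusion $f^k(\Absbig{\cU^1_{m+k}(G)})\subseteq\Absbig{\cU^1_m(f^k(G))}$.

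For the open-set statement, fix $i\in\{1,2\}$ and recall $U^i_{m+k}(G)=\stdInt{\Absbig{\cU^i_{m+k}(G)}}\subseteq\Absbig{\cU^i_{m+k}(G)}$, so by what we just proved $f^k(U^i_{m+k}(G))\subseteq\Absbig{\cU^i_m(f^k(G))}$. Since $f$ is a branched cover, $f^k$ is an open map (Remark~\ref{r: bc on mfd is cls fbc}), so $f^k(U^i_{m+k}(G))$ is open; being an open subset of $\Absbig{\cU^i_m(f^k(G))}$, it is contained in $\stdInt{\Absbig{\cU^i_m(f^k(G))}}=U^i_m(f^k(G))$. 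This completes the argument.

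I do not anticipate a genuine obstacle here; the only point requiring a little care is ensuring that $f^k$ maps $n$-cells to $n$-cells, which is where cellularity (not just continuity or openness) is essential, and that the monotonicity $G_1\subseteq G_2\Rightarrow\Absbig{\cU^1_m(G_1)}\subseteq\Absbig{\cU^1_m(G_2)}$ is used in the $i=2$ step. Both are immediate from the definitions, so the proof is short and purely combinatorial-topological, with openness of $f^k$ invoked only at the very end to pass from the closed sets $\Absbig{\cU^i}$ to their interiors $U^i$.
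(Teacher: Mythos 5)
Your proof is correct and follows essentially the same route as the paper: the paper likewise deduces the inclusion $f^k\bigl(\Absbig{\cU^i_{m+k}(G)}\bigr)\subseteq\Absbig{\cU^i_m\bigl(f^k(G)\bigr)}$ directly from the definition of the cellular neighborhoods and the cellularity of $f^k$, and then passes to interiors using openness of $f$. The only difference is that you spell out the "easy to see" step (the $i=1$ case plus monotonicity for $i=2$), which the paper leaves implicit.
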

\begin{proof}
    Fix $G\subseteq\mfd^n$, $m\in\N_0$, $k\in\N$, and $i\in\{1,\,2\}$. 
    It is easy to see from the construction of $\cU^i_{m+k}(G)$ and $\cU^i_m\bigl(f^k(G)\bigr)$ in (\ref{e: U^1 and U^2}) and the definition of {\itCel} maps that $f^k\bigl(\Absbig{\cU^i_{m+k}(G)}\bigr)\subseteq \Absbig{\cU^i_m\bigl(f^k(G)\bigr)}$. Thus, since $f$ is open, $f^k\bigl(U^i_{m+k}(G)\bigr)=f^k\bigl(\stdInt{\Absbig{\cU^i_{m+k}(G)}}\bigr)\subseteq\stdInt{\Absbig{\cU^i_m\bigl(f^k(G)\bigr)}} =U^i_m\bigl(f^k(G)\bigr)$.
\end{proof}

\subsection{Quasisymmetric uniformization}
Now we show that an expanding {\itCel} branched cover on a {\clcnR} $n$-manifold $\mfd^n$, $n\geq 2$, is a uniform expanding BQS map with respect to its visual metrics. Recall that, in Subsection~\ref{subsct: cell decomp}, we denote by $\coverF(\cD)$ the set of flowers in a cell decomposition $\cD$. Also recall from Proposition~\ref{p: p-flowers: invariance}~(ii) that for a {\celSeq} $\{\cD_m\}_{m\in\N_0}$ of an {\itCel} branched cover $f\:\mfd^n\to\mfd^n$, we have $\coverF(\cD_m)=(f^m)^*\coverF(\cD_0)$ for each $m\in\N$.
\begin{lemma}\label{l: cellular BQS: UEBQS on visual sphere}
	Let $f\:\mfd^n\to\mfd^n$, $n\geq 2$, be an expanding {\itCel} branched cover on a {\clcnR} $n$-manifold, $\{\cD_m\}_{m\in\N_0}$ be a {\celSeq} of $f$, and $\varrho$ be a visual metric for $f$ on $\mfd^n$. Then there exists a homeomorphism $\eta \: [0,+\infty) \to [0,+\infty)$ such that $f\:(\mfd^n,\varrho)\to(\mfd^n,\varrho)$ is a uniform expanding BQS map with data $(\coverF(\cD_0),\eta)$.
\end{lemma}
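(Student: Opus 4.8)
The plan is to verify the two defining conditions of a uniform expanding BQS map (Definition~\ref{d: UEBQS}) with respect to the cover $\cU \= \coverF(\cD_0)$, exploiting the identification $\coverF(\cD_m)=(f^m)^*\coverF(\cD_0)$ from Proposition~\ref{p: p-flowers: invariance}~(ii). The expansion condition~(i) is essentially immediate: since $\varrho$ is a visual metric and $\{\cD_m\}$ is an expanding {\celSeq} (by Proposition~\ref{p: characterization of expans}), the estimate \eqref{e: visual metric} together with $\mesh(\cD_m)\to 0$ forces $\mesh_\varrho(\coverF(\cD_m))\to 0$; concretely, a flower $\flower_m(p)$ is contained in the union of the $m$-chambers meeting $p$, and any two points $x,y$ in such a flower satisfy $m_{f,\cD_0}(x,y)\geq m$, hence $\varrho(x,y)\leq C\Lambda^{-m}$. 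So $\max\{\diam_\varrho U : U\in (f^m)^*\cU\}\leq C\Lambda^{-m}\to 0$.

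The substance of the proof is the uniformity condition~(ii): finding a single homeomorphism $\eta$ such that for every $m\in\N$ and every $V\in(f^m)^*\cU=\coverF(\cD_m)$, the restriction $f^m|_V\colon V\to f^m(V)$ is $\eta$-BQS. Fix such a $V=\flower_m(p)$ with $p$ an $m$-vertex, so $f^m(V)=\flower_0(f^m(p))$ is a $0$-flower. Let $E,F$ be intersecting continua in $V$; we must bound $\diam_\varrho(f^m(E))$ in terms of $\diam_\varrho(f^m(F))$ and the ratio $\diam_\varrho E/\diam_\varrho F$. The key is to translate diameters of continua into separation levels. For a continuum $A$ contained in some flower, let $k(A)$ denote the largest level $l$ such that $A$ is contained in some $l$-flower (finite, since $\mesh(\cD_l)\to 0$ and $A$ is non-degenerate); using \eqref{e: visual metric} one gets $\diam_\varrho A \asymp \Lambda^{-k(A)}$ with multiplicative constants depending only on $C,\Lambda$ and on a bounded-overlap/chain constant for flowers (a Lebesgue-number argument as in Lemma~\ref{l: CBC: cover cell by M flowers}). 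The crucial combinatorial input is that $f^m$ shifts levels exactly by $m$ on flowers: by Corollary~\ref{c: CBC: connected set in flower}~(i) applied iteratively, a continuum $A\subseteq V\subseteq\flower_m(p)$ is contained in an $(m+j)$-flower if and only if $f^m(A)$ is contained in a $j$-flower (one uses that $A$ stays inside the single flower $V$ so that the pullback components match up under $f^m|_V$, which is injective on each $m$-chamber). Therefore $k(f^m(A)) = k(A) - m$ whenever $A\subseteq V$, giving $\diam_\varrho(f^m(A))\asymp \Lambda^{-(k(A)-m)} = \Lambda^m \diam_\varrho(A)$; the factor $\Lambda^m$ cancels in the ratio, yielding $\diam_\varrho(f^m(E))/\diam_\varrho(f^m(F))\asymp \diam_\varrho(E)/\diam_\varrho(F)$ with constants independent of $m$, from which a suitable $\eta$ (linear, up to the uniform constant, i.e.\ $\eta(t)=C' t$) is read off.

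The main obstacle is the level-shift identity $k(f^m(A))=k(A)-m$ for continua $A$ contained in the flower $V$: one direction (if $A$ lies in an $(m+j)$-flower then $f^m(A)$ lies in a $j$-flower) follows directly from forward invariance of flowers (Proposition~\ref{p: p-flowers: invariance}~(i) and Proposition~\ref{p: c-flower: fwd invariance}), but the reverse direction requires that a continuum in $V$ whose image lies in a $j$-flower actually lifts, under the local inverse branch of $f^m$ available on $V$, into an $(m+j)$-flower. This is where containment in the single flower $V$ and the pullback description $\coverF(\cD_{m+j})=(f^{m+j})^*\coverF(\cD_0)=(f^m)^*\coverF(\cD_j)$ (Proposition~\ref{p: p-flowers: invariance}~(ii)) are essential, together with Corollary~\ref{c: CBC: connected set in flower}~(i); the remaining care is purely to track the multiplicative constants (depending only on $C$, $\Lambda$, and the combinatorics of $\cD_0$) and to handle the edge cases where $k(A)=m$ or the continua are contained in small flowers near $\partial V$. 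Finally, assembling the two-sided diameter comparisons into a bona fide homeomorphism $\eta$ is routine once the uniform constant is in hand.
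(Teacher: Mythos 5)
Your proposal is correct and follows essentially the same route as the paper: you measure each continuum by the maximal flower level containing it (the paper's $(f,\coverF(\cD_0))$-approximation), prove $\diam_\varrho A\asymp\Lambda^{-k(A)}$ from the visual-metric estimate together with a Lebesgue-number argument, shift the level by exactly $m$ under $f^m$ using the flower-pullback structure, and read off a linear $\eta$. Two minor remarks: the lower diameter bound actually rests on the iteration property of $m_{f,\cD_0}$ (Lemma~\ref{l: m(x,y) property: iteration}) and the Lebesgue number of $\coverF(\cD_0)$ rather than on Lemma~\ref{l: CBC: cover cell by M flowers}, and the reverse direction of your level-shift identity does not require containment in the single flower $V$, since Corollary~\ref{c: CBC: connected set in flower}~(i) applied to $f^m$ with the cellular pair $(\cD_{m+j},\cD_j)$ gives it for every connected set.
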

\begin{proof}
	Denote $m(x,y)\=m_{f,\cD_0}(x,y)$ for all $x,\,y\in\mfd^n$. Let $\Lambda>1$ be the expansion factor for $\varrho$, and let $C>1$ be a constant for which (\ref{e: visual metric})
	holds for all $x,\,y\in\mfd^n$. 
	By Lemma~\ref{l: visual metric: metric properties of cells}, there exists a constant $C'>1$ such that for all $m\in\N$ and $X\in\cD_m^{\top}$,
	\begin{equation}\label{e: BQS wrt visual metric: diam X}
		C'^{-1}\Lambda^{-m} \leq \diam X \leq  C'\Lambda^{-m},
	\end{equation}
    where we may also assume that $C'>\diam\mfd^n$.
    
    Fix $k_0\in\N$ such that $8C'\Lambda^{-k_0}$ is strictly less than the Lebesgue number of the open cover $\coverF(\cD_0)$.
	
	For each $m\in\N$ and each $x\in\mfd^n$, we denote
	\begin{equation*}
	\begin{aligned}
		\tW_m(x)\=\{y\in\mfd^n: m(x,y)\geq m\}.
	\end{aligned}
    \end{equation*}
    By (\ref{e: visual metric}), the definition of $m(x,y)$ (cf.~(\ref{e: the constant m(x,y)})), and (\ref{e: BQS wrt visual metric: diam X}),
    \begin{equation}\label{e: B() < tWm(x) < B()}
        B\bigl( x,C^{-1}\Lambda^{-m} \bigr)\subseteq\tW_m(x)\subseteq B(x, 4C'\Lambda^{-m})
    \end{equation}
	
	Fix arbitrary $m\in\N$ and an $m$-vertex $p$.
	Consider a continuum $A\subseteq\flower_m(p)$. We aim to give estimates of $A$ and $f^m(A)$. Since $A\subseteq\flower_m(p)$ and $f$ is expanding, we can find an integer $l\geq m$ and an $l$-flower that is an $(f,\coverF(\cD_0))$-approximation of $A$ (see Definition~\ref{d: approximate} and Proposition~\ref{p: p-flowers: invariance}~(ii)).
	
	First, we show $\diam A \geq  C^{-1}\Lambda^{-(l+k_0+1)}$. We argue by contradiction and assume $\diam A< C^{-1}\Lambda^{-(l+k_0+1)}$. Fix $x_0\in A$. Then by (\ref{e: B() < tWm(x) < B()}),
    $A\subseteq\tW_{l+k_0+1}(x_0)$, and thus, by Lemma~\ref{l: m(x,y) property: iteration} and (\ref{e: B() < tWm(x) < B()}), $f^{l+1}(A)\subseteq \tW_{k_0}\bigl(f^{l+1}(x_0)\bigr)\subseteq B\bigl(f^{l+1}(x_0),4C'\Lambda^{-k_0}\bigr)$. So it follows from the choice of $k_0$ that $f^{l+1}(A)\subseteq B\bigl(f^{l+1}(x_0),4C'\Lambda^{-k_0}\bigr))$ is contained in some $0$-flower, and thus, by Corollary~\ref{c: CBC: connected set in flower}~(i), $A$ is contained in some $(l+1)$-flower. This contradicts the choice of $l$. Thus, $\diam A \geq  C^{-1}\Lambda^{-(l+k_0+1)}$. 
    
    On the other hand, since $A$ is contained in some $l$-flower, it follows from Corollary~\ref{c: x-flower structure}~(ii) and (\ref{e: BQS wrt visual metric: diam X}) that $\diam A \leq  2C'\Lambda^{-l}$. Hence, we have
	$C^{-1}\Lambda^{-(l+k_0+1)} \leq \diam A \leq  2C'\Lambda^{-l}$.

    By Remark~\ref{r: approximate: iterate}, there exists an $(l-m)$-flower that is an $(f,\coverF(\cD_0))$-approximation of $f^m(A)$. Same arguments as above yield
	$C^{-1}\Lambda^{-(l+k_0+1)+m} \leq \diam(f^m(A)) \leq  2C'\Lambda^{-l+m}$.
	It follows that
	\begin{equation*}
        \bigl( 2C'C\Lambda^{k_0+1} \bigr)^{-1}\Lambda^{m} \leq \diam(f^m(A))/\diam A \leq 2C'C\Lambda^{k_0+1}\Lambda^m.
	\end{equation*}
	Applying the above inequality for continua $E$ and $E'$ contained in $\flower_m(p)$, we have 
	\begin{equation*}
	 \diam(f^m(E)) / \diam(f^m(E'))  \leq   \bigl(2C'C\Lambda^{k_0+1}\bigr)^2 \diam E / \diam E' .
    \end{equation*}
	Define $\eta\:[0,+\infty)\to[0,+\infty),\,t\mapsto\bigl(2C'C\Lambda^{k_0+1}\bigr)^2t$, then the proof is complete.
\end{proof}

Since the BQS property is preserved under a quasisymmetric conjugacy, one implication in Theorem~\ref{t: CBC: QS sph <=> UBQS} follows immediately.

\begin{cor}\label{c: cellular BQS: QS equiv => flower UBQS}
	Let $f\:\mfd^n\to\mfd^n$, $n\geq2$, be an expanding {\itCel} branched cover on a {\clcnR} $n$-manifold, $\{\cD_m\}_{m\in\N_0}$ be a {\celSeq} of $f$, and $\varrho$ a visual metric for $f$. Let $\dg$ be the Riemannian distance on $\mfd^n$. If the identity map $\id\:(\mfd^n,\varrho)\to(\mfd^n,\dg)$ is a quasisymmetry, then there exists a homeomorphism $\eta\:[0,+\infty)\to[0,+\infty)$ such that $f\:(\mfd^n,\dg)\to(\mfd^n,\dg)$ is a uniform expanding BQS map with data $(\coverF(\cD_0),\eta)$.
\end{cor}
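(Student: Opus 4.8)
The plan is to obtain the corollary directly from Lemma~\ref{l: cellular BQS: UEBQS on visual sphere} together with the hypothesis, exploiting that the branched quasisymmetry property is stable under quasisymmetric conjugation. By Lemma~\ref{l: cellular BQS: UEBQS on visual sphere} there is a homeomorphism $\eta_0\colon[0,+\infty)\to[0,+\infty)$ such that $f\colon(\mfd^n,\varrho)\to(\mfd^n,\varrho)$ is a uniform expanding BQS map with data $(\coverF(\cD_0),\eta_0)$; recall also that $\coverF(\cD_m)=(f^m)^*\coverF(\cD_0)$ for every $m\in\N$ by Proposition~\ref{p: p-flowers: invariance}~(ii). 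Write $\iota\colon(\mfd^n,\varrho)\to(\mfd^n,\dg)$ for the identity map, which is quasisymmetric by assumption, say $\eta_1$-quasisymmetric; since the inverse of a quasisymmetry is a quasisymmetry and restrictions of quasisymmetries are quasisymmetric with the same distortion function (cf.~\cite{TV80}), $\iota^{-1}\colon(\mfd^n,\dg)\to(\mfd^n,\varrho)$ is $\eta_2$-quasisymmetric for some homeomorphism $\eta_2$ depending only on $\eta_1$.

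First I would verify condition~(i) (expansion) of Definition~\ref{d: UEBQS} for $\dg$. Condition~(i) for $\varrho$ gives $\mesh_\varrho(\coverF(\cD_m))\to 0$ as $m\to+\infty$. Since $\mfd^n$ is compact and $\varrho$ and $\dg$ induce the same topology, $\iota$ is uniformly continuous, so subsets of small $\varrho$-diameter have small $\dg$-diameter; hence $\mesh_{\dg}(\coverF(\cD_m))\to 0$ as well.

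Next I would verify condition~(ii) (uniformity) for $\dg$. Fix $m\in\N$ and $U\in\coverF(\cD_m)=(f^m)^*\coverF(\cD_0)$. Regarding $f^m$ restricted to $U$ as a map $(U,\dg)\to(f^m(U),\dg)$, factor it as the composition of $\iota^{-1}|_U\colon(U,\dg)\to(U,\varrho)$, the map $(f^m|_U)_\varrho\colon(U,\varrho)\to(f^m(U),\varrho)$, and $\iota|_{f^m(U)}\colon(f^m(U),\varrho)\to(f^m(U),\dg)$: the middle map is $\eta_0$-BQS by the conclusion of Lemma~\ref{l: cellular BQS: UEBQS on visual sphere}, and the two outer maps are restrictions of $\iota^{-1}$ and $\iota$, hence $\eta_2$- and $\eta_1$-quasisymmetric respectively. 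It then remains to apply the following general principle: \emph{if $g\colon(\fX,d_\fX)\to(\fY,d_\fY)$ is $\eta$-BQS and $\alpha\colon(\fX',d_{\fX'})\to(\fX,d_\fX)$ and $\beta\colon(\fY,d_\fY)\to(\fY',d_{\fY'})$ are quasisymmetric homeomorphisms, then $\beta\circ g\circ\alpha$ is $\eta'$-BQS for some homeomorphism $\eta'$ depending only on $\eta$ and the distortion functions of $\alpha$ and $\beta$.} Granting this, $f^m|_U$ is $\eta'$-BQS with respect to $\dg$ with $\eta'$ determined only by $\eta_0,\eta_1,\eta_2$, hence independent of $m$ and $U$. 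Together with the expansion just established, this shows that $f\colon(\mfd^n,\dg)\to(\mfd^n,\dg)$ is a uniform expanding BQS map with data $(\coverF(\cD_0),\eta')$, as claimed.

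The substance of the argument --- and the step I expect to be the main obstacle --- is the general principle stated above. The mild difficulty is that the BQS inequality involves two \emph{intersecting} continua $E,F$, not nested ones, whereas the handy diameter-distortion estimates for a quasisymmetric homeomorphism $\phi$ are for nested sets: for $A\subseteq B$ with $0<\diam B<\infty$ one has $\diam\phi(A)\le\eta_\phi(2\diam A/\diam B)\diam\phi(B)$, and, by applying this to $\phi^{-1}$ (again quasisymmetric), a companion reverse bound $\diam\phi(B)\le\mu_\phi(\diam B/\diam A)\diam\phi(A)$ with $\mu_\phi$ a homeomorphism built from $\eta_\phi$ (see e.g.~\cite{He01}). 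One bridges the two by passing through the continuum $E\cup F$ (connected because $E\cap F\neq\emptyset$, compact because $E,F$ are), noting $\diam(E\cup F)\le\diam E+\diam F$ and $g(E\cup F)=g(E)\cup g(F)$, and chaining the two estimates for $\alpha$ and $\beta$ applied to the nested pairs $E'\subseteq E'\cup F'$, $F'\subseteq E'\cup F'$, $g(E)\subseteq g(E\cup F)$, $g(F)\subseteq g(E\cup F)$, using the BQS inequality for $g$ in between to control $\diam_{d_\fY}g(E)/\diam_{d_\fY}g(F)$. Keeping careful track of the arguments of $\eta_\phi$ and $\mu_\phi$ shows that the resulting function $\eta'$ can be taken to be a genuine homeomorphism of $[0,+\infty)$ with $\eta'(0)=0$; this bookkeeping is the only point requiring care, and everything else is formal. (If preferred, this principle can alternatively be quoted from the literature on branched quasisymmetries, cf.~\cite{GW16}.)
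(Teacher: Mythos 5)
Your proposal is correct and follows essentially the same route as the paper: invoke Lemma~\ref{l: cellular BQS: UEBQS on visual sphere} to get the uniform expanding BQS property with respect to $\varrho$, and then transfer it to $\dg$ via the invariance of the BQS condition under pre- and post-composition with quasisymmetries. The only difference is that the paper quotes this invariance from \cite[Lemmas~3.1 and~3.2]{LiP19} (leaving the expansion condition implicit), whereas you sketch the chaining argument through $E\cup F$ yourself and verify expansion explicitly; both are fine.
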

\begin{proof}
	Suppose that the identity map $\id\:(\mfd^n,\varrho)\to(\mfd^n,\dg)$ is a quasisymmetry. By Lemma~\ref{l: cellular BQS: UEBQS on visual sphere}, for each $m\in\N_0$ and each $U\in\coverF(\cD_m)$, the map $f^m|_{U}$ is $\xi$-BQS (with respect to $\varrho$) for some $\xi$ independent of $m$ and $U$. Then it follows from {\cite[Lemmas~3.1 and~3.2]{LiP19}} that $f^m|_{U}$ is $\eta$-BQS (with respect to $\dg$) for some $\eta$ independent of $m\in\N_0$ and $U\in\coverF(\cD_m)$. 
\end{proof}

In what follows, we consider the converse of Corollary~\ref{c: cellular BQS: QS equiv => flower UBQS}. 
First, we show that if an expanding {\itCel} branched cover $f\:\mfd^n\to\mfd^n$ on a {\clcnR} $n$-manifold is a uniform expanding BQS map with respect to $\dg$, then each {\celSeq} of $f$ satisfies metric properties similar to those of the \emph{quasi-visual approximation} in \cite{BM22}, which allows us to establish the quasisymmetric equivalence between $\dg$ and the visual metrics. See \cite[Chapter~18]{BM17} for a similar discussion on Thurston maps. 

\begin{lemma}\label{l: QS unif: QV approx}
Let $f\:\mfd^n\to\mfd^n$, $n\geq 2$, be an expanding {\itCel} branched cover on a {\clcnR} $n$-manifold equipped with the Riemannian distance $\dg$, and $\{\cD_m\}_{m\in\N_0}$ be a {\celSeq} of $f$. If $f$ is a uniform expanding BQS map with data $(\coverF(\cD_0),\eta)$ for some homeomorphism $\eta \: [0,+\infty) \to [0,+\infty)$, then the following statements are true:
	\begin{enumerate}[label=(\roman*),font=\rm]
    \smallskip
    \item For each $k\in\N_0$, there exist $\alpha_k,\,\beta_k\in(0,+\infty)$ such  that for all $m\in\N_0$, $Z\in\cD_m^{[n]}$, and $W\in\cD_{m+k}^{[n]}$ with $Z\cap W\neq\emptyset$, we have $\alpha_k<\diam Z/\diam W<\beta_k$. Moreover, $\alpha_k\to+\infty$ and $\beta_k\to+\infty$ as $k\to+\infty$.
    \smallskip
    \item There exists $\lambda\in(0,+\infty)$ such that for each $m\in\N$ and all $Z,\,W\in\cD_m^{[n]}$ with $Z\cap W=\emptyset$, we have
	$\dist(Z,W) \geq  \lambda\diam Z$. 
    \smallskip
    \item There exists $\mu\in(1,+\infty)$ such that for all $x,\,y\in\mfd^n$, and each $Z\in\cD_m^{\top}$, where $m\=m_{f,\cD_0}(x,y)$, containing $x$, we have
		$\mu^{-1} \leq {\dg(x,y)}/{\diam Z} \leq  \mu$.
    \end{enumerate}
\end{lemma}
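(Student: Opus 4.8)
The plan is to reduce all three statements, via the key lemma below, to estimates about the \emph{flowers} $\flower_m(q)$ of the decompositions $\cD_m$, and then to transport those estimates by pushing forward with $f^m$ onto the \emph{finitely many} cells of $\cD_0^{[n]}$ and flowers of $\coverF(\cD_0)$, where everything is trivially bounded. Throughout I abbreviate $D_k\=\mesh(\cD_k)=\max\{\diam_{\dg}Y:Y\in\cD_k^{[n]}\}$ and $d_k\=\min\{\diam_{\dg}Y:Y\in\cD_k^{[n]}\}>0$; since $f$ is expanding (Definition~\ref{d: UEBQS}), Proposition~\ref{p: characterization of expans} gives $D_k\to0$ as $k\to\infty$. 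For every $m$ and $Y\in\cD_m^{[n]}$ the restriction $f^m|_Y$ is a homeomorphism onto $f^m(Y)\in\cD_0^{[n]}$, so $\diam(f^m Y)\in[d_0,D_0]$; moreover $\coverF(\cD_m)=(f^m)^*\coverF(\cD_0)$ by Proposition~\ref{p: p-flowers: invariance}, so $f^m|_U$ is $\eta$-BQS for each $U\in\coverF(\cD_m)$. I will freely use the closure version of this, namely that $f^m$ is $\eta'$-BQS on $\cls{\flower_m(q)}$ for some $\eta'$ depending only on $\eta$, obtained from the hypothesis on the open flower by a routine approximation of continua in $\cls{\flower_m(q)}$ by continua in $\flower_m(q)$.

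\emph{Key lemma:} there is $C_1=C_1(\eta,\cD_0)\ge1$ such that whenever $q$ is a vertex of $\cD_m$ and $Z,Z'\in\cD_m^{[n]}$ both contain $q$, then $C_1^{-1}\le\diam Z/\diam Z'\le C_1$. To prove it, fix a small $\delta>0$ (its admissible size may depend on $m,q$, but $C_1$ will not). Using density of the cell-interiors and the homeomorphisms $f^m|_Z,f^m|_{Z'}$, choose continua $E\subseteq\stdCellint{Z}$ and $F\subseteq\stdCellint{Z'}$ with $\diam(f^m E)\ge\diam(f^m Z)-\delta\ge d_0-\delta$, with $\diam F\ge\diam Z'-\delta$, and such that $E$ and $F$ each meet $B_{\dg}(q,\delta)$. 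As $\flower_m(q)$ is an open neighbourhood of $q$, a minimizing geodesic $\sigma$ between the two chosen points near $q$ has length $\le2\delta$ and, for $\delta$ small, lies in $\flower_m(q)$; then $\widetilde E\=E\cup\sigma\cup F$ is a continuum in $\flower_m(q)$. Applying the $\eta$-BQS inequality of $f^m|_{\flower_m(q)}$ to the intersecting pairs $(E,\widetilde E)$ and $(\widetilde E,F)$ and chaining, together with $\diam E\le\diam Z$, $\diam F\le\diam\widetilde E\le\diam Z+\diam Z'+2\delta$, and $\diam(f^m F)\le D_0$, gives — in the case $\diam Z\le\diam Z'$ and $\delta$ small —
\begin{equation*}
 d_0-\delta\ \le\ \diam(f^m E)\ \le\ \eta\!\Bigl(\tfrac{\diam Z}{\diam Z'-\delta}\Bigr)\,\eta(4)\,D_0 ,
\end{equation*}
hence $\diam Z\ge\eta^{-1}\!\bigl(d_0/(2\eta(4)D_0)\bigr)(\diam Z'-\delta)$; letting $\delta\to0$ and using symmetry proves the lemma. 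An immediate consequence is that $\diam Z\le\diam\cls{\flower_m(q)}\le2C_1\diam Z$ for every vertex $q$ of $\cD_m$ and every $Z\in\cD_m^{[n]}$ with $q\in Z$, since any two points of $\cls{\flower_m(q)}=\bigcup\{Z'':q\in Z''\}$ lie in cells through $q$ and can be joined through $q$; in particular statement~(i) holds for $k=0$ with $\alpha_0=C_1^{-1}$, $\beta_0=C_1$.

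For \emph{statement~(i) with $k\ge1$}, fix $Z\in\cD_m^{[n]}$ and $W\in\cD_{m+k}^{[n]}$ with $Z\cap W\neq\emptyset$. By Lemma~\ref{l: expansion: J_m--->+infty} choose $k_0$ with $J\bigl(\bigcup_{i\ge k_0}\cD_i,\cD_0\bigr)\ge2$; for $k\ge k_0$ no $\cD_k$-chamber joins opposite sides of $\cD_0$, so Corollary~\ref{c: CBC: connected set in flower}(ii) (applied to $f^m$, the cellular pair $(\cD_m,\cD_0)$, and the connected set $W$, noting $f^m(W)\in\cD_k^{[n]}$) shows $W$ does not join opposite sides of $\cD_m$. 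Hence $W\subseteq\flower_m(c)$ for some $c\in\cD_m$ (Lemma~\ref{l: decomp of S^n: join opp <=> not in flower}); since $Z\cap\flower_m(c)\supseteq Z\cap W\neq\emptyset$, Proposition~\ref{p: c-flower: structure}(i) forces $c\subseteq Z$, so $W\subseteq\cls{\flower_m(c)}\subseteq\cls{\flower_m(q)}$ for a vertex $q$ of $c$, with $q\in Z$. Thus $Z$ and $W$ are intersecting continua in $\cls{\flower_m(q)}$, and applying the $\eta'$-BQS property of $f^m$ there to the pairs $(Z,W)$ and $(W,Z)$, together with $\diam(f^m Z)\in[d_0,D_0]$ and $\diam(f^m W)\in[d_k,D_k]$, yields
\begin{equation*}
 (\eta')^{-1}\!\bigl(d_k/D_0\bigr)^{-1}\ \ge\ \frac{\diam Z}{\diam W}\ \ge\ (\eta')^{-1}\!\bigl(d_0/D_k\bigr) ,
\end{equation*}
so $\alpha_k\=(\eta')^{-1}(d_0/D_k)$ works and $\alpha_k\to+\infty$ because $D_k\to0$, while $\beta_k\=(\eta')^{-1}(d_k/D_0)^{-1}<+\infty$; the finitely many cases $1\le k<k_0$ are handled by the same dichotomy, the alternative (where $W$ meets two disjoint $\cD_m$-chambers) being treated with statement~(ii) and the bound $\diam\cls{\flower_m(q)}\le2C_1\diam Z$, and finally $\beta_k$ may be enlarged to make $\beta_k\to+\infty$. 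For \emph{statement~(ii)}, given disjoint $Z,W\in\cD_m^{[n]}$ a minimizing geodesic $\gamma$ joining them is a continuum meeting the two disjoint cells, hence joins opposite sides of $\cD_m$, so $f^m(\gamma)$ joins opposite sides of $\cD_0$ (Corollary~\ref{c: CBC: connected set in flower}) and therefore escapes every base flower; comparing $\diam(f^m\gamma)$ from below (by the Lebesgue number of $\coverF(\cD_0)$) with an upper bound obtained by covering $\gamma$ by finitely many flowers of $\coverF(\cD_m)$ — on each of which $f^m$ is $\eta$-BQS and whose diameters near $Z$ are $\asymp\diam Z$ by the key lemma (Lemma~\ref{l: CBC: cover cell by M flowers}) — forces $\len\gamma=\dist_{\dg}(Z,W)\ge\lambda\diam Z$ for a uniform $\lambda>0$. \emph{Statement~(iii)} then follows: for $m=m_{f,\cD_0}(x,y)$ (with $x\neq y$) and $x\in Z\in\cD_m^{[n]}$, if the defining supremum is attained at a level $\ge1$ pick $X,Y\in\cD_m^{[n]}$ with $x\in X$, $y\in Y$, $X\cap Y\neq\emptyset$; as $x\in X\cap Z$ and $X\cap Y\neq\emptyset$, the key lemma gives $\diam X,\diam Y\le C_1^2\diam Z$, so $\dg(x,y)\le\diam X+\diam Y\le2C_1^2\diam Z$ (the case $m=0$ being immediate from finiteness of $\cD_0^{[n]}$), while for the reverse inequality one picks $X_0,Y_0\in\cD_{m+1}^{[n]}$ containing $x,y$ respectively, which are disjoint by definition of $m$, and combines statement~(ii) ($\dg(x,y)\ge\lambda\diam X_0$) with statement~(i) for $k=1$ ($\diam X_0\ge\beta_1^{-1}\diam Z$) to get $\dg(x,y)\ge(\lambda/\beta_1)\diam Z$.

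The main obstacle is the recurring need to feed the branched-quasisymmetry estimate continua that touch the \emph{boundary} of the open flower on which $f^m$ is controlled — cells lie in the closures of flowers, not in the open flowers themselves. The plan circumvents this by building the relevant continua inside the dense cell-interiors and joining them by geodesics of length $O(\delta)$ near a vertex, together with the closure version of the BQS property; keeping the resulting constants uniform while the admissible $\delta$ and the flowers shrink with $m$ is the delicate point, as are the argument for statement~(ii) and the bookkeeping for the finitely many small values of $k$ in statement~(i).
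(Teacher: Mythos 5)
Your outline diverges from the paper's proof in a structural way: you push every configuration all the way down by the full iterate $f^m$ to level $0$, whereas the paper's argument descends only to an intermediate level $m-m_0$ (with $m_0$ chosen so that $\mesh(\cD_i)<\delta_0/4$ for $i\ge m_0$, $\delta_0$ the Lebesgue number of $\coverF(\cD_0)$), precisely so that the whole relevant configuration ($Z\cup W$ in part~(i), $\Absbig{\cU^1_m(Z)}$ together with the geodesic from $Z$ to $\partial U^1_m(Z)$ in part~(ii)) sits inside a single \emph{open} flower of $\coverF(\cD_{m-m_0})$, where the uniform BQS hypothesis applies verbatim and the image quantities live at the finitely many levels $\le m_0+k$. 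This difference is not cosmetic, because your route runs into the fact that a level-$m$ chamber is generally \emph{not} contained in the open level-$m$ flower of its vertex, only in its closure. Your fix --- that $f^m$ is $\eta'$-BQS on $\cls{\flower_m(q)}$ ``by a routine approximation of continua in $\cls{\flower_m(q)}$ by continua in $\flower_m(q)$'' --- is asserted, not proved, and it is not routine: the hypothesis is only on open flowers, and there is no canonical way to push an arbitrary continuum in the closed flower (possibly lying in boundary faces of topologically wild cells) into the open flower while preserving intersections and controlling diameters of both the set and its $f^m$-image. (For the specific pairs you need in~(i) --- a closed chamber $Z$ versus a deep cell $W$ that does lie in the open flower --- this can be patched by exhausting $Z$ from $\stdCellint{Z}$ under a homeomorphism $Z\cong[0,1]^n$ and attaching a short geodesic connector at a point of $Z\cap W\subseteq\flower_m(c)$, but none of this is in your text.) Your treatment of the finitely many small $k$ in~(i) is also gapped as stated: the dichotomy ``$W$ lies in a level-$m$ flower or $W$ meets two disjoint $\cD_m$-chambers'' is not exhaustive, since the cells meeting $W$ can be pairwise intersecting with empty total intersection.

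The more serious failure is statement~(ii), on which your~(iii) and your small-$k$ bookkeeping both lean. Your plan --- lower-bound $\diam(f^m(\gamma))\ge\delta_0$ for the minimizing geodesic $\gamma$, and beat it with ``an upper bound obtained by covering $\gamma$ by finitely many flowers of $\coverF(\cD_m)$ \ldots whose diameters near $Z$ are $\asymp\diam Z$'' --- does not close. A geodesic of length $\le\lambda\diam Z$ can meet an unbounded number of level-$m$ flowers; the chambers it meets have diameters comparable to $\diam Z$ only with a constant that degrades geometrically along chains of adjacencies (your key lemma compares cells sharing a vertex, nothing more); Lemma~\ref{l: CBC: cover cell by M flowers} covers \emph{cells} of $\cD_m$ by boundedly many flowers and says nothing about $\gamma$; and you give no mechanism for combining per-flower BQS estimates (each pairing a piece of $\gamma$ with what continuum?) into a bound on $\diam(f^m(\gamma))$ below $\delta_0$. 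The paper resolves exactly this point with the ingredient your outline omits: since $\diam\bigl(\Absbig{\cU^1_{m_0}(f^{m-m_0}(Z))}\bigr)<\delta_0$, Corollary~\ref{c: CBC: connected set in flower}~(i) places $\Absbig{\cU^1_m(Z)}$ inside one open flower $\flower_{m-m_0}(p)$, and the BQS property of the \emph{intermediate} iterate $f^{m-m_0}$ there, applied to the intersecting pair consisting of $Z$ and the geodesic from $Z$ to $\partial U^1_m(Z)$ (whose image is not contained in any $m_0$-flower, hence has diameter $\ge\delta_{m_0}$), yields $\dist(Z,W)\ge\dist\bigl(Z,\partial U^1_m(Z)\bigr)\ge\eta^{-1}(\delta_{m_0}/\diam(f^{m-m_0}(Z)))\,\diam Z$ at once. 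Without this (or an equivalent single-flower containment at a coarser level), your~(ii), and hence the lower bound in~(iii), is not established.
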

\begin{proof}
    Let $\delta_0$ be the Lebesgue number of the open cover $\coverF(\cD_0)$. Since $f$ is expanding, we may fix $m_0\in\N$ such that $\mesh(\cD_m)<\delta_0/4$ for all $m \geq  m_0$.

\smallskip
    
(i) For each $k\in\N_0$, denote
\begin{equation*}
	\begin{aligned}
        a_k&\=\min\bigl\{\diam X/\diam Y:X\in\cD_m^{\top},\,Y\in\cD_{m+k}^{\top},\,m \leq  m_0\bigr\} \quad\text{ and}\\
        b_k&\=\max\bigl\{\diam X/\diam Y:X\in\cD_m^{\top},\,Y\in\cD_{m+k}^{\top},\,m \leq  m_0\bigr\}.
	\end{aligned}
\end{equation*}
We also denote $\ta_k\=\eta^{-1}(a_k)$ and $\tb_k\=1/\eta^{-1}\bigl(b_k^{-1}\bigr)$ for each $k\in\N_0$.

Fix arbitrary $m,\,k\in\N_0$, $Z\in\cD_m^{\top}$, and $W\in\cD_{m+k}^{\top}$ satisfying $Z\cap W\neq\emptyset$. If $m \leq  m_0$, then $a_k \leq \diam Z/\diam W \leq  b_k$.
Now suppose $m>m_0$ and set $h\=f^{m-m_0}$. Then $\diam(h(Z\cup W))\leqslant 2\mesh(\cD_{m_0}) <\delta_0$, and $h(Z\cup W)$ is contained in a $0$-flower. Thus, by Corollary~\ref{c: CBC: connected set in flower}~(i), there exists $U\in\coverF(\cD_{m-m_0})$ such that $Z\cup W\subseteq U$.
Since $h|_{U}$ is $\eta$-BQS, we have 
    \begin{equation*}
        \eta^{-1}(\diam(h(Z))/\diam(h(W))) \leq \diam Z/\diam W \leq  1/\eta^{-1}(\diam(h(W))/\diam(h(Z))).
    \end{equation*}
Thus, $\ta_k \leq \diam Z/\diam W \leq  \tb_k$. It suffices to set $\alpha_k\=\min\{a_k,\,\ta_k\}$ and $\beta_k\=\max\bigl\{b_k,\,\tb_k\bigr\}$. 
Since $\mesh(\cD_i)\to0$ as $i\to+\infty$, we have $\alpha_i\to+\infty$ and $\beta_i\to+\infty$ as $i\to+\infty$.

\smallskip

(ii) Fix $m\in\N$ and $Z,\,W\in\cD_m^{\top}$ satisfying $Z\cap W=\emptyset$. 
	Set
	\begin{equation*}
	a\=\min\bigl\{\dist(X,Y)/\diam X:X,\,Y\in\cD_m^{\top},\,X\cap Y=\emptyset,\,m \leq  m_0\bigr\}.
\end{equation*}
	If $m \leq  m_0$, then $\dist(Z,W) \geq  a\cdot\diam Z$.
	
	Now suppose $m>m_0$. Denote $h\=f^{m-m_0}$. Consider $\cU_{m}^1(Z)$ and $\cU_{m_0}^1(h(Z))$ defined in (\ref{e: U^i_m(G)}). 
    It is straightforward to check from (\ref{e: U^i_m(G)}) that $\diam\bigl(\Absbig{\cU_{m_0}^1(h(Z))}\bigr) \leq 3\mesh(\cD_{m_0})<\delta_0$,
    which implies $\Absbig{\cU_{m_0}^1(h(Z))}$ is contained in some $0$-flower. Thus, since $\Absbig{\cU_m^1(Z)}$ is a connected set for which $h\bigl(\Absbig{\cU_m^1(Z)}\bigr)\subseteq\Absbig{\cU^1_{m_0}(h(Z))}$ (cf.~Lemmas~\ref{l: cel Nbhd: U^1(X)}~(ii) and~\ref{l: cel Nbhd: fwd invariance}), by Corollary~\ref{c: CBC: connected set in flower}~(i),
	we may assume $\Absbig{\cU_m^1(Z)}\subseteq\flower_{m-m_0}(p)$ for some $(m-m_0)$-vertex $p$.
    
    Let $x\in Z$ and $y\in\partial U_m^1(Z)$ satisfy $\dg(x,y)=\dist\bigl(Z,\partial U_m^1(Z)\bigr)$, and $\gamma$ be a geodesic curve from $x$ to $y$.
	By Lemma~\ref{l: cellular neighborhood: int(cX) - cls, comp, bd}~(ii), $\gamma$ meets both $Z$ and an $m$-chamber disjoint from $Z$, and thus, by Corollary~\ref{c: CBC: connected set in flower}~(iii), $h(\gamma)$ is not contained in any $m_0$-flower. Then $\diam(h(\gamma)) \geq \delta_{m_0}$, where $\delta_{m_0}$ is the Lebesgue number of the open cover $\coverF(\cD_{m_0})$.

    Clearly $\gamma$ and $Z$ are contained in $\Absbig{\cU_m^1(Z)}$ and thus in $\flower_{m-m_0}(p)$.
	Since $h|_{\flower_{m-m_0}(p)}$ is $\eta$-BQS, 
	\begin{equation*}
	 \dg(x,y) / \diam Z
     \geq \eta^{-1} ( \diam(h(\gamma)) / \diam(h(Z)) ) 
     \geq \eta^{-1} ( \delta_{m_0}/ \diam(h(Z)) ).
\end{equation*}
    Then by Lemma~\ref{l: cel Nbhd: U^1(X)}~(iii), for $b\=\min\bigl\{\eta^{-1}(\delta_{m_0}/\diam X):X\in\cD_{m_0}^{\top}\bigr\}$,
	\begin{equation*}
	\dist(Z,W) \geq \dist\bigl(Z,\partial U_m^1(Z)\bigr) \geq  b\diam Z.
    \end{equation*}
	Hence $\lambda\=\min\{a, \, b\}$ is the desired constant.

    \smallskip

    (iii) Let $x,\,y\in\mfd^n$ be arbitrary. Denote $m_0\=m_{f,\cD_0}(x,y)$, and suppose $Z\in\cD_{m_0}^{\top}$ contains $x$.
    Consider the following cases according to Remark~\ref{r: remarks on m(x,y)}~(a):

    \smallskip
    
    {\it Case~1.} For all $l\in\N_0$ and $X,\,Y\in\cD_l^{[n]}$ containing $x$ and $y$, respectively, $X\cap Y=\emptyset$. Then $m_0=0$ and $Z\cap Y=\emptyset$ for all $Y\in\cD_0^{[n]}$ containing $y$. Choose $Y\in\cD_0^{[n]}$ that contains $y$. By (ii),
	\begin{equation*}
	 (1/\mu' ) \diam Z
     \leq \dist(Z,Y) 
     \leq  \dg(x,y) 
     \leq  \mu'\diam Z,
    \end{equation*}
	where $\mu'$ is a constant with $\mu'>\max\bigl\{\lambda^{-1},\, \max \bigl\{ \diam \mfd^n / \diam X : X\in\cD_0^{[n]} \bigr\} \bigr\}$. Clearly $\mu'>1$.

    \smallskip

    {\it Case~2.} There exist $X_0,\,Y_0\in\cD_{m_0}$ satisfying $x\in X_0$, $y\in Y_0$, and $X_0\cap Y_0\neq\emptyset$. Choose $X_1,\,Y_1\in\cD_{m_0+1}$ satisfying $x\in X_1$, $y\in Y_1$. The definition of $m_{f,\cD_0}(x,y)$ implies $X_1\cap Y_1=\emptyset$. 
	First, 
    by~(i) and~(ii), we have
    \begin{equation*}
        \dg(x,y) \geq \dist(X_1,Y_1) \geq  \lambda\diam X_1 \geq  \lambda\beta_1^{-1}\diam Z.
    \end{equation*}
   On the other hand, by~(i), we have 
	\begin{equation*}
        \dg(x,y) \leq \diam X_0+\diam Y_0
         \leq (1+\beta_0)\diam X_0 \leq \beta_0(1+\beta_0)\diam Z.
	\end{equation*}

\smallskip

	Therefore, $\mu\=\max\bigl\{\mu',\,\beta_0(1+\beta_0),\,\lambda^{-1}\beta_1\bigr\}$ is the desired constant.
\end{proof}

We are now ready to prove the other direction in Theorem~\ref{t: CBC: QS sph <=> UBQS}.
\begin{theorem}\label{t: cellular BQS: flowerBQS => QS equivalence}
	Let $f\:\mfd^n\to\mfd^n$, $n\geq 2$, be an expanding {\itCel} branched cover on a {\clcnR} $n$-manifold, $\{\cD_m\}_{m\in\N_0}$ be a {\celSeq} of $f$, $\varrho$ be a visual metric for $f$, and $\dg$ be the distance function on $\mfd^n$. If $f\:(\mfd^n,\dg)\to(\mfd^n,\dg)$ is a uniform expanding BQS map with data $(\coverF(\cD_0),\eta)$ for some homeomorphism $\eta \: [0,+\infty) \to [0,+\infty)$, then the identity map $\id\:(\mfd^n,\dg)\to(\mfd^n,\varrho)$ is a quasisymmetry.
\end{theorem}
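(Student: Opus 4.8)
The plan is to exploit that, along a fixed {\celSeq} $\{\cD_m\}_{m\in\N_0}$ of $f$, \emph{both} $\varrho$ and $\dg$ are comparable to explicit functions of the separation level $m(x,y)\=m_{f,\cD_0}(x,y)$. For $\varrho$ this is the defining estimate $C^{-1}\Lambda^{-m(x,y)}\le\varrho(x,y)\le C\Lambda^{-m(x,y)}$ of Definition~\ref{d: visual metrics}. For $\dg$, Lemma~\ref{l: QS unif: QV approx}~(iii) shows that $\dg(x,y)$ is comparable, with a fixed constant $\mu$, to $\diam_{\dg}Z$ for any $n$-chamber $Z\in\cD_{m(x,y)}^{[n]}$ containing $x$, while Lemma~\ref{l: QS unif: QV approx}~(i) controls the ratio of $\dg$-diameters of overlapping chambers whose levels differ by a fixed amount $k$, through constants $\alpha_k,\beta_k\to+\infty$. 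Combining these three inputs, the quasisymmetry of $\id\:(\mfd^n,\dg)\to(\mfd^n,\varrho)$ reduces to a purely combinatorial comparison of separation levels.

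Concretely, I would fix $x,y,z\in\mfd^n$ with $x\ne z$, write $m_1\=m(x,y)$, $m_2\=m(x,z)$, $t\=\dg(x,y)/\dg(x,z)$, and choose $n$-chambers $Z_1\in\cD_{m_1}^{[n]}$ and $Z_2\in\cD_{m_2}^{[n]}$ with $x\in Z_1\cap Z_2$ (these exist by Lemma~\ref{l: cell decomp: compact, and Mfd}~(ii)). After replacing $\alpha_k$ and $\beta_k$ by $\inf_{j\ge k}\alpha_j$ and $\sup_{j\le k}\beta_j$, we may assume both sequences are non-decreasing and still tend to $+\infty$, so that Lemma~\ref{l: QS unif: QV approx}~(i), applied to the overlapping pair $Z_1,Z_2$ (coarser one as ``$Z$'', finer one as ``$W$'', $k=\abs{m_1-m_2}$), together with~(iii) applied to $(x,y)$ and to $(x,z)$, yields two-sided bounds of the shape: if $m_1\ge m_2$ then $\mu^{-2}\beta_{m_1-m_2}^{-1}<t<\mu^2\alpha_{m_1-m_2}^{-1}$, whereas if $m_2>m_1$ then $\mu^{-2}\alpha_{m_2-m_1}<t<\mu^2\beta_{m_2-m_1}$. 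In particular, once $t<\mu^{-2}\alpha_1$ the second alternative is impossible, so $m_1\ge m_2$ and $m_1-m_2$ is at least some integer $\kappa\bigl(1/(\mu^2t)\bigr)$ with $\kappa(s)\to+\infty$ as $s\to+\infty$; and for $t\ge\mu^{-2}\alpha_1$ the quantity $\abs{m_1-m_2}$ is bounded above by some integer $\lambda(\mu^2t)$, finite for each $t$.

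Feeding this into the elementary bound $\varrho(x,y)/\varrho(x,z)\le C^2\Lambda^{m_2-m_1}$ (immediate from the visual-metric estimate and $\Lambda>1$) gives $\varrho(x,y)\le\eta_0(t)\,\varrho(x,z)$, where $\eta_0(t)\=C^2\Lambda^{-\kappa(1/(\mu^2t))}$ for $0<t<\mu^{-2}\alpha_1$ and $\eta_0(t)\=C^2\Lambda^{\lambda(\mu^2t)}$ otherwise. The function $\eta_0$ is non-decreasing, finite on bounded sets, tends to $0$ as $t\to0^+$ and to $+\infty$ as $t\to+\infty$, hence is dominated by a homeomorphism $\eta\:[0,+\infty)\to[0,+\infty)$, and $\id\:(\mfd^n,\dg)\to(\mfd^n,\varrho)$ is $\eta$-quasisymmetric, as claimed.

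The genuine analytic content is already absorbed into Lemma~\ref{l: QS unif: QV approx} (and the cellular-neighborhood estimates preceding it), so the remaining difficulty in this theorem is essentially bookkeeping: one must check that a small ratio $\dg(x,y)/\dg(x,z)$ indeed forbids the case $m(x,z)>m(x,y)$ — this is what forces the distortion function to decay to $0$ at the origin — and that the resulting step function may legitimately be promoted to a genuine homeomorphism. A more streamlined but less self-contained route would be to isolate a general metric lemma (in the spirit of \cite[Chapter~18]{BM17}): two metrics on a compact space, one comparable to $\Lambda^{-m(\cdot,\cdot)}$ and the other comparable to the $\dg$-diameter of a level-$m(\cdot,\cdot)$ chamber, are quasisymmetrically equivalent as soon as the ``quasi-visual approximation'' estimates of Lemma~\ref{l: QS unif: QV approx} hold, after which the theorem is immediate.
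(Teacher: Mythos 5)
Your proposal is correct and follows essentially the paper's own argument: both rest on the defining visual-metric estimate together with parts (i) and (iii) of Lemma~\ref{l: QS unif: QV approx}, comparing the separation levels $m(x,y)$ and $m(x,z)$ through chambers of those levels containing $x$ and converting the gap $\abs{m(x,y)-m(x,z)}$ into a distortion bound. The only cosmetic difference is that you bound $\varrho(x,y)/\varrho(x,z)$ by a function of $\dg(x,y)/\dg(x,z)$ (the literally stated direction, which then needs the routine domination of your non-decreasing control function by a homeomorphism), whereas the paper bounds the $\dg$-ratio by a function of the $\varrho$-ratio built directly from homeomorphisms $a,b$ with $a(k)<\alpha_k<\beta_k<b(k)$; the two formulations are equivalent since inverses of quasisymmetries are quasisymmetries, and your closing remark about quasi-visual approximations matches the paper's own remark following the theorem.
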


\begin{remark}
    	Theorem~\ref{t: cellular BQS: flowerBQS => QS equivalence} also follows from the verification that $\bigl\{\cD^{[n]}_m\bigr\}_{m\in\N_0}$ is a \emph{quasi-visual approximation} of both $(\mfd^n,\dg)$ and $(\mfd^n,\varrho)$; see~\cite[Section~2]{BM22}. Indeed, this is a consequence of Lemmas~\ref{l: QS unif: QV approx} and \ref{l: visual metric: metric properties of cells}. For the convenience of the reader, we include a direct proof.
\end{remark}

\begin{proof} 
	Denote $m(x,y)\=m_{f,\cD_0}(x,y)$ for all $x,\,y\in\mfd^n$.
	Let $\Lambda>1$ be the expansion factor of $\varrho$, and choose (by Lemma~\ref{l: visual metric: same expansion factor}) a constant $C>1$ such that (\ref{e: visual metric}) holds for all $x,\, y\in\mfd^n$.
	Let $\mu\in(1,+\infty)$ be the constant given by Lemma~\ref{l: QS unif: QV approx}~(iii). Let $\alpha_k,\,\beta_k\in(0,+\infty)$ for $k\in\N_0$ be constants given by Lemma~\ref{l: QS unif: QV approx}~(i), and $a\:\R\to(0,+\infty)$, $b\:\R\to(0,+\infty)$ be strictly increasing homeomorphisms such that $a(k)<\alpha_k<\beta_k<b(k)$ for each $k\in\N_0$.
	
	Fix arbitrary $x,\, y,\, z\in\mfd^n$. Denote $m_1\=m(x,y)$ and $m_2\=m(x,z)$. Choose $X_1\in\cD_{m_1}^{[n]}$ and $X_2\in\cD_{m_2}^{[n]}$ both containing $x$. By Lemma~\ref{l: QS unif: QV approx}~(iii),
    \begin{equation}\label{e: d(x,y)/d(x,z)==X1/X2}
        \dg(x,y)/\dg(x,z) \leq  \mu^2\diam_{\dg}X_1/\diam_{\dg}X_2.
    \end{equation}
	Now we give an estimate of $\diam_{\dg}X_1/\diam_{\dg}X_2$. Denote $k\=\abs{m_1-m_2}$. 
	
	If $m_1 \leq  m_2$, then by Lemma~\ref{l: QS unif: QV approx}~(i) and by (\ref{e: visual metric}), 
	\begin{equation*}
    {\diam_{\dg}X_1}/{\diam_{\dg}X_2} \leq \beta_k<b(k)\leq b\bigl(\log_{\Lambda}\bigl(C^2{\varrho(x,y)}/{\varrho(x,z)}\bigr)\bigr).\end{equation*}
    Likewise, if $m_1>m_2$, then 
    \begin{equation*}
        {\diam_{\dg}X_1}/{\diam_{\dg}X_2} \leq \alpha_k^{-1} < 1/a(k) \leq 1 \big/ a\bigl(-\log_\Lambda\bigl(C^2{\varrho(x,y)}/{\varrho(x,z)}\bigr)\bigr).
    \end{equation*}

    Combining the above two inequalities with (\ref{e: d(x,y)/d(x,z)==X1/X2}),
	${\dg(x,y)}/{\dg(x,z)} \leq \theta({\varrho(x,y)}/{\varrho(x,z)})$ 
    for the function
    $\theta\:[0,+\infty)\to[0,+\infty)$ defined by $\theta(t)\=\mu^2 \max\bigl\{b\bigl(\log_{\Lambda}\bigl(C^2t\bigr)\bigr),\, 1\big/ a\bigl(-\log_\Lambda \bigl(C^2t\bigr)\bigr)\bigr\}$. It is easy to verify that $\theta$ is strictly increasing, continuous, with $\theta(0)=0$ and $\theta(t)\to+\infty$ as $t\to+\infty$. Hence $\theta$ is a homeomorphism and the identity map $\id\:(\mfd^n,\varrho)\to(\mfd^n,\dg)$ is $\theta$-quasisymmetric.
\end{proof}

 We finish this section with a proof of Theorem~\ref{t: CBC: QS sph <=> UBQS}.
\begin{proof}[Proof of {Theorem~\ref{t: CBC: QS sph <=> UBQS}}]
	Let $\{\cD_m\}_{m\in\N_0}$ be a {\celSeq} of $f$.
	If the identity map $\id\:(\mfd^n,\dg)\to(\mfd^n,\varrho)$ is a quasisymmetry, then by Corollary~\ref{c: cellular BQS: QS equiv => flower UBQS}, $f\:(\mfd^n,\dg)\to(\mfd^n,\dg)$ is a uniform expanding BQS map.
	
	For the converse, suppose that $f\:(\mfd^n,\dg)\to(\mfd^n,\dg)$ is a uniform expanding BQS map with data $(\cV,\eta)$. Since $f$ is expanding, we may fix a constant $m_0\in\N$ such that $2\mesh_{\dg}(\cD_{m_0})$ is strictly less than the Lebesgue number of $\cV$. This implies that $f$ is a uniform expanding BQS map with data $(\coverF(\cD_{m_0}),\eta)$. Applying Theorem~\ref{t: cellular BQS: flowerBQS => QS equivalence} with the {\celSeq} $\{\cD_{m_0+k}\}_{k\in\N_0}$,  the identity map $\id\:(\mfd^n,\dg)\to(\mfd^n,\varrho)$ is a quasisymmetry. 
\end{proof}

\section{Cellular uniformly quasiregular maps} 
\label{sct: Rigidity of UQR}
In this section, we investigate rigidity
of expanding {\itCel} branched covers that are uniformly quasiregular,  leading to a proof of \ref{item:UQR-3}$\Leftrightarrow$\ref{item:UQR-4} in Theorem~\ref{tx: QS-UEBQS-Lattes-UQR}. More precisely, we establish the following theorem.
\begin{theorem}\label{t: cellular UQR => Lattes}
	Let $f\:\mfd^n\to\mfd^n$, $n \geq 3$, be an expanding {\itCel} branched cover on a {\clcnR} $n$-manifold.
	If $f$ is uniformly quasiregular, then $f$ is a chaotic Latt\`es map.
\end{theorem}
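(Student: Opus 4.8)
The plan is to follow the chain of implications recorded in \eqref{e: implications: UQR-Lattes-UQR},
\[
\text{UQR}\ \Longrightarrow\ \text{ffi UQR}\ \Longrightarrow\ \text{conical UQR}\ \Longrightarrow\ \text{Latt\`es}\ \Longrightarrow\ \text{chaotic Latt\`es},
\]
so that the argument breaks into three essentially independent pieces. Fix throughout a {\celSeq} $\{\cD_m\}_{m\in\N_0}$ of $f$, which exists because $f$ is {\itCel} and which, by Proposition~\ref{p: characterization of expans}, may be taken expanding, i.e.\ $\mesh(\cD_m)\to 0$. We also note once and for all that $f$ is non-injective: by Proposition~\ref{p: expans=>LEO} an expanding branched cover is topologically exact, and a homeomorphism of a closed connected manifold cannot have this property (it would force $\mfd^n$ to be homeomorphic to a proper open subset of itself). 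Thus $f$ is a non-injective UQR map, and $\deg f\geq 2$.

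\textbf{Step 1: the finite-forward-index property.} The first task, which is the content of Proposition~\ref{p: CPCF: UQR => bd multi}, is to show $\sup\{\ind(x,f^m):m\in\N_0,\,x\in\mfd^n\}<+\infty$. This is exactly where $n\geq 3$ enters: if $K$ is a common dilatation bound for all iterates $f^m$, then each $f^m$ is $K$-quasiregular, and a $K$-quasiregular map between $n$-manifolds with $n\geq 3$ has local index bounded by a constant $C=C(n,K)$ (a modulus/capacity estimate, cf.\ Rickman~\cite{Ri93}); applying this to $f^m$ gives $\ind(x,f^m)\leq C$ for all $m$ and $x$. By Lemma~\ref{l: multiplicity: N_loc(f,x) and N(D,x)} and Corollary~\ref{c: multiplilcity: characterization of bounded N_loc} this is equivalent to a uniform bound on the local complexity of the complexes in the {\celSeq}: $\NxD{x}{\cD_m}\leq C_1$ for all $m\in\N_0$ and $x\in\mfd^n$. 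Combining this with the flower-invariance Proposition~\ref{p: p-flowers: invariance} applied to the branched cover $f^m$ and the combinatorics of the fixed finite complex $\cD_0$, one deduces that for every $m$-vertex $p$ the proper map $f^m|_{\flower_m(p)}\:\flower_m(p)\to\flower_0(f^m(p))$, and likewise $f^m$ restricted to the cellular neighborhoods $U_m^1(X)\subseteq U_m^2(X)$ of an $m$-chamber $X$, has multiplicity bounded by a constant $C_2=C_2(C_1,\cD_0)$ independent of $m$.

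\textbf{Step 2: every point of $\mfd^n$ is conical.} This is the technical core, and the step I expect to be the main obstacle. Given $x\in\mfd^n$, pick for each $m$ an $m$-chamber $X_m\ni x$; then $f^m|_{X_m}\:X_m\to f^m(X_m)$ is a homeomorphism onto one of the finitely many $0$-chambers, hence onto a set of diameter bounded below by a fixed $\delta_0>0$, while $\diam X_m\to 0$ by expansion. To upgrade this purely topological statement to the quantitative \emph{quasiconformal} control required by the notion of a conical point (recalled in Appendix~\ref{Ap: conical pt of UQR}), the plan is to establish a level-independent metric estimate for the cells of the {\celSeq}: there exists $C_0\geq 1$, depending only on $n$, $K$ and $\cD_0$, such that for every $m$ and every $m$-chamber $X$ the condenser $\bigl(U_m^2(X),X\bigr)$ has $\modul$ comparable, up to the factor $C_0$, to the $\modul$ of the fixed condenser $\bigl(U_0^2(f^m(X)),f^m(X)\bigr)$; consequently each $m$-chamber is uniformly quasiround, and $f^m$ restricted to a suitable neighborhood of $x$ inside $U_m^1(X_m)$ is a $C_0$-quasiconformal map onto a set of definite size. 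The comparison is carried out by pushing forward and pulling back the relevant curve families under $f^m$ via the $K_O$- and $K_I$-inequalities for quasiregular maps, the uniformly bounded multiplicities from Step~1 being precisely what makes the $K_I$-estimate usable simultaneously at all levels; the \emph{markings} of Subsection~\ref{subsct: marking} together with the cellular neighborhoods of Subsection~\ref{subsct: cellular neighborhood} provide the bookkeeping that keeps every constant independent of $m$. With this estimate, the neighborhoods of $x$ shrinking to $x$ and blown up by $f^m$ to definite size with distortion at most $C_0$ exhibit $x$ as a conical point. Since $x$ is arbitrary and the Julia set of the expanding map $f$ is all of $\mfd^n$, the conical set of $f$ has full measure.

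\textbf{Step 3: conclusion.} A uniformly quasiregular map whose conical set has positive measure is a (quasiregular) Latt\`es map by Martin and Mayer~\cite[Theorem~1.3]{MM03}; hence $f$ is a Latt\`es map. Finally, $f$ is expanding, so its conformal affine factor has scaling constant $\lambda>1$, and by Lemma~\ref{l: Lattes: top exact => chaotic} an expanding Latt\`es map is chaotic. Therefore $f$ is a chaotic Latt\`es map. The only genuinely new difficulty in this chain is the uniform modulus/quasiroundness estimate in Step~2: it plays the role that Koebe's distortion theorem plays in the two-dimensional theory, and the delicate point is the level-independence of all the constants, which is precisely what necessitates the finite-forward-index property and the combinatorial machinery built around it.
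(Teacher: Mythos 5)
Your overall architecture (UQR $\Rightarrow$ ffi $\Rightarrow$ conical $\Rightarrow$ Latt\`es $\Rightarrow$ chaotic) coincides with the paper's, and you correctly identify Step~2 as the technical core; however, Step~1 rests on a false claim. There is no pointwise bound $\ind(x,f)\leq C(n,K)$ for $K$-quasiregular maps in dimension $n\geq 3$: Rickman's estimate (\cite[Chapter~III, Corollary~5.9]{Ri93}, the result the paper actually uses) only bounds the \emph{infimum} of the local index over a non-degenerate continuum, and points of arbitrarily large index can occur with fixed $K$. Concretely, a uniformly $K$-quasiregular map $g$ with a fixed branch point $x_0$ (such maps exist; their super-attracting fixed points are the subject of \cite{HMM04}) has every iterate $K$-quasiregular while $\ind(x_0,g^m)=\ind(x_0,g)^m\to+\infty$, so applying your claimed bound to $g^m$ gives a contradiction. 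This is exactly why the finite-forward-index property is nontrivial and why the paper proves it dynamically (Proposition~\ref{p: CPCF: UQR => bd multi}): Lemma~\ref{l: cellular + QR + big i(x,f) => vertex} combines the constancy of $\ind(\cdot,f)$ on cell-interiors (Corollary~\ref{c: multi: N(f,x)=const on int(c)}) with Rickman's continuum estimate to force any point of large index to be a vertex; Lemma~\ref{l: itCel: inf multi + UQR => period branch} then splits a high-index iterate into blocks of controlled index and pigeonholes among the finitely many $0$-vertices to produce a \emph{periodic branch point}; and such a point yields a super-attracting basin, hence a non-empty Fatou set, contradicting topological exactness (Proposition~\ref{p: expans=>LEO}) together with the non-emptiness of the Julia set of a non-injective UQR map. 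None of this appears in your proposal, so Step~1 is a genuine gap, and the uniform multiplicity bounds that your Step~2 modulus estimates rely on are left unsupported.

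On Step~2 itself: your plan is the right kind of plan, but it remains a sketch, and the paper's non-degeneracy mechanism is slightly different from your ``uniform quasiroundness / $C_0$-quasiconformal restriction'' formulation. The paper proves a separation estimate $\dist(X,Y)\geq\lambda\diam X$ for disjoint $m$-chambers (Proposition~\ref{prop: UQR + bd mul (no need for Mkv) => dist/diam>}) by applying the modulus inequality with multiplicity factor $N(f,A)$ (Theorem~\ref{t: BQS and UQR: modul char for QR}) to the curve family joining the continua $A$ and $B$ built from the markings (Lemma~\ref{l: cellular neighborhood: continua A and B}); conicality is then extracted from rescaled iterates by a normal-family argument, with the separation estimate ruling out constant limits, rather than by asserting quasiconformality of any restriction of $f^m$ (which is in general not injective on the relevant neighborhoods). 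Steps~3 and the final reduction via \cite[Theorem~1.3]{MM03} and Lemma~\ref{l: Lattes: top exact => chaotic} match the paper.
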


For the convenience of the reader, we recall the notion of Latt\`es maps (cf.~\cite{Ka22,May97,IM01}).
\begin{definition}\label{d: Lattes triple}
    Let $\mfd^n$, $n\ge 3$, be an {\orclcnR} $n$-manifold. A triple $(\Gamma,h,A)$ is a \defn{Latt\`es triple} on $\mfd^n$ if the following conditions are satisfied: 
\begin{enumerate}[label=(\roman*),font=\rm]
	\smallskip
	\item $\Gamma$ is a discrete subgroup of the isometry group $\Isom(\R^n)$.
	\smallskip
	\item $h\:\R^n\to\mfd^n$ is a quasiregular map that is strongly automorphic with respect to $\Gamma$, i.e., for all $x,\,y\in\R^n$, $h(x)=h(y)$ if and only if $y=\gamma(x)$ for some $\gamma\in\Gamma$.
	\smallskip
	\item $A$ is a conformal affine map (i.e.,~$A=\lambda U+v$ for some $\lambda>0$, $U\in O(n)$, and $v\in\R^n$) with $A\Gamma A^{-1}\subseteq\Gamma$.
\end{enumerate}
A \defn{Latt\`es map} is a uniformly quasiregular map $f\:\mfd^n\to\mfd^n$ for which there exists a Latt\`es triple $(\Gamma,h,A)$ with
$f\circ h=h\circ A$.
In this case, we call $f$ a \defn{Latt\`es map with respect to $(\Gamma,h,A)$}.
We call a Latt\`es map with respect to a Latt\`es triple $(\Gamma,h,A)$ \emph{chaotic} if $\Gamma$ is cocompact and $A$ is expanding, i.e., $A=\lambda U+v$ for some $\lambda>1$, $U\in O(n)$, and $v\in\R^n$.
\end{definition}

If a Latt\`es map is expanding (or more generally, topologically exact), then it is chaotic.
\begin{lemma}\label{l: Lattes: top exact => chaotic}
    If a Latt\`es map $f\:\mfd^n\to\mfd^n$, $n\ge 3$, is topologically exact, then $f$ is chaotic. In particular, if $f$ is expanding, then $f$ is chaotic.
\end{lemma}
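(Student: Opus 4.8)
The plan is to verify separately the two conditions defining ``chaotic'' for a defining Latt\`es triple $(\Gamma,h,A)$ of $f$ in the sense of Definition~\ref{d: Lattes triple}: that $\Gamma$ is cocompact, and that $A=\lambda U+v$ is expanding, i.e.\ $\lambda>1$. The ``in particular'' clause then comes for free: a Latt\`es map is uniformly quasiregular, hence a branched cover, and an expanding branched cover on the compact connected (locally connected, metric) manifold $\mfd^n$ is topologically exact by Proposition~\ref{p: expans=>LEO}. Everything is driven by the semiconjugacy $f\circ h=h\circ A$ --- which iterates to $f^m\circ h=h\circ A^m$ --- together with the fact that $A$ is a Euclidean similarity scaling distances by $\lambda^m$ after $m$ steps.

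\emph{Cocompactness.} I would first note that $h$ is open (being quasiregular), so $h(\R^n)$ is a non-empty open subset of $\mfd^n$, and that since $A$ is a bijection of $\R^n$ we get $f^m(h(\R^n))=h(A^m(\R^n))=h(\R^n)$ for every $m$. Topological exactness supplies an $m$ with $f^m(h(\R^n))=\mfd^n$, hence $h$ is surjective. Strong $\Gamma$-automorphy then says $h$ is constant on $\Gamma$-orbits and separates distinct orbits, so it descends to a continuous bijection $\overline h\colon\R^n/\Gamma\to\mfd^n$; as $h$ and the quotient map are open, $\overline h$ is open, hence a homeomorphism. Since $\mfd^n$ is compact, so is $\R^n/\Gamma$, i.e.\ $\Gamma$ is cocompact.

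\emph{Expansion of $A$.} Now that $\R^n/\Gamma$ is compact, $\overline h$ is uniformly continuous, and since the projection $\R^n\to\R^n/\Gamma$ is $1$-Lipschitz there is a modulus $\omega$ with $\omega(t)\to0$ as $t\to0^{+}$ and $\diam_{\dg}(h(S))\le\omega(\diam S)$ for all $S\subseteq\R^n$. Take a small ball $W\=\B^n(x_0,r)$ and set $B\=h(W)$, which is non-empty and open. Then $f^m(B)=h(A^m(W))$ and $\diam(A^m(W))=2\lambda^m r$, so $\diam_{\dg}(f^m(B))\le\omega(2\lambda^m r)$. If $\lambda\le1$ this is at most $\omega(2r)$ for every $m$; but topological exactness gives some $m$ with $f^m(B)=\mfd^n$, forcing $\omega(2r)\ge\diam_{\dg}(\mfd^n)>0$, which fails once $r$ is small enough. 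Hence $\lambda>1$, and together with the previous step $f$ is a chaotic Latt\`es map.

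I do not expect a serious obstacle here: once topological exactness of $f$ is transported through the semiconjugacy $h$ and the similarity structure of $A$ is used, both parts are short. The one subtlety is logical ordering --- the diameter estimate in the second part relies on uniform continuity of $h$, which is only available after cocompactness has been established --- so the surjectivity/compactness argument must come first.
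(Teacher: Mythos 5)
Your proof is correct and follows essentially the same route as the paper: transport topological exactness through the iterated semiconjugacy $f^m\circ h=h\circ A^m$ to get $h$ surjective and $\R^n/\Gamma\cong\mfd^n$ compact (cocompactness of $\Gamma$), then use arbitrarily small balls to force $\lambda>1$. The only difference is that you make explicit, via uniform continuity of the induced map on the compact quotient, the step the paper compresses into ``since $D$ can be arbitrarily small, it is easy to see $\lambda>1$''.
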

\begin{proof}
    Let $f\:\mfd^n\to\mfd^n$ be a Latt\`es map with respect to a Latt\`es triple $(\Gamma,h,A)$, where $A=\lambda U+v$ for some $\lambda>0$, $U\in O(n)$, and $v\in\R^n$. Suppose $f$ is topologically exact. Let $D\subseteq\R^n$ be an arbitrary open ball. Since $f$ is topologically exact, there exists $k\in\N$ such that $f^k(h(D))=h(A^k(D))=\mfd^n$. Hence, $\R^n/\Gamma \cong h(\R^n)=\mfd^n$ is compact and $\Gamma$ is cocompact. Since $D$ can be arbitrarily small, it is easy to see $\lambda>1$. Thus, $f$ is chaotic.

    When $f$ is expanding, $f$ is topologically exact (see Proposition~\ref{p: expans=>LEO}), and hence chaotic.
\end{proof}

\begin{rem}
    The classical Latt\`es maps on $\widehat\C$ satisfy Definition~\ref{d: Lattes triple} in the case where $n=2$, and are moreover chaotic.
\end{rem}

\subsection{Cellular neighborhoods revisited}
First, we recall from Subsection~\ref{subsct: cellular neighborhood} (see (\ref{e: U^1 and U^2}) and (\ref{e: U^i_m(G)})) the open sets $U^i(G)$ for $G\subseteq\mfd^n$ and $i\in\{1,\,2\}$, defined with respect to a cell decomposition $\cD$, and $U^i_m(G)$ for $m\in\N_0$ with respect to a cellular sequence $\{\cD_m\}_{m\in\N_0}$. These open sets play an important role in the current section. Here we give several properties in the case where $G\=X$ for some $X\in\cD^{[n]}$.

\begin{lemma}\label{l: cellular neighborhood: Mayer-Vietoris}
	Let $\cD$ be a cell decomposition of a {\clcnR} $n$-manifold $\mfd^n$, $n\geq 2$, and consider $X\in\cD^{[n]}$.
	If $U^1(X)$ is contained in a chart, then the set $U^1(X)\smallsetminus X$ is path-connected. 
\end{lemma}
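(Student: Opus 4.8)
The plan is to show that $U^1(X)\smallsetminus X$ is path-connected by reducing the problem to a statement about the complement of a closed ball inside an open ball in $\R^n$, using that $U^1(X)$ lives in a chart. First I would record the basic structure of $U^1(X)$: by Lemma~\ref{l: cel Nbhd: U^1(X)} it is a path-connected open neighborhood of the $n$-cell $X$ with $\cls{U^1(X)} = \Absbig{\cU^1(X)}$, a finite union of $n$-chambers each meeting $X$. The set $X$ itself is a closed $n$-cell, hence homeomorphic to $\cls{\B^n}$, and by Lemma~\ref{l: properties of cells}~(ii) its cell-interior $\stdCellint{X}$ and cell-boundary $\cellbound X$ agree with the topological interior and boundary of $X$ in $\mfd^n$.

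The key topological input is the following: if $V$ is an open subset of $\R^n$ (or of a chart, via a homeomorphism onto an open subset of $\R^n$) containing a closed topological $n$-cell $X$ with $X$ contained in the interior of $V$... but here $X$ need not be in the interior of $U^1(X)$ — indeed $\cellbound X \subseteq \partial U^1(X)$ is possible. The right statement to use is that $U^1(X)\smallsetminus X$ deformation retracts onto, or is at least connected by paths that avoid $X$. I would argue as follows. Since $U^1(X)$ is contained in a chart, fix a homeomorphism $\varphi$ from an open neighborhood of $\cls{U^1(X)}$ onto an open subset $\Omega$ of $\R^n$; it suffices to prove $\varphi(U^1(X))\smallsetminus\varphi(X)$ is path-connected. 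Write $U\=\varphi(U^1(X))$, an open connected subset of $\R^n$, and $B\=\varphi(X)$, a closed topological $n$-cell in $\cls U$. Given two points $p,q\in U\smallsetminus B$, connect them first by a path $\gamma$ in $U$ (possible since $U$ is path-connected); the problem is to push $\gamma$ off $B$. Since $B$ is a closed $n$-cell, $\R^n\smallsetminus B$ has exactly one unbounded component and possibly a bounded component equal to $\inte B$ (by the Jordan--Brouwer separation theorem applied to $\partial B\cong \Sp^{n-1}$); here $\inte B = \varphi(\stdCellint X)$. So $U\smallsetminus B = (U\smallsetminus\cls B)$, and $U\smallsetminus\cls B$ has at most the components coming from the (single) unbounded complementary component of $B$ meeting $U$. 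The point is that $U\smallsetminus B$ is connected provided $U$ is connected, $U\not\subseteq \cls B$, and $U$ meets only the unbounded component of $\R^n\smallsetminus B$ — the latter because $U$ is open and $\inte B\subseteq B$ so $\inte B$ contributes nothing to $U\smallsetminus B$.

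More carefully: I would invoke that for a closed $n$-cell $B\subseteq\R^n$ (so $\partial B$ is a bicollared $(n-1)$-sphere by the generalized Schoenflies theorem, valid since $B$ is a cell), $\R^n\smallsetminus\partial B$ has exactly two components, $\inte B$ and the unbounded component $E$; hence $\R^n\smallsetminus B = E$ is connected, and moreover $U\smallsetminus B = U\cap E$. Then the claim reduces to: $U\cap E$ is path-connected. This follows because $U$ is open and path-connected and $E$ is open and path-connected with $\R^n\smallsetminus E = B$ having empty interior-complement in the relevant sense — actually the cleanest route is: $U\cap E = U\smallsetminus B$, and I claim any path $\gamma\subseteq U$ between two points of $U\smallsetminus B$ can be homotoped rel endpoints within $U$ to a path in $U\smallsetminus B$, using that $U$ is an open subset of $\R^n$ with $n\geq 2$ and $B$ is nowhere dense... but $B$ is not nowhere dense since it has nonempty interior. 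So this last move fails, and the correct argument must use the bicollar: near each point of $\gamma\cap\partial B$, use the bicollar structure $\partial B\times(-1,1)\hookrightarrow \R^n$ to push that portion of $\gamma$ to the $E$-side; the portions of $\gamma$ inside $\inte B$ get routed around using the fact that $\partial B$ is connected (here $n\geq 2$ so $\Sp^{n-1}$ is connected) and lies in $E$'s closure. I expect this pushing-off argument — making precise that a path in $U$ can be rerouted to avoid the $n$-cell $B$, exploiting the Schoenflies/bicollar structure and $n\geq 2$ — to be the main obstacle; the rest is bookkeeping with the chart homeomorphism and the finite cellular structure of $\cU^1(X)$.
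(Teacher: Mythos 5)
Your chart reduction and the facts you assemble about $\R^n\smallsetminus\partial B$ are essentially sound (one aside: since $X\subseteq U^1(X)$ by Lemma~\ref{l: cel Nbhd: U^1(X)}~(i) and $U^1(X)$ is open, the configuration $\cellbound X\subseteq\partial U^1(X)$ you worry about cannot occur), but the step you yourself single out as the main obstacle is a genuine gap, and the tool you propose for it is not available. The generalized Schoenflies theorem takes bicollaredness of the $(n-1)$-sphere as a \emph{hypothesis}; it does not conclude that the boundary of a topologically embedded closed $n$-cell is bicollared. For $n\geq 3$ wild cells exist --- for instance the Alexander horned ball in $\R^3$, a closed $3$-cell whose boundary sphere admits no bicollar and whose complement is connected but not simply connected --- and nothing in Definition~\ref{d: cell decomposition} excludes such cells, so the plan of rerouting a path through a collar $\partial B\times(-1,1)$ breaks down exactly where it is needed. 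Your fallback observation is also insufficient on its own: as you note, $U$ and $E\=\R^n\smallsetminus B$ being open and path-connected does not make $U\cap E$ connected, and the sketch never supplies the missing ingredient (your identification $E=\R^n\smallsetminus B$ of the unbounded complementary component already quietly uses the connectedness of the complement of the cell, which is the real input here).

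The missing ingredient is that $\{U,E\}$ is an open cover of $\R^n$, because $B=\varphi(X)\subseteq U$, and this is precisely how the paper argues: with $E=\R^n\smallsetminus\varphi(X)$ and $U=\varphi\bigl(U^1(X)\bigr)$ one has $U\cup E=\R^n$ and $U\cap E=\varphi\bigl(U^1(X)\smallsetminus X\bigr)$, and the Mayer--Vietoris sequence in reduced homology gives exactness of $\tH_1(\R^n)\to\tH_0(U\cap E)\to\tH_0(U)\oplus\tH_0(E)$. Since $E$ is path-connected (the complement of a compact cell in $\R^n$ is connected by Alexander duality, i.e.\ the lemma underlying Jordan--Brouwer, and this holds for wild cells as well), $U$ is path-connected by Lemma~\ref{l: cel Nbhd: U^1(X)}~(ii), and $\tH_1(\R^n)=0$, it follows that $\tH_0(U\cap E)=0$, so $U^1(X)\smallsetminus X$ is path-connected. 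Thus your setup is one homological step away from a proof, but the collar-pushing route cannot be repaired without tameness assumptions the lemma does not make; it should be replaced by this Mayer--Vietoris argument, which needs no Schoenflies theorem and no bicollar.
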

\begin{proof}
	Let $(V,\varphi)$ be a chart, where $V\subseteq\mfd^n$ is an open set containing $U^1(X)$, and $\varphi\:V\to \varphi(V)\subseteq\R^n$ is a homeomorphism. By Lemma~\ref{l: cel Nbhd: U^1(X)}~(i), $X\subseteq U^1(X)\subseteq V$ and $\varphi(X)\subseteq\varphi\bigl(U^1(X)\bigr)$.
	Set $A\=\R^n\smallsetminus \varphi(X)$ and $B\=\varphi\bigl(U^1(X)\bigr)$. Then $A\cup B=\R^n$, and $A\cap B=\varphi\bigl(U^1(X)\smallsetminus X\bigr)$.
	
	Consider the following Mayer--Vietoris sequence
	\begin{equation*}
	\cdots\to \tH_{q+1}(\R^n)\to \tH_q(A\cap B)\to \tH_q(A)\oplus \tH_q(B)\to \tH_q(\R^n)\to\cdots
	\end{equation*}
	of reduced homology groups. By the Jordan--Brouwer separation theorem, $A$ is path-connected. By Lemma~\ref{l: cel Nbhd: U^1(X)}~(ii), $B$ is path-connected. Then $\tH_0(A)=\tH_0(B)=0=\tH_1(\R^n)$. It follows that $\tH_0(A\cap B)=0$. Hence, $A\cap B$ and $U^1(X)\smallsetminus X=\varphi^{-1}(A\cap B)$ are path-connected.
\end{proof}

\begin{lemma}\label{l: cellular neighborhood: a chain around X}
	Let $\cD$ be a cell decomposition of a {\clcnR} $n$-manifold $\mfd^n$, $n\geq 2$. Let $Y\in\cD^{[n]}$ be such that $U^1(Y)$ is contained in a chart. Then for all $Y_1,\,Y_2\in\cU^1(Y)\smallsetminus\{Y\}$, there are $X_1,\,\dots,\,X_l\in\cU^1(Y)\smallsetminus\{Y\}$ with the following properties:
	\begin{enumerate}[label=(\roman*),font=\rm]
    	\smallskip
		\item $X_1=Y_1$ and $X_l=Y_2$.
		\smallskip
		\item For each $i\in\{1,\,\dots,\,l-1\}$, we have $X_i\cap X_{i+1}\neq\emptyset$ and $X_{i}\cap X_{i+1}\nsubseteq Y$.
	\end{enumerate}
\end{lemma}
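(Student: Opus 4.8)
The plan is to promote the path-connectedness of $U^1(Y)\smallsetminus Y$, furnished by Lemma~\ref{l: cellular neighborhood: Mayer-Vietoris}, into a connectedness statement about the ``adjacency graph'' on the chambers meeting $Y$. We may assume $\cU^1(Y)\smallsetminus\{Y\}\neq\emptyset$, since otherwise there is nothing to prove. Declare $X\sim X'$ for $X,X'\in\cU^1(Y)\smallsetminus\{Y\}$ if there is a chain as in the statement (allowing the trivial length-one chain); this is an equivalence relation (reflexive via the trivial chain, symmetric by reversal, transitive by concatenation), and the task reduces to showing it has a single class.

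Two preliminary observations are needed. First, if $x\in U^1(Y)$ then every $n$-cell containing $x$ lies in $\cU^1(Y)$: an $n$-cell $X$ with $X\cap Y=\emptyset$ is not in $\cU^1(Y)$, hence $X\subseteq\Absbig{\cD^{[n]}\smallsetminus\cU^1(Y)}=\mfd^n\smallsetminus U^1(Y)$ by Lemma~\ref{l: cellular neighborhood: int(cX) - cls, comp, bd}~(ii), so it contains no point of $U^1(Y)$. Second, if $x\in U^1(Y)\smallsetminus Y$, then its carrier $\supp_\cD(x)$ is not contained in $Y$, so no chamber through $x$ equals $Y$; thus every chamber through $x$ lies in $\cU^1(Y)\smallsetminus\{Y\}$, there is at least one by Lemma~\ref{l: cell decomp: compact, and Mfd}~(ii), and any two such chambers $X_1,X_2$ satisfy $x\in X_1\cap X_2$ with $x\notin Y$, whence $X_1\sim X_2$. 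Consequently there is a well-defined map $\kappa$ from $U^1(Y)\smallsetminus Y$ to the set of $\sim$-classes, assigning to $x$ the common class of the chambers through $x$.

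The heart of the argument is that $\kappa$ is locally constant. Given $x\in U^1(Y)\smallsetminus Y$, Corollary~\ref{c: x-flower structure}~(i),(ii) together with the first observation give $\cls{\flower_\cD(x)}\subseteq\Absbig{\cU^1(Y)}$, so the open neighborhood $\flower_\cD(x)$ of $x$ is contained in $U^1(Y)$; since $Y$ is closed, $\flower_\cD(x)\smallsetminus Y$ is an open neighborhood of $x$ in $U^1(Y)\smallsetminus Y$. For $x'\in\flower_\cD(x)\smallsetminus Y$ one has $x\in\supp_\cD(x')$ by the definition of the flower, so by Lemma~\ref{l: properties of cell decompositions}~(v) every chamber through $x'$ also passes through $x$; as both points have chambers through them, $\kappa(x')=\kappa(x)$. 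Hence $\kappa$ is locally constant on the nonempty path-connected set $U^1(Y)\smallsetminus Y$ (this is where the hypothesis that $U^1(Y)$ lies in a chart enters, via Lemma~\ref{l: cellular neighborhood: Mayer-Vietoris}), therefore constant, say with value $\cC_0$.

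Finally, for any $X\in\cU^1(Y)\smallsetminus\{Y\}$ the cell-interior $\stdCellint X$ is nonempty, open, contained in $U^1(Y)$, and disjoint from $Y$ (as $X\neq Y$ are distinct $n$-cells, Lemma~\ref{l: properties of cell decompositions}~(iv) forces $X\cap Y\subseteq\cellbound X$); choosing $x\in\stdCellint X$ yields $X\in\kappa(x)=\cC_0$, so $\cC_0=\cU^1(Y)\smallsetminus\{Y\}$ and the required chains exist. I expect the only delicate point to be the local-constancy step — checking that the carrier relation $x\in\supp_\cD(x')$ really does force the chamber sets to nest the right way — but this is precisely Lemma~\ref{l: properties of cell decompositions}~(v) combined with the openness and closure properties of flowers recorded in Corollary~\ref{c: x-flower structure}.
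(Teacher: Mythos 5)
Your proof is correct and follows essentially the same route as the paper: both arguments hinge on the connectedness of $U^1(Y)\smallsetminus Y$ from Lemma~\ref{l: cellular neighborhood: Mayer-Vietoris} and then convert it into chain-connectivity of the chambers in $\cU^1(Y)\smallsetminus\{Y\}$. The only difference is bookkeeping: the paper chains the finite cover of $U^1(Y)\smallsetminus Y$ by the relatively closed sets $X\cap\bigl(U^1(Y)\smallsetminus Y\bigr)$, whereas you run a locally-constant-function argument on flower neighborhoods, which is a sound (if slightly longer) implementation of the same idea.
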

\begin{proof}
	Denote $D\=U^1(Y)$. By Lemma~\ref{l: cellular neighborhood: Mayer-Vietoris}, $D\smallsetminus Y$ is connected.
	
	For each $X\in\cU^1(Y)\smallsetminus\{Y\}$, it is easy to check (cf.~the proof of Lemma~\ref{l: cellular neighborhood: int(cX) - cls, comp, bd}~(i) and (ii)) that $\stdCellint{X}\subseteq D\smallsetminus Y$, and thus $X\cap(D\smallsetminus Y)$ is a non-empty relatively closed subset of $D\smallsetminus Y$.
	
	Consider the finite cover $\Omega\=\bigl\{X\cap(D\smallsetminus Y):X\in\cU^1(Y)\smallsetminus\{Y\}\bigr\}$ of $D\smallsetminus Y$ by non-empty (relatively) closed subsets of $D\smallsetminus Y$.
    The connectedness of $D\smallsetminus Y$ implies that there exist $D_1,\,\dots,\,D_l\in \Omega$ such that
	\begin{enumerate}[label=(\roman*),font=\rm]
    	\smallskip
		\item$D_1=Y_1\cap(D\smallsetminus Y)$, $D_l=Y_2\cap(D\smallsetminus Y)$, and
		\smallskip
		\item $D_i\cap D_{i+1}\neq\emptyset$ for each $i\in\{1,\,\dots,\,l-1\}$.
	\end{enumerate}
	For each $i\in\{2,\,\dots,\, l-1\}$, suppose that $D_i=X_i\cap(D\smallsetminus Y)$ for $X_i\in\cU^1(Y)\smallsetminus\{Y\}$. Then we obtain $X_1,\,\dots,\, X_l$ as desired. 
\end{proof}

\begin{lemma}\label{l: cellular neighborhood: bd multi =>card(U^2)<N}
	Let $\{\cD_m\}_{m\in\N_0}$ be a {\celSeq} of an {\itCel} map $f\:\mfd^n\to\mfd^n$, $n\geq2$, on a {\clcnR} $n$-manifold with 
	$\sup\{\ind(x,f^m):m\in\N,\,x\in\mfd^n\}<+\infty$. 
	Then there exists $N\in\N$ such that
	$\card\bigl(\cU_{m}^1(X)\bigr) \leq 
	\card\bigl(\cU_{m}^2(X)\bigr) \leq  N$ for all $m\in\N_0$ and $X\in\cD_m^{\top}$,.
\end{lemma}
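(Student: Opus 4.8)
The plan is to bound $\card\bigl(\cU^2_m(X)\bigr)$ by a constant depending only on $\cD_0$ and on the finite supremum in the hypothesis; the inequality $\card\bigl(\cU^1_m(X)\bigr)\leq\card\bigl(\cU^2_m(X)\bigr)$ will then be immediate from the definitions in (\ref{e: U^1 and U^2}), since any $Y\in\cU^1_m(X)$ satisfies $Y\subseteq\Absbig{\cU^1_m(X)}$ and hence $Y\in\cU^1_m\bigl(\Absbig{\cU^1_m(X)}\bigr)=\cU^2_m(X)$.

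First I would fix the relevant constants. By Corollary~\ref{c: multiplilcity: characterization of bounded N_loc}, the hypothesis $\sup\{\ind(x,f^m):m\in\N,\,x\in\mfd^n\}<+\infty$ gives $\sup\bigl\{\NxD{x}{\cD_m}:m\in\N,\,x\in\mfd^n\bigr\}<+\infty$; together with the trivial bound $\NxD{x}{\cD_0}\leq\card\cD_0$ this produces $N_0\in\N$ such that, for every $m\in\N_0$, each point of $\mfd^n$ lies in at most $N_0$ level-$m$ chambers. I would also set $V_0\=\card\bigl(\cD_0^{[0]}\bigr)$, the number of $0$-vertices of the fixed decomposition $\cD_0$. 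Note that the naive estimate of $\card\bigl(\cU^1_m(X)\bigr)$ by ``(number of vertices of $\cD_m$ in $X$)$\,\cdot N_0$'' fails directly because $\cD_m$ has exponentially many cells; the point of the next step is to repair this.

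The heart of the argument is the observation that, although $\cD_m$ typically has exponentially many cells, its restriction $\cD_m|_X$ to a single chamber $X\in\cD_m^{[n]}$ has at most $V_0$ vertices, \emph{uniformly in $m$}. This holds because $f^m$ is $(\cD_m,\cD_0)$-cellular (Remark~\ref{r: basic cellular property}), so $f^m|_X\:X\to f^m(X)$ is a homeomorphism onto the level-$0$ chamber $f^m(X)\in\cD_0^{[n]}$, and Corollary~\ref{c: cellular: cellular homeo} identifies $\cD_m|_X=(f^m|_X)^*\bigl(\cD_0|_{f^m(X)}\bigr)$; since $(f^m|_X)^{-1}$ preserves dimensions of cells, $\cD_m|_X$ has exactly as many $0$-cells as $\cD_0|_{f^m(X)}$, hence at most $V_0$. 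With this in hand the counting is routine: for $Y\in\cU^1_m(X)$ with $Y\neq X$, Lemma~\ref{l: properties of cell decompositions}~(iv) shows that $Y\cap X$ is a nonempty union of cells of $\cD_m$ of dimension $<n$, so by Lemma~\ref{l: cell decomp: compact, and Mfd}~(iii) it contains a vertex $p$ of $\cD_m$, which then lies in $X$ (thus is one of the $\leq V_0$ vertices of $\cD_m|_X$) and in $Y$; since each such $p$ lies in at most $N_0$ chambers, $\card\bigl(\cU^1_m(X)\bigr)\leq 1+V_0N_0$. Then I would note that $\cU^2_m(X)=\bigcup_{Y\in\cU^1_m(X)}\cU^1_m(Y)$ straight from (\ref{e: U^1 and U^2}), apply the same bound to each chamber $Y\in\cU^1_m(X)$, and conclude $\card\bigl(\cU^2_m(X)\bigr)\leq(1+V_0N_0)^2\=N$ (the case $m=0$ being bounded by $\card\cD_0$ in any event).

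The main obstacle — really the only non-formal step — is exactly this uniform bound on the local complexity of $\cD_m$ near a chamber: recognizing that $X$ together with its induced cell structure is a homeomorphic copy, via the cellular iterate $f^m$, of a level-$0$ chamber with the fixed structure $\cD_0$. Everything else is bookkeeping with the definitions (\ref{e: U^1 and U^2}) and (\ref{e: U^i_m(G)}) together with the uniform bound supplied by Corollary~\ref{c: multiplilcity: characterization of bounded N_loc}.
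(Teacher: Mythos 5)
Your proposal is correct and follows essentially the same route as the paper: the paper also reduces the count of $\cU^1_m(X)$ to the vertices of $\cD_m$ lying in $X$, bounds these uniformly via the identification $\cD_m|_X=(f^m|_X)^*\bigl(\cD_0|_{f^m(X)}\bigr)$ from Corollary~\ref{c: cellular: cellular homeo}, uses Corollary~\ref{c: multiplilcity: characterization of bounded N_loc} to bound the number of chambers at each vertex, and then squares the bound via $\cU^2_m(X)=\bigcup\{\cU^1_m(Y):Y\in\cU^1_m(X)\}$. The only differences are cosmetic choices of constants ($1+V_0N_0$ versus $N_0\card\cD_0$).
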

\begin{proof}
	By Corollary~\ref{c: multiplilcity: characterization of bounded N_loc}, set $N_0\=\sup_{m\in\N_0,\,x\in\mfd^n}\card\bigl\{X\in\cD_m^{\top}:x\in X\bigr\}<+\infty$.
	
	Fix arbitrary $m\in\N_0$ and $X\in\cD_m^{\top}$. For each $Y\in\cU_m^1(X)$, by Lemmas~\ref{l: properties of cell decompositions}~(iv) and~\ref{l: cell decomp: compact, and Mfd}~(iii), $X\cap Y$ contains an $m$-vertex. It follows that $\cU^1_{m}(X)=\bigcup_{p}\bigl\{Y\in\cD_{m}^{\top}:p\in Y\bigr\}$, where $p$ ranges over all $m$-vertices contained in $X$. Since, by Corollary~\ref{c: cellular: cellular homeo}, $\cD_{m}|_X=(f^m|_X)^*\bigl(\cD_0|_{f^m(X)}\bigr)$, we have $\card\bigl((\cD_m|_X)^{[0]}\bigr)=\card\bigl((\cD_0|_{f^m(X)})^{[0]}\bigr) \leq \card\cD_0$. Hence, $\card\bigl(\cU^1_{m}(X)\bigr) \leq  N_0 \card\cD_0$.
		
    Consider now $\card\bigl(\cU^2_{m}(X)\bigr)$. 
    The definition of $\cU^2_{m}(X)$ implies $\cU^2_{m}(X)=\bigcup\bigl\{\cU^1_{m}(Y):Y\in\cU^1_{m}(X)\bigr\}$,
	and the above discussion shows that $\card\bigl(\cU^2_{m,k}(X)\bigr) \leq  (N_0\card\cD_0)^2$. 
    
    The lemma follows after we define $N\=(N_0\card\cD_0)^2$.
\end{proof}

 \subsection{Markings for {\celSeq}s}\label{subsct: marking}
A key tool of the current section is the following notion that we call \emph{markings for cellular sequences}.

\begin{definition}[Marking for {\celSeq}s]\label{d: marking}
	For a {\celSeq} $\{\cD_m\}_{m\in\N_0}$ of an {\itCel} map $f\:\mfd^n\to\mfd^n$, $n\ge 2$, on a {\clcnR} $n$-manifold, we call a sequence of maps $\{\mk_m\:\cD_m\to\mfd^n\}_{m\in\N_0}$ a \defn{marking} for $\{\cD_m\}_{m\in\N_0}$ if, for all $m\in\N_0$ and $c\in\cD_m$, $\mk_m(c)\in\stdCellint{c}$ and, if $m\in\N$, $f(\mk_m(c))=\mk_{m-1}(f(c))$.
\end{definition}

To be concrete, one may construct a marking in the following way. For each $c_0\in\cD_0$, fix a point $\mk_0(c_0)\in\stdCellint{c_0}$. Then define $\mk_m(c)\=(f^m|_c)^{-1}(\mk_0(f^m(c)))$ for each $m\in\N$ and each $c\in\cD_m$.

Using markings, we give the following Lemmas~\ref{l: cellular neighborhood: a chain join marked cells} and~\ref{l: cellular neighborhood: continua A and B}, constructing continua with good properties that yield the metric estimates needed in Subsection~\ref{subsct: cellular UQR => Lattes}; see Figure~\ref{f: A and B}.

\begin{lemma}\label{l: cellular neighborhood: a chain join marked cells}
	Let $f\:\mfd^n\to\mfd^n$, $n\ge 2$, be an expanding {\itCel} branched cover on a {\clcnR} $n$-manifold, $\{\cD_m\}_{m\in\N_0}$ be a {\celSeq} of $f$, and $\{\mk_m\}_{m\in\N_0}$ be a marking for $\{\cD_m\}_{m\in\N_0}$. Then there exists $k_0\in\N$ such that for all $m,\,k\in\N_0$ with $k \geq  k_0$, and all $X\in\cD_m^{[n]}$, $c_1,\,c_2\in\cD_m|_X$, there exists a subset $\cE\subseteq\cD_{m+k}^{[n]}$ with the following properties:
	\begin{enumerate}[font=\rm,label=(\roman*)]
		\smallskip
		\item For each $X'\in\cE$, $X'\cap X\neq\emptyset$.
		\smallskip
		\item $\abs{\cE}$ is path-connected and contains $\mk_m(c_1)$ and $\mk_m(c_2)$.
		\smallskip
		\item \label{i: a chain join marked cells: separate} If $c_3\in\cD_m$ and $Y'\in\cD_{m+k}^{[n]}$ satisfy $c_1\nsubseteq c_3$, $c_2\nsubseteq c_3$, and $Y'\cap c_3\neq\emptyset$, then $Y'\cap \abs{\cE}=\emptyset$.
	\end{enumerate}
\end{lemma}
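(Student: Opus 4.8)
The statement we need to prove, Lemma~\ref{l: cellular neighborhood: a chain join marked cells}, asserts the existence of a ``cellular chain'' of $(m+k)$-chambers joining two marked points $\mk_m(c_1)$ and $\mk_m(c_2)$ inside a single $m$-chamber $X$, such that the chain stays near $X$ (property (i)), is connected and contains the two marked points (property (ii)), and avoids the $(m+k)$-chambers that meet cells $c_3$ separating $c_1$ from $c_2$ (property \ref{i: a chain join marked cells: separate}).

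\medskip
\noindent\textbf{Plan of proof.}
The plan is to first treat the ``universal'' local model: the chamber $X$ with its internal cell structure $\cD_m|_X = (f^m|_X)^*(\cD_0|_{f^m(X)})$ is, via the homeomorphism $f^m|_X$, one of finitely many configurations coming from $\cD_0|_{Y}$, $Y \in \cD_0^{[n]}$. So I would reduce to a statement about a fixed cell decomposition $\cD$ of a closed ball $[0,1]^n$ (or rather of $Y$), and pull everything back through $f^m|_X$. This reduction is the reason the constant $k_0$ can be chosen uniformly: it will only depend on $\cD_0$ (and $f$) once we pass to the finitely many models $\cD_0|_Y$.

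The core construction in the model: given $c_1, c_2 \in \cD|_Y$, the two marked points $\mk_0(c_1), \mk_0(c_2)$ lie in $\stdCellint{c_1}, \stdCellint{c_2}$. I would connect them by a path $\gamma$ inside $Y$ that is ``generic'' with respect to the skeleton structure — concretely, a path whose image lies in the union of the cell-interiors of $c_1$, $c_2$, and $n$-dimensional cells of $\cD|_Y$ meeting both, or more robustly, a path in $Y$ that stays in a small neighborhood of a segment and avoids the $(n-2)$-skeleton of $\cD|_Y$ except where forced. Since expansion gives $\mesh(\cD_{m+k}) \to 0$ (Proposition~\ref{p: characterization of expans} and the definition of expanding {\celSeq}), for $k$ large the $(m+k)$-chambers meeting $\gamma$ (pulled back to $X$) form the desired set $\cE$: it is connected because $\gamma$ is connected and each point of $\gamma$ lies in some $(m+k)$-chamber (Lemma~\ref{l: cell decomp: compact, and Mfd}~(ii)), and it contains the two marked points because those points lie on $\gamma$. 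To get (i), I note $\gamma \subseteq Y = f^m(X)$, hence its preimage under $f^m|_X$ lies in $X$, so each $X' \in \cE$ meets $X$ (in fact is contained in $U^1_{m+k}(\text{its own intersection with }\gamma)$, but meeting $X$ suffices).

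\medskip
\noindent\textbf{The separation property \ref{i: a chain join marked cells: separate}} is the main obstacle and the reason for the ``genericity'' care above. The point is: if $c_3 \in \cD_m$ does not contain $c_1$ and does not contain $c_2$, then $\mk_m(c_1), \mk_m(c_2) \notin c_3$ (as $\mk_m(c_i) \in \stdCellint{c_i}$ and if $\stdCellint{c_i} \cap c_3 \neq \emptyset$ then $c_i \subseteq c_3$ by Lemma~\ref{l: properties of cell decompositions}~(v)); moreover I want the path $\gamma$ to keep a definite distance from $c_3$. But $c_3$ ranges over infinitely many cells across all levels and all positions, so ``definite distance'' must be made uniform. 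The resolution: work in the finitely many local models and observe that what matters is the combinatorial relation. Inside $Y$, choose $\gamma$ to lie in the open set $\flower_{\cD|_Y}(c_1 \cup c_2)$-analogue — precisely, in $Y \smallsetminus \bigcup\{ \sigma \in \cD|_Y : c_1 \nsubseteq \sigma, \ c_2 \nsubseteq \sigma\}$, which is open (finite union of closed cells removed) and contains both marked points. Then any cell $c_3$ with $c_1 \nsubseteq c_3$, $c_2 \nsubseteq c_3$ is disjoint from $\gamma$; since $\gamma$ is compact and this bad set is closed, $\dist(\gamma, \text{bad set}) =: \delta > 0$, and $\delta$ is bounded below over the finitely many models. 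Then for $k \geq k_0$ with $\mesh(\cD_{m+k}) < \delta_0 \Lambda^{-k}$-small enough (uniformly, using expansion and the uniform lower bound for $\delta$ transported back through $f^m|_X$ — here one uses that $f^m|_X$ is bi-Lipschitz-controlled only on the relevant scales via the visual/metric comparison, or more cleanly, one runs the whole argument already downstairs and transports only the \emph{combinatorial} conclusion upstairs using $\cD_{m+k}|_X = (f^m|_X)^*(\cD_k|_{f^m(X)})$ and Lemma~\ref{l: cel Nbhd: fwd invariance}), every $(m+k)$-chamber $Y'$ meeting such a $c_3$ is disjoint from $\gamma$, hence $Y' \notin \cE$. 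I expect the genuinely delicate point to be making the choice of $\gamma$ and $\delta$ \emph{simultaneously} uniform over the finitely many models \emph{and} compatible with pulling back through $f^m|_X$; the clean way is to phrase the construction entirely at level $0$ (work with $\cD_k|_{f^m(X)}$ for the fixed finite list of $n$-cells $f^m(X) \in \cD_0^{[n]}$, apply expansion to $\{\cD_k\}$, get $k_0$ depending only on $\cD_0$), then pull back the resulting subset of $\cD_k^{[n]}$ to a subset $\cE$ of $\cD_{m+k}^{[n]}$ via $(f^m|_X)^{-1}$, and verify (i)–(iii) are all preserved since $f^m|_X$ is a cellular homeomorphism (Corollary~\ref{c: cellular: cellular homeo}).
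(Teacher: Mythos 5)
Your setup—fixing, for each of the finitely many level-zero models $Y\in\cD_0^{[n]}$ and each pair of cells of $\cD_0|_Y$, a curve joining the two marked points through $\stdCellint{Y}$, choosing $k_0$ by expansion so that $\mesh(\cD_k)$ is small compared with the finitely many resulting positive distances, pulling the curve back through $f^m|_X$, and letting $\cE$ be the $(m+k)$-chambers meeting the pulled-back curve—is exactly the paper's construction, and it does give (i) and (ii). The gap is in property (iii) when $m\ge 1$. Your first suggested fix, a metric comparison (``bi-Lipschitz on the relevant scales via the visual metric''), is not available: the lemma is a statement about the Riemannian metric $\dg$, the distortion of $f^m|_X$ with respect to $\dg$ is unbounded in $m$, and comparability of $\dg$ with a visual metric is equivalent to the conclusions of the main theorems, so invoking it here would be circular. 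Your second fix, ``run the argument downstairs and transport the combinatorial conclusion,'' breaks at a specific point: the hypothesis of (iii) does not push forward under $f^m$. If $c_3\in\cD_m$ satisfies $c_1\nsubseteq c_3$ and $c_2\nsubseteq c_3$, the image $f^m(c_3)\in\cD_0$ may perfectly well contain $f^m(c_1)$ or $f^m(c_2)$ (distinct level-$m$ cells can have nested images), so the level-zero constant ``distance from the curve to cells containing neither marked cell'' says nothing about $f^m(Y')$ for a chamber $Y'$ meeting $c_3$; the image of $Y'$ can come arbitrarily close to an endpoint of the level-zero curve, and the image-level disjointness you would like to transport upstairs can fail even though the upstairs statement is true. (Transporting the conclusion is harmless—disjoint images force disjoint preimages—but the hypothesis is lost.)

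What is missing is a mechanism converting the upstairs non-containment into level-zero data, and this is where the paper's proof does real work. Since $c_1\nsubseteq c_3$, the cell $c_3$ lies outside $\flower_m(c_1)$, so a two-chamber chain $X'\cup Y'$ with $X'\in\cE$ and $Y'\cap c_3\neq\emptyset$ must cross $\partial\flower_m(c_1)$ (or $\partial\flower_m(c_2)$); by forward invariance of flowers (Proposition~\ref{p: c-flower: fwd invariance}) its image under $f^m$ then meets a level-zero cell $\tau_1'$ with $f^m(c_1)\nsubseteq\tau_1'$. A single such cell is still not a contradiction, because $\tau_1'$ may contain $f^m(c_2)$ and hence be legitimately close to the $c_2$-end of the level-zero curve; this is exactly why the paper introduces the additional constant $\delta_2$ in (\ref{e: delta2}), a lower bound on the \emph{sum} of distances from a point of the curve to a pair of cells, one missing $f^m(c_1)$ and one missing $f^m(c_2)$, and runs a two-case analysis according to whether $\mk_m(c_1)\in X'$. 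Your proposal has no counterpart to this flower-boundary step or to $\delta_2$, so as written the proof of (iii) is incomplete for $m\ge 1$; for $m=0$ your argument is fine and coincides with the paper's.
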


\begin{proof}
	{\it Step~1: Definition of the number $k_0\in\N$}.
	
	For all $c,\,c'\in\cD_0$, if $c\nsubseteq c'$, then since $\mk_0(c)\in\stdCellint{c}$, by Lemma~\ref{l: properties of cell decompositions}~(v), $\mk_0(c)\notin c'$ and thus $\dist(\mk_0(c),c')>0$. For each $c\in\cD_0$, set
	\begin{equation}\label{e: delta0}
	\delta_0(c)\=\inf\{\dist(\mk_0(c),c'):c'\in\cD_0,\,c\nsubseteq c'\}.
	\end{equation}

	Fix arbitrary $X\in\cD_0^{[n]}$ and $c_1,\,c_2\in\cD_0|_X$, since $X$ is an $n$-dimensional cell, we can fix a curve $\gamma_{X,c_1,c_2}\:[0,1]\to X$ associated with $X,\,c_1,\,c_2$ for which $\gamma_{X,c_1,c_2}(0)=\mk_0(c_1)$, $\gamma_{X,c_1,c_2}(1)=\mk_0(c_2)$, and $\gamma_{X,c_1,c_2}((0,1))\subseteq\stdCellint{X}$. 
	
	For each $c_3\in\cD_0$ with $c_1\nsubseteq c_3$ and $c_2\nsubseteq c_3$, by Lemma~\ref{l: properties of cell decompositions}~(v), we have $\mk_0(c_1),\,\mk_0(c_2)\notin c_3$ and $c_3\cap\stdCellint{X}=\emptyset$, which implies that $c_3$ does not meet $\gamma_{X,c_1,c_2}$, and thus $\dist(c_3,\gamma_{X,c_1,c_2})>0$. Set 
	\begin{equation}\label{e: delta1}
	\delta_1(X,c_1,c_2)\=\inf\{\dist(c_3,\gamma_{X,c_1,c_2}):c_3\in\cD_0,\,c_1\nsubseteq c_3,\,c_2\nsubseteq c_3\}.
	\end{equation}
	
	For all $c_1',\,c'_2\in\cD_0$ with $c_1\nsubseteq c'_1$ and $c_2\nsubseteq c'_2$, by Lemma~\ref{l: properties of cell decompositions}~(v), we have $\mk_0(c_i)\notin c'_i$ and $c'_i\cap\stdCellint{X}=\emptyset$ for $i\in\{1,\,2\}$, and thus
	$t\mapsto\dist(\gamma_{X,c_1,c_2}(t),c'_1)+\dist(\gamma_{X,c_1,c_2}(t),c'_2)$
	gives a strictly positive continuous function on $[0,1]$, which admits a strictly positive minimum.
	Set 
	\begin{equation}\label{e: delta2}	\delta_2(X,c_1,c_2)\=\inf_{c'_1,\,c'_2}\min_{t\in[0,1]}\{\dist(\gamma_{X,c_1,c_2}(t),c'_1)+\dist(\gamma_{X,c_1,c_2}(t),c'_2)\},
	\end{equation}
	where the infimum is taken over all $c_1',\,c'_2\in\cD_0$ with $c_1\nsubseteq c'_1$ and $c_2\nsubseteq c'_2$.
	
	Since $\cD_0$ is finite (cf.~Lemma~\ref{l: cell decomp: compact, and Mfd}~(i)), $\delta_0(c_1)$, $\delta_0(c_2)$, $\delta_1(X,c_1,c_2)$, $\delta_2(X,c_1,c_2)$, and
	\begin{equation*}
	\delta(X,c_1,c_2)\=\min\{\delta_0(c_1),\,\delta_0(c_2),\,\delta_1(X,c_1,c_2),\,\delta_2(X,c_1,c_2)\}.
    \end{equation*} 
    are strictly positive.
    We define $k_0\in\N$ to be an integer such that for all $k \geq  k_0$,
\begin{equation}\label{e: k0}
	\mesh(\cD_k)<\min \bigl\{  \delta(X,c_1,c_2) / 4 : X\in\cD_0^{[n]}, \, c_1,c_2\in\cD_0|_X \bigr\}.
    \end{equation}
    Such a $k_0$ exists since $f$ is expanding.

    \smallskip
	
	{\it Step~2: Construction of $\cE$ in the case where $m=0$}.
	
	Fix arbitrary $k \geq  k_0$, $X\in\cD_0^{[n]}$, and $c_1,\,c_2\in\cD_0|_X$.
	Define
	\begin{equation*}
	\cE\=\bigl\{X'\in\cD_{k}^{[n]}:X'\cap\gamma_{X,c_1,c_2}\neq\emptyset\bigr\}.
	\end{equation*}
	Clearly each $X'\in\cE$ meets $\gamma_{X,c_1,c_2}\subseteq X$. By Lemma~\ref{l: cell decomp: compact, and Mfd}~(ii), $\gamma_{X,c_1,c_2}\subseteq\abs{\cE}$ and thus $\abs{\cE}$ is a path-connected set containing $\mk_0(c_1)$ and $\mk_0(c_2)$.

    It remains to verify property~\ref{i: a chain join marked cells: separate} in the statement.
	Suppose $c_3\in\cD_0$ and $Y'\in\cD_{k}^{[n]}$ satisfy $c_1,\,c_2\nsubseteq c_3$ and $Y'\cap c_3\neq\emptyset$. If $Y'\cap\abs{\cE}\neq\emptyset$, then there exists
    $X'\in\cE$ such that $X'\cap Y'\neq\emptyset$, and it follows from $k\geq k_0$ and the definition of $k_0$ (cf.~(\ref{e: k0})) that
	\begin{equation*}
	\begin{aligned}
		\dist(\gamma_{X,c_1,c_2},c_3) \leq \diam X'+\diam Y'
		 \leq 2\mesh(\cD_{k})<\delta_1(X,c_1,c_2),
	\end{aligned}
\end{equation*}
	which contradicts the definition of $\delta_1(X,c_1,c_2)$ in (\ref{e: delta1}).
	
	\smallskip
	
	{\it Step~3: Construction of $\cE$ in the case where $m\in\N$.}
	
	Fix arbitrary $m,\,k\in\N$ with $k \geq  k_0$. Fix $X\in\cD_m^{[n]}$ and $\sigma_1,\,\sigma_2\in\cD_m|_X$. Denote $Y\=f^m(X)$, $\tau_1\=f^m(\sigma_1)$, $\tau_2\=f^m(\sigma_2)$, and let $\gamma_{Y,\tau_1,\tau_2}\:[0,1]\to Y$ be the curve associated with $Y,\,\tau_1,\,\tau_2$ defined in step~1.
	Define a curve $\tgamma_{X,\sigma_1,\sigma_2}\:[0,1]\to X$ by $\tgamma_{X,\sigma_1,\sigma_2}\=(f^m|_X)^{-1}\circ\gamma_{Y,\tau_1,\tau_2}$. The construction of $\gamma_{Y,\tau_1,\tau_2}$ in step~1 and definition of the marking $\{\mk_i\}_{i\in\N}$ imply that $\tgamma_{X,\sigma_1,\sigma_2}(0)=\mk_m(\sigma_1)\in\stdCellint{\sigma_1}$,
    $\tgamma_{X,\sigma_1,\sigma_2}((0,1))\subseteq\stdCellint{X}$, and
    $\tgamma_{X,\sigma_1,\sigma_2}(1)=\mk_m(\sigma_2)\in\stdCellint{\sigma_2}$.
	Set 
	\begin{equation*}
	\cE\=\bigl\{X'\in\cD_{m+k}^{[n]}:X'\cap\tgamma_{X,\sigma_1,\sigma_2}\neq\emptyset\bigr\}.
\end{equation*}
	Then each $X'\in\cE$ meets $\tgamma_{X,\sigma_1,\sigma_2}\subseteq X$, and $\abs{\cE}$ is a path-connected set that contains $\mk_m(\sigma_1)$ and $\mk_m(\sigma_2)$. 
	
	Now we verify property~\ref{i: a chain join marked cells: separate}.
	Suppose that $\sigma_3\in\cD_m$ and $\sigma_1,\,\sigma_2\nsubseteq\sigma_3$. We argue by contradiction and assume that $Z'\in\cD_{m+k}^{[n]}$ satisfy $Z'\cap \sigma_3\neq\emptyset$ and $Z'\cap\abs{\cE}\neq\emptyset$.
	Let $X'\in\cE$ satisfy $X'\cap Z'\neq\emptyset$. 

Since $\tgamma_{X,\sigma_1,\sigma_2}(0)=\mk_m(\sigma_1)\in\stdCellint{\sigma_1}$ and
$\tgamma_{X,\sigma_1,\sigma_2}((0,1))\subseteq\stdCellint{X}$ as shown above, by (\ref{e: cF(c)}), $\tgamma_{X,\sigma_1,\sigma_2}([0,1))\subseteq\flower_m(\sigma_1)$. Likewise, $\tgamma_{X,\sigma_1,\sigma_2}((0,1])\subseteq\flower_m(\sigma_2)$.
Then since $X'$ meets $\tgamma_{X,\sigma_1,\sigma_2}$, $X'$ meets $\flower_m(\sigma_1)\cup\flower_m(\sigma_2)$, and
we may assume $X'\cap \flower_m(\sigma_1)\neq\emptyset$. Since $\sigma_1\nsubseteq\sigma_3$,
    by Proposition~\ref{p: c-flower: structure}~(i), $\sigma_3\subseteq\mfd^n\smallsetminus\flower_m(\sigma_1)$ and thus $Z'\smallsetminus\flower_m(\sigma_1)\neq\emptyset$. The above discussion, with the fact that $X'\cup Z'$ is path-connected, implies $X'\cup Z'$ meets $\partial\flower_m(\sigma_1)$ (cf.~Lemma~\ref{l: there is a sub-curve}).
    Thus, we can pick $W'\in\{X',\,Z'\}$ satisfying $X'\cap W'\neq\emptyset$ and $W'\cap\partial\flower_m(\sigma_1)\neq\emptyset$.
    Then by Proposition~\ref{p: c-flower: fwd invariance}~(i), $f^m(W')\cap \partial\flower_0(\tau_1)\neq\emptyset$ and thus, by Proposition~\ref{p: c-flower: structure}~(iii), there exists $\tau'_1\in\cD_0$ such that $f^m(W')\cap\tau'_1\neq\emptyset$ and $\tau_1\nsubseteq\tau'_1$.
	
	Consider the following cases according to $X'$ and $\mk_m(\sigma_1)$:
	
	\smallskip
	
	{\it Case~1.} $\mk_m(\sigma_1)\in X'$. Then $\mk_0(\tau_1)=f^m(\mk_m(\sigma_1))\in f^m(X')$. Thus, it follows from $k\geq k_0$ and the definition of $ k_0$ (cf.~(\ref{e: k0})) that
	$\dist(\mk_0(\tau_1),\tau'_1) \leq \diam(f^m(X')\cup f^m(W'))\leq  2\mesh(\cD_{k})<\delta_0(\tau_1)/2$, 
	which contradicts the definition of $\delta_0(\tau_1)$ in (\ref{e: delta0}).
	
	\smallskip
	
	{\it Case~2.} $\mk_m(\sigma_1)\notin X'$.
    Then $X'$ meets $\tgamma_{X,\sigma_1,\sigma_2}((0,1])$, and thus, since $\tgamma_{X,\sigma_1,\sigma_2}((0,1])\subseteq\flower_m(\sigma_2)$ as shown above, $X'$ meets $\flower_m(\sigma_2)$.
    Similar to our construction of $W'$, we choose $W''\in\cD_{m+k}^{[n]}$ in such a way that $W''\cap X'\neq\emptyset$, $W''\cap\partial\flower_m(\mk_m(\sigma_2))\neq\emptyset$, and there exists $\tau'_2\in\cD_0$ such that $f^m(W'')\cap\tau'_2\neq\emptyset$ and $\tau_2\nsubseteq\tau'_2$. 
	Let $t_0\in[0,1]$ be such that $\gamma_{Y,\tau_1,\tau_2}(t_0)\in f^m(X')$. Then it follows from $k\geq k_0$ and the definition of $ k_0$ (cf.~(\ref{e: k0})) that
	\begin{equation*}
	\begin{aligned}
		\dist(\gamma_{Y,\tau_1,\tau_2}(t_0),\tau'_1)+\dist(\gamma_{Y,\tau_1,\tau_2}(t_0),\tau'_2)
		 \leq  &2\diam(f^m(X'))+\diam(f^m(W'))+\diam(f^m(W''))\\
		 \leq  &4\mesh(\cD_{k})<\delta_2(Y,\tau_1,\tau_2).
	\end{aligned}
    \end{equation*}
	This contradicts the definition of $\delta_2(Y,\tau_1,\tau_2)$ in (\ref{e: delta2}).
\end{proof}

\begin{figure}[h]
	\includegraphics[scale=0.3]{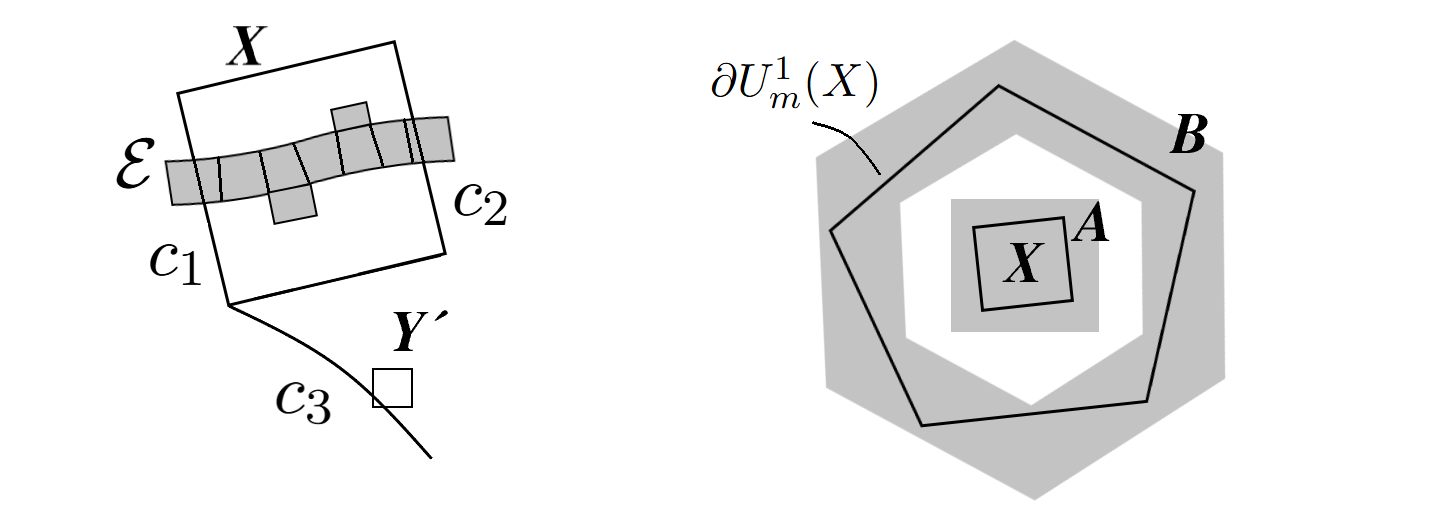}
    \caption{Illustrations for Lemmas~\ref{l: cellular neighborhood: a chain join marked cells}~(left) and~\ref{l: cellular neighborhood: continua A and B}~(right).}
    \label{f: A and B}
\end{figure}

\begin{lemma}\label{l: cellular neighborhood: continua A and B}
    Let $f\:\mfd^n\to\mfd^n$, $n\ge 2$, be an expanding {\itCel} branched cover on a {\clcnR} $n$-manifold and $\{\cD_m\}_{m\in\N_0}$ be a {\celSeq} of $f$. Then there exist $m_0,\,k_0\in\N$ such that for each pair of integers $m \geq  m_0$, $k\geq k_0$, and each $X\in\cD_m$, there exist continua $A$ and $B$ such that the following statements are true:
	\begin{enumerate}[label=(\roman*),font=\rm]
    	\smallskip
		\item Both $A$ and $B$ are unions of chambers in $\cD_{m+k}^{[n]}$.
		\smallskip
		\item $X\subseteq A\subseteq U_m^1(X)$ and $\partial U_m^1(X)\subseteq B\subseteq U_m^2(X)$.
		\smallskip
		\item $A\cap B=\emptyset$. 
	\end{enumerate}
\end{lemma}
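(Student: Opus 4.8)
The plan is to obtain $A$ by combinatorially thickening $X$ and $B$ as a collar around $\partial U_m^1(X)$, both as unions of $(m+k)$-chambers connected up by the marked-point chains of Lemma~\ref{l: cellular neighborhood: a chain join marked cells}. First fix a finite atlas of $\mfd^n$ and a marking $\{\mk_l\}_{l\in\N_0}$ of $\{\cD_l\}_{l\in\N_0}$. Using expansion ($\mesh(\cD_l)\to 0$), choose $m_0\in\N$ so that for every $m\geq m_0$ and every $X\in\cD_m^{[n]}$ the set $\abs{\cU_m^2(X)}$ has diameter below the Lebesgue numbers of both the atlas and of $\coverF(\cD_0)$, and $U_m^1(X)\ne\mfd^n$; then $U_m^1(X)$ lies in a chart, so Lemmas~\ref{l: cellular neighborhood: Mayer-Vietoris} and~\ref{l: cellular neighborhood: a chain around X} apply to it, $U_m^1(X)\smallsetminus X$ is path-connected, and $\partial U_m^1(X)$ is a non-empty subset of the continuum $\abs{\cU_m^1(X)}\smallsetminus\stdCellint{X}$ (the closure of $U_m^1(X)\smallsetminus X$). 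Next let $k_0\in\N$ be at least the constant of Lemma~\ref{l: cellular neighborhood: a chain join marked cells} and large enough that $\mesh(\cD_{k_0})$ is smaller than each of the finitely many strictly positive numbers $\dist\bigl(Y,\partial U_0^1(Y)\bigr)$ and $\dist\bigl(\abs{\cU_0^1(Y)},\mfd^n\smallsetminus U_0^2(Y)\bigr)$, $Y\in\cD_0^{[n]}$; such a $k_0$ exists since $\mesh(\cD_l)\to 0$, and it depends only on the finite data of $\cD_0$, hence is uniform in $m$.

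For $A$, I would take $A\=\abs{\cU_{m+k}^1(X)}$, which by Lemma~\ref{l: cel Nbhd: U^1(X)} is a path-connected union of $(m+k)$-chambers containing $X$, so that only $A\subseteq U_m^1(X)$ remains. The point is to transfer this to level $0$. Set $Y\=f^m(X)\in\cD_0^{[n]}$. For $k\geq k_0$ the bound $\mesh(\cD_k)<\dist\bigl(Y,\partial U_0^1(Y)\bigr)$ forces $\abs{\cU_k^1(Y)}\subseteq U_0^1(Y)$, because any $k$-chamber meeting $Y$ is a connected set of diameter $<\dist\bigl(Y,\mfd^n\smallsetminus U_0^1(Y)\bigr)$. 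Now fix $Z\in\cU_{m+k}^1(X)$; since $f^m$ is $(\cD_{m+k},\cD_k)$-cellular with $f^m(Z)\cap Y\ne\emptyset$ we get $f^m(Z)\in\cU_k^1(Y)$, hence $f^m(\stdCellint{Z})=\stdCellint{f^m(Z)}\subseteq U_0^1(Y)$, so $\stdCellint{Z}$ lies over $U_0^1(Y)$; as $\stdCellint{Z}$ is connected, $Z$ meets $X$, and $U_m^1(X)\subseteq(f^m)^{-1}\bigl(U_0^1(Y)\bigr)$ by Lemma~\ref{l: cel Nbhd: fwd invariance} with $X$ lying in a single component $V$ of $(f^m)^{-1}\bigl(U_0^1(Y)\bigr)$, it follows that $\stdCellint{Z}\subseteq V$, and then $Z=\cls{\stdCellint{Z}}\subseteq V$ because $V$ is clopen in $(f^m)^{-1}\bigl(U_0^1(Y)\bigr)$ while $f^m(Z)\subseteq\abs{\cU_0^1(Y)}=\cls{U_0^1(Y)}$ and $\mesh(\cD_k)$ is small. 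Thus $A\subseteq V$, and running the separation argument of Lemma~\ref{l: cellular neighborhood: a chain join marked cells} downstairs in the finite complex $\cD_0$ identifies $V=U_m^1(X)$, completing this case.

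For $B$, I would start from $B_0\=\abs{\cU_{m+k}^1\bigl(\partial U_m^1(X)\bigr)}$, the union of all $(m+k)$-chambers meeting $\partial U_m^1(X)$, which contains $\partial U_m^1(X)$ and — by the analogous level-$0$ transfer, using now the threshold $\mesh(\cD_k)<\dist\bigl(\abs{\cU_0^1(Y)},\mfd^n\smallsetminus U_0^2(Y)\bigr)$ — is contained in $U_m^2(X)$. Then adjoin finitely many marked-point chains $\abs{\cE}$ from Lemma~\ref{l: cellular neighborhood: a chain join marked cells}, applied to cells of $\cD_m$ lying in $\abs{\cU_m^1(X)}$, meeting $\partial U_m^1(X)$, and not contained in $X$, to join $B_0$ into a single continuum $B\supseteq\partial U_m^1(X)$; the separation property~(iii) of Lemma~\ref{l: cellular neighborhood: a chain join marked cells} keeps the added chains inside $\abs{\cU_m^1(X)}\smallsetminus X\subseteq U_m^2(X)$, so $B\subseteq U_m^2(X)$. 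Finally $A\cap B=\emptyset$: every chain of $B$ is disjoint from $X$ by property~(iii), and no $(m+k)$-chamber meeting $\partial U_m^1(X)$ can meet $X$ — again pushing down by $f^m$, such a chamber and a chamber of $A$ would give a connected set joining $Y$ to $f^m\bigl(\partial U_m^1(X)\bigr)$ of diameter at most $2\mesh(\cD_k)$, which is ruled out by the threshold once the downstairs separation of $Y$ from the relevant part of $\partial U_0^1(Y)$ is invoked.

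The main obstacle is precisely the uniformity of $k_0$: a fixed number $k$ of subdivision steps does not make $(m+k)$-chambers small relative to the Riemannian geometry of $\cD_m$ — disjoint $m$-chambers may become arbitrarily close as $m$ grows — so one cannot argue from ``$\mesh(\cD_{m+k_0})<\dist\bigl(X,\partial U_m^1(X)\bigr)$''. The device that resolves this, already exploited in Lemma~\ref{l: cellular neighborhood: a chain join marked cells}, is to push every containment and separation assertion down to the finite complex $\cD_0$ through the chamberwise homeomorphisms $f^m|_{(\cdot)}$ and the forward invariance of cellular neighborhoods (Lemma~\ref{l: cel Nbhd: fwd invariance}), where all relevant distances are bounded below uniformly; the identification $V=U_m^1(X)$ and the connectedness of the collar (Lemmas~\ref{l: cellular neighborhood: Mayer-Vietoris}--\ref{l: cellular neighborhood: a chain around X}) are the remaining technical points to settle.
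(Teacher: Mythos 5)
Your construction itself is the same as the paper's: $A$ is the union of the $(m+k)$-chambers meeting $X$, $B$ starts from the union $B_0$ of the $(m+k)$-chambers meeting $\partial U_m^1(X)$ and is connected up by the marked-point chains of Lemma~\ref{l: cellular neighborhood: a chain join marked cells}, with $m_0$ chosen so that $U_m^1(X)$ sits in a chart. The gap is in your uniformity mechanism, i.e.\ in the choice of $k_0$ and the ``push down to level $0$'' argument. You choose $k_0$ so that $\mesh(\cD_{k_0})$ beats the level-$0$ distances $\dist\bigl(Y,\partial U_0^1(Y)\bigr)$ and $\dist\bigl(\Absbig{\cU_0^1(Y)},\mfd^n\smallsetminus U_0^2(Y)\bigr)$, and you then argue that an $(m+k)$-chamber meeting both $X$ and something it should not meet would, after applying $f^m$, violate a metric separation downstairs. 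But $f^m$ is not injective on $U_m^2(X)$: distinct, disjoint $m$-chambers around $X$ are mapped onto $0$-chambers that may coincide with or be adjacent to $Y=f^m(X)$. Consequently $f^m\bigl(\partial U_m^1(X)\bigr)$ need not lie in, or even near, $\partial U_0^1(Y)$ --- it can intersect $Y$ itself --- so the ``downstairs separation of $Y$ from the relevant part of $\partial U_0^1(Y)$'' that your disjointness and containment claims invoke simply does not exist. The same defect undermines the step $A\subseteq U_m^1(X)$: what you actually prove is $A\subseteq V$, where $V$ is the component of $(f^m)^{-1}\bigl(U_0^1(Y)\bigr)$ containing $X$, and the final identification $V=U_m^1(X)$ is unjustified and false in general, since any $m$-chamber disjoint from $X$ whose image meets $Y$ also has its interior inside $(f^m)^{-1}\bigl(U_0^1(Y)\bigr)$ and may be glued to $V$ along shared faces. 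Likewise, your claim that the added chains stay inside $\Absbig{\cU_m^1(X)}\smallsetminus X$ does not follow from property~(iii) of Lemma~\ref{l: cellular neighborhood: a chain join marked cells}: that property only excludes chambers meeting prescribed cells, while the chain chambers meet $X_i\in\cU_m^1(X)$ and can still protrude into $m$-chambers outside $\cU_m^1(X)$.

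What replaces the missing metric control is the combinatorial invariance of ``joining opposite sides'': by Corollary~\ref{c: CBC: connected set in flower}~(iii), a connected set meeting two \emph{disjoint} cells of $\cD_m$ has $f^m$-image joining opposite sides of $\cD_0$, and by Lemma~\ref{l: expansion: J_m--->+infty} one can enlarge $k_0$ so that $J(\cD_k,\cD_0)>2$ for all $k\geq k_0$, so that a union of one or two level-$\geq k_0$ cells can never join opposite sides of $\cD_0$. This yields, uniformly in $m$, that no $(m+k)$-chamber can meet both $X$ and an $m$-chamber disjoint from $X$ (hence $A\subseteq U_m^1(X)$), that $B_0\subseteq U_m^2(X)$ and $B_0\cap A=\emptyset$, and that the chains $\abs{\cE_i}$ remain in $U_m^2(X)$. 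Your level-$0$ metric thresholds cannot substitute for this, because ``joining opposite sides'' is preserved exactly under the cellular branched cover while distances to $\partial U_0^1(Y)$ are destroyed by the overlapping of images of disjoint $m$-chambers; this is precisely the obstruction you flag at the end, and your proposed resolution does not overcome it.
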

\begin{proof}
	Since $f$ is expanding, there exists $m_0\in\N$ such that, when $m \geq  m_0$, for each $X\in\cD_m^{[n]}$, $U^1_m(X)$ is contained in a chart.
	
	Fix a marking $\{\mk_m\}_{m\in\N_0}$ for $\{\cD_m\}_{m\in\N_0}$, and let $k_0\in\N$ be the constant given by Lemma~\ref{l: cellular neighborhood: a chain join marked cells}. Without loss of generality, we may assume $k_0$ to be sufficiently large that, by Lemma~\ref{l: expansion: J_m--->+infty}, $J(\cD_k,\cD_0)>2$ for all $k\geq k_0$; recall that $J(\cD_{k},\cD_0)$ is defined in (\ref{e: the joining number J(m>k)}).
	
	Fix arbitrary $m,\,k\in\N_0$ with $m \geq  m_0$ and $k\ge k_0$. Fix $X\in\cD_m^{[n]}$. 
	We define
	\begin{equation*}
	A\=\bigcup\bigl\{X'\in\cD_{m+k}^{[n]}:X'\cap X\neq\emptyset\bigr\}.
    \end{equation*}
	Clearly $A\supseteq X$ (cf.~Lemma~\ref{l: cell decomp: compact, and Mfd}~(ii)) and hence $A$ is path-connected. Then since $A$ is a union of finitely many cells (cf.~Lemma~\ref{l: cell decomp: compact, and Mfd}~(i)), $A$ is a continuum. Suppose that $A\nsubseteq U^1_m(X)$, then by Lemma~\ref{l: cellular neighborhood: int(cX) - cls, comp, bd}~(ii), there exists $X'\in\cD_{m+k}^{[n]}$ that meets both $X$ and some $Y\in\cD_m^{[n]}\smallsetminus\cU^1_m(X)$, and it follows from Corollary~\ref{c: CBC: connected set in flower}~(iii) that $f^m(X')$ {\jsOpSd} of $\cD_0$, which contradicts $J(\cD_{k},\cD_0)>2$. Hence $A\subseteq U^1_m(X)$.
    
	Now we construct the continuum $B$. 
	First, set 
    \begin{equation}\label{e: B_0}
        B_0\=\bigcup\bigl\{X'\in\cD_{m+k}^{[n]}:X'\cap\partial U_m^1(X)\neq\emptyset\bigr\}.
    \end{equation}
    Clearly $B_0\supseteq\partial U_m^1(X)$.
    The arguments showing $A\subseteq U^1_m(X)$ yield that $B_0\subseteq U^2_m(X)$.
	Suppose $B_0$ meets $A$, then we apply Lemma~\ref{l: cellular neighborhood: int(cX) - cls, comp, bd}~(ii) and Corollary~\ref{c: CBC: connected set in flower}~(iii) as above to obtain $X',\,X''\in\cD_{m+k}^{[n]}$ such that $X'\cap X''\neq\emptyset$ and $f^m(X')\cup f^m(X'')$ {\jsOpSd} of $\cD_0$, which contradicts $J(\cD_{k},\cD_0)>2$. Hence $B_0\cap A=\emptyset$.
	
	Now we extend $B_0$ to a continuum $B$ satisfying the desired properties.

	By Lemma~\ref{l: cellular neighborhood: int(cX) - cls, comp, bd}~(iii), $\partial U_m^1(X)=\abs{\cC}$, where 
    \begin{equation}\label{e: the collection cC}
        \cC\=\bigl\{c\in\cD_m:\partial U_m^1(X)\cap\stdCellint{c}\neq\emptyset\bigr\}=\bigl\{c\in\cD_m:c\subseteq\partial U_m^1(X)\bigr\}.
    \end{equation}
	Consider $\sigma_0,\,\sigma_1\in\cC$. In what follows, we construct a connected set $B(\sigma_0,\sigma_1)$ that contains $\mk_m(\sigma_0)$ and $\mk_m(\sigma_1)$. Since $\partial U^1_m(X)\cap\stdCellint{\sigma_0}\neq\emptyset$, by Lemma~\ref{l: properties of cell decompositions}~(v), there exists $Y_0\in\cU_m^1(X)$ containing $\sigma_0$. Likewise, there is $Y_1\in\cU_m^1(X)$ containing $\sigma_1$. By the choice of $m\geq m_0$, $U^1_m(X)$ is contained in a chart, and we can use Lemma~\ref{l: cellular neighborhood: a chain around X} to find $X_1,\,\dots,\,X_l\in\cU_m^1(X)\smallsetminus\{X\}$ with the following properties: 
	\begin{itemize}
    	\smallskip
		\item[(1)] $X_1=Y_0$ and $X_l=Y_1$.
		\smallskip
		\item[(2)] For each $1 \leq  i \leq  l-1$, we have $X_{i}\cap X_{i+1}\neq\emptyset$ and $X_{i}\cap X_{i+1}\nsubseteq X$.
	\end{itemize}
	For each integer $1 \leq  i \leq  l-1$, since $X_i\cap X_{i+1}\nsubseteq X$ and by Lemma~\ref{l: properties of cell decompositions}~(iv), there is $c_i\in\cD_m$ such that $c_i\subseteq X_{i}\cap X_{i+1}$ and $c_i\nsubseteq X$. We also denote $c_0\=\sigma_0$ and $c_l\=\sigma_1$. For each integer $1 \leq  i \leq  l$, by Lemma~\ref{l: cellular neighborhood: a chain join marked cells}, we can find $\cE_i\subseteq\cD_{m+k}^{[n]}$ with the following properties:
    \begin{enumerate}[label=(\roman*),font=\rm]
		\smallskip
		\item For each $X'\in\cE_i$, $X'\cap X_i\neq\emptyset$.
		\smallskip
		\item $\abs{\cE_i}$ is path-connected and contains $\mk_m(c_{i-1}),\,\mk_m(c_i)$.
        \smallskip
        \item For each $Y'\in\cD_{m+k}^{[n]}$ that meets $X$, $\abs{\cE_i}\cap Y'=\emptyset$.
	\end{enumerate}
	
    Fix $1\le i\le l$. First, by the construction of $A$ and the above property (iii) of $\cE_i$, we have $\abs{\cE_i}\cap A=\emptyset$. Then we show $\abs{\cE_i}\subseteq U_m^2(X)$. Suppose that $\abs{\cE_i}\nsubseteq U_m^2(X)=\mfd^n\smallsetminus\Absbig{\cD_m^{[n]}\smallsetminus\cU^2_{m}(X)}$ (cf.~Lemma~\ref{l: cellular neighborhood: int(cX) - cls, comp, bd}~(ii)), then by the above property~(i) of $\cE_i$, there exists
	$X'\in\cE_i$ such that $X'$ meets both $X_i$ and some $Y\in\cD_m^{[n]}\smallsetminus\cU^2_{m}(X)$, which satisfy $X_i\cap Y=\emptyset$ (cf.~(\ref{e: U^1 and U^2})). Thus, by Corollary~\ref{c: CBC: connected set in flower}~(iii), $f^m(X')$ {\jsOpSd} of $\cD_0$, which contradicts $J(\cD_{k_0},\cD_0)>2$. Hence, $\abs{\cE_i}\subseteq U_m^2(X)$.
	
	Set $B(\sigma_0,\sigma_1)\=\abs{\cE_1}\cup\cdots\cup \abs{\cE_l}$. By the property~(ii) of $\cE_i$, $1\le i\le l$, and the above discussion, $B(\sigma_0,\sigma_1)$ is a path-connected subset of $U^2_m(X)\smallsetminus A$ that contains $\mk_m(\sigma_0)$ and $\mk_m(\sigma_1)$.
    
    Define 
	\begin{equation}\label{e: continuum B}
	\begin{aligned}
		B\=B_0\cup\bigcup\{B(\sigma,\tau):\sigma,\,\tau\in\cC\}.
	\end{aligned}
\end{equation}
	Since $\partial U_m^1(X)\subseteq B_0\subseteq U^2_m(X)\smallsetminus A$, we have $A\cap B=\emptyset$ and $\partial U_m^1(X)\subseteq B\subseteq U_m^2(X)$.
	
	It remains to verify that $B$ is a continuum. First, since $B$ is a union of finitely many cells (cf.~Lemma~\ref{l: cell decomp: compact, and Mfd}~(i)), $B$ is compact. Then we show that $B$ is connected---more precisely, for each pair $x_0,\,x_1$ of points in $B$, we can find a path-connected subset of $B$ that contains $x_0,\,x_1$. Consider the following cases according to $x_0,\,x_1\in B$.
	
	\smallskip
	
	{\it Case~1.} $\{x_0,\,x_1\}\subseteq B_0$. For each $i\in\{0,\,1\}$, suppose $X'_i\in\cD_{m+k}^{[n]}$ and $\sigma_i\in\cC$ satisfy $x_i\in X'_i$ and $X'_i\cap \sigma_i\neq\emptyset$, and set $B(\sigma_i)\=\bigcup\bigl\{X'\in\cD_{m+k}^{[n]}:X'\cap \sigma_i\neq\emptyset\bigr\}$. Then for each $i\in\{0,\,1\}$, $B(\sigma_i)$ is path-connected and $\{x_i,\,\mk_m(\sigma_i)\}\subseteq B(\sigma_i)\subseteq B_0$ (cf.~(\ref{e: B_0}) and (\ref{e: the collection cC})). Thus, since $B(\sigma_0,\sigma_1)$ is constructed to be a path-connected set containing $\mk_m(\sigma_0)$ and $\mk_m(\sigma_1)$, the subset $B(\sigma_0)\cup B(\sigma_0,\sigma_1)\cup B(\sigma_1)$ of $B$ (cf.~(\ref{e: continuum B})) is path-connected and contains $x_0,\,x_1$.
	
	\smallskip
	
	{\it Case~2.} $\{x_0,\,x_1\}\nsubseteq B_0$. We may assume $x_0\in B(\sigma_0,\tau_0)$ for some $\sigma_0,\,\tau_0\in\cC$. Since $B(\sigma_0,\tau_0)$ is path-connected and contains $\mk_m(\sigma_0)$, it suffices to find a path-connected subset of $B$ that contains $\mk_m(\sigma_0)$ and $x_1$. If $x_1\in B_0$, then since $\mk_m(\sigma_0)\in\partial U_m^1(X)\subseteq B_0$ (cf.~(\ref{e: B_0}) and (\ref{e: the collection cC})), this case reduces to Case~1. If $x_1\notin B_0$, then there exist $\sigma_1,\,\tau_1\in\cC$ such that $x_1\in B(\sigma_1,\tau_1)$ (cf.~(\ref{e: continuum B})), and thus, by a similar argument as above, it suffices to find a path-connected subset of $B$ that contains $\mk_m(\sigma_0)$ and $\mk_m(\sigma_1)$. This case also reduces to Case~1 since $\mk_m(\sigma_0),\,\mk_m(\sigma_1)\in B_0$.
\end{proof}

\subsection{Rigidity of UQR maps}
\label{subsct: cellular UQR => Lattes}

Here, we prove Theorem~\ref{t: cellular UQR => Lattes}.
First, we give several technical lemmas, providing key metric estimates for the proof of Theorem~\ref{t: cellular UQR => Lattes}.
\begin{lemma}\label{l: cellular + QR + big i(x,f) => vertex}
    Let $f\:\mfd^n\to\mfd^n$, $n\geqslant3$, be a $(\cD_1,\cD_0)$-cellular branched cover on a {\clcnR} $n$-manifold, and suppose $f$ is $K$-quasiregular for some $K\geq 1$. Then there exists a constant $I_{n,K}$, depending only on $n$ and $K$, having the following property: if $x\in\mfd^n$ satisfies $i(x,f)\geq I_{n,K}$, then $x$ is a $1$-vertex.
\end{lemma}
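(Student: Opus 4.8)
The plan is to establish the following quantitative statement, from which the lemma follows immediately by taking $I_{n,K}\=c(n,K)+1$: \emph{if $x$ is not a $1$-vertex, then $i(x,f)\leq c(n,K)$ for some constant $c(n,K)$ depending only on $n$ and $K$.} We may assume $i(x,f)\geq 2$, so $x\in B_f$. Put $c\=\supp_{\cD_1}(x)$ and suppose $\dim c=d\geq 1$. By Corollary~\ref{c: multi: N(f,x)=const on int(c)}\,(ii) the whole cell-interior $\stdCellint{c}$ lies in $B_f$, and by the Chernavskii--V\"ais\"al\"a theorem \cite[Theorem~5.4]{Va66} the branch set has topological dimension at most $n-2$; hence $1\leq d\leq n-2$, which is where $n\geq 3$ is used.

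First I would localize and flatten. In geodesic normal coordinates centred at $x$ the metric $g$ is, on a small ball, $2$-bi-Lipschitz to the Euclidean metric, so near $x$ the map $f$ is $K'$-quasiregular for the Euclidean structure with $K'=K'(n,K)$; this keeps all later constants independent of $(\mfd^n,g)$. Using the normal neighborhoods furnished by \eqref{e: i(x,f)=Sum i(f,z)}, I would fix for small $\rho>0$ the $x$-component $U_\rho$ of $f^{-1}(B(f(x),\rho))$: then $f|_{U_\rho}\colon U_\rho\to B(f(x),\rho)$ is proper of degree $m\=i(x,f)$, $f^{-1}(f(x))\cap U_\rho=\{x\}$, and standard distortion estimates for $K'$-quasiregular maps give that $U_\rho$ is round ($\dist(x,\partial U_\rho)\geq c_0(n,K')\diam U_\rho$) and that the nested normal neighborhoods $U_r\subseteq U_\rho$ over concentric balls $B(f(x),r)$ shrink to $x$ as $r\downarrow 0$. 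Since $d\geq 1$, the cell $c$ is not contained in $U_\rho$ for $\rho$ small, so $c\cap U_\rho$ is a continuum through $x$ meeting $\partial U_\rho$; thus near $x$ the branch set contains a continuum of diameter comparable to $\diam U_\rho$, and $f|_c$ being a homeomorphism onto $f(c)$ carries it to a comparable continuum in $f(B_f)$ through $f(x)$.

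The core is then a modulus comparison. I would choose radii $r_1<r_2<\rho$ with $r_2/r_1$ a fixed universal ratio and $r_2$ of the right scale relative to $f(c\cap U_\rho)$, take in the target the spherical ring family $\Gamma'$ joining $S(f(x),r_1)$ to $S(f(x),r_2)$ (of positive universal modulus $M(\Gamma')$), and observe that, after discarding an exceptional subfamily of $n$-modulus zero (the image of the branch set being modulus-negligible on this scale), each $\gamma'\in\Gamma'$ has exactly $m$ pairwise disjoint complete lifts under $f|_{U_\rho}$; let $\Gamma$ be the family of all these lifts. V\"ais\"al\"a's inequality for $K'$-quasiregular maps (see, e.g., \cite{Ri93}) then gives $m\cdot M(\Gamma')\leq K'_O(f)\,M(\Gamma)$. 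Every curve of $\Gamma$ lies in $U_{r_2}\smallsetminus\overline{U_{r_1}}$ and joins $\partial U_{r_1}$ to $\partial U_{r_2}$, so $M(\Gamma)\leq\capa(U_{r_2},\overline{U_{r_1}})$, and the whole argument reduces to bounding this capacity by a constant $\mu(n,K)$; granting that, $m\leq K'_O(f)\,\mu(n,K)/M(\Gamma')=:c(n,K)$, as desired.

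The main obstacle is exactly this capacity bound. For a branch point of large order a $K$-quasiregular map may compress so severely near $x$ that the normal neighborhoods $U_{r_1}$ and $U_{r_2}$ of concentric target balls become nearly coincident, making $\capa(U_{r_2},\overline{U_{r_1}})$ blow up and the estimate fail; this is precisely what can happen at vertices of $\cD_1$ of high valence, which is why no bound on $i(\cdot,f)$ holds in general. The point is that when $x$ lies in the \emph{interior} of a cell $c$ of dimension $\geq 1$ along which $f$ acts homeomorphically, the branch set near $x$ is, transverse to $c$, ``no more complicated than $c$ itself'' and $f$ does not compress the $c$-direction, which keeps $U_{r_1}$ and $U_{r_2}$ separated by a definite amount; combining this with the roundness of the $U_r$'s and quasiregular distortion should yield $\capa(U_{r_2},\overline{U_{r_1}})\leq\mu(n,K)$ with $\mu$ depending only on $n$ and $K$ (and not on $\cD_1$). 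Turning this heuristic into clean quantitative estimates is the delicate step; the remaining ingredients---normal coordinates, roundness of normal neighborhoods, modulus-negligibility of the exceptional family, and V\"ais\"al\"a's inequality---are standard.
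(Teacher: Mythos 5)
Your reduction is the right one and matches the paper's: since $x$ is not a $1$-vertex it lies in $\stdCellint{c}$ for some $c\in\cD_1$ with $\dim(c)\ge 1$, the local index is constant on $\stdCellint{c}$ by Corollary~\ref{c: multi: N(f,x)=const on int(c)}, and after passing to bi-Lipschitz charts one has a Euclidean $K'$-quasiregular map whose local index is $\ge i(x,f)$ at every point of a nondegenerate continuum. But at this point the paper simply cites the known quantitative fact that this is impossible when $i(x,f)$ is large: by \cite[Chapter~III, Corollary~5.9]{Ri93}, for a $K'$-quasiregular map $g$ and any nondegenerate continuum $E$ in its domain one has $\inf_{z\in E} i(z,g)<(nK')^{n-1}$, so $I_{n,K}=(16^n nK)^{n-1}$ works. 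That citation is exactly the estimate you are trying to re-derive by hand, and your re-derivation has a genuine gap.

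Concretely, your modulus argument in the second and third paragraphs only uses the largeness of the index at the single point $x$ (through the normal neighborhoods $U_r$ of $x$ and the lifting/V\"ais\"al\"a step), plus the qualitative fact that the branch set near $x$ contains a continuum. As you yourself note, an argument of that shape cannot succeed, because no bound on $i(\cdot,f)$ in terms of $(n,K)$ holds at isolated high-index points (vertices): the normal neighborhoods $U_{r_1}\subseteq U_{r_2}$ can indeed degenerate into a thin shell and $\capa(U_{r_2},\overline{U_{r_1}})$ need not be bounded by a constant $\mu(n,K)$. The missing ingredient is that the index is large at \emph{every} point of the continuum $E\subseteq\stdCellint{c}$, and this must enter the modulus estimate itself (as it does in the proof of \cite[III.5.9]{Ri93}, via the inequality $M(\Gamma')\le K_I(g)\,M(\Gamma)/m$ applied to curve families meeting $E$, not to curve families in a single normal neighborhood of $x$). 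Your closing heuristic that ``$f$ does not compress the $c$-direction'' is not substantiated and is precisely the delicate point; as written, the capacity bound $\capa(U_{r_2},\overline{U_{r_1}})\le\mu(n,K)$ is unproved and the argument does not close. Replacing the entire second half of your proposal by the localization in charts plus the citation of Rickman's corollary yields the paper's (complete and much shorter) proof.
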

\begin{proof}
    In what follows, we show that $I_{n,K}\=(16^nnK)^{n-1}$ is the desired constant.

    We argue by contradiction and assume that a point $x\in\mfd^n$, which satisfies $i(x,f)\geq I_{n,K}$, is not a $1$-vertex. 
    By Lemma~\ref{l: properties of cell decompositions}~(ii), there exists $c\in\cD_1$ such that $x\in\stdCellint{c}$ and $\dim(c)=d\geq 1$.

    Fix $L\in[1,2)$. Let $(U,\phi)$ and $(V,\psi)$ be $L$-bi-Lipschitz charts at $x$ and $f(x)$, respectively, such that $f(U)\subseteq V$. Then $g\=\psi\circ f\circ\phi^{-1}\:\phi(U)\to \psi(V)\subseteq\R^n$ is a $\bigl(KL^{4n}\bigr)$-quasiregular map. Since $d\geq 1$ and $x\in\stdCellint{c}$, there exists a continuum $E\subseteq \stdCellint{c}$ that is also contained in $U$; for example, one may choose a homeomorphism $h\:c\to\cls{\B^d}$ and set $E\=h^{-1}\bigl(\cls{\B^d(h(x),\epsilon)}\bigr)$, where the radius $\epsilon>0$ is sufficiently small. By Corollary~\ref{c: multi: N(f,x)=const on int(c)}, $i(x',f)=i(x,f)$ for all $x'\in\stdCellint{c}$. It follows that $i(z,g)=i(x,f)\geq I_{n,K}\geq \bigl(nKL^{4n}\bigr)^{n-1}$ for all $z\in \phi(E)$. However, by \cite[Chapter~III, Corollary~5.9]{Ri93}, $\inf_{z\in\phi(E)}i(z,g)<\bigl(nKL^{4n}\bigr)^{n-1}$, which yields a contradiction.
\end{proof}

\begin{lemma}\label{l: itCel: inf multi + UQR => period branch}
    Let $f\:\mfd^n\to\mfd^n$, $n \geq 3$, be an {\itCel} branched cover on a {\clcnR} $n$-manifold. If $f$ is uniformly quasiregular and $\sup\{\ind(x,f^m):m\in\N,\,x\in\mfd^n\}=+\infty$, then $f$ has a periodic branch point.
\end{lemma}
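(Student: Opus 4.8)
The plan is to leverage Lemma~\ref{l: cellular + QR + big i(x,f) => vertex} together with the finiteness of the vertex set of $\cD_0$, by an orbit‑counting argument. First I would fix, using uniform quasiregularity, a constant $K\geq 1$ such that every iterate $f^m$ is $K$-quasiregular, set $I\=I_{n,K}$ as in Lemma~\ref{l: cellular + QR + big i(x,f) => vertex} (note $I\geq 2$), fix a {\celSeq} $\{\cD_m\}_{m\in\N_0}$ of $f$, and let $\cV$ be the set of vertices of $\cD_0$, which is finite by Lemma~\ref{l: cell decomp: compact, and Mfd}~(i). I would also record that $\ind(\cdot,f)\leq N(f)<+\infty$ (Remark~\ref{r: bc on mfd is cls fbc}) and that local indices multiply along orbits, $\ind(x,f^m)=\prod_{j=0}^{m-1}\ind(f^j(x),f)$; write $d\=N(f)$.

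The core of the proof is the following trapping observation: \emph{if $\ind(x,f^m)\geq I$ for some $x\in\mfd^n$ and $m\in\N$, then $f^j(x)\in\cV$ for every $j\geq m$.} To prove it, write $j=m+k$ with $k\in\N_0$; by Remark~\ref{r: basic cellular property} the iterate $f^m$ is $(\cD_{m+k},\cD_k)$-cellular, and it is $K$-quasiregular, so applying Lemma~\ref{l: cellular + QR + big i(x,f) => vertex} to $f^m$ shows that $x$ is a vertex of $\cD_{m+k}$, i.e.\ $\{x\}\in\cD_{m+k}$. Since $f^{m+k}$ is $(\cD_{m+k},\cD_0)$-cellular, $f^{m+k}(\{x\})$ is a cell of $\cD_0$; as $\{x\}$ is $0$-dimensional, so is $f^{m+k}(\{x\})=\{f^j(x)\}$, whence $f^j(x)\in\cV$. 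I expect this to be the main point of the argument: it converts a single high‑multiplicity iterate into the statement that the whole forward tail of the orbit is confined to the finite set $\cV$, and it works precisely because $I=I_{n,K}$ depends on $n,K$ alone and not on the cellular pair used---this is exactly where \emph{uniform} quasiregularity is essential.

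Granting this, I would conclude as follows. Using the hypothesis $\sup\{\ind(x,f^m):m\in\N,\,x\in\mfd^n\}=+\infty$, choose $x\in\mfd^n$ and $m\in\N$ with $\ind(x,f^m)>d^{\,\card\cV+\lceil\log_2 I\rceil}$. Let $0\leq t_1<\cdots<t_r<m$ list the times $j$ with $f^j(x)\in B_f$, and set $b_p\=f^{t_p}(x)\in B_f$ for $1\leq p\leq r$. From $\ind(x,f^m)=\prod_{p=1}^r\ind(b_p,f)\leq d^{\,r}$ one gets $r>\card\cV+\lceil\log_2 I\rceil$. If $b_p=b_{p'}$ for some $p<p'$, then $b_p=f^{\,t_{p'}-t_p}(b_p)$ with $t_{p'}-t_p\geq 1$, so $b_p$ is a periodic branch point and we are done. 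Otherwise the $b_p$ are pairwise distinct; but for each $q$ with $\lceil\log_2 I\rceil<q\leq r$ we have $\ind(x,f^{t_q})=\prod_{p=1}^{q-1}\ind(b_p,f)\geq 2^{\,q-1}\geq I$, so the trapping observation (applied with $m$ replaced by $t_q$) gives $b_q\in\cV$; this would force $r-\lceil\log_2 I\rceil>\card\cV$ pairwise distinct points into $\cV$, which is impossible. Hence the first alternative must occur, and $f$ has a periodic branch point.

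The only routine facts to be verified are the multiplicativity of local indices along orbits and the bound $\ind(\cdot,f)\leq N(f)$, both standard for branched covers on closed manifolds (cf.~Remark~\ref{r: bc on mfd is cls fbc} and (\ref{e: i(x,f)=Sum i(f,z)}) together with the composition properties of local indices), and that the $q$-th branch time satisfies $t_q\geq q-1\geq 1$ for the range of $q$ used, so that the iterates $f^{t_q}$ appearing above are genuine.
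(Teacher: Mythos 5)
Your argument is correct, and it rests on the same engine as the paper's proof: uniform quasiregularity gives one constant $I_{n,K}$ from Lemma~\ref{l: cellular + QR + big i(x,f) => vertex} valid for all iterates (each $f^m$ being $(\cD_{m+k},\cD_k)$-cellular and $K$-quasiregular), so accumulated high multiplicity forces orbit points into the finite $0$-vertex set, and a pigeonhole argument produces periodicity. The bookkeeping, however, is genuinely different. The paper works only with the submultiplicative inequalities (\ref{e: i(x,f^k+l) = i(x,f^k) i(f^k(x),f^l)}), cuts the orbit into blocks whose multiplicity lies in $[I_{n,K}, I_{n,K}N(f)]$, shows the block endpoints are $0$-vertices, pigeonholes over $\card\cD_0$ to get a periodic $0$-vertex, and then extracts a periodic branch point from its orbit; your version instead tracks the actual branch times, proves the cleaner ``trapping'' statement (once $\ind(x,f^m)\geq I_{n,K}$, every later orbit point is a $0$-vertex), and concludes by a dichotomy that either a branch point repeats or too many distinct branch points are forced into the vertex set, which yields the periodic branch point directly. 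The one place where you go beyond the paper's toolkit is the exact multiplicativity $\ind\bigl(x,f^{m}\bigr)=\prod_{j<m}\ind\bigl(f^j(x),f\bigr)$: the lower-bound half (needed for $\ind\bigl(x,f^{t_q}\bigr)\geq 2^{q-1}$) does not follow from (\ref{e: i(x,f^k+l) = i(x,f^k) i(f^k(x),f^l)}), but it is indeed a standard composition formula for local indices of quasiregular (or, via (\ref{e: i(x,f)=Sum i(f,z)}) and normal neighborhoods, of discrete open) maps, so this is a citation matter rather than a gap; you should state it with a reference (e.g.\ Rickman) rather than leave it as ``routine''. Minor loose ends that are easily absorbed: $N(f)\geq 2$ (otherwise the hypothesis $\sup\ind=+\infty$ fails), and $I_{n,K}\geq 2$, both of which you implicitly use.
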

\begin{proof}
   Let $f$ be uniformly $K$-quasiregular for some $K\geqslant 1$, and $I_{n,K}$ be the constant given by Lemma~\ref{l: cellular + QR + big i(x,f) => vertex}. Suppose $\sup\{\ind(x,f^m):m\in\N,\,x\in\mfd^n\}=+\infty$. 

    Fix a {\celSeq} $\{\cD_m\}_{m\in\N_0}$ of $f$. Let $x_0\in\mfd^n$ and $m\in\N$ satisfy 
    \begin{equation}\label{e: i(x,f^m)>(N(f)In,K)^cD+1}
        \ind(x_0,f^m)> (N(f) I_{n,K})^{1+\card\cD_0}.
    \end{equation}
    By the definition of the local multiplicity (cf.~(\ref{e: N_loc(f,x)})), for all $x\in\mfd^n$ and $k,\,l\in\N_0$, 
    \begin{equation}\label{e: i(x,f^k+l) = i(x,f^k) i(f^k(x),f^l)}
    \ind\bigl(x,f^k\bigr)\leqslant\ind\bigl(x,f^{k+l}\bigr)\leqslant \ind\bigl(x,f^k\bigr)\cdot\ind\bigl(f^k(x), f^l\bigr)\leqslant\ind\bigl(x,f^k\bigr)\cdot N\bigl(f^l\bigr),
    \end{equation} 
    and, in particular, $\ind\bigl(x,f^k\bigr)\leq\ind\bigl(x,f^{k+1}\bigr)\leq \ind\bigl(x,f^k\bigr)\cdot N(f)$. It follows that if $x\in\mfd^n$ and $k,\,M\in\N$ satisfy $\ind\bigl(x,f^k\bigr)>MN(f)$, then there exists $1\leq l< k$ with
    \begin{equation}\label{e: M<i(x,f^l)<MN(f)}
        M\leq \ind\bigl(x,f^l\bigr)\leq M N(f).
    \end{equation}
    Using (\ref{e: i(x,f^m)>(N(f)In,K)^cD+1}), (\ref{e: i(x,f^k+l) = i(x,f^k) i(f^k(x),f^l)}), and (\ref{e: M<i(x,f^l)<MN(f)}), we can find $a_1,\,\dots,\,a_s\in\N$ satisfying the following conditions:
    \begin{enumerate}[label=(\roman*),font=\rm]
        \smallskip
        \item For each $1\leq j\leq s$, we have $I_{n,K}\leqslant\ind\bigl(f^{b_{j-1}}(x_0),f^{a_j}\bigr)\leqslant I_{n,K}N(f)$. Here $b_{j}\= a_1+\dots+a_j$ for each $1\leqslant j\leqslant s$ and $b_0\=0$.
        \smallskip
        \item $a_1+\cdots+a_s\leqslant m$ and $\ind(x_0,f^{a_1+\cdots+a_s})\leqslant\ind(x_0,f^m)\leqslant \ind(x_0,f^{a_1+\cdots+a_s})\cdot(N(f)I_{n,K})$.
    \end{enumerate}
    For each integer $1\leq j\leq s$, since $f^{a_j}$ is a $\bigl(\cD_{a_j},\cD_0\bigr)$-cellular (cf.~Remark~\ref{r: basic cellular property}) $K$-quasiregular map, and $f^{b_{j-1}}(x_0)$ satisfies $I_{n,K}\leqslant\ind\bigl(f^{b_{j-1}}(x_0),f^{a_j}\bigr)$, it follows from Lemma~\ref{l: cellular + QR + big i(x,f) => vertex} that $f^{b_{j-1}}(x_0)$ is an $a_j$-vertex, and thus $f^{b_j}(x_0)=f^{a_j}\bigl(f^{b_{j-1}}(x_0)\bigr)$ is a $0$-vertex.
    
    Combining (\ref{e: i(x,f^k+l) = i(x,f^k) i(f^k(x),f^l)}) and condition~(i) above, we get
    \begin{equation*}
        (I_{n,K}N(f))^s\geq\prod_{i=1}^s\ind\bigl(f^{b_{i-1}}(x_0),f^{a_i}\bigr)\geq\ind\bigl(x_0,f^{b_s}\bigr).
    \end{equation*}
    Thus, by (\ref{e: i(x,f^m)>(N(f)In,K)^cD+1}) and condition~(ii) above, 
    \begin{equation*}
    (I_{n,K}N(f))^s\geq(N(f)I_{n,K})^{-1}\ind(x_0,f^m)>(N(f)I_{n,K})^{\card\cD_0},    
    \end{equation*}
    which implies $s>\card\cD_0$. Since all $f^{b_j}(x_0)$, $1\leqslant j\leqslant s$, are $0$-vertices, there exist integers $1\leqslant k<l\leqslant s$ such that $f^{b_k}(x_0)=f^{b_l}(x_0)$. Then $f^{b_k}(x_0)$ is a periodic point of $f$. Since $\ind\bigl(f^{b_k}(x_0),f^{b_l-b_k}\bigr)\geq I_{n,K}>1$, the forward orbit of $f^{b_k}(x_0)$ contains a branch point $x^*$, which is periodic.
\end{proof}

\begin{prop}\label{p: CPCF: UQR => bd multi}
	Let $f\:\mfd^n\to\mfd^n$, $n\geq3$, be an expanding {\itCel} branched cover on a {\clcnR} $n$-manifold. If $f$ is uniformly quasiregular, then
	$\sup\{\ind(x,f^m):m\in\N,\,x\in\mfd^n\}<+\infty$.
\end{prop}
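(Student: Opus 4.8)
The statement we must prove is Proposition~\ref{p: CPCF: UQR => bd multi}: for an expanding {\itCel} branched cover $f\colon\mfd^n\to\mfd^n$, $n\geq 3$, that is uniformly quasiregular, the forward index $\sup\{\ind(x,f^m):m\in\N,\,x\in\mfd^n\}$ is finite. The plan is to argue by contradiction: assume the supremum is infinite. Then by Lemma~\ref{l: itCel: inf multi + UQR => period branch}, $f$ has a periodic branch point, say $x^*$ of period $p$, so that $f^p(x^*)=x^*$ and $\ind(x^*,f^p)\geq 2$. The contradiction will come from showing that a periodic branch point is incompatible with expansion together with uniform quasiregularity.

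First I would exploit periodicity of the local index: since $f^p$ is uniformly $K$-quasiregular (with the same $K$ as $f$, since $f$ is uniformly quasiregular), and $\ind(x^*,(f^p)^k)=\ind(x^*,f^{pk})\geq \ind(x^*,f^p)^k\to+\infty$ as $k\to\infty$ (by the submultiplicativity/supermultiplicativity bounds in \eqref{e: i(x,f^k+l) = i(x,f^k) i(f^k(x),f^l)} applied along the periodic orbit). Here one uses that $f^{pk}$ fixes $x^*$ and that local index composes multiplicatively at a fixed point of the iterate. So for $k$ large, $\ind(x^*,f^{pk})\geq I_{n,K}$, where $I_{n,K}$ is the constant from Lemma~\ref{l: cellular + QR + big i(x,f) => vertex}. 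Now fix a {\celSeq} $\{\cD_m\}_{m\in\N_0}$ of $f$. Since $f^{pk}$ is $(\cD_{pk},\cD_0)$-cellular (Remark~\ref{r: basic cellular property}) and $K$-quasiregular, Lemma~\ref{l: cellular + QR + big i(x,f) => vertex} forces $x^*$ to be a $pk$-vertex for all large $k$. In particular $x^*$ is an $m$-vertex for infinitely many (hence, by pulling forward along the orbit, arbitrarily large) levels $m$.

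Next I would derive a contradiction with expansion. Since $x^*$ is a branch point and the orbit is periodic, there is a whole bi-infinite cellular structure around $x^*$: for each such level $m$ the flower $\flower_m(x^*)$ is an open neighborhood of $x^*$, and by Proposition~\ref{p: p-flowers: invariance} these flowers pull back under iteration. The key point is that $\ind(x^*,f^{pk})\geq 2^k$ grows without bound while, by Lemma~\ref{l: multiplicity: N_loc(f,x) and N(D,x)}, $\ind(x^*,f^{pk})\leq \NxD{x^*}{\cD_{pk}}\leq \NxD{x^*}{\cD_0}\cdot\card\cD_0$ — wait, more carefully: Lemma~\ref{l: multiplicity: N_loc(f,x) and N(D,x)} gives $\ind(x^*,f^m)\leq \NxD{x^*}{\cD_m}$, and iterating the inequality $\NxD{x}{\cD_{m+1}}\leq \NxD{f(x)}{\cD_m}\cdot\ind(x,f)$ along the periodic orbit one gets $\NxD{x^*}{\cD_{pk}}\leq \bigl(\max_p\NxD{\cdot}{\cD_0}\bigr)\cdot \ind(x^*,f^{pk})$, which is not yet a contradiction. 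So instead the contradiction must be geometric: I would use that the orbit $\{x^*,f(x^*),\dots,f^{p-1}(x^*)\}$ is a finite set, all of whose points are $0$-vertices (being images of a $pk$-vertex), and that the chambers of $\cD_{pk}$ containing $x^*$ shrink in diameter (expansion), yet their $f^{pk}$-images are chambers of $\cD_0$ of fixed size; combined with the index blow-up and Väisälä's inequality (as in Lemma~\ref{l: cellular + QR + big i(x,f) => vertex}, \cite[Ch.~III, Cor.~5.9]{Ri93}), applied on a fixed small continuum near $x^*$ mapped by the $K$-quasiregular $f^{pk}$, one gets $\inf_z \ind(z,f^{pk})<(nK)^{n-1}$ on that continuum, contradicting $\ind(x^*,f^{pk})\geq I_{n,K}>(nK)^{n-1}$ being attained at every point of a nondegenerate continuum through $x^*$ — this uses Corollary~\ref{c: multi: N(f,x)=const on int(c)}, that the index is constant on cell-interiors, and that a branch vertex lies in the cell-interior of positive-dimensional cells (from Lemma~\ref{l: cellular neighborhood: ...} or directly: if $x^*$ is a $pk$-vertex then the dimension-$\geq 1$ cells of $\cD_{pk}$ emanating from it have $x^*$ in their cell-boundary, not interior — so one instead takes a short segment into such a cell).

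The main obstacle is precisely the last step: reconciling "$x^*$ is forced to be a vertex at all large levels" with "the index at $x^*$ blows up under iteration." The cleanest route, which I would pursue, is: since $\ind(x^*,f^{pk})\geq I_{n,K}$, Lemma~\ref{l: cellular + QR + big i(x,f) => vertex} does \emph{not} directly apply to $x^*$ itself (it is already a vertex), but it does apply to nearby points — take any $1$-dimensional cell $c$ of $\cD_{pk}$ with $x^*\in\cellbound c$; by Corollary~\ref{c: multi: N(f,x)=const on int(c)} the index on $\stdCellint c$ equals a constant $i_c$, and by upper semicontinuity of local index (or \eqref{e: i(x,f)=Sum i(f,z)}) $i_c\leq \ind(x^*,f^{pk})$ but also there must be such a $c$ with $i_c\geq \ind(x^*,f^{pk})/(\text{number of cells at }x^*)$; choosing $k$ so large that this still exceeds $I_{n,K}$ — possible if the number of cells meeting $x^*$ stays bounded, which is exactly Corollary~\ref{c: multiplilcity: characterization of bounded N_loc} — wait, that corollary's hypothesis is what we are trying to prove. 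So instead I use the crude bound $\NxD{x^*}{\cD_{pk}}\leq \ind(x^*,f^{pk})\cdot\card\cD_0$ (Lemma~\ref{l: multiplicity: N_loc(f,x) and N(D,x)}), giving a positive-dimensional cell $c\ni x^*$ with $i_c\geq \ind(x^*,f^{pk})^{1/2}/\card\cD_0^{1/2}\to\infty$; then Lemma~\ref{l: cellular + QR + big i(x,f) => vertex} applied to a point of $\stdCellint c$ at level $pk$ forces that point to be a vertex — contradicting $\dim c\geq 1$. This closes the argument, and the delicate bookkeeping is ensuring the cell $c$ can be chosen positive-dimensional with index still $\geq I_{n,K}$, which the quantitative blow-up $\ind(x^*,f^{pk})\geq 2^k$ against the polynomial-in-index bound $\card\cD_0\cdot(\cdots)$ comfortably provides for $k$ large.
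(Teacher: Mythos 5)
Your first step is the same as the paper's: argue by contradiction and invoke Lemma~\ref{l: itCel: inf multi + UQR => period branch} to produce a periodic branch point $x^*$. After that, however, your argument has a genuine gap at its decisive step. You need a positive-dimensional cell $c\in\cD_{pk}$ at $x^*$ whose constant interior index satisfies $i_c\geq \ind(x^*,f^{pk})^{1/2}/\card(\cD_0)^{1/2}$, and you offer no derivation of this beyond citing the bound $\NxD{x^*}{\cD_{pk}}\leq \ind(x^*,f^{pk})\cdot\card\cD_0$, which is an upper bound on the number of chambers at $x^*$ and says nothing about indices on adjacent cell interiors. In fact no such transfer of index from a vertex to neighboring positive-dimensional cells exists: by Lemma~\ref{l: multi: i(x,f)=max card U(Y)} a huge index at $x^*$ only means many cells at $x^*$ map onto one common cell, and each of these cells can individually be unbranched or carry index $2$ on its interior. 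This is not a technicality: for $n\geq 3$ and fixed $K$, a $K$-quasiregular map can have arbitrarily large local index at a single point while the index along every nearby continuum stays bounded (cone-type examples, e.g.\ the cone over a degree-$d$ quasiregular self-map of $\Sp^{n-1}$ has dilatation independent of $d$ but index $d$ at the cone point). Rickman's continuum estimate, which is exactly what powers Lemma~\ref{l: cellular + QR + big i(x,f) => vertex}, bounds the index only on cell interiors of positive dimension; it forces a high-index point to be a vertex, but it cannot be turned around to bound the vertex index, which is precisely why the blow-up $\ind(x^*,f^{pk})\geq 2^k$ does not by itself contradict uniform quasiregularity through the cellular structure. (A smaller issue: the growth $\ind(x^*,f^{pk})\geq \ind(x^*,f^p)^k$ does not follow from the stated inequality \eqref{e: i(x,f^k+l) = i(x,f^k) i(f^k(x),f^l)}; it is true by multiplicativity of the local index at a fixed point, but that needs to be said.)

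The paper closes the argument by leaving combinatorics entirely at this point and using the Fatou--Julia theory of uniformly quasiregular maps: a periodic branch point of a UQR map is superattracting, so the Fatou set $\cF(f)$ is non-empty; $\cF(f)$ is open and totally invariant, while expansion gives topological exactness by Proposition~\ref{p: expans=>LEO}, so some iterate maps $\cF(f)$ onto all of $\mfd^n$, forcing $\cF(f)=\mfd^n$; this contradicts the fact that a non-injective UQR map on a closed manifold has non-empty Julia set. If you want to salvage your combinatorial route you would need a mechanism producing high index on a nondegenerate continuum (not just at the vertex $x^*$), and the examples above show this cannot come for free from the index blow-up alone; the dynamical (Fatou--Julia) input is what replaces it in the paper's proof of Proposition~\ref{p: CPCF: UQR => bd multi}.
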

\begin{proof}
    Assume that $f$ is uniformly quasiregular.
	We argue by contradiction and assume $\sup\{\ind(x,f^m):m\in\N,\,x\in\mfd^n\}=+\infty$. Then by Lemma~\ref{l: itCel: inf multi + UQR => period branch}, there exists a periodic point $x^*\in B_f$, which is in a super-attracting basin in the Fatou set $\cF(f)$ of $f$ (cf.~\cite{HMM04}). Thus, $\cF(f)\neq\emptyset$.
	
	By Proposition~\ref{p: expans=>LEO}, $f$ is topologically exact. 
	Consequently, since $\cF(f)$ is a non-empty open set that is totally invariant for $f$ (cf.~\cite{HMM04}), $\cF(f)=f^k(\cF(f))=\mfd^n$ for some $k\in\N$. However, this is a contradiction, because $f$ being a non-injective uniformly quasiregular map on a {\clcnR} $n$-manifold implies that the Julia set $\cJ(f)=\mfd^n\smallsetminus \cF(f)$ is non-empty (cf.~\cite{IM96, HMM04}).
\end{proof}

A key observation in the proof of Theorem~\ref{t: cellular UQR => Lattes} is the following relative separation property of {\celSeq}s. 

\begin{prop}\label{prop: UQR + bd mul (no need for Mkv) => dist/diam>}
	Let $f\:\mfd^n\to\mfd^n$, $n\geq 3$, be an expanding {\itCel} branched cover on a {\clcnR} $n$-manifold $(\mfd^n,\dg)$, and $\{\cD_m\}_{m\in\N_0}$ be a {\celSeq} of $f$. 
	If $f$ is uniformly quasiregular, 
	then there exists a constant $\lambda\in(0,1]$ such that, for all $m\in\N_0$ and $X,\,Y\in\cD_m^{[n]}$ with $X\cap Y=\emptyset$, we have
	$\dist_{\dg}(X,Y) \geq \lambda \diam_{\dg} X$.
\end{prop}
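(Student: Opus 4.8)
The statement is a ``relative separation'' property: disjoint $m$-chambers are separated by a definite fraction of their diameters, uniformly in $m$. The plan is to argue by contradiction, reducing to a renormalized statement at level $0$ via the cellular structure and then invoking the boundedness of multiplicity (Proposition~\ref{p: CPCF: UQR => bd multi}) together with a modulus estimate for quasiregular maps.

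\textbf{Step 1: Reduction to bounded level.} Since $f$ is expanding, fix $m_0\in\N$ with $\mesh(\cD_{m_0})$ smaller than the Lebesgue number $\delta_0$ of $\coverF(\cD_0)$. For $m\le m_0$ there are only finitely many pairs of disjoint chambers, so $\dist_{\dg}(X,Y)/\diam_{\dg}X$ is bounded below by a positive constant $\lambda_0$ in that range. It therefore suffices to treat $m>m_0$. Writing $h\=f^{m-m_0}$, which is $(\cD_m,\cD_{m_0})$-cellular, I want to transport the problem to level $m_0$. The obstruction is that $h$ need not be bi-Lipschitz, so ratios of distances are not preserved; instead I will control the pullback using the modulus of a curve family, which $K$-quasiregular maps distort by at most the factor $K$.

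\textbf{Step 2: Two separating continua and a curve family.} Fix $X\in\cD_m^{[n]}$ and $Y\in\cD_m^{[n]}$ with $X\cap Y=\emptyset$. Using Corollary~\ref{c: CBC: connected set in flower}, the images under $h$ of $X$, of $Y$, and of any small connecting set cannot simultaneously live in a single $m_0$-flower when $X,Y$ are ``far'' combinatorially; this forces $Y\notin\cU^1_m(X)$ and even $Y\notin\cU^2_m(X)$ after increasing $m_0$ so that $J(\cD_{k},\cD_0)>2$ for $k\ge m_0$ (Lemma~\ref{l: expansion: J_m--->+infty}). Now apply Lemma~\ref{l: cellular neighborhood: continua A and B} (with a suitable $k$, say $k=m-m_0$ or any admissible value): it produces continua $A$ and $B$ with $X\subseteq A\subseteq U^1_m(X)$, $\partial U^1_m(X)\subseteq B\subseteq U^2_m(X)$, and $A\cap B=\emptyset$. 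In particular $X\subseteq A$ and $Y\subseteq \mfd^n\smallsetminus U^1_m(X)$, so $Y$ lies in the unbounded complementary region; the continuum $B$ separates $A$ (hence $X$) from $Y$. Let $\Gamma$ be the family of curves in $U^2_m(X)$ joining $A$ to $B$. Since $B$ separates $X$ from $Y$ inside $U^2_m(X)$, any curve from $X$ to $\partial U^1_m(X)$ — a fortiori any curve realizing $\dist_{\dg}(X,\partial U^1_m(X))$ — meets $B$, and by Lemma~\ref{l: cel Nbhd: U^1(X)}~(iii) this distance is a lower bound for $\dist_{\dg}(X,Y)$.

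\textbf{Step 3: Modulus estimates, upstairs and downstairs.} On the one hand, a standard estimate bounds $\modul(\Gamma)$ from above in terms of $\dist_{\dg}(A,B)$ and $\diam_{\dg}(U^2_m(X))$, and $\diam_{\dg}(U^2_m(X))\le C\diam_{\dg}X$ by Lemma~\ref{l: cel Nbhd: fwd invariance} and the uniform bound $\card\cU^2_m(X)\le N$ from Lemma~\ref{l: cellular neighborhood: bd multi =>card(U^2)<N} (which uses Proposition~\ref{p: CPCF: UQR => bd multi}); combined with $\dist_{\dg}(A,B)\le \dist_{\dg}(X,\partial U^1_m(X))$ one gets $\modul(\Gamma)\lesssim \bigl(\dist_{\dg}(X,Y)/\diam_{\dg}X\bigr)^{-(n-1)}$ up to constants. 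On the other hand, since $h=f^{m-m_0}$ restricted to the flower $\flower_{m-m_0}(p)\supseteq U^2_m(X)$ (obtained as in the proof of Lemma~\ref{l: cellular neighborhood: continua A and B}) is a $K$-quasiregular homeomorphism onto its image, the $K_O$-inequality gives $\modul(\Gamma)\ge K^{-1}\modul(h\Gamma)$, where $h\Gamma$ joins the continua $h(A)\supseteq h(X)\in\cD_{m_0}^{[n]}$ and $h(B)$, both nondegenerate sets of diameter comparable to $\mesh(\cD_{m_0})$ lying in the fixed cell decomposition $\cD_{m_0}$ inside a bounded region. There are only finitely many level-$m_0$ configurations, so $\modul(h\Gamma)\ge c_1>0$ for a constant $c_1$ depending only on $f,\cD_0,m_0$. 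Chaining the two inequalities yields $\bigl(\dist_{\dg}(X,Y)/\diam_{\dg}X\bigr)^{-(n-1)}\gtrsim c_1$, i.e.\ $\dist_{\dg}(X,Y)\ge\lambda\diam_{\dg}X$ with $\lambda$ uniform, contradicting nothing but rather directly proving the claim; taking $\lambda\=\min\{\lambda_0,\lambda_1,1\}$ over the two ranges finishes the proof.

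\textbf{Main obstacle.} The delicate point is Step~3: getting a \emph{uniform} lower bound on $\modul(h\Gamma)$ downstairs. This needs the separation of the downstairs continua $h(A)$ and $h(B)$ to be controlled from below uniformly, which in turn is exactly what Lemma~\ref{l: cellular neighborhood: continua A and B} provides (it is built from finitely many level-$0$ data through the marking), and it needs $U^2_m(X)$ to be swallowed by a single pulled-back flower so that $h$ there is a homeomorphism and the $K_O$-inequality applies cleanly — this uses $J(\cD_k,\cD_0)>2$ and the bounded-multiplicity bound $\card\cU^2_m(X)\le N$. Once these combinatorial inputs are in place, the quasiregular modulus inequality does the rest; the geometric modulus estimates themselves are routine.
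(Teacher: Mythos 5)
Your overall architecture (reduce to $m>m_0$, build the continua $A,B$ from Lemma~\ref{l: cellular neighborhood: continua A and B}, compare $\dist_{\dg}(X,Y)$ with $\dist_{\dg}(A,B)$ through $\partial U^1_m(X)$ via Lemma~\ref{l: cel Nbhd: U^1(X)}~(iii), and transport a modulus estimate to level $m_0$ by $h=f^{m-m_0}$) is the same as the paper's. But Step~3 has the two modulus inequalities paired in the wrong directions, and as written the argument proves the opposite inequality. From your bounds $\modul(\Gamma)\lesssim(\dist/\diam)^{-(n-1)}$ and $\modul(\Gamma)\geq K^{-1}\modul(h\Gamma)\geq K^{-1}c_1$ the only conclusion is $(\dist/\diam)^{-(n-1)}\gtrsim c_1$, i.e.\ $\dist_{\dg}(X,Y)\leq C\,\diam_{\dg}X$ --- an upper bound, not the claimed lower bound. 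The correct pairing (and the paper's) is: a \emph{lower} bound $\modul(\Delta(E,F))\geq c_n\log\bigl(1+\diam E/\dist(E,F)\bigr)$, which blows up if the separation is too small relative to $\diam E$, against a \emph{uniform upper} bound on $\modul(\Gamma)$ obtained from the Rickman--V\"ais\"al\"a inequality $\modul(\Gamma)\leq K L^{4n} N(h,U)\,\modul(h\Gamma)$ of Theorem~\ref{t: BQS and UQR: modul char for QR} (this is exactly where Proposition~\ref{p: CPCF: UQR => bd multi} and Lemma~\ref{l: cellular neighborhood: bd multi =>card(U^2)<N} are used, to get $N(h,U)\leq N$), together with $\modul(h\Gamma)\leq m(V)(\delta_0/L)^{-n}$, which holds because every curve of $\Gamma$ joins the disjoint unions of $(m+k_0)$-chambers $A$ and $B$, so its image under $f^{m-m_0}$ cannot lie in a single $(m_0+k_0)$-flower and hence has diameter at least the Lebesgue number $\delta_0$, while lying in a fixed bounded chart image $V\subseteq\B^n$.

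Two of your ingredients are also unjustified on their own terms. First, the uniform lower bound $\modul(h\Gamma)\geq c_1$ does not follow from ``finitely many level-$m_0$ configurations'': $h\Gamma$ is only a subfamily of $\Delta(h(A),h(B))$, and modulus is monotone, so a positive lower bound for the full joining family says nothing about the image family; moreover $h(B)$ depends on $m$, and $h$ restricted to an $(m-m_0)$-flower need not be a homeomorphism (at a branch vertex the flower maps with local degree $>1$), so the inequality you invoke is Poletsky's $K_I$-inequality --- true, but not the direction the proof needs; the needed direction carries the multiplicity factor $N(h,U)$. Second, your upper bound $\modul(\Gamma)\lesssim(\dist(X,Y)/\diam X)^{-(n-1)}$ requires $m\bigl(U^2_m(X)\bigr)\lesssim(\diam_{\dg}X)^n$, i.e.\ that chambers adjacent to $X$ have diameters comparable to $\diam_{\dg}X$; the cardinality bound $\card\bigl(\cU^2_m(X)\bigr)\leq N$ does not give this, and such comparability is not available at this stage (it is essentially a consequence of the proposition, proved in the paper only under the BQS hypothesis in Lemma~\ref{l: QS unif: QV approx}); there is also a monotonicity slip in replacing $\dist(A,B)$ by the larger quantity $\dist(X,Y)$ inside an upper bound. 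The paper's estimate of the image family avoids all of this: it uses only that the image curves are uniformly long and contained in a bounded set, with no comparison between $\diam X$ and its neighbors.
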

\begin{proof}
	Suppose that $f$ is uniformly $K$-quasiregular for a constant $K \geq  1$.
	We fix $L\in[1,2]$ and an atlas $\{(U_\alpha,\phi_\alpha)\}_{\alpha\in\sA}$ for which each coordinate map $\phi_\alpha\:U_\alpha\to \phi_\alpha(U_\alpha)\subseteq\B^n$ is $L$-bi-Lipschitz.
	
	Let $m_0,\,k_0\in\N$ be constants given by Lemma~\ref{l: cellular neighborhood: continua A and B}. We may also assume $m_0$ is so large that if $m \geq  m_0$,
	then for each $X\in\cD_m^{[n]}$, $U^2_m(X)$ is contained in some chart among $\{(U_\alpha,\phi_\alpha)\}_{\alpha\in\sA}$.
	Set 
	\begin{equation*}
	\lambda_0\=\min\bigl\{\dist_{\dg}(X,Y)\big/\diam_{\dg} X: 0\leq l \leq  m_0,\,X,\,Y\in\cD_l^{[n]},\,X\cap Y=\emptyset\bigr\}.
    \end{equation*}
	Now it suffices to give a constant $\lambda_1>0$ such that $\dist_{\dg}(X,Y) \geq \lambda_1\diam_{\dg} X$ for all $X,\,Y\in\cD_m^{[n]}$ satisfying $m \geq  m_0$ and $X\cap Y=\emptyset$. For this, we utilize the moduli of curve families (see~Appendix~\ref{Ap: moduli} for a discussion) to give metric estimates.
	
	Fix arbitrary $m\in\N$ with $m>m_0$ and $X\in\cD_m^{[n]}$. Denote $X_0\=f^{m-m_0}(X)\in\cD_{m_0}^{[n]}$. 
    Then by Lemma~\ref{l: cellular neighborhood: continua A and B}, we can find continua $A$ and $B$ having the following properties:
	\begin{enumerate}[label=(\roman*),font=\rm]
    	\smallskip
		\item Both $A$ and $B$ are unions of chambers in $\cD_{m+k_0}^{[n]}$.
		\smallskip
		\item $X\subseteq A\subseteq U_m^1(X)$ and $\partial U_m^1(X)\subseteq B\subseteq U_m^2(X)$.
		\smallskip
		\item $A\cap B=\emptyset$. 
	\end{enumerate}

    Since $m>m_0$, $U^2_m(X)$ and $U^2_{m_0}(X_0)$ are contained in charts among $\{(U_\alpha,\phi_\alpha)\}_{\alpha\in\sA}$.
    Let $\phi$ and $\psi$ be $L$-bi-Lipschitz coordinate maps in $\{\phi_\alpha\}_{\alpha\in\sA}$ that map $U^2_m(X)$ and $U^2_{m_0}(X_0)$, respectively, into $\B^n$. Since $f^{m-m_0}\bigl(U_m^2(X)\bigr)\subseteq U^2_{m_0}(X_0)$ (cf.~Lemma~\ref{l: cel Nbhd: fwd invariance}),
	we may consider the map
	\begin{equation*}
	h\:\phi\bigl(U^2_m(X)\bigr)\to\psi\bigl(U^2_{m_0}(X_0)\bigr),\quad x\mapsto\psi\bigl(f^{m-m_0}\bigl(\phi^{-1}(x)\bigr)\bigr).
    \end{equation*}
	Since $f^{m-m_0}$ is $K$-quasiregular and $\phi,\,\psi$ are $L$-bi-Lipschitz, $h$ is $\bigl(KL^{4n}\bigr)$-quasiregular.
	
    Denote $D\=\phi\bigl(U^1_m(X)\bigr)$, $U\=\phi\bigl(U^2_m(X)\bigr)$, $V\=\psi\bigl(U^2_{m_0}(X_0)\bigr)$, $E\=\phi(A)$, and $F\=\phi(B)$. Then $D,\,U,\,V$ are domains for which $\cls{D}\subseteq U\subseteq\B^n$ (cf.~Lemma~\ref{l: cel Nbhd: U^1(X)}~(i) and~(ii)) and $h(U)\subseteq V\subseteq\B^n$. By the choice of $A,\,B$, it is clear that $E,\,F$ are disjoint continua in $U$ satisfying $E\subseteq D$ and $\partial D\subseteq F$. 
   
    The properties $\cls{D}\subseteq\B^n$, $E\subseteq D$, and $\partial D\subseteq F$ imply $\diam E \leq  \diam F$. 
    Thus, by {\cite[Lemma~7.38]{Vu88}}, we have the following lower bound for the modulus of the curve family $\Delta(E,F)$ (see Appendix~\ref{Ap: moduli} for definition):
	\begin{equation}\label{e: UQR: MOD>min diam/dist}
		\modul(\Delta(E,F)) \geq  c_n\log\biggl(1+\frac{\min\{\diam E,\,\diam F\}}{\dist(E,F)}\biggr) = c_n\log\biggl(1+\frac{\diam E}{\dist(E,F)}\biggr),
	\end{equation}
    where $c_n>0$ is a constant depending only on $n$.
	Since $\partial D\subseteq F$ and by Lemma~\ref{l: there is a sub-curve}, for each curve $\gamma\in\Delta(E,F)$, there is a sub-curve of $\gamma$ in the curve family $\Delta(E,F;D)$ (see Appendix~\ref{Ap: moduli} for definition). Thus, by {\cite[Lemma~5.3]{Vu88}}, we have
	\begin{equation}\label{e: UQR: MOD(E,F,D)}
		\modul(\Delta(E,F)) \leq \modul(\Delta(E,F;D)).
	\end{equation}
	
	Consider an arbitrary curve $\gamma\in\Delta(E,F;D)$. Since $\phi^{-1}(\gamma)\in\Delta\bigl(A,B; U^1_m(X)\bigr)$ meets both $A$ and $B$, which are disjoint unions of cells in $\cD_{m+k_0}^{[n]}$ (see properties (i) and (iii) of $A,\,B$), by Corollary~\ref{c: CBC: connected set in flower}~(iii), $f^{m-m_0}\bigl(\phi^{-1}(\gamma)\bigr)$ cannot be contained in any $(m_0+k_0)$-flower. This implies
	$\diam_{\dg}\bigl(f^{m-m_0}\bigl(\phi^{-1}(\gamma)\bigr)\bigr) \geq \delta_0$, 
	where $\delta_0>0$ is the Lebesgue number of the open cover $\coverF(\cD_{m_0+k_0})$. Since
	$\psi$ is $L$-bi-Lipschitz, the curve $h(\gamma)=\psi\bigl(f^{m-m_0}\bigl(\phi^{-1}(\gamma)\bigr)\bigr)$ satisfies $\diam(h(\gamma)) \geq  \delta_0/L$.
    So
    \begin{equation*}
        h(\Delta(E,F;D))\subseteq\{\zeta:\zeta\text{ is a curve in }V,\,\length(\zeta) \geq \delta_0/L\},
    \end{equation*}
	and thus, by {\cite[Lemmas~5.2~(ii) and~5.5]{Vu88}} and the fact that $V\subseteq\B^n$,
    \begin{equation}\label{e: M(h(Delta))}
        \modul(h(\Delta(E,F;D))) \leq  m(V)\cdot(\delta_0/L)^{-n} \leq \alpha_n\cdot(\delta_0/L)^{-n},
    \end{equation}
	where $m(\cdot)$ stands for the $n$-dimensional Lebesgue measure and $\alpha_n\=m(\B^n)$.
    
    By Proposition~\ref{p: CPCF: UQR => bd multi}, $\sup\bigl\{\ind\bigl(x,f^j\bigr):j\in\N,\,x\in\mfd^n\bigr\}<+\infty$.
    Let $N\in\N$ be the constant given by Lemma~\ref{l: cellular neighborhood: bd multi =>card(U^2)<N}, for which $\card\bigl(\cU^2_m(X)\bigr)\le\sup\bigl\{\card\bigl(\cU^2_l(Z)\bigr):l\in\N_0,Z\in\cD_l^{[n]}\bigr\}\le N$. Then since $f^{m-m_0}$ is injective on each $m$-chamber, $N(h,U)=N\bigl(f^{m-m_0},U^2_m(X)\bigr) \leq  N$. Thus, applying Theorem~\ref{t: BQS and UQR: modul char for QR} to the $\bigl(KL^{4n}\bigr)$-quasiregular map $h\:U\to V$ and using (\ref{e: M(h(Delta))}), we have
	\begin{equation}\label{e: UQR: MOD < fMOD}
		\begin{aligned}
			\modul(\Delta(E,F;D)) \leq  KL^{4n}\cdot N(h,U)\cdot\modul(h(\Delta(E,F;D)))
			 \leq \alpha_nNKL^{5n}\delta_0^{-n}.
		\end{aligned}
	\end{equation}
	Combining (\ref{e: UQR: MOD>min diam/dist}), (\ref{e: UQR: MOD(E,F,D)}), and (\ref{e: UQR: MOD < fMOD}), we obtain
	$c_n\log(1+{\diam E}/{\dist(E,F)}) \leq \alpha_nNKL^{5n}\delta_0^{-n}$, 
	and thus
	$\dist(E,F)/\diam E \geq  \bigl(\exp\bigl\{c_n^{-1}\alpha_nNKL^{5n}\delta_0^{-n}\bigr\}-1\bigr)^{-1}$.
	Since $\phi$ is $L$-bi-Lipschitz,
	\begin{equation*}
	\dist_{\dg}(A,B)/\diam_{\dg} A \geq  L^{-2}\bigl(\exp\bigl\{c_n^{-1}\alpha_nNKL^{5n}\delta_0^{-n}\bigr\}-1\bigr)^{-1}.
\end{equation*}
	Denote $\lambda_1\=L^{-2}\bigl(\exp\bigl\{c_n^{-1}\alpha_nNKL^{5n}\delta_0^{-n}\bigr\}-1\bigr)^{-1}$.
	By Lemma~\ref{l: cel Nbhd: U^1(X)}~(iii), for each $Y\in\cD_m^{[n]}$ satisfying $Y\cap X=\emptyset$,
	\begin{equation*}
	\begin{aligned}
		\frac{\dist_{\dg}(X,Y)}{\diam_{\dg} X} \geq \frac{\dist_{\dg}(X,\partial U^1_m(X))}{\diam_{\dg} X} \geq \frac{\dist_{\dg}(A,B)}{\diam_{\dg}A} \geq  \lambda_1.
	\end{aligned}
\end{equation*}
	Defining $\lambda\=\min\{\lambda_0,\,\lambda_1,\,1\}$, we obtain the constant desired.
\end{proof}

\begin{cor}\label{c: UQR+bd multi=>dist/diam Fl>}
	Let $f\:\mfd^n\to\mfd^n$, $n\geq 3$, be an expanding {\itCel} branched cover on a {\clcnR} $n$-manifold $(\mfd^n,\dg)$, and $\{\cD_m\}_{m\in\N_0}$ be a {\celSeq} of $f$.
    If $f$ is uniformly quasiregular, 
	then there exists a constant $\tlambda\in(0,1)$ such that for all $m\in\N_0$, $m$-vertex $p$, and $Y\in \cD_m^{[n]}$ with $Y\cap\cls{\flower_m(p)}=\emptyset$, we have $\dist\bigl(Y,\cls{\flower_m(p)}\bigr) \geq \tlambda\diam\cls{\flower_m(p)}$.
\end{cor}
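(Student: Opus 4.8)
The plan is to reduce the statement to Proposition~\ref{prop: UQR + bd mul (no need for Mkv) => dist/diam>} together with a uniform comparability of the $\dg$-diameters of the $n$-chambers of $\{\cD_m\}_{m\in\N_0}$ that share a common vertex. First I would discard the small levels: for any fixed $m_1$ there are only finitely many triples $(m,p,Y)$ with $m\le m_1$, $p$ an $m$-vertex and $Y\in\cD_m^{[n]}$ disjoint from $\cls{\flower_m(p)}$, and for each of them $\dist_{\dg}\bigl(Y,\cls{\flower_m(p)}\bigr)>0$ and $\diam_{\dg}\cls{\flower_m(p)}\in(0,+\infty)$, so the ratio is bounded below on this finite family and it is enough to produce a constant valid for all large $m$. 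Fix such an $m$ and an $m$-vertex $p$. By Corollary~\ref{c: x-flower structure}~(ii) write $\cls{\flower_m(p)}=X_1\cup\dots\cup X_s$ with $X_1,\dots,X_s\in\cD_m^{[n]}$ and $p\in X_i$ for each $i$. Since $f$ is uniformly quasiregular, Proposition~\ref{p: CPCF: UQR => bd multi} gives $\sup\{\ind(x,f^j):j\in\N,\,x\in\mfd^n\}<+\infty$, so by Corollary~\ref{c: multiplilcity: characterization of bounded N_loc} the number $s=\card\{X\in\cD_m^{[n]}:p\in X\}$ is bounded by a constant independent of $m$ and $p$. As each $X_i$ contains $p$, for $a\in X_i$ and $b\in X_j$ one has $\dg(a,b)\le\dg(a,p)+\dg(p,b)\le\diam_{\dg}X_i+\diam_{\dg}X_j$, whence
\[
\diam_{\dg}\cls{\flower_m(p)}\le 2\max_{1\le i\le s}\diam_{\dg}X_i .
\]

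If $Y\in\cD_m^{[n]}$ is disjoint from $\cls{\flower_m(p)}$, then $Y\cap X_i=\emptyset$ for every $i$, so Proposition~\ref{prop: UQR + bd mul (no need for Mkv) => dist/diam>} yields $\lambda\in(0,1]$, independent of $m,p,Y$, with $\dist_{\dg}(Y,X_i)\ge\lambda\diam_{\dg}X_i$, and therefore
\[
\dist_{\dg}\bigl(Y,\cls{\flower_m(p)}\bigr)=\min_{1\le i\le s}\dist_{\dg}(Y,X_i)\ge\lambda\min_{1\le i\le s}\diam_{\dg}X_i .
\]
Comparing the two displays, the corollary follows once one knows that there is $c_0\in(0,1]$, independent of $m$, such that $\diam_{\dg}W\le c_0^{-1}\diam_{\dg}X$ whenever $X,W\in\cD_m^{[n]}$ with $X\cap W\neq\emptyset$; indeed, applied to the pair among $X_1,\dots,X_s$ realizing the extreme diameters (both contain $p$) this gives $\min_i\diam_{\dg}X_i\ge c_0\max_i\diam_{\dg}X_i$, and then $\tlambda\=\tfrac12\lambda c_0$ works (together with the constant from the small levels).

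The real content is this comparability, which I would prove by a modulus argument in the spirit of the proof of Proposition~\ref{prop: UQR + bd mul (no need for Mkv) => dist/diam>}. Once the small levels are again set aside, fix $m$ large enough that $U_m^2(Z)$ lies in an $L$-bi-Lipschitz chart for every $Z\in\cD_m^{[n]}$, and fix $X,W\in\cD_m^{[n]}$ with $X\cap W\neq\emptyset$; note $X\subseteq\cls{U_m^1(W)}$ by Lemma~\ref{l: cel Nbhd: U^1(X)}~(i). One may reduce to $X,W$ being facet-adjacent, since any two $n$-chambers through $p$ are joined by a chain of facet-adjacent $n$-chambers through $p$ of length bounded by $s$ (removing the codimension-$\ge2$ skeleton near $p$ does not disconnect). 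The map $h\=f^{m-m_0}$ restricts to a $K$-quasiconformal homeomorphism on each $m$-chamber and, by Lemma~\ref{l: cellular neighborhood: bd multi =>card(U^2)<N} (using the finite forward index once more), satisfies $N\bigl(h,U_m^2(W)\bigr)\le N$. Working in the chart, one estimates the modulus of a curve family $\Gamma$ joining $X$ to the continuum $B\supseteq\partial U_m^1(W)$ produced by Lemma~\ref{l: cellular neighborhood: continua A and B} applied to $W$. On the one hand, since $\dist_{\dg}\bigl(W,\partial U_m^1(W)\bigr)\ge\lambda_1\diam_{\dg}W$ for a uniform $\lambda_1>0$ (this bound is contained in the proof of Proposition~\ref{prop: UQR + bd mul (no need for Mkv) => dist/diam>}), every curve of $\Gamma$ issues from a set of $\dg$-diameter $\le\diam_{\dg}X$ and runs to $\dg$-distance $\gtrsim\diam_{\dg}W$ from it, so a spherical-ring estimate bounds $\modul(\Gamma)$ by a quantity depending only on $n$ and on $\diam_{\dg}W/\diam_{\dg}X$ that tends to $0$ as this ratio tends to $\infty$. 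On the other hand, pushing $\Gamma$ forward by $h$, using that a curve meeting two disjoint cells of a deep decomposition has image joining opposite sides of $\cD_{m_0}$ (Corollary~\ref{c: CBC: connected set in flower}~(iii)), hence of diameter at least the Lebesgue number of $\coverF(\cD_{m_0})$, together with the quasiregular modulus inequalities (Theorem~\ref{t: BQS and UQR: modul char for QR}) and the bound $N$, one obtains $\modul(\Gamma)\ge\mu_0>0$ uniformly. Comparing the two bounds forces $\diam_{\dg}W/\diam_{\dg}X$ to be bounded by a constant depending only on $n,K,L,N,\mu_0$ and the relevant Lebesgue numbers, which is the required $c_0^{-1}$.

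I expect the main obstacle to be exactly this last step: pinning down the family $\Gamma$ (and the auxiliary continuum near $X$, chosen as in Lemma~\ref{l: cellular neighborhood: continua A and B} so that its deep-cell structure is disjoint from that of $B$) so that the Euclidean upper bound and the quasiregular lower bound genuinely apply to the \emph{same} $\Gamma$. Granting the comparability of chamber diameters, the assembly above is routine.
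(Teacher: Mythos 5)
Your reduction of the corollary to Proposition~\ref{prop: UQR + bd mul (no need for Mkv) => dist/diam>} plus a level-independent comparability of the diameters of chambers sharing a vertex is internally consistent, but that comparability is exactly the hard part, and your modulus sketch does not deliver it. The two tools you invoke both produce \emph{upper} bounds: Theorem~\ref{t: BQS and UQR: modul char for QR} gives $\modul(\Gamma)\le K\,N(h,A)\,\modul(h\Gamma)$, so it converts an upper bound on the image modulus into an upper bound on $\modul(\Gamma)$, and the observation that image curves join opposite sides of $\cD_{m_0}$ (hence have a definite diameter inside a bounded chart) bounds $\modul(h\Gamma)$ from \emph{above}. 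Neither yields the uniform lower bound $\modul(\Gamma)\ge\mu_0$ your comparison needs. A direct geometric lower bound, e.g.\ \cite[Lemma~7.38]{Vu88} applied to $X$ and the continuum $B$, gives $c_n\log\bigl(1+\min\{\diam X,\diam B\}/\dist(X,B)\bigr)$, which degenerates precisely in the regime $\diam W\gg\diam X$ that you are trying to exclude; the alternative would be a Poletsky-type inequality together with a path-lifting argument showing that a definite family of curves at level $m_0$ lifts into $\Gamma$, none of which is available in the paper. So the ``main obstacle'' you flag is a genuine gap, not a bookkeeping issue. (Your auxiliary claim that any two chambers through $p$ are joined by a chain of facet-adjacent chambers through $p$ is also unproved for the general cell decompositions used here.)

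The paper's own proof shows the whole detour is unnecessary: no comparability of adjacent chamber diameters is used. With $\lambda\in(0,1]$ from Proposition~\ref{prop: UQR + bd mul (no need for Mkv) => dist/diam>}, choose $X\in\cD_m^{[n]}$ with $p\in X$ realizing $\dist\bigl(Y,\cls{\flower_m(p)}\bigr)=\dist(X,Y)$, and $\hX\in\cD_m^{[n]}$ with $p\in\hX$ of maximal diameter among chambers containing $p$, so that $\diam\hX\ge\tfrac12\diam\cls{\flower_m(p)}$. Since $p\in X\cap\hX$ and $Y\cap\hX=\emptyset$, the triangle inequality through $p$ gives $\dist(X,Y)\ge\dist\bigl(\hX,Y\bigr)-\diam X\ge\lambda\diam\hX-\diam X$; if $\diam X<(\lambda/2)\diam\hX$ this yields $\dist(X,Y)\ge(\lambda/2)\diam\hX$, while otherwise $\dist(X,Y)\ge\lambda\diam X\ge(\lambda^2/2)\diam\hX$. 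In either case $\dist\bigl(Y,\cls{\flower_m(p)}\bigr)\ge(\lambda^2/4)\diam\cls{\flower_m(p)}$, so $\tlambda=\lambda^2/4$ works uniformly in $m$, $p$, $Y$, with no case distinction on the level and no extra modulus estimate. You should replace your comparability lemma by this two-case argument.
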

\begin{proof}
	Let $\lambda\in(0,1]$ be a constant, given by Proposition~\ref{prop: UQR + bd mul (no need for Mkv) => dist/diam>}, such that for all $m\in\N_0$ and $X,\,Y\in\cD_m^{[n]}$ with $X\cap Y=\emptyset$, we have
	$\dist(X,Y) \geq \lambda \diam X$.
	Then set $\tlambda\=\min\bigl\{\lambda/4,\,\lambda^2/4\bigr\}=\lambda^2/4$.
	
	Let $p$ be an $m$-vertex, and $Y\in \cD_m^{[n]}$ satisfy $Y\cap\cls{\flower_m(p)}=\emptyset$. By Proposition~\ref{p: c-flower: structure}~(ii), we can choose $X,\,\hX\in\cD_m^{[n]}$ in a such a way that $p\in X\cap \hX$, $\dist(X,Y)=\dist\bigl(\cls{\flower_m(p)},Y\bigr)$, and $\diam \hX=\max\bigl\{\diam X':X'\in\cD_m^{[n]},\,p\in X'\bigr\}\geq  (1/2) \diam \cls{\flower_m(p)}$.
	Consider the following cases according to $\diam X$ and $\diam\hX$.
	
	\smallskip
	
	{\it Case~1.} $\diam X<(\lambda/2)\diam \hX$. By the fact that $p\in X\cap\hX$ and Proposition~\ref{prop: UQR + bd mul (no need for Mkv) => dist/diam>}, we have
    $\dist(X,Y)+\diam X \geq \dist\bigl(\hX,Y\bigr) \geq \lambda\diam \hX$,
	and thus, by the choice of $X$ and $\hX$,
    \begin{equation*}
        \dist\bigl(\cls{\flower_m(p)},Y\bigr)=\dist(X,Y) \geq (\lambda/2)\diam \hX \geq (\lambda/4)\diam \cls{\flower_m(p)}\geq \tlambda\diam \cls{\flower_m(p)}.
    \end{equation*}
	
	\smallskip
	
	{\it Case~2.}
    $\diam X \geq (\lambda/2)\diam \hX$. Then by Proposition~\ref{prop: UQR + bd mul (no need for Mkv) => dist/diam>} and the choice of $\hX$, we have
    \begin{equation*}
        \dist(X,Y) \geq \lambda\diam X \geq \bigl(\lambda^2/2\bigr)\diam \hX \geq \bigl(\lambda^2/4\bigr)\diam \cls{\flower_m(p)}\geq \tlambda\diam \cls{\flower_m(p)} . \qedhere
    \end{equation*}
\end{proof}

We are now ready to prove Theorem~\ref{t: cellular UQR => Lattes}.

\begin{proof}[Proof of Theorem~\ref{t: cellular UQR => Lattes}]
    We argue by showing that every point in $\mfd^n$ is conical (see Definition~\ref{d: conical points}). 
    
	Let $\{\cD_m\}_{m\in\N_0}$ be a {\celSeq} of $f$. 
    For each $m\in\N_0$ and each $m$-vertex $p$, we denote 
    \begin{equation*}
	   \begin{aligned}
       W_m(p)\= U^1_m\bigl(\cls{\flower_m(p)}\bigr)=U^2_m(p),
	   \end{aligned}
    \end{equation*}
    where the equality holds by Proposition~\ref{p: c-flower: structure}~(ii). The following properties are clear:
    \begin{itemize}
        \smallskip
        \item [(a)] $W_m(p)\subseteq B(x,4\mesh(\cD_m))$ for each $x\in W_m(p)$,
        \smallskip
        \item [(b)] $\cls{\flower_m(p)}\subseteq W_m(p)$ (by Lemma~\ref{l: cell Nbhd: U^i(G)}~(i)), and
        \smallskip
        \item [(c)] $f^k(W_{m}(p))\subseteq W_{m-k}\bigl(f^k(p)\bigr)$ (by Lemma~\ref{l: cel Nbhd: fwd invariance})
    \end{itemize}
    for all $m,\,k\in\N_0$ with $k\leq m$ and $m$-vertex $p$.
    
    By Lemma~\ref{l: cellular neighborhood: int(cX) - cls, comp, bd}~(ii) and Corollary~\ref{c: UQR+bd multi=>dist/diam Fl>}, there exists a constant $\tlambda>0$ such that for all $m\in\N_0$ and $m$-vertex $p$ satisfying $\partial W_m(p)\neq\emptyset$,
    \begin{equation}\label{e: dist(F(p),dW(p))>}
        \dist\bigl(\cls{\flower_{m}(p)},\partial W_{m}(p)\bigr)\geq \tlambda\diam\cls{\flower_{m}(p)}.   
    \end{equation}

    Let $\{(U_\alpha,\phi_\alpha)\}_{\alpha\in\sA}$ be an atlas such that the coordinate maps $\phi_\alpha\:U_\alpha\to\B^n$, $\alpha\in\sA$, are $L$-bi-Lipschitz for some fixed $L>1$.
    
	Fix an arbitrary point $x\in\mfd^n$ and an $L$-bi-Lipschitz chart $(U,\phi)$ in $\{(U_\alpha,\phi_\alpha)\}_{\alpha\in\sA}$ that contains $x$. Since $f$ is expanding, fix a sufficiently large $m_0\in\N$ and a small constant $\delta_0>0$ such that
    \begin{enumerate}[label=(\roman*),font=\rm]
    	\smallskip
        \item $B(x,5\mesh(\cD_m))\subseteq U$ for each $m \geq  m_0$;
        \smallskip
        \item $10\mesh(\cD_{m_0})$ is strictly less than the Lebesgue number of 
        $\{U_\alpha\}_{\alpha\in\sA}$; and
        \smallskip
        \item $2\delta_0$ is strictly less than the Lebesgue numbers of $\coverF(\cD_{m_0})$.
    \end{enumerate}
     
     \begin{figure}[h]
     	\includegraphics[scale=0.3]{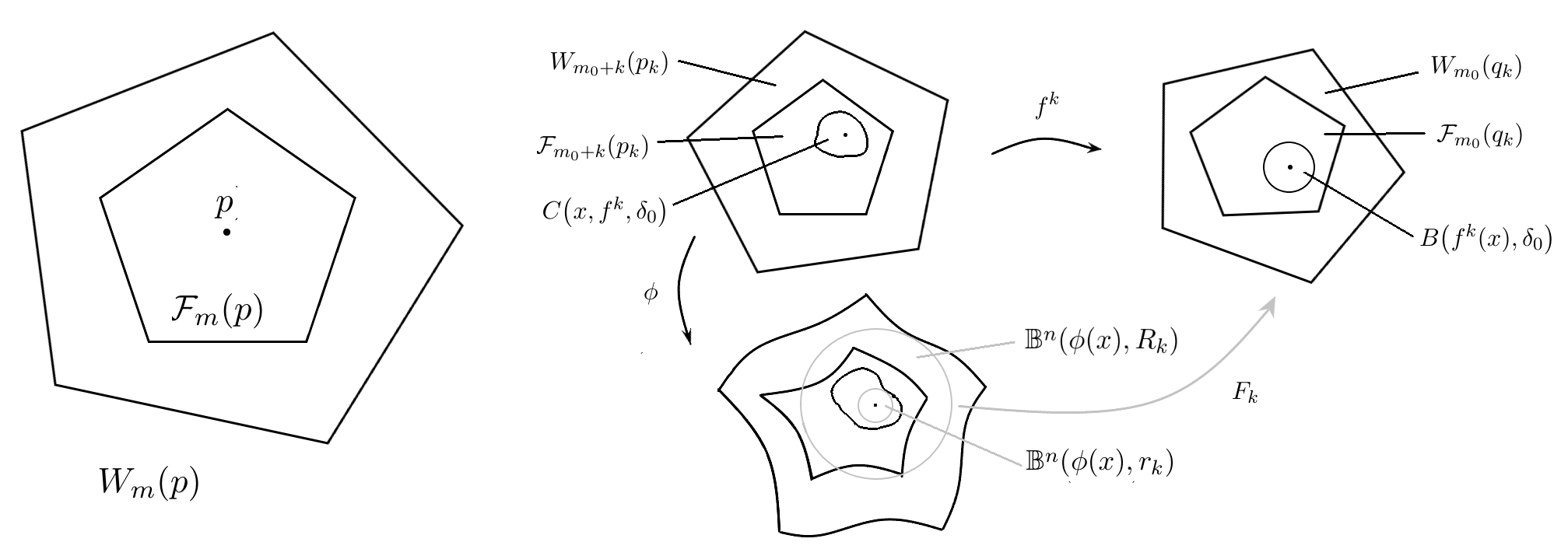}
        \caption{}
        \label{f: illustrate proof of main thm}
     \end{figure}
     
    Now fix arbitrary $k\in\N$.
    Choose an $m_0$-vertex $q_k$ such that $\flower_{m_0}(q_k)$ contains $\cls{B\bigl(f^k(x),\delta_0\bigr)}$, and, by Proposition~\ref{p: p-flowers: invariance}~(ii),
	an $(m_0+k)$-vertex $p_k\in f^{-k}(q_k)$ for which 
    $\flower_{m_0+k}(p_k)$ is the connected component of $f^{-k}(\flower_{m_0}(q_k))$ that contains $x$. 
    Let $C\bigl(x,f^k,\delta_0\bigr)$ be the connected component of $f^{-k}\bigl(B\bigl(f^k(x),\delta_0\bigr)\bigr)$ containing $x$. Then $\cls{C\bigl(x,f^k,\delta_0\bigr)}$ is contained in $\flower_{m_0+k}(p_k)$ since $\cls{B\bigl(f^k(x),\delta_0\bigr)}\subseteq \flower_{m_0}(q_k)$. Moreover, by the above properties~(a), (b) of $W_{m_0+k}(p_k)$ and (i) of $m_0$, we have
    \begin{equation*}
        \cls{C\bigl(x,f^k,\delta_0\bigr)}\subseteq\flower_{m_0+k}(p_k)\subseteq\cls{\flower_{m_0+k}(p_k)}\subseteq W_{m_0+k}(p_k)\subseteq\cls{W_{m_0+k}(p_k)}\subseteq U.
    \end{equation*}
	See Figure~\ref{f: illustrate proof of main thm} for an illustration of the construction above.
	
	Denote $r_k\=\dist\bigl(\phi(x),\partial\phi\bigl(C\bigl(x,f^k,\delta_0\bigr)\bigr)\bigr)$ and $R_k\=\dist(\phi(x),\partial\phi(W_{m_0+k}(p_k)))$. It is clear that $R_k-r_k\geq\dist\bigl(\cls{\phi\bigl(C\bigl(x,f^k,\delta_0\bigr)\bigr)},\partial\phi(W_{m_0+k}(p_k))\bigr)\geq \dist\bigl(\cls{\phi(\flower_{m_0+k}(p_k))},\,\partial \phi(W_{m_0+k}(p_k))\bigr)$.
    Thus, since $\cls{W_{m_0+k}(p_k)}\subseteq U$ guarantees that $\partial{W_{m_0+k}(p_k)}$ is non-empty and equal to $\phi^{-1}(\partial\phi(W_{m_0+k}(p_k)))$, we combine (\ref{e: dist(F(p),dW(p))>}) with the fact that $\phi$ is $L$-bi-Lipschitz to obtain
    \begin{equation*}
    	R_k-r_k \geq \dist\bigl(\cls{\phi(\flower_{m_0+k}(p_k))},\,\partial \phi(W_{m_0+k}(p_k))\bigr) \geq \tlambda L^{-2}\diam \bigl(\cls{\phi(\flower_{m_0+k}(p_k))}\bigr) \geq  \tlambda L^{-2} r_k.
    \end{equation*}
    
	Fix $0<\epsilon_0<\tlambda L^{-2}$. Then $\B^n(\phi(x),(1+\epsilon_0)r_k)\subseteq \phi(W_{m_0+k}(p_k))$. Define
	\begin{equation*}
	F_k\:\B^n\to f^k(W_{m_0+k}(p_k))\subseteq\mfd^n,\quad z\mapsto \bigl(f^k\circ\phi^{-1}\bigr)(\phi(x)+(1+\epsilon_0)r_k\cdot z).
    \end{equation*}
    By the choice of $r_k$, $\cls{\B^n(\phi(x),r_k)}$ meets $\partial \phi\bigl(C\bigl(x,f^k,\delta_0\bigr)\bigr)$, and we can choose a point $y_k\in\cls{\B^n(\phi(x),r_k)}\cap\partial \phi\bigl(C\bigl(x,f^k,\delta_0\bigr)\bigr)$. Since $C\bigl(x,f^k,\delta_0\bigr)$ is a connected component of $f^{-k}\bigl(B\bigl(f^k(x),\delta_0\bigr)\bigr)$, we have $f^k\bigl(\partial C\bigl(x,f^k,\delta_0\bigr)\bigr)\subseteq\partial B\bigl(f^k(x),\delta_0\bigr)$ (cf.~\cite[Chapter~I, Lemma~4.7]{Ri93}), and thus
    $\bigl(f^k\circ\phi^{-1}\bigr)(y_k)\in \partial B\bigl(f^k(x),\delta_0\bigr)$. 
    Denote 
	$z_k\=\frac{y_k-\phi(x)}{(1+\epsilon_0)r_k}\in\B^n$. Then
    \begin{equation}\label{e: dg(Fk(zk) Fk(0))=d/2}
    \dg(F_k(0),F_k(z_k))=\dg\bigl(f^k(x),f^k\bigl(\phi^{-1}(y_k)\bigr)\bigr)=\delta_0,    
    \end{equation}
    where $\dg$ is the Riemannian distance on $\mfd^n$.

	Consider the family $\{F_j:j\in\N\}$.
	Since $f$ is uniformly quasiregular, $\phi$ is bi-Lipschitz, and $z\mapsto (1+\epsilon_0)r_j\cdot z$ is a similarity transformation, there exists $K \geq  1$ such that all $F_j$, $j\in\N$, are $K$-quasiregular.
	
	Since $\cD_{m_0}$ is finite, we may fix an $m_0$-vertex $q$ and a subsequence $\{k_i\}_{i\in\N}$ of $\{k\}_{k\in\N}$ such that $q_{k_i}=q$, $i\in\N$. By the above properties~(a), (c) of $W_{m}(p)$ and~(ii) of $m_0$, there exists a bi-Lipschitz chart $(V,\psi)$ in $\{(U_\alpha,\phi_\alpha)\}_{\alpha\in\sA}$ such that for each $i\in\N$, $F_{k_i}(\B^n)\subseteq f^{k_i}(W_{m_0+k_i}(p_{k_i}))\subseteq W_{m_0}(q)\subseteq V$. Then it follows from Montel's theorem (cf.~\cite[Theorem~19.8.1]{IM01}) that, there is a subsequence of $\{F_{k_i}\}_{i\in\N}$ that converges locally uniformly in $\B^n$ to some $K$-quasiregular map $\Phi\:\B^n\to\mfd^n$ as $j\to+\infty$. Without loss of generality, we may assume $\{F_{k_i}\}_{i\in\N}$ itself converges locally uniformly in $\B^n$ to $\Phi\:\B^n\to\mfd^n$.

    Since, for each $i\in\N$, $z_{k_i}\in\cls{\B^n\bigl(0,\frac{1}{1+\epsilon_0}\bigr)}$, we can find a subsequence of $\{z_{k_i}\}_{i\in\N}$ converging to some $w^*\in\cls{\B^n\bigl(0,\frac{1}{1+\epsilon_0}\bigr)}$. Without loss of generality, we may assume $z_{k_i}\to w^*$ as $i\to+\infty$.
    Since $\{F_{k_i}\}_{i\in\N}$ converges locally uniformly in $\B^n$ to $\Phi$,
	there is a small open neighborhood $D$ of $w^*$ for which $F_{k_i}|_D$ converges uniformly to $\Phi|_D$ as $i\to+\infty$.
    
    Suppose $\Phi\equiv c\in\mfd^n$ is constant. Then when $i\in\N$ is sufficiently large, we have $\dg(F_{k_i}(0),c)<\delta_0/2$ and $\dg(c,F_{k_i}(z_{k_i}))<\delta_0/2$, which contradicts $\dg(F_{k_i}(0),F_{k_i}(z_{k_i}))=\delta_0$ (see (\ref{e: dg(Fk(zk) Fk(0))=d/2})). Hence, $\Phi$ is non-constant.

    Choose $\epsilon\in(0,1)$ for which $F_{k_i}|_{\epsilon\B^n}$ converges uniformly to $\Phi|_{\epsilon\B^n}$. Define $\rho_i\=(1+\epsilon_0)\epsilon r_{k_i}$. Then $\rho_i\to 0$ as $i\to+\infty$. Define $f_i\:\B^n\to\mfd^n$ by $f_i(z)\=F_{k_i}(\epsilon z)=\bigl(f^{k_i}\circ\phi^{-1}\bigr)(x+\rho_{i}z)$.
	Then $f_i$ converges uniformly to
	$\Psi\:\B^n\to\mfd^n$ given by $\Psi(z)\=\Phi(\epsilon z)$.
	
	Since $\Phi$ is a non-constant quasiregular map, it is discrete and open, which implies that $\Phi|_{\epsilon\B^n}$ and $\Psi$ cannot be constant. 
	Now we can see that $x$ is a conical point. By Corollary~\ref{c: conical: +measure conical=>Lattes} (see also \cite{MM03}), $f$ is a Latt\`es map. Since $f$ is expanding, $f$ is chaotic (cf.~Lemma~\ref{l: Lattes: top exact => chaotic}).
\end{proof}

\section{Metric theory of Latt\`es maps}
\label{sct: UEBQS <=> Lattes}
In this section, we investigate metrically defined classes of maps closely related to UQR maps, giving a characterization of (chaotic) Latt\`es maps on an {\orclcnR} $n$-manifold $\mfd^n$, $n \geq  3$, in terms of uniform expanding BQS maps (cf.~Definition~\ref{d: UEBQS}).

More precisely, we establish Theorem~\ref{tx: UeBQS-Lattes} via the following approach:
one of the implications in Theorem~\ref{tx: UeBQS-Lattes} is formulated as Theorem~\ref{t: UBQS=>Lattes}, and the converse follows from Proposition~\ref{p: CXC=>UBQS} and \cite[Theorem~4.4.3]{HP09}.
See~\cite[Section~4.4]{HP09} for a related discussion.

\subsection{Uniform expanding BQS maps are Latt\`es maps}\label{subsct: UEBQS=>Lattes}

Recall that BQS maps can be viewed as metric counterparts of quasiregular maps (cf.~Section~\ref{sct: intro} and Appendix~\ref{Ap: BQS=>QR}).
Our first observation is that orientation-preserving uniform expanding BQS maps are uniformly quasiregular.
\begin{lemma}\label{l: UEBQS => UQR}
    Let $f\:(\mfd^n,\dg)\to(\mfd^n,\dg)$, $n \geq 3$, be an orientation-preserving uniform expanding BQS map on an {\orclcnR} $n$-manifold. Then $f$ is uniformly quasiregular.
\end{lemma}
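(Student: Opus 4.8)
The plan is to deduce uniform quasiregularity from the branched quasisymmetry of the iterates via the modulus characterization of quasiregular maps recalled in the appendix (the result invoked as Theorem~\ref{t: BQS and UQR: modul char for QR}, cf.~the relation between BQS maps and quasiregular maps in Appendix~\ref{Ap: BQS=>QR}). Concretely, for each $m\in\N$ I would show that the iterate $f^m$ is $K$-quasiregular with $K$ independent of $m$. The starting point is the uniform expanding BQS data $(\cU,\eta)$ of $f$: by Definition~\ref{d: UEBQS}, for every $m$ and every $U\in(f^m)^*\cU$ the restriction $f^m|_U\colon U\to f^m(U)$ is $\eta$-BQS with the \emph{same} control function $\eta$. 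Since $\mfd^n$ is compact and $\cU$ is finite, each $f^m(U)$ lies in a bi-Lipschitz chart once we compose with $f^{k}$ for a fixed bounded $k$; more simply, using expansion we can fix $m_0$ so that $\mesh((f^{m})^*\cU)$ is small enough that each element of $(f^{m})^*\cU$ and its image under $f^{m}|_{(f^{m-m_0})^*\cU}$ sit inside charts, reducing everything to a local Euclidean statement.

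The key steps, in order: (1) Local reduction. Cover $\mfd^n$ by finitely many bi-Lipschitz charts $(U_\alpha,\phi_\alpha)$, $\phi_\alpha$ being $L$-bi-Lipschitz onto a subset of $\B^n$; choose $m_0$ (possible by expansion) so that for $m\geq m_0$ every $U\in(f^m)^*\cU$ and its image $f^m(U)$ lie in such charts. (2) BQS $\Rightarrow$ local quasiregularity. On each such $U$, conjugating $f^m|_U$ by the bi-Lipschitz charts gives an orientation-preserving, discrete, open map between Euclidean domains that is $\eta'$-BQS with $\eta'$ depending only on $\eta$ and $L$. Here I invoke the appendix result that an orientation-preserving BQS branched cover between Euclidean domains is quasiregular with dilatation bounded in terms of $\eta'$ (this is the ``$\mathrm{BQS}\Rightarrow\mathrm{QR}$'' implication, applied in the form of Theorem~\ref{t: BQS and UQR: modul char for QR}): the $\eta$-BQS inequality on continua translates, via the geometric/capacity definition of modulus, into the two-sided modulus inequality $\modul(\Gamma)/C \le \modul(f^m(\Gamma)) \le C\,\modul(\Gamma)$ for curve families $\Gamma$ in $U$, with $C=C(n,\eta',N(f^m|_U))$, and $N(f^m|_U)$ is bounded because $f^m|_U$ has bounded local index (a consequence that uniform expanding BQS maps have the ffi property, as remarked after Theorem~\ref{tx: UeBQS-Lattes}; alternatively one argues directly that $f|_U$ is a bounded-degree branched cover for $U$ in the fixed finite cover and then composes). (3) Patching. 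Since the members of $(f^m)^*\cU$ cover $\mfd^n$, and on each the conjugated map is $K$-quasiregular with $K=K(n,\eta,L,N)$ independent of $m$, and since quasiregularity is a local property and membership in $W^{1,n}_{\loc}$ is local, $f^m$ is globally $K$-quasiregular on $\mfd^n$ (using that $\mfd^n$ is closed so the finitely many local pieces suffice and no boundary issues arise). (4) Handle small $m<m_0$ separately: there are only finitely many such iterates, each quasiregular (again by the local BQS $\Rightarrow$ QR argument applied with a coarser chart cover), so enlarging $K$ absorbs them. Conclude $\sup_m \{K \colon f^m \text{ is } K\text{-QR}\}<\infty$, i.e.\ $f$ is UQR.

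The main obstacle I anticipate is \textbf{verifying the $W^{1,n}_{\loc}$-regularity and the precise uniform modulus bound} — i.e.\ making rigorous the step ``$\eta$-BQS branched cover between Euclidean domains $\Rightarrow$ quasiregular with controlled constant.'' BQS is an a priori weaker, purely metric hypothesis (it controls diameters of images of continua, not Sobolev regularity), so one must genuinely use the appendix machinery: the metric definition of modulus of curve families, the fact that BQS maps quasi-preserve modulus (which gives the analytic dilatation via the equivalence of geometric and analytic definitions of quasiregularity for discrete open maps, e.g.\ along the lines of the Heinonen--Koskela / Williams theory), and crucially the boundedness of the multiplicity $N(f^m|_U)$ so that the modulus inequality constant does not blow up with $m$. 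The cleanest route is to cite Theorem~\ref{t: BQS and UQR: modul char for QR} in the appendix, which presumably packages exactly this equivalence, and then the remaining work — the local-to-global patching, the chart bi-Lipschitz bookkeeping, and the finitely-many-small-$m$ caveat — is routine. A secondary technical point is ensuring the multiplicity bound: this follows since $f$, being a branched cover on a closed oriented manifold, has $N(f)=|\deg f|<\infty$ (Remark~\ref{r: bc on mfd is cls fbc}), hence $N(f^m|_U)\le N(f^m)\le N(f)^m$ a priori — but that grows with $m$; the fix is that for $U\in (f^m)^*\cU$ one has $f^m|_U$ injective or of uniformly bounded degree because $U$ is a connected component of a preimage of a \emph{fixed} small set $V\in\cU$, so $N(f^m|_U)=N(f^m,U)$ is controlled by the local index which, for uniform expanding BQS maps, is uniformly bounded (the ffi property). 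I would state this as a lemma, citing the ffi discussion, before applying the modulus characterization.
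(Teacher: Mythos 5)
Your overall strategy (uniform BQS control on every iterate plus the appendix ``BQS $\Rightarrow$ QR'' machinery gives a quasiregularity constant independent of $m$) is the right one, but as written there are two concrete gaps. First, you lean on the wrong appendix result. Theorem~\ref{t: BQS and UQR: modul char for QR} is Rickman's modulus \emph{characterization} of quasiregular maps; to use it you would have to establish the inequality $\modul(\Gamma)\leq K\,N(f^m,A)\,\modul(f^m\Gamma)$ for \emph{all} curve families from the purely metric hypothesis that $f^m$ distorts diameters of continua in an $\eta$-controlled way, together with membership in $W^{1,n}_{\loc}$. That passage from diameter distortion to a modulus inequality and Sobolev regularity is exactly the nontrivial analytic content you flag as ``the main obstacle,'' and it is not packaged in the theorem you cite. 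The result that does package it is Theorem~\ref{thm: LiP19: BQS=>QR}: a discrete, open, orientation-preserving \emph{local} $\eta$-BQS map is $K$-quasiregular with $K$ depending only on $\eta$ (its proof goes through the linear dilatation $H^*(x,f)$ and the metric definition of quasiregularity, with no modulus inequality and no multiplicity constant). With that statement the whole argument collapses: for each $k$, the Lebesgue number of the open cover $(f^k)^*\cU$ shows that $f^k$ is a local $\eta$-BQS map with the \emph{same} $\eta$, and $f^k$ is discrete, open, and orientation-preserving, so $f^k$ is $K(\eta)$-quasiregular for every $k$. This is the paper's proof; no chart bookkeeping, no patching, and no separate treatment of small $m$ is needed, since the constant from Theorem~\ref{thm: LiP19: BQS=>QR} already depends only on $\eta$.

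Second, the multiplicity bound you invoke to keep the constant in the modulus route from blowing up is circular in this paper's logical order. You propose to control $N(f^m|_U)$ by the finite-forward-index property of uniform expanding BQS maps, ``as remarked after Theorem~\ref{tx: UeBQS-Lattes}''; but that remark is a \emph{consequence} of Theorem~\ref{tx: UeBQS-Lattes}, whose proof (via Theorem~\ref{t: UBQS=>Lattes}) begins precisely by applying Lemma~\ref{l: UEBQS => UQR}. Your fallback, $N(f^m,U)\leq N(f)^m$, grows with $m$ as you note, and no independent argument bounding the local index of $f^m$ on elements of $(f^m)^*\cU$ is given. So either the constant in your step~(2) degenerates with $m$, or you assume a fact that is only available downstream of the lemma you are proving. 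Switching to Theorem~\ref{thm: LiP19: BQS=>QR} removes the need for any multiplicity bound and closes both gaps simultaneously.
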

\begin{proof}
    Let $(\cU,\eta)$ be uniform expanding BQS data of $f$.
    Then for each $k\in\N$, $f^k$ is a discrete, open, and orientation-preserving \emph{local $\eta$-BQS map} (cf.~Appendix~\ref{Ap: BQS=>QR} for definition). By Theorem~\ref{thm: LiP19: BQS=>QR}, there exists $K \geq  1$, depending only on $\eta$, such that for each $k\in\N$, the map $f^k$ is $K$-quasiregular. Hence, $f$ is uniformly quasiregular.
\end{proof}

\begin{theorem}\label{t: UBQS=>Lattes}
	An orientation-preserving uniform expanding BQS map $f\:(\mfd^n,\dg)\to(\mfd^n,\dg)$, $n \geq 3$, on an {\orclcnR} $n$-manifold is a chaotic Latt\`es map.
\end{theorem}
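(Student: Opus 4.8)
The plan is to deduce Theorem~\ref{t: UBQS=>Lattes} from the Martin--Mayer rigidity theorem (Corollary~\ref{c: conical: +measure conical=>Lattes}) by showing that \emph{every} point of $\mfd^n$ is a conical point of $f$ (see Definition~\ref{d: conical points}). By Lemma~\ref{l: UEBQS => UQR}, $f$ is uniformly $K$-quasiregular for some $K\geq1$; being an expanding branched cover, $f$ is topologically exact by Proposition~\ref{p: expans=>LEO}, so $\mfd^n$ is the Julia set of $f$. Hence, once full conicality is verified, Corollary~\ref{c: conical: +measure conical=>Lattes} shows $f$ is a Latt\`es map, and being expanding it is chaotic by Lemma~\ref{l: Lattes: top exact => chaotic}. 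The overall scheme parallels the proof of Theorem~\ref{t: cellular UQR => Lattes}, with the pullbacks $(f^m)^*\cU$ of a uniform expanding BQS datum $(\cU,\eta)$ replacing the cellular sequence and the branched quasisymmetry inequality replacing the curve-family modulus estimates; note that, since $\mesh((f^m)^*\cU)\to0$, every element of every $(f^m)^*\cU$ is a proper open subset of $\mfd^n$.

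Fix $x\in\mfd^n$ and an $L$-bi-Lipschitz chart $(U,\phi)$ at $x$. Choose $\delta_0>0$ smaller than the injectivity radius of $(\mfd^n,g)$, than $\diam\mfd^n$, and than a quarter of a Lebesgue number of $\cU$, so that for every $p\in\mfd^n$ the ball $B(p,\delta_0)$ is connected with $\delta_0\leq\diam B(p,\delta_0)\leq2\delta_0$, and there is $U_0(p)\in\cU$ with $B(p,2\delta_0)\subseteq U_0(p)$, hence $\dist(f^k(x),\partial U_0(f^k(x)))\geq2\delta_0$. For $k\in\N$ let $C_k$ and $\tU_k$ be the connected components through $x$ of $f^{-k}(B(f^k(x),\delta_0))$ and of $f^{-k}(U_0(f^k(x)))$, respectively. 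Then $C_k\subseteq\tU_k$; the map $f^k|_{\tU_k}$ is $\eta$-BQS by Definition~\ref{d: UEBQS}(ii); by Lemma~\ref{l: open+closed => f(tU)=U}, $f^k(C_k)=B(f^k(x),\delta_0)$ and $f^k(\tU_k)=U_0(f^k(x))$; since all the sets involved are proper, $\partial\tU_k\neq\emptyset$ with $f^k(\partial\tU_k)\subseteq\partial U_0(f^k(x))$ and $f^k(\partial C_k)\subseteq\partial B(f^k(x),\delta_0)$; and $\cls{C_k}\subseteq\tU_k$ with $\diam C_k\to0$, so $\tU_k\subseteq U$ for $k$ large.

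The technical heart---and the step I expect to be the main obstacle---is a ``roundness'' estimate for the fibers: with $c_0\=\eta^{-1}(1/2)>0$ one has $\dist(C_k,\partial\tU_k)\geq c_0\diam C_k$ and $\dist(x,\partial C_k)\geq c_0\diam C_k$ for all large $k$. For the first, take a minimizing geodesic $\gamma$ realizing $\dist(C_k,\partial\tU_k)$; it lies in the ball about its basepoint of radius equal to its length, hence inside $\tU_k$; its $f^k$-image joins $\cls{B(f^k(x),\delta_0)}$ to $\partial U_0(f^k(x))$, so has diameter $\geq\delta_0$, while $\diam f^k(\cls{C_k})\leq2\delta_0$; applying the $\eta$-BQS inequality to $\gamma$ and $\cls{C_k}$ then forces $\length(\gamma)\geq c_0\diam C_k$. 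For the second one argues by contradiction: if $\dist(x,\partial C_k)<c_0\diam C_k$, then by the first estimate the minimizing geodesic from $x$ to $\partial C_k$ is shorter than $\dist(x,\partial\tU_k)$, hence stays in $\tU_k$, meets $\partial C_k$ (so its image has diameter $\geq\delta_0$), and $\eta$-BQS again forces its length to be $\geq c_0\diam C_k$, a contradiction. Managing these interlocking ``the geodesic stays in $\tU_k$'' arguments---$\tU_k$ being non-convex and a priori not comparable in size to $C_k$---is the delicate point. Writing $r_k\=\dist(\phi(x),\partial\phi(C_k))$, the bi-Lipschitz bounds give $L^{-1}c_0\diam C_k\leq r_k\leq L\diam C_k$, so $r_k\to0$ and $r_k\asymp\diam C_k$.

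Finally, for $k$ large define $g_k\colon\B^n\to\mfd^n$ by $g_k(z)\=f^k(\phi^{-1}(\phi(x)+r_kz))$; this is legitimate since $\B^n(\phi(x),r_k)\subseteq\phi(C_k)\subseteq\phi(\tU_k)\subseteq\phi(U)$, and then $g_k(\B^n)\subseteq B(f^k(x),\delta_0)$. Since $f$ is uniformly $K$-quasiregular and $\phi$ is bi-Lipschitz, the $g_k$ are uniformly $K'$-quasiregular, so by Montel's theorem for quasiregular maps (\cite[Theorem~19.8.1]{IM01}) a subsequence $g_{k_i}$ converges locally uniformly on $\B^n$ to a map $\Psi$ that is $K'$-quasiregular or constant. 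If $\Psi$ were constant then $\diam g_{k_i}(\cls{\B^n(0,1/2)})\to0$; but $g_{k_i}(\cls{\B^n(0,1/2)})=f^{k_i}(E_i)$ with $E_i\=\phi^{-1}(\cls{\B^n(\phi(x),r_{k_i}/2)})\subseteq C_{k_i}$ a continuum satisfying $\diam E_i\asymp r_{k_i}\asymp\diam C_{k_i}$, and the $\eta$-BQS inequality for $E_i$ and $\cls{C_{k_i}}$ together with roundness gives $\delta_0\leq\diam f^{k_i}(\cls{C_{k_i}})\leq\eta\bigl(\diam C_{k_i}/\diam E_i\bigr)\diam f^{k_i}(E_i)\to0$, a contradiction. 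Hence $\Psi$ is a non-constant quasiregular map arising as a locally uniform limit of rescalings $f^{k_i}\circ\phi^{-1}(\phi(x)+r_{k_i}\,\cdot\,)$ with $r_{k_i}\to0$, so $x$ is a conical point (shrinking the domain by a fixed factor if Definition~\ref{d: conical points} requires it). Since $x$ was arbitrary, Corollary~\ref{c: conical: +measure conical=>Lattes} applies and the proof is complete.
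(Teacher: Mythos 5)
Your overall strategy coincides with the paper's: upgrade $f$ to a uniformly quasiregular map via Lemma~\ref{l: UEBQS => UQR}, show that \emph{every} point of $\mfd^n$ is conical by rescaling $f^k\circ\phi^{-1}$ along pullback neighborhoods and controlling distortion with the $\eta$-BQS inequality, and finish with Corollary~\ref{c: conical: +measure conical=>Lattes} and Lemma~\ref{l: Lattes: top exact => chaotic}. You differ only in implementation: the paper proves Lemma~\ref{l: exp UBQS => non-zero distortion at a point} (whose maximal-lift argument yields a definite ball $B\bigl(f^m(x),\tdelta\bigr)$ inside the image of the rescaled map, which is then used for non-constancy) and gets compactness from the BQS-equicontinuity estimate (\ref{e: equicontinuous}) plus Arzel\`a--Ascoli, whereas you prove two geodesic ``roundness'' inequalities for $C_k\subseteq\tU_k$ and obtain non-constancy by the BQS ratio comparison of $\cls{C_{k_i}}$ with the inner half-ball $E_i$. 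That part of your argument is correct and in fact bypasses the path-lifting step.

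Two points need repair. First, the normal-family step: you cannot invoke \cite[Theorem~19.8.1]{IM01} for the maps $g_k\colon\B^n\to\mfd^n$ exactly as written, since a family of uniformly $K$-quasiregular maps into a compact target need not be normal without some normalization (Montel-type theorems require the family to omit a set of positive capacity, or to be uniformly bounded inside a fixed Euclidean chart), and your images $g_k(\B^n)\subseteq B\bigl(f^k(x),\delta_0\bigr)$ sit in balls whose centers drift with $k$. The fix is easy and is exactly what the paper does: pass to a subsequence along which $f^{k}(x)$ converges (or pigeonhole over a finite atlas), so that, $\delta_0$ being below the injectivity radius, all images lie in one fixed bi-Lipschitz chart and the composed maps form a bounded family of quasiregular maps into $\R^n$; alternatively, derive equicontinuity directly from the BQS inequality as in (\ref{e: equicontinuous}) and use Arzel\`a--Ascoli together with \cite[Chapter~VI, Theorem~8.6]{Ri93} for quasiregularity of the limit. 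Second, a small domain issue in the first roundness estimate: the $\eta$-BQS inequality from Definition~\ref{d: UEBQS} applies to continua contained in $\tU_k$, while the minimizing geodesic realizing $\dist\bigl(C_k,\partial\tU_k\bigr)$ has its terminal point on $\partial\tU_k$; apply the inequality to a truncated geodesic and let the truncation parameter tend to $0$ (no truncation is needed in the second estimate, since there the endpoint lies in $\partial C_k\subseteq\cls{C_k}\subseteq\tU_k$). With these adjustments your proof goes through.
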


We first establish a technical lemma.

\begin{lemma}\label{l: exp UBQS => non-zero distortion at a point}
	Let $f\:(\mfd^n,\dg)\to(\mfd^n,\dg)$, $n \geq 3$, be an orientation-preserving uniform expanding BQS map with data $(\cU,\eta)$ on an {\orclcnR} $n$-manifold. Then there exist constants $m_0\in\N$ and $\alpha,\,\delta,\,\tdelta>0$ with the following property: for each $x\in\mfd^n$ and each $m\in\N$, there exists $U\in (f^{m_0+m})^*\cU$ such that $B(x,2\alpha\diam U)\subseteq U$ and
	\begin{equation*}
	\begin{aligned}
		B\bigl(f^m(x),\tdelta\bigr)\subseteq f^m(B(x,\alpha\diam U))
		\subseteq f^m(B(x,2\alpha\diam U))\subseteq B(f^m(x),\delta)\subseteq f^m(U).
	\end{aligned}
\end{equation*}
\end{lemma}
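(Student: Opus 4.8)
\textbf{Proof plan for Lemma~\ref{l: exp UBQS => non-zero distortion at a point}.}

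The plan is to extract the required constants from the expansion and uniformity data $(\cU,\eta)$ of $f$, using the fact that chambers in the pullback covers have comparable diameters to their ``children'' under bounded iteration, and that $f^m$ restricted to elements of $(f^{m_0+m})^*\cU$ is $\eta$-BQS. First I would fix $m_0\in\N$ so large that $\mesh\bigl((f^{m_0})^*\cU\bigr)$ is strictly less than the Lebesgue number $\delta_{\cU}$ of the cover $\cU$; this guarantees, via Lemma~\ref{l: open+closed => f(tU)=U} and Remark~\ref{r: forward inv of pullback}, that for each $x$ and each $m$ the element $U\in (f^{m_0+m})^*\cU$ containing $x$ is mapped by $f^m$ homeomorphically onto an element $V=f^m(U)\in (f^{m_0})^*\cU$ (here one uses that $f^m$ is injective on components of deep pullbacks only after passing to components, so more carefully: $f^m(U)\in (f^{m_0})^*\cU$ and $f^m|_U\colon U\to f^m(U)$ is a branched cover, but the BQS estimates below do not require injectivity). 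Since there are finitely many elements in $(f^{m_0})^*\cU$, the quantities $\diam V$, the Lebesgue number of $(f^{m_0})^*\cU$, and the ``thickness'' $\inf\{\dist(z,\partial V): z\in f^m(x),\ \ldots\}$ are all uniformly controlled from below, independent of $x$ and $m$.

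Next I would produce the inner radius constant $\alpha$. Choose $\alpha\in(0,1)$ so small that for every $W\in (f^{m_0})^*\cU$ and every point $w$ in the ``$(1/4)$-core'' of $W$ (points at $\dg$-distance at least $\frac14\delta_{\cU}$ from $\partial W$) one has $B(w,4\alpha\diam W)\subseteq W$; again finiteness of $(f^{m_0})^*\cU$ makes this possible with a single $\alpha$. For a given $x$ and $m$, among the finitely many elements of $(f^{m_0+m})^*\cU$ containing $x$ I would select $U$ to be one of largest diameter; then by expansion plus the Lebesgue-number choice, $x$ lies in the core of $U$ (else $x$ would be within $\mesh$ of $\partial U$ but still covered by another element of the pullback, forcing $\bigcup\{U'\ni x\}$ to be ``small'', which one checks against $\delta_{\cU}$), giving $B(x,2\alpha\diam U)\subseteq U$. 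This handles the first and last inclusions: $B(x,2\alpha\diam U)\subseteq U$ and, applying $f^m$, $f^m(B(x,2\alpha\diam U))\subseteq f^m(U)=V$.

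It remains to sandwich $f^m(B(x,\alpha\diam U))$ between two metric balls around $f^m(x)$ of radii independent of $x,m$. For the upper inclusion $f^m(B(x,2\alpha\diam U))\subseteq B(f^m(x),\delta)$: apply the uniformity condition to the $\eta$-BQS map $f^m|_U$ with the pair of continua $\overline{B(x,2\alpha\diam U)}$ (diameter $\le 4\alpha\diam U$) and $U$ itself (diameter $=\diam U$), getting $\diam\bigl(f^m(B(x,2\alpha\diam U))\bigr)\le \eta(4\alpha)\diam V\le \eta(4\alpha)\cdot\mesh\bigl((f^{m_0})^*\cU\bigr)=:\delta$, so set $\delta$ accordingly. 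For the lower inclusion $B(f^m(x),\tdelta)\subseteq f^m(B(x,\alpha\diam U))$: let $y\in\mfd^n$ be a point realizing $\dist\bigl(x,\partial U\bigr)$ — or rather pick $y$ on a $\dg$-geodesic from $x$ leaving $B(x,\alpha\diam U)$, so $\dg(x,y)=\alpha\diam U$ and the geodesic segment $\gamma$ from $x$ to $y$ lies in $B(x,\alpha\diam U)\subseteq U$; then $f^m(\gamma)$ is a continuum in $V$ joining $f^m(x)$ to $f^m(y)$, and applying the $\eta$-BQS inequality in the reverse direction (with the pair $\gamma$ and a fixed continuum in $U$ of diameter $\ge\frac14\delta_{\cU}$, say a subarc realizing the Lebesgue number) gives $\diam f^m(\gamma)\ge \eta^{-1}\!\bigl(4\alpha\diam U/\diam U\bigr)^{-1}\cdot(\text{something})$ — more cleanly, one bounds $\dg(f^m(x),f^m(y))$ from below by $1/\eta\bigl(\diam U'/(\alpha\diam U)\bigr)\cdot\diam f^m(U')$ for an auxiliary $U'$, and since $f^m(B(x,\alpha\diam U))$ is an open connected set containing $f^m(x)$ whose image-diameter is bounded below by a uniform $2\tdelta$, it must contain $B(f^m(x),\tdelta)$ for a uniform $\tdelta>0$ (using that $f^m(B(x,\alpha\diam U))$ is open, hence contains a ball, and connectivity forces the ball to be large once the set's diameter is large — one invokes that $f^m$ is open, Remark~\ref{r: bc on mfd is cls fbc}, together with a normal-family / compactness argument over the finitely many shapes in $(f^{m_0})^*\cU$). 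The main obstacle I anticipate is precisely this last lower inclusion: turning a lower bound on $\diam f^m(B(x,\alpha\diam U))$ into a genuine ball $B(f^m(x),\tdelta)$ inside the image. I expect this to require either a uniform modulus-of-continuity bound for the inverse branches of $f^m|_U$ (available from the $\eta$-BQS hypothesis applied symmetrically, since BQS controls both $\diam f(E)/\diam f(F)$ and its reciprocal) or a compactness argument: if no uniform $\tdelta$ existed, one would extract sequences $x_i,m_i$ and, after renormalizing by $\phi_i(z)=(\text{chart})\circ f^{m_i}\circ(\text{chart})^{-1}(x_i+(\alpha\diam U_i)z)$, pass to a locally uniform limit $\Phi$ (quasiregular by the BQS $\Rightarrow$ QR theorem, Lemma~\ref{l: UEBQS => UQR} and its appendix) which is non-constant by the already-established upper/diameter bounds, and discreteness and openness of $\Phi$ contradict the vanishing of $\tdelta_i$. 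I would carry out the compactness version, as it parallels the argument in the proof of Theorem~\ref{t: cellular UQR => Lattes} and reuses the same machinery.
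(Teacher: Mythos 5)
Your plan breaks down at the very first inclusion $B(x,2\alpha\diam U)\subseteq U$. You define $\alpha$ and the ``core'' using the finitely many sets of the fixed-level cover $(f^{m_0})^*\cU$, but $U$ lives at level $m_0+m$: the distance from any point to $\partial U$ tends to $0$ as $m\to+\infty$, so a core defined by the fixed scale $\tfrac14\delta_{\cU}$ is eventually empty, and even a relative version (distance to $\partial U$ comparable to $\diam U$) is exactly the kind of roundness control that is \emph{not} known a priori for components of $f^{-(m_0+m)}$-preimages --- it is what this lemma is secretly establishing. Choosing $U$ of largest diameter among the elements containing $x$ does not place $x$ deep inside $U$, and the Lebesgue-number argument you sketch only controls level $0$ and does not transfer upstairs. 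In the paper this step is genuinely dynamical: one first chooses $V\in(f^{m_0})^*\cU$ with $B(f^m(x),2\delta)\subseteq V$ (possible because $\delta$ is below a quarter of the Lebesgue number of $(f^{m_0})^*\cU$), takes $U$ to be the component of $f^{-m}(V)$ containing $x$, lets $C$ be the component of $f^{-m}(B(f^m(x),\delta))$ containing $x$, and bounds $r=\dist(x,\partial C)$ from below by $2\alpha\diam U$ by applying the $\eta$-BQS inequality to the continua $\cls{B(x,r)}$ (whose image reaches $\partial B(f^m(x),\delta)$, hence has diameter at least $\delta$) and $\cls U$, inside an element $U_0\in(f^m)^*\cU$ containing $\cls U$ --- note also that the uniformity axiom gives BQS for $f^m$ only on elements of $(f^m)^*\cU$, so this embedding into $U_0$ is needed and should be stated, not just the claim that $f^m|_U$ is $\eta$-BQS for $U\in(f^{m_0+m})^*\cU$. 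Your order of choices also loses the final inclusion $B(f^m(x),\delta)\subseteq f^m(U)$: if $U$ is picked upstairs first, $f^m(x)$ can lie arbitrarily close to $\partial f^m(U)$, so no uniform $\delta$ works; choosing $V$ first, centered metrically at $f^m(x)$, is precisely what makes that inclusion automatic.

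For the lower inclusion $B(f^m(x),\tdelta)\subseteq f^m(B(x,\alpha\diam U))$, you correctly identify the crux but do not close it. Two corrections to your sketch: non-constancy of any renormalized limit cannot come from ``the already-established upper/diameter bounds''; it needs a uniform \emph{lower} bound on $\diam f^m(B(x,\alpha\diam U))$, which in fact follows cleanly from one BQS comparison of $\cls{B(x,\alpha\diam U)}$ with $\cls U$ (giving $\diam f^m(\cls{B_x})\geq\diam V/\eta(1/\alpha)$), and converting a large image diameter into a ball centered at $f^m(x)$ still requires an argument, since a connected open set of large diameter containing $f^m(x)$ need not contain any definite ball around it. The paper avoids normal families entirely: it sets $s=\dist(f^m(x),\partial f^m(B_x))$, picks $z\in\cls{B(f^m(x),s)}\cap\partial f^m(B_x)$, lifts the geodesic from $f^m(x)$ to $z$ by the maximal-lift lemma \cite[Lemma~9.20]{Vu88} to a curve starting at $x$ that must leave $B_x$, and applies the BQS inequality twice to get $s\geq\diam V/(\eta(2)\eta(1/\alpha))=\tdelta$. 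Your compactness alternative could probably be made to work (with the BQS lower diameter bound plus a degree argument for the limit map), but as written it is a proof obligation, not a proof, and it would be circular to lean on the machinery of Theorem~\ref{t: UBQS=>Lattes}, which itself invokes this lemma for exactly the non-constancy step.
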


\begin{proof}
	Let $f$ be a uniform expanding BQS map with data $(\cU,\eta)$. Denote $\cU_m\=(f^m)^*\cU$ for each $m\in\N_0$.
	Let $\delta_0$ be the Lebesgue number of $\cU_0$ and
    fix a sufficiently large $m_0\in\N$ such that
    \begin{itemize}
        \smallskip
        \item[(1)] for each $m \geq  m_0$, $\mesh(\cU_{m})<\delta_0/4$;
        \smallskip
        \item[(2)] for each $m \geq  m_0$, each element in $\cU_m$ is contained in some chart. 
    \end{itemize} 
	Let $\delta_{m_0}$ be the Lebesgue number of $\cU_{m_0}$ and fix a constant $\delta\in(0,\delta_{m_0}/4)$.
    Then fix constants
	\begin{equation*}
	\alpha\=\frac{1}{4}\eta^{-1}\biggl(\frac{\delta}{2\mesh(\cU_{m_0})}\biggr)
    \quad\text{ and }\quad
	\tdelta\=\frac{\min\{\diam W : W\in\cU_{m_0}\}}{\eta(2)\eta({1}/{\alpha})}.
    \end{equation*}
	It is clear that $\alpha$ and $\tdelta$ depend only on $\eta$, $\delta$, and $\cU_{m_0}$.
	
	Fix arbitrary $x\in\mfd^n$ and $m\in\N_0$. Denote $y\=f^m(x)$.
	Since $\delta<\delta_{m_0}/4$, there exists $V\in\cU_{m_0}$ such that $B(y,2\delta)\subseteq V$. Let $U\in\cU_{m_0+m}$ be the connected component of $f^{-m}(V)$ with $x\in U$, then $f^m(U)=V$ (cf.~Remark~\ref{r: forward inv of pullback}). By the choice of $m_0$, there exists $V_0\in\cU_0$ containing the closure $\cls{V}$ of $V$. Thus,
	$\cls{U}\subseteq f^{-m}\bigl(\cls{f^m(U)}\bigr)\subseteq f^{-m}(V_0).$ 
    Since $U$ and thus $\cls{U}$ are connected, there exists $U_0\in\cU_m$ such that $\cls{U}\subseteq U_0$ and $f^m(U_0)=V_0$.

First, we verify $B(x,2\alpha\diam U)\subseteq U$ and $
        f^m(B(x,2\alpha\diam U))\subseteq B(y,\delta)\subseteq V$.
	Let $C$ be the connected component of $f^{-m}(B(y,\delta))$ that contains $x$. As above, since $\cls{B(y,\delta)}\subseteq V$, we have $\cls{C}\subseteq U$, and then (since $U$ and $V$ are contained in charts by above property~(2) of $m_0$ above) it follows directly from~\cite[Chapter~I, Lemma~4.7]{Ri93} that $C$ is a domain such that $f^m(C)=B(y,\delta)$ and $f^m(\partial C)=\partial f^m(C)=\partial B(y,\delta)$. Set $r\=\dist(x,\partial C)$. Then $B(x,r)\subseteq C$ and $\cls{B(x,r)}$ meets $\partial C$, which, combined with $f^m(\partial C)=\partial B(y,\delta)$, implies $\diam\bigl(f^m\bigl(\cls{B(x,r)}\bigr)\bigr) \geq \delta$. 
    
    Since $f^m|_{U_0}\:U_0\to V_0$ is $\eta$-BQS, we have 
	\begin{equation*}
	\frac{\delta}{\diam \cls{V}}
	 \leq 
	\frac{\diam\bigl(f^m\bigl(\cls{B(x,r)}\bigr)\bigr)}{\diam\bigl(f^m\bigl(\cls{U}\bigr)\bigr)}
	 \leq 
	\eta\biggl(\frac{2r}{\diam \cls{U}}\biggr),
\end{equation*}
	and thus
	\begin{equation*}
	r \geq 
	\frac{\diam \cls{U}}{2}\eta^{-1}\biggl( \frac{\delta}{\diam \cls{V}} \biggr)
	 \geq 
	\frac{\diam \cls{U}}{2}\eta^{-1}\biggl(\frac{\delta}{2\mesh(\cU_{m_0})}\biggr)=2\alpha\diam \cls{U}.
\end{equation*}
	From the above discussion, we conclude that
    \begin{equation}\label{e: metric Lattes: B(x,2aU) subset U}
        B(x,2\alpha\diam U)\subseteq C\subseteq U \quad\text{and}\quad 
        f^m(B(x,2\alpha\diam U))\subseteq f^m(C)= B(y,\delta)\subseteq V.
    \end{equation}
	
	It remains to show $B\bigl(y,\tdelta\bigr)\subseteq f^m(B(x,\alpha\diam U))$. Denote $B_x\=B(x,\alpha\diam U)$.
	Set $s\=\dist(y,\partial f^m(B_x))$. Then $B(y,s)\subseteq f^m(B_x)$ and $\cls{B(y,s)}$ meets $\partial f^m(B_x)$. Pick $z\in\cls{B(y,s)}\cap\partial f^m(B_x)$, and let $\gamma_{yz}\:[0,1]\to \cls{B(y,s)}$ be a geodesic curve from $y$ to $z$. Since $U$ and $V=f^m(U)$ are contained in charts (see property~(2) of $m_0$), it follows directly from \cite[Lemma~9.20]{Vu88} that there exists a curve\footnote{Such a curve is called the ``maximal lift" of $\gamma_{yz}$ at $x$ (see e.g.~\cite{Ri93,Vu88}).} $\tgamma_{yz}:[0,c)\to U$, $0<c<1$, starting from $x$, such that $f^m\circ\tgamma_{yz}=\gamma_{yz}|_{[0,c)}$.
    By \cite[Lemma~9.20]{Vu88}, there are two cases as follows, according to which we define a curve $\tgamma$:
	\begin{enumerate}[label=(\roman*),font=\rm]
    	\smallskip
		\item $\tgamma_{yz}(t)\to x_1$ as $t\to c$ for some $x_1\in U$, in which case $c=1$ and $f^m(x_1)=\lim_{t\to 1}\gamma_{yz}(t)=z$. Define $\tgamma|_{[0,1)}\=\tgamma_{yz}$ and $\tgamma(1)\= x_1$. Note that $x_1\notin B_x$ since $z\notin f^m(B_x)$.
		\smallskip
		\item $\tgamma_{yz}(t)\to\partial U$ as $t\to c$. Choose $\epsilon\in(0,c)$ such that $\tgamma_{yz}(c-\epsilon)\notin\cls{B_x}$, and define $\tgamma\=\tgamma_{yz}|_{[0,c-\epsilon]}$.
	\end{enumerate}
	In neither case $\tgamma$ is contained in $B_x$, which means there exists a sub-curve $\gamma$ of $\tgamma$ (after re-parametrization) for which $\gamma(0)=x$, $\gamma(1)\in\partial B_x$, and $\gamma([0,1))\subseteq B_x$ (cf.~Lemma~\ref{l: there is a sub-curve}).
    Clearly $\diam \gamma \geq \alpha\diam U  \geq 2^{-1}\diam \cls{B_x}$.
	Thus, since $f^m|_{U_0}$ is $\eta$-BQS, we have
	\begin{align*}
	\frac{\diam\bigl(f^m\bigl(\cls{B_x}\bigr)\bigr)}{s}
	& = 
	\frac{\diam\bigl(f^m\bigl(\cls{B_x}\bigr)\bigr)}{\diam\gamma_{yz}}
	 \leq 
	\frac{\diam\bigl(f^m\bigl(\cls{B_x}\bigr)\bigr)}{\diam(f^m(\gamma))}
	 \leq 
	\eta\biggl(\frac{\diam \cls{B_x}}{\diam \gamma}\biggr) \leq \eta(2) \quad\text{ and} \\
    \frac{\diam V}{\diam(f^m(\gamma))}
	& = \frac{\diam\bigl(f^m\bigl(\cls{U}\bigr)\bigr)}{\diam\bigl(f^m(\gamma))}
	 \leq \eta\biggl(\frac{\diam \cls{U}}{\diam\gamma}\biggr)
	 \leq \eta\Bigl(\frac{1}{\alpha}\Bigr).
\end{align*}
	Combining the above inequalities, we get
	$s \geq \frac{\diam V}{\eta(2)\eta({1}/{\alpha})}=\tdelta$, 
	and thus
    $B\bigl(y,\tdelta\bigr)\subseteq B(y,s)\subseteq f^m(B(x,\alpha\diam U))$.
	This, together with (\ref{e: metric Lattes: B(x,2aU) subset U}), concludes that $U$ has the desired property.
\end{proof}

\begin{proof}[Proof of Theorem~\ref{t: UBQS=>Lattes}]
	By Lemma~\ref{l: UEBQS => UQR}, $f$ is uniformly quasiregular. Then by Corollary~\ref{c: conical: +measure conical=>Lattes} (see also \cite{MM03}), it suffices to show that every point in $\mfd^n$ is conical (cf.~Definition~\ref{d: conical points}). 
	
	Let $(\cU,\eta)$ be uniform expanding BQS data of $f$, and 
	$m_0,\,\alpha,\delta,\,\tdelta$ be constants given by Lemma~\ref{l: exp UBQS => non-zero distortion at a point}.
    
    Fix $x\in\mfd^n$, a constant $r>0$, and an $L$-bi-Lipschitz coordinate map $\phi\:B(x,r)\to\phi(B(x,r))\subseteq \R^n$, given by the exponential map, for a fixed $1 \leq  L<\sqrt{2}$. We also fix a constant $\lambda\in(L, 2/L)$.
	For each $m\in\N$, we use Lemma~\ref{l: exp UBQS => non-zero distortion at a point} to choose $U_m\in\cU_{m_0+m}$ with 
	\begin{equation}\label{e: distortion of UeBQS}
	\begin{aligned}
        B(x,2r_m)\subseteq &U_m \quad\text{and}\\
		B\bigl(f^m(x),\tdelta\bigr)\subseteq f^m(B(x,r_m))
		\subseteq &f^m(B(x,2r_m))
		\subseteq B(f^m(x),\delta)\subseteq f^m(U_m),
	\end{aligned}
\end{equation}
		where $r_m\=\alpha\diam U_m$. Since $f$ is expanding, $r_m\to 0$ as $m\to+\infty$,
        and we may fix $m_1\in\N$ such that for all $m \geq  m_1$, $U_m\subseteq B(x,r)$.
		
		Fix arbitrary $m \geq  m_1$. Denote $x_0\=\phi(x)$, $D_1\=\phi(B(x,r_m))$, and $D_2\=\phi(B(x,2r_m))$. Since $\phi$ is $L$-bi-Lipschitz and $\lambda\in(L,2/L)$, we have
        $D_1\subseteq\B^n(x_0,\lambda r_m)\subseteq\cls{\B^n(x_0,\lambda r_m)}\subseteq D_2$.
		
        Consider the map
        $F_m\:\cls{\B^n}\to \mfd^n$ given by $F_m(y)\=\bigl(f^m\circ\phi^{-1}\bigr)(x_0+\lambda r_m\cdot y)$. 
		Since $y\mapsto x_0+\lambda r_m \cdot y$ is a similarity transformation, $\phi$ is $L$-bi-Lipschitz, $\phi^{-1}(D_2)=B(x,2r_m)\subseteq U_m$, and $f^m|_{U_m}$ is $\eta$-BQS, we can find $\xi\:[0,+\infty)\to[0,+\infty)$, independent of $m$, such that $F_m$ is $\xi$-BQS. 
		
		Consider distinct points $y_1,\,y_2\in\cls{\B^n}$ and a geodesic line segment $\gamma$ in $\cls{\B^n}$ from $y_1$ to $y_2$. Then
		\begin{equation*}
		\begin{aligned}
			\frac{\dg(F_m(y_1),F_m(y_2))}{\diam_{\dg} \bigl(F_m\bigl(\cls{\B^n}\bigr)\bigr)} \leq \frac{\diam_{\dg}(F_m(\gamma))}{\diam_{\dg} \bigl(F_m\bigl(\cls{\B^n}\bigr)\bigr)} \leq \xi\biggl(\frac{\abs{y_1-y_2}}{\diam \cls{\B^n}}\biggr)
			 \leq \xi(\abs{y_1-y_2}/2).
		\end{aligned}
	\end{equation*}
		Thus, since $F_m(\cls{\B^n})\subseteq f^m\bigl(\phi^{-1}(D_2)\bigr)=f^m(B(x,2r_m))\subseteq f^m(U_m)$ (see~(\ref{e: distortion of UeBQS})), we have
		\begin{equation}\label{e: equicontinuous}
			\dg(F_m(y_1),F_m(y_2)) \leq \xi(\abs{y_1-y_2}/2)\mesh \cU_{m_0}.
		\end{equation}
	
	Consider the family $\{F_k\}_{k =  m_1}^{+\infty}$. 
	By~(\ref{e: equicontinuous}) and the fact that $(\mfd^n,\dg)$ is bounded, $\{F_k\}_{k = m_1}^{+\infty}$ is equicontinuous and uniformly bounded on $\cls{\B^n}$. It follows from the Arzel\`a--Ascoli theorem that there exists a subsequence $\bigl\{F_{k_j}\bigr\}_{j\in\N}$ that is uniformly convergent on $\cls{\B^n}$ to some $\Phi\:\cls{\B^n}\to\mfd^n$. Then $\bigl\{F_{k_j}|_{\B^n}\bigr\}_{j\in\N}$ converges uniformly to $\Phi|_{\B^n}$.
    Since, by Theorem~\ref{thm: LiP19: BQS=>QR}, there exists $K>1$ for which all $F_{k}|_{\B^n}$, $k\geqslant m_1$, are $K$-quasiregular, the limit map $\Phi|_{\B^n}$ is quasiregular (cf.~\cite[Chapter~VI, Theorem~8.6]{Ri93}).
	Moreover, $\Phi|_{\B^n}$ is non-constant, since, for each integer $k\geq m_1$, $B\bigl(f^k(x),\tdelta\bigr)\subseteq f^k(B(x,\alpha\diam U_k))\subseteq F_k(\B^n)$ (see~(\ref{e: distortion of UeBQS})). Therefore, $x$ is a conical point. By Corollary~\ref{c: conical: +measure conical=>Lattes}, $f$ is a Latt\`es map. Since $f$ is expanding, $f$ is chaotic (cf.~Lemma~\ref{l: Lattes: top exact => chaotic}).
\end{proof}

\subsection{Latt\`es maps are uniform expanding BQS maps}

To prove the converse of Theorem~\ref{t: UBQS=>Lattes}, we divide the arguments into two parts. First, a result of Ha\"issinsky and Pilgrim (\cite[Theorem~4.4.3]{HP09}) shows that chaotic Latt\`es maps are metric coarse expanding conformal (CXC) systems. Then we show that metric CXC systems are uniform expanding BQS maps. 

We recall the special case of metric CXC systems on closed Riemannian manifolds; for a more detailed discussion we refer the reader to \cite[Sections~2.2 and~2.5]{HP09}.

A finite branched cover $f\:(\mfd^n,\dg)\to(\mfd^n,\dg)$ with $\deg(f) \geq 2$ on a closed Riemannian $n$-manifold is a \defn{metric CXC system (for the global repellor $X=\mfd^n$)} if there exists a finite cover $\cU$ of $\mfd^n$ by connected open sets such that the covers $\cU_m\=(f^*)^m\cU$, ${m\in\N_0}$, satisfy the following conditions:
\begin{enumerate}
	\smallskip
	\item[(i)] Expansion Axiom (abbreviated as~[Expans]): 
	$\mesh(\cU_m)\to0$ as $m\to+\infty$.
	\smallskip
	\item[(ii)] Irreducibility Axiom (abbreviated as~[Irred]): For each $x\in \mfd^n$ and each neighborhood $W$ of $x$ in $\mfd^n$, there exists $m\in\N$ with $f^m(W)=\mfd^n$.
	\smallskip
	\item[(iii)] Degree Axiom (abbreviated as~[Deg]): The following set has a finite maximum:
    \begin{equation*}
        \bigl\{\deg\bigl(f^k|_{\widetilde{U}}\:\widetilde{U}\to U\bigr): U\in\mathcal{U}_m,\,\widetilde{U}\in\mathcal{U}_{m+k},\,m,\,k\in\N\bigr\}.
    \end{equation*}
	\item[(iv)] Roundness Distortion Axiom (abbreviated as~[Round]): There exist continuous increasing functions $\rho_+\:[1,+\infty)\to [1,+\infty)$ and $\rho_-\:[1,+\infty)\to [1,+\infty)$ such that 
	for all 
	$m,\,k \in \N_0$, $\tU\in\cU_{m+k}$, $\ty\in\tU$, and $U\=f^k\bigl(\tU\bigr)\in\cU_m$, $y\=f^k(\ty)\in U$, we have
	\begin{equation*}
		\begin{aligned}
			\Round\bigl(\tU,\ty\bigr)<\rho_-(\Round(U,y))\quad \text{ and } \quad
			\Round(U,y)<\rho_+\bigl(\Round\bigl(\tU,\ty\bigr)\bigr),
		\end{aligned}
	\end{equation*}
	where 
	$
	\Round(A,a)\={\sup\{\abs{x-a}:x\in A\}}/{\sup\{r>0:B(a,r)\subseteq A\}}
	$
	for an open subset $A$ and $a\in A$.

	\smallskip
	\item[(v)] Diameter Distortion Axiom (abbreviated as~[Diam]): There exist increasing homeomorphisms 
	$\delta_+\:[0,1]\to[0,1]$ and $\delta_-\:[0,1]\to[0,1]$ such that, for all $m_0,\,m_1,\,k\in\N_0$, $\tU\in\cU_{m_0+k}$, $\tU'\in\cU_{m_1+k}$ with $\tU'\subseteq\tU$, and $U\=f^k\bigl(\tU\bigr)\in\cU_{m_0}$, $U'\=f^k\bigl(\tU'\bigr)\in\cU_{m_1}$, we have
	\begin{equation*}
		\begin{aligned}
			{\diam \tU'}/{\diam \tU}<\delta_-({\diam U'}/{\diam U})\quad\text{ and }\quad
			{\diam U'}/{\diam U}<\delta_+\bigl({\diam \tU'}/{\diam \tU}\bigr).
		\end{aligned}
	\end{equation*}
\end{enumerate}

We record some properties of metric CXC systems; see \cite[Propositions~2.6.2,~2.6.4, and~2.6.5]{HP09}.

\begin{lemma}[Properties of CXC systems]\label{l: CXC metric properties}
	Let $f\:(\mfd^n,\dg)\to(\mfd^n,\dg)$ be a metric CXC system and $\cU$ be a finite cover of $\mfd^n$ by connected open subsets such that the covers $\cU_m\=(f^*)^m\cU$, ${m\in\N_0}$, satisfy {\rm [Expans], [Irred], [Deg], [Round], and [Diam]}. Then there exist constants $K>1$, $C>1$, $C'>0$, and $\theta\in(0,1)$ with the following properties:
	\begin{enumerate}[label=(\roman*),font=\rm]
    	\smallskip
		\item For each $x\in\mfd^n$ and each $m\in\N_0$, there exist $r>0$ and $U\in\cU_{m}$ such that 
		$B(x,r)\subseteq U\subseteq \cls{B(x,Kr)}$.
		\smallskip
		\item For all $m\in\N_0$, $U\in\cU_m$, $U'\in\cU_{m+1}$, if $U\cap U'\neq\emptyset$, then
		$
		C^{-1} \leq \diam U'/\diam U \leq  C.
		$
		\smallskip
		\item For all $m,\,k\in\N_0$, $U\in\cU_m$, $U'\in\cU_{m+k}$, if $U\cap U'\neq\emptyset$, then
		$
		\diam U' \leq  C'\theta^k\diam U.
		$
	\end{enumerate}
\end{lemma}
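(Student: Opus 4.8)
These three estimates are, respectively, \cite[Propositions~2.6.2, 2.6.4, and~2.6.5]{HP09}, specialized to the case where the global repellor is the closed Riemannian manifold $\mfd^n$ itself; the plan is to derive them from the five axioms [Expans], [Irred], [Deg], [Round], and [Diam]. For part~(i) the plan is to combine [Round] with a Lebesgue-number estimate at level $0$: since $\cU_0$ is a finite cover of the compact manifold $\mfd^n$ by open sets, it has a Lebesgue number $\delta_0>0$, so for every $y\in\mfd^n$ there is $W\in\cU_0$ with $B(y,\delta_0)\subseteq W$, hence $\Round(W,y)\leq R_0$ where $R_0\=(\diam\mfd^n)/\delta_0$. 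Given $x\in\mfd^n$ and $m\in\N_0$, I would set $y\=f^m(x)$, choose such a $W$, and let $U\in\cU_m$ be the connected component of $f^{-m}(W)$ containing $x$, so that $f^m(U)=W$ by Lemma~\ref{l: open+closed => f(tU)=U}; then [Round] gives $\Round(U,x)<\rho_-(R_0)$, and unwinding the definition of roundness yields $B(x,r)\subseteq U\subseteq\overline{B(x,2\rho_-(R_0)\,r)}$ for $r\=\tfrac12\sup\{s>0:B(x,s)\subseteq U\}$, i.e.\ part~(i) with $K\=2\rho_-(R_0)$ (the level $m=0$ is handled directly by the choice of $W$).

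For part~(ii) the plan is a pullback-with-nesting argument. Using [Expans], first fix $m_0$ so that $2\mesh(\cU_m)$ is less than the Lebesgue number of $\cU_0$ for all $m\geq m_0$. For such $m$, given $U\in\cU_m$ and $U'\in\cU_{m+1}$ with $U\cap U'\neq\emptyset$, the connected set $U\cup U'$ lies in a single $W\in\cU_0$, hence both $U$ and $U'$ lie in the connected component $U''$ of $f^{-m}(W)$ containing $U$; thus $U\subseteq U''$ and $U'\subseteq U''$ with $U''\in\cU_m$ and $f^m(U'')=W$. Applying [Diam] to the nested pairs $U'\subseteq U''$ and $U\subseteq U''$ (projected by $f^m$ into $\cU_1,\cU_0$ and into $\cU_0,\cU_0$, respectively) and using that the ratios $\diam V'/\diam V''$ for $V'\subseteq V''$ with $V',V''\in\cU_0\cup\cU_1$ are pinched between two positive constants, one gets $\diam U'/\diam U''$ and $\diam U/\diam U''$ both in a fixed interval $[C^{-1},1]$, whence $C^{-2}\leq\diam U'/\diam U\leq C^{2}$; the finitely many levels $m<m_0$ affect only the constant.

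Part~(iii) --- the geometric decay $\diam U'\leq C'\theta^k\diam U$ for intersecting $U\in\cU_m$ and $U'\in\cU_{m+k}$ --- is the step I expect to be the main obstacle, and it is the place where one must invoke the CXC machinery of \cite{HP09} in earnest rather than a short pullback argument. The difficulty is that a single application of [Diam] only yields $\diam U'/\diam U\to0$ (composing the homeomorphism $\delta_-$ with itself finitely often need not produce a geometric rate), so the plan is: first upgrade [Expans] to the quantitative statement $\mesh(\cU_k)\leq C_1\theta_1^{\,k}$ by combining the bounded roundness from part~(i), the bounded local multiplicity from [Deg] (via Corollary~\ref{c: multiplilcity: characterization of bounded N_loc}-type counting), and [Diam] applied along chains of nested elements that descend through a fixed number $N$ of levels; then transfer this rate to an intersecting pair $U\cap U'\neq\emptyset$ through the element $U''\in\cU_m$ produced in part~(ii), using part~(ii) again to replace $\diam U''$ by $\diam U$. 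The delicate point inside this scheme is manufacturing the intermediate nested ancestors needed to chain [Diam], since the nesting $U'\subseteq U''$ is not preserved by $f$; this is exactly the content of \cite[§2.6]{HP09}. Accordingly, in the final text I would carry out the derivations of parts~(i) and~(ii) in full and simply cite \cite[Proposition~2.6.5]{HP09} for part~(iii).
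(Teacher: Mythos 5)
The paper offers no argument for this lemma at all: it simply records the three statements as Propositions~2.6.2, 2.6.4, and~2.6.5 of \cite{HP09}. So your plan to cite Ha\"issinsky--Pilgrim for part~(iii) is exactly what the paper does, and your derivation of part~(i) from [Round] together with the Lebesgue number of $\cU_0$ is sound (the identity $f^m(U)=W$ for the component $U$ of $f^{-m}(W)$ being supplied by Remark~\ref{r: forward inv of pullback}).

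Your sketch of part~(ii), however, contains an invalid step. From $U\cup U'\subseteq W$ with $W\in\cU_0$ you conclude that $U$ and $U'$ lie in a connected component $U''$ of $f^{-m}(W)$. But lying in $f^{-m}(W)$ means $f^m(U)\subseteq W$, not $U\subseteq W$: the element $f^m(U)\in\cU_0$ has no reason to be contained in, or even to meet, the particular $W$ that happens to contain $U\cup U'$ as a subset of the manifold, so the common ancestor $U''\in\cU_m$ needed for your two applications of [Diam] is not produced this way. The standard repair (the same maneuver the paper uses in the analogous Lemma~\ref{l: QS unif: QV approx}) is to apply the Lebesgue-number argument after pushing forward to a fixed level rather than at the top: with $\delta_0$ the Lebesgue number of $\cU_0$, fix $m_0$ so that $\mesh(\cU_j)<\delta_0/2$ for all $j\geq m_0$, and for $m\geq m_0$ set $h\=f^{m-m_0}$; then $h(U)\in\cU_{m_0}$ and $h(U')\in\cU_{m_0+1}$ intersect and have small diameters, so $h(U\cup U')\subseteq W$ for some $W\in\cU_0$, whence the connected set $U\cup U'$ lies in a single component $U''\in\cU_{m-m_0}$ of $f^{-(m-m_0)}(W)$. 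Now apply [Diam] with $k=m-m_0$ to the nested pairs $U\subseteq U''$ and $U'\subseteq U''$, whose images $h(U)\subseteq W$ and $h(U')\subseteq W$ involve only the finitely many elements of $\cU_{m_0}\cup\cU_{m_0+1}$ and of $\cU_0$; this pins both $\diam U/\diam U''$ and $\diam U'/\diam U''$ in a fixed interval $\bigl[\delta_+^{-1}(c_0),1\bigr]$ and yields the two-sided bound, while the finitely many levels $m<m_0$ only affect the constant, as you say. With this correction your outline for (i)--(ii), together with the citation for (iii), is consistent with the paper's treatment.
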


By \cite[Theorem~4.4.3]{HP09}, Latt\`es maps are metric CXC systems. To show the converse of Theorem~\ref{t: UBQS=>Lattes}, it suffices to prove the following.

\begin{prop}\label{p: CXC=>UBQS}
	A metric CXC system $f\:(\mfd^n,\dg)\to(\mfd^n,\dg)$, $n \geq 3$, on a {\clcnR} $n$-manifold $(\mfd^n,g)$ is a uniform expanding BQS map.
\end{prop}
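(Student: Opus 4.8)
The plan is to take as the uniform expanding BQS data of $f$ the finite open cover $\cU$ furnished by the CXC structure, together with a homeomorphism $\eta\:[0,+\infty)\to[0,+\infty)$ to be manufactured from the CXC distortion functions. Writing $\cU_m\=(f^*)^m\cU$, the expansion condition~(i) of Definition~\ref{d: UEBQS} for the pullbacks of $\cU$ is literally the axiom [Expans]. Hence the entire content is condition~(ii): to produce a \emph{single} $\eta$, independent of $m\in\N_0$ and of $\tilde U\in\cU_m$, such that the branched cover $f^m|_{\tilde U}\:\tilde U\to f^m(\tilde U)$ is $\eta$-BQS. Here $f^m(\tilde U)\in\cU$ by Remark~\ref{r: forward inv of pullback}, and $f^m|_{\tilde U}$ has degree bounded uniformly by the axiom [Deg], a fact that will be used (through Lemma~\ref{l: CXC metric properties}) to prevent images of continua from degenerating.

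I would reduce condition~(ii) to a \emph{tight} two-sided diameter distortion estimate for the cells of the pullback covers: a homeomorphism $\psi$, independent of $m$ and of $\tilde U\in\cU_m$, with $\psi$ of \emph{power type} near $0$, such that for every non-degenerate continuum $A\subseteq\tilde U$, writing $U_0=f^m(\tilde U)$,
\begin{equation*}
	\psi^{-1}\!\left(\frac{\diam_{\dg}A}{\diam_{\dg}\tilde U}\right)\diam_{\dg}U_0\ \le\ \diam_{\dg}\bigl(f^m(A)\bigr)\ \le\ \psi\!\left(\frac{\diam_{\dg}A}{\diam_{\dg}\tilde U}\right)\diam_{\dg}U_0 .
\end{equation*}
Granting this, for intersecting continua $E,F\subseteq\tilde U$ with $\diam_{\dg}E\le\diam_{\dg}F$ one applies the estimate to $E$ and to $E\cup F$: since $E\cap F\neq\emptyset$ forces $\diam_{\dg}(E\cup F)$ to be comparable to $\diam_{\dg}F$, and since $\psi$ increasing gives $\diam_{\dg}(f^m(E))\lesssim\diam_{\dg}(f^m(F))$, one obtains $\diam_{\dg}(f^m(E\cup F))\asymp\diam_{\dg}(f^m(F))$, and then $\diam_{\dg}(f^m(E))/\diam_{\dg}(f^m(F))$ is controlled by a homeomorphism of $\diam_{\dg}E/\diam_{\dg}F$; this last step is where it is essential that $\psi$ be genuinely of power type (a mere sandwich between two different powers is not enough, as a log-type modulus shows). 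Cases where $\diam_{\dg}E/\diam_{\dg}F$ is bounded below are handled directly by [Deg] and the finiteness of $\cU$.

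To establish the displayed estimate I would introduce, for a non-degenerate continuum $A\subseteq\tilde U\in\cU_m$, its \emph{defining level} $k(A)\in\N_0$, the largest $k$ for which $A$ lies in some cell $\tilde V_A\in\cU_{m+k}$ (finite by [Expans]). Lemma~\ref{l: CXC metric properties}(i),(ii) yields the elementary fact that around every point of a cell $V\in\cU_j$ there is a ball of radius comparable to $\diam_{\dg}V$ contained in a cell of $\cU_{j+1}$; applied inside $\tilde V_A$ and using maximality of $k(A)$, this gives $\diam_{\dg}A\asymp\diam_{\dg}\tilde V_A$ with uniform constants. Moreover, if $f^m(A)$ were contained in a cell $W\in\cU_{k(A)+1}$, the component of $f^{-m}(W)$ containing $A$ would be a cell of $\cU_{m+k(A)+1}$ containing $A$, contradicting maximality; hence $f^m(A)$ lies in no cell of $\cU_{k(A)+1}$, and the same roundness fact, applied inside $f^m(\tilde V_A)\in\cU_{k(A)}$, gives $\diam_{\dg}(f^m(A))\asymp\diam_{\dg}(f^m(\tilde V_A))$. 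It remains to compare $\diam_{\dg}(f^m(\tilde V_A))/\diam_{\dg}U_0$ with $\diam_{\dg}\tilde V_A/\diam_{\dg}\tilde U$: Lemma~\ref{l: CXC metric properties}(iii) gives matching exponential upper bounds $\theta^{k(A)}$ in source and target (forcing $\psi$ to be bounded above by a power), iterating the roundness fact gives matching geometric lower bounds, and the precise two-sided comparison --- which is what genuinely produces the homeomorphism $\psi$ rather than a loose band --- comes from the Diameter Distortion Axiom [Diam], after a localization step replacing the overlapping pair $(\tilde V_A,\tilde U)$ by a properly nested pair of cells of comparable diameters; alternatively one may quote the metric CXC theory of Ha\"issinsky and Pilgrim \cite{HP09} for this comparison.

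The main obstacle is exactly this uniformity in the level $m$, compounded by the fact that the pullback covers $\{\cU_m\}$ need \emph{not} be nested (there is no cellular Markov structure to exploit here, in contrast with Section~\ref{sct: QS uniformization of visual}): one cannot naively localize a small continuum inside $\tilde U$ by a smaller cell contained in $\tilde U$, which is precisely what makes the application of [Diam] delicate. The way around this is to transport diameter ratios forward along $f$ and use [Round] and [Diam] to control the resulting distortion, while keeping every constant and every comparison homeomorphism independent of $m$ --- possible because the CXC distortion data $\rho_\pm,\delta_\pm$, and hence the constants in Lemma~\ref{l: CXC metric properties}, do not depend on the level.
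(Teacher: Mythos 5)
Your setup and your first estimates are the same as the paper's: you take the CXC cover $\cU$ as the BQS data (so that [Expans] gives the expansion condition), you introduce the maximal level $k(A)$ of a continuum (this is precisely the $(f,\cU)$-approximation of Definition~\ref{d: approximate}), you prove $\diam_{\dg}A\asymp\diam_{\dg}\tilde V_A$ with constant $2KC$ from Lemma~\ref{l: CXC metric properties}(i)--(ii) (this is (\ref{e: CXC=>UBQS: A and U_A comparable})), and you observe that approximations push forward under $f^m$ (Remark~\ref{r: approximate: iterate}). The gap is in your central reduction. You want, uniformly in $m$ and $\tU\in\cU_m$, a two-sided estimate $\psi^{-1}(\diam A/\diam\tU)\,\diam U_0\le\diam(f^m(A))\le\psi(\diam A/\diam\tU)\,\diam U_0$ with $\psi$ of power type, and you then divide two such estimates. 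But dividing $\psi(x)$ by $\psi^{-1}(y)$ produces a bound in terms of $x/y$ alone only if $\psi$ and $\psi^{-1}$ are comparable to the \emph{same} power, i.e.\ only in the essentially linear case; for $\psi(t)=\Lambda t^{\alpha}$ with $\alpha\neq1$ the quotient $\Lambda^{1+1/\alpha}x^{\alpha}y^{-1/\alpha}$ blows up as $x=y\to0$. And in the presence of branching the sharp estimate genuinely has different powers on the two sides: at a branch point of power type (local model $r\mapsto r^{d}$ radially, $d\ge2$) a continuum of relative diameter $t$ through the branch point has image of relative diameter $\asymp t^{d}$, while away from the branch set it has image of relative diameter $\asymp t$, so the best available sandwich is $\psi(t)\asymp t^{1/d}$ above and $\psi^{-1}(t)\asymp t^{d}$ below. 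Your own caveat that ``a sandwich between two different powers is not enough'' therefore applies to the only estimate one can hope to extract, and nothing in [Round], [Diam], or Lemma~\ref{l: CXC metric properties} yields a single-power two-sided comparison (\,[Diam] only provides homeomorphisms $\delta_\pm$, with no power-type structure\,). The detour through $E\cup F$ does not repair this: once you normalize both continua by $\diam\tU$ you have discarded the one hypothesis BQS hands you, namely that $E$ and $F$ intersect and hence sit at the same location relative to the branch set.

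The paper's proof avoids the two-sided estimate altogether. For intersecting continua $E,F\subseteq\tU$ it passes to their approximating cells $U_E\in\cU_{m+k}$ and $U_F\in\cU_{m+l}$, which \emph{themselves intersect}, and then converts the level difference $\abs{k-l}$ into diameter ratios both upstairs and downstairs using only Lemma~\ref{l: CXC metric properties}(ii)--(iii): for instance when $k>l$ one has $\diam U_E/\diam U_F\ge C^{-(k-l)}$ while $\diam(f^m(U_E))/\diam(f^m(U_F))\le C'\theta^{k-l}$, and symmetrically when $k\le l$; eliminating $k-l$ gives the one-sided bound $\diam(f^m(U_E))/\diam(f^m(U_F))\le\xi(\diam U_E/\diam U_F)$ with $\xi(t)=\max\bigl\{C't^{\log_{1/C}\theta},\,C^{2}(C't)^{\log_{1/\theta}C}\bigr\}$, which combined with the comparability (\ref{e: CXC=>UBQS: A and U_A comparable}) is exactly the BQS inequality. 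Note that a one-sided ratio bound for intersecting approximating cells is all that is needed, that [Diam] and [Round] enter only through Lemma~\ref{l: CXC metric properties}, and that [Deg] is not used at all (as the paper remarks after the proof), so your appeals to [Deg] for the ``bounded ratio'' case and to a delicate nesting/localization step for [Diam] are also off the route that works. To repair your argument, keep your $k(A)$ machinery but replace the normalization by $\diam\tU$ with a direct comparison of the two intersecting approximating cells along the lines above.
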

\begin{proof}
	Let $f\:(\mfd^n,\dg)\to(\mfd^n,\dg)$ be a metric CXC system and $\cU$ be a finite cover of $\mfd^n$ by connected open subsets such that $\{\cU_m\}_{m\in\N_0}$, where $\cU_m\=(f^*)^m\cU$, satisfies {[Expans], [Irred], [Deg], [Round], and [Diam]}. 
    Let $K>1$, $C>1$, $C'>0$ and $\theta\in(0,1)$ be constants given by Lemma~\ref{l: CXC metric properties}.

    We estimate the sizes of continua using open sets in $\{\cU_m\}_{m\in\N_0}$.
    Suppose $A$ is a continuum and $U\in\cU_m$ is an $(f,\cU)$-approximation of $A$ (see Definition~\ref{d: approximate}). Let $x\in A$ be a point.
    By Lemma~\ref{l: CXC metric properties}~(i), there are $r>0$ and $U'\in\cU_{m+1}$ such that $B(x,r)\subseteq U'\subseteq \cls{B(x,Kr)}$.
    Since $U$ is an $(f,\cU)$-approximation of $A$, we have $A\subseteq U$, $A\nsubseteq U'$, and thus $r \leq \diam A \leq \diam U$.
    This, combined with $2Kr \geq\diam U'>C^{-1}\diam U$ (by Lemma~\ref{l: CXC metric properties}~(ii)), yields
    \begin{equation}\label{e: CXC=>UBQS: A and U_A comparable}
		(2KC)^{-1}\diam U \leq \diam A \leq \diam U.
	\end{equation}

    Now we show that $f$ is uniform expanding BQS.
    Fix $m\in\N$ and $U\in\cU_m$. Let $E,\,F\subseteq U$ be intersecting continua. Since $E,\,F\subseteq U$ and $\mesh(\cU_i)\to+\infty$ as $i\to+\infty$, there exist $k,\,l\in\N_0$ and $U_E\in\cU_{m+k}$, $U_F\in\cU_{m+l}$ such that $U_E$ and $U_F$ are $(f,\cU)$-approximations of $E$ and $F$, respectively. By Remark~\ref{r: approximate: iterate}, $f^m(U_E)$ and $f^m(U_F)$ are $(f,\cU)$-approximations of $f^m(E)$ and $f^m(F)$, respectively. In what follows we give an estimate of $\diam(f^m(U_E))/\diam(f^m(U_F))$.

\smallskip
    
	{\it Case~1.} $k>l$. Since $E$ intersects $F$, $U_E\cap U_F\neq\emptyset$ and $f^m(U_E)\cap f^m(U_F)\neq\emptyset$. Then by~Lemma~\ref{l: CXC metric properties}~(ii) and (iii), $\frac{\diam U_E}{\diam U_F} \geq  C^{-(k-l)}$ and $\frac{\diam(f^m(U_E))}{\diam(f^m(U_F))} \leq  C'\theta^{k-l}$,
	which yield
	\begin{equation*}
	{\diam(f^m(U_E))}/{\diam(f^m(U_F))} \leq  C'({\diam U_E}/{\diam U_F})^{\log_{1/C}\theta}.
\end{equation*}

\smallskip
	
	{\it Case~2.} $k \leq  l$. As above, $\frac{\diam(f^m(U_E))}{\diam(f^m(U_F))} \leq  C^{(l-k)+2}$ and 
	$
	\frac{\diam U_E}{\diam U_F} \geq \frac{1}{C'}\theta^{-(l-k)}.
	$
	Then
	\begin{equation*}
	{\diam(f^m(U_E))}/{\diam(f^m(U_F))} \leq  C^2(C'{\diam U_E}/{\diam U_F})^{\log_{1/\theta}C}.
\end{equation*}
	
	Define a function $\xi\:[0,+\infty)\to[0,+\infty)$ by
	$\xi(t)\=\max\bigl\{C't^{\log_{1/C}\theta},\,C^2(C't)^{\log_{1/\theta}C}\bigr\}$.
	Then 
	\begin{equation*}
	{\diam(f^m(U_E))}/{\diam(f^m(U_F))} \leq \xi({\diam U_E}/{\diam U_F}).
\end{equation*}
	This, combined with (\ref{e: CXC=>UBQS: A and U_A comparable}), yields
	$\diam(f^m(E)) \leq \xi(2KC\diam E/\diam F)\cdot 2KC\diam(f^m(F))$.
	Consider the function $\eta\:[0,+\infty)\to[0,+\infty)$ given by $\eta(t)\=2KC\cdot\xi(2KCt)$. Then $\eta$ is a homeomorphism for which $f^m|_U$ is $\eta$-BQS.
	Since $m\in\N$ and $U\in\cU_m$ are arbitrary, $f$ is a uniform expanding BQS map with data $(\cU,\eta)$.
\end{proof}

\begin{rem}
We find it interesting that the proofs of Lemma~\ref{l: CXC metric properties} and Proposition~\ref{p: CXC=>UBQS} do not rely on [Deg] axiom. Hence, it may not require \emph{a priori} the degree axiom [Deg] in the result \cite[Theorem~4.4.4]{HP09} of Ha\"issinsky and Pilgrim. More precisely, by combining Proposition~\ref{p: CXC=>UBQS} with Theorem~\ref{t: UBQS=>Lattes}, we have that an orientation-preserving branched cover $f\:\mfd^n\to\mfd^n$ satisfying {[Expans], [Irred], [Deg], [Round], and [Diam]}, on an {\orclcnR} $n$-manifold $\mfd^n$ of dimension $n\ge 3$, is a Latt\`es map.
\end{rem}

\subsection{Proofs of the main theorems} 
We conclude this section by collecting the arguments for the proofs of the main theorems stated in the introduction.

\begin{proof}[Proof of Theorem~\ref{tx: UeBQS-Lattes}]
Let $f\:(\mfd^n,\dg)\to(\mfd^n,\dg)$, $n\ge 3$, be an orientation-preserving branched cover on an {\orclcnR} $n$-manifold.
If $f$ is uniform expanding BQS, then by Theorem~\ref{t: UBQS=>Lattes}, $f$ is a chaotic Latt\`es map. The converse implication follows from Proposition~\ref{p: CXC=>UBQS} and the fact that chaotic Latt\`es maps on $\mfd^n$ are metric CXC systems (see~\cite[Theorem~4.4.3]{HP09}).
\end{proof}

\begin{proof}[Proof of Theorem~\ref{tx: QS-UEBQS-Lattes-UQR}]
Let $f\:\mfd^n\to\mfd^n$, $n\ge 3$, be an orientation-preserving expanding {\itCel} branched cover on an {\orclcnR} $n$-manifold.
The equivalence of~\ref{item:UQR-1} and~\ref{item:UQR-2} follows from Theorem~\ref{t: CBC: QS sph <=> UBQS}.
The equivalence of~\ref{item:UQR-3} and~\ref{item:UQR-4} follows from Theorem~\ref{t: cellular UQR => Lattes}.
The equivalence of~\ref{item:UQR-2},~\ref{item:UQR-4}, and~\ref{item:UQR-5} is obtained by combining Theorem~\ref{tx: UeBQS-Lattes} with the fact that metric CXC systems on $\mfd^n$ coincide with chaotic Latt\`es maps (see~\cite[Theorems~4.3.3 and~4.3.4]{HP09}). 
\end{proof}

\begin{proof}[Proofs of Theorems~\ref{tx: QS uniformization of visual metric} and \ref{tx: UQR = Lattes}]
Let $f\:\mfd^n\to\mfd^n$, $n\ge 3$, be an orientation-preserving expanding Thurston-type map on an {\orclcnR} $n$-manifold.
By Theorem~\ref{t: CPCF=>encoding sequence}, $f$ is an {\itCel} branched cover.
Then Theorem~\ref{tx: QS uniformization of visual metric} follows from \ref{item:UQR-1} $\Leftrightarrow$ \ref{item:UQR-2} in Theorem~\ref{tx: QS-UEBQS-Lattes-UQR}, and Theorem~\ref{tx: UQR = Lattes} follows from \ref{item:UQR-3} $\Leftrightarrow$ \ref{item:UQR-4} in Theorem~\ref{tx: QS-UEBQS-Lattes-UQR}.
\end{proof}

\appendix

\section{~}\label{Ap: QR and visual}
In this appendix, we collect various facts on quasiregular mappings and visual metrics.

\subsection{Quasiregular maps}
Here, we provide more information on {quasiregular maps}, including characterizations via different approaches, as well as the notion of conical points of UQR maps; we refer the reader to \cite{GW16,LiP19,MM03,Ri93,Vu88}.

\subsubsection{Geometric definition}\label{Ap: moduli}
	We record a geometric characterization for quasiregular maps using {moduli of curve families}; see e.g.~\cite{Ri93,Vu88} for detailed discussions.
	
	Let $\fX$ be a topological space. A continuous map $\gamma\:I\to \fX$, where $I\subseteq\R$ is an interval, is called a \defn{curve} in $\fX$. When there is no ambiguity, we denote the image $\gamma(I)$ by $\gamma$. For subsets $E,\, F,\, G\subseteq \fX$, we denote by $\Delta(E,F;G)$ the family of non-constant curves defined on closed intervals joining $E$ and $F$ in $G$, i.e., non-constant curves $\gamma\:[a,b]\to \fX$, $a<b$, with $\gamma(a)\in E$, $\gamma(b)\in F$, and $\gamma((a,b))\subseteq G$. In particular, when $G=\fX$, we write $\Delta(E,F)\=\Delta(E,F;G)$. For curves $\gamma\:I\to \fX$ and $\gamma'\:I'\to \fX$, we call $\gamma'$ a \defn{sub-curve} of $\gamma$ if $I'\subseteq I$ and $\gamma'=\gamma|_{I'}$. 
    
    We recall the following elementary fact regarding sub-curves.
	\begin{lemma}\label{l: there is a sub-curve}
		Let $G$ be an open subset of a topological space $\fX$, $D$ be a subset of $G$, and $\gamma\:[0,1]\to \fX$ be a curve in $\Delta(D, \fX\smallsetminus G)$. Then $\gamma'\=\gamma|_{[0,a]}$, where $a\=\inf \gamma^{-1}(\fX\smallsetminus G)$, is a sub-curve of $\gamma$ in $\Delta(D,\partial G; G)$.
	\end{lemma}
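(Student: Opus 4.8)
The plan is to unwind the definitions and exploit that $G$ is open, so that $\partial G=\overline{G}\smallsetminus G$. First I would observe that the set $\gamma^{-1}(\fX\smallsetminus G)$ is a closed subset of $[0,1]$, since $\fX\smallsetminus G$ is closed and $\gamma$ is continuous, and that it is non-empty because $\gamma(1)\in\fX\smallsetminus G$. Hence the infimum $a\=\inf\gamma^{-1}(\fX\smallsetminus G)$ is attained, i.e., $\gamma(a)\in\fX\smallsetminus G$. Moreover, since $\gamma$ belongs to $\Delta(D,\fX\smallsetminus G)$ we have $\gamma(0)\in D\subseteq G$, so $0\notin\gamma^{-1}(\fX\smallsetminus G)$; therefore $a>0$ and the restriction $\gamma'\=\gamma|_{[0,a]}$ is a curve defined on a non-degenerate closed interval, and is by construction a sub-curve of $\gamma$.

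Next I would verify the three conditions defining membership in $\Delta(D,\partial G;G)$. By the definition of $a$ as an infimum, every $t\in[0,a)$ satisfies $\gamma(t)\in G$; in particular $\gamma'(0)=\gamma(0)\in D$ and $\gamma'((0,a))\subseteq G$. For the terminal point, continuity of $\gamma$ at $a$ together with $a>0$ gives $\gamma(a)=\lim_{t\to a^-}\gamma(t)$, a limit of points of $G$, so $\gamma(a)\in\overline{G}$; combined with $\gamma(a)\notin G$ from the first paragraph and the openness of $G$ (which yields $\partial G=\overline{G}\smallsetminus G$), we conclude $\gamma'(a)=\gamma(a)\in\partial G$. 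Finally $\gamma'$ is non-constant, since $\gamma'(0)\in G$ while $\gamma'(a)\notin G$ force $\gamma'(0)\neq\gamma'(a)$. Collecting these facts shows $\gamma'\in\Delta(D,\partial G;G)$.

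There is essentially no obstacle here: the only point requiring a moment's care is that the infimum $a$ is in fact a minimum of $\gamma^{-1}(\fX\smallsetminus G)$ — so that $\gamma(a)$ genuinely lies in $\fX\smallsetminus G$ and not merely in $\overline{G}$ — which is exactly the closedness of that preimage. Everything else is a direct translation of the definitions of $\Delta(\cdot,\cdot;\cdot)$ and of $\partial G$, using only that $G$ is open and that $D\subseteq G$.
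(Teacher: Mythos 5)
Your proof is correct and complete; the paper states this lemma only as an elementary fact without giving a proof, and your argument is exactly the standard one it relies on: the preimage $\gamma^{-1}(\fX\smallsetminus G)$ is closed and nonempty, so the infimum $a$ is attained and positive, $\gamma([0,a))\subseteq G$ by minimality, and continuity at $a$ together with openness of $G$ places $\gamma(a)\in\overline{G}\smallsetminus G=\partial G$, with non-constancy forced by $\gamma'(0)\in G$ and $\gamma'(a)\notin G$. The one point you rightly flag --- that $a$ is a minimum, not just an infimum --- is indeed the only place where anything needs to be checked, and your closedness argument handles it.
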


	For a curve $\gamma$ in $\R^n$, the length of $\gamma$ is defined in the usual way and denoted by $\length(\gamma)$. We call a curve $\gamma$ \defn{rectifiable} if $\length(\gamma)<+\infty$.
	Let $\Gamma$ be a family of curves in $\R^n$. A Borel function $\rho\:\R^n\to[0,+\infty]$ is called \defn{admissible} for $\Gamma$ if $\int_\gamma\!\rho\,\di s \geq  1$ for each rectifiable curve $\gamma\in\Gamma$. For each $p\in[1,+\infty)$, the \defn{$p$-modulus} of $\Gamma$ is defined as
	\begin{equation*}
	\modul_p(\Gamma)\=\inf_\rho\int_{\R^n}\!\rho^p\,\di m,
	\end{equation*}
	where $\rho$ ranges over all admissible Borel functions for $\Gamma$. Here and in what follows, $m$ is the $n$-dimensional Lebesgue measure on $\R^n$.
	With the dimension $n$ understood, the $n$-modulus $\modul_n(\Gamma)$ of $\Gamma$ is usually abbreviated as \defn{modulus} and denoted by $\modul(\Gamma)$.

	Moduli of curve families yield the following characterization of quasiregular maps.
    Here the notation $N(f,A)$ is defined in (\ref{e: N(f,A)}). 
	\begin{theorem}[{\cite[Theorem~II.6.7]{Ri93}}]\label{t: BQS and UQR: modul char for QR}
		Let $f\:G\to\R^n$ be a non-constant continuous map on a domain $G\subseteq\R^n$ and $K\in[1,+\infty)$. Then $f$ is $K$-quasiregular map if and only if the following statements are true:
		\begin{enumerate}[label=(\roman*),font=\rm]
        	\smallskip
			\item $f$ is orientation-preserving, discrete, and open.
			\smallskip
			\item For each Borel set $A\subseteq G$ with $N(f,A)<+\infty$, each curve family $\Gamma$ in $A$, and the image $f\Gamma\=\{f\circ\gamma:\gamma\in\Gamma\}$ of $\Gamma$, we have 
			$\modul(\Gamma) \leq  K\cdot N(f,A)\cdot\modul(f\Gamma)$.
		\end{enumerate}
	\end{theorem}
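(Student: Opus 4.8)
This is Rickman's modulus characterization of quasiregularity \cite[Theorem~II.6.7]{Ri93}, so for a self-contained treatment the efficient route is simply to invoke it; below I sketch the structure of its proof. It splits into two implications. The forward direction (the outer-dilatation, or $K_O$, inequality, essentially Poletskii's inequality) derives the topological properties (i) and the modulus bound (ii) from the analytic distortion inequality $\norm{Df}^n \leq K J_f$. The converse recovers the analytic definition from (i) together with the modulus bound (ii).

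\textbf{Forward direction.} Property (i) is classical: by Reshetnyak's theorem a non-constant quasiregular map is discrete and open, and it is orientation-preserving since the distortion inequality forces $J_f \geq 0$ a.e.\ with $J_f \not\equiv 0$. For (ii), fix a Borel set $A \subseteq G$ with $N(f,A) < +\infty$, a curve family $\Gamma$ in $A$, and a Borel function $\rho$ admissible for $f\Gamma$. Set $\rho'(x) \= \rho(f(x))\,\norm{Df(x)}$ for $x \in A$ and $\rho' \= 0$ elsewhere. The crucial analytic input is that $f$, being $W^{1,n}_\loc$, is absolutely continuous on $\modul$-almost every curve (Fuglede's theorem applied to the $\mathrm{ACL}^n$ property), so for every $\gamma \in \Gamma$ outside a curve family of modulus zero the image $f \circ \gamma$ is rectifiable and, using $\abs{(f\circ\gamma)'} \leq \norm{Df\circ\gamma}\,\abs{\gamma'}$ along $\gamma$,
\begin{equation*}
  \int_\gamma \rho' \,\di s \;\geq\; \int_{f\circ\gamma} \rho \,\di s \;\geq\; 1 .
\end{equation*}
Hence $\rho'$ is admissible for $\Gamma$ up to that null family, which does not affect the modulus. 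The distortion inequality together with the change-of-variables formula then gives
\begin{align*}
  \modul(\Gamma) &\leq \int_{\R^n}(\rho')^n\,\di m = \int_A \rho(f)^n\,\norm{Df}^n\,\di m \leq K\int_A \rho(f)^n\, J_f\,\di m \\
  &= K\int_{\R^n}\rho(y)^n\, N(f,A,y)\,\di m(y) \leq K\, N(f,A)\,\modul(f\Gamma),
\end{align*}
where $N(f,A,y) \= \card\bigl(f^{-1}(y)\cap A\bigr)$; taking the infimum over admissible $\rho$ yields (ii).

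\textbf{Converse.} Applying (ii) to curve families joining the boundary components of small spherical rings shows that $f$ distorts moduli of ring domains by a bounded factor; a Gehring--Väisälä type argument then upgrades this to $f \in W^{1,n}_\loc$ (indeed $\mathrm{ACL}^n$), so $f$ is differentiable a.e.\ with locally integrable Jacobian. It remains to prove $\norm{Df(x)}^n \leq K\, J_f(x)$ for a.e.\ $x$. At a point $x$ of differentiability with $J_f(x) > 0$, apply (ii) to a curve family filling a small ball $B(x,r)$ and rescale by $r$; as $r \to 0$ the moduli converge to those of the corresponding families for the linear map $L \= Df(x)$, while $N(f, B(x,r)) \to 1$ for a.e.\ $x$ (the branch set having measure zero). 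This produces $\modul(\Gamma_L) \leq K\,\modul(L\Gamma_L)$, which for a linear map is equivalent by a direct computation to $\norm{L}^n \leq K\, J_L = K\, J_f(x)$; at points with $J_f(x) = 0$ the same limiting argument forces $L = 0$.

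\textbf{Main obstacle.} The two technical hearts of the argument are exactly the passages between metric/modulus data and Sobolev data: in the forward direction, the absolute continuity of $f$ on almost every curve — which is precisely what makes the pulled-back density $\rho'$ admissible for $\Gamma$ — and in the converse direction, the promotion of the modulus inequality to $W^{1,n}_\loc$-regularity. Both are standard but substantial pieces of the foundations of quasiregular mapping theory, and in practice I would invoke the relevant results from \cite{Ri93} (or \cite{Vu88}) rather than reprove them here.
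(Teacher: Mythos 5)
Your approach matches the paper exactly: this statement is quoted there as an external result, Rickman's modulus characterization \cite[Theorem~II.6.7]{Ri93}, with no proof given, and invoking it is precisely what the paper does. Your sketch of the two implications (the $K_O$-inequality via Fuglede's theorem and change of variables, and the converse via modulus distortion, $W^{1,n}_{\loc}$-regularity, and linearization at points of differentiability) is consistent with the standard proof in \cite{Ri93}.
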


\subsubsection{Metric counterpart}\label{Ap: BQS=>QR}
In \cite{LiP19}, Lindquist and the second-named author gave a characterization of quasiregular maps in terms of branched quasisymmetric maps. 
One of their results asserts that local branched quasisymmetric maps are quasiregular, quantitatively.

Recall that a continuous map $f\:\fX\to \fY$ between metric spaces is a \defn{local $\eta$-branched quasisymmetric} (abbreviated as~\defn{local $\eta$-BQS}) \defn{map} for a homeomorphism $\eta\:[0,+\infty)\to[0,+\infty)$ if there exists $r>0$ such that  for each $x\in \fX$, $f|_{B(x,r)}$ is an $\eta$-BQS map.
\begin{theorem}[{\cite[Theorem~A.10]{LiP19}}]\label{thm: LiP19: BQS=>QR}
    Let $\mfd^n$ and $\cN^n$ be oriented Riemannian $n$-manifolds and $f\:\mfd^n\to\cN^n$ a discrete, open, and orientation-preserving local $\eta$-BQS map. Then $f$ is $K$-quasiregular for some $K \geq 1$ depending only on $\eta$.
\end{theorem}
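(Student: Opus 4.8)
Since this is \cite[Theorem~A.10]{LiP19}, the plan is to reconstruct the argument of that paper. First I would reduce to a Euclidean statement. Quasiregularity is a local property that is preserved, at the cost of replacing $K$ by $KL^{4n}$, under pre- and post-composition with $L$-bi-Lipschitz maps, $L\geq 1$; local $\eta$-branched quasisymmetry is likewise preserved, with a control function depending only on $\eta$ and $L$. Fixing $L$-bi-Lipschitz charts on $\mfd^n$ and $\cN^n$ (e.g.\ exponential charts with $L$ close to $1$), it therefore suffices to prove: a discrete, open, orientation-preserving, local $\eta$-BQS map $g\colon \Omega\to\R^n$ on a domain $\Omega\subseteq\R^n$ is $K$-quasiregular with $K$ depending only on $n$ and $\eta$. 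Because the relevant quantities are scale invariant, the radius $r>0$ from the definition of local BQS does not enter the final constant.

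Next I would invoke the geometric characterization of quasiregular maps, Theorem~\ref{t: BQS and UQR: modul char for QR}: one must show that for every Borel set $A\Subset\Omega$ with $N(g,A)<+\infty$ and every curve family $\Gamma$ in $A$ one has $\modul(\Gamma)\le K\,N(g,A)\,\modul(g\Gamma)$. Covering $A$ by finitely many balls on which $g$ is genuinely $\eta$-BQS and using subadditivity of the modulus, it is enough to bound, by a constant depending only on $n$ and $\eta$, the modulus distortion of a single $\eta$-BQS map $g\colon B\to\R^n$ on a ball. Here the key point is that the modulus of a family $\Delta(E,F;D)$ joining two continua is comparable, via Loewner- and Gr\"otzsch-type capacity estimates of the kind already used in the paper (cf.~\cite[Lemma~7.38]{Vu88} and Lemma~\ref{l: there is a sub-curve}), to an explicit function of the ratios $\diam E/\dist(E,F)$ and $\diam F/\dist(E,F)$; and an $\eta$-BQS map controls exactly these ratios, since $\diam g(E)/\diam g(F)\le\eta(\diam E/\diam F)$ and, applying the BQS inequality to $E$ together with a continuum realizing $\dist(E,F)$, $\dist(g(E),g(F))$ is comparable to $\diam g(E)$ up to a factor depending only on $\eta$. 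Exhausting a general curve family by such ``ring'' families then yields the two-sided modulus comparison with constant $K=K(n,\eta)$, following the classical scheme for passing from quasisymmetry to quasiconformality.

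The remaining issue is branching. The estimate above is naturally carried out on $\Omega\smallsetminus B_g$, where $g$ is a local homeomorphism, so that the homeomorphic case of branched quasisymmetry applies: on bounded-turning Euclidean balls a local homeomorphism that is $\eta$-BQS is quasisymmetric (the Guo--Williams observation recalled in the introduction, together with \cite{TV80}), hence $K'$-quasiconformal with $K'=K'(n,\eta)$, so $g\in W^{1,n}_{\loc}(\Omega\smallsetminus B_g)$ with $\|Dg\|^n\le K'J_g$ a.e.\ there. By the Chernavskii--V\"ais\"al\"a theorem (\cite[Theorem~5.4]{Va66}) the topological dimension of $B_g$ is at most $n-2$, so $B_g$ has vanishing $n$-capacity and is removable for continuous maps of bounded distortion (cf.~\cite{Ri93}); removability promotes the above to $g\in W^{1,n}_{\loc}(\Omega)$ with the same distortion bound a.e.\ on $\Omega$, i.e.\ $g$ is $K'$-quasiregular. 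Patching the charts, $f$ is $K$-quasiregular with $K$ depending only on $n$ and $\eta$. I expect the main obstacle to be the quantitative modulus-distortion estimate for BQS maps---passing from control of diameters of continua (the metric condition) to control of moduli of curve families (the geometric condition) with a constant depending only on $\eta$---together with the removability step across the branch set.
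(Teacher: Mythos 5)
The decisive gap is your removability step. From the Chernavskii--V\"ais\"al\"a theorem you only get that the topological dimension of $B_g$ is at most $n-2$, and you then assert that $B_g$ "has vanishing $n$-capacity and is removable". Topological dimension $\leq n-2$ gives neither zero Lebesgue measure nor zero capacity: already a totally disconnected (dimension~$0$) compact set in $\R^n$ can have positive measure, and for a map that is merely discrete and open the branch set can be metrically wild --- the fact that $m(B_f)=0$ for quasiregular maps is a theorem of Reshetnyak, i.e.\ a consequence of the quasiregularity you are trying to prove, not an input. Removability theorems for $W^{1,n}_{\loc}$ maps of bounded distortion require genuine metric smallness of the exceptional set (zero or $\sigma$-finite $(n-1)$-Hausdorff measure, NED sets, etc.), which you do not have; so the passage from "quasiconformal on $\Omega\smallsetminus B_g$" to "quasiregular on $\Omega$" does not go through as written. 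A second unaddressed point in the same step: to promote a local homeomorphism that is $\eta$-BQS to a quasisymmetry via the Guo--Williams observation you need the \emph{image} to be of bounded turning with uniform constants, and the image of a small ball under a discrete open map need not be; the lower bound on $|f(x)-f(z)|$ in terms of the diameter of the image of some continuum through $x$ and $z$ has to be manufactured, and nothing in your sketch produces it. (Your earlier modulus route has a similar soft spot: Theorem~\ref{t: BQS and UQR: modul char for QR} demands the inequality for \emph{all} curve families, and "exhausting a general family by ring families" is not a standard reduction.)

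The paper's proof avoids both difficulties by never separating off the branch set. After the same bi-Lipschitz chart reduction, it bounds at \emph{every} point $x$ (branch point or not) the inverse linear dilatation $H^*(x,f)=\limsup_{r\to0}L^*(x,f,r)/l^*(x,f,r)$ formed from the $x$-component $U(x,f,r)$ of $f^{-1}(\B^n(f(x),r))$. The key device is \cite[Lemma~2.7]{MRV69}, which yields a continuum $E\subseteq\cls{U(x,f,r)}$ through $x$ and the farthest boundary point $z$ whose image is exactly the radial segment $[f(z),f(x)]$; applying the $\eta$-BQS inequality to the pair $([x,y],E)$, where $y$ is a nearest boundary point, gives $L^*/l^*\leq 1/\eta^{-1}(1)$, and \cite[Theorem~4.14]{MRV69} then converts this pointwise bound directly into $\bigl(1/\eta^{-1}(1)\bigr)^n$-quasiregularity for sense-preserving, discrete, open maps. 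That combination --- a metric definition of quasiregularity valid across branch points, plus the MRV continuum lemma supplying the lower distortion bound --- is precisely what your argument is missing, and without it the removability and bounded-turning issues are genuine obstructions rather than technicalities.
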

We record a self-contained proof of Theorem~\ref{thm: LiP19: BQS=>QR} for the convenience of the reader (cf.~the proof of \cite[Theorem~A.10]{LiP19}).
\begin{proof}
	By localizing the map with $2$-bi-Lipschitz charts, it suffices to consider an $\eta$-branched quasisymmetric map $f\:\B^n\to\R^n$.
	
	For $x\in\B^n$ and $r>0$, we denote by $U(x,f,r)$ the connected component of $f^{-1}(\B^n(f(x),r))$ that contains $x$, and denote $L^*(x,f,r)\=\sup_{z\in\partial U(x,f,r)}\abs{x-z}$ and $l^*(x,f,r)\=\inf_{z\in\partial U(x,f,r)}\abs{x-z}$.
	
	Fix arbitrary $x\in\B^n$. Let $r_x > 0$ be a small radius such that for each $r \in(0, r_x)$, $f(U(x,f,r))=\B(f(x),r)$ and $f(\partial U(x,f,r))=\partial f(U(x,f,r))=\partial \B^n(f(x),r)$ (cf.~\cite[Lemma~9.14]{Vu88}). 
    
	Fix arbitrary $r\in(0,r_x)$ and points $y,\,z\in\partial U(x, f, r)$ for which $\abs{y-x} = l^*(x, f, r)$ and $\abs{z-x}=L^*(x, f, r)$.
	We use \cite[Lemma~2.7]{MRV69} (see also~\cite[Lemma~2.6]{LiP19}) to conclude that there exists a continuum $E \subseteq \cls{U(x, f, r)}$ containing $z$ and $x$ for which $f(E)$ is the line segment $[f(z), f(x)]$ containing $f(z)$ and $f(x)$. Since $f(\partial U(x,f,r))=\partial\B^n(f(x),r)$, we have $f(y)\in\partial\B^n(f(x),r)$ and thus $\diam(f([x,y])) \geq  r$. Combining this with $\diam E \geq \abs{x-z}$, $f(E)=[f(z),f(x)]$, and the fact that $f$ is $\eta$-BQS, we have
    \begin{equation*}
    \begin{aligned}
          r\leq\diam(f[y,x])
          &\leq  \eta(\diam[y,x]/\diam E) \cdot \diam(f(E))  \\
          &\leq  \eta(\abs{y-x}/\abs{z-x}) \cdot \diam[f(z),f(x)]
          =\eta(\abs{y-x}/\abs{z-x}) \cdot r.
    \end{aligned}
    \end{equation*}
    Thus, $\eta(\abs{y-x}/\abs{z-x}) \geq 1$ and $L^*(x,f,r)/l^*(x,f,r)=\abs{z-x}/\abs{y-x}\leq 1/\eta^{-1}(1)$.
	Consequently, $H^*(x,f)\=\limsup_{r\to0}L^*(x,f,r)/l^*(x,f,r)\leq  1/\eta^{-1}(1)$. Therefore, since $x$ is arbitrary, $f$ is $\bigl(1/\eta^{-1}(1)\bigr)^n$-quasiregular (cf.~\cite[Theorem~4.14]{MRV69}).
\end{proof}

\subsubsection{Conical points}
\label{Ap: conical pt of UQR}

We revisit a rigidity result due to Martin and Mayer~\cite{MM03} for UQR maps on $n$-spheres, $n\geq 3$, showing that UQR maps having a set of conical points with positive measure are Latt\`es maps. As the discussion below is analogous to \cite{MM03}, readers familiar with the argument of Martin and Mayer may want to skip this part.

\begin{definition}[Conical points]\label{d: conical points}
For a uniformly quasiregular map $f\:\mfd^n\to\mfd^n$ on a {\clcnR} $n$-manifold, a point $p\in\mfd^n$ is a \defn{conical point} if there exists a bi-Lipschitz smooth chart $(U,\phi)$, a subsequence $\{k_j\}_{j\in\N}$ of $\{k\}_{k\in\N}$, and a sequence of sizes $\{\rho_j\}_{j\in\N}$ with $\rho_j\to0$ as $j\to+\infty$ such that the family 
\begin{equation*}
\Psi_j\:\B^n\to\mfd^n,\quad x\mapsto \bigl(f^{k_j}\circ\phi^{-1}\bigr)(\phi(p)+\rho_j x)
\end{equation*}
converges uniformly to a non-constant quasiregular map $\Psi\:\B^n\to\mfd^n$.
\end{definition}

We formulate the following manifold version of \cite[Theorem~6.1]{MM03}.
\begin{theorem}\label{t: Ap: cont in measure at conical=>Latt}
	Let $f\:\mfd^n\to\mfd^n$, $n\ge 3$, be a uniformly quasiregular map on a closed Riemannian $n$-manifold, and $\mu$ be an $f$-invariant conformal structure on $\mfd^n$. If $\mu$ is continuous in measure at a conical point $p_0\in\mfd^n$, then $f$ is a Latt\`es map.
\end{theorem}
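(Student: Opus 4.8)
The plan is to adapt the argument of Martin and Mayer \cite{MM03} to the manifold setting, the only genuine change being to localize through the bi-Lipschitz chart $(U,\phi)$ at $p_0$. First I would transport the picture to a Euclidean domain: set $\nu\=(\phi^{-1})^*\mu$, a bounded measurable conformal structure on $\phi(U)\subseteq\R^n$ that is continuous in measure at $\phi(p_0)$ with value a fixed conformal structure, which we denote $\mu_0$. By hypothesis there are a subsequence $\{k_j\}_{j\in\N}$, radii $\rho_j\to0$, and rescalings $\Psi_j\:\B^n\to\mfd^n$, $\Psi_j\=f^{k_j}\circ\phi^{-1}\circ A_j$, where $A_j(x)\=\phi(p_0)+\rho_j x$, converging locally uniformly on $\B^n$ to a non-constant quasiregular map $\Psi\:\B^n\to\mfd^n$. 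Since $f$ is uniformly $K$-quasiregular for some $K\ge1$ and $\phi$, $A_j$ have controlled distortion, the family $\{\Psi_j\}$ is uniformly $K'$-quasiregular, so $\Psi$ is $K'$-quasiregular as well.

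Next I would identify the conformal structures pulled back by the $\Psi_j$. Because $\mu$ is $f$-invariant, $(f^{k_j})^*\mu=\mu$ a.e., and because $A_j$ is a translation--dilation (a similarity with no rotation part), $A_j^*\nu$ is just $\nu$ reparametrized. Hence $\Psi_j^*\mu=A_j^*(\phi^{-1})^*(f^{k_j})^*\mu=A_j^*\nu$, that is, $(\Psi_j^*\mu)(x)=\nu(\phi(p_0)+\rho_j x)$ for a.e.\ $x\in\B^n$. Letting $j\to\infty$ and using the measure-continuity of $\nu$ at $\phi(p_0)$, the conformal structures $\Psi_j^*\mu$ converge in measure, on compact subsets of $\B^n$, to the \emph{constant} conformal structure $\mu_0$. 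On the other hand, $\Psi_j\to\Psi$ in $W^{1,n}_{\loc}$ with a.e.-convergent derivatives off the (null) branch sets; by the stability of solutions of Beltrami-type systems under such convergence (see \cite{MM03} and \cite[Chapter~VI]{Ri93}) one obtains $\Psi^*\mu=\mu_0$ a.e.\ on $\B^n$. Choosing a fixed linear map $T\in\mathrm{GL}(n,\R)$ with $T^*\mu_0$ equal to the standard conformal structure and setting $h\=\Psi\circ T$, we thus obtain a non-constant quasiregular map $h$ with $h^*\mu$ equal to the standard structure; equivalently, $h$ is $\mu$-conformal.

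Finally I would run the rigidity/straightening step of \cite{MM03}. Since $f^*\mu=\mu$, every iterate $f^m\circ h$ is again $\mu$-conformal, and so is every germ of the form (local branch of $f^{-1}$)$\circ h$, away from the branch and postcritical sets. Any two $\mu$-conformal local homeomorphisms into $\mfd^n$ with overlapping images are related, in Euclidean coordinates, by a $1$-quasiconformal change of coordinates for the standard structure, hence by a M\"obius transformation, by Liouville's theorem in dimension $n\ge3$ (see, e.g., \cite{Re89,IM01}). Exploiting this rigidity, one continues the germ of $h$ along the dynamics of $f$ to a globally defined quasiregular map $h\:\R^n\to\mfd^n$ that is strongly automorphic with respect to a discrete group $\Gamma<\Isom(\R^n)$, together with a conformal affine map $A$ with $A\Gamma A^{-1}\subseteq\Gamma$ and $f\circ h=h\circ A$; that is, $(\Gamma,h,A)$ is a Latt\`es triple and $f$ is a Latt\`es map. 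The main obstacle is precisely this last step: carrying out the analytic continuation that upgrades the local $\mu$-conformal map $h$ to a developing map $\R^n\to\mfd^n$ and extracting $\Gamma$ and $A$ from it; in the manifold setting this requires no new ideas beyond the systematic replacement of $\widehat{\C}$ (or $\Sp^n$) by $\mfd^n$ together with the chart $(U,\phi)$. A secondary technical point, already noted, is the passage to the limit $\Psi_j^*\mu\to\Psi^*\mu$, which is a lower-semicontinuity statement for the $\mu$-distortion functional along the locally uniformly convergent, uniformly quasiregular sequence $\{\Psi_j\}$ and rests on the $W^{1,n}_{\loc}$-compactness of uniformly quasiregular families.
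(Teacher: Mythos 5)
Your proposal is correct and follows essentially the same route as the paper: the paper gives no independent argument for this statement, presenting it as the manifold version of \cite[Theorem~6.1]{MM03} with the remark that the discussion is analogous, and your outline (localizing in a bi-Lipschitz chart, rescaling at the conical point, passing the invariant structure to the limit via continuity in measure, normalizing by a linear map, and then using Liouville rigidity to continue the germ and extract the Latt\`es triple) is precisely that adaptation of Martin--Mayer. The two steps you single out as delicate---the convergence $\Psi_j^*\mu\to\Psi^*\mu$ and the continuation to a strongly automorphic developing map---are exactly the parts the paper also defers to \cite{MM03}, so no discrepancy with the paper's treatment arises.
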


First, recall that a \defn{(bounded measurable) conformal structure} on a Riemannian $n$-manifold $\mfd^n$ is a bounded measurable map $\mu\:\mfd^n\to S(n)$, where $S(n)$ is the space of positive-definite symmetric matrices of determinant $1$, equipped with a Riemannian metric; see~\cite[Section~20.1]{IM01} for a detailed discussion. The \defn{pullback} of a conformal structure $\mu$ under $f\in W_{\loc}^{1,n}(\mfd^n)$ is 
\begin{equation*}
    f^*\mu(x)=J_f(x)^{-2/n}(Df(x))^{\operatorname{t}}\mu(f(x))Df(x) \quad\text{a.e.\ } x\in\mfd^n.
\end{equation*}
Here $(Df(x))^{\operatorname{t}}$ denotes the transpose of $(Df(x))^{\operatorname{t}}$. We say that $\mu$ is \defn{invariant under $f\in W_\loc^{1,n}(\mfd^n)$ (or $f$-invariant)} if $f^*\mu=\mu$. Each uniformly quasiregular map $f\:\mfd^n\to\mfd^n$ admits an $f$-invariant conformal structure (see~\cite[Theorem~4.1]{Ka21} or \cite[Theorem~5.1]{IM96}) and the converse is also true (cf. the discussion in \cite[Section~2]{IM96}).

We also recall that a map $f\:\mfd^n\to(\fX,d)$ between a Riemannian $n$-manifold and a metric space is \defn{continuous in measure at a point $p_0\in\mfd^n$} if, for each $\epsilon>0$,
\begin{equation*}
\frac{m(B(p_0,\delta)\cap E_\epsilon(p_0))}{m(B(p_0,\delta))}\to 0\quad\text{as }\delta\to 0,
\end{equation*}
where $m$ is the $n$-dimensional Lebesgue measure and $E_\epsilon(p_0)\=\{p\in\mfd^n:d(f(p),f(p_0))>\epsilon\}$. 

\begin{rem}\label{r: continuity in measure}
It is easy to check that $f\:\mfd^n\to(\fX,d)$ is continuous in measure at $p_0\in\mfd^n$ if and only if for each smooth chart $(U,\phi)$, $f\circ\phi^{-1}$ is continuous in measure at $\phi(p_0)\in\R^n$.
\end{rem}

Since a conformal structure $\mu$ on $\mfd^n$ is measurable, it is continuous in measure almost everywhere (cf.~\cite[Theorem~2.9.13]{Fe69}). Thus, Theorem~\ref{t: Ap: cont in measure at conical=>Latt} immediately leads to the following result, as a manifold version of \cite[Theorem~1.3]{MM03}.
\begin{cor}\label{c: conical: +measure conical=>Lattes}
	Let $\mfd^n$ be a closed Riemannian $n$-manifold with $n \geq 3$ and $f\:\mfd^n\to\mfd^n$ be a uniformly quasiregular map. If the set of conical points of $f$ has a positive measure, then $f$ is a Latt\`es map.
\end{cor}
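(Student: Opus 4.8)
\textbf{Proof proposal for Corollary~\ref{c: conical: +measure conical=>Lattes}.}
The plan is to derive this corollary directly from Theorem~\ref{t: Ap: cont in measure at conical=>Latt} together with the measure-theoretic structure already recalled in Appendix~\ref{Ap: conical pt of UQR}. First I would fix a (bounded measurable) conformal structure $\mu$ on $\mfd^n$ that is invariant under $f$; such a $\mu$ exists because $f$ is uniformly quasiregular (see \cite[Theorem~4.1]{Ka21} or \cite[Theorem~5.1]{IM96}). The key observation is that $\mu$, being a bounded measurable map into the metric space $S(n)$, is \emph{approximately continuous}---equivalently, continuous in measure---at almost every point of $\mfd^n$; this is the standard Lebesgue-density/Lusin-type fact cited as \cite[Theorem~2.9.13]{Fe69}, transported to the manifold via smooth charts using Remark~\ref{r: continuity in measure}.

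Next I would combine this with the hypothesis. Let $\cC\subseteq\mfd^n$ denote the set of conical points of $f$, and suppose $m(\cC)>0$, where $m$ is the $n$-dimensional Lebesgue measure (well-defined up to sets of measure zero via smooth charts, since $\mfd^n$ is a closed Riemannian manifold). Let $N\subseteq\mfd^n$ be the set of points at which $\mu$ fails to be continuous in measure; by the previous paragraph $m(N)=0$. Hence $m(\cC\smallsetminus N)=m(\cC)>0$, so in particular $\cC\smallsetminus N\neq\emptyset$. Choose any point $p_0\in\cC\smallsetminus N$: then $p_0$ is a conical point of $f$ and $\mu$ is continuous in measure at $p_0$.

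Finally I would invoke Theorem~\ref{t: Ap: cont in measure at conical=>Latt} with this $\mu$ and this $p_0$: since $\mu$ is an $f$-invariant conformal structure on $\mfd^n$ that is continuous in measure at the conical point $p_0$, the theorem yields that $f$ is a Latt\`es map, completing the proof. I do not expect a genuine obstacle here---the corollary is a short deduction---but the one point requiring care is the passage between the manifold and Euclidean formulations: one must check that "almost every point" and "continuous in measure" are chart-independent notions, which is exactly the content of Remark~\ref{r: continuity in measure} (for continuity in measure) together with the fact that smooth charts are locally bi-Lipschitz and hence preserve null sets and Lebesgue density points. With that bookkeeping in place the argument is immediate.
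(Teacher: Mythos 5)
Your argument is correct and is exactly the paper's deduction: the paper likewise fixes an $f$-invariant conformal structure (existence via \cite[Theorem~4.1]{Ka21} or \cite[Theorem~5.1]{IM96}), notes that a measurable conformal structure is continuous in measure almost everywhere by \cite[Theorem~2.9.13]{Fe69}, intersects this full-measure set with the positive-measure conical set, and applies Theorem~\ref{t: Ap: cont in measure at conical=>Latt} at such a point. Your extra remarks on chart-independence via Remark~\ref{r: continuity in measure} are harmless bookkeeping that the paper leaves implicit.
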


\subsection{Visual metrics}\label{Ap: visual metric}

In what follows, we discuss the visual metrics for expanding {\itCel} branched covers introduced in Subsection~\ref{subsct: visual metrics}.
Our class of visual metrics is in the spirit of the seminal works of Bonk--Meyer \cite{BM17} and Ha\"isinsky--Pilgrim \cite{HP09}, where visual metrics with respect to dynamics on $2$-dimensional spheres and more general topological spaces, respectively, are introduced. The discussion in this subsection is analogous to \cite[Chapter~8]{BM17}.

\subsubsection{The separation level $m_{f,\cD}$} \label{Ap: m(x,y)}
The function $m_{f,\cD_0}$, defined by (\ref{e: the constant m(x,y)}) for a {\celSeq} $\{\cD_m\}_{m\in\N_0}$ of an expanding {\itCel} map $f$, plays a role similar to the Gromov products on Gromov hyperbolic spaces, and yields the definition of visual metrics for $f$. We give some elementary properties of $m_{f,\cD_0}$.

\begin{lemma}\label{l: m(x,y) property: iteration}
	Let $f\:\mfd^n\to\mfd^n$ be an {\itCel} map on a {\clcntop} $n$-manifold and $\{\cD_m\}_{m\in\N_0}$ be a {\celSeq} of $f$. Then for all $x,y\in\mfd^n$, we have $m_{f,\cD_0}(f(x),f(y)) \geq  m_{f,\cD_0}(x,y)-1$.
\end{lemma}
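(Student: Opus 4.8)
The statement to prove is that $m_{f,\cD_0}(f(x),f(y)) \geq m_{f,\cD_0}(x,y)-1$ for all $x,y\in\mfd^n$. The plan is to argue directly from the definition \eqref{e: the constant m(x,y)} of the separation level together with the forward invariance of chambers under a cellular map. Write $m \= m_{f,\cD_0}(x,y)$. If $m=0$ the inequality is trivial since $m_{f,\cD_0}(f(x),f(y))\geq 0$, so assume $m\geq 1$.

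First I would unwind the definition: by \eqref{e: the constant m(x,y)} there exist $X,Y\in\cD_m^{[n]}$ with $x\in X$, $y\in Y$, and $X\cap Y\neq\emptyset$. Now I invoke that $f$ is $(\cD_m,\cD_{m-1})$-cellular (this is part of being an {\itCel} map with {\celSeq} $\{\cD_l\}_{l\in\N_0}$; see Definition~\ref{d: iter cellular map}). By the definition of a cellular map, $f(X)\in\cD_{m-1}$ and $f(Y)\in\cD_{m-1}$, and moreover the restrictions $f|_X$ and $f|_Y$ are homeomorphisms onto their images, so $\dim f(X)=\dim X=n$ and $\dim f(Y)=\dim Y=n$; hence $f(X),f(Y)\in\cD_{m-1}^{[n]}$. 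Clearly $f(x)\in f(X)$ and $f(y)\in f(Y)$. Finally, since $X\cap Y\neq\emptyset$, pick $z\in X\cap Y$; then $f(z)\in f(X)\cap f(Y)$, so $f(X)\cap f(Y)\neq\emptyset$. Plugging $l=m-1$ and the pair $(f(X),f(Y))$ into \eqref{e: the constant m(x,y)} for the points $f(x),f(y)$ shows $m_{f,\cD_0}(f(x),f(y))\geq m-1$, as desired.

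There is essentially no serious obstacle here; the only point to be slightly careful about is the dimension bookkeeping—that the cellular image of an $n$-cell is again an $n$-cell—which follows immediately from $f|_X$ being a homeomorphism onto $f(X)$. One should also note the convention $\sup\emptyset = 0$ used in \eqref{e: the constant m(x,y)} so that the degenerate case $m=0$ is handled uniformly, and that the argument makes no use of expansion, so it applies to any {\itCel} map. If desired, iterating the inequality yields $m_{f,\cD_0}(f^k(x),f^k(y))\geq m_{f,\cD_0}(x,y)-k$ for all $k\in\N_0$, which is the form typically used in the visual-metric estimates.
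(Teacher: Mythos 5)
Your proof is correct and is essentially the paper's argument: chambers at level $m$ map to chambers at level $m-1$ under the $(\cD_m,\cD_{m-1})$-cellular map, preserving the intersection, so the separation level drops by at most one. The paper phrases this as a proof by contradiction, but the content is identical (and, like yours, it implicitly treats the finite case; the case $m_{f,\cD_0}(x,y)=+\infty$ follows by the same argument applied at every finite level).
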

\begin{proof}
	Suppose $x,\, y\in\mfd^n$ satisfy $m_{f,\cD_0}(f(x),f(y)) <  m_{f,\cD_0}(x,y)-1$. This implies that $m\=m_{f,\cD_0}(x,y)>1$, and thus, by definition (cf.~(\ref{e: the constant m(x,y)})), there exists non-disjoint $m$-chambers $X,\,Y$ with $x\in X$ and $y\in Y$. Then $f(X)$ and $f(Y)$ are non-disjoint $(m-1)$-chambers containing $f(x)$ and $f(y)$, respectively. This implies $m_{f,\cD_0}(f(x),f(y))\ge m-1$, which is a contradiction.
\end{proof}

When $f$ is an expanding {\itCel} branched cover, we have the following properties.
\begin{lemma}\label{l: m(x,y) properties: hyperbolic}
	Let $f\:\mfd^n\to\mfd^n$ be an expanding {\itCel} branched cover on a {\clcnmtr} $n$-manifold, and let $\{\cD_m\}_{m\in\N_0}$ and $\{\cC_m\}_{m\in\N_0}$ be {\celSeq}s of $f$. Then the following statements are true: 
	\begin{enumerate}[label=(\roman*),font=\rm]
        \smallskip
		\item There exists $k_0\in\N$, depending only on $\{\cD_m\}_{m\in\N_0}$, such that for all $x,y,z\in\mfd^n$,
		\begin{equation*}
			\min\{m_{f,\cD_0}(x,z),\,m_{f,\cD_0}(z,y)\} \leq  m_{f,\cD_0}(x,y)+k_0.
		\end{equation*}
		\item There exists $k_1\in\N$, depending only on $\{\cC_m\}_{m\in\N_0}$ and $\{\cD_m\}_{m\in\N_0}$, such that for all $x,\,y\in\mfd^n$,
	       \begin{equation*}
	            m_{f,\cD_0}(x,y)-k_1 \leq  m_{f,\cC_0}(x,y) \leq  m_{f,\cD_0}(x,y)+k_1.
        	 \end{equation*}
	\end{enumerate}
\end{lemma}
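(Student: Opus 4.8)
Both inequalities will be deduced from a single \emph{one-level} metric estimate, renormalized to all levels by pushing forward along $f$. The two recurring tools are: (a) for a cellular branched cover with {\celPair} $(\cD_1,\cD_0)$, a connected set $A$ lies in a $1$-flower if and only if $f(A)$ lies in a $0$-flower (Corollary~\ref{c: CBC: connected set in flower}~(i)); applying this to the pairs $(\cD_{j+1},\cD_j)$ and iterating, a connected set $A$ lies in an $l$-flower if and only if $f^{l}(A)$ lies in a $0$-flower; and (b) the elementary observation that if $x,y$ lie in a common $m$-flower $\flower_m(p)$, then the carriers $\supp_{\cD_m}(x)$ and $\supp_{\cD_m}(y)$ both contain $p$, hence by Lemma~\ref{l: cell decomp: compact, and Mfd}~(ii) are contained in $m$-chambers $X,Y\ni p$ with $X\cap Y\neq\emptyset$, so that $m_{f,\cD_0}(x,y)\geq m$ by \eqref{e: the constant m(x,y)}. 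The only metric input is that, since $f$ is expanding, every {\celSeq} of $f$ is expanding by Proposition~\ref{p: characterization of expans}; in particular $\mesh(\cD_m)\to 0$ and $\mesh(\cC_m)\to 0$ as $m\to+\infty$.

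\textbf{Part (i).} Let $\delta_0>0$ be the Lebesgue number of the finite open cover $\coverF(\cD_0)$, and fix $k_0\in\N$ with $4\mesh(\cD_{k_0})<\delta_0$. Given $x,y,z\in\mfd^n$, set $m\=\min\{m_{f,\cD_0}(x,z),m_{f,\cD_0}(z,y)\}$. If $m\leq k_0$ the asserted inequality is immediate, and if $m=+\infty$ then $x=y$ (cf.~Remark~\ref{r: remarks on m(x,y)}) and it is again immediate; so assume $k_0<m<+\infty$. By \eqref{e: the constant m(x,y)} there are $X,Z,Z',Y\in\cD_m^{[n]}$ with $x\in X$, $z\in Z\cap Z'$, $y\in Y$, $X\cap Z\neq\emptyset$, and $Z'\cap Y\neq\emptyset$; then $A\=X\cup Z\cup Z'\cup Y$ is a connected set containing $x$ and $y$. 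Since $f^{m-k_0}$ is $(\cD_{m},\cD_{k_0})$-cellular (Remark~\ref{r: basic cellular property}), $f^{m-k_0}(A)$ is a connected union of four chambers in $\cD_{k_0}^{[n]}$ whose pairwise intersections still form a chain, so $\diam f^{m-k_0}(A)\leq 4\mesh(\cD_{k_0})<\delta_0$; hence $f^{m-k_0}(A)$ is contained in some $0$-flower. By tool (a) (iterated $m-k_0$ times), $A$ is contained in an $(m-k_0)$-flower, so $x$ and $y$ lie in a common $(m-k_0)$-flower, and tool (b) gives $m_{f,\cD_0}(x,y)\geq m-k_0$, i.e.~$\min\{m_{f,\cD_0}(x,z),m_{f,\cD_0}(z,y)\}\leq m_{f,\cD_0}(x,y)+k_0$.

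\textbf{Part (ii).} By the symmetry between $\{\cC_m\}_{m\in\N_0}$ and $\{\cD_m\}_{m\in\N_0}$ it suffices to produce $k_1\in\N$, depending only on the two sequences, with $m_{f,\cC_0}(x,y)\leq m_{f,\cD_0}(x,y)+k_1$ for all $x,y\in\mfd^n$; interchanging the roles of $\cC$ and $\cD$ and enlarging $k_1$ then yields the full statement. Let $\delta_0'>0$ be the Lebesgue number of $\coverF(\cD_0)$ and fix $k_1\in\N$ with $2\mesh(\cC_{k_1})<\delta_0'$. Given $x,y$, put $m\=m_{f,\cC_0}(x,y)$; we may assume $k_1<m<+\infty$. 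Choose $X,Y\in\cC_m^{[n]}$ with $x\in X$, $y\in Y$, $X\cap Y\neq\emptyset$, and set $A\=X\cup Y$, a connected set containing $x,y$. Since $f^{m-k_1}$ is $(\cC_m,\cC_{k_1})$-cellular, $f^{m-k_1}(A)$ is a connected union of two chambers in $\cC_{k_1}^{[n]}$, so $\diam f^{m-k_1}(A)\leq 2\mesh(\cC_{k_1})<\delta_0'$ and $f^{m-k_1}(A)$ lies in a $0$-flower of $\cD_0$. Applying tool (a) to the {\celPair}s $(\cD_{j+1},\cD_j)$ of $f$, iterated $m-k_1$ times, $A$ lies in an $(m-k_1)$-flower of $\cD$, whence $m_{f,\cD_0}(x,y)\geq m-k_1$ by tool (b). This is the required bound.

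\textbf{Main obstacle.} The one genuinely delicate point is that $k_0$ and $k_1$ must be independent of $m$ and of the chosen points; this is precisely what renormalizing by $f^{m-k_0}$ (respectively $f^{m-k_1}$) achieves, converting an arbitrary level-$m$ configuration into a level-$k_0$ (respectively level-$k_1$) one, where a single Lebesgue-number estimate settles the matter once and for all. The remainder is bookkeeping that still requires some care: keeping the distinction between open flowers and closed chambers straight (it is cleanest to phrase tool (b) via carriers and Lemma~\ref{l: cell decomp: compact, and Mfd}~(ii)), checking that forward images of chambers under the relevant iterate of $f$ are again chambers of the appropriate decomposition with the same adjacency chain, and bounding the diameter of a connected chain of $k$ chambers by $k\cdot\mesh$.
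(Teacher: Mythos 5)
Your part (ii) is correct, and it is in fact a different and arguably cleaner route than the paper's: the paper covers the two adjacent $\cC_{\tilde m}$-chambers by boundedly many $\cD_{\tilde m}$-flowers (Lemma~\ref{l: CBC: cover cell by M flowers}) and then iterates part (i) along the resulting chain of points, whereas you renormalize by $f^{m-k_1}$ and settle everything with one Lebesgue-number estimate for $\coverF(\cD_0)$; notably your (ii) does not use (i) at all.

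Part (i), however, has a genuine gap at its first step. With $m=\min\{m_{f,\cD_0}(x,z),\,m_{f,\cD_0}(z,y)\}$ you claim that \eqref{e: the constant m(x,y)} produces $X,Z,Z',Y\in\cD_m^{[n]}$ with $x\in X$, $z\in Z\cap Z'$, $y\in Y$, $X\cap Z\neq\emptyset$, $Z'\cap Y\neq\emptyset$, i.e.\ intersecting chambers for \emph{both} pairs at the common level $m$. The definition only gives intersecting chambers at the level where the supremum is attained, namely at level $m_{f,\cD_0}(x,z)$ for the first pair and $m_{f,\cD_0}(z,y)$ for the second; for the pair whose value strictly exceeds $m$ there is no reason to have intersecting chambers at level $m$. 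A {\celSeq} is not assumed to be cellular Markov, so $\cD_l$ need not refine $\cD_{l'}$ for $l'<l$, and the set of levels at which two given points lie in intersecting chambers need not be downward closed: two points can sit in adjacent small $l$-chambers while at a coarser level $l'$ every chamber containing one of them misses every chamber containing the other (e.g.\ when a very small $l'$-chamber separates their carriers). So the step is unjustified as written. The repair is minor and brings you essentially to the paper's argument: take the chambers at the two levels $m_1=m_{f,\cD_0}(x,z)$ and $m_2=m_{f,\cD_0}(z,y)$ (both $\geq m$; if either is infinite the inequality is trivial), and choose $k_0$ so that $4\sup_{i\geq k_0}\mesh(\cD_i)$ is less than the Lebesgue number of $\coverF(\cD_0)$, which is possible because every {\celSeq} of an expanding map is expanding. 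Then $f^{m-k_0}(X\cup Z\cup Z'\cup Y)$ is a connected union of four chambers of levels $\geq k_0$, its diameter is below the Lebesgue number, and the rest of your argument (containment in a $0$-flower, pullback to an $(m-k_0)$-flower, your tool (b)) goes through verbatim. This is precisely what the paper's use of the joining number $J\bigl(\bigcup_{i\geq k_0}\cD_i,\cD_0\bigr)$ from Lemma~\ref{l: expansion: J_m--->+infty} accomplishes: it is taken over cells of \emph{all} levels $\geq k_0$ exactly to accommodate the two different levels $m_1$ and $m_2$.
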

\begin{proof}
	(i) By Lemma~\ref{l: expansion: J_m--->+infty}, fix $k_0\in\N$ such that $J\bigl(\bigcup_{i \geq  k_0}\cD_i,\cD_0\bigr)> 10$. To show that $k_0$ is the desired constant, we argue by contradiction and assume that $x,y,z\in\mfd^n$ satisfy
	\begin{equation*}
	\min\{m_{f,\cD_0}(x,z),\,m_{f,\cD_0}(z,y)\}> m_{f,\cD_0}(x,y)+k_0.
\end{equation*}
	Denote $m\=m_{f,\cD_0}(x,y)\in\N_0$ and let $X,\,Y$ be $(m+1)$-chambers containing $x$ and $y$, respectively. By the definition of $m_{f,\cD_0}(x,y)$, we have $X\cap Y=\emptyset$.
	Since $m_{f,\cD_0}(x,z)>m+k_0>0$, there exist (cf.~Remark~\ref{r: remarks on m(x,y)}~(a)) non-disjoint $(m_{f,\cD_0}(x,z))$-chambers $X',\,Z'$ with $x\in X',\,z\in Z'$. Likewise, choose non-disjoint $(m_{f,\cD_0}(z,y))$-chambers $Y'',\,Z''$ with $y\in Y'',\,z\in Z''$.
	Consider the connected set $A\=X'\cup Z'\cup Z''\cup Y''$ that intersects both $X$ and $Y$ (which are disjoint $(m+1)$-chambers). By Corollary~\ref{c: CBC: connected set in flower}~(iii), $f^{m+1}(A)=f^{m+1}(X')\cup f^{m+1}(Y'')\cup f^{m+1}(Z')\cup f^{m+1}(Z'')$ {\jsOpSd} of $\cD_0$. This contradicts $J\bigl(\bigcup_{i \geq  k_0}\cD_i,\cD_0\bigr)>10$ since $f^m(A)$ is a union of four cells in $\bigcup_{i \geq  k_0}\cD_i$ (which follows from $\min\{m_{f,\cD_0}(x,z),m_{f,\cD_0}(z,y)\}>m+k_0$). 
	
	\smallskip
	
	(ii) 
	Let $M=M(\cC_0,\coverF(\cD_0))\in\N$ be the constant given by Lemma~\ref{l: CBC: cover cell by M flowers}. First, we show that for all $x,\,y\in\mfd^n$, $m_{f,\cC_0}(x,y) \leq  m_{f,\cD_0}(x,y)+(4M-2)k_0$, where $k_0\in\N$ is the constant in (i).
	 
	Fix arbitrary $x,\,y\in\mfd^n$. It suffices to consider the case $m_{f,\cC_0}(x,y)>0$. Suppose $\tm\in\N_0$ and $\tX,\,\tY\in\cC_{\tm}^{\top}$ satisfy $x\in \tX$, $y\in \tY$, and $\tX\cap \tY\neq\emptyset$. By Lemma~\ref{l: CBC: cover cell by M flowers}, there exists a cover of $\tX\cup\tY$ by no more than $2M$ elements in $\coverF(\cD_{\tm})$, and thus,
    by the connectedness of $\tX\cup\tY$ and Proposition~\ref{p: c-flower: structure}~(ii), there exist $X_1,\,\dots,\, X_{4M}\in\cD_{\tm}^{\top}$ satisfying $x\in X_1$, $y\in X_{4M}$, and $X_{i}\cap X_{i+1}\neq\emptyset$ for each $i\in\{1,\,\dots,\,4M-1\}$. Set $x_1\=x$, $x_{4M}\=y$, and pick $x_i\in X_i$ for each $i\in\{2,\,\dots,\,4M-1\}$. Clearly $\tm\leq m_{f,\cD_0}(x_i,x_{i+1})$ for each $i\in\{1,\,\dots,\,4M-1\}$.
	
	Now we show $\tm \leq  m_{f,\cD_0}(x_1,x_{i})+(i-2)k_0$ for each $i\in\{2,\,\dots,\,4M\}$.
	Indeed, $\tm\leqslant m_{f,\cD_0}(x_1,x_2)$, and we can conclude using the following induction argument: for each $i\in\{3,\,\dots,\,4M\}$, if $\tm \leq  m_{f,\cD_0}(x_1,x_{i-1})+(i-3)k_0$, then since $\tm\leq m_{f,\cD_0}(x_{i-1},x_{i})$ and by (i), we have
	\begin{equation*}
	\tm-(i-3)k_0 \leq \min\{m_{f,\cD_0}(x_1,x_{i-1}),m_{f,\cD_0}(x_{i-1},x_{i})\} \leq  m_{f,\cD_0}(x_1,x_{i})+k_0,
    \end{equation*}
	and thus $\tm \leq  m_{f,\cD_0}(x_1,x_{i})+(i-2)k_0$.
    
	Hence, 
	$\tm \leq  m_{f,\cD_0}(x_1,x_{4M})+(4M-2)k_0=m_{f,\cD_0}(x,y)+(4M-2)k_0$. Denote $k_1'\=(4M-2)k_0$. Since $\tm\in\N_0$ and non-disjoint $\tX,\,\tY\in\cC_{\tm}^{\top}$ satisfying $x\in \tX$, $y\in \tY$ are arbitrary, we have $m_{f,\cC_0}(x,y) \leq  m_{f,\cD_0}(x,y)+k'_1$. 
	By the same arguments, there exists $k''_1\in\N$ such that $m_{f,\cD_0}(x,y) \leq  m_{f,\cC_0}(x,y)+k''_1$ for all $x,\,y\in\mfd^n$.
	Then $k_1\=\max\{k'_1,\,k''_1\}$ is the desired constant.
\end{proof}

\subsubsection{Existence and properties of visual metrics}
\label{Ap: existence of visual metric}

Now we are ready to establish the existence of visual metrics (see Definition~\ref{d: visual metrics}) for expanding {\itCel} branched covers.

\begin{theorem}[Existence of visual metrics]\label{t: visual metric: existence}
	Let $f\:\mfd^n\to\mfd^n$ be an expanding {\itCel} branched cover on a {\clcnmtr} $n$-manifold. Then $f$ has a visual metric.
\end{theorem}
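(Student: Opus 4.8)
\textbf{Proof proposal for Theorem~\ref{t: visual metric: existence}.}
The plan is to construct a visual metric directly from the separation function $m = m_{f,\cD_0}$ associated to a fixed {\celSeq} $\{\cD_m\}_{m\in\N_0}$ of $f$, following the standard ``visual quasimetric plus chain argument'' scheme used for Gromov hyperbolic boundaries (cf.~\cite[Chapter~8]{BM17}). First I would fix an expansion factor $\Lambda>1$ that is close enough to $1$, and define the candidate $\varrho_0(x,y)\=\Lambda^{-m(x,y)}$ for $x\ne y$, with $\varrho_0(x,x)\=0$, using Remark~\ref{r: remarks on m(x,y)}~(b) (valid since the {\celSeq} is expanding by Proposition~\ref{p: characterization of expans}) to ensure $\varrho_0(x,y)=0$ iff $x=y$. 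The function $\varrho_0$ is manifestly symmetric and nonnegative; the only failure is the triangle inequality, which is replaced by the quasi-ultrametric estimate coming from Lemma~\ref{l: m(x,y) properties: hyperbolic}~(i): there exists $k_0\in\N$ with $m(x,y)\ge \min\{m(x,z),m(z,y)\}-k_0$ for all $x,y,z$, hence $\varrho_0(x,y)\le \Lambda^{k_0}\max\{\varrho_0(x,z),\varrho_0(z,y)\}$.

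Next I would pass from this quasi-ultrametric to an honest metric by the Frink metrization lemma (the standard ``chain'' construction): define
\begin{equation*}
	\varrho(x,y)\=\inf\Bigl\{\sum_{i=1}^{N}\varrho_0(z_{i-1},z_i): N\in\N,\ z_0=x,\ z_N=y,\ z_i\in\mfd^n\Bigr\},
\end{equation*}
which is automatically a pseudometric, and which satisfies $\tfrac14\varrho_0(x,y)\le \varrho(x,y)\le \varrho_0(x,y)$ \emph{provided} $\Lambda^{k_0}\le 2$, i.e.\ provided $\Lambda$ was chosen sufficiently close to $1$ at the outset (Frink's lemma; see e.g.~\cite[Lemma~7.2.1]{BM17}). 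The two-sided comparison $\varrho_0\asymp\varrho$ then gives $\varrho>0$ off the diagonal (so $\varrho$ is a genuine metric) and shows that $\varrho$ satisfies the defining inequality \eqref{e: visual metric} with this {\celSeq}, constant $C=4$, and expansion factor $\Lambda$. Thus $\varrho$ is a visual metric for $f$ in the sense of Definition~\ref{d: visual metrics}, once we also check it induces the correct topology.

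For the topology claim I would argue that the sets $\tW_m(x)\=\{y:m(x,y)\ge m\}$ form a neighborhood basis at $x$ that is comparable to the $\varrho$-balls: on one hand $\tW_m(x)$ contains a $\varrho$-ball of radius $\asymp\Lambda^{-m}$ by \eqref{e: visual metric}, and on the other hand $\tW_m(x)\subseteq \bigcup\{X\in\cD_m^{[n]}:x\in X\}\subseteq \cls{\flower_m(x)}$, whose diameter tends to $0$ as $m\to\infty$ since $f$ is expanding and $\coverF(\cD_m)=(f^m)^*\coverF(\cD_0)$ by Proposition~\ref{p: p-flowers: invariance}~(ii); moreover by Corollary~\ref{c: x-flower structure}~(i) each $\flower_m(x)$ is an open neighborhood of $x$, which is what lets us conclude the $\varrho$-topology agrees with the manifold topology. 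Finally, the well-definedness statements (independence of the visual metric class from the choice of {\celSeq}, and uniqueness of the expansion factor) follow from Lemma~\ref{l: m(x,y) properties: hyperbolic}~(ii): two {\celSeq}s give separation functions differing by a bounded additive constant $k_1$, hence visual metrics for one are visual metrics for the other with the same $\Lambda$ and adjusted $C$. The main obstacle I anticipate is purely bookkeeping rather than conceptual: namely verifying that the constant $k_0$ in the quasi-ultrametric inequality can be forced below $\log_\Lambda 2$ by shrinking $\Lambda$ — this is fine because $k_0$ depends only on $\{\cD_m\}_{m\in\N_0}$, not on $\Lambda$, so one first fixes the {\celSeq}, obtains $k_0$, and only then selects $\Lambda\in(1,2^{1/k_0}]$; the rest is the routine Frink argument and the topology check.
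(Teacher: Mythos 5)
Your construction is correct and follows essentially the same route as the paper: both define the quasimetric $q(x,y)=\Lambda^{-m_{f,\cD_0}(x,y)}$, get the quasi-triangle inequality from Lemma~\ref{l: m(x,y) properties: hyperbolic}~(i), and then metrize; the paper keeps $\lambda>1$ arbitrary and invokes \cite[Proposition~14.5]{He01}, which yields a metric comparable to $q^{\epsilon}$ (expansion factor $\lambda^{\epsilon}$ for some unspecified $\epsilon$), whereas you pre-shrink $\Lambda$ so that $\Lambda^{k_0}\le 2$ and use Frink's chain lemma, which buys a prescribed factor $\Lambda\in\bigl(1,2^{1/k_0}\bigr]$ and the explicit constant $C=4$; your quantifier order is right since $k_0$ depends only on the {\celSeq}. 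Two small remarks: Definition~\ref{d: visual metrics} does not ask for compatibility with the given topology (the paper records that separately in Lemma~\ref{l: visual metrics: compatibale topo, and lipschitz}), so that part of your argument is optional; and within it the inclusion $\tW_m(x)\subseteq\bigcup\bigl\{X\in\cD_m^{[n]}:x\in X\bigr\}$ is not justified, because $m_{f,\cD_0}(x,y)\ge m$ only produces intersecting chambers at some level $l\ge m$, and $\cD_l$ need not refine $\cD_m$ (cellular sequences are not nested). The intended conclusion survives, since $y\in\tW_m(x)$ forces $x$ and $y$ to lie in intersecting level-$l$ chambers for some $l\ge m$, whence $\diam\tW_m(x)\le 2\sup_{l\ge m}\mesh(\cD_l)\to 0$ as $m\to+\infty$ by expansion.
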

\begin{proof}
	Let $\{\cD_m\}_{m\in\N_0}$ be a {\celSeq} of $f$.
	Fix $\lambda>1$ and set $q(x,y)\=\lambda^{-m_{f,\cD_0}(x,y)}$ for all $x,\,y\in\mfd^n$. For all 
    $x,\,y\in\mfd^n$, it can be directly checked that $q(x,y)=q(y,x)$, and that $q(x,y)=0$ if and only if $x=y$. In addition, by Lemma~\ref{l: m(x,y) properties: hyperbolic}~(i), there exists $k_0\in\N$ such that
	$q(x,y) \leq \lambda^{k_0}(q(x,z)+q(z,y))$
	for all $x,\,y,\,z\in\mfd^n$. Having these properties of $q(x,y)$, we use \cite[Proposition~14.5]{He01} to assert that there exist constants $\epsilon>0$ and $C>1$, and a metric $\varrho$ on $\mfd^n$ satisfying
	$C^{-1}(q(x,y))^\epsilon\le \varrho(x,y)\le C(q(x,y))^\epsilon$
	for all $x,\,y\in\mfd^n$.
	Such a metric $\varrho$ is a visual metric for $f$.
\end{proof}

In what follows, we give some elementary properties of visual metrics. 

	\begin{lemma}\label{l: visual metrics: compatibale topo, and lipschitz}
		Let $f\:\mfd^n\to\mfd^n$ be an expanding {\itCel} map on a {\clcnmtr} $n$-manifold, and $\varrho$ be a visual metric for $f$. Then 
		\begin{enumerate}[label=(\roman*),font=\rm]
        	\smallskip
			\item $\varrho$ induces the given topology on $\mfd^n$ and
			\smallskip
			\item $f\:(\mfd^n,\varrho)\to(\mfd^n,\varrho)$ is a Lipschitz map.
		\end{enumerate}
	\end{lemma}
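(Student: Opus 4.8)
\textbf{Proof plan for Lemma~\ref{l: visual metrics: compatibale topo, and lipschitz}.}

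The plan is to deduce both statements directly from the visual-metric inequality \eqref{e: visual metric} together with the elementary properties of the separation level $m_{f,\cD_0}$ established above. Fix a {\celSeq} $\{\cD_m\}_{m\in\N_0}$ witnessing the visual metric $\varrho$, and constants $C>1$, $\Lambda>1$ as in Definition~\ref{d: visual metrics}; let $d$ be a metric on $\mfd^n$ inducing the given topology.

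For (i): the first task is to show that the $\varrho$-topology coincides with the $d$-topology. One direction is to bound $\varrho$-balls below in $d$-diameter and vice versa using the mesh control of the expanding {\celSeq}. Concretely, I would argue that the identity map $\id\:(\mfd^n,d)\to(\mfd^n,\varrho)$ is continuous: given $x\in\mfd^n$ and $\varepsilon>0$, pick $m$ with $C\Lambda^{-m}<\varepsilon$; by Lemma~\ref{l: m(x,y) properties: hyperbolic}(i) (the quasi-ultrametric inequality with constant $k_0$) and the expansion of $\{\cD_m\}_{m\in\N_0}$, there is a $d$-neighbourhood $V$ of $x$ contained in a single $(m+k_0)$-chamber containing $x$ — indeed, if $V$ has $d$-diameter less than $\sup\{\mesh_d(\cD_i):i\ge m+k_0\}$-related Lebesgue number considerations force $m_{f,\cD_0}(x,y)\ge m$ for $y\in V$, hence $\varrho(x,y)\le C\Lambda^{-m}<\varepsilon$. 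For the other direction, $\id\:(\mfd^n,\varrho)\to(\mfd^n,d)$ is continuous because $\mfd^n$ is compact Hausdorff in the $d$-topology and a continuous bijection from a compact space to a Hausdorff space is a homeomorphism; alternatively, argue directly that $m_{f,\cD_0}(x,y)$ large forces $x,y$ to lie in a common chamber of large level, whose $d$-diameter is small by expansion. Either way, (i) follows.

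For (ii): the Lipschitz estimate is immediate from Lemma~\ref{l: m(x,y) property: iteration}, which gives $m_{f,\cD_0}(f(x),f(y))\ge m_{f,\cD_0}(x,y)-1$ for all $x,y\in\mfd^n$. Applying \eqref{e: visual metric} twice,
\[
\varrho(f(x),f(y))\le C\Lambda^{-m_{f,\cD_0}(f(x),f(y))}\le C\Lambda^{-m_{f,\cD_0}(x,y)+1}=\Lambda\cdot C\Lambda^{-m_{f,\cD_0}(x,y)}\le C^2\Lambda\,\varrho(x,y),
\]
so $f$ is $(C^2\Lambda)$-Lipschitz with respect to $\varrho$.

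The only genuinely delicate point is the $d$-to-$\varrho$ continuity in (i): one must translate ``$d$-close'' into ``shares a high-level chamber'', which requires combining the expansion of the {\celSeq} (controlling $\mesh_d(\cD_m)$) with the fact that flowers $\coverF(\cD_m)$ form covers, so that a sufficiently small $d$-ball lies inside a single $m$-flower and hence meets only chambers containing the relevant vertex; this yields the lower bound $m_{f,\cD_0}(x,y)\ge m$. I expect this to be the main obstacle, though it is routine given Lemma~\ref{l: m(x,y) properties: hyperbolic}, Proposition~\ref{p: c-flower: structure}, and Remark~\ref{r: remarks on m(x,y)}(b); the rest is bookkeeping.
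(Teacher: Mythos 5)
Your proposal is correct and is essentially the paper's (omitted) argument: the paper records that (ii) follows immediately from Lemma~\ref{l: m(x,y) property: iteration}, which is exactly your computation giving the Lipschitz constant $C^2\Lambda$, and that (i) follows from standard arguments of the kind you describe (a small $d$-ball around $x$ lies in a single $m$-flower, so any $y$ in it lies in an $m$-chamber meeting one containing $x$, whence $m_{f,\cD_0}(x,y)\ge m$ and $\varrho(x,y)\le C\Lambda^{-m}$; then either the continuous-bijection-from-compact-to-Hausdorff shortcut or the mesh estimate $d(x,y)\le 2\mesh_d(\cD_{m})$ for $m=m_{f,\cD_0}(x,y)$ gives the reverse continuity). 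The only phrasing to fix is the mid-proof claim that a $d$-neighborhood of $x$ is ``contained in a single $(m+k_0)$-chamber'': chambers are closed, so this fails for points on chamber boundaries, and the correct statement is the flower formulation you give in your final paragraph.
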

    We omit the proof of the above lemma,
    where (i) follows from standard arguments, and (ii) follows immediately from Lemma~\ref{l: m(x,y) property: iteration}.
    
	The following lemma shows that the definition of visual metrics of an expanding {\itCel} branched cover does not rely on the choice of the {\celSeq}.
    
	\begin{lemma}\label{l: visual metric: same expansion factor}
		Let $f\:\mfd^n\to\mfd^n$ be an expanding {\itCel} branched cover on a {\clcnmtr} $n$-manifold. Let $\varrho$ be a visual metric for $f$ and $\Lambda>1$ be an expansion factor of $\varrho$.
		Then for each {\celSeq} $\{\cC_m\}_{m\in\N_0}$ of $f$, there exists $\tC>1$ such that for all $x,\,y\in\mfd^n$,
		\begin{equation*}
		\tC^{-1}\Lambda^{-m_{f,\cC_0}(x,y)} \leq \varrho(x,y) \leq \tC\Lambda^{-m_{f,\cC_0}(x,y)}.
	\end{equation*}
	\end{lemma}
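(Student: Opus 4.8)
The plan is to deduce this from the visual-metric inequality for the original cellular sequence $\{\cD_m\}_{m\in\N_0}$ together with the comparison of separation levels across different cellular sequences. By Definition~\ref{d: visual metrics}, since $\varrho$ is a visual metric for $f$ with expansion factor $\Lambda>1$, there is a {\celSeq} $\{\cD_m\}_{m\in\N_0}$ of $f$ and a constant $C>1$ with
\begin{equation*}
C^{-1}\Lambda^{-m_{f,\cD_0}(x,y)} \leq \varrho(x,y) \leq C\Lambda^{-m_{f,\cD_0}(x,y)}
\end{equation*}
for all $x,y\in\mfd^n$. The point is that \emph{both} $\{\cC_m\}_{m\in\N_0}$ and $\{\cD_m\}_{m\in\N_0}$ are {\celSeq}s of the expanding {\itCel} branched cover $f$, so Lemma~\ref{l: m(x,y) properties: hyperbolic}~(ii) applies to the pair $(\{\cC_m\},\{\cD_m\})$: there exists $k_1\in\N$, depending only on these two {\celSeq}s, with
\begin{equation*}
m_{f,\cD_0}(x,y)-k_1 \leq m_{f,\cC_0}(x,y) \leq m_{f,\cD_0}(x,y)+k_1
\end{equation*}
for all $x,y\in\mfd^n$.

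The rest is bookkeeping. From the two displayed inequalities for $m_{f,\cC_0}$ one gets $\Lambda^{-m_{f,\cD_0}(x,y)} \leq \Lambda^{k_1}\Lambda^{-m_{f,\cC_0}(x,y)}$ and $\Lambda^{-m_{f,\cD_0}(x,y)} \geq \Lambda^{-k_1}\Lambda^{-m_{f,\cC_0}(x,y)}$, using $\Lambda>1$. Substituting into the visual-metric bounds for $\{\cD_m\}_{m\in\N_0}$ yields
\begin{equation*}
\varrho(x,y) \leq C\Lambda^{-m_{f,\cD_0}(x,y)} \leq C\Lambda^{k_1}\Lambda^{-m_{f,\cC_0}(x,y)}
\end{equation*}
and symmetrically $\varrho(x,y) \geq C^{-1}\Lambda^{-k_1}\Lambda^{-m_{f,\cC_0}(x,y)}$. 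Hence $\tC\=C\Lambda^{k_1}>1$ is the desired constant, and the lemma follows. One should note in passing that $\Lambda$ being an expansion factor for $\varrho$ (with respect to some {\celSeq}) is well-defined and unique by the remarks following Definition~\ref{d: visual metrics}; but for this lemma we only need that $\Lambda$ works for the particular {\celSeq} $\{\cD_m\}_{m\in\N_0}$ appearing in the hypothesis, so no uniqueness statement is actually invoked.

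There is essentially no obstacle here: the entire content is packaged into Lemma~\ref{l: m(x,y) properties: hyperbolic}~(ii), whose proof in turn rests on Lemma~\ref{l: CBC: cover cell by M flowers} and the near-triangle-inequality Lemma~\ref{l: m(x,y) properties: hyperbolic}~(i). If I were to flag the one place requiring a moment's care, it is simply making sure the argument is symmetric in $\{\cC_m\}$ and $\{\cD_m\}$ so that both the upper and lower bounds come out with the same constant $\tC$; this is immediate once the two-sided estimate on separation levels is in hand. Thus the proof is a two-line substitution, and I would present it as such.
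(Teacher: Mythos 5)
Your proof is correct and follows essentially the same route as the paper: invoke the defining estimate of the visual metric for some {\celSeq} $\{\cD_m\}_{m\in\N_0}$, apply Lemma~\ref{l: m(x,y) properties: hyperbolic}~(ii) to compare $m_{f,\cD_0}$ with $m_{f,\cC_0}$, and absorb the shift $k_1$ into the constant $\tC = C\Lambda^{k_1}$. Nothing further is needed.
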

	\begin{proof}
        Let $\{\cC_m\}_{m\in\N_0}$ be a {\celSeq} of $f$.
		Since $\varrho$ is a visual metric, there exists a {\celSeq} $\{\cD_m\}_{m\in\N_0}$ of $f$ and a constant $C>1$ satisfying $C^{-1}\Lambda^{-m_{f,\cD_0}(x,y)} \leq \varrho(x,y) \leq  C\Lambda^{-m_{f,\cD_0}(x,y)}$
		for all $x,\,y\in\mfd^n$.
		Combining this with Lemma~\ref{l: m(x,y) properties: hyperbolic}~(ii), we get a constant $k_1\in\N$ such that
		$
		\bigl(C\Lambda^{k_1}\bigr)^{-1}\Lambda^{-m_{f,\cC_0}(x,y)} \leq \varrho(x,y) \leq \bigl(C\Lambda^{k_1}\bigr)\Lambda^{-m_{f,\cC_0}(x,y)}
		$ 
        for all $x,\, y\in\mfd^n$.
		Then $\tC\=C\Lambda^{k_1}$ is the desired constant.
	\end{proof}

	Consequently, the expansion factor of a visual metric is unique.
	\begin{cor}
		Let $f\:\mfd^n\to\mfd^n$ be an expanding {\itCel} branched cover on a {\clcnmtr} $n$-manifold. Let $\varrho$ be a visual metric for $f$. Then the expansion factor of $\varrho$ is unique.
	\end{cor}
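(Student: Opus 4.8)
The final statement to prove is the uniqueness of the expansion factor of a visual metric. The plan is to argue by contradiction: suppose a visual metric $\varrho$ for $f$ admits two distinct expansion factors $\Lambda_1 > 1$ and $\Lambda_2 > 1$, say $\Lambda_1 < \Lambda_2$. By definition, for each $i \in \{1,2\}$ there is a {\celSeq} $\{\cD^{(i)}_m\}_{m\in\N_0}$ of $f$ and a constant $C_i > 1$ so that
\begin{equation*}
C_i^{-1} \Lambda_i^{-m_{f,\cD^{(i)}_0}(x,y)} \leq \varrho(x,y) \leq C_i \Lambda_i^{-m_{f,\cD^{(i)}_0}(x,y)}
\end{equation*}
for all $x,\,y\in\mfd^n$.

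First I would invoke Lemma~\ref{l: visual metric: same expansion factor} to replace both representations by ones using a single fixed {\celSeq}, say $\{\cD^{(1)}_m\}_{m\in\N_0}$: that lemma gives a constant $\tC > 1$ with $\tC^{-1} \Lambda_2^{-m_{f,\cD^{(1)}_0}(x,y)} \leq \varrho(x,y) \leq \tC \Lambda_2^{-m_{f,\cD^{(1)}_0}(x,y)}$ for all $x,\,y\in\mfd^n$. Writing $m(x,y) \= m_{f,\cD^{(1)}_0}(x,y)$, we now have, for all $x \neq y$,
\begin{equation*}
C_1^{-1} \Lambda_1^{-m(x,y)} \leq \varrho(x,y) \leq \tC \Lambda_2^{-m(x,y)},
\end{equation*}
hence $\Lambda_1^{-m(x,y)} \leq C_1 \tC \, \Lambda_2^{-m(x,y)}$, i.e. $(\Lambda_2/\Lambda_1)^{m(x,y)} \leq C_1 \tC$ whenever $x \neq y$. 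Since $\Lambda_2/\Lambda_1 > 1$, this forces $m(x,y)$ to be bounded above by the constant $\log_{\Lambda_2/\Lambda_1}(C_1 \tC)$ over all pairs $x \neq y$.

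The remaining step is to observe that this boundedness is impossible. Since $f$ is expanding and $\{\cD^{(1)}_m\}_{m\in\N_0}$ is a {\celSeq} of $f$, Proposition~\ref{p: characterization of expans} shows $\{\cD^{(1)}_m\}_{m\in\N_0}$ is an expanding {\celSeq}, so $\mesh(\cD^{(1)}_m) \to 0$ as $m \to +\infty$. I would then pick, for each $m$, an $m$-chamber $X_m$ and a point $x_m \in \stdCellint{X_m}$ together with a nearby point $y_m$ in $X_m$ distinct from $x_m$; by construction $x_m, y_m$ lie in a common $m$-chamber, so $m(x_m, y_m) \geq m$. This contradicts the uniform bound on $m(\cdot,\cdot)$ for distinct points once $m$ is large enough, completing the proof. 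The only mildly delicate point is ensuring one can always produce two distinct points in a common $m$-chamber — this is immediate since a chamber is an $n$-dimensional cell, hence uncountable (in fact contains an open set by Lemma~\ref{l: properties of cells}~(ii)), so no genuine obstacle arises; alternatively one may simply quote Remark~\ref{r: remarks on m(x,y)}~(b), which already records that $m_{f,\cD_0}(x,y) = +\infty$ iff $x = y$ for an expanding {\celSeq}, and then any sequence $y_m \to x$ with $y_m \neq x$ has $m(x, y_m) \to +\infty$.
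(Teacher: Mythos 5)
Your proposal is correct and follows essentially the same route as the paper: both transfer the two visual-metric estimates to a single {\celSeq} via Lemma~\ref{l: visual metric: same expansion factor} and then derive a contradiction by producing pairs of distinct points with $m_{f,\cD_0}(x,y)$ arbitrarily large. Your extra detail on producing such pairs (two distinct points in a common $m$-chamber) is a fine way to make explicit what the paper leaves as ``by choosing appropriate $x$ and $y$.''
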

	\begin{proof}
		Suppose that $\tLambda > \Lambda>1$ are two expansion factors of $\varrho$. Let $\{\cD_m\}_{m\in\N_0}$ be a {\celSeq} of $f$ and denote $m(x,y)\=m_{f,\cD_0}(x,y)$ for all $x,\,y\in\mfd^n$. By Lemma~\ref{l: visual metric: same expansion factor}, there are constants $C>1$ and $\tC>1$ such that 
		$C^{-1}\Lambda^{-m(x,y)} \leq \varrho(x,y) \leq  C\Lambda^{-m(x,y)}$ and 
        $\tC^{-1}\tLambda^{-m(x,y)} \leq \varrho(x,y) \leq  \tC\tLambda^{-m(x,y)}$
    		for all $x,\,y\in\mfd^n$. In particular, $\tC\tLambda^{-m(x,y)} \geq C^{-1}\Lambda^{-m(x,y)}$ for all $x,\,y\in\mfd^n$. However, by choosing appropriate $x$ and $y$, we can make $m(x,y)$ so large that $\tC\tLambda^{-m(x,y)} < C^{-1}\Lambda^{-m(x,y)}$, which is a contradiction.	 
	\end{proof}

    The next lemma gives estimates of the size and distance (with respect to a visual metric) of chambers in a {\celSeq}. This lemma is applied in Section~\ref{sct: QS uniformization of visual} to show the branched quasisymmetric properties (with respect to visual metrics).
    
    \begin{lemma}\label{l: visual metric: metric properties of cells}
    	Let $f\:\mfd^n\to\mfd^n$ be an expanding {\itCel} branched cover on a {\clcnmtr} $n$-manifold. Equip $\mfd^n$ with a visual metric $\varrho$ for $f$ and let $\Lambda>1$ be the expansion factor for $\varrho$. Then for each {\celSeq} $\{\cD_m\}_{m\in\N_0}$ of $f$, there exists $C'>1$ such that for all $m\in\N_0$ and $X,\, Y \in\cD_m^{\top}$ we have
    		$(1/C')\Lambda^{-m} \leq \diam X \leq  C'\Lambda^{-m}$ and if $X\cap Y=\emptyset$ then $\dist(X,Y) \geq  (1/C')\Lambda^{-m}$.
    \end{lemma}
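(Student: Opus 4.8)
\textbf{Proof proposal for Lemma~\ref{l: visual metric: metric properties of cells}.} The plan is to deduce the two-sided diameter bound and the separation bound from the defining inequality (\ref{e: visual metric}) of the visual metric, together with the combinatorial control on chambers provided by the {\celSeq}. By Lemma~\ref{l: visual metric: same expansion factor}, we may work with the given {\celSeq} $\{\cD_m\}_{m\in\N_0}$ directly: there is a constant $C>1$ with $C^{-1}\Lambda^{-m_{f,\cD_0}(x,y)}\leq\varrho(x,y)\leq C\Lambda^{-m_{f,\cD_0}(x,y)}$ for all $x,y\in\mfd^n$. Write $m(x,y)\=m_{f,\cD_0}(x,y)$.

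First I would establish the upper bound on $\diam X$ for $X\in\cD_m^{[n]}$. If $x,y\in X$, then $X$ is an $m$-chamber containing both points, and trivially $X\cap X\neq\emptyset$, so by the definition (\ref{e: the constant m(x,y)}) of the separation level we have $m(x,y)\geq m$. Hence $\varrho(x,y)\leq C\Lambda^{-m(x,y)}\leq C\Lambda^{-m}$, giving $\diam X\leq C\Lambda^{-m}$. For the lower bound, the key point is that there is a uniform $k_0\in\N$ such that any two chambers at level $m+k_0$ that meet a common $m$-chamber are ``separated from the rest'' — more precisely, I want to find, for each $m$-chamber $X$, two points $x,y\in X$ with $m(x,y)$ bounded above by $m+k_0$ for a constant $k_0$ independent of $m$ and $X$. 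This is where the expansion of the {\celSeq} enters: by Lemma~\ref{l: expansion: J_m--->+infty}, $J(\cD_k,\cD_0)\to+\infty$, so fix $k_0$ with $J(\cD_{k_0},\cD_0)>2$. Now, given an $m$-chamber $X$, pick $X'\in\cD_{m+k_0}^{[n]}$ with $X'\cap X\neq\emptyset$ and a point $x\in X'\subseteq X$; if every point of $X$ had separation level exceeding $m+k_0$ from $x$, then $X$ would be covered by level-$(m+k_0+1)$ chambers all meeting $X'$, and applying $f^m$ one shows (via Corollary~\ref{c: CBC: connected set in flower}~(iii)) that the image joins opposite sides of $\cD_0$ using at most a bounded number of cells in $\bigcup_{i\geq k_0}\cD_i$, contradicting $J(\cD_{k_0},\cD_0)>2$. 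Thus there is $y\in X$ with $m(x,y)\leq m+k_0$, whence $\diam X\geq\varrho(x,y)\geq C^{-1}\Lambda^{-(m+k_0)}=(C\Lambda^{k_0})^{-1}\Lambda^{-m}$.

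Next I would handle the separation bound. Suppose $X,Y\in\cD_m^{[n]}$ with $X\cap Y=\emptyset$. Take any $x\in X$, $y\in Y$; I claim $m(x,y)\leq m+k_1$ for a uniform $k_1$. Indeed, if $m(x,y)\geq m+k_1+1$ with $k_1$ chosen (again via Lemma~\ref{l: expansion: J_m--->+infty}) so that $J(\cD_{k_1+1},\cD_0)>2$, then there would exist intersecting chambers $X',Y'$ at level $m(x,y)\geq m+k_1+1$ with $x\in X'$, $y\in Y'$; the connected set $X'\cup Y'$ meets the two disjoint $m$-chambers $X$ and $Y$, so by Corollary~\ref{c: CBC: connected set in flower}~(iii), $f^m(X'\cup Y')$ joins opposite sides of $\cD_0$ while being a union of two cells in $\bigcup_{i\geq k_1+1}\cD_i$, contradicting the choice of $k_1$. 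Hence $m(x,y)\leq m+k_1$, so $\varrho(x,y)\geq C^{-1}\Lambda^{-(m+k_1)}=(C\Lambda^{k_1})^{-1}\Lambda^{-m}$; taking the infimum over $x\in X,y\in Y$ gives $\dist(X,Y)\geq(C\Lambda^{k_1})^{-1}\Lambda^{-m}$. Setting $C'\=\max\{C,\,C\Lambda^{k_0},\,C\Lambda^{k_1}\}$ (and enlarging if necessary so $C'>1$) yields all the asserted inequalities.

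The main obstacle I expect is making the two ``non-separation'' arguments (the lower diameter bound and the separation bound) fully rigorous: one must carefully verify that the relevant connected unions of cells really do join opposite sides of $\cD_0$ after pushing forward by $f^m$, invoking Corollary~\ref{c: CBC: connected set in flower} correctly, and count how many cells of $\bigcup_{i\geq k}\cD_i$ appear so as to contradict $J(\cD_k,\cD_0)>2$. A minor subtlety is the degenerate case $m(x,y)=0$ (Remark~\ref{r: remarks on m(x,y)}), but this only helps, since then $\varrho(x,y)\geq C^{-1}$, which dominates $\Lambda^{-m}$ up to constants; these boundary cases should be absorbed into the final choice of $C'$. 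Everything else — symmetry, the trivial direction $m(x,y)\geq m$ for points in a common chamber — is routine.
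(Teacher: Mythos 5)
Your upper bound on $\diam X$ (points of a common $m$-chamber have separation level at least $m$) and your separation estimate are essentially the paper's argument; the only slip in the latter is that the witnessing level for $m(x,y)>m+k_1$ is some $l\geq m+k_1+1$ not of your choosing, so you must pick $k_1$ with $J\bigl(\bigcup_{i\geq k_1+1}\cD_i,\cD_0\bigr)>2$ rather than $J(\cD_{k_1+1},\cD_0)>2$; Lemma~\ref{l: expansion: J_m--->+infty} supplies exactly this, and this is what the paper does.

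The genuine gap is the lower bound on $\diam X$. First, the containment in ``$x\in X'\subseteq X$'' is unjustified, since a {\celSeq} need not be Markov, so no $(m+k_0)$-chamber need be contained in $X$; but that is cosmetic. The real problems are: (a) $m(x,y)>m+k_0$ only produces intersecting chambers at \emph{some} level $l\geq m+k_0+1$ depending on $y$, with the chamber containing $x$ not necessarily equal to $X'$, so $X$ is not covered by level-$(m+k_0+1)$ chambers all meeting $X'$; and (b), more fundamentally, your contradiction mechanism cannot fire, because $x$ and $y$ lie in the \emph{same} $m$-chamber $X$: the connected sets you build never meet two disjoint $m$-cells, so Corollary~\ref{c: CBC: connected set in flower}~(iii) is not applicable. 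Pushing forward by $f^m$, the hypothetical configuration says only that every point of the $0$-chamber $f^m(X)$ is joined to $f^m(x)$ by two intersecting cells of level $\geq k_0$, hence lies in a common $0$-flower with $f^m(x)$ — and this is perfectly consistent (if $f^m(x)$ is an interior point of $f^m(X)$, the $0$-flowers containing it already cover $f^m(X)$), so $J(\cD_{k_0},\cD_0)>2$ yields no contradiction. The lower bound really needs a metric input at level $0$ together with the iteration inequality: the paper uses Lemma~\ref{l: m(x,y) property: iteration} to get $m_{f,\cD_0}(x,y)\leq m_{f,\cD_0}(f^m(x),f^m(y))+m$, hence $\varrho(x,y)\geq \varrho(f^m(x),f^m(y))/\bigl(C^2\Lambda^m\bigr)$, and then the bijectivity of $f^m|_X$ onto a $0$-chamber together with $b\=\min\bigl\{\diam X_0:X_0\in\cD_0^{[n]}\bigr\}>0$. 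If you insist on your strategy of producing $x,y\in X$ with $m(x,y)\leq m+k_0$, it can be repaired, but only by choosing $k_0$ metrically (e.g.\ $2\sup_{l\geq k_0+1}\mesh(\cD_l)<b$), picking $z,w\in f^m(X)$ farther apart than that bound so that $m_{f,\cD_0}(z,w)\leq k_0$, and pulling back through $f^m|_X$ via Lemma~\ref{l: m(x,y) property: iteration} — i.e.\ you are forced back to the paper's ingredients, and the purely combinatorial choice $J(\cD_{k_0},\cD_0)>2$ does not suffice.
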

    \begin{proof}
    	Let $\{\cD_m\}_{m\in\N_0}$ be a {\celSeq} of $f$.
    	By Lemma~\ref{l: visual metric: same expansion factor}, let $C>1$ be such that $C^{-1}\Lambda^{-m_{f,\cD_0}(x,y)} \leq \varrho(x,y) \leq  C\Lambda^{-m_{f,\cD_0}(x,y)}$ for all $x,\,y\in\mfd^n$. By Lemma~\ref{l: expansion: J_m--->+infty}, choose $k\in\N$ for which $J\bigl(\bigcup_{i \geq  k}\cD_i,\cD_0\bigr)>2$. Set $b\=\min\bigl\{\diam X_0:X_0\in\cD_0^{\top}\bigr\}$ and $C'\=\max \bigl\{ C\Lambda^k, \, C^2/b \bigr\}$.
        
        Fix $m\in\N_0$ and $X, \, Y \in\cD_m^{\top}$. Let $x\in X$ and $y\in Y$ be arbitrary. 
        
        By (\ref{e: the constant m(x,y)}) and Lemma~\ref{l: m(x,y) property: iteration}, $m\leq m_{f,\cD_0}(x,y) \leq m_{f,\cD_0}(f^m(x),f^m(y)) + m$, and thus $\frac{\varrho(f^m(x),f^m(y))}{C^{2}\Lambda^{m}}\leq\varrho(x,y) \leq  \frac{C}{\Lambda^{m}}$. This, combined with the fact that $f^m|_X\:X\to f^m(X)$ is a bijection, implies
        $C^{-2}\Lambda^{-m}\diam(f^m(X)) \leq \diam X \leq C\Lambda^{-m}$. Thus, $(1/C')\Lambda^{-m} \leq \diam X \leq  C'\Lambda^{-m}$.
        
    	Now assume $X\cap Y=\emptyset$. Suppose that $m(x,y) >  m+k$. Then there exist $j >  k$ and $X',\,Y'\in\cD_{m+j}^{\top}$ satisfying $x\in X'$, $y\in Y'$, and $X'\cap Y'\neq\emptyset$.
    	Then $X'\cup Y'$ is a connected set that meets both $X$ and $Y$, and thus, by Corollary~\ref{c: CBC: connected set in flower}~(iii), $f^m(X')\cup f^m(Y')$ joins opposite sides of $\cD_0$, which contradicts $J\bigl(\bigcup_{i \geq  k}\cD_i,\cD_0\bigr)>2$.  Thus, $m_{f,\cD_0}(x,y) \leq  m+k$ and $\varrho(x,y) \geq  C^{-1}\Lambda^{-m-k}$. It follows that $\dist(X,Y) \geq  C^{-1}\Lambda^{-k}\Lambda^{-m}\geq  (1/C')\Lambda^{-m}$.
    \end{proof}


\end{document}